\theoremstyle{plain}
\newtheorem{theorem}{Theorem}[section]
\newtheorem{corollary}[theorem]{Corollary}
\newtheorem{lemma}[theorem]{Lemma}
\newtheorem{proposition}[theorem]{Proposition}
\theoremstyle{definition}
\newtheorem{definition}[theorem]{Definition}
\newtheorem{problem}[theorem]{Problem}
\newtheorem{RHP}[theorem]{Riemann-Hilbert Problem}
\newtheorem{DBAR}[theorem]{$\dbar$-Problem}
\theoremstyle{remark}
\newtheorem{remark}[theorem]{Remark}
\numberwithin{figure}{section}
\numberwithin{equation}{section}
\DeclareMathOperator{\ad}{ad}
\DeclareMathOperator{\real}{Re}
\DeclareMathOperator{\dist}{dist}
\DeclareMathOperator{\imag}{Im}
\DeclareMathOperator{\sgn}{sgn}
\DeclareMathOperator{\sech}{sech}
\DeclareMathOperator{\Res}{Res}
\DeclareMathOperator{\res}{res}
\begin{document}

\title[Global Well-Posedness and Soliton Resolution for DNLS]{Global Well-posedness and soliton resolution for the Derivative Nonlinear Schr\"{o}dinger equation}
\author{Robert Jenkins}
\author{Jiaqi Liu}
\author{Peter Perry}
\author{Catherine Sulem}
\address[Jenkins]{Department of Mathematics, University of Arizona, Tucson, Arizona 85721--0089}
\address[Liu]{Department of Mathematics, University of Kentucky, Lexington, Kentucky 40506--0027}
\address[Perry]{ Department of Mathematics, University of Kentucky, Lexington, Kentucky 40506--0027}
\address[Sulem]{Department of Mathematics, University of Toronto, Toronto, Ontario M5S 2E4, Canada }
\thanks{P. Perry supported in part by a Simons Research and Travel Grant.}
\thanks {C. Sulem supported in part by NSERC Grant 46179-13}
\date{\today}
\begin{abstract}
We study the Derivative Nonlinear Schr\"odinger equation for  general initial conditions 
in weighted Sobolev spaces that can support bright solitons (but excluding spectral 
singularities).  We prove global well-posedness  and give a full description of  the long-
time behavior of the solutions in the form of a finite sum of  localized solitons and a 
dispersive component.  At leading order and in space-time
cones,  the solution has the form   of a multi-soliton whose parameters  are slightly 
modified  from their initial  values by  solitons-solitons and solitons-radiation
 interactions.  Our analysis  provides an explicit expression 
for the correction dispersive term.  
We use  the nonlinear steepest descent method  of Deift and Zhou \cite{DZ03}  
revisited by the $\overline{\partial}$-analysis of Dieng-McLaughlin \cite{DM08} and complemented by the recent 
work of Borghese-Jenkins-McLaughlin \cite{BJM16} on soliton resolution for the focusing nonlinear Schr\"{o}dinger equation. 
\end{abstract}

\maketitle

\tableofcontents 

%
%
\begingroup
\let\clearpage\relax

%
%

%
%
%

\newcommand{\newtext}[1]{{\color{blue}{#1}}}

%
%

\newcommand{\rarr}{\rightarrow}
\newcommand{\darr}{\downarrow}
\newcommand{\uarr}{\uparrow}

\newcommand{\sig}{\sigma_3}

\newcommand{\dotarg}{\, \cdot \, }

\newcommand{\eps}{\varepsilon}
\newcommand{\lam}{\lambda}
\newcommand{\Lam}{\Lambda}

\newcommand{\dee}{\partial}
\newcommand{\dbar}{\overline{\partial}}

\newcommand{\vect}[1]{\boldsymbol{\mathbf{#1}}}

\newcommand{\dint}{\displaystyle{\int}}

%
%

\newcommand{\resp}{resp.\@}
\newcommand{\ifff}{if and only if }
\newcommand{\ie}{i.e.}

%
%

\newcommand{\lp}{\left(}
\newcommand{\rp}{\right)}
\newcommand{\lb}{\left[}
\newcommand{\rb}{\right]}
\newcommand{\lw}{\left<}
\newcommand{\rw}{\right>}

\newcommand{\one}{\bm{1}}

%
%


\newcommand{\C}{\mathbb{C}}
\newcommand{\R}{\mathbb{R}}
\newcommand{\N}{\mathbb{N}}


\newcommand{\calB}{\mathcal{B}}
\newcommand{\calC}{\mathcal{C}}
\newcommand{\calD}{\mathcal{D}}
\newcommand{\calE}{\mathcal{E}}
\newcommand{\calF}{\mathcal{F}}
\newcommand{\calG}{\mathcal{G}}
\newcommand{\calI}{\mathcal{I}}
\newcommand{\calK}{\mathcal{K}}
\newcommand{\calL}{\mathcal{L}}
\newcommand{\calM}{\mathcal{M}}
\newcommand{\calN}{\mathcal{N}}
\newcommand{\calR}{\mathcal{R}}
\newcommand{\calS}{\mathcal{S}}
\newcommand{\calU}{\mathcal{U}}
\newcommand{\calZ}{\mathcal{Z}}


\newcommand{\ba}{\breve{a}}
\newcommand{\bb}{\breve{b}}

\newcommand{\br}{\breve{r}}
\newcommand{\bs}{\breve{s}}

\newcommand{\balpha}{\breve{\alpha}}
\newcommand{\bbeta}{\breve{\beta}}
\newcommand{\brho}{\breve{\rho}}
\newcommand{\bgamma}{\breve{\gamma}}

\newcommand{\bC}{\breve{C}}
\newcommand{\bN}{\breve{N}}


\newcommand{\bfe}{\mathbf{e}}
\newcommand{\bff}{\mathbf{f}}
\newcommand{\bfN}{\mathbf{N}}
\newcommand{\bfM}{\mathbf{M}}


\newcommand{\qbar}{\overline{q}}
\newcommand{\rbar}{\overline{r}}
\newcommand{\wbar}{\overline{w}}
\newcommand{\zbar}{\overline{z}}


\newcommand{\etabar}{\overline{\eta}}
\newcommand{\lambar}{\overline{\lambda}}
\newcommand{\lambdabar}{\overline{\lambda}}
\newcommand{\rhobar}{\overline{\rho}}
\newcommand{\btheta}{\overline{\vartheta}}
\newcommand{\zetabar}{\overline{\zeta}}


\newcommand{\hatphi}{\widehat{\phi}}

%
%
%

\newcommand{\bigo}[1]{\mathcal{O} \left( #1 \right)}
\newcommand{\littleo}[2][ ]{ {o}_{#1} \left( #2 \right) }

\newcommand{\bigO}[2][ ]{\mathcal{O}_{#1} \left( {#2} \right)}
\newcommand{\norm}[2][ ]{\left\| {#2} \right\|_{#1}}

%
%

\newcommand{\medcup}{{\mathsmaller{\bigcup}}}

%
%

\newcommand{\upmat}[1]
{
	\begin{pmatrix}	
	0	&	#1	\\
	0	&	0
	\end{pmatrix}
}
\newcommand{\lowmat}[1]
{
	\begin{pmatrix}
	0	&	0	\\
	#1	&	0
	\end{pmatrix}
}
\newcommand{\upunitmat}[1]
{
	\begin{pmatrix}
	1	&	#1	\\
	0	&	1
	\end{pmatrix}
}
\newcommand{\lowunitmat}[1]
{
	\begin{pmatrix}
	1	&	0	\\
	#1	&	1
	\end{pmatrix}
}

%
%

\newcommand{\La}{ {\mathcal{L} } }
\newcommand{\B}{ {\mathcal{B } } }

\newcommand{\qsol}{q_{\mathrm{sol}}}
\newcommand{\usol}{u_{\mathrm{sol}}}
\newcommand{\tqsol}{\widetilde{q}_{\mathrm{sol}}}

\newcommand{\poles}{\Lambda}
\newcommand{\indicator}{\chi_{_\poles}}
\newcommand{\poledist}{ {\mathrm{d}_{\poles} } }
\newcommand{\coeff}{\mathcal{C}}
\newcommand{\data}{\sigma_d}

\newcommand{\NN}{ {N} }
\newcommand{\nn}{ {n} }
\newcommand{\nk}[2][]{ {{n}_{#1}^{(#2)}} }

\newcommand{\mk}[1]{ m^{(#1)} }
\newcommand{\vk}[1]{ {v^{(#1)}} }
\newcommand{\Sk}[1]{ {\Sigma^{(#1)} } }
\newcommand{\Wk}[2][]{ {W_{#1}^{(#2)} } }

\newcommand{\sgnt}{\eta}
\newcommand{\pospoles}{\Delta_{\xi,\sgnt}^+}
\newcommand{\negpoles}{\Delta_{\xi,\sgnt}^-}
\newcommand{\posnegpoles}{\Delta_{\xi,\sgnt}^\pm}
\newcommand{\posint}{ I^{+}_{\xi,\sgnt}} 
\newcommand{\negint}{ I^{-}_{\xi,\sgnt}} 						

\newcommand{\Nrhp}[1][]{ {{\mathcal{N}}_{#1}^{\mathsc{rhp}} } }
\newcommand{\Nsol}[1][]{ {{\mathcal{N}}_{\!\!#1}^{\mathrm{sol}} } }
\newcommand{\NsolAlt}[1][]{ {{\mathcal{N}}_{#1}^{\negpoles} } }
\newcommand{\NPC}[1][]{ {{\mathcal{N}}_{#1}^{\mathsc{pc}} } }

\newcommand{\Uxi}{ {\mathcal{U}_\xi} }

\newcommand{\error}{ {{\mathcal{E}}} }

\newcommand{\Ske}[1][]{ { \Sigma^{(\error)}_{#1} }  }
\newcommand{\vke}{ \vk{\error} }

\newcommand{\mout}{M^{(\textrm{out})}}
\newcommand{\mxi}{M^{(\xi)}}
\newcommand{\mPC}{M^{(\textsc{pc})}}

\newcommand{\sol}{\mathrm{sol}}

%
%

\newcommand{\spacing}[2]
{ 
{	
	\mathrlap{#1}
	\hphantom{#2} 
}
}

%
%

\newcommand{\pd}[3][ ]{\frac{\partial^{#1} #2}{\partial #3^{#1} } }
\newcommand{\od}[3][ ]{\frac{\mathrm{d}^{#1} #2}{\mathrm{d} #3^{#1} } }
\newcommand{\vd}[3][ ]{\frac{ \delta^{#1} #2}{ \delta #3^{#1}} } 

%
%

\renewcommand{\Re}{\mathop{ \mathrm{Re}}\nolimits}
\renewcommand{\Im}{\mathop{ \mathrm{Im} }\nolimits}
\newcommand{\im}{\mathrm{i} }

%
%

\newcommand{\triu}[2][1]{\begin{pmatrix} #1 & #2 \\ 0 & #1 \end{pmatrix}}
\newcommand{\tril}[2][1]{\begin{pmatrix} #1 & 0 \\ #2 & #1 \end{pmatrix}}
\newcommand{\diag}[2]{\begin{pmatrix} #1 & 0 \\ 0 & #2 \end{pmatrix}}
\newcommand{\offdiag}[2]{\begin{pmatrix} 0 & #1 \\ #2 & 0 \end{pmatrix}}

\newcommand{\striu}[2][1]{\begin{psmallmatrix} #1 & #2 \\ 0 & #1 \end{psmallmatrix}}
\newcommand{\stril}[2][1]{\begin{psmallmatrix} #1 & 0 \\ #2 & #1 \end{psmallmatrix}}
\newcommand{\sdiag}[2]{\begin{psmallmatrix*}[c] #1 & 0 \\ 0 & #2 \end{psmallmatrix*}}
\newcommand{\soffdiag}[2]{\begin{psmallmatrix*}[r] 0 & #1 \\ #2 & 0 \end{psmallmatrix*}}
\newcommand{\stwomat}[4]{\begin{psmallmatrix*} #1 & #2 \\ #3 & #4 \end{psmallmatrix*}}

%
%

\newcommand{\mathsc}[1]{ {\text{\normalfont\scshape#1}} }

%
%

\newcommand{\oldnorm}[2][]
{
	 {\left\|  #2 \right\|_{#1} } 
}

\newcommand{\twomat}[4]
{
\begin{pmatrix}
	#1	&	#2	\\
	#3	&	#4
\end{pmatrix}
}

\newcommand{\Twomat}[4]
{
	\left(
		\begin{array}{ccc}
			#1	&&	#2	\\
			\\
			#3	 &&	#4
		\end{array}
	\right)
}

\newcommand{\twovec}[2]
{
	\left(
		\begin{array}{c}
			#1		\\
			#2
		\end{array}
	\right)
}

\newcommand{\Twovec}[2]
{
	\left(
		\begin{array}{c}
			#1		\\
			\\
			#2
		\end{array}
	\right)
}

%
%

\tikzset{->-/.style={decoration={
  markings,
  mark=at position .55 with {\arrow{triangle 45}} },postaction={decorate}}
}

%
%
\newcommand{\FigPhaseA}{
\begin{tikzpicture}[scale=0.7]
\path[fill=pink,opacity=0.5]	(0,0) rectangle(4,4);
\path[fill=pink,opacity=0.5]    (-4,-4) rectangle(0,0);
\path[fill=cyan,opacity=0.5]		(-4,0) rectangle (0,4);
\path[fill=cyan,opacity=0.5]		(0,-4) rectangle (4,0);
\draw[fill] (0,0) circle[radius=0.075];
\draw[thick,->,>=stealth] 	(0,0) -- (2,0);
\draw	[thick]    	(2,0) -- (4,0);
\draw[thick,->,>=stealth]	(-4,0) -- (-2,0);
\draw[thick]		(-2,0) -- (0,0);
\draw[thin,dashed]	(0,4) -- (0,-4);
\node at (2,2) 	{$e^{2it\theta} \gg 1 $};
\node at (-2,-2)  {$e^{2it\theta} \gg 1 $};
\node at (-2,2)	{$e^{2it\theta} \ll 1 $};
\node at (2,-2)	{$e^{2it\theta} \ll 1 $};
\node[above] at (0,4)	{$ \sgnt = +1 $};
\node[below] at (2,-0.05)		{$\posint$};
\node[below] at (-2,-0.05)		{$\negint$};
\node[below right] at (0,0)		{$\xi$};
\end{tikzpicture}
}

%
%

\newcommand{\FigPhaseB}{
\begin{tikzpicture}[scale=0.7]
\path[fill=cyan,opacity=0.5]	(0,0) rectangle(4,4);
\path[fill=cyan,opacity=0.5]    (-4,-4) rectangle(0,0);
\path[fill=pink,opacity=0.5]		(-4,0) rectangle (0,4);
\path[fill=pink,opacity=0.5]		(0,-4) rectangle (4,0);
\draw[fill] (0,0) circle[radius=0.075];
\draw[thick,->,>=stealth] 	(0,0) -- (2,0);
\draw	[thick]    	(2,0) -- (4,0);
\draw[thick,->,>=stealth]	(-4,0) -- (-2,0);
\draw[thick]		(-2,0) -- (0,0);
\draw[thin,dashed]	(0,4) -- (0,-4);
\node at (2,2) 	{$e^{2it\theta} \ll 1 $};
\node at (-2,-2)  {$e^{2it\theta} \ll 1 $};
\node at (-2,2)	{$e^{2it\theta} \gg 1 $};
\node at (2,-2)	{$e^{2it\theta} \gg 1 $};
\node[above] at (0,4)	{$ \sgnt = -1 $};
\node[below] at (-2,-0.05)		{$\posint$};
\node[below] at (2,-0.05)		{$\negint$};
\node[below right] at (0,0)		{$\xi$};
\end{tikzpicture}
}

%
%

\newcommand{\posDBARcontours}{
\resizebox{0.45\textwidth}{!}{
\begin{tikzpicture}
\path [fill=gray!20] (0,0) -- (-4,4) -- (-4.5,4) -- (-4.5,-4) -- (-4,-4) -- (0,0);
\path [fill=gray!20] (0,0) -- (4,4) -- (4.5,4) -- (4.5,-4) -- (4,-4) -- (0,0);
%
\draw [help lines] (-4.5,0) -- (4.5,0);
\draw [thick][->-] (-4,4) -- (0,0);
\draw [thick][->-] (0,0) -- (4,4);
\draw [thick][->-] (0,0) -- (4,-4);
%
\foreach \pos in { 
		(-1.5,1.4), (-1.5,-1.4), (-3.5,2.5), 
		(-3.5,-2.5), (3,2), (3,-2)}
\draw[color=white, fill=white] \pos circle [radius=.2];
%
\node[left] at (3,3) {$\Sigma_{1}\,$};
\node[right] at (-3,3) {$\Sigma_{2}$};
\node[right] at (-3,-3) {$\,\Sigma_{3}$};
\node[left] at (3,-3) {$\Sigma_{4}\,$};
\draw[fill] (0,0) circle [radius=0.025];
\node[below] at (0,0) {$\xi$};
\node[above] at (0,4) {$\sgnt = +1$};
%
\node at (1,.4) {$\Omega_{1}$};
\node at (0,1.08) {$\Omega_{2}$};
\node at (-1,.4) {$\Omega_{3}$};
\node at (-1,-.4) {$\Omega_{4}$};
\node at (0,-1.08) {$\Omega_{5}$};
\node at (1,-.4) {$\Omega_{6}$};
%
\node[left] at (4.5,0.8) {$\tril{-R_1 e^{-2it \theta}} $ };
\node[right] at (-4.5,0.8) {$\triu{ -R_3 e^{2it \theta}} $ };
\node[right] at (-4.5,-0.8) {$\tril{ R_4 e^{-2it \theta}} $ };
\node[left] at (4.5,-0.8) {$\triu{ R_6 e^{2it \theta }} $};
\node at (0,2.5) {$\diag{1}{1}$};
\node at (0,-2.5) {$\diag{1}{1}$};
\end{tikzpicture}
}
}

%
%

\newcommand{\negDBARcontours}{
\resizebox{0.45\textwidth}{!}{
\begin{tikzpicture}
\path [fill=gray!20] (0,0) -- (-4,4) -- (-4.5,4) -- (-4.5,-4) -- (-4,-4) -- (0,0);
\path [fill=gray!20] (0,0) -- (4,4) -- (4.5,4) -- (4.5,-4) -- (4,-4) -- (0,0);
%
\draw [help lines] (-4.5,0) -- (4.5,0);
\draw [thick][->-] (-4,4) -- (0,0) ;
\draw [thick][->-](-4,-4)--(0,0);
\draw [thick][->-] (0,0) -- (4,4);
\draw [thick][->-] (0,0) -- (4,-4);
%
\foreach \pos in { 
		(-1.5,1.4), (-1.5,-1.4), (-3.5,2.5), 
		(-3.5,-2.5), (3,2), (3,-2)}
\draw[color=white, fill=white] \pos circle [radius=.2];
%
\node[left] at (3,3) {$\Sigma_{2}\,$};
\node[right] at (-3,3) {$\Sigma_{1}$};
\node[right] at (-3,-3) {$\,\Sigma_{4}$};
\node[left] at (3,-3) {$\Sigma_{3}\,$};
\draw[fill] (0,0) circle [radius=0.025];
\node[below] at (0,0) {$\xi$};
\node[above] at (0,4) {$\sgnt = -1$};
%
\node at (1,.4) {$\Omega_{3}$};
\node at (0,1.08) {$\Omega_{2}$};
\node at (-1,.4) {$\Omega_{1}$};
\node at (-1,-.4) {$\Omega_{6}$};
\node at (0,-1.08) {$\Omega_{5}$};
\node at (1,-.4) {$\Omega_{4}$};
%
\node[left] at (4.5,0.8) {$\triu {-R_3 e^{2it \theta}} $ };
\node[right] at (-4.5,0.8) {$\tril{ -R_1 e^{-2it \theta}} $ };
\node[right] at (-4.5,-0.8) {$\triu{ R_6 e^{2it \theta}} $ };
\node[left] at (4.5,-0.8) {$\tril{ R_4 e^{-2it \theta }} $};
\node at (0,2.5) {$\diag{1}{1}$};
\node at (0,-2.5) {$\diag{1}{1}$};
\end{tikzpicture}
}
}

%
%

\newcommand{\FigPCjumps}{
\resizebox{0.45\textwidth}{!}{
\begin{tikzpicture}
\draw [help lines] (-4,0) -- (4,0);

\draw[->-] (-3,3) -- (0,0);
\draw[->-] (-3,-3) -- (0,0);
\draw[->-] (0,0) -- (3,3);
\draw[->-] (0,0) -- (3,-3);

\node[left] at (2,2) {$\Sigma_{1}\,$};
\node[right] at (-2,2) {$\Sigma_{2}$};
\node[right] at (-2,-2) {$\,\Sigma_{3}$};
\node[left] at (2,-2) {$\Sigma_{4}\,$};
\draw[fill] (0,0) circle [radius=0.025];
\node[below] at (0,0) {$\xi$};
\node[above] at (0,3) {$\sgnt = 1$};
%
\node at (1,.4) {$\Omega_{1}$};
\node at (0,1.08) {$\Omega_{2}$};
\node at (-1,.4) {$\Omega_{3}$};
\node at (-1,-.4) {$\Omega_{4}$};
\node at (0,-1.08) {$\Omega_{5}$};
\node at (1,-.4) {$\Omega_{6}$};
%
\node[right] at (2.0,1.5) {$\tril[1] { s_\xi }$}; 
\node[right] at (2.0,-1.5) {$\triu[1]{ r_\xi } $}; 
\node[left] at (-2.0,1.5) {$\triu[1]{ \dfrac{r_\xi}{1+r_\xi s_\xi} } $}; 
\node[left] at (-2.0,-1.5) {$\tril[1] { \dfrac{s_\xi}{1+r_\xi s_\xi} } $}; 
%
\end{tikzpicture}
}}

%
%
\newcommand{\FigPCjumpsB}{
\resizebox{0.45\textwidth}{!}{
\begin{tikzpicture}
\draw [help lines] (-4,0) -- (4,0);

\draw[->-] (-3,3) -- (0,0)  ;
\draw[->-] (-3,-3) -- (0,0);
\draw[->-] (0,0) -- (3,3);
\draw[->-] (0,0) -- (3,-3);

\node[left] at (2,2) {$\Sigma_{2}\,$};
\node[right] at (-2,2) {$\Sigma_{1}$};
\node[right] at (-2,-2) {$\,\Sigma_{4}$};
\node[left] at (2,-2) {$\Sigma_{3}\,$};
\draw[fill] (0,0) circle [radius=0.025];
\node[below] at (0,0) {$\xi$};
\node[above] at (0,3) {$\sgnt = -1$};
%
\node at (1,.4) {$\Omega_{3}$};
\node at (0,1.08) {$\Omega_{2}$};
\node at (-1,.4) {$\Omega_{1}$};
\node at (-1,-.4) {$\Omega_{6}$};
\node at (0,-1.08) {$\Omega_{5}$};
\node at (1,-.4) {$\Omega_{4}$};
%
\node[right] at (2.0,1.5) {$\triu[1]{ \dfrac{r_\xi}{1+r_\xi s_\xi} }$}; 
\node[right] at (2.0,-1.5) {$\tril[1] { \dfrac{s_\xi}{1+r_\xi s_\xi}} $}; 
\node[left] at (-2.0,1.5) {$\tril[1] { s_\xi }$}; 
\node[left] at (-2.0,-1.5) {$\triu[1]{ r_\xi }$}; 
%
\end{tikzpicture}
}}

%
%

%
%

\newcommand{\solfig}{
\hspace*{\stretch{1}}
\begin{tikzpicture}[scale=0.7]						
\coordinate (shift) at (-40:1.5);
\coordinate (x1) at (0.7,0);
\coordinate (x2) at (2.1,0);
\coordinate (topleft) at ($(x1)+(95:6)$);
\coordinate (bottomleft) at ($(x1)+(-120:6)$);
\coordinate (topright) at ($(x2)+(60:6)$);
\coordinate (bottomright) at ($(x2)+(-85:6)$);
\coordinate (C) at ($ (x1)!.5!(x2)+(77.5:.5)$);

\begin{scope}
  \clip (-4,-4) rectangle (4,4);
  \path[name path=top] (-4,4) -- (4,4);
  \path[name path=bottom] (-4,-4) -- (4,-4);
  \path[name path=right] (4,4) -- (4,-4);  	
  \path [fill=gray!15] (x1) -- (topleft) -- (topright) -- 
    (x2) -- (bottomright) -- (bottomleft) -- (x1);
  \draw[thick,name path=leftcone] (bottomleft) -- (x1) -- (topleft);
  \draw[thick,name path=rightcone] (bottomright) -- (x2) -- (topright);
    \draw [help lines, name path=axis][->] (-4,0) -- (3.3,0)
    	node[label=0:$x$] {};
    \draw [help lines][->] (-0.5,-4) -- (-0.5,3.0)
    	node[label=90:$t$] {};
    \path [name intersections={of= leftcone and top, by=TL}];
    \path [name intersections={of= rightcone and right, by=TR}];
    \path [name intersections={of= leftcone and bottom, by=BL}];
    \path [name intersections={of= rightcone and bottom, by=BR}];
\end{scope}
    \node [fill=black, inner sep=1.5pt, circle, label=-70:$x_2$] at (x2) {};
    \node [fill=black, inner sep=1.5pt, circle, label=-177:$x_1$] at (x1) {};
	\node [label=180:${x-v_1 t = x_1}$] at (TL) {};
	\node [label=90:${x-v_2 t = x_2}$] at (TR) {};
    \node [label=170:${x-v_2 t = x_1}$] at (BL) {};
    \node [label=10:${x-v_1 t = x_2}$] at (BR) {};
    \node at (C) {$\mathcal{S}$};
\end{tikzpicture}
\hspace*{\stretch{1}}
\begin{tikzpicture}[scale=0.7]			
	\coordinate (v1) at (0.75,0);
	\coordinate (v2) at (-2,0);
	\path [fill=gray!15] ($(v1)+(0,-1)$) -- ($(v1)+(0,7)$) 
	--($(v2)+(0,7)$) -- ($(v2)+(0,-1)$) --  ($(v1)+(0,-1)$);
	\draw[thick] ($(v1)+(0,-.4)$) -- ($(v1)+(0,7)$);
	\draw[thick] ($(v2)+(0,-.4)$) -- ($(v2)+(0,7)$);
	\draw[help lines][->] (-4,0) -- (4,0) 
		node[label=0:$\Re \lam$] {};
	\node [label=${-v_1/4}$] at (.75,-1.2) {};
	\node [label=${-v_2/4}$] at (-2,-1.2) {};
\node [fill=black, inner sep = 1pt, circle, label=-90:$\lambda_1$] at (3,6) 			{};
\node [fill=black, inner sep = 1pt, circle, label=-90:$\lambda_2$] at (1.5,5) 		{};
\node [fill=black, inner sep = 1pt, circle, label=-90:$\lambda_3$] at (0,5.5) 		{};
\node [fill=black, inner sep = 1pt, circle, label=-90:$\lambda_5$] at (-3.3,4.4) 	{};
\node [fill=black, inner sep = 1pt, circle, label=-90:$\lambda_8$] at (-0.8,0.8) 	{};
\node [fill=black, inner sep = 1pt, circle, label=-90:$\lambda_4$] at (-2.5,6.5) 	{};
\node [fill=black, inner sep = 1pt, circle, label=-90:$\lambda_6$] at (-1.6,3) 		{};
\node [fill=black, inner sep = 1pt, circle, label=-90:$\lambda_9$] at (2,1) 			{};
\node [fill=black, inner sep = 1pt, circle, label=-90:$\lambda_7$] at (-3.8,1.1) 	{};
\node [fill=black, inner sep = 1pt, circle, label=-90:$\lambda_{10}$] at (3.8,2.3) 	{};
\end{tikzpicture}
\hspace*{\stretch{1}}
}

%
%
 
\newcommand{\NRHP}{N^{\mathrm{RHP}}}
\newcommand{\diagmat}[2]
{
	\begin{pmatrix}
		#1	&	0	\\
		0	&	#2
	\end{pmatrix}
}
\newcommand{\sigeps}{\begin{pmatrix} 0 & 1 \\ \eps & 0 \end{pmatrix}}
\section{Introduction}

In this paper will prove global well-posedness and soliton resolution for the derivative nonlinear Schr\"{o}dinger equation
\begin{gather}
\label{DNLS1}
i u_t + u_{xx} - i \varepsilon (|u|^2 u)_x =0\\
\label{data1}
u(x,t=0) = u_0
\end{gather}
for initial data in a dense and open subset  of the function space $H^{2,2}(\R)$ which contains $0$ and also initial data of arbitrarily large $L^2$ norm. As we explain below, this set is spectrally determined and includes initial data with at most finitely many soliton components and no algebraic solitons.  Here
$\eps = \pm 1$ and 
$H^{2,2}(\R)$ is the completion of $C_0^\infty(\R)$ in the norm
$$ \norm[H^{2,2}]{u} = \left( \norm[L^2]{(1+(\dotarg))^2 u(\dotarg)}^2 + \norm[L^2]{u''}^2 \right)^{1/2}. $$

It is known that the Cauchy problem is locally well-posed in $H^{1/2}(\R)$ (see, for example, Takaoka \cite{Takaoka99}) and globally well-posed for small data \cite{HO92,Wu14}. Precisely, for any $u_0\in H^{1/2}(\R)$ such that $\|u_0\|_{L^2} < \sqrt{4\pi}$, 
there exists a unique solution $u\in C(\R, H^{1/2}(\R))$ \cite{GuoWu17}.  A key structural property of DNLS  discovered by Kaup and Newell \cite{KN78} is that it is integrable by inverse scattering: that is, there is a linear spectral problem with $u(x,t)$ as potential whose spectral data (consisting of a reflection coefficient, describing the continuous spectrum of 
the linear problem,  together with eigenvalues and norming constants, describing the discrete spectrum of the linear problem) evolve linearly under the flow. This linearizing transformation, together with an inverse defined via a Riemann-Hilbert problem defined by the spectral data, gives a method to integrate the equation explicitly. 

In recent works \cite{LPS15,LPS16} (referred to as Papers I and II), we used the inverse scattering tools to prove
global existence and  long-time behavior of solutions to DNLS
for initial conditions in the weighted Sobolev space $H^{2,2}(\R)$,  restricting  initial conditions to those that  do not support solitons.  We proved that the amplitude of the solution
 decays like the solution of the linear problem, namely $|t|^{-1/2}$ as $|t| \to \infty$, and the phase behaves like the phase of the free dynamics modified by  a logarithmic correction. These results are analogous to those of Deift and Zhou \cite{DZ03} for the defocussing NLS, and improve earlier results of Kitaev-Vartanian \cite{KV97} on DNLS. 
The asymptotic state is fully described in  terms of the scattering data associated to  the initial condition.  
Using a different approach, Pelinovsky and Shimabukuro  proved global existence  to the DNLS equation for initial conditions that do not support solitons in \cite{PS17}
and recently complemented  their study allowing a finite number of discrete eigenvalues \cite{SSP17}.

Here we will prove global well-posedness and solution resolution for the DNLS equation with  initial data in an open and dense subset of $H^{2,2}(\R)$ (excluding spectral singularities, a notion that we will define precisely  later). Soliton resolution refers to the property that the solution decomposes into the sum of a finite number of separated solitons and a radiative part as $|t| \to \infty$.  The solitons parameters 
are slightly modulated, due to the soliton-soliton and soliton-dispersion interactions. We fully describe the dispersive part which contains two components,  one coming from the continuous  spectrum and another one from the interaction of the discrete and continuous spectrum. This decomposition is a central feature
in nonlinear wave dynamics and has been the object of many theoretical and numerical studies. It has been established in many {\em{perturbative}} contexts,   that is when the initial condition is
close to a soliton or a multi-soliton. In non-perturbative cases,  this property was proved rigorously for KdV  \cite{ES83},  mKdV \cite{S86} and  for the focusing NLS equation
\cite{BJM16} using the inverse scattering approach.
The last result has been conjectured for a long time \cite{ZS72} but rigorously proved only recently.

The soliton resolution conjecture is at the heart of current studies in nonlinear waves and extends to solutions that blow up in finite time. 
 In the context of  non\newtext{-}integrable equations, Tao \cite{Tao08} considered the NLS equation with potential in high dimension ($d\ge 11$) and proved the existence of a global  attractor, assuming radial symmetry.
A recent work by Duykaerts, Jia, Kenig and Merle \cite{DJKM16}  concerns  the focusing energy critical wave equation for which they prove that any bounded solution  asymptotically behaves like  a finite sum of modulated solitons, a regular component in the finite time blow up case or a free radiation in the global case, plus a residue term that vanishes asymptotically in the energy space as time approaches the maximal time of existence  (see  also \cite{DKM13,DKM15} for other cases,   radial and non-radial, in various dimensions).

\subsection{Global well-posedness}

Equation \eqref{DNLS1} is gauge-equivalent to the equation
\begin{gather}
	\label{DNLS2}
	iq_t + q_{xx} + i \eps q^2 \bar q_x + \frac{1}{2} |q|^4 q = 0, \\
	\label{data}
	q(x,t=0) = q_0(x)
\end{gather}
via the gauge transformation
\begin{equation}
\label{G}
\calG (u)(x) = \exp\left(- i\eps \int_{-\infty}^x |u(y)|^2 \, dy \right) u(x).
\end{equation}
This nonlinear, invertible mapping is an isometry of $L^2(\R)$, maps soliton solutions to soliton solutions, and maps dense open sets to dense open sets in weighted Sobolev spaces. Because of this, global well-posedness for \eqref{DNLS2} on an open and dense set $U$ in $H^{2,2}(\R)$ containing data of arbitrary $L^2$-norm implies 
global well-posedness of \eqref{DNLS1} on a subset $\calG^{-1}(U)$ of $H^{2,2}(\R)$ with the same properties. For this reason, we will prove the global well-posedness result for \eqref{DNLS2}.

Our analysis exploits the discovery of Kaup and Newell  that \eqref{DNLS2} generates an isospectral flow for the 
linear spectral problem 
\begin{equation}
\label{LS}
\Psi_x =	 -i\zeta^2 \sigma_3 \Psi + \zeta Q \Psi + P \Psi
\end{equation}
where
$$ 
\sigma_3 = \diagmat{1}{-1}, 
\quad
Q(x) = 
\begin{pmatrix}
	0	&	q(x)	\\
	\eps \overline{q(x)}	& 	0
\end{pmatrix},
\quad
P(x) = \frac{i\eps}{2} 
\begin{pmatrix}
-|q(x)|^2 & 	0	\\
0	&	|q(x)|^2
\end{pmatrix},
$$
and the unknown $\psi$ is a $2 \times 2$ matrix-valued function of $x$. This spectral problem defines a map $\calR$ from $q \in H^{2,2}(\R)$ to spectral data that we will describe, and has an inverse $\calI$ defined by a Riemann-Hilbert problem which recovers the potential $q(x)$. Moreover, the spectral data for a solution $q(t) = q(x,t)$ of \eqref{DNLS2} obey a linear law of evolution. Thus the solution operator $\calM$ for the Cauchy problem \eqref{DNLS2} is given by
\begin{equation}
\label{sol.op}
\calM (q_0,t) = \left( \calI \circ\Phi_t \circ \calR \right) q_0 
\end{equation}
where $\Phi_t$ is the linear evolution on spectral data. To state our results we first describe the set $U$ and the maps $\calR$,  $\Phi_t$, and $\calI$ in greater detail.

The direct scattering map $\calR$ maps $q \in H^{2,2}(\R)$ into spectral data defined by special solutions of \eqref{LS}. To describe the data, first
note that, if $q=0$, all solutions of \eqref{LS} are matrix multiples of $e^{-i\zeta^2 x \sigma_3}$ and
therefore bounded provided $\zeta \in \Sigma$, where
$$ \Sigma=\left\{ \zeta \in \C: \imag \zeta^2 = 0\right\}. $$
It is natural to set $\Psi(x,\zeta) = M(x,\zeta) e^{-i\zeta^2 x \sigma_3}$ and look for
bounded solutions of 
\begin{equation}
\label{LS.m}
\frac{d}{dx} M(x,\zeta) = -i\zeta^2 \ad \sigma_3 (M)+ \zeta Q(x) M + P(x)M, \quad
\ad \sigma_3 (A) \coloneqq [\sigma_3,A].
\end{equation}
If $q \in L^1(\R) \cap L^2(\R)$, a perturbation argument shows that \eqref{LS.m}  admits
bounded solutions for  $\zeta \in \Sigma$. Indeed, there exist 
unique solutions $M^\pm(x,\zeta)$ of \eqref{LS.m} with $\lim_{x \rarr \infty} M^\pm(x,\zeta) = \begin{pmatrix} 1 & 0 \\ 0 & 1 \end{pmatrix}$. The functions 
$$\Psi^\pm(x,\zeta) = M^\pm(x,\zeta) e^{-i\zeta^2 x \sigma_3}$$ are the \emph{Jost solutions} for \eqref{LS}.

The Jost functions define data associated with the continuous spectrum of the problem \eqref{LS} in the following way.  If $\Psi$ solves \eqref{LS} then $\det \Psi(x,\zeta)$ is independent of $x$, and,  if $\Psi_1$ and $\Psi_2$ solve \eqref{LS} for given $\zeta$,
then $\Psi_2 = \Psi_1 A$ for a constant matrix $A$. 
Thus, for $\zeta \in \Sigma$, the Jost solutions obey 
\begin{equation}
\label{trans}
\Psi^+(x,\zeta) = \Psi^-(x,\zeta) T(\zeta), \quad
T(\zeta) = 
\begin{pmatrix}
a(\zeta)	&	\bb(\zeta)	\\
b(\zeta)	&	\ba(\zeta)
\end{pmatrix}.
\end{equation}
Since $\det \Psi^\pm = 1$ it follows that
\begin{equation}
\label{det.ab}
a(\zeta) \ba(\zeta) - b(\zeta) \bb(\zeta) = 1.
\end{equation}
The functions $a$, $\ba$, $b$ and $\bb$ obey the symmetries
\begin{equation}
\label{sym.ab}
a(-\zeta) 	= a(\zeta), 		\quad 	\ba(\zeta) 	= \overline{a(\zetabar)}, \quad
b(-\zeta)	= - b(\zeta) 	\quad	\bb(\zeta) 	= \eps \overline{b(\zetabar)}.
\end{equation}
as follows from \eqref{trans}, the unicity of $\Psi^\pm$,  and the fact that the maps
\begin{equation}
\label{maps.Psi}
\Psi(x,\zeta) \mapsto \sigma_3 \Psi(x,-\zeta) \sigma_3, \quad
\Psi(x,\zeta) \mapsto \sigma_\eps \overline{\Psi(x,\zetabar)} \sigma_\eps^{-1}
\end{equation}
preserve the solution space of \eqref{LS}, where
\begin{equation}
\label{sigeps.def}
\sigma_\eps = \sigeps.
\end{equation}
The functions $a$ and $\ba$ are bounded continuous functions
with
\begin{equation}
\label{asy.a}
\lim_{|\zeta| \rarr \infty} a(\zeta) = \lim_{|\zeta| \rarr \infty} \ba(\zeta) = 1.
\end{equation}
Write $\Psi^+ = (\Psi_1^+, \Psi_2^+)$ where $\Psi_1^+$ and $\Psi_2^+$ are column vectors,
and similarly for $\Psi^-$. Let 
$$ \Omega^\pm = \{z \in \C: \pm \imag z^2 > 0 \}.$$
The columns $\Psi_1^+$ and $\Psi_2^-$ admit analytic continuations to
$\Omega^+$, while the columns $\Psi_1^-$ and $\Psi_2^+$ admit analytic continuations
to $\Omega^-$.  From the formulas
\begin{equation}
\label{a.wronski}
 \ba(\zeta) = 
\begin{vmatrix}
\Psi_{11}^+(x,\zeta)	&	\Psi_{12}^-(x,\zeta)	\\
\Psi_{21}^+(x,\zeta)	&	\Psi_{22}^-(x,\zeta)
\end{vmatrix},
\quad
a(\zeta)=
\begin{vmatrix}
\Psi_{11}^-(x,\zeta)	&	\Psi_{12}^+(x,\zeta)	\\
\Psi_{21}^-(x,\zeta)	&	\Psi_{22}^+(x,\zeta)
\end{vmatrix}.
\end{equation}
(which are easy consequences of \eqref{trans}), it follows that the  functions $\ba$ and $a$ admit analytic continuations respectively to $\Omega^+$ and $\Omega^-$. Zeros of $\ba$ and $a$ correspond to soliton components in the initial data $q$.
More precisely, zeros of $\ba$ and $a$ on $\Sigma$ are spectral singularities and correspond to algebraic solitons, while zeros of $\ba$ in $\Omega^+$ and of $a$ in $\Omega^-$ correspond to bright solitons. 

Owing to \eqref{sym.ab},
the zeros of $\ba$ and $a$ come in ``quartets'' $(\zeta, - \zeta, \zetabar, -\zetabar)$. Thus if $\Omega^{++} = \{ z \in \Omega^+: \imag \zeta > 0\}$, the set 
$$\calZ^{++} = \{ \zeta \in \Omega^{++}: \ba(\zeta) = 0 \}$$ 
uniquely determines the zeros of $a$ and $\ba$ in $\Omega^+ \cup \Omega^-$. We denote
$$ \calZ = \medcup_{\, \zeta \in \calZ^{++}} \{ \zeta, -\zeta, \zetabar, -\zetabar\}.$$
If $\ba$ has a simple zero at $\zeta$,
the \emph{norming constant}  $c_\zeta$ is given by
\begin{equation}
\label{czeta}
c_\zeta = b_\zeta/\ba'(\zeta)
\end{equation} 
where 
\begin{equation}
\label{czeta.bdef}
 \begin{bmatrix}
	\Psi_{11}^-(x,\zeta)	\\
	\Psi_{21}^-(x,\zeta)
\end{bmatrix}
=
b_\zeta
\begin{bmatrix}
	\Psi_{12}^+(x,\zeta)	\\
	\Psi_{22}^+(x,\zeta)
\end{bmatrix}.
\end{equation}

\begin{definition}
\label{def:U}
We denote by $U$ the set of $q \in H^{2,2}(\R)$ so that $\ba$ has at most finitely many simple zeros in $\Omega^{++}$ and no zeros on $\Sigma$. We denote by $U_N$ the subset of $U$ consisting of potentials so that $\ba$ has exactly $N$ simple zeros. 
\end{definition}

The set $U$ is a disjoint union of open sets $U_N$ where $N=0,1,2,\ldots$ counts the number of zeros of $\ba$
in $\Omega^{++}$. 
The set $U_0$ contains a neighborhood of $0$ since, for the zero potential, $T(\zeta)$ is the identity matrix, and $T$
is a continuous function of $q \in L^1(\R) \cap L^2(\R)$. Excluded from $U$ are potentials with spectral singularities or infinitely many zeros of $\ba$. By mimicking a construction of Zhou (see \cite[Example 3.3.16]{Zhou89a}), it is easy to see that $H^{2,2}(\R)\setminus U$ contains potentials $q$ belonging to $\calS(\R)$.  The following subsets of $U_N$ will play an important role in describing continuity properties of the direct scattering map.

\begin{definition}
\label{def:U.bounded}
We will call a subset $U'$ of $U_N$ \emph{bounded} if
there is are strictly positive constants $c$ and  $C$ so that $\norm[H^{2,2}]{q} \leq C$ ,
$\imag \zeta_i^2 > c$ for all zeros $\zeta$ of $\ba$  and $\inf_{\zeta \in \Sigma} |\ba(\zeta)| \geq c$ for all $q \in U'$.
\end{definition}

We now describe the spectral data $\left(\rho, \{ \lam_j, C_j \}_{j=1}^N \right)$ associated to $q \in U_N$. By the symmetries \eqref{sym.ab}, the function
\begin{equation}
\label{ab->rho}
\rho(\lam) = \zeta^{-1}  \bb(\zeta)/a(\zeta), \quad \lam = \zeta^2
\end{equation}
is a well-defined function on $\R$, and obeys the nonlinear constraint
\begin{equation}
\label{rho.con}
1- \eps \lam |\rho(\lam)|^2 > 0.
\end{equation}
as follows from \eqref{det.ab} and the boundedness of $a$ and $\ba$.  We denote by $S$ the set of all $\rho \in H^{2,2}(\R)$ obeying
\eqref{rho.con}. 
The remaining data $\{ \lam_j, C_j \}$ are associated with eigenvalues of the problem \eqref{LS}, i.e., 
zeros of $\ba$. They are the image of $\{ \zeta_j, c_j \}$ under the quadratic transformation
\begin{equation}
\label{zeta->lambda}
\lam_j = \zeta_j^2, \quad C_j = 2c_j, \quad 1 \leq j \leq N.
\end{equation}
Motivated by these observations, we define sets $V$ and $V_N$, and the direct scattering map $\calR$,  as follows.

\begin{definition}
\label{def:V} We denote by $V$ the 
the disjoint union
$$ V = \medcup_{N=0}^\infty \, V_N$$ 
where $ V_0 = S $ and, for $N \geq 1$,
$$ V_N = S \times (\C^+ \times \C^\times)^N. $$
\end{definition}

\begin{definition}
\label{def:R}
The \emph{direct scattering map} is the map
\begin{align*}
 \calR :  U_N  	&\rarr   		S \times \left(  \C^+ \times \C^\times \right)^N\\
q					&\mapsto  \left( \rho, \{ \lam_j,  C_j \}_{j=1}^N \right) 
\end{align*}
for each $N$.  
\end{definition}

Let 
\begin{equation}
\label{distances}
d_\Lambda = \frac{1}{2} \inf_{\lam \neq \mu \in \Lambda} |\lam-\mu|.
\end{equation}
Note that, since $\Lambda$ is invariant under complex conjugation, $|\Im \lam_j| \geq d_\Lambda$ for each $j$.

\begin{definition}
\label{def:V.bounded}
We call a subset $V'$ of $V_N$ \emph{bounded} if there is a constant $C$ with
$$\norm[H^{2,2}]{\rho} + \sup_{1 \leq j \leq N} |C_j| + \sup_{1 \leq j \leq N} |\lam_j| \leq C$$ and a constant $c>0$ so that 
$d_\Lambda \geq c$
for all data $\left( \rho, \{ \lam_j, C_j \}_{j=1}^N\right)$ in $V'$. 
\end{definition}

Our first
result is:

\begin{theorem}
\label{thm:R}
The set $U$ is open and dense in $H^{2,2}(\R)$, contains a neighborhood of $0$, and contains $q$ with arbitrary $L^2$ norm. Moreover, the direct scattering map 
$$ \calR: U \rarr V $$
maps bounded subsets of $U_N$ into bounded subsets of $V_N$ for each $N$,  and is
uniformly Lipschitz continuous on bounded subsets of $U_N$. 
\end{theorem}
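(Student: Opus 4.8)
The starting point is the Volterra integral equation representation of the Jost solutions $M^\pm(x,\zeta)$ of \eqref{LS.m} developed in Papers I and II. For $q$ in a bounded subset of $L^1(\R)\cap L^2(\R)$ the Neumann series for $M^\pm$ converge, $M^\pm(x,\cdot)$ are analytic in $\zeta$ on $\Omega^\pm$ and continuous up to $\Sigma$, and $M^\pm$ — hence the transition coefficients $a,\ba,b,\bb$ — depend on $q$ analytically on $\Omega^+\cup\Omega^-$ and in a Lipschitz fashion uniformly on norm-bounded sets; moreover $\ba(\zeta)\to 1$ as $|\zeta|\to\infty$ in $\overline{\Omega^+}$, with rate controlled by $\|q\|_{L^1}$. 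The extra decay and smoothness available when $q\in H^{2,2}(\R)$ upgrade this, exactly as for the reflection data in Papers I--II, to the statements that $\rho\in H^{2,2}(\R)$ with $\|\rho\|_{H^{2,2}}\le F(\|q\|_{H^{2,2}})$ for a continuous $F$, and that $q\mapsto\rho$ is uniformly Lipschitz on bounded subsets of $H^{2,2}(\R)$ on which $\inf_\Sigma|a|$ is bounded below. I take these estimates as given and concentrate on the ingredients attached to the discrete spectrum, which are new here.

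\textbf{Topology of $U$.} Each $U_N$ is open: for $q\in U$ we have $\inf_\Sigma|\ba|>0$ (continuity plus $\ba\to1$) and all zeros of $\ba$ confined to a disk $|\zeta|\le R(\|q\|)$; since $\ba$ has no zero on $\Sigma$, Rouché's theorem on a fixed quarter-disk in $\Omega^{++}$, together with the uniform Lipschitz dependence of $\ba$ on $q$ over $\Sigma$ and over compact subsets of $\Omega^{++}$, shows that the number of zeros counted with multiplicity is locally constant and that each simple zero stays simple. As the $U_N$ are disjoint, $U$ is open. Since $q=0$ gives $T\equiv I$, hence $\ba\equiv 1$ with no zeros, we have $0\in U_0$, so $U$ contains a neighbourhood of $0$. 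For density, since $C_0^\infty(\R)$ is dense in $H^{2,2}(\R)$ it suffices to approximate a given compactly supported $q$ by compactly supported elements of $U$; for such $q$ the coefficient $\ba$ extends to an entire function of $\zeta$ of exponential type, bounded and tending to $1$ on $\overline{\Omega^{++}}$, hence with only finitely many zeros there, so that $q\in U$ is equivalent to $\ba$ having no zero on $\Sigma$ and no multiple zero. These are codimension-positive conditions: perturbing inside a finite-dimensional family $q+\sum_k t_k\varphi_k$ of compactly supported potentials, joint analyticity of $(\zeta,t)\mapsto\ba$ confines the exceptional $t$ to a proper real-analytic (hence Lebesgue-null) subset \emph{provided} the perturbation is effective, i.e.\ the derivative of $q\mapsto\ba$ in the directions $\varphi_k$ displaces the finitely many zeros and their first derivatives independently. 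Hence almost every small perturbation lies in $U$, so $U$ is dense; and since $H^{2,2}(\R)$ contains functions of arbitrarily large $L^2$ norm, density forces the same of $U$.

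\textbf{Discrete data and Lipschitz bounds.} Fix a bounded subset $U'\subset U_N$, so $\|q\|_{H^{2,2}}\le C$, $\Im\zeta_j^2>c$, $\inf_\Sigma|\ba|\ge c$. On $\Sigma$ the symmetries \eqref{sym.ab} give $|a|=|\ba|$, so $\inf_\Sigma|a|\ge c$ and the representation $\rho=\zeta^{-1}\bb/a$ yields $\|\rho\|_{H^{2,2}}\le F(C)$ and uniform Lipschitz dependence on $q$ by the foundational estimates. The zeros $\zeta_j$ lie in the compact set $\{|\zeta|\le R(C)\}\cap\Omega^{++}$ and, on $U'$, are uniformly simple and uniformly separated from $\Sigma$ and from one another; writing $\zeta_j=\frac{1}{2\pi i}\oint_{\gamma_j}\zeta\,\ba'(\zeta)/\ba(\zeta)\,d\zeta$ over a small circle $\gamma_j$ on which $|\ba|$ is bounded below uniformly in $q\in U'$, the uniform Lipschitz dependence of $\ba,\ba'$ on $q$ gives that $q\mapsto\zeta_j$, hence $q\mapsto\lam_j=\zeta_j^2$, is uniformly Lipschitz, with $\sup_j|\lam_j|\le R(C)^2$ and $d_\Lambda$ bounded below on the image. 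Finally $C_j=2b_{\zeta_j}/\ba'(\zeta_j)$, with $b_{\zeta_j}$ the proportionality constant of \eqref{czeta.bdef}, a smooth function of the Jost solutions evaluated at $\zeta_j$; since those depend Lipschitz-continuously on $q$ and on $\zeta_j$, and $\ba'(\zeta_j)$ is bounded below uniformly on $U'$, $q\mapsto C_j$ is uniformly Lipschitz and bounded. Assembling the three pieces shows $\calR$ maps bounded subsets of $U_N$ into bounded subsets of $V_N$ and is uniformly Lipschitz there.

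\textbf{Main obstacle.} The genuine difficulty is the density statement, and inside it the effectiveness of the perturbation: one must rule out that a proper closed subspace of potentials remains ``stuck'' with a zero of $\ba$ on $\Sigma$ or with a multiple zero. This is where one invokes the linearization formulas for $a,\ba,b,\bb$ in terms of squared Jost solutions and the completeness of the squared eigenfunctions, which guarantee that finitely many well-chosen compactly supported perturbations move the zeros of $\ba$ and their derivatives in independent directions. Everything else reduces either to the quantitative Volterra estimates imported from Papers I--II or to elementary complex analysis, once the analytic and uniform Lipschitz dependence of $\ba$ on $q$ is in hand.
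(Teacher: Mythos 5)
Your outline of the openness argument and of the Lipschitz continuity of the discrete data (argument-principle integrals for the $\zeta_j$, boundedness of $\ba'(\zeta_j)$ from below on bounded subsets, the formula $C_j=2b_{\zeta_j}/\ba'(\zeta_j)$) matches the paper's treatment in substance, and deriving "arbitrarily large $L^2$ norm" from density is legitimate, though the paper proves the sharper statement by an explicit construction of soliton-free potentials in $U_0$ of any prescribed norm (Proposition \ref{prop:empty}, Appendix \ref{app:empty}), which is part of what Theorem \ref{thm:R} is quoted as resting on. The genuine gap is in the density argument, and you in fact flag it yourself: everything hinges on the "effectiveness" of the perturbation, i.e.\ on exhibiting perturbations whose first-order effect on $\ba$ provably splits a multiple zero into simple zeros and moves a zero off $\Sigma$, and this is exactly the step you do not carry out. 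Your appeal to "linearization formulas in terms of squared Jost solutions and completeness of the squared eigenfunctions" is a gesture at a mechanism, not a proof: completeness statements of that type are themselves nontrivial for the Kaup--Newell system with merely $H^{2,2}$ (or even $C_0^\infty$) data, and even granted them you would still need to show that the induced variations of $\ba$ and $\ba'$ at the finitely many zeros are jointly non-degenerate, which is precisely the claim at issue. Without this, the "proper real-analytic exceptional set" conclusion in your finite-dimensional family argument has no basis, since a degenerate family could leave the zero structure unchanged.

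The paper closes this gap by a concrete and elementary device (Lemma \ref{lemma:TT} and Proposition \ref{prop:direct.generic}): perturb $q$ by $\mu\varphi$ with $\varphi$ supported in an interval disjoint from and to the left of $\operatorname{supp}q$, so that the transition matrices multiply, $T(\lam,q+\mu\varphi)=T(\lam,\mu\varphi)\,T(\lam,q)$, giving the explicit first-order formula $\alpha(\lam,\mu)=\alpha(\lam,0)+\mu c_\varphi\,\lam\bbeta(\lam)+\bigO{\mu^2}$. The non-degeneracy is then checked by hand: $\varphi\ge 0$ with sufficiently small support guarantees $\widehat\varphi(\lam_i)\neq 0$ at each of the finitely many zeros, and the coefficient $\lam_i\bbeta(\lam_i)$ is nonzero there, so Rouch\'e's theorem applied first on the discs $D(\lam_i,r_i)$ and then on the $n$ small discs centered at the $n$-th roots of $\gamma_i$ shows an order-$n$ zero splits into $n$ simple zeros, and a second perturbation of the same type pushes any remaining real zero off the axis (with uniform control of $\lam\bbeta$ and its derivative coming from the Volterra estimates of Lemma \ref{lemma:direct.n}). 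So the architecture of your proof is right, but its central step — the one the paper identifies as the new content of Section \ref{sec:direct.generic} — is assumed rather than proved, and the mechanism you point to is not the one that makes the argument work in the paper.
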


Theorem \ref{thm:R} is a direct consequence of Theorems \ref{thm:R.lip}, Theorem \ref{thm:generic}, and Proposition \ref{prop:empty} which asserts the existence of $q \in U_0$ with arbitrarily large $L^2$-norm.

\begin{remark}
One-soliton solutions corresponding to eigenvalue $\lam = |\lam| e^{i\phi} \in \C^+$
have $L^2$ norm $\sqrt{4(\pi - \phi)}$ if $\eps=1$ and $\sqrt{4\phi}$ if $\eps=-1$. In the limit $\phi \darr 0$ ($\eps=+1$) or $\phi \uarr \pi$ ($\eps=-1$), the pole approaches the real axis and the bright soliton becomes an algebraic soliton. As we will compute, $N$-soliton solutions separate into a train of one-soliton solutions so that an $N$-soliton solution may have $L^2$-norm in $(0,N\sqrt{4\pi})$. It follows that the set $U_N$ contains 
elements of $L^2$ norm arbitrarily close to $0$.
\end{remark}

It follows from the Lax representation for \eqref{DNLS2} that the spectral data for a solution
$q(x,t)$ of \eqref{DNLS2} obey the linear evolution
\begin{equation}
\label{sd.ev.zeta}
\begin{gathered}
\dot{a}(\zeta,t)=\dot{\ba}(\zeta)=0, \\
 \dot{b}(\zeta,t) = -4i\zeta^4 b(\zeta,t), \,\,\, \dot{\bb}(\zeta,t) = 4i  \zeta^4 \bb(\zeta,t), \quad \dot{c_j}(t) = -4i\zeta_j^4 c_j(t)
\end{gathered}
\end{equation}
so that
\begin{equation}
\label{sd.ev}
\dot{\rho}(\lam,t) = -4i \lam^2 \rho(\lam,t), \quad	\dot{\lam_j} = 0, \quad \dot{C_j} = -4i \lam_j^2 C_j.
\end{equation}

\begin{definition}
\label{flowmap}
For each nonnegative integer $N$, $t \in \R$, $\rho \in S$, and $\{ \lam_j, C_j \}_{j=1}^N \in \left(\C^+ \times \C^\times\right)^N$, the linear evolution $\Phi_t: V_N \rarr V_N$ is given by
$$ \Phi_t \left(\rho, \{ \lam_j, C_j \}_{j=1}^N \right) = \left( e^{-4i(\dotarg)^2 t}  \rho(\dotarg), \{ \lam_j, C_j e^{-4i\lam_j^2 t }\}_{j=1}^N \right). $$
\end{definition}

It is easy to see that the map $\Phi_t$ preserves $V_N$ for each $N$ and is jointly continuous in $t$ and the data $\left(\rho, \{ \lam_j, C_j \}_{j=1}^N \right)$.

We now describe  the inverse scattering map $\calI$
which recovers $q(x,t)$ from the time-evolved spectral data.  The inverse scattering map is defined by a Riemann-Hilbert problem which we will first describe in the $\zeta$ variables (Problem \ref{RHP1}) and then in the $\lam$ variables (Problem \ref{RHP2}). 

To describe the Riemann-Hilbert Problem in the $\zeta$ variables, we recall the \emph{Beals-Coifman solutions} of \eqref{LS.m}.  Denote by $M_1^\pm$ the first column of the normalized Jost solutions $M^\pm$, and similarly denote by $M_2^\pm$ the second column of $M^\pm$. It can be shown that the matrix-valued function
$$ M(x,z) = 	\begin{cases}
						\begin{bmatrix}
							M_1^+(x,z)	&	\dfrac{M_2^-(x,z)}{\ba(z)}
						 \end{bmatrix}
						 &	 z \in \Omega^+ \setminus \calZ\\
						 \\
						 \begin{bmatrix}
						 	\dfrac{M_{1}^-(x,z)}{a(z)}	&	M_2^+(x,z)
						 \end{bmatrix}
						 & z \in \Omega^- \setminus \calZ
					\end{cases}
$$
defines a meromorphic function from $\C \setminus \left(\Sigma \cup \calZ \right)$ to $SL(2,\C)$ with 
$M(x,z) \rarr \begin{pmatrix} 1 & 0 \\ 0 & 1 \end{pmatrix}$ as $|z| \rarr \infty$. As a function of $x$, (1) $M(x,z)$ solves \eqref{LS.m}, (2) $M(x,z) \rarr \begin{pmatrix} 1 & 0 \\ 0 & 1 \end{pmatrix}$ as $x \rarr \infty$ and (3) $M(x,z)$ is bounded as $x \rarr -\infty$. These three properties uniquely characterize $M(x,z)$. The function $M$ has continuous boundary values on $\Sigma$ and satisfies the following Riemann-Hilbert problem. To state it, we first recall that the 
exponential of the linear operator $\ad \sigma_3$ acts on $2 \times 2$ matrices as
$$ e^{it\ad\sigma_3} 
	\begin{pmatrix} 
		a& b \\ c & d 
	\end{pmatrix} =
	\begin{pmatrix}
		 a & e^{2it} b \\ 
		 e^{-2it}c & d 
	\end{pmatrix}
$$
and is an automorphism. Next, we define
\begin{align}
\label{r}
r(\zeta) 		&=	\bb(\zeta)/a(\zeta)\\
\label{r.to.br}
\br(\zeta)	&=	\eps \overline{r(\zetabar)}
\end{align}
and impose the conditions
\begin{equation}
\label{rr}
r(-\zeta) = -r(\zeta), \quad 1 - r\br \geq c > 0.
\end{equation}
Note that, under the evolution \eqref{sd.ev.zeta}, we have $\dot{r} =-4i\zeta^4 r$ and $\dot{\br} = 4i\zeta^4 \br$.
Finally, let $\varphi(x,t,\zeta)$ be phase function
\begin{equation}
\label{phase.zeta}
\Theta(x,t,\zeta) = -\left(\zeta^2 \frac{x}{t} + 2 \zeta^4 \right). 
\end{equation}

\begin{RHP}
\label{RHP1}
Given $x,t \in \R$, $r \in H^1(\Sigma)$ obeying \eqref{rr}, and $\{ \zeta_j, c_j \}_{j=1}^N \in (\Omega^{++} \times \C^\times )^N$, 
find a matrix-valued function $M(x,t,z) : \C \setminus \left(\Sigma \cup \calZ \right) \rarr SL(2,\C)$
with the following properties:
\begin{itemize}
\item[(i)]		$M(x,t,z) = \begin{pmatrix} 1 & 0 \\ 0 & 1 \end{pmatrix}+ \bigO{\dfrac{1}{z}}$ as $|z| \rarr \infty$
\item[(ii)]	$M(x,t,z)$ has continuous boundary values $M_\pm(x,t,\zeta)$  on $\Sigma$, taken
				as $z \rarr \zeta \in \Sigma$ from $\Omega^\pm$,
				which obey the jump relation
				$$ M_+(x,t,\zeta) = M_-(x,t,\zeta) e^{it\Theta \ad \sigma_3} v(\zeta), 
					\quad	
					v(\zeta) 	=	\begin{pmatrix}
											1-r(\zeta)\br(\zeta)
												&	r(\zeta)	\\[10pt]
											-\br(\zeta)								
												&	1	
									\end{pmatrix}
				$$
\item[(iii)]		$M(x,t,z)$ has simple poles at $\zeta \in \calZ$ with
				$$ \Res_{z = \zeta} M(x,t,z) = \lim_{z \rarr \zeta} M(x,t,z) e^{it\Theta \ad \sigma_3} v(\zeta)$$
				where, for $\zeta \in \calZ^{++}$,
				$$ v(\pm\zeta) = \lowmat{c_\zeta }   , \quad 
					v(\pm \zetabar) = \upmat{\overline{c_\zeta} }$$
\end{itemize}
\end{RHP}

It can be shown that $M(x,t,z)$ solves \eqref{LS.m} as a function of $x$. The reconstruction formula
\begin{equation}
\label{q.zeta}
q(x,t) = \lim_{z \rarr \infty} 2iz M_{12}(x,t,z) 
\end{equation}
is an easy consequence of the large-$z$ expansion for $M(x,t,z)$ and \eqref{LS.m}.

As in the direct problem, the symmetries of the scattering data allow for a reduction, this time to a Riemann-Hilbert problem with contour $\R$ which is the image of $\Sigma$ under the map $\zeta \mapsto \zeta^2$. Recall \eqref{ab->rho} and \eqref{zeta->lambda}, and denote by $\Lam^+$ (resp.\ $\Lam$) the image of $\calZ^{++}$ 
(resp.\ $\calZ$) under the map $\zeta \mapsto \zeta^2$. It follows from the definition that $\Lam = \Lam^+ \cup \overline{\Lam^+}$. Under
the change of variables, the phase function \eqref{phase.zeta} becomes
\begin{equation}
\label{phase.lambda}
\theta(x,t,\lam) = - \left(\lambda \frac{x}{t} + 2 \lambda^2\right).
\end{equation}

\begin{RHP}
\label{RHP2}
Given $x,t \in \R$, $\rho \in S$ and $\{ \lam_j, C_j \}_{j=1}^N$ in $(\C^{++} \times \C^\times)^N$, find a matrix-valued function $N(x,t,z) : \C \setminus (\R \cup \Lam) \rarr SL(2,\C)$ with the following properties:
\begin{itemize}
\item[(i)]		$N_{22}(x,t,z) = \overline{N_{11}(x,t,\zbar)}$, $N_{21}(x,t,z) = \eps \overline{N_{12}(x,t,\zbar)}$, 
\item[(ii)]	$N(x,t,z) = \begin{pmatrix}
							1	&	0	\\
							q*	&	1
						\end{pmatrix}
					+ \bigO{\dfrac{1}{z}}
		 $ as $|z| \rarr \infty$, 
\item[(iii)]	$N$ has continuous boundary values $N_\pm$ on $\R$ and
				$$ N_+(x,\lam) = N_-(x,\lam) e^{it\theta \ad \sigma_3} J(\lam),
				\quad 
				J(\lam) = \begin{pmatrix}
								1-\lam |\rho(\lam)|^2 	&	 \rho(\lam)	\\
								-\eps \lam \overline{\rho(\lam)}	&	1
							\end{pmatrix}
				$$
\item[(iv)] 	For each $\lam \in \Lam$, 
				$$ \Res_{z = \lam} N(x,t,z) = \lim_{z \rarr \lam} N(x,t,z) e^{it\theta \ad \sigma_3} J(\lam) $$
				where for each $\lam \in \Lam^+$
				$$ J(\lam)  = \lowmat{\lam C_\lam}, \quad J(\overline{\lam}) = \upmat{\overline{C_\lam}}. $$
\end{itemize}
\end{RHP}

\begin{remark}
\label{rem:RHP2}
Although the symmetry condition (i) uniquely determines $q^*$, it is equivalent, and more effective for analysis, to consider the 
Riemann-Hilbert problem 
for the row vector $n(x,t,z)=(N_{11}(x,t,z),N_{12}(x,t,z))$ obeying the asymptotic condition
$$ n(x,t,z) = (1,0) + \bigO{\frac{1}{z}},$$
the jump relations (ii) and the residue condition (iii).
\end{remark}

We may recover $q$ from the reconstruction formula (compare \eqref{q.zeta})
\begin{equation}
\label{q.lam}
q(x,t) = \lim_{z \rarr \infty} 2iz N_{12}(x,t,z).
\end{equation}

To define the inverse scattering map and state its mapping properties, we temporarily set $t=0$ in Problem \ref{RHP2}.
\begin{definition}
\label{def:I}
For each $N$, the \emph{inverse scattering map} is the map 
\begin{align*}
\calI: V_N 	&\rarr U_N		\\
\left(\rho, \left\{ \lam_j, C_j\right\}_{j=1}^N \right)	&	\rarr	q
\end{align*}
defined by Problem \ref{RHP2} (with $t=0$) and \eqref{q.lam}.
\end{definition}

Our next result is:

\begin{theorem}
\label{thm:I}
The inverse scattering map $\calI : V \rarr U$ takes bounded subsets of $V_N$ to bounded subsets of $U_N$ for each $N$, and is uniformly Lipschitz continuous on bounded subsets of $V_N$. Moreover, $\calR \circ \calI$ is the identity map on $V$ and $\calI \circ \calR$ is the identity map on $U$.
\end{theorem}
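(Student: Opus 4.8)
The plan is to establish, in order: (a) unique solvability of Riemann--Hilbert Problem \ref{RHP2} (equivalently of its Beals--Coifman integral equation) for every data point in $V_N$, with resolvent bounds uniform on bounded subsets; (b) the reconstruction estimates showing that $q=\calI(\rho,\{\lambda_j,C_j\}_{j=1}^N)$ lies in $H^{2,2}(\R)$ with norm controlled by the data and depends Lipschitz-continuously on it; and (c) the two composition identities $\calR\circ\calI=\mathrm{id}_V$ and $\calI\circ\calR=\mathrm{id}_U$.

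\emph{Solvability.} I would pass to the vector problem of Remark \ref{rem:RHP2} and remove the $N$ poles by conjugating $n(x,z)$ with a rational (Blaschke-type, interpolation) matrix built from $\{\lambda_j\}$ and the residue data $\{\lambda_j C_{\lambda_j}\}$; this trades the residue conditions (iv) for a regular, exponentially small perturbation of the jump on $\R$, reducing Problem \ref{RHP2} to a pure-contour RHP on $\R$ whose jump still enjoys the Schwarz-reflection symmetry forced by condition (i) and the positivity coming from the constraint \eqref{rho.con}. Representing the solution through the Cauchy operators $C_\pm$ on $L^2(\R)$ yields a singular integral equation $(\mathrm{I}-C_w)\mu=(1,0)$; the symmetry together with the sign condition allow a standard vanishing-lemma argument in the spirit of Zhou \cite{Zhou89a} to give invertibility of $\mathrm{I}-C_w$, and on a bounded subset $V'\subset V_N$ the factorization data (hence the conjugating factor) are uniformly controlled, so $\|(\mathrm{I}-C_w)^{-1}\|$ is uniformly bounded on $V'$. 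Undoing the conjugation reconstructs $N(x,z)$, and \eqref{q.lam} defines $q$.

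\emph{Reconstruction estimates and Lipschitz continuity.} This is where the real work lies. One differentiates the integral equation in $x$ once and twice; since $\partial_x$ acts on the oscillatory factor $e^{it\theta\ad\sigma_3}$ (at $t=0$ the phase is $t\theta|_{t=0}=-\lambda x$, so the factor is $e^{-ix\lambda\ad\sigma_3}$) it produces powers of $\lambda$, so the two $x$-derivatives of $q$ are controlled by $\langle\lambda\rangle^2$-weighted $L^2$ norms of $\rho$, while the two powers of the spatial weight $\langle x\rangle^2$ are recovered by integration by parts in $\lambda$, which is precisely where the two $\lambda$-derivatives in the definition of $H^{2,2}$ enter; the pole contributions are smooth and rapidly decaying in $x$ and are harmless. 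Uniform bounds on $V'$ follow because everything is assembled from $(\mathrm{I}-C_w)^{-1}$ and the (uniformly controlled) data, and Lipschitz continuity follows from the second resolvent identity $(\mathrm{I}-C_{w_1})^{-1}-(\mathrm{I}-C_{w_2})^{-1}=(\mathrm{I}-C_{w_1})^{-1}(C_{w_1}-C_{w_2})(\mathrm{I}-C_{w_2})^{-1}$ combined with Lipschitz dependence of the jump/residue data on $(\rho,\{\lambda_j,C_j\})$. The main obstacle I anticipate is bookkeeping these weighted, differentiated estimates uniformly across all $N$ and in the presence of poles that may approach $\R$; but the ``bounded subset'' hypotheses (the lower bound $d_\Lambda\ge c$ and $\inf_{\Sigma}|\ba|\ge c$) are designed exactly so that the conjugating factor and the resolvent stay uniformly bounded, so the reflectionless-free estimates of Papers I--II \cite{LPS15,LPS16} extend with the soliton conjugation in place.

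\emph{The inverse relations.} Both follow from uniqueness in step (a). For $\calI\circ\calR=\mathrm{id}_U$: given $q\in U$, the Beals--Coifman solution $M$ attached to the Lax operator \eqref{LS}, reduced by the symmetries \eqref{sym.ab}, is a solution of Problem \ref{RHP2} with data $\calR(q)$; uniqueness forces it to coincide with $N$, and then the large-$z$ expansion of \eqref{LS.m} shows $\lim_{z\to\infty}2izN_{12}(x,0,z)=q(x)$, i.e.\ \eqref{q.lam} returns $q$. For $\calR\circ\calI=\mathrm{id}_V$: set $q=\calI(\text{data})$ and run the direct construction on $q$; after undoing the conjugation, the analyticity, normalization and jump/residue structure of $N$ identify its columns with the Jost/Beals--Coifman solutions of \eqref{LS} with potential $q$, so the reflection coefficient, eigenvalues and norming constants read off from $N$ are exactly the data we started from, and in particular $q\in U_N$ (exactly $N$ simple zeros of $\ba$, none on $\Sigma$, the latter from the nonvanishing in \eqref{rho.con}). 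Since $\calR$ and $\calI$ are continuous (Theorem \ref{thm:R} and step (b)) and mutually inverse on the strata $U_N$, $V_N$, the theorem follows.
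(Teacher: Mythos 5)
Your overall architecture (Beals--Coifman integral equations, a vanishing-lemma existence argument, $x$-differentiation plus $\lambda$-integration-by-parts for the $H^{2,2}$ weights, and uniqueness for the two composition identities) is the same as the paper's, but there is a concrete gap in the estimates step. You assert that the pole contributions are ``smooth and rapidly decaying in $x$ and are harmless.'' They are not: in the right-normalized problem the discrete data enter through the factors $C_{j,x}=C_je^{2i\lam_j x}$ (see \eqref{rhoCx}), which decay exponentially only as $x\to+\infty$ and \emph{grow} exponentially as $x\to-\infty$; the finite-rank pieces of the Beals--Coifman operator and the reconstruction term $q_3(x)=\sum_j 2i\eps\overline{C_{j,x}}\nu_j(x)$ therefore blow up in that direction, and no uniform resolvent bound or $L^2$ estimate is available on all of $\R$ from the right-normalized problem alone. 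This is exactly why the paper's proof of Theorem \ref{thm:I} is organized in four steps: the right RHP \ref{RHP2} yields Lipschitz estimates only in $H^{2,2}(a,\infty)$; a separate \emph{left}-normalized RHP (Appendix \ref{app:left}, with the transformed norming constants $C^\ell_k=1/(C_k(\balpha'(\lam_k))^2)$) yields estimates in $H^{2,2}(-\infty,a)$; one then shows the two reconstructions agree on overlapping intervals; and finally $\calI\circ\calR=\mathrm{id}$ via uniqueness. Without the left/right split your argument cannot produce the full-line $H^{2,2}(\R)$ bound or Lipschitz estimate, so this is a missing idea, not just bookkeeping. Relatedly, uniform invertibility of $I-\calK_x$ over a \emph{bounded} (not compact) subset of $V_N$ and over bounded $x$-intervals is not automatic from the bounded-subset hypotheses; the paper obtains it by a continuity--compactness argument using the compact embedding of bounded subsets of $H^{2,2}$ data into the weaker space $Y$ (Lemma \ref{lemma:Kx.small}), a device your proposal does not supply.

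On the existence step, your plan to conjugate away the poles by a rational matrix and then apply a Zhou-type vanishing lemma directly to the $\lambda$-variable problem glosses over two points the paper has to work for. First, the description of the resulting jump as an ``exponentially small perturbation'' is not correct at $t=0$ (that smallness belongs to the long-time analysis, not to the inverse map at fixed time); the conjugation simply modifies the jump by rational factors. Second, the positivity and Schwarz-symmetry hypotheses of Zhou's theorem (\cite[Theorem 9.3]{Zhou89}) are verified in the paper in the $\zeta$ variables for the case $\eps=-1$ only, with the augmented-contour Problem \ref{RHP1c}, and the case $\eps=+1$ is reached through the explicit isomorphism of Lemma \ref{lemma:bob}; in the $\lambda$ variables the matrix $J(\lam)+J(\lam)^*$ is not obviously nonnegative for both signs of $\eps$, so the vanishing lemma cannot simply be invoked there. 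Your sketch would need either to reproduce the paper's detour through the $\zeta$-plane problem or to verify these structural hypotheses for your conjugated $\lambda$-plane problem; as written, that verification is absent.
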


This result is proved as Theorem \ref{thm:Lip.I} in section \ref{sec:Lip}.

The composition $\calI \circ \Phi_t$ is given by RHP \ref{RHP2} and the reconstruction formula \eqref{q.lam}.
In his thesis, Lee proved that the right-hand side of \eqref{sol.op} is the correct solution operator
for \eqref{DNLS2} for $q_0 \in U \, \cap \, \calS(\R)$. As a consequence of Lee's result, Theorem \ref{thm:R}, and Theorem
\ref{thm:I}, we conclude:

\begin{theorem}
\label{thm:GWP}
Let $U$ be the set of $q_0$ given in Definition \ref{def:U}.
\begin{itemize}
\item[(i)]		The Cauchy problem for \eqref{DNLS2} has a unique global
				solution for initial data $q_0 \in U$, 
\item[(ii)]	the solution map $\calM: (q_0,t) \rarr q(x,t)$ is a continuous 
				map from $U \times [0,T]$ to $H^{2,2}(\R)$ for any $T>0$, and
\item[(iii)]	For any $T>0$, 
				$$
					\sup_{t \in [0,T]} \norm[H^{2,2}]{\calM(q_1,t) - \calM(q_2,t)} 
					\lesssim \norm[H^{2,2}]{q_1-q_2}
				$$
				with constant uniform in $q_1,q_2$ in a bounded subset of $U$ and $t \in [0,T]$.
\end{itemize}
\end{theorem}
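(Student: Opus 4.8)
The plan is to deduce Theorem~\ref{thm:GWP} from the apparatus already in place: the factorization $\calM(q_0,t)=(\calI\circ\Phi_t\circ\calR)(q_0)$ of \eqref{sol.op}, the quantitative mapping properties of $\calR$ (Theorem~\ref{thm:R}) and of $\calI$ together with the identities $\calR\circ\calI=\mathrm{id}_V$, $\calI\circ\calR=\mathrm{id}_U$ (Theorem~\ref{thm:I}), the explicit linear flow $\Phi_t$ (Definition~\ref{flowmap}), and Lee's theorem that $\calM$ solves \eqref{DNLS2} for $q_0\in U\cap\calS(\R)$. Beyond these, three points require attention: \textbf{(a)} that $\Phi_t$ sends bounded subsets of $V_N$ to bounded subsets of $V_N$ and is Lipschitz on them, uniformly for $t\in[0,T]$; \textbf{(b)} that the factorization yields a bona fide solution for every $q_0\in U$, not merely for Schwartz data; and \textbf{(c)} uniqueness. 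I would organize the proof around these.

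First I would dispatch \textbf{(a)}. On a bounded subset of $V_N$ the eigenvalues $\lam_j$ are untouched by $\Phi_t$ and, since $\Im\lam_j^2$ is controlled by $|\lam_j|^2$, the factors $e^{-4i\lam_j^2 t}$ have modulus bounded on $[0,T]$ in terms of the bound on $|\lam_j|$; on $\R$, $|e^{-4i\lam^2 t}|=1$, and each $\lam$-derivative of $e^{-4i\lam^2 t}\rho(\lam)$ produces at most one extra factor $8|\lam|t$, so, by the standard weighted-Sobolev interpolation bound $\norm[L^2]{\lam\rho'}\lesssim\norm[H^{2,2}]{\rho}$, the $H^{2,2}$ norm of $e^{-4i(\dotarg)^2 t}\rho$ is controlled by $(1+T^2)\norm[H^{2,2}]{\rho}$. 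Since the constraint \eqref{rho.con} and the eigenvalue separation $d_\Lambda$ involve only $|\rho(\lam)|$ and the (unchanged) $\lam_j$, $\Phi_t$ preserves boundedness; a difference estimate of the same type --- using that the exponents $-4i\lam_j^2 t$ depend Lipschitz-continuously on $\lam_j$ --- gives the Lipschitz bound. Joint continuity of $\Phi_t$ in $(t,\text{data})$ is the remark following Definition~\ref{flowmap}.

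Granting \textbf{(a)}, parts (ii)--(iii) follow by chaining Lipschitz estimates: for a bounded $U'\subset U_N$, Theorem~\ref{thm:R} places $\calR(U')$ in a bounded $V'\subset V_N$; \textbf{(a)} places $\bigcup_{t\in[0,T]}\Phi_t(V')$ in a bounded $V''\subset V_N$; Theorem~\ref{thm:I} makes $\calI$ Lipschitz on $V''$ with image in a bounded subset of $U_N$. Composing yields
\[
\sup_{t\in[0,T]}\norm[H^{2,2}]{\calM(q_1,t)-\calM(q_2,t)}\ \lesssim\ \norm[H^{2,2}]{q_1-q_2}\qquad(q_1,q_2\in U'),
\]
uniformly over $U'$, which is (iii), and combining this with the continuity of $\Phi_t$ in $t$ and covering $U_N$ by bounded sets gives (ii). For existence (the main part of (i)), I would use that $\calS(\R)\cap U$ is dense in $U$ --- because $\calS(\R)$ is dense in $H^{2,2}(\R)$ and $U$ is open (Theorem~\ref{thm:R}) --- to pick, for given $q_0\in U_N$, a sequence $q_0^{(k)}\in\calS(\R)\cap U_N$ with $q_0^{(k)}\to q_0$ in $H^{2,2}$ inside a fixed bounded set, so that $q^{(k)}:=\calM(q_0^{(k)},\dotarg)\to q:=\calM(q_0,\dotarg)$ in $C([0,T];H^{2,2}(\R))$. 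Each $q^{(k)}$ solves \eqref{DNLS2} by Lee's theorem, hence distributionally; and since $H^{2,2}(\R)\hookrightarrow H^2(\R)\hookrightarrow C^1_b(\R)$, the convergence is also in $C([0,T];C^1_b(\R))$ with a uniform $H^{2,2}$ bound, so $q^{(k)}_{xx}\to q_{xx}$, $(q^{(k)})^2\overline{q^{(k)}_x}\to q^2\overline{q_x}$ and $|q^{(k)}|^4 q^{(k)}\to|q|^4 q$ in $C([0,T];L^2(\R))$. Passing to the limit in the distributional form of \eqref{DNLS2} shows $q$ solves \eqref{DNLS2}; since $q_{xx}$, $q^2\overline{q_x}$, $|q|^4 q\in C([0,T];L^2(\R))$, $q$ is a strong solution with $q_t\in C([0,T];L^2(\R))$, and $q(0)=(\calI\circ\Phi_0\circ\calR)(q_0)=q_0$ by $\Phi_0=\mathrm{id}$ and $\calI\circ\calR=\mathrm{id}$. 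For uniqueness in \textbf{(c)}, I would invoke the gauge equivalence: $\calG$ carries a $C([0,T];H^{2,2})$ solution of \eqref{DNLS2} to a $C([0,T];H^{2,2})$ solution of \eqref{DNLS1}, and \eqref{DNLS1} has at most one solution in $C([0,T];H^{1/2}(\R))\supset C([0,T];H^{2,2}(\R))$ by the local theory quoted in the introduction, so injectivity of $\calG$ transfers uniqueness back to \eqref{DNLS2}.

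The soft functional analysis in these steps carries no new content: essentially all the weight of Theorem~\ref{thm:GWP} rests on the quantitative estimates of Theorems~\ref{thm:R} and \ref{thm:I}, whose proofs are the real labor and are carried out elsewhere in the paper. Within the present argument the one place demanding genuine care is \textbf{(b)}: checking that the regularity $C([0,T];H^{2,2})$ supplied by $\calI$ is precisely enough both to pass to the limit in the cubic and quintic nonlinearities and to upgrade the resulting distributional solution to a strong one that assumes the data $q_0$. That is where I expect the only real (if modest) difficulty to lie; the Lipschitz chaining of \textbf{(a)} and the density argument are routine once the mapping properties of $\calR$, $\Phi_t$ and $\calI$ are in hand.
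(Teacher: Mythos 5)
Your proposal is correct and follows precisely the route the paper intends: Theorem \ref{thm:GWP} is presented there as a direct consequence of Lee's result for Schwartz-class data together with Theorems \ref{thm:R} and \ref{thm:I}, and your chaining of the Lipschitz/boundedness properties of $\calR$, $\Phi_t$ and $\calI$, the density-and-limit argument passing the equation to general $q_0\in U$, and the gauge-transformation uniqueness step simply supply the details the paper leaves implicit. The only caveat is that Takaoka's $H^{1/2}$ uniqueness is conditional (in the resolution space), but at the $H^{2,2}\subset H^2$ regularity of your solutions unconditional uniqueness follows from a standard energy/Gronwall argument, so nothing essential is affected.
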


Together with the local well-posedness result of Takaoka \cite{Takaoka99}, Theorem \ref{thm:GWP} establishes global well-posedness for the DNLS in an open and dense subset of $H^{2,2}(\R)$. 

\subsection{Soliton Resolution}

To deduce large-time asymptotics and soliton resolution for \eqref{DNLS1}, we will first obtain large-time asymptotics for \eqref{DNLS2} and then compute the asymptotics of the phase in \eqref{G} in terms of spectral data. These two results together will yield large-time asymptotics and soliton resolution for \eqref{DNLS1}.

First, we compute long-time asymptotic behavior and prove solution resolution for \eqref{DNLS2} with initial data $q_0 \in U$ using the Deift-Zhou method of steepest descent applied to Problem \ref{RHP2}. Note that the phase function $\theta$ in \eqref{phase.lambda}   has a single critical point at $\xi = -x/4t$.
From the evolution \eqref{sd.ev}, Problem \ref{RHP2}, and Remark \ref{rem:RHP2}, it suffices to solve the Riemann-Hilbert problem for the row vector-valued function $n(x,t,z) = (N_{11}(x,t,z),N_{12}(x,t,z))$. 

We will choose intervals $[v_1,v_2]$ of velocities and $[x_1,x_2]$ of initial positions and compute the asymptotic behavior of $q(x,t)$ in space-time regions of the form
\begin{equation}
\label{S.cone}
\calS(v_1,v_2,x_1,x_2)	=	
\left\{ (x,t) : x=x_0 + vt \text{ for } v \in [v_1,v_2], \, \, x_0 \in [x_1,x_2] \right\}.
\end{equation}
In what follows, we set
\begin{equation}
\label{LamI}
\Lambda(I) = \{ \lam \in \Lambda: \real (\lam) \in I \}, \quad N(I) = |\Lam(I)|. 
\end{equation}
Solitons in $\Lambda([-v_2/4, -v_1/4])$ should be `visible' in these asymptotics, but remaining solitons will 
move either too slowly or too fast to be seen in the moving window.

To state our result, we need the following notation. For $\xi \in \R$ and $\eta \in \{ -1 , +1 \}$  (here $\eta = \sgn t$), let
\begin{align}
I_{\xi,\eta}^-	&=	\left\{ \lam \in \C:		\imag \lam = 0, \quad -\infty < \eta \Re \lam \leq \eta \xi \right\}, \\
I_{\xi,\eta}^+	&=	\left\{ \lam \in \C:		\imag \lam = 0, \quad \eta \xi < \eta \Re \lam < \infty \right\}.
\end{align}  
For $x_1 \leq x_2$ and $v_1 \leq v_2$, let $\calS(v_1,v_2,x_1,x_2)$ be the subset of $\R^2$ given by

\begin{figure}[H]
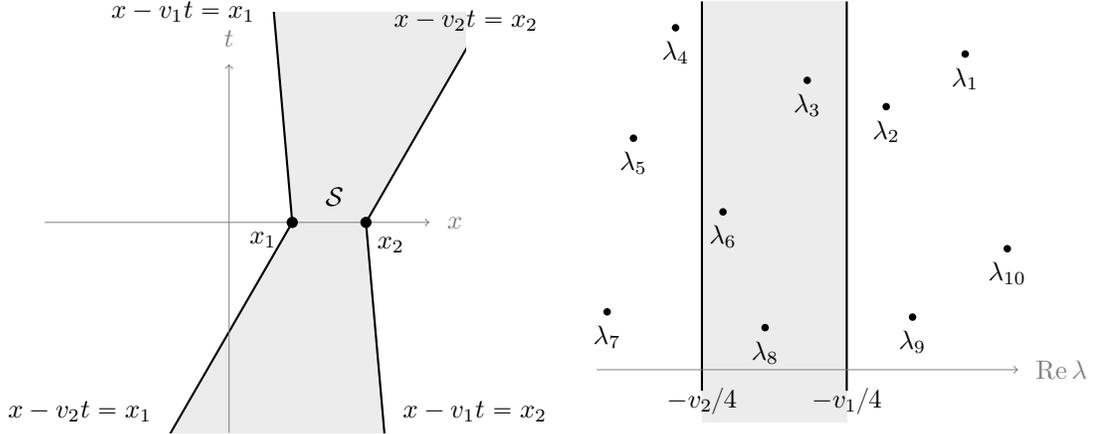

\solfig
\caption{
Given initial data $q_0(x)$ which generates scattering data 
$\left\{ \rho, \{ \lam_k, C_k \}_{k=1}^N \right\}$, then asymptotically as $|t| \to \infty$ inside the space-time cone $\mathcal{S}(v_1,v_2,x_1,x_2)$ (shaded on left) the solution $q(x,t)$ of \eqref{DNLS2} approaches an $N(I)$-soliton $\qsol(x,t)$ corresponding to the discrete spectra in $\poles(I)$ 
(shaded region on right) and connection coefficients $\widehat{C}_k$ which are modulated by the soliton-soliton and soliton-radiation interactions as described in Theorem~\ref{thm:long-time}.
\label{fig:light.cone}
}
\end{figure}
We will prove:

\begin{theorem}
\label{thm:long-time}
Suppose that $q_0 \in U_N$ with scattering data $\left( \rho, \{ \lam_k,C_k \}_{k=1}^N \right)$. Fix $x_1, x_2, v_1, v_2 \in \R$ with $v_1 < v_2$, and let $$I = [-v_2/4,-v_1/4].$$
Denote by  $\qsol(x,t;\mathcal{D}_I)$ the soliton solution of \eqref{DNLS2} with modified discrete scattering data
$$ \left\{ (\lam_k, \widehat{C_k}): \lam_k \in \Lambda(I) \right\}$$
where
$$
\widehat{C_k} =  C_k \, 
	\prod_{\Re \lam_j \in I_{\xi,\eta}^- \setminus I}
	\left(	\frac{\lam_k - \lam_j}{\lam_k - \overline{\lam_j}} \right)^2 
	\exp \left(  
					\frac{i}{\pi} \int_{I_{\xi,\eta}^-} 
						\frac{\log\left( 1 - \eps \lam |\rho(\lam)|^2 \right)}{\lam-\lam_k}  \, d\lam
			\right)
$$
Then, as $|t| \rarr \infty$ in the cone $\calS(v_1,v_2,x_1,x_2)$, we have
\begin{equation}
\label{q.long-time}
q(x,t) \underset{t \rarr \infty}{\sim}  \qsol(x,t;\mathcal{D}_I) + t^{-1/2} f(x,t) + \bigO{t^{-3/4}}. 
\end{equation}
Here $f(x,t)$ is given by
$$
f(x,t)=  2^{-1/2} 
	\left[ 
		A_{12}(\xi,\eta) \calN_{11}^{\mathrm{sol}}(\xi;x,t)^2 + 
		\eps \xi \overline{A_{12}(\xi,\eta)} \calN_{12}^{\mathrm{sol}}(\xi;x,t)^2 
	\right]
$$
where the constants $A_{12}(\xi,\eta)$ are given by \eqref{Axi} and $\calN^{\mathrm{sol}}$ solves Problem \ref{outermodel}.
\end{theorem}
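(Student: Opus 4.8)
The plan is to apply the Deift--Zhou nonlinear steepest descent method, in the $\dbar$-variant of Dieng--McLaughlin and Borghese--Jenkins--McLaughlin, to the row-vector Riemann--Hilbert problem associated with Problem \ref{RHP2} (see Remark \ref{rem:RHP2}). Throughout, $\xi = -x/(4t)$ is the unique critical point of the phase $\theta(x,t,\lam)$ in \eqref{phase.lambda}, and $\eta = \sgn t$ determines which open sectors of $\C \setminus \R$ carry exponential decay of $e^{2it\theta}$. The first step is a conjugation: I introduce a scalar function $\delta = \delta(\lam;\xi,\eta)$ solving a scalar RHP on $\negint$ with jump $1 - \eps\lam|\rho(\lam)|^2$; by \eqref{rho.con} this factor is strictly positive, so $\delta$ is given by a convergent Cauchy integral, $\delta(\lam) = \exp\!\left(\tfrac{1}{2\pi i}\int_{\negint} \tfrac{\log(1 - \eps s|\rho(s)|^2)}{s - \lam}\,ds\right)$. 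Conjugating $n$ by $\delta^{\sig}$ converts the jump matrix $J(\lam)$ into a form admitting the appropriate upper/lower triangular factorization on the two halves of $\R$ relative to $\xi$; this is the source of the integral factor in $\widehat{C_k}$. Simultaneously I carry out the usual trading of residues: the poles $\lam_j$ with $\Re\lam_j \in \negint \setminus I$ are removed by explicit triangular transformations, contributing the Blaschke-type factors $\left(\tfrac{\lam_k - \lam_j}{\lam_k - \overline{\lam_j}}\right)^2$, while the poles in $\Lambda(I)$ are retained.

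Next I perform the deformation. Since $\rho$ is only $H^{2,2}$ and not analytic, I replace contour deformation by a $\dbar$-continuation: the triangular factors are extended off $\R$ into lens-shaped regions agreeing with $r_\xi$, $s_\xi$ on $\R$ and having $\dbar$-derivative controlled by $\norm[H^{2,2}]{\rho}$ times a factor decaying in $t$. This yields a mixed RHP--$\dbar$ problem whose pure-RHP part has jumps only on the cross through $\xi$ and on small circles about the surviving poles. That pure RHP factorizes as a product of an outer model $\Nsol$ carrying exactly the data $\{(\lam_k,\widehat{C_k}): \lam_k \in \Lambda(I)\}$, which by the results already established is precisely the RHP for the $N(I)$-soliton $\qsol(x,t;\mathcal{D}_I)$ (Problem \ref{outermodel}), together with a local parabolic-cylinder parametrix $\mPC$ near $\xi$ built from parabolic cylinder functions with the frozen data $r_\xi, s_\xi$, whose subleading term is of size $t^{-1/2}$ with coefficient $A_{12}(\xi,\eta)$ as in \eqref{Axi}.

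For the error analysis, writing the true solution as $\error\cdot\mPC\cdot\Nsol$ near $\xi$ and $\error\cdot\Nsol$ away from it, $\error$ solves a small-norm RHP--$\dbar$ problem: the jump mismatch on the circle about $\xi$ is $\bigo{t^{-1/2}}$ with an explicit leading term and $\bigo{t^{-1}}$ remainder, the jumps on the remaining contours are exponentially small, and the $\dbar$-contribution is $\bigo{t^{-3/4}}$, the rate dictated by the $H^{2,2}$ regularity. Solving the associated singular integral equation and inserting the large-$z$ expansion of $\error$ into the reconstruction formula \eqref{q.lam} gives $q = \qsol + t^{-1/2} f + \bigo{t^{-3/4}}$, with $f$ the $t^{-1/2}$-coefficient from the PC parametrix conjugated by $\Nsol$; tracking the off-diagonal entries and using the symmetry $N_{21} = \eps\overline{N_{12}(\zbar)}$ produces $f(x,t) = 2^{-1/2}\big[A_{12}(\xi,\eta)\,\calN_{11}^{\mathrm{sol}}(\xi;x,t)^2 + \eps\xi\,\overline{A_{12}(\xi,\eta)}\,\calN_{12}^{\mathrm{sol}}(\xi;x,t)^2\big]$. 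Uniformity over $(x,t)\in\calS(v_1,v_2,x_1,x_2)$ follows because solitons with $\Re\lam_j \notin I$ are exponentially separated from the moving window, so their residue contributions are uniformly negligible.

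I expect the principal difficulty to be the uniform control of the interplay between the discrete spectrum and the $\dbar$-continuation: one must verify that the $\dbar$-estimates survive both the presence of poles and the conjugation by $\delta^{\sig}$, and that the bounds are uniform across the entire cone, including the regime where solitons approach but never cross the boundary velocities. A secondary technical point is the solvability (a vanishing lemma) and the $x,t$-uniform boundedness of the outer soliton model $\Nsol$ with the modulated data $\widehat{C_k}$, which underlies the identification of the limiting profile as a genuine $N(I)$-soliton; and the $\eta = \pm 1$ dichotomy requires care, since it interchanges the roles of $\negint$ and $\posint$ and hence of the growth and decay sectors.
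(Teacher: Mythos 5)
Your proposal is correct in outline and follows the same $\dbar$-steepest-descent architecture as the paper: the scalar conjugation $\delta^{\sigma_3}$ with jump $1-\eps\lam|\rho|^2$ on $\negint$, the non-analytic lens extensions controlled in $H^{2,2}$, an outer soliton model plus a parabolic-cylinder parametrix at $\xi$, a small-norm error problem whose $t^{-1/2}$ term produces $f$ via \eqref{Axi} and the symmetry $A_{21}=\eps\xi\overline{A_{12}}$, and a pure $\dbar$-problem contributing $\bigO{t^{-3/4}}$. The one genuine structural difference is how the poles outside $\Lambda(I)$ are disposed of. You trade them away at the deformation stage by triangular/Blaschke interpolants, so that your outer model carries only $\{(\lam_k,\widehat{C_k}):\lam_k\in\Lambda(I)\}$ from the start. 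The paper instead keeps \emph{all} $N$ poles in the outer model (Problem \ref{outermodel}), whose residue data are modified only by $\delta(\lam_k)^{-2}$, identifies it with the full $N$-soliton $\qsol(x,t;\mathcal{D}_\xi)$, and only afterwards invokes a separate soliton-separation result (Proposition \ref{prop:sol separation}, proved in Appendix \ref{app:solitons} by a Blaschke renormalization of the reflectionless linear system relative to the $\xi$-dependent partition $\posnegpoles$) to replace $\mathcal{D}_\xi$ by $\mathcal{D}_I$ up to $\bigO{e^{-4\poledist\nu_0 t}}$; this is where the factors $\bigl(\tfrac{\lam_k-\lam_j}{\lam_k-\overline{\lam_j}}\bigr)^2$ enter. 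Your route buys a cleaner limiting object immediately, but it shifts the burden onto exactly the point you flag: solvability and $(x,t)$-uniform boundedness of the outer model with data $\widehat{C_k}$, where the retained poles in $\Lambda(I)$ still have residue factors $e^{-2it\theta(\lam_k)}$ that can grow exponentially inside the cone when $\Re\lam_k$ lies on the growing side of $\xi$; curing this requires the $\xi$-dependent (not $I$-dependent) Blaschke renormalization and the uniform bounds \eqref{gamma.bounds}, which is precisely the content of Lemma \ref{lem:sol.bound}. The paper's deferred arrangement handles that uniformity and the identification of $\qsol(\cdot;\mathcal{D}_I)$ in one place, at the cost of carrying all $N$ poles through the error analysis.
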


Equation \eqref{q.long-time} expresses soliton resolution in the following sense. First, as described below, the function $q_\sol(x,t)$ is generically asymptotic to a superposition of one-soliton solutions.
Second, the term at order $t^{-1/2}$ represents a dispersive contribution; in the no-soliton case, i.e., if $v_1,v_2$ are chosen in Theorem~\ref{thm:long-time} such that $N(I) = 0$, 
$\Nsol \equiv I$, and $\qsol \equiv 0$, so using \eqref{Axi} the asymptotic solution reduces to
\begin{equation}
	\begin{gathered}
	   q(x,t) = \frac{1}{\sqrt{2|t|}} \frac{\kappa(\xi)}{ \xi} e^{i\alpha_\pm(\xi) } 
	   e^{i x^2/(4t) \mp \kappa(\xi) \log|8t| }
	   + \bigo{t^{-3/4}}, \quad t \to \pm \infty \\
	   \alpha_\pm(\xi) = \frac{\pi}{4} - \arg(-\eps \xi \overline{\rho(\xi)})
	   \pm \arg \Gamma(i\kappa(\xi)) 
	   \pm \frac{1}{\pi} \int_{\mp \infty}^\xi \log|\xi - \lam| 
	      \mathrm{d}_\lam \log(1-\eps \lam |\rho(\lam)|^2).
	\end{gathered}
\end{equation}
which agrees with the dispersive asymptotics determined, for example, in \cite{LPS16}. 

Previously, Kitaev-Vartanian computed similar asymptotics in the 
no-soliton sector \cite{KV97} and the finite-soliton sector \cite{KV99}, making more stringent assumptions on regularity together with a smallness assumption on the reflection coefficient that we do not require.

Theorem~\ref{thm:long-time} implies, as a special case, the asymptotic separation of the solution $q(x,t)$ into a sum of one-solitons whenever the $\lam_k \in \poles^+$ have distinct real parts. 
If the initial data $q_0$ generates scattering data $\{ \rho, \{\lam_k, C_k\}_{k=1}^N \}$ with $\lam_k = \eta_k + i \tau_k$ then
applying Theorem~\ref{thm:long-time} repeatedly to sets $\mathcal{S}_k$ each of which contains a single soliton speed $v=-4\eta_k$ one finds that the solution of \eqref{DNLS2}-\eqref{data} satisfies 
\begin{equation}\label{sol.res.generic}
	q(x,t) = \sum_{k=1}^N 
		q_{\textrm{sol}}(x,t; \lam_k, x_k^\pm, \alpha_k^\pm) + \bigo{|t|^{-1/2}}
		\qquad
		t \to \pm \infty	
\end{equation}		
where, setting $u = \real \lambda$,
\begin{multline}
	q_{\textrm{sol}}(x,t;\lam,x_0,\alpha_0) =  \\
	\varphi(x - x_0 +4 u t , \lam)
		\exp i \left\{ 4|\lam|^2 t - 2u(x + 4u t) 
		  - \frac{\eps}{4} \int_{-\infty}^{x- x_0 + 4 u t} \varphi(\eta)^2 d\eta
		  - \alpha_0 \right\}
\end{multline}
is the form of a general one-soliton solution of \eqref{DNLS2}. 
Here $\alpha_0$, $x_0$, and $\varphi$ are given as in \eqref{1sol}.
The asymptotic phases
are 
\begin{gather}
	x^\pm_k = 
		\frac{1}{4\tau_k} \log \left| \frac{\lam_k C_k^2}{4 \tau_k^2} \right|
		+ \frac{1}{2\tau_k} 
		\sum_{\substack{\lam_j \in \poles^+ \\ \mathclap{\pm (\eta_k - \eta_j) > 0}}} 
		  \log \left| \frac{ \lam_k - \lam_j}{\lam_k - \overline{ \lam_j} } \right|
		  \pm \frac{1}{2\pi} \int_{\eta_k}^{\mp\infty}
		    \frac{ \log(1-\eps s | \rho(s)|^2) }{(s-\eta_k)^2+\tau_k^2} ds \\
	\alpha^\pm_k = 
		\arg \lp i \lam_k C_k \rp 
		+ \sum_{\substack{\lam_j \in \poles^+ \\ \mathclap{\pm (\eta_k - \eta_j) > 0}}}
		    \arg \lp \frac{ \lam_k - \lam_j}{\lam_k - \overline{ \lam_j} } \rp
		\mp \frac{1}{\pi} \int_{\eta_k}^{\mp \infty}    
		    \frac{ (s - \eta_k) \log(1-\eps s | \rho(s)|^2) }{(s-\eta_k)^2+\tau_k^2} ds
		 \mod{2\pi}
\end{gather}
so that the total phase shifts of the $k$th soliton, as it interacts both with the other solitons and the radiation component, are
\begin{align}
	x_k^+ - x_k^- &= 
		\frac{1}{2\tau_k} 
		\sum_{\mathclap{j \neq k }} \sgn(\eta_k - \eta_j) 
		  \log \left| \frac{ \lam_k - \lam_j}{\lam_k - \overline{ \lam_j} } \right|
		  + \frac{1}{2\pi} \int_{-\infty}^{\infty}
		    \frac{ \sgn(s-\eta_k) \log(1-\eps s | \rho(s)|^2) }{(s-\eta_k)^2+\tau_k^2} ds \\
	\alpha_k^+ - \alpha_k^- &= 
		\sum_{\mathclap{j \neq k}}
		    \sgn(\eta_k - \eta_j) 
		    \arg \lp \frac{ \lam_k - \lam_j}{\lam_k - \overline{ \lam_j} } \rp
		- \frac{1}{\pi} \int_{-\infty}^{\infty}    
		    \frac{ |s - \eta_k| \log(1-\eps s | \rho(s)|^2) }{(s-\eta_k)^2+\tau_k^2} ds
	 \mod{2\pi}	    
\end{align} 

In the non-generic case in which two or more $\lam_j \in \poles^+$ have the same real part one still observes a form of soliton resolution akin to \eqref{sol.res.generic}. In this case, the one-solitons in \eqref{sol.res.generic} corresponding to spectral values with the same real part coalesce to form higher-order solitons called breathers; these breathers are localized traveling waves whose amplitudes exhibit quasi-periodic oscillations in the frame of the traveling wave.

To obtain a similar asymptotic formula for \eqref{DNLS1}, we use the gauge transformation \eqref{G} to write 
$$
u(x,t) = 
	\left[ \calG^{-1}	
			\circ
				\calM
			\circ\calG
	\right]	(u_0)  (x).
$$ 
where $\calM$ is given by \eqref{sol.op}.
We obtain an asymptotic formula for $u(x,t)$ in terms of spectral data for $q_0 = \calG (u_0)$ in Proposition \ref{prop:u.gauge.expansion}, which plays a key result and introduces some complications in the asymptotic formulas for small $\xi$.   Recall Definition \ref{def:U} and the fact that $\calG$ maps dense open subsets to dense open sets in $H^{2,2}(\R)$,
and recall the space-time region \eqref{S.cone}. If $u_0 \in \calG^{-1}(U)$, then $q_0 = \calG(u_0)$ has 
no spectral singularities and the scattering coefficient $\ba$ for $q_0$ has at most finitely many zeros. 

\begin{theorem}
\label{thm:long-time-gauge}
Suppose that $u_0 \in \calG^{-1}(U)$,  let $q_0 = \calG u_0$, and let $\calR(q_0) = \left\{ \rho, \{ \lam_k,C_k \}_{k=1}^N \right\}$. Fix $v_1,v_2,x_1,x_2$ as in Theorem \ref{thm:long-time}, let $I=[-v_2/4,-v_1/4]$,  let $\xi = -x/(4t)$, and let $\eta = \pm 1$ for $\pm t > 0$. 
Let $u_\sol(x,t) = \calG^{-1} \left( q_\sol(\cdot,t) \right)(x)$. 
Fix $M > 0$.
The solution $u(x,t)$ of \eqref{DNLS1} has the following asymptotics as $|t| \rarr \infty$ in the cone
$\calS(v_1,v_2,x_1,x_2)$.
\begin{enumerate}
\item[(i)]	 For $|\xi| \geq M t^{-1/8}$,
$$ 
u(x,t) = \left[ 
					u_\sol(x,t;\calD(I)) + t^{-1/2} g(x,t) + \bigO{t^{-3/4}} 
			\right]
			e^{i\alpha_0(\xi,\eta)} 
$$
\item[(ii)]	For $|\xi| \leq M t^{-1/8}$,
$$ 
u(x,t) = F(\xi,t,\eta) \left[ u_\sol(x,t;\calD(I)) + t^{-1/2}\widetilde{g}(x,t) + \bigO{t^{-3/4}} \right] e^{i\alpha_0(\xi,\eta)} $$
\end{enumerate}
In the above formulas, 
\begin{align*}
\alpha_0(\xi,\eta)	&= 	-4 \sum_{\real \lam_k \in I^+_{\xi,\eta} \setminus I} \arg \lam_k
	- \frac{1}{\pi} \int_{I_{\xi,\eta}^+} \frac{\log\left(1-\eps \lam|\rho(\lam)|^2 \right)}{\lam} \, d \lam \\
g(x,t)		&=		\left[	
								A_{12}(\xi,\eta)
								\calN_{11}^{\sol}(\xi;x,t)^2 + 
								\eps \xi \overline{\calN^{\sol}(\xi;x,t)}^2  \right.	\\
			&\quad	\left.
								+ 2\eps q_\sol(x,t;\calD(I)) \, 
									\real
										\left(
												A_{12}(\xi,\eta) \calN_{11}^\sol(\xi;x,t) 
												\overline{\calN_{12}^\sol(\xi;x,t)}
										\right)
						\right]  \\
			&\quad	\times	e^{i\eps \int_{-\infty}^x |q_\sol(y,t;\calD(I))|^2 \, dy}\\
\widetilde{g}(x,t) &= g(x,t)  \\ 
	&\quad + (1-G(\xi,t,\sgnt)) \overline{A_{12}(\xi,\sgnt)} \qsol(y,t;\mathcal{D(I)}) 
	\exp \lp \ \  4i \sum\limits_{\mathclap{\Re \lam_k \in I \cup \negint}} \arg \lam_k \rp 
	\int_x^\infty \usol(y,t;\mathcal{D}(I)) dy ,
\intertext{where, setting $p=e^{i\pi/4} |8t\xi^2|^{1/2}$,}
F(\xi,t,\eta)		&=	\left[ e^{p^2/4} p^{-i\eta \kappa(\xi)} D_{i\eta \kappa(\xi)}(p) \right]^{-2}\\
G(\xi,t,\eta)	&=	\frac{p D_{i\eta\kappa(\xi)-1}(p)}{D_{i\eta\kappa(\xi)}(p)}.
\end{align*}
\end{theorem}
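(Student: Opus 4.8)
The plan is to transport Theorem~\ref{thm:long-time} through the inverse gauge $\calG^{-1}$. Since $\calG$ preserves the pointwise modulus, $u(x,t)=\exp\!\big(i\eps\int_{-\infty}^{x}|q(y,t)|^{2}\,dy\big)\,q(x,t)$, so the only new information needed beyond Theorem~\ref{thm:long-time} is the large-$t$ behavior of the gauge phase $\int_{-\infty}^{x}|q(y,t)|^{2}\,dy$. One should \emph{not} compute this by integrating the pointwise asymptotics of $|q|^{2}$: the dispersive tail carries an $O(1)$ amount of mass through that integral, and, more importantly, the remainder in Theorem~\ref{thm:long-time} is not uniform as the stationary point $\xi=-x/(4t)$ of $\theta$ approaches $0$ — which is exactly the mechanism producing the two regimes in the statement. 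Instead one reads the gauge phase off the Riemann--Hilbert solution at the exceptional point $z=0$: at $\zeta=0$ (equivalently $\lam=0$) the Lax equation \eqref{LS.m} degenerates to $\tfrac{d}{dx}M=P(x)M$ with $P$ diagonal, so the Beals--Coifman solution — equivalently the solution $N$ of Problem~\ref{RHP2} at $\lam=0$ — satisfies $N_{11}(x,t,0)=\exp\!\big(\tfrac{i\eps}{2}\int_{x}^{\infty}|q(y,t)|^{2}\,dy\big)$ up to a constant fixed by the scattering data. Combined with conservation of $\|q(t)\|_{L^{2}}^{2}=\|q_{0}\|_{L^{2}}^{2}$ and its trace-formula expression, this yields an exact identity of the schematic form $u(x,t)=\mathcal{C}(q_{0})\,[N_{11}(x,t,0)]^{-2}\,q(x,t)$, so recovering $u$ requires the value of the Riemann--Hilbert solution at the origin in addition to the large-$z$ data that produces $q$.

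\textbf{Steepest descent, tracked near $z=0$.} I would then rerun the Deift--Zhou/$\dbar$ analysis already carried out for Theorem~\ref{thm:long-time} — scalar $\delta$-function conjugation on $I_{\xi,\eta}^{-}$ (the source of the $\log(1-\eps\lam|\rho|^{2})$ integrals and of the modulated coefficients $\widehat{C_{k}}$), opening of lenses, the small-norm $\dbar$ estimate, and factorization through the outer soliton model $\calN^{\mathrm{sol}}$ (Problem~\ref{outermodel}) together with a local parabolic-cylinder parametrix at $\xi$ — but now keeping track of the solution uniformly for $z$ in a neighborhood of $0$, not merely of the reconstruction coefficient at $z=\infty$. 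Evaluating the resulting asymptotics at $z=0$ splits into two cases according to the position of the local parametrix relative to the origin, measured by $p=e^{i\pi/4}|8t\xi^{2}|^{1/2}\sim|t|^{1/2}|\xi|$. For $|\xi|\ge M|t|^{-1/8}$ one has $1/p^{2}\lesssim|t|^{-3/4}$, so within the claimed precision the parabolic-cylinder parametrix contributes to $N_{11}(x,t,0)$ only through its $I+\bigO{t^{-1/2}}$ error; the leading value at $0$ is $\delta(0)\,\calN_{11}^{\mathrm{sol}}(0;x,t)$, whose square provides the prefactor $e^{i\alpha_{0}(\xi,\eta)}$ (the sum $-4\sum_{\real\lam_{k}\in I_{\xi,\eta}^{+}\setminus I}\arg\lam_{k}$ coming from discrete residues crossed during the deformation, the $\log$-integral from $\delta(0)$), while the $\bigO{t^{-1/2}}$ part of the $z=0$ expansion gives the correction inside $g$. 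For $|\xi|\le M|t|^{-1/8}$ the center $\xi$ of the parametrix lies within the transition scale of $z=0$, so its value at the origin is a genuine ratio of parabolic-cylinder functions $D_{i\eta\kappa(\xi)}(p)$: the $(1,1)$ entry, squared, is $F(\xi,t,\eta)$, and the off-diagonal entry feeds $G(\xi,t,\eta)$ into the $t^{-1/2}$ term, producing the extra $(1-G)$ contribution in $\widetilde g$. That $F,G\to1$ and $1-G\to0$ as $p\to\infty$ is precisely the matching of the two regimes.

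\textbf{Assembly.} Finally, multiply the $z=0$ asymptotics of $[N_{11}(x,t,0)]^{-2}$ by $q(x,t)=\qsol(x,t;\calD(I))+t^{-1/2}f(x,t)+\bigO{t^{-3/4}}$ from Theorem~\ref{thm:long-time}, expand, and collect orders. The leading term is $\calG^{-1}(\qsol)(x,t)=\usol(x,t;\calD(I))$ times $e^{i\alpha_{0}(\xi,\eta)}$ — consistently, the factor $e^{i\eps\int_{-\infty}^{x}|\qsol(y,t;\calD(I))|^{2}\,dy}$ in $g$ is precisely the soliton-level gauge phase; the $t^{-1/2}$ term combines $t^{-1/2}f$ with the leading phase and $\qsol$ with the $t^{-1/2}$-correction of the phase, giving $g$ in case (i) and $g$ plus the $(1-G)$ term (whose factor $\int_{x}^{\infty}\usol\,dy$ comes from differentiating the gauge phase through the soliton profile) in case (ii); the remainder stays $\bigO{t^{-3/4}}$. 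One then checks uniformity in the cone $\calS(v_{1},v_{2},x_{1},x_{2})$ and in the transition regime, and that the accumulated data-dependent constants (discrete products $\prod(\lam_k/\overline{\lam_k})^{\pm2}$, $\log(1-\eps\lam|\rho|^{2})$ integrals, conserved mass) recombine into the stated $\alpha_{0}$, $g$, $\widetilde g$. The cases $t\to+\infty$ and $t\to-\infty$ are handled together via $\eta=\sgn t$ and the conjugation symmetries of the Riemann--Hilbert problem.

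\textbf{Main obstacle.} The crux is the uniform analysis as $\xi\to0$, where the point at which the gauge phase is read off ($z=0$) coincides with the stationary point of $\theta$: the parabolic-cylinder parametrix must be propagated all the way to the origin with quantitative error control in the overlap, the correct transition functions $F$ and $G$ (rather than their $\xi$-away-from-$0$ limits) must be extracted, and the threshold $|\xi|\sim|t|^{-1/8}$ — dictated by balancing $1/p^{2}\sim|t|^{-3/4}$ — must be pinned down. A secondary nuisance is the bookkeeping of the many data-dependent constants distributed among $\delta(0)$, $\calN^{\mathrm{sol}}(0)$, the residues crossed in the deformation, and the conserved mass, which must assemble exactly into $\alpha_{0}$, $\widehat{C_{k}}$, and the prefactors in $g$ and $\widetilde g$.
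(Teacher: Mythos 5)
Your proposal is correct and follows essentially the same route as the paper: you express the gauge factor as $n_{11}(x,t,0)^{-2}$ times the conserved-mass/trace-formula constant (the paper's weak Plancherel identity, Lemma \ref{lem.Plancherel}), then evaluate the chain $n=n^{(3)}\calN^{\mathrm{RHP}}(\mathcal{R}^{(2)})^{-1}\delta^{\sigma_3}$ at $z=0$ using $\calN^{\mathrm{sol}}$, $\error$, and the parabolic-cylinder parametrix, with the two regimes governed by whether the origin sits inside the local disk and matched via $F,G\to 1$ — exactly Proposition \ref{prop:u.gauge.expansion}. The only caveat is that the bookkeeping you defer (in particular, the discrete sum in $\alpha_0$ arising from combining the full-line Plancherel sum with the soliton-level gauge phase of $\qsol(\cdot;\mathcal{D}_\xi)$ versus $\mathcal{D}_I$, rather than from "residues crossed during deformation") must be carried out as in the paper's Remark \ref{rem:RHP2}-style assembly, but this does not change the argument.
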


\begin{remark}
Note that in Theorem \ref{thm:long-time}, we have used  that the $N$-soliton solution    $\qsol(x,t;\mathcal{D}_\xi)$ and  the reduced $N(I)$- soliton solution
$\qsol(x,t;\mathcal{D}_I)$ with respective scattering data 
 $\mathcal{D}_\xi = \{(\lam_k,\widetilde{C}_k) \}_{k=1}^N$
 and 
$\mathcal{D}_I = \{(\lam_k, \widehat{C}_k ), \   \widehat{C}_k = \widetilde{C}_k   \smashoperator[l]{ \prod_{
	\substack{\lam_j \in \poles \setminus \poles(I) \\ \Re \lam_j \in  \negint}
	}}
	\lp \frac{ \lam_k - \lam_j}{ \lam_k - \overline{\lam_j} } \rp^2 ) \,:\, \lam_k \in \poles(I) \}$
	are exponentially close at $|t|\to \infty$ (see Proposition \ref{prop:sol separation}). However, this is not the case for their respective gauge factor for which
\[
Ê Ê Ê\exp \Big (i\eps \int_{-\infty}^xÊ |\qsol(x,t; \mathcal{D}_\xi)|^2 dxÊ \Big)Ê =
Ê Ê Ê\exp \Big( i\eps \int_{-\infty}^xÊ |\qsol(x,t; \mathcal{D}_I)|^2 dxÊ \Big)   \exp \big( -4i \sum_{\Re \lam_k \in I^- \setminus I} \arg \lam_k 
  \big).
\]
This explains the presence of the first term in the right-hand side of the formula for $\alpha_0$ in Theorem \ref{thm:long-time-gauge}.	
\end{remark}

We close this introduction by sketching the contents of this paper. After a review of the Beals-Coifman approach to Riemann-Hilbert problems in section \ref{sec:prelim}, we consider the direct scattering map in section \ref{sec:direct}. Section \ref{sec:direct} should be read in concert with section 3 of \cite{LPS16}; the main 
new results of this section are the proof that the set $U$ from Definition \ref{def:U} is open and dense (see Theorem \ref{thm:generic} and Propositions \ref{prop:direct.generic} and Proposition \ref{prop:direct.open}) and that, restricted to the set $U$, the maps from $q$ to the discrete scattering data are Lipschitz continuous (Proposition \ref{prop:direct.discrete.lip}. Together with the results from section 3 of \cite{LPS16}, these results imply Theorem \ref{thm:R}. All of these results are based on analysis of the Volterra integral equations for the normalized Jost solutions $N^\pm$ in the $\lam$ variables (see \eqref{direct.n1}--\eqref{direct.n2}).

Section \ref{sec:inverse} contains the proof of Theorem \ref{thm:I}. We first prove that there exists a unique solution to Problem \ref{RHP1} (section \ref{sec:RHP1}) using a uniqueness theorem of Zhou \cite{Zhou89} together with the Beals-Coifman theory. Next, we use the results of section \ref{sec:RHP1} and a change of variables to prove existence and uniqueness of solutions to Problem \ref{RHP2} in 
section \ref{sec:RHP2}. Finally, in section \ref{sec:Lip},  we study the Beals-Coifman 
integral equations for Problem \ref{RHP2} to prove Lipschitz continuity of the inverse map, 
proving Theorem \ref{thm:I}.

We give the proof of Theorem  \ref{thm:long-time} in sections \ref{sec:deform},  \ref{sec:models}, and \ref{sec:largetime}, using the steepest descent method of Deift and Zhou \cite{DZ93}, the later approach of Dieng-McLaughlin \cite{DM08}, and the very recent work of Borghese, Jenkins and McLaughlin \cite{BJM16} on the focusing cubic NLS which shows how to treat a problem with discrete as well as continuous spectral data. Following \cite{BJM16} we reduce Problem \ref{RHP2} to an `outer' model which describes the asymptotic behavior of solitons, and in `inner' model which computes the contributions due to the interactions of solitons and radiation. 

In section \ref{sec:deform} we begin with the row-vector RHP (see Remark \ref{RHP2}) and deform to an RHP on a new contour $\Sigma^{(2)}$ whose jump matrices approach the identity exponentially fast away from the critical point $\xi$. In section \ref{sec:conj},
we  conjugate the row-vector RHP for $n=(n_1,n_2)$  to a form suitable for lensing, i.e., deforming the contour $\R$ about the critical point $\xi$ so that the jump matrix of the new Riemann-Hilbert problem approaches the identity exponentially fast away from the critical point $\xi$. The solution of the conjugated RHP, Problem \ref{rhp.n1}, is  the unknown $n^{(1)}(x,t,z)=n(x,t,z) \delta^{-\sigma_3}$ where $\delta$ solves a scalar RHP with contour $(-\infty,\xi)$ and has asymptotics $1 + \bigO{1/z}$ as $|z| \rarr \infty$.  The jump matrix for Problem \ref{rhp.n1} has a `good factorization for lensing. In order to deform the contour, we extend the scattering data in the jump matrix of Problem \ref{rhp.n1} into the complex plane to define a new unknown $n^{(2)}$, solving a mixed $\dbar$-Riemann-Hilbert problem, Problem \ref{rhp.n2}, described in section \ref{sec:extensions}. The extension introduces non-analyticity of $n^{(2)}$ which is solved away at a later step. The unknown $n^{(2)}$ coincides with $n^{(1)}$ in the sectors $\Omega_2$ and $\Omega_5$ (see figure \ref{fig:n2def}) and is piecewise analytic in the sectors $\Omega_1$--$\Omega_6$ with no jumps across the real axis. 

In section \ref{sec:models} we construct a solution $\calN^{\mathrm{RHP}}$ 
of the Riemann-Hilbert problem, Problem \ref{rhp.Nrhp}, corresponding to Problem \ref{rhp.n2}. Thus the function $n^{(3)} = n^{(2)}\left(\calN^{\mathrm{RHP}}\right)^{-1}$ obeys a pure $\dbar$-problem, Problem \ref{dbar.n3}. Because the original RHP contains both `continuous' and `discrete' data, the solution $\calN^{\mathrm{RHP}}$ consists of
an `outer' model for the soliton components (see section \ref{sec:outer model}, Problem \ref{outermodel} and Proposition \ref{outer.soliton}) and an `inner' model for the stationary phase point (see section \ref{sec:local model},  Problem \ref{rhp.localmodel} and Proposition \ref{prop:PCmodel.est}).  The outer and inner models are used to build a parametrix for 
Problem \ref{rhp.Nrhp} in section \ref{sec:RHP.exist}. The `gluing' of parametrices is carried out by solving a small-norm Riemann-Hilbert problem, Problem \ref{rhp.E}.  The $\dbar$ problem for $n^{(3)}$ is solved in section \ref{sec:dbar} and is shown to have asymptotic behavior $$n(z) = \begin{pmatrix} 1 & 0 \\ 0 & 1 \end{pmatrix}+ \bigO{t^{-3/4}},$$ 
Thus, (suppressing the $(x,t)$ dependence for brevity)
$$ n(z) = n^{(3)}(z) \mathcal{N}^{\mathrm{RHP}}(z) \mathcal{R}^{(2)}(z)^{-1} \delta(z)^{\sigma_3}.$$
The leading contribution to $q(x,t)$ in \eqref{q.lam} comes from the explicitly computable model factor $\calN^{\mathrm{RHP}}$ owing to the asymptotics of $n^{(3)}$, the fact that $\mathcal{R}^{(2)}$ is the identity in sectors $\Omega_2$ and $\Omega_5$ (and we can take $z \rarr \infty$ in either sector), and the diagonal matrix $\delta^{-\sigma_3}$ does not change $n_2$ at order $\bigO{1/z}$.

With the solution of Problem \ref{RHP2} in hand, we give the proof of Theorems \ref{thm:long-time} and 
\ref{thm:long-time-gauge} in sections \ref{subsec:sol-q} and \ref{subsec:sol-u}, respectively.  To prove Theorem \ref{thm:long-time-gauge}, we work out an asymptotic formula for the phase factor in the gauge transformation \eqref{G} in terms of spectral data, Proposition \ref{prop:u.gauge.expansion}. This in 
turn relies on a weak Plancherel formula, Lemma \ref{lem.Plancherel}. 

Appendix \ref{app:BC} collects the Beals-Coifman integral equations for Riemann-Hilbert problems \ref{RHP1} and \ref{RHP2}. We give the jump matrices and residue relations for the `left' normalized version of Problem \ref{RHP2} in Appendix \ref{app:left}. We construct $N$-soliton solutions $q_\sol$ for \eqref{DNLS2} in Appendix \ref{app:solitons}. Finally, in Appendix \ref{app:empty}, we construct initial data for \eqref{DNLS2} with arbitrarily large $L^2$ norm and empty discrete spectrum, showing that \eqref{DNLS2} (and hence, by gauge transformation, \eqref{DNLS1}) admits global solutions with arbitrarily large $L^2$ norm.

\subsection*{Acknowledgements}

PAP and JL would like to thank the University of Toronto for hospitality during part of the time that this work was done. Conversely, RJ and CS would like to thank the department of Mathematics at the University of Kentucky for their hospitality during  part of the time that this work was done.  The authors are grateful to Peter Miller for numerous helpful discussions and particularly for his suggestion to use the ideas of Tovbis and Venakides to construct soliton-free initial data of large $L^2$-norm.


\subsection*{Notation Index}

$~$
\vskip  0.5cm

\subsubsection*{General}

\begin{itemize}

\item[$\Omega^\pm$]	 	Set of $z \in \C$ with $\pm \imag z^2  > 0$
\item[$\Omega^{++}$]		First quadrant in $\C$

\item[$\calF$]		Fourier transform $\left(\calF f\right)(\lam) = \dfrac{1}{\pi}\dint_{-\infty}^\infty e^{-2i\lam x} f(x) \, dx$
\smallskip

\item[$\calF^{-1}$]	Inverse Fourier transform 
						$\left(\calF^{-1}g\right)(x) = \dfrac{1}{\pi} \dint_{-\infty}^\infty e^{2i\lam x} g(\lam) \, d\lam$
						
\smallskip

\item[$C$]						Cauchy integral over a contour 
\item[$C^\pm$]				Cauchy projectors 
						
\smallskip

\item[$\sigma_2$]			Pauli matrix 
									$\begin{pmatrix} 0 & 1 \\ 1 & 0 \end{pmatrix}$
									
\smallskip
\item[$\sigma_3$]			Pauli matrix
									$\begin{pmatrix} 1 & 0 \\ 0 & -1 \end{pmatrix}$

\smallskip

\item[$\sigma_\eps$]  		The matrix 
									$\begin{pmatrix} 0 & 1 \\ \eps & 0 \end{pmatrix}$
\end{itemize}

\subsubsection*{Maps and Spaces}

\begin{itemize}
\item[$\calG$]				Gauge transformation \eqref{G} linking \eqref{DNLS1} and \eqref{DNLS2}
\item[$\calR$]					Direct scattering map (Definition \ref{def:R})
\item[$\calI$]					Inverse scattering map (Definition \ref{def:I})
\item[$\Phi_t$]				Flow on scattering data defined by \eqref{sd.ev}
\item[$\calM$]				Solution operator $\calI \circ \Phi_t \circ \calR$ for \eqref{DNLS2}
\item[$U$]						Spectrally determined, open and dense subset of $H^{2,2}
									(\R)$	(Definition \ref{def:U}) and domain of $\calR$
\item[$V$]						Image of $U$ under $\calR$ (Definition \ref{def:V}) and 
									domain of $\calI$
\item[$U_N$]					$N$-soliton sector of $U$
\item[$V_N$]					Image of $U_N$ under $\calR$
\item[$S$]						Set of $\rho \in H^{2,2}(\R)$ with 
									$1- \eps \lam|\rho(\lam)|^2 > 0$
\item[$X$]						Function space (see \eqref{X})
\item[$X^\sharp$]			Function space (see \eqref{Xsharp})
\item[$Y$]						Set of $\rho \in L^{2,5/4}(\R)$ with $\rho' \in L^{2,3/4}(\R)$
									and $1- \eps \lam|\rho(\lam)|^2 > 0$  (See
									\eqref{Y})
\end{itemize}

\subsubsection*{Scattering Solutions,  Scattering Data}

\begin{itemize}
			
\item[$\Psi^\pm$]			Jost solutions of \eqref{LS}	with
									$\lim_{x \rarr \pm \infty} \Psi^\pm(x,\zeta) e^{i\zeta^2 x \sigma_3} = \begin{pmatrix} 1 & 0 \\ 0 & 1 \end{pmatrix}$
\item[$M^\pm$]				Normalized Jost solutions $M(x,z) = \Psi^\pm(x,\zeta) e^{i\zeta^2 x \sigma_3}$
\item[$a,b,\ba,\bb$]			Scattering coefficients in $\zeta$ variables, \eqref{trans} 
\item[$r.\br$]					Reflection coefficient in $\zeta$ variables, 
									\eqref{r}--\eqref{r.to.br}
\item[$\alpha,\beta$]		Scattering coefficients in $\lam$ variables, 
									\eqref{alpha.beta.def}
\item[$\rho$]					Scattering data in $\lam$ variable, \eqref{ab->rho}
\item[$\zeta_j, c_j$] 		Zero of $\ba$ and associated norming constant 
									(see  \eqref{czeta})
\item[$\lam_j, C_j$] 		Zero of $\balpha$ and associated norming constant 
									(see \eqref{zeta->lambda}	
\item[$\calZ$]					Set of zeros of $a$ and $\ba$
\item[$\calZ^{++}$]			$\calZ \cap \Omega^{++}$
\item[$\Lambda$]			Image of $\calZ$ under  $z \mapsto z^2$ and set of zeros of 
									$\alpha$ and $\balpha$
\item[$\Lambda^{++}$] 	Image of $\calZ^{++}$ under $z \mapsto z^2$

\end{itemize}

\subsubsection*{Beals-Coifman Integral Equations}
\begin{itemize}
\item[$\calC_w$]			Beals-Coifman integral operator (see \eqref{BC.Cw}) associated to RHP with factorized jump matrix (see \eqref{RHP.model.v})
\item[$\mu$]					Matrix-valued solution of Beals-Coifman 
									integral equation for  Problem \ref{RHP1}; see
									\eqref{RHP1.11.AI}--\eqref{RHP1.12.AI.disc}
\item[$\nu$]					Row vector-valued solution of 
									Beals-Coifman integral equation
									for Problem \ref{RHP2}; see 
									\eqref{RHP2.11}--\eqref{RHP2.12.disc}
\item[$\nu^\flat$]				Solution of iterated Beals-Coifman equation for 
									Problem \ref{RHP2} (see \eqref{nuflat} and \eqref{nuflat.int})
									belonging to $X$
\item[$\nu^\sharp$]			$\nu^\flat - \mathbf{e}$, element of $X^\sharp$ obeying \eqref{nusharp.int}
\item[${\nu}_0,\nu^*$]		See \eqref{nuflat.to.nu0}
\end{itemize}

\subsubsection*{Riemann-Hilbert Problems}

\begin{itemize}
\item[$\Sigma$]				Oriented contour $\imag z^2 = 0$ that divides $\C$ 
									into $\Omega^+$ and $\Omega^-$
\item[$\Sigma^{(2)}$]		Oriented contour $\real z^2 = 0$ (see \eqref{Sigma_k})

\item[$M_\pm$]				Boundary values of Beals-Coifman solutions 
									of \eqref{LS.m} on $\Sigma$ as 
									$\pm \imag \zeta^2 \darr 0$
\item[$\Theta$]  			Phase function \eqref{phase.zeta} ($\zeta$ variables)
\item[$\theta$]				Phase function \eqref{phase.lambda} ($\lam$ variables)
\item[$\xi$]						Unique critical point of $\theta$, $\xi=-x/4t$
\item[$\calN^{\mathrm{sol}}$]
									`Outer model' (Problem \ref{outermodel}) which
									describes solitons (see Proposition \ref{outer.soliton})
\item[$\calN^{\mathrm{PC}}$]
									`Inner model' (Problem \ref{rhp.localmodel}) that
									describes radiation (see Proposition 
									\ref{prop:PCmodel.est})
\item[$\calN^{\mathrm{RHP}}$] 
									Solution to Problem \ref{rhp.Nrhp} (see \eqref{error def})
\item[$\calE$]					Gluing error for gluing $\calN^{\mathrm{sol}}$ and
									$\calN^{\mathrm{PC}}$ to form $\calN^{\mathrm{RHP}}$ (see Problem \ref{rhp.E},
									Lemma \ref{lem:Err}, and Proposition \ref{prop:E})
\item[$n$]						Row vector-valued solution to Problem \ref{RHP2}
\item[$n^{(1}$, $n^{(2)}$, $n^{(3)}$]  
									Respective solutions to 
									Riemann-Hilbert Problem  \ref{rhp.n1}, 
									$\dbar-$Riemann-Hilbert Problem \ref{rhp.n2}, 
									and $\dbar$-problem \ref{dbar.n3}
\end{itemize}

\newpage
%
%

\section{Preliminaries}
\label{sec:prelim}
 
We recall several important results of the Beals-Coifman theory of Riemann-Hilbert problems. References include the paper of Beals-Coifman \cite{BC84}, the paper of Zhou \cite{Zhou89} and the recent monograph on Riemann-Hilbert problems by Trogdon and Olver \cite{TO16}.

Let  $\Gamma$ be a \emph{complete} contour in $\C$, i.e., an oriented contour that divides $\C\setminus \Gamma$ into two regions $\Omega^+$ and $\Omega^-$ so that  $\Omega^+ \cap \Omega^- = \emptyset$,   $\Omega^+$ lies to the left of $\Gamma$, and $\Omega^-$ lies to the right of $\Gamma$. We allow for the possibility of self-intersections (which occur in Problems \ref{RHP1} and \ref{RHP1c} below).

The Cauchy transform of a function $f \in L^2(\Gamma)$ is given by 
$$ (Cf)(z) = \frac{1}{2\pi i} 
	\int_\Gamma 
		\frac{f(s)}{s-z} \, ds.
$$
We will sometimes write $C_\Gamma$ for $C$ to emphasize the dependence on the contour $\Gamma$.
If $f \in H^1(\Gamma)$ then $(Cf)(z)$ has H\"{o}lder 
continuous boundary values $C^\pm f$ on $\Gamma$ where
$$ (C^\pm f)(\zeta) = \lim_{\substack{z \rarr \zeta, \\ z \in \Omega^\pm}} (Cf)(z). $$
The operators $C^\pm$ are bounded operators on $L^2(\Gamma)$ with 
\begin{equation}
\label{Cpm}C^+ - C^- = I,
\end{equation} 
where $I$ denotes the identity operator.  For contours with finite self-intersections, see Trogdon-Olver \S 2.5 for a proof of the $L^2$-boundedness result under hypotheses that include our Problems \ref{RHP1} and \ref{RHP1c}.

It is worth noting that, if $\Gamma$ is the union of two disjoint contours $\Gamma_1$ and $\Gamma_2$ and $h \in H^1(\Gamma)$, then
\begin{equation}
\label{Cpm.disjoint}
\left(C^\pm h\right)(\zeta)=
	\begin{cases}
		\left(C^\pm_{\Gamma_1} h \right)(\zeta)
			+ \left(C_{\Gamma_2} h\right)(\zeta),
			&	\zeta \in \Gamma_1, \\
		\\
		\left(C^\pm_{\Gamma_2} h\right)(\zeta)
			+ \left(C_{\Gamma_1} h\right)(\zeta),
			&	\zeta \in \Gamma_2.
	\end{cases}
\end{equation}

We consider the following Riemann-Hilbert problem for a matrix-valued function $M(z)$ on $\C \setminus \Gamma$. We write $g_\pm \in \dee C(L^2)$ if $(g_+,g_-)$ are a pair of functions in $L^2(\Gamma)$ with $g_\pm = C^\pm h$ for a fixed function $h \in L^2(\Gamma)$.  For simplicity of notation, we omit the parametric dependence of the data $v$ and the solution $M$ on $x,t \in \R$ that occurs in Problems \ref{RHP1} and \ref{RHP2}.

\begin{RHP}
\label{RHP.model}
Given a $2\times 2$ matrix-valued function $v$ on $\Gamma$ with 
$v, v^{-1} \in L^\infty(\Gamma)$, 
find a function
$$ M(\dotarg): \C \setminus \Gamma \rarr SL(2,\C) $$
with $M(\dotarg)$ which has continuous boundary values 
$$ M_\pm(\zeta) = \lim_{ \substack{ z \rarr \zeta, \\ z \in \Omega^\pm}}
	M(z) $$
for $\zeta \in \Gamma$ satisfying the jump relation $M_+(\zeta) = M_-(\zeta) v(\zeta)$, and 
$$M_\pm(\dotarg) - I \in \dee C(L^2). $$
\end{RHP}

We'll refer to such a Riemann-Hilbert problem as \emph{normalized} owing to the asymptotic condition $M^\pm(\dotarg)-I \in \dee C(L^2)$. 

Given a normalized Riemann-Hilbert problem on $\Gamma$ with jump matrix 
\begin{equation}
\label{RHP.model.v}
v=(I-w_-)^{-1} (I+w_+),
\end{equation}
$w_\pm \in L^\infty(\Gamma) \cap L^2(\Gamma)$, we may introduce a new matrix-valued function $\mu$ defined in terms of the putative boundary values $M_\pm$ by
\begin{equation}
\label{BC.mu}
\mu = M_+(I+w_+)^{-1} = M_-(I-w_-)^{-1}, \quad \mu - I \in L^2(\Gamma)
\end{equation}
It is easy to see that
$$M_+ - M_- = \mu(w^-+w^+)$$
so that 
\begin{equation}
\label{mu-to-M}
M(z) = I + \int_\Gamma \frac{\mu(s)(w^-(s) + w^+(s))}{s-z} \, \dfrac{ds}{2\pi i}.
\end{equation}
Thus, in order to find $M(z)$ on $\C \setminus \Gamma$, it suffices to compute $\mu$.
Taking the boundary value as $\imag(z) \rarr 0$ in $\Omega^+$
we recover that
$$M_+(\zeta) = I + C^+
		\left[ \mu(\dotarg)
			\left(w^-(\dotarg) + w^+(\dotarg)\right)
		\right](\zeta)
$$
A short computation using  \eqref{Cpm} shows that $\mu$ solves the \emph{Beals-Coifman integral equation}
\begin{equation}
\label{BC.int}
\mu = I + \calC_w \mu 
\end{equation}
where 
\begin{equation}
\label{BC.Cw}
\calC_w h = C^+(hw_-) + C^-(hw_+).
\end{equation}

A basic result of the Beals-Coifman theory is:

\begin{theorem}
\label{thm:BC}
There exists a unique solution $M$ to the  Riemann-Hilbert problem \ref{RHP.model} if and only if there exists a unique solution $\mu$ with $\mu-I \in L^2(\Gamma)$ to the  Beals-Coifman integral equation
\eqref{BC.int}.
\end{theorem}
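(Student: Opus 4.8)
The plan is to realize the equivalence through an explicit bijective correspondence between solutions $M$ of Problem \ref{RHP.model} and solutions $\mu$ with $\mu-I\in L^2(\Gamma)$ of \eqref{BC.int}, after which existence and uniqueness transfer in both directions at once. For the direction ``$M$ solves Problem \ref{RHP.model} $\Rightarrow$ $\mu$ solves \eqref{BC.int}'', I would define $\mu$ by \eqref{BC.mu}; the two expressions for $\mu$ there agree because the factorization \eqref{RHP.model.v} gives $M_-=M_+v^{-1}=M_+(I+w_+)^{-1}(I-w_-)$. Membership $\mu-I\in L^2(\Gamma)$ follows by writing $\mu-I=(M_+-I)(I+w_+)^{-1}-w_+(I+w_+)^{-1}$ and using that $M_+-I=C^+h$ for some $h\in L^2(\Gamma)$ (this is exactly the normalization $M_\pm-I\in\dee C(L^2)$), the $L^2$-boundedness of $C^+$, that $w_+\in L^2(\Gamma)$, and $(I+w_+)^{-1}\in L^\infty(\Gamma)$. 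Then $M_+-M_-=\mu(w_++w_-)$, and since $M_\pm-I\in\dee C(L^2)$ the single density is forced to be $\mu(w_++w_-)$, so $M_\pm=I+C^\pm(\mu(w_++w_-))$; inserting $M_+=\mu(I+w_+)$ and simplifying with $C^+-C^-=I$ from \eqref{Cpm} (so that $C^+f-f=C^-f$) yields precisely $\mu=I+C^-(\mu w_+)+C^+(\mu w_-)=I+\calC_w\mu$, using \eqref{BC.Cw}.

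For the reverse direction, given $\mu$ with $\mu-I\in L^2(\Gamma)$ solving \eqref{BC.int}, I would define $M$ by \eqref{mu-to-M}, i.e.\ $M(z)=I+C\big(\mu(w_++w_-)\big)(z)$. First, $\mu(w_++w_-)\in L^2(\Gamma)$ since $w_\pm\in L^2(\Gamma)\cap L^\infty(\Gamma)$ and $\mu-I\in L^2(\Gamma)$; hence $M$ is analytic on $\C\setminus\Gamma$ with boundary values $M_\pm=I+C^\pm(\mu(w_++w_-))$ and $M_\pm-I\in\dee C(L^2)$. Next, subtracting the Beals--Coifman identity $\mu=I+C^+(\mu w_-)+C^-(\mu w_+)$ from these formulas and again invoking $C^+-C^-=I$ gives $M_+=\mu(I+w_+)$ and $M_-=\mu(I-w_-)$, whence $M_+=M_-(I-w_-)^{-1}(I+w_+)=M_-v$, so the jump relation holds. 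Finally, since $\det v\equiv 1$ (equivalently the target is $SL(2,\C)$), $\det M$ has no jump across $\Gamma$, extends to an entire function that tends to $1$ at infinity because the entries of $C(\mu(w_++w_-))$ vanish there, and hence $\det M\equiv1$ by Liouville; thus $M$ solves Problem \ref{RHP.model}.

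To finish, I would verify that the two constructions are mutually inverse, so that solutions correspond bijectively. Starting from $\mu$ and building $M$ via \eqref{mu-to-M}, the identity $M_+=\mu(I+w_+)$ returns $M_+(I+w_+)^{-1}=\mu$. Starting from $M$ and building $\mu$ via \eqref{BC.mu}, the function $I+C(\mu(w_++w_-))$ has the same boundary values $M_\pm$ as $M$, their difference has trivial jump across $\Gamma$, and both differ from $I$ by elements of $\dee C(L^2)$; the standard uniqueness statement for analytic functions with prescribed $L^2$-Cauchy boundary data (the difference extends to an entire function vanishing at infinity) forces this function to equal $M$. Consequently existence on one side is equivalent to existence on the other, and the correspondence being bijective, the same holds for uniqueness.

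I expect the only real work to be functional-analytic bookkeeping rather than any substantive idea: pinning down that $\dee C(L^2)$ is exactly the normalization making the Cauchy representation \eqref{mu-to-M} reproduce $M$ (including the decay of $M-I$ and $\det M-1$ at infinity used in the Liouville step), and ensuring the Cauchy projectors $C^\pm$ are bounded on $L^2(\Gamma)$ despite the finite self-intersections of $\Gamma$ that occur for Problems \ref{RHP1} and \ref{RHP1c} --- the latter being precisely the point for which Trogdon--Olver \S 2.5 is cited. All the algebraic content of the proof collapses to the single identity $C^+-C^-=I$.
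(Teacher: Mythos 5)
Your proposal is correct and takes essentially the same route as the paper: the explicit correspondence $\mu = M_+(I+w_+)^{-1} = M_-(I-w_-)^{-1}$ in one direction and $M = I + C\left(\mu(w_++w_-)\right)$ in the other, with all the algebra reduced to $C^+-C^-=I$, which is exactly the computation the paper sketches around \eqref{BC.mu}--\eqref{BC.int} and the argument of the references it cites for this theorem (Zhou, Prop.~3.3; Trogdon--Olver, \S 2.7). The technical points you flag at the end (reading $\partial C(L^2)$ as the global Cauchy representation, and $L^2$-boundedness of $C^\pm$ on the self-intersecting contour) are precisely the ones the paper defers to those references, so no gap beyond that standard bookkeeping remains.
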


For proof and discussion see for example Zhou \cite[Proposition 3.3]{Zhou89} and \cite[Section 2.7]{TO16}.

A sufficient condition for the  operator $\calC_w$ to be Fredholm is that $w^\pm \in H^1(\Gamma)$ (see Trogdon and Olver \cite[Theorem 2.59]{TO16}). The merit of this result is that existence and uniqueness for Problem \ref{RHP.model} is then equivalent to a standard problem in the Fredholm theory of integral equations. 

A second important result of the theory concerns problems with residues such as Problems \ref{RHP1} and \ref{RHP2}.  Associated to the residues are discrete data $\{ \zeta_j, c_j\}_{j=1}^{2N}$, $\zeta_j \in \C \setminus \Gamma$ and $c_j \in \C \setminus {0}$,  which describe the location of residues and the leading term of the Laurent series. We assume a symmetry $\zeta_{n+j} = \overline{\zeta_j}$ and $c_{j+n} = \overline{c_j}$, and we restrict to the case of simple poles, which will suffice for our application. We denote by $\calZ$ the finite set $\{ \zeta_j \}_{j=1}^{2N}$. 

\begin{RHP}
\label{RHP.model.disc}
Find an analytic function 
$$M(\dotarg):\C \setminus \left(\Gamma \cup \calZ\right) \rarr SL(2,\C)$$ 
so that:
\begin{itemize}
\item[(i)]		$M(\dotarg)$ has continuous boundary values 
$$ M_\pm(\zeta) = \lim_{z \rarr \zeta, z \in \Omega^\pm}
	M(z) $$
for $\zeta \in \Gamma$ satisfying the jump relation $M_+(\zeta) = M_-(\zeta) v(\zeta)$ and 
$$M_\pm(\dotarg) - I \in \dee C(L^2). $$
\item[(ii)]	For each $\zeta_j \in \calZ$,  
$$ 
\Res_{z=\zeta_j} M(z) = \lim_{z \rarr \zeta_j} M(z) v_j
$$
where
$$ v_j = 	\begin{pmatrix}
					0		&	0	\\
					c_j	&	0
				\end{pmatrix},
	\quad 1  \leq j \leq N, 
	\qquad
	v_j =		\begin{pmatrix}
					0	& c_j	\\
					0	&	0
				\end{pmatrix},
	\quad N+1 \leq j \leq 2N.
$$
\end{itemize}
\end{RHP}

The residue condition means that for $1 \leq j \leq N$, the first column of 
$M(z)$ has a pole given by $c_j$ times the value of the second column, while, for $N+1 \leq j \leq 2N$, the second column of $M(z)$ has a residue given by $c_j$ times the value of the first column. 

Problem \ref{RHP.model.disc} is equivalent to a Riemann-Hilbert problem with no discrete data but having an augmented contour 
$$ \Gamma' = \Gamma \cup \{ \gamma_j \}_{j=1}^{2N} $$
where each $\gamma_j$ is a simple closed curve in $\C \setminus \Gamma$ surrounding $\zeta_j$ and no other element of $\calZ$. The new curves are given an orientation consistent with the orientation of the original contour $\Gamma$ and so $\Gamma'$ also divides $\C \setminus \Gamma'$ into 
two disjoint sets ${\Omega'}^+$ and ${\Omega'}^-$.  The additional jump matrices are constructed from the discrete data.  In what follows, $D_j$ is the interior of $\gamma_j$ (see Figures \ref{fig:RHP1.c} and \ref{fig:RHP2}), and $C^\pm$ are the Cauchy projectors for the contour $\Gamma'$. As before, we say that $g_\pm \in \dee C (L^2)$ if
$g_\pm = C^\pm h$ for a fixed function $h \in L^2(\Gamma')$. 

\begin{RHP}
\label{RHP.model.aug}
Find an analytic function 
$$M: \C \setminus {\Gamma}': \rarr SL(2,\C)$$ 
with continuous  boundary values 
$$M_\pm(\zeta) = \lim_{\substack{z \rarr \zeta, \\z \in {\Omega'}^\pm}} M(z),$$
so that $M_\pm(\dotarg) - I \in \dee C_{{\Gamma}'}(L^2)$ and
$ M_+(\zeta) = M_-(\zeta) v'(\zeta) $
for $\zeta \in {\Gamma}'$, where
$$
v'(\zeta) = 	\begin{cases}
					v(\zeta)	,	&	\zeta \in \Gamma,\\
					\\
					\begin{pmatrix}
						1										&	0	\\
						\dfrac{c_i}{\zeta - \zeta_j}	&	1
					\end{pmatrix}, 
					& \zeta \in \gamma_j,  \\
					\\
					\begin{pmatrix}
						1	&	\dfrac{c_{j+n}}{\zeta-\zeta_{j+n}}	\\
						0	&	1
					\end{pmatrix},
					& \zeta \in \gamma_{j+n}
				\end{cases}
$$
\end{RHP}

\begin{theorem}
\label{thm:RHP.model.disc}
Problems \ref{RHP.model.disc} and \ref{RHP.model.aug} are equivalent, i.e., there exists a unique solution to Problem \ref{RHP.model.disc} if and only if there exists a unique solution to Problem \ref{RHP.model.aug}. Their respective solutions $M$ and $M'$ are related by
$$ M(z) = \begin{cases}
					M'(z), 	&	z \notin \medcup_{j=1}^{2N} D_j\\
					\\
					M'(z) \left(I + \dfrac{V_j}{z-\zeta_j}\right), & z \in D_j
				\end{cases}
$$
\end{theorem}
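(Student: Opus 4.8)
The plan is to produce an explicit pair of mutually inverse bijections between the solution set of Problem~\ref{RHP.model.disc} and that of Problem~\ref{RHP.model.aug}, from which the asserted equivalence and the reconstruction formula follow at once. Write $B_j(z) = I + V_j/(z-\zeta_j)$, where $V_j := v_j$ is the nilpotent matrix appearing in the residue condition of Problem~\ref{RHP.model.disc} (strictly lower triangular for $1 \le j \le N$, strictly upper triangular for $N+1 \le j \le 2N$); since $V_j^2 = 0$ one has $\det B_j \equiv 1$ and $B_j(z)^{-1} = I - V_j/(z-\zeta_j)$. As the $2N$ points of $\calZ$ are distinct, the curves $\gamma_j$ may be chosen mutually disjoint, disjoint from $\Gamma$, each enclosing only $\zeta_j$, and oriented so that their interiors $D_j$ lie in ${\Omega'}^-$; with this choice $v'|_{\gamma_j} = B_j|_{\gamma_j}$ is exactly the jump prescribed in Problem~\ref{RHP.model.aug}.

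Given a solution $M$ of Problem~\ref{RHP.model.disc}, I would set $M'(z) = M(z)$ off $\bigcup_j D_j$ and $M'(z) = M(z) B_j(z)^{-1}$ on $D_j$. The substantive point is that $M'$ is regular at $\zeta_j$: writing $M(z) = M_{-1}(z-\zeta_j)^{-1} + M_0 + \bigo{z-\zeta_j}$ near $\zeta_j$, the residue condition $\Res_{\zeta_j} M = \lim_{z\to\zeta_j} M(z) V_j$ forces $M_{-1} V_j = 0$ (otherwise the limit would diverge) and $M_{-1} = M_0 V_j$, and substituting these into $M B_j^{-1} = M\bigl(I - V_j/(z-\zeta_j)\bigr)$ annihilates both the $(z-\zeta_j)^{-2}$ coefficient, $-M_{-1}V_j = 0$, and the $(z-\zeta_j)^{-1}$ coefficient, $M_{-1} - M_0 V_j = 0$. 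Everything else is routine: $\det M' = \det M = 1$; on $\Gamma$ the jump is unchanged since $\Gamma \cap D_j = \emptyset$; on $\gamma_j$, where $M'$ equals $M$ from outside and $M B_j^{-1}$ from inside, it jumps precisely by $B_j = v'|_{\gamma_j}$; and $M'(z) = I + \bigo{1/z}$ as $|z| \to \infty$ because $B_j^{-1} = I + \bigo{1/z}$. Conversely, given a solution $M'$ of Problem~\ref{RHP.model.aug} I would set $M(z) = M'(z)$ off $\bigcup_j D_j$ and $M(z) = M'(z) B_j(z)$ on $D_j$ --- which is exactly the relation stated in the theorem. The jump relation $M'_+ = M'_- v'$ on $\gamma_j$ makes $M$ continuous across $\gamma_j$, hence analytic on $\C \setminus (\Gamma \cup \calZ)$; a one-line Laurent expansion gives $\Res_{\zeta_j} M = M'(\zeta_j) V_j = \lim_{z\to\zeta_j} M(z) V_j$ (the last equality using $V_j^2 = 0$), so the residue condition holds; and $\det M = 1$, the jump $v$ on $\Gamma$, and the normalization at infinity are all immediate. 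These two constructions are visibly inverse to one another.

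Since $M \mapsto M'$ is a bijection between the two solution sets, Problem~\ref{RHP.model.disc} admits a unique solution if and only if Problem~\ref{RHP.model.aug} does, and the displayed formula recovers $M$ from $M'$ --- which is the content of the theorem. The one place that requires genuine care, and which I expect to be the main (if still routine) obstacle, is the bookkeeping of the normalization condition ``$M_\pm - I \in \partial C(L^2)$'', since the solution of Problem~\ref{RHP.model.disc} carries poles off $\Gamma$ while the contour $\Gamma'$ of Problem~\ref{RHP.model.aug} is pole-free. I would handle this by observing that the gauge factors $B_j^{\pm 1}$ are analytic and equal to $I + \bigo{1/z}$ in a neighbourhood of every contour component other than $\gamma_j$, so they alter neither the behaviour at infinity nor the $L^2$ boundary behaviour of the solution, and then by using the Plemelj representation of the pole-free function $M'$ over $\Gamma'$ to exhibit the requisite Cauchy-class density; on the compact curves $\gamma_j$ the boundary values are smooth and bounded, so they contribute to the density without difficulty.
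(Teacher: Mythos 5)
Your construction is correct and is exactly the standard gauge argument (dressing by the nilpotent rational factors $B_j(z)=I+V_j/(z-\zeta_j)$, using $V_j^2=0$ to remove/create the poles and to match the circle jumps), which is the proof the paper itself does not spell out but delegates to Zhou, Section 6 of \cite{Zhou89}. Your explicit choices — orienting the $\gamma_j$ so that $D_j$ lies in ${\Omega'}^-$ so that $v'|_{\gamma_j}=B_j$, and the Plemelj/Liouville bookkeeping for the $\dee C(L^2)$ normalization on the augmented contour — are precisely the points that make the cited argument go through, so nothing essential is missing.
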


For a proof see \cite[Section 6]{Zhou89}. 				
%
%

\section{The Direct Scattering Map}
\label{sec:direct}

This section is devoted to the Lipschitz continuity of the scattering data $(\rho, \{ \lambda_k,C_k\}_{k=1}^N )$
with respect to the potential $q$.  The Lipschitz continuity of the reflection coefficient $\rho$ in terms of $q$ is given in Proposition \ref{prop:direct.scatt.lip}.
We refer the reader to section 3 of Paper I or to the thesis of the second author \cite{Liu17}.
We also prove that the coefficients $\alpha$ and $\balpha$ are analytic in the lower  (resp. upper) complex plane, and the location of their zeros in a compact set of $\C$, (Propositions \ref{lemma:direct.a} and \ref{lemma:direct.a.infty}).

The new element of this study is the Lipschitz continuity of the discrete scattering data in terms of $q$  presented in Section \ref{sec:direct.lip.discrete}.
For this purpose, we make the notion of generic potential precise and prove  in Section \ref{sec:direct.generic}  that the set of potentials
 supporting at most finitely many solitons and having no spectral singularities is open and dense in $H^{2,2}(\R)$.

\subsection{Lipschitz Continuity of Scattering Data}
\label{sec:direct.scattering}

To study the Jost solutions it is convenient to set
\begin{equation}
\label{Jost.zeta.norm}
	M^\pm(x,\zeta) = \Psi^\pm(x,\zeta) e^{i x \zeta^2  \sigma_3},
	\qquad
	\lim_{x \rarr \pm \infty} M^\pm(x,\zeta) = \twomat{1}{0}{0}{1}. 
\end{equation}
We recall from Paper I that the off-diagonal components of $m$ are odd functions of $\zeta$ and the on-diagonal components are even in $\zeta$. Because of this symmetry,
the change of variables
$$N^\pm(x,\zeta^2) 
= \Twomat{m_{11}(x,\zeta)}{ \zeta^{-1} m_{12}(x,\zeta)}{\zeta m_{21}(x,\zeta) }{m_{22}(x,\zeta)}$$
yields well-defined functions $N^\pm(x,\lam)$ obeying the differential equation 
\begin{equation}
\label{direct.n.de}
\frac{d}{dx} N^\pm = -i\lam \ad(\sigma_3) { N^\pm} 
	+ \left( \twomat{0}{q(x)}{\eps \lam \overline{q(x)}}{0}+
								\frac{i\eps}{2}\twomat{-|q(x)|^2}{0}{0}{|q(x)|^2} \right) N^\pm,
\end{equation}
the respective asymptotic conditions
\begin{equation}
\label{direct.n.ac}
\lim_{x \rarr \pm \infty} N^\pm(x,\lam) = \twomat{1}{0}{0}{1},
\end{equation}
and the relation 
\begin{equation}
\label{direct.n.jc}
N^+(x,\lam) = N^-(x,\lam) 
		e^{-i\lam x \ad(\sigma_3)} T(\lambda) 
\end{equation}
{where $T(\lambda)$ is the transition matrix 
\begin{equation} \label{transition-lambda}
T(\lambda) =\begin{pmatrix}
     \alpha(\lam)  & \beta (\lam) \\
      \lam {\bbeta(\lam)}  & {\balpha(\lam)}   
     \end{pmatrix}
\end{equation}					
}
and  
\begin{equation}
\label{alpha.beta.def}
\alpha(\zeta^2) = a(\zeta), \quad \beta(\zeta^2) = \zeta^{-1}\bb(\zeta)
\end{equation} 
are well-defined functions of
$\lam = \zeta^2$ owing to the symmetries \eqref{sym.ab}.
It also follows from \eqref{sym.ab} that 
$$ \breve{\alpha}(\lam) = \overline{\alpha(\lam)}, \;\;  \breve{\beta}(\lam) = \eps \overline{\beta(\lam)}. $$

From \eqref{direct.n.jc} at $x=0$ and the fact that $\det N^\pm = 1$ we derive the Wronskian formulae
\begin{align}
\label{alpha}
\alpha(\lam)	&=	N_{11}^+(0,\lam) \overline{N_{11}^-(0,\lam)} -\eps \lam^{-1} N_{21}^+(0,\lam)\overline{N_{21}^-(0,\lam)} ,\\[10pt]
\label{beta}
\beta(\lam)		&=	\frac{\eps}{\lam}\left(\overline{ N_{11}^-(0,\lam)} \overline{N_{21}^+(0,\lam)} - 
\overline{N_{11}^+(0,\lam)} \overline{N_{21}^-(0,\lam)}\right).
\end{align}
which reduce the analysis of $\alpha$ and $\beta$ to the study of the normalized Jost functions $N^\pm$.

The analysis of $\alpha(\lam)$ and $\beta(\lam)$ presented in Paper I does not depend on any spectral assumptions other than a strictly positive lower bound on $|\alpha(\lam)|$, i.e., the absence of spectral singularities. The continuity result below follows from the analysis presented there.

\begin{lemma}
The following maps are locally Lipschitz continuous from $H^{2,2}(\R )$ into $L^2(\R )$.
$$
q \mapsto \alpha(\dotarg), 		\quad 
q \mapsto \beta(\dotarg),		\quad
q \mapsto \lw \dotarg \rw \beta'(\dotarg), \quad
q \mapsto \beta''(\dotarg), \quad
q \mapsto \lw \dotarg \rw^{-1} \alpha''(\dotarg).
$$
\end{lemma}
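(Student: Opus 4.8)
The plan is to reduce everything to the known mapping properties of the normalized Jost functions $N^\pm(0,\lam)$ and then exploit the Wronskian formulas \eqref{alpha}--\eqref{beta} together with the differential equation \eqref{direct.n.de}. From Paper I (and the analysis recalled at the start of this section) we have that, for $q$ in a bounded subset of $H^{2,2}(\R)$ with no spectral singularity, the maps $q \mapsto N^\pm_{11}(0,\dotarg)$ and $q \mapsto N^\pm_{21}(0,\dotarg)$, as well as their first and second $\lam$-derivatives with the appropriate weights, are locally Lipschitz into the relevant weighted $L^2$ spaces; in particular $N^\pm_{11}(0,\dotarg) - 1$ and $\lam^{-1} N^\pm_{21}(0,\dotarg)$ lie in $H^{2,2}(\R)$ with locally Lipschitz dependence on $q$, and $N^\pm_{11}(0,\dotarg)$ is bounded with bounded inverse away from spectral singularities. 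The first step is simply to record these facts and fix a bounded neighborhood $U'$ of a given $q$ in $H^{2,2}(\R)$ on which all the relevant norms are controlled and $\inf_\lam |\alpha(\lam)| \geq c > 0$.

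Second, I would treat $q \mapsto \beta(\dotarg)$ and its weighted derivatives. From \eqref{beta}, $\beta(\lam) = \tfrac{\eps}{\lam}\bigl(\overline{N^-_{11}(0,\lam)}\,\overline{N^+_{21}(0,\lam)} - \overline{N^+_{11}(0,\lam)}\,\overline{N^-_{21}(0,\lam)}\bigr)$; the $\lam^{-1}$ is harmless because each term $N^\pm_{21}(0,\lam)$ vanishes like $\lam$ at the origin (this is exactly the symmetry that made the change of variables $\zeta^{-1}m_{12}$, $\zeta m_{21}$ well-defined), so $\lam^{-1}N^\pm_{21}(0,\dotarg) \in H^{2,2}$. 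Thus $\beta$ is, up to conjugation, a sum of products of $N^\pm_{11}(0,\dotarg)$ and $\lam^{-1}N^\pm_{21}(0,\dotarg)$. Lipschitz continuity of $q\mapsto \beta$ into $L^2$, of $q \mapsto \lw\dotarg\rw\beta'$, and of $q\mapsto \beta''$ then follows from the Leibniz rule and the Banach-algebra/multiplier properties of $H^{2,2}(\R)$ (boundedness of pointwise products of $H^1$ functions, $\lw\dotarg\rw$-weighted estimates), plus the bilinear estimate $\|fg - \tilde f\tilde g\| \lesssim \|f-\tilde f\|\|g\| + \|\tilde f\|\|g-\tilde g\|$. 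The weight $\lw\dotarg\rw$ on $\beta'$ is absorbed by distributing it onto the factors, each of which carries an $H^{2,2}$ bound that controls one weight.

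Third, the $\alpha$ statements are slightly more delicate because of the $\eps\lam^{-1}$ in \eqref{alpha} and the weight $\lw\dotarg\rw^{-1}$ on $\alpha''$. For $\alpha(\lam) = N^+_{11}(0,\lam)\overline{N^-_{11}(0,\lam)} - \eps\lam^{-1}N^+_{21}(0,\lam)\overline{N^-_{21}(0,\lam)}$, the first term is an honest product in $H^{2,2}$; the second term has a $\lam^{-1}$ but $N^+_{21}(0,\lam)\overline{N^-_{21}(0,\lam)}$ vanishes like $\lam^2$ at the origin, so $\lam^{-1}$ times it still has the right behavior near $0$, while for large $\lam$ the factor $\lam^{-1}$ only helps. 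The map $q\mapsto \alpha$ is therefore Lipschitz into, say, $L^\infty \cap L^2$. The weight $\lw\dotarg\rw^{-1}$ on $\alpha''$ is there precisely to absorb the growth created when the two $\lam$-derivatives land on the $\lam^{-1}$ factor: one computes $\partial_\lam^2(\lam^{-1}g(\lam))$ with $g = N^+_{21}\overline{N^-_{21}}$ and sees terms like $\lam^{-3}g$, $\lam^{-2}g'$, $\lam^{-1}g''$; since $g = O(\lam^2)$ near $0$ and $g,g',g''$ are controlled in the weighted spaces coming from Paper I, the product with $\lw\dotarg\rw^{-1}$ is in $L^2$. Carrying out the Leibniz expansion of $\alpha''$ and bounding each of the finitely many terms in $L^2$ using the Paper I estimates, with the bilinear Lipschitz estimate as above, completes this case.

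The main obstacle I anticipate is the bookkeeping around the $\lam^{-1}$ and $\lam^{-2}$ singularities at the origin: one must use, and in this paper cite from Paper I, the precise vanishing order of $N^\pm_{21}(0,\lam)$ at $\lam = 0$ and the precise weighted-space membership of its $\lam$-derivatives, and check that the combinations appearing in $\partial_\lam^2(\lam^{-1}\,\cdot\,)$ are integrable near $0$ after multiplication by $\lw\dotarg\rw^{-1}$. Everything away from $\lam=0$, and all the Lipschitz (rather than merely boundedness) statements, reduce mechanically to the multilinear estimates in $H^{2,2}(\R)$ and its weighted relatives together with the already-established Lipschitz continuity of $q \mapsto N^\pm(0,\dotarg)$; so modulo this low-frequency analysis the lemma is a routine consequence of the results of Paper I recalled in this section.
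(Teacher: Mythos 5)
Your overall strategy is the one the paper intends: the paper omits the proof entirely, remarking only that the lemma follows from the analysis of Paper I, and that analysis is exactly the reduction you describe — express $\alpha$ and $\beta$ through the Wronskian formulas \eqref{alpha}--\eqref{beta}, invoke the (Lipschitz) estimates on the normalized Jost functions $N^\pm(0,\cdot)$ from the Volterra equations, and conclude by bilinear estimates of the form $\|fg-\tilde f\tilde g\|\lesssim\|f-\tilde f\|\,\|g\|+\|\tilde f\|\,\|g-\tilde g\|$. Your handling of the origin (oddness of $m_{21}$ in $\zeta$, hence vanishing of $N^\pm_{21}(0,\lam)$ like $\lam$ at $\lam=0$, so the factors $\lam^{-1}$ in \eqref{alpha}--\eqref{beta} are harmless) is also correct and is the right low-frequency bookkeeping.

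There are, however, two concrete problems with the inputs as you state them. First, the claim that $N^\pm_{11}(0,\cdot)-1$ and $\lam^{-1}N^\pm_{21}(0,\cdot)$ lie in $H^{2,2}(\R)$ is false in general: from \eqref{n11-1} one only gets $N^+_{11}(0,\lam)-1=\bigO{\lam^{-1}}$, and from \eqref{direct:n21.lam} $N^+_{21}(0,\lam)$ tends to $-\tfrac{\eps}{2i}\overline{q(0)}$ as $|\lam|\to\infty$, so these functions decay like $\lw\lam\rw^{-1}$ and certainly do not carry the weight $\lw\lam\rw^{2}$ required for $H^{2,2}$. Consequently the step in which you "absorb" the weights by distributing them onto factors "each of which carries an $H^{2,2}$ bound" does not go through as written; indeed, if it did, you would obtain $\lw\cdot\rw^{2}\beta\in L^2$, which is stronger than the lemma asserts. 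The correct argument must use the actual weighted $L^2/L^\infty$ estimates on $N^\pm(0,\cdot)$, $\partial_\lam N^\pm(0,\cdot)$ and $\partial^2_\lam N^\pm(0,\cdot)$ established in Paper I (these are what produce the asymmetric weights $\lw\cdot\rw\beta'$, $\beta''$, $\lw\cdot\rw^{-1}\alpha''$ appearing in the statement, a reflection of the quadratic change of variables $\lam=\zeta^2$), rather than a Banach-algebra argument in $H^{2,2}$. Second, the restriction at the outset to a neighborhood on which $\inf_\lam|\alpha(\lam)|\geq c>0$ is both unnecessary and undesirable: the lemma is stated for all of $H^{2,2}(\R)$ with no spectral assumption (the lower bound on $|\alpha|$ enters only later, for the quotient $\rho=\beta/\alpha$ in Proposition \ref{prop:direct.scatt.lip}), and since your products never require inverting $\alpha$ or $N^\pm_{11}$, you should simply drop that hypothesis.
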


We omit the proof.

Let $\rho(\lam) = \beta(\lam)/\alpha(\lam)$. As a consequence of the quotient rule, we have:

\begin{proposition}
\label{prop:direct.scatt.lip}
For any $c>0$, the map $q \rarr \rho$ is Lipschitz continuous from the open subset 
$$U = 	\left\{ 
				q \in H^{2,2}(\R ): 
				\inf_{\lam \in \R }|\alpha(\lam)|> c
			\right\}
$$
of $H^{2,2}(\R )$ into $H^{2,2}(\R )$. 
\end{proposition}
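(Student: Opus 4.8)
The plan is to deduce Lipschitz continuity of $q \mapsto \rho = \beta/\alpha$ from the already-established Lipschitz continuity of the maps $q \mapsto \alpha$, $q \mapsto \beta$, and their (weighted) derivatives stated in the preceding lemma, together with the spectral hypothesis that $|\alpha(\lambda)| > c$ uniformly in $\lambda$. The first step is to record the standard facts that make the quotient rule work: $H^{2,2}(\R) \hookrightarrow L^\infty(\R)$ continuously (so $\alpha, \beta$ are bounded), and the lower bound $|\alpha| > c$ gives $\|1/\alpha\|_{L^\infty} \le 1/c$. One also needs that on a bounded subset $U'$ of $U$ the relevant norms of $\alpha,\beta$ are uniformly bounded; this follows from the lemma (the maps are \emph{locally} Lipschitz, hence bounded on bounded sets) together with $\lim_{|\lambda|\to\infty}\alpha(\lambda) = 1$ from \eqref{asy.a}, which controls $\alpha$ near infinity where $\beta \in L^2$ alone would not immediately give $\rho \in L^2$.

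Next I would write, for two potentials $q_1, q_2 \in U'$ producing $(\alpha_j,\beta_j,\rho_j)$,
$$
\rho_1 - \rho_2 = \frac{\beta_1 - \beta_2}{\alpha_1} + \beta_2 \left( \frac{1}{\alpha_1} - \frac{1}{\alpha_2}\right) = \frac{\beta_1 - \beta_2}{\alpha_1} - \frac{\beta_2(\alpha_1 - \alpha_2)}{\alpha_1 \alpha_2},
$$
and estimate each term in $L^2$: the first is bounded by $c^{-1}\|\beta_1 - \beta_2\|_{L^2} \lesssim \|q_1-q_2\|_{H^{2,2}}$, and the second by $c^{-2}\|\beta_2\|_{L^\infty}\|\alpha_1-\alpha_2\|_{L^2} \lesssim \|q_1-q_2\|_{H^{2,2}}$ since $\|\beta_2\|_{L^\infty}$ is uniformly bounded on $U'$. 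This handles the $L^2$ part. For the $H^{2,2}$ estimate one must control $\langle\cdot\rangle^2 \rho$, $\rho'$, and $\rho''$. The weight is harmless on $\rho$ itself because $\langle\lambda\rangle^2 \beta \in L^2$ is part of the hypothesis package (it is implied by $\beta, \langle\cdot\rangle\beta', \beta'' \in L^2$ via the one-dimensional Sobolev/Hardy-type inequalities, or one assumes $\beta \in H^{2,2}$ outright as the lemma's conclusion in $L^2$ of the weighted derivatives plus the definition of the space); the main work is differentiating the quotient. Using the product/quotient rule, $\rho' = \beta'/\alpha - \beta\alpha'/\alpha^2$ and similarly for $\rho''$, which expands into finitely many terms each a product of factors drawn from $\{1/\alpha\}$ (in $L^\infty$), $\{\alpha, \beta, \langle\cdot\rangle^{-1}\alpha'', \langle\cdot\rangle\beta', \beta''\}$ and their differences. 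Each difference is controlled Lipschitzly by the lemma; each undifferentiated or $L^\infty$ factor is controlled by the uniform bounds on $U'$. The weights balance: $\langle\lambda\rangle^2 \rho''$ pairs a $\langle\lambda\rangle^2$ against, e.g., $\beta''$ and $\alpha''$, which the lemma controls only weighted as $\langle\lambda\rangle^{-1}\alpha''$ — here one uses that $\langle\lambda\rangle^2 \beta'' $ need not be in $L^2$, so instead one checks the exact weight bookkeeping matches what Paper I establishes (the lemma is stated precisely so that $\rho \in H^{2,2}$, i.e., $\langle\cdot\rangle^2\rho, \rho', \rho'' \in L^2$, works out; a cross term like $\beta \alpha''/\alpha^2$ carries weight $\langle\lambda\rangle^2$ which is absorbed as $\langle\lambda\rangle^2\beta \in L^\infty$-ish against $\langle\lambda\rangle^{-1}\alpha''\in L^2$ modulo a residual $\langle\lambda\rangle$ that is killed by decay of $\beta$).

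The main obstacle — really the only nonroutine point — is the careful weight accounting in the second derivative: one must verify that every term produced by twice-differentiating $\beta/\alpha$ can be grouped into (bounded $L^\infty$ factor) $\times$ (one $L^2$ factor from the lemma's list with \emph{exactly} the right weight) $\times$ (Lipschitz-controlled difference), with no orphaned positive power of $\langle\lambda\rangle$. This is exactly why the lemma is phrased with the mixed weights $\langle\cdot\rangle\beta'$, $\beta''$, $\langle\cdot\rangle^{-1}\alpha''$ rather than uniform weights, and why $\alpha \to 1$ at infinity matters (so $1/\alpha - 1$ decays and products like $(\langle\lambda\rangle^2\rho)$ near $\lambda = \infty$ don't blow up). Once that bookkeeping is done, assembling the triangle-inequality estimate and invoking the uniform bounds of the lemma on a bounded subset $U'$ of $U$ yields $\|\rho_1 - \rho_2\|_{H^{2,2}} \lesssim \|q_1 - q_2\|_{H^{2,2}}$ with constant depending only on $c$ and the $H^{2,2}$-size of $U'$, which is the assertion. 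I would present this as: (1) quote the Sobolev embedding and the lemma; (2) state the uniform bounds on $U'$; (3) write the quotient-rule decomposition for $\rho$, $\rho'$, $\rho''$; (4) estimate term by term; (5) conclude.
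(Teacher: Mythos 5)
Your proposal follows essentially the same route as the paper, which disposes of this proposition in one line ("as a consequence of the quotient rule") resting on the preceding lemma's Lipschitz bounds for $\alpha$, $\beta$ and their weighted derivatives together with the uniform lower bound $|\alpha| > c$; your term-by-term quotient-rule decomposition and weight bookkeeping is precisely the argument the paper leaves implicit (with details deferred to Paper I). No gap to report.
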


In Proposition \ref{lemma:direct.a},   we prove that $\alpha$ extends to an analytic function on $\C  ^-$. 
It follows from  the relation
\begin{equation}
\label{direct:alpha.breve}
\balpha(\lam) = \overline{\alpha(\lambdabar)}.
\end{equation}
that $\balpha$ extends to an analytic function on $\C^+$.
From the equation
\eqref{direct.n.de} and the condition \eqref{direct.n.ac}, 
it is easy to see that $N_{11}^+(x,\lam)$ and $N_{21}^+(x,\lam)$ satisfy the Volterra integral equations
\begin{align}
\label{direct.n1}
N_{11}^+(x,\lam)	
	&=	1	-	 \int_x^\infty 
								q(y) N_{21}^+(y)  
					\, dy
				-	\int_x^\infty
							\frac{-i\eps}{2} |q(y)|^2 N_{11}^+(y,\lam) 
					\, dy \\
\label{direct.n2}
N_{21}^+(x,\lam)
	&=	-\int_x^\infty 
				e^{2 i\lam(x-y)}   
						\eps \lam \overline{q(y)} N_{11}^+(y,\lam) 
				\, dy
		{ -} \frac{i\eps}{2} \int_x^\infty 
				 e^{2i\lambda(x-y)} |q(y)|^2 N_{21}^+(y,\lam)
			 \, dy.			
\end{align}
Integrating by parts to remove the $\lam$ in \eqref{direct.n2} and iterating the resulting equations leads to the following system of integral equations (see Paper I, equation (3.4)):
\begin{align}
\label{direct:n11.lam}
N_{11}^+(x,\lam)	&=	 1 + \frac{\eps}{2i} \int_x^\infty q(y) \int_y^\infty e^{-2i\lam(z-y)}
q^\sharp(z) N_{11}^+(z,\lam) \, dz \, dy \\
\label{direct:n21.lam}
N_{21}^+(x,\lam) &=	-\frac{1}{2i} \eps \overline{q(x)} N_{11}^+(x,\lam) 
	-\frac{\eps}{2i} \int_x^\infty e^{-2i\lam(y-x)} q^\sharp(y) N_{11}^+(y,\lam) \, dy
\end{align}
where 
$$
q^\sharp(x) = \overline{q'(x)} - \frac{i}{2} \eps|q(x)|^2\overline{q(x)}
$$
\begin{lemma}
\label{lemma:direct.n}
There exist unique solutions of \eqref{direct:n11.lam}--\eqref{direct:n21.lam} with
$$ \sup_{\imag \lam \leq 0} \left|N_{11}^+(x,\lam) \right|
\leq\exp\left(\frac{1}{2}\|q\|_{L^1} \|q^\sharp\|_{L^1}  \right)$$
and
$$ 
\sup_{\imag \lam \leq 0} \left| N_{21}^+(x,\lam) \right| \leq 
\exp\left(\frac{1}{2}\|q\|_{L^1}\|q^\sharp \|_{L^1}\right)\left(\|q\|_{L^1}+\|q^\sharp\|_{L^1} \right).
$$
Moreover,
\begin{multline}
\label{direct.n11.c}
\left| N_{11}^+(x,\lam;q_1) - N_{11}^+(x,\lam;q_2) \right|	\\
\leq
\exp\left[
		 	C
				 \left( 	
				 	\|q_1\|_{L^1} \|q_1^\sharp\|_{L^1} + 
				 	\|q_2\|_{L^1} \|q_2^\sharp\|_{L^1}
		 		\right)
		\right] 
		\left(  
				\|q_1-q_2\|_{L^1}    \|q_1^\sharp \|_{L^1} 
+ \|q_2\|_{L^1} \|q_1^\sharp-q_2^\sharp\|_{L^1} 
		\right)
\end{multline}
where $C$ is independent of $\lam$ with 
$\imag(\lam) \leq 0$ and $x \geq 0$. 
\end{lemma}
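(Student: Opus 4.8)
The plan is to treat \eqref{direct:n11.lam} as a fixed point equation for $N_{11}^+$ and then to read off $N_{21}^+$ from \eqref{direct:n21.lam}. For $\lambda$ a fixed parameter in the closed lower half plane, define an operator $\calK_q$ on bounded continuous functions of $x\in\R$ by
\[
(\calK_q f)(x) = \frac{\eps}{2i}\int_x^\infty q(y)\int_y^\infty e^{-2i\lambda(z-y)}\,q^\sharp(z)\,f(z)\,dz\,dy,
\]
so that \eqref{direct:n11.lam} reads $N_{11}^+ = 1 + \calK_q N_{11}^+$. Recall that $q,q^\sharp\in L^1(\R)$ for $q\in H^{2,2}(\R)$. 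The first, and essentially the only nonroutine, step is to rewrite $\calK_q$ as a genuine single variable Volterra operator: since $|e^{-2i\lambda(z-y)}| = e^{2(\Im\lambda)(z-y)}\le 1$ whenever $z\ge y$ and $\Im\lambda\le 0$, Fubini's theorem applies on the region $x\le y\le z$ and gives
\[
(\calK_q f)(x) = \frac{\eps}{2i}\int_x^\infty q^\sharp(z)\,\kappa_q(x,z)\,f(z)\,dz,\qquad \kappa_q(x,z)\coloneqq \int_x^z q(y)\,e^{-2i\lambda(z-y)}\,dy,
\]
whose kernel has modulus at most $\tfrac12\|q\|_{L^1}|q^\sharp(z)|$, a bound independent of $x$ and of $\lambda$ with $\Im\lambda\le 0$.

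With this form in hand I would run the standard Volterra iteration. Setting $G(x) = \tfrac12\|q\|_{L^1}\int_x^\infty|q^\sharp(s)|\,ds$, so that $-G'=\tfrac12\|q\|_{L^1}|q^\sharp|\ge 0$ and $G(+\infty)=0$, an induction on $n$ (using $G^n G' = \tfrac1{n+1}(G^{n+1})'$) gives $|(\calK_q^n f)(x)| \le \tfrac1{n!}G(x)^n\sup_x|f| \le \tfrac1{n!}\big(\tfrac12\|q\|_{L^1}\|q^\sharp\|_{L^1}\big)^n\sup_x|f|$. Hence the Neumann series $N_{11}^+ \coloneqq \sum_{n\ge 0}\calK_q^n 1$ converges absolutely and uniformly in $x$ and in $\lambda$ with $\Im\lambda\le 0$, solves \eqref{direct:n11.lam}, and is bounded by $\exp(\tfrac12\|q\|_{L^1}\|q^\sharp\|_{L^1})$; it is moreover continuous in $(x,\lambda)$ and analytic in $\lambda$ for $\Im\lambda<0$ since each summand is. Uniqueness among bounded functions is immediate: the difference $w$ of two bounded solutions satisfies $w=\calK_q w=\calK_q^n w$ for all $n$, and $\tfrac1{n!}G(x)^n\to 0$. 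The bound on $N_{21}^+$ then follows at once from \eqref{direct:n21.lam}: both terms carry a unimodular factor (again $\Im\lambda\le 0$ and $y\ge x$), so $|N_{21}^+(x,\lambda)| \le \tfrac12\big(|q(x)|+\|q^\sharp\|_{L^1}\big)\sup_x|N_{11}^+(\cdot,\lambda)|$, from which the second estimate follows.

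For the Lipschitz bound \eqref{direct.n11.c}, I would subtract the two fixed point equations. Writing $w = N_{11}^+(\cdot,\lambda;q_1)-N_{11}^+(\cdot,\lambda;q_2)$ one obtains $(I-\calK_{q_2})w = (\calK_{q_1}-\calK_{q_2})N_{11}^+(\cdot,\lambda;q_1)$, hence $w = (I-\calK_{q_2})^{-1}(\calK_{q_1}-\calK_{q_2})N_{11}^+(\cdot,\lambda;q_1)$. The iterate bound above shows that $(I-\calK_{q_2})^{-1}=\sum_{n\ge 0}\calK_{q_2}^n$ maps bounded functions to bounded functions with operator bound $\exp(\tfrac12\|q_2\|_{L^1}\|q_2^\sharp\|_{L^1})$. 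For the difference operator I would use the single variable form and telescope the kernel difference,
\[
q_1^\sharp\,\kappa_{q_1} - q_2^\sharp\,\kappa_{q_2} = q_1^\sharp\,(\kappa_{q_1}-\kappa_{q_2}) + (q_1^\sharp-q_2^\sharp)\,\kappa_{q_2},
\]
using $|\kappa_{q_j}(x,z)|\le\|q_j\|_{L^1}$ and $|\kappa_{q_1}(x,z)-\kappa_{q_2}(x,z)|\le\|q_1-q_2\|_{L^1}$, to get $\sup_x|(\calK_{q_1}-\calK_{q_2})f| \le \tfrac12\big(\|q_1-q_2\|_{L^1}\|q_1^\sharp\|_{L^1}+\|q_2\|_{L^1}\|q_1^\sharp-q_2^\sharp\|_{L^1}\big)\sup_x|f|$. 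Combining these three bounds with $\sup_x|N_{11}^+(\cdot,\lambda;q_1)|\le\exp(\tfrac12\|q_1\|_{L^1}\|q_1^\sharp\|_{L^1})$ yields \eqref{direct.n11.c} with $C=\tfrac12$.

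The only step needing real care is the Fubini rearrangement converting the double integral operator into a Volterra operator with an $x$-independent kernel bound; without it the clean $1/n!$ decay of the iterates, hence the exponential bounds, does not come out. Everything else is routine, the one piece of bookkeeping being that in the Lipschitz estimate the telescoping must be split exactly as above so that the result matches the combination $\|q_1-q_2\|_{L^1}\|q_1^\sharp\|_{L^1}+\|q_2\|_{L^1}\|q_1^\sharp-q_2^\sharp\|_{L^1}$ appearing in \eqref{direct.n11.c}.
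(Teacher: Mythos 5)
Your proposal is correct and follows essentially the same route as the paper: a Volterra/Neumann-series argument exploiting $|e^{-2i\lambda(z-y)}|\le 1$ for $\Im\lambda\le 0$, $z\ge y$, yielding the resolvent bound $\exp\!\left(\tfrac12\|q\|_{L^1}\|q^\sharp\|_{L^1}\right)$, with the Lipschitz estimate obtained from the second resolvent identity exactly as you do; your Fubini rearrangement to a one-variable Volterra kernel is a clean way to make the $1/n!$ gain explicit, though the same gain also comes directly from the double-integral form by comparing with the decreasing weight $\tfrac12\|q^\sharp\|_{L^1}\int_x^\infty|q|$. The only loose point is the final step for $N_{21}^+$: the term $\tfrac12|q(x)|\,\sup|N_{11}^+|$ is controlled by $\|q\|_{L^\infty}$ (finite since $H^{2,2}(\R)\subset L^\infty(\R)$), not by $\|q\|_{L^1}$, so the literal constant in the lemma's second display does not quite follow from your intermediate bound — but this imprecision is already present in the paper's statement, whose own proof likewise only asserts uniform boundedness of $N_{21}^+$ from \eqref{direct:n21.lam}.
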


\begin{proof}
Equation \eqref{direct:n11.lam} is of Volterra type
{  
\begin{equation}\label{volt}
  N_{11}^+ = 1 + S N_{11}^+.
 \end{equation}
   Using that}
 $\left| e^{-2i\lam(x-y)} \right| \leq 1$ for $\imag \lam \leq 0$ and $x \leq y$,
the estimate
$$ \left| (Sh)(x) \right| \leq \gamma(x) \sup_{y>x} \left| h(y) \right| $$
holds  with
$$
\gamma(x) = \int_x^\infty |q(y)| \int_y^\infty |q^\sharp(z)| \, dz \, dy.
$$
From  the standard theory of Volterra integral equations, {  we have that }
\begin{equation}
\label{direct.S.res}
\norm[C(\R ^+) \rarr C(\R ^+)]{(I-S)^{-1}} 
	\leq \exp \left(\frac{1}{2} \|q\|_{L^1} \|q^\sharp\|_{L^1} \right),
\end{equation}
uniformly in $\lam$ with $\imag \lam \leq 0$. Since 
$\|q\|_{L^1}$ and $\|q^\sharp\|_{L^1}$ are controlled by 
$\norm[H^{2,2}]{q}$, it follows from this fact and \eqref{direct:n21.lam} that 
 $\norm[C(\R ^+)]{N_{11}^+}$ and  $\norm[C(\R ^+)]{N_{21}^+}$  
 are bounded uniformly in  $\lam$ with  $\imag \lam \leq 0$
 and $q$ in a bounded subset of $H^{2,2}(\R )$.  Finally,  \eqref{direct.n11.c} follows from the resolvent estimate \eqref{direct.S.res} and the second resolvent formula.
\end{proof}

Similar estimates are obtained for $N_{11}^-$ and $N_{21}^-$ for $x \in \R ^-$.
From these estimates and the Wronskian formula \eqref{alpha}, we  conclude:

\begin{proposition}
\label{lemma:direct.a}
The function $\alpha$ is analytic on $\C  ^-$ and {  satisfies}
$$
\left| \alpha(\lam;q_1) - \alpha(\lam;q_2) \right| 
\leq \exp
				C
				\left( 
					\norm[H^{2,2}]{q_1}^2 + \norm[H^{2,2}]{q_2}^2
				\right)
			\left( \norm[H^{2,2}]{q_1} + \norm[H^{2,2}]{q_2} \right)
			\left( \norm[H^{2,2}]{q_1 - q_2}\right).
$$
where the constants are uniform in $\lam$ with $\imag \lam \leq 0$.
\end{proposition}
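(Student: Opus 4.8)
The plan is to read off Proposition~\ref{lemma:direct.a} from the Wronskian representation \eqref{alpha}, which writes $\alpha$ as a bilinear expression in the boundary values at $x=0$ of the normalized Jost functions $N^\pm$, combined with Lemma~\ref{lemma:direct.n}, its counterpart for $N_{11}^-,N_{21}^-$ on $\R^-$, and a short additional remark about the factor $\lam^{-1}$. Essentially all of the analytic work is contained in Lemma~\ref{lemma:direct.n}; what remains is to assemble the pieces and track the exponential constants.

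\emph{Analyticity.} First I would note that $N_{21}^+(0,\lam)$ vanishes linearly at $\lam=0$: the inhomogeneous term of \eqref{direct.n2} carries an explicit factor $\lam$, and Volterra iteration preserves it, so $g^+(\lam):=\lam^{-1}N_{21}^+(0,\lam)$ solves a Volterra equation of exactly the structure treated in Lemma~\ref{lemma:direct.n} (divide \eqref{direct.n2} by $\lam$); hence $g^+$ is bounded uniformly for $\imag\lam\le 0$ and obeys a \eqref{direct.n11.c}-type Lipschitz estimate. Next, with $x=0$ fixed, the resolvent $(I-S)^{-1}$ of \eqref{direct.S.res} is given by a Neumann series whose iterates are absolutely convergent multiple integrals with kernels $e^{-2i\lam(\cdot)}$ that are entire in $\lam$ and bounded by $1$ on $\{\imag\lam\le 0\}$; each iterate is therefore analytic on $\C^-$ and continuous up to $\R$, and the series converges uniformly there, so $N_{11}^+(0,\cdot)$ and $g^+$ are analytic on $\C^-$ and continuous up to $\R$. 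The analogous equations for $N^-$ (integrals over $(-\infty,x)$) have kernels bounded by $1$ on $\{\imag\lam\ge 0\}$, so $N_{11}^-(0,\cdot)$ and $N_{21}^-(0,\cdot)$ are analytic on $\C^+$ and continuous up to $\R$. By Schwarz reflection the functions $\widehat N_{11}^-(\lam):=\overline{N_{11}^-(0,\lambdabar)}$ and $\widehat N_{21}^-(\lam):=\overline{N_{21}^-(0,\lambdabar)}$ are analytic on $\C^-$ and on $\R$ coincide with $\overline{N_{11}^-(0,\lam)}$ and $\overline{N_{21}^-(0,\lam)}$. Substituting $N_{21}^+(0,\lam)=\lam\,g^+(\lam)$ into \eqref{alpha} gives
\[
\alpha(\lam)=N_{11}^+(0,\lam)\,\widehat N_{11}^-(\lam)-\eps\,g^+(\lam)\,\widehat N_{21}^-(\lam)
\]
on $\R$; the right side is analytic on $\C^-$ and continuous up to $\R$, so it furnishes the analytic continuation of $\alpha$ to $\C^-$. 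Relation \eqref{direct:alpha.breve} then transfers analyticity of $\balpha$ to $\C^+$.

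\emph{The estimate.} For the Lipschitz bound I would apply the elementary product inequality $|f_1h_1-f_2h_2|\le \|f_1\|_\infty|h_1-h_2|+\|h_2\|_\infty|f_1-f_2|$ to the two terms of the displayed formula for $\alpha$. Lemma~\ref{lemma:direct.n} and its $N^-$ analogue bound $N_{11}^+(0,\cdot)$, $g^+$, $N_{11}^-(0,\cdot)$, $N_{21}^-(0,\cdot)$ uniformly by $\exp(C\norm[H^{2,2}]{q}^2)$ and give \eqref{direct.n11.c}-type differences controlled by $\exp\big(C(\norm[H^{2,2}]{q_1}^2+\norm[H^{2,2}]{q_2}^2)\big)\big(\norm[L^1]{q_1-q_2}\norm[L^1]{q^\sharp}+\norm[L^1]{q}\norm[L^1]{q_1^\sharp-q_2^\sharp}\big)$; since $\norm[L^1]{q},\norm[L^1]{q^\sharp}\lesssim\norm[H^{2,2}]{q}$ and, because the cubic term $|q|^2\overline{q}$ in $q^\sharp$ is locally Lipschitz on $L^2\cap L^\infty$, $\norm[L^1]{q_1^\sharp-q_2^\sharp}\lesssim(1+\norm[H^{2,2}]{q_1}^2+\norm[H^{2,2}]{q_2}^2)\norm[H^{2,2}]{q_1-q_2}$, every such difference is dominated by $\exp\big(C(\norm[H^{2,2}]{q_1}^2+\norm[H^{2,2}]{q_2}^2)\big)(\norm[H^{2,2}]{q_1}+\norm[H^{2,2}]{q_2})\norm[H^{2,2}]{q_1-q_2}$, the residual polynomial growth absorbed into the exponential. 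Feeding this into the product inequality and adding the two contributions yields the asserted estimate, with constants uniform in $\lam$ with $\imag\lam\le 0$.

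\emph{Main obstacle.} The one point requiring care is the behaviour at $\lam=0$, where the factor $\lam^{-1}$ in \eqref{alpha}--\eqref{beta} would otherwise be singular: this is handled by recognizing the linear vanishing of $N_{21}^+(0,\cdot)$ at the origin and re-running the Volterra/resolvent estimates of Lemma~\ref{lemma:direct.n} directly for $g^+=\lam^{-1}N_{21}^+(0,\cdot)$, so that the uniform bounds and Lipschitz continuity persist up to and including $\lam=0$. The remainder is routine bookkeeping of exponential constants.
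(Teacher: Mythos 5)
Your proof is correct and follows essentially the same route as the paper: it combines the Wronskian formula \eqref{alpha} with the uniform bounds and Lipschitz estimates of Lemma \ref{lemma:direct.n} and their analogues for $N_{11}^-$, $N_{21}^-$ on $\R^-$. Your explicit treatment of analyticity via the Volterra/Neumann series and of the $\lam^{-1}$ factor near $\lam=0$ (through $g^+=\lam^{-1}N_{21}^+$) only fills in details the paper leaves implicit, and is consistent with its argument.
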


It {  is also } important  that  {  the } zeros of $\alpha$  lie in $\C  ^- \cap \left\{ |z| \leq R \right\}$ where $R > 0$ depends only on  $\norm[H^{2,2}(\R )]{q}$. This is the object of the next proposition.

\begin{proposition}
\label{lemma:direct.a.infty}
{  The function $\alpha$ satisfies} 
\begin{equation} \label{a.infty}
\lim_{R \rarr \infty} \sup_{|\lam| \geq R, \imag \lam \leq 0} |\alpha(\lam)-1| = 0
\end{equation}
where the convergence is uniform in $q$ in a bounded subset of $H^{2,2}(\R )$.
\end{proposition}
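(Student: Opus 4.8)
The plan is to show the uniform decay of $\alpha(\lam)-1$ as $|\lam|\to\infty$ (with $\imag\lam\le 0$) directly from the Volterra integral equations \eqref{direct:n11.lam}--\eqref{direct:n21.lam} and the Wronskian formula \eqref{alpha}, using a Riemann--Lebesgue type argument to extract the oscillation gain. First I would record that, by \eqref{alpha},
$$
\alpha(\lam) = N_{11}^+(0,\lam)\overline{N_{11}^-(0,\lam)} - \eps\lam^{-1}N_{21}^+(0,\lam)\overline{N_{21}^-(0,\lam)}.
$$
The second term is $\bigo{|\lam|^{-1}}$ uniformly, since $N_{21}^\pm(0,\lam)$ are bounded uniformly in $\imag\lam\le 0$ and $q$ bounded in $H^{2,2}$ by Lemma \ref{lemma:direct.n}; so it suffices to prove $N_{11}^\pm(0,\lam)\to 1$ uniformly. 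Writing $N_{11}^+ = 1 + SN_{11}^+$ with $S$ as in the proof of Lemma \ref{lemma:direct.n}, we get $N_{11}^+ - 1 = (I-S)^{-1}(S\mathbf{1})$, where $(S\mathbf{1})(x) = \frac{\eps}{2i}\int_x^\infty q(y)\int_y^\infty e^{-2i\lam(z-y)}q^\sharp(z)\,dz\,dy$, and $\|(I-S)^{-1}\|$ is bounded uniformly in $\lam$ (with $\imag\lam\le 0$) and in $q$ in a bounded set. So everything reduces to showing $\sup_{x\ge 0}|(S\mathbf{1})(x)|\to 0$ as $|\lam|\to\infty$, uniformly over bounded sets of $q$.

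The main obstacle — and the real content — is this last uniform decay, because for $\imag\lam < 0$ the factor $e^{-2i\lam(z-y)}$ has modulus $\le 1$ but does \emph{not} decay, so one only has oscillation along the real direction. The standard device is to split: fix $\delta>0$ and decompose $q^\sharp = g_\delta + h_\delta$ with $g_\delta\in C_0^\infty$ and $\|h_\delta\|_{L^1}<\delta$; the $h_\delta$ piece contributes $\le \tfrac12\|q\|_{L^1}\|h_\delta\|_{L^1} \le C\delta$ uniformly in $\lam$, while for the smooth compactly supported $g_\delta$ an integration by parts in $z$ gains a factor $|\lam|^{-1}$ (the boundary terms and the $g_\delta'$ term are controlled by $\|q\|_{L^1}$ and the $L^1$-norm of $g_\delta,g_\delta'$, which are fixed once $\delta$ is fixed). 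Choosing $\delta$ small and then $|\lam|$ large makes $\sup_x|(S\mathbf 1)(x)|$ as small as desired. The one subtlety is uniformity over $q$ in a bounded subset of $H^{2,2}(\R)$: one cannot choose the splitting $q^\sharp = g_\delta + h_\delta$ depending on $q$. To handle this I would instead prove an equicontinuity/tightness statement: the family $\{q^\sharp : \|q\|_{H^{2,2}}\le C\}$ is uniformly $L^1$-small outside large balls (by the weight) and has uniformly small $L^1$-modulus of continuity (since $(q^\sharp)'$ is controlled in $L^1$ by $\|q\|_{H^{2,2}}$ — indeed $q\in H^{2,2}$ gives $q''\in L^2$, $q'\in L^\infty$, $q\in L^\infty$, so $(q^\sharp)' = \overline{q''} - \tfrac{i\eps}{2}(2|q|^2 q)' \in L^1$ with norm bounded by $C$). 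Uniform $L^1$-tightness plus uniform smoothness is precisely what lets one run the oscillatory-integral argument with constants depending only on $C$.

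Concretely the steps are: (1) reduce via \eqref{alpha} and the uniform bounds of Lemma \ref{lemma:direct.n} to the claim $\sup_{x\ge 0}|N_{11}^\pm(0,\lam)-1|\to 0$; (2) reduce via $N_{11}^+ - 1 = (I-S)^{-1}(S\mathbf 1)$ and the uniform resolvent bound \eqref{direct.S.res} to $\sup_{x\ge 0}|(S\mathbf 1)(x)|\to 0$; (3) establish the uniform-in-$q$ integrability and smoothness of $q^\sharp$ from the $H^{2,2}$ bound; (4) prove the oscillatory decay of $\int_x^\infty q(y)\int_y^\infty e^{-2i\lam(z-y)}q^\sharp(z)\,dz\,dy$, uniformly in $x$ and in $q$, by Fubini plus the standard split-and-integrate-by-parts argument, using (3); (5) assemble, noting the argument for $N_{11}^-$ on $\R^-$ is identical after the obvious reflection. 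I expect step (4), carried out with the uniformity required by (3), to be the crux; everything else is bookkeeping with estimates already in place from Lemma \ref{lemma:direct.n} and Proposition \ref{lemma:direct.a}.
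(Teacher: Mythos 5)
Your reductions are sound, and in fact step (2) is cleaner than the paper's own argument: writing $N_{11}^+-1=(I-S)^{-1}S\mathbf{1}$ and invoking the uniform $C(\R^+)$ resolvent bound \eqref{direct.S.res} correctly reduces everything to $\sup_{x}\left|(S\mathbf{1})(x)\right|\to0$ uniformly, whereas the paper integrates by parts inside the integral equation \eqref{direct:n11.lam}, generating terms containing $N_{11}^+-1$ and $\partial_x N_{11}^+$ and thereby forcing the uniform $L^2$ bound of Lemma \ref{lemma:direct.n11-1}. The genuine gap is in your steps (3)--(4): you justify the uniform Riemann--Lebesgue argument by asserting that $(q^\sharp)'=\overline{q''}-\tfrac{i\eps}{2}\left(|q|^2\overline{q}\right)'$ lies in $L^1(\R)$ with norm controlled by the $H^{2,2}$ bound. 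That does not follow: the $H^{2,2}$ norm controls $\langle \cdot\rangle^2 q$ and $q''$ in $L^2$ only, there is no weight on $q''$, and $L^2(\R)\not\subset L^1(\R)$, so $\|q''\|_{L^1}$ (hence $\|(q^\sharp)'\|_{L^1}$) can be infinite, or arbitrarily large on a bounded subset of $H^{2,2}(\R)$. Consequently both the proposed integration by parts in $z$ against $q^\sharp$ and the claimed uniformly small $L^1$-modulus of continuity of $\{q^\sharp\}$ are unjustified as written, and this is precisely the step you identify as the crux.

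The strategy can be repaired, but not for free: $L^1$-equicontinuity of $\{q'\}$ (hence of $\{q^\sharp\}$) over a bounded set does hold, proved by splitting $|x|\le R$ and $|x|>R$, using $\|q'(\cdot+h)-q'\|_{L^1(|x|\le R)}\le (2R)^{1/2}|h|\,\|q''\|_{L^2}$ on the compact part and the weighted tail bound $\|q'\|_{L^1(|x|>R)}\lesssim R^{-1/2}\|\langle\cdot\rangle q'\|_{L^2}\lesssim R^{-1/2}\|q\|_{H^{2,2}}$ outside; this yields $L^1$-precompactness of the family and makes your approximation argument uniform. The paper's route shows, however, that no approximation is needed at all: after Fubini, $(S\mathbf{1})(x)=\tfrac{\eps}{2i}\int_x^\infty q^\sharp(z)\left(\int_x^z q(y)\,e^{-2i\lam(z-y)}\,dy\right)dz$, and integrating by parts in $y$ against the outer factor $q$ (not $q^\sharp$) gives $\left|(S\mathbf{1})(x)\right|\lesssim |\lam|^{-1}\|q^\sharp\|_{L^1}\left(\|q\|_{L^\infty}+\|q'\|_{L^1}\right)$, valid for $\imag\lam\le0$ since $\left|e^{-2i\lam(z-y)}\right|\le1$ there; all the norms involved are controlled by $\|q\|_{H^{2,2}}$, so uniformity over bounded sets is automatic and one even obtains the explicit rate $\bigo{|\lam|^{-1}}$ rather than mere decay.
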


\begin{proof}
From the Wronskian formula \eqref{alpha}  {  for $\alpha$} 
and the uniform bounds on $N_{21}^-$ and $N_{21}^+$, estimate \eqref{a.infty} will follow from 
\begin{equation}
\label{n11.infty}
\lim_{R \rarr \infty} \sup_{|\lam| \geq R, \imag \lam \leq 0} |N_{11}^\pm(0,\lam)-1| = 0.
\end{equation}
We {   now  sketch the proof of  \eqref{n11.infty} for $N_{11}^+$; the proof for $N_{11}^-$ is similar.}

From \eqref{direct:n11.lam} and an integration by parts we see that
{
\begin{align}
\label{n11-1}
N_{11}^+(x,\lam) -1 	
	&=	\dfrac{\eps}{2i}\int_x^\infty q(y) \int_y^\infty e^{-2i\lam(z-y)} q^\sharp(z) \, dz \, dy \\
\nonumber
	&\quad +
		\frac{\eps}{4\lam}\int_x^\infty q(y) \left[ G_1(y,\lam) + G_2(y,\lam)+ G_3(y,\lam)\right] \, dy,
\end{align}
where
\begin{align*}
G_1(x,\lam)	&=	-q^\sharp(x) \left(N_{11}^+(x,\lam)-1 \right)	\\
G_2(x,\lam)	&=	-\int_x^\infty e^{-2i\lam(y-x)} \left(q^\sharp\right)'(y) \left(N_{11}^+(y,\lam)-1 \right)\, dy\\
G_3(x,\lam)	&=	-\int_x^\infty e^{-2i\lam(y-x)} q^\sharp(y) \frac{\dee N_{11}^+}{\dee x}(y,\lam) \, dy .
\end{align*}
}

Reversing the orders of integration in the first right-hand term of \eqref{n11-1} and integrating by parts
we may estimate
$$ \left| \int_x^\infty q(y) \int_y^\infty e^{-2i\lam(z-y)} q^\sharp(z) \, dz \, dy \right|
\leq \frac{1}{|\lam|} \|q^\sharp\|_{L^1}  \left( \|q\|_{L^\infty} + \|q'\|_{L^1}  \right). $$
From Lemma \ref{lemma:direct.n} we have $|G_1(x,\lam)| \lesssim 1$ where the implied constants
depend only on $\|q\|_{L^1}$ and $\|q^\sharp\|_{L^1}$. Differentiating \eqref{direct:n11.lam} to compute $\dee N_{11}^+/\dee x$ we may similarly estimate $|G_3(x,\lam)|$. To estimate $G_2(x,\lam)$, we need to show that $\norm[L^2(\R ^+)]{N_{11}^+(\dotarg,\lam) - 1}$ is bounded uniformly in $\lam$ with $\imag \lam \leq 0$ and $q$ in a bounded subset of $H^{2,2}(\R )$. This is carried out in Lemma \ref{lemma:direct.n11-1} below.
\end{proof}

To prove the $L^2$ estimate on $N_{11}^+(x,\lam) -1$, we return to the integral equation 
\eqref{direct:n11.lam} and note that the operator $S$ 
is a Hilbert-Schmidt operator on $L^2(\R ^+)$ uniformly in $\lam$ for
$\imag \lam \leq 0$.  
{  Indeed}  its integral kernel is given by
\begin{equation}
\label{S.Ker}
K(x,z) = 	\begin{cases}
						\dint_{\hspace{-1mm}x}^z q(y) e^{-2i\lam(z-y)} q^\sharp(z) \ dy, 	& x < z\\
						\\
						0,			 													& x > z
					\end{cases}
\end{equation}
{  with}
$$ \norm[L^2(\R ^+ \times \R ^+)]{K} \leq \norm[L^{2,1/2}]{q^\sharp} \|q\|_{L^1} .$$
One {  checks  }  that
$$ \ker_{L^2(\R ^+)} (I - S) \subset \ker_{C(\R ^+)}(I-S)  = \{ 0 \} $$
where the last equality follows from the existence of the resolvent $(I-S)^{-1}$ on
$C(\R ^+)$. Writing $S = S(\lam)$ to display the dependence of the operator $S$ on $\lam$, 
we can show that 
\begin{equation}
\label{direct:S.small}
\lim_{|\lam| \rarr \infty} \norm[\mathrm{HS}]{S(\lam)} = 0 
\end{equation}
uniformly in $\lam$ with $\imag \lam \leq 0$ and $q$ in a bounded subset of $H^{2,2}(\R )$. This follows from the integration by parts
$$ \int_x^z q(y) e^{-2i\lam(z-y)} \, dy = \frac{1}{2i\lam} \left[ q(z) - q(x) + \int_x^z e^{-2i\lam(z-y)} q(y) \, dy \right]$$
and a straightforward estimate of the Schmidt norm using \eqref{S.Ker}. Writing $K=K(\lam,q)$, 
we may also estimate
$$ 
\norm[L^2(\R ^+ \times \R ^+)]{K(\lam, q_1)-K(\lam,q_2)} 
	\leq \norm[L^{2,1/2}]{q_1-q_2} \|q_1\|_{L^1} + \norm[L^{2,1/2}]{q_2}\|q_1-q_2\|_{L^1}
$$
uniformly in $\lam$ with $\imag \lam \leq 0$. On the other hand, it follows from the Dominated Convergence Theorem that $\norm[L^2(\R ^+ \times \R ^+)]{K(\lam_1,q)- K(\lam_2,q)} \rarr 0$ as $\lam_1 \rarr \lam_2$ for any fixed $q \in L^1(\R) \cap L^{2,1/2}(\R)$.   Writing $S=S(\lam,q)$, we now use a `continuity-compactness argument'  
as well as  \eqref{direct:S.small} to prove:

\begin{lemma}
\label{lemma:direct.S.res}
The resolvent $(I-S(\lam,q))^{-1}$ exists as a bounded operator on $L^2(\R ^+)$
and for any $M>0$,
$$\sup_{\imag \lam \leq 0, \, \norm[H^{2,2}]{q} \leq M} 
	\norm[L^2(\R ^+) \rarr L^2(\R ^+)]{(I-S(\lam,q))^{-1}} < \infty.
$$
\end{lemma}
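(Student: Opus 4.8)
The plan is to run the ``continuity--compactness'' argument referred to in the text. Fix $M>0$. By \eqref{direct:S.small} choose $R=R(M)>0$ so large that $\norm[\mathrm{HS}]{S(\lam,q)}\le \tfrac12$ whenever $|\lam|\ge R$, $\imag\lam\le 0$, and $\norm[H^{2,2}]{q}\le M$; for such $\lam$ the Neumann series shows $(I-S(\lam,q))^{-1}$ exists with $\norm[L^2(\R^+)\to L^2(\R^+)]{(I-S(\lam,q))^{-1}}\le 2$. It therefore remains to bound the resolvent uniformly for $(\lam,q)$ in $D_R\times B_M$, where $D_R=\{\lam:|\lam|\le R,\ \imag\lam\le 0\}$ is compact and $B_M=\{q\in H^{2,2}(\R):\norm[H^{2,2}]{q}\le M\}$. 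Two inputs are needed. \emph{Existence}: for each fixed $(\lam,q)$ the operator $S(\lam,q)$ is Hilbert--Schmidt, hence compact, and $I-S(\lam,q)$ is injective on $L^2(\R^+)$ since $\ker_{L^2(\R^+)}(I-S)\subset\ker_{C(\R^+)}(I-S)=\{0\}$ by the Volterra resolvent bound \eqref{direct.S.res}; the Fredholm alternative then produces a bounded $(I-S(\lam,q))^{-1}$. \emph{Continuity}: the map $(\lam,q)\mapsto S(\lam,q)$ is continuous from $D_R\times B_M$ into the Hilbert--Schmidt operators, because
$$\norm[\mathrm{HS}]{S(\lam_1,q_1)-S(\lam_2,q_2)}\le \norm[L^2(\R^+\times\R^+)]{K(\lam_1,q_1)-K(\lam_1,q_2)}+\norm[L^2(\R^+\times\R^+)]{K(\lam_1,q_2)-K(\lam_2,q_2)},$$
where the first term is $\bigO[M]{\norm[H^{2,2}]{q_1-q_2}}$ by the Lipschitz bound on the kernel $K$ in $q$ (uniformly in $\lam$) and the second tends to $0$ as $\lam_1\to\lam_2$ by the Dominated Convergence Theorem.

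The bound is then obtained by contradiction. If it failed there would be $\lam_n\in D_R$ and $q_n\in B_M$ with $\norm{(I-S(\lam_n,q_n))^{-1}}\to\infty$. By compactness of $D_R$, after passing to a subsequence $\lam_n\to\lam_*\in D_R$. Since the weighted Sobolev embedding $H^{2,2}(\R)\hookrightarrow H^{1,1}(\R)$ is compact, a further subsequence satisfies $q_n\to q_*$ strongly in $H^{1,1}(\R)$; in particular $q_n\to q_*$ in $L^1\cap L^\infty$ and $q_n'\to q_*'$ in $L^{2,1/2}$, whence $q_n^\sharp\to q_*^\sharp$ in $L^{2,1/2}$ (the cubic term $|q|^2\overline q$ in $q^\sharp$ being controlled by the uniform $L^\infty$ bound on $q_n$). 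Combining with the continuity input gives $\norm[\mathrm{HS}]{S(\lam_n,q_n)-S(\lam_*,q_*)}\to 0$. The limit $S(\lam_*,q_*)$ is Hilbert--Schmidt, hence compact, and $I-S(\lam_*,q_*)$ is injective by the same Volterra argument (which needs only $q_*\in L^1$ and $q_*^\sharp\in L^1$, both guaranteed by $q_*\in H^{1,1}$), so $(I-S(\lam_*,q_*))^{-1}$ is bounded, say with norm $C_*$. For $n$ large, $\norm{S(\lam_n,q_n)-S(\lam_*,q_*)}<1/(2C_*)$, and the standard perturbation estimate gives $\norm{(I-S(\lam_n,q_n))^{-1}}\le C_*/\bigl(1-C_*\norm{S(\lam_n,q_n)-S(\lam_*,q_*)}\bigr)\le 2C_*$, contradicting $\norm{(I-S(\lam_n,q_n))^{-1}}\to\infty$.

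The main obstacle is the compactness step: a norm-bounded sequence in $H^{2,2}(\R)$ is not precompact in $H^{2,2}$, so one must descend to the weaker topology furnished by the compact weighted-Sobolev embedding $H^{2,2}(\R)\hookrightarrow H^{1,1}(\R)$ and then verify that convergence there still controls the Hilbert--Schmidt kernels $K(\lam_n,q_n)$ — which amounts to propagating $H^{1,1}$-convergence through the cubic nonlinearity appearing in $q^\sharp$. Once this is in hand, everything else is the Fredholm alternative together with the kernel estimates and the uniform decay \eqref{direct:S.small} already established, so no further essential difficulty arises.
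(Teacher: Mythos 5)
Your argument is correct and follows essentially the same continuity--compactness strategy as the paper: Neumann series via \eqref{direct:S.small} for $|\lam|\ge R$, the Fredholm alternative combined with Volterra injectivity for pointwise existence, and uniformity on the compact region from Hilbert--Schmidt continuity of $S(\lam,q)$ together with compactness of the $H^{2,2}$-ball in a weaker weighted topology --- the paper merely packages this last step as ``the continuous image of a compact set is bounded'' rather than as your sequential contradiction. The only slip is the claim that $q_*\in H^{1,1}$ by itself guarantees $q_*^\sharp\in L^1\cap L^{2,1/2}$ (with the paper's convention the $H^{1,1}$ norm does not control $q_*'$ in weighted norms); this is harmless, since $q_*$ lies in the weakly closed ball $\norm[H^{2,2}]{q}\le M$, or alternatively one can interpolate the needed $L^1$ and $L^{2,1/2}$ convergences of $q_n'$ against the uniform $H^{2,2}$ bound along the sequence.
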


\begin{proof}
For any $M>0$, $R>0$, the identity map takes the set
$$ 
\left\{ \lam \in \C  : \imag \lam \leq 0, |\lam| \leq R \right\} \times 
\left\{ q \in H^{2,2}(\R ): \norm[H^{2,2}]{q} \leq M \right\}
$$
into a subset of $\C   \times (L^{2,1/2} \cap L^1)$ with compact closure.
By the second resolvent formula, the map $(\lam,q) \mapsto (I-S(\lam,q))^{-1}$ is continuous into the bounded operators on $L^2(\R ^+)$. It follows by compactness and continuity that the set
$$ \left\{ (I-S(\lam,q))^{-1}: \imag \lam \leq 0, |\lam| \leq R, \norm[H^{2,2}]{q} \leq M \right\} $$
is compact in $\calB(L^2(\R ^+))$, hence bounded. On the other hand, for sufficiently large $R$ depending on $M$, we have from \eqref{direct:S.small}
that $\sup_{|\lam| \geq R} \norm[\calB(L^2(\R ^+))]{(I-S(\lam,q))^{-1}} \leq 2$
for any $q$ with $\norm[H^{2,2}]{q} \leq M$. 
\end{proof}

We can now prove:

\begin{lemma}
\label{lemma:direct.n11-1}
If $q \in H^{2,2}(\R )$, the estimate
$$\norm[L^2(\R ^+)]{N_{11}^+(\dotarg,\lam) - 1} \lesssim 1$$
holds.
\end{lemma}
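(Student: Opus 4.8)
The plan is to solve the Volterra equation \eqref{direct:n11.lam} in $L^2(\R^+)$ rather than in $C(\R^+)$, inverting it by means of the Hilbert--Schmidt resolvent bound of Lemma \ref{lemma:direct.S.res}. Writing \eqref{direct:n11.lam} as $N_{11}^+ = 1 + S N_{11}^+$ and setting $h = N_{11}^+ - 1$, the function $h$ satisfies $h = g + S h$, where
\[
g(x) := \frac{\eps}{2i} \int_x^\infty q(y) \int_y^\infty e^{-2i\lam(z-y)} q^\sharp(z) \, dz \, dy
\]
is the image of the constant function $1$ under $S$ (equivalently, the first right-hand term of \eqref{n11-1}). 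Since Lemma \ref{lemma:direct.S.res} tells us that $I - S(\lam,q)$ is invertible on $L^2(\R^+)$ with inverse bounded uniformly for $\imag \lam \le 0$ and $q$ in a bounded subset of $H^{2,2}(\R)$, the lemma will follow once we verify (a) that $g \in L^2(\R^+)$ with norm controlled by $\norm[H^{2,2}]{q}$, and (b) that $h \in L^2(\R^+)$, so that $(I-S)h = g$ holds in $L^2(\R^+)$ and may be inverted.

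For (a), I would use only the elementary bound $|e^{-2i\lam(z-y)}| \le 1$, valid for $\imag \lam \le 0$ and $z \ge y$, which gives $|g(x)| \le \tfrac12 \|q^\sharp\|_{L^1} \int_x^\infty |q(y)| \, dy$; since $H^{2,2}(\R) \subset L^{2,2}(\R)$,
\[
\int_x^\infty |q(y)| \, dy \le \left( \int_x^\infty \langle y\rangle^{-4} \, dy \right)^{1/2} \norm[L^2]{\langle\cdot\rangle^2 q} \lesssim \langle x\rangle^{-3/2}, \qquad x \ge 0,
\]
so $|g(x)| \lesssim \langle x\rangle^{-3/2}$, which lies in $L^2(\R^+)$, with implied constant controlled by $\norm[H^{2,2}]{q}$ (recall from the proof of Lemma \ref{lemma:direct.n} that $\|q^\sharp\|_{L^1}$ is). For (b), the same idea applied to the kernel \eqref{S.Ker} gives $\int_x^\infty |K(x,z)| \, dz \lesssim \big(\int_x^\infty |q(y)| \, dy\big) \|q^\sharp\|_{L^1} \lesssim \langle x\rangle^{-3/2}$, so feeding the uniform bound $\norm[L^\infty(\R^+)]{N_{11}^+} \lesssim 1$ from Lemma \ref{lemma:direct.n} into $h = g + S h$ yields $|h(x)| \lesssim \langle x\rangle^{-3/2}$; in particular $h \in L^2(\R^+)$.

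Granting (a) and (b), $(I-S)h = g$ in $L^2(\R^+)$, and Lemma \ref{lemma:direct.S.res} gives
\begin{align*}
\norm[L^2(\R^+)]{N_{11}^+(\dotarg,\lam) - 1}
	&= \norm[L^2(\R^+)]{(I - S(\lam,q))^{-1} g} \\
	&\le \norm[L^2(\R^+) \rarr L^2(\R^+)]{(I - S(\lam,q))^{-1}} \, \norm[L^2(\R^+)]{g} \lesssim 1,
\end{align*}
uniformly in $\lam$ with $\imag \lam \le 0$ and in $q$ in bounded subsets of $H^{2,2}(\R)$. The step I expect to need the most care is (b): the $L^\infty$ bound on $N_{11}^+$ alone does not place $N_{11}^+ - 1$ in $L^2(\R^+)$, and one has to exploit the weighted decay $q \in L^{2,2}(\R)$ --- through $\int_x^\infty |q(y)|\,dy \lesssim \langle x\rangle^{-3/2}$ --- to extract the $x$-decay of $g$ and of $h$. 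Everything else is a routine Volterra / Hilbert--Schmidt estimate already assembled in the preceding lemmas.
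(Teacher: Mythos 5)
Your proof is correct and follows essentially the same route as the paper: write $N_{11}^+ - 1$ as the solution of $\eta = S(1) + S\eta$, show $S(1)\in L^2(\R^+)$ via the decay $\int_x^\infty |q|\,dy \lesssim \langle x\rangle^{-3/2}$ coming from $q\in L^{2,2}$, and invert $I-S$ with the uniform $L^2$ resolvent bound of Lemma \ref{lemma:direct.S.res}. Your step (b), checking directly that $N_{11}^+ - 1$ itself lies in $L^2(\R^+)$ so the equation can legitimately be read in $L^2$, is a sensible addition that makes explicit a point the paper leaves implicit (and in fact already yields the stated bound on its own).
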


\begin{proof}
The function $\eta = N_{11}^+ -1$ obeys the integral equation
$ \eta = S(1) + S(\eta)$ where
$$
S(1) = \frac{1}{2i} 
				\int_x^\infty q(y) 
					\int_z^\infty 
						e^{-2i\lam(z-y)}q^\sharp(z) 
					\, dz 
				\, dy.
$$
We may estimate
$$\norm[L^2(\R ^+)]{S(1)} \leq  \lw x \rw^{-3/2} \norm[L^{2,2}]{q} \|q^\sharp\|_{L^1}$$
which shows that $S(1) \in L^2(\R ^+)$ uniformly in $\lam$ with $\imag \lam \leq 0$. 
The desired bound is obtained
using Lemma \ref{lemma:direct.S.res}.

\end{proof}

\subsection{Generic Properties of Spectral Data}
\label{sec:direct.generic}

Lee \cite{Lee83} showed that generic potentials $q$ in the Schwartz class have at most finitely many simple zeros of $\alpha$ and no spectral singularities. His proof is based on a general argument of Beals and Coifman \cite{BC84}. Here we {  give}  a more precise functional analytic argument inspired by analogous results in Schr\"{o}dinger scattering theory (see the manuscript of Dyatlov and Zworski \cite[Chapter 2, Theorem 2.2]{DZ17}). We will prove:

\begin{theorem}
\label{thm:generic}
The set of $U$ potentials $q$ supporting at most finitely many solitons and having 
no spectral singularities is open and dense in $H^{2,2}(\R )$. 
\end{theorem}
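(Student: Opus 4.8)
The plan is to work with the analytic scattering coefficient $\alpha(\lambda)$ on $\C^- $, whose zeros are exactly the discrete eigenvalues and whose zeros on $\R$ are the spectral singularities, and to show (i) openness and (ii) density of $U$ separately. For openness, fix $q_0 \in U_N$, so $\alpha(\,\cdot\,; q_0)$ has $N$ simple zeros in $\C^-$ and $\inf_{\lambda \in \R}|\alpha(\lambda; q_0)| > 0$. By Proposition \ref{lemma:direct.a.infty} there is a radius $R = R(\|q_0\|_{H^{2,2}})$ outside which $|\alpha - 1| < 1/2$ uniformly, so all zeros lie in the compact set $K = \{|\lambda| \le R, \Im \lambda \le 0\}$; enlarging $R$ slightly this persists on a $H^{2,2}$-neighborhood of $q_0$ by the local Lipschitz bound of Proposition \ref{lemma:direct.a}. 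On $K$, $\alpha$ depends continuously (indeed Lipschitz) on $q$ in the sup norm, and it is analytic in $\lambda$ on the interior; by Hurwitz's theorem (or Rouch\'e around each simple zero and around the real axis, using a uniform lower bound on $|\alpha|$ on a fixed contour separating the zeros from $\R$), for $q$ near $q_0$ the function $\alpha(\,\cdot\,;q)$ has exactly $N$ zeros near those of $\alpha(\,\cdot\,;q_0)$, each simple, and none on $\R$. Hence a whole $H^{2,2}$-neighborhood of $q_0$ lies in $U_N \subset U$.

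For density, the strategy is a two-step perturbation. Given arbitrary $q \in H^{2,2}(\R)$, first approximate $q$ by a Schwartz potential (or even a compactly supported smooth one), so it suffices to make $U$ dense among such nice potentials. For a Schwartz potential, $\alpha(\lambda)$ extends continuously to the closed lower half-plane including infinity, is analytic in the open half-plane, and $\alpha(\lambda) \to 1$ at infinity; its zero set in $\overline{\C^-}$ is therefore discrete away from infinity, but could a priori be infinite or contain real zeros. The key analytic input I would invoke is that $\alpha(\lambda;q)$, as a function of $q$, can be perturbed by a one-parameter family $q \mapsto q + s\,h$ (for a suitable fixed test function $h$, or by a small generic real-linear perturbation) so that the zeros move; one then argues that for all but countably many (equivalently, a dense $G_\delta$ / full-measure set of) parameters $s$ in a small interval, $q + sh$ has no zeros on $\R$ and only finitely many in $\C^-$. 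Concretely: confine attention to the compact set $K$ as above (valid uniformly for $s$ small), where $\alpha$ is analytic in $\lambda$ and real-analytic — indeed affine, or at worst analytic — in $s$; the zero set in $K \times (-\epsilon,\epsilon)$ is then an analytic variety, and away from a discrete set of bad $s$-values its slices are finite sets of simple zeros avoiding $\R$. This is precisely the mechanism used in the Schr\"odinger analogue of Dyatlov--Zworski cited in the text.

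I expect the main obstacle to be the density half, specifically producing the perturbation $h$ and proving that it genuinely moves the zeros — i.e., that $\partial_s \alpha$ does not vanish identically along the zero set, and more delicately that one cannot have zeros accumulating or staying on $\R$ for an uncountable set of $s$. This requires (a) an explicit formula for the $s$-derivative of $\alpha$ at a zero in terms of the squared Jost eigenfunction (a standard trace-type identity: $\partial_s \alpha$ at a simple zero $\lambda_j$ is proportional to $\int \Psi$-bilinear-in-$h$, nonzero for suitable $h$), giving transversality zero-by-zero; and (b) a global compactness argument confining all zeros to $K$ uniformly in $s$ (from Proposition \ref{lemma:direct.a.infty}) so that "finitely many" is automatic once real zeros are excluded and accumulation in the interior is ruled out by analyticity in $\lambda$. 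Granting these, density follows by choosing $s$ outside the exceptional countable set and arbitrarily small. I would also remark that openness of each $U_N$ plus density of $\bigcup_N U_N$ gives that $U$ is open and dense, and that the quoted Zhou-type construction (\cite[Example 3.3.16]{Zhou89a}) shows the complement is nonempty even within $\calS(\R)$, so the genericity statement is sharp.
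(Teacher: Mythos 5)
Your openness argument is essentially the paper's own (Proposition \ref{prop:direct.open}): confine all zeros to a compact set using Proposition \ref{lemma:direct.a.infty}, invoke the Lipschitz dependence of $\alpha$ on $q$ from Proposition \ref{lemma:direct.a}, and apply Rouch\'e/Hurwitz to see that the $N$ simple zeros persist, stay simple, and stay off $\R$ for $q$ in an $H^{2,2}$-neighborhood. That half is sound and matches the paper.

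The density half, however, has a genuine gap: the entire argument rests on the transversality claims you yourself flag as the obstacle, namely that a one-parameter perturbation $q+sh$ actually moves each zero, splits multiple zeros into simple ones, and pushes real zeros off the axis for all but countably many $s$. As written these are assertions about the zero set as an analytic variety, and nothing in your outline excludes $\partial_s\alpha$ vanishing along that variety; ``finite sets of simple zeros avoiding $\R$ for generic $s$'' does not follow from the variety structure alone without the derivative formula and its nonvanishing, which you do not supply. (A second, smaller point: for merely Schwartz potentials finiteness of the zero set can genuinely fail --- this is Zhou's example cited after Definition \ref{def:U} --- so the first approximation step must be to $C_0^\infty$, where the unlabeled lemma quoted from Lee's thesis gives at most finitely many zeros in $\imag\lam\leq c$; your ``could a priori be infinite'' worry is then resolved by that lemma, not by the $s$-genericity argument.) The paper supplies precisely the mechanism you leave open: it perturbs by $\mu\varphi$ with $\varphi$ compactly supported \emph{disjointly} from (and to one side of) $\mathrm{supp}\,q$, so the transition matrices multiply (Lemma \ref{lemma:TT}) and one obtains the explicit first-order expansion $\alpha(\lam,\mu)=\alpha(\lam,0)+\mu c_\varphi\,\lam\bbeta(\lam)+\bigo{\mu^2}$ of \eqref{balpha.p0}; the determinant relation \eqref{det.ab}, continued analytically for compactly supported $q$, forces $\lam_i\bbeta(\lam_i)\neq 0$ at every zero $\lam_i\neq 0$ of $\alpha$, and choosing a single $\varphi\geq 0$ of sufficiently small support makes $\widehat{\varphi}(\lam_i)\neq 0$ at all zeros simultaneously, so the first-order term is genuinely nonzero. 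Rouch\'e arguments then split an order-$n$ zero into $n$ simple zeros, and a second perturbation of the same kind moves any remaining real (simple) zeros off $\R$ (Proposition \ref{prop:direct.generic}). Your proposed squared-eigenfunction/trace identity could in principle play the role of this explicit variation, but until you produce that formula, verify its nonvanishing at every zero (including real and multiple ones), and deduce simplicity, the density argument is incomplete.
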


Our strategy {  is}  to study the dense set of $q \in C_0^\infty(\R )$ and prove that 
any such $q$ can be perturbed by an arbitrarily small amount in $H^{2,2}$-norm
to a potential with the desired properties. We then {   use} continuity of spectral quantities to show that the set is open as well as dense. These steps are carried out in Propositions \ref{prop:direct.generic} and \ref{prop:direct.open} below which together give the proof of Theorem \ref{thm:generic}.

We begin with the study of $C_0^\infty$ potentials.
The following fact is well-known and easy to prove; see
for example Chapter 2 of  Lee's thesis \cite{Lee83}. 

\begin{lemma}
Suppose that $q \in C_0^\infty(\R )$. 
Then $\alpha(\lam;q)$ is analytic in $\C  $ and has at most finitely many zeros in  
$\imag \lam \leq c$ for any $c \in \R $. 
\end{lemma}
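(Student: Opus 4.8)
The plan is to exploit the compact support of $q$ to upgrade the analyticity of the normalized Jost functions from a half-plane to all of $\C$, and then to recycle the decay estimate of Proposition \ref{lemma:direct.a.infty} in an arbitrary lower half-plane.

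First I would fix $L$ with $\operatorname{supp} q \subset [-L,L]$ and revisit the Volterra equations \eqref{direct:n11.lam}--\eqref{direct:n21.lam} for $N_{11}^+, N_{21}^+$ (and their analogues for $N^-$). On the support of $q$ one has $|e^{-2i\lam(z-y)}| \leq e^{4L|c|}$ for $\imag \lam \leq c$, so the kernel \eqref{S.Ker} of the Volterra operator $S$ is bounded on $C([-L,L])$ locally uniformly in $\lam \in \C$. Hence the Neumann series $\sum_{k \geq 0} S^k 1$ converges locally uniformly in $\lam \in \C$; since each term is an entire function of $\lam$ (finite iterated integrals of the $C_0^\infty$ functions $q, q^\sharp$ against products of exponentials $e^{-2i\lam(\dotarg)}$), the solutions $N_{11}^\pm(0,\lam)$ and $N_{21}^\pm(0,\lam)$ are entire in $\lam$. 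By the Wronskian formula \eqref{alpha}, $\alpha$ is then analytic on $\C \setminus \{0\}$; the apparent singularity at $\lam = 0$ is removable because, evaluating \eqref{direct.n2} at $\lam = 0$, $N_{21}^\pm(\dotarg,0)$ solves a homogeneous Volterra equation and hence vanishes identically, so $\lam^{-1} N_{21}^+(0,\lam)\overline{N_{21}^-(0,\lam)}$ extends analytically across $\lam = 0$. (Equivalently, $\alpha(\zeta^2) = a(\zeta)$ by \eqref{alpha.beta.def}, and $a$ is an even entire function of $\zeta$ for compactly supported $q$.) Thus $\alpha(\dotarg;q)$ is entire.

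Next, fix $c \in \R$. I would run the proof of Proposition \ref{lemma:direct.a.infty} essentially verbatim, but over the half-plane $\imag \lam \leq c$: on $\operatorname{supp} q$ the bound $|e^{-2i\lam(z-y)}| \leq e^{4L|c|}$ replaces the bound $\leq 1$ used there for $\imag \lam \leq 0$, which only alters the implied constants. This gives
\[
\lim_{R \rarr \infty} \ \sup_{|\lam| \geq R, \ \imag \lam \leq c} \ |\alpha(\lam) - 1| = 0,
\]
so in particular $\alpha \not\equiv 0$ and there is $R = R(c,q)$ with $|\alpha(\lam)| \geq 1/2$ whenever $|\lam| \geq R$ and $\imag \lam \leq c$. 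Consequently every zero of $\alpha$ in $\{ \imag \lam \leq c \}$ lies in the compact set $K = \{ \imag \lam \leq c, \ |\lam| \leq R \}$. Since $\alpha$ is entire and not identically zero, its zero set has no accumulation point in $\C$, so $K$ can contain only finitely many zeros of $\alpha$; this proves the claim.

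There is no serious obstacle here — the statement is essentially bookkeeping on top of estimates already in hand. The only points requiring a moment's care are the removability of the $\lam = 0$ pole in \eqref{alpha} (handled above via the homogeneous Volterra equation for $N_{21}^\pm(\dotarg,0)$, or via the $\zeta$-picture) and verifying that the large-$|\lam|$ argument of Proposition \ref{lemma:direct.a.infty} survives in a half-plane $\imag \lam \leq c$ with possibly positive $c$, which it does precisely because $q$ is compactly supported.
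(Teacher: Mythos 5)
Your proof is correct, and it is essentially the standard argument that the paper itself does not reproduce (the lemma is stated as ``well-known and easy to prove'' with a pointer to Chapter 2 of Lee's thesis): compact support makes the Volterra/Neumann series for the Jost functions converge locally uniformly in all of $\C$, so $N^\pm(0,\cdot)$ and hence $\alpha$ are entire; the large-$\lam$ estimate of Proposition \ref{lemma:direct.a.infty} survives in $\{\imag \lam \leq c\}$ because on $\operatorname{supp} q$ the exponentials are bounded by $e^{4L|c|}$; and an entire function tending to $1$ at infinity in that half-plane can have only finitely many zeros there. The one cosmetic point is that to continue the Wronskian formula \eqref{alpha} off the real axis you must read $\overline{N_{11}^-(0,\lam)}$ as $\overline{N_{11}^-(0,\overline{\lam})}$ (and similarly for the other conjugated factors), which is entire whenever $N_{11}^-$ is; with that understood, your removal of the apparent $\lam^{-1}$ singularity via $N_{21}^\pm(\cdot,0)\equiv 0$ is correct.
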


Using this fact, a perturbation argument, and Rouch\'e's theorem, we will construct a dense set of potentials in $H^{2,2}(\R )$ for which $\alpha$ has at most finitely many simple zeros in $\C  ^-$ and no zeros on $\R $. We will then exploit {  Proposition}  \ref{lemma:direct.a} to show that this set is also open. 

To construct the dense set, we need two perturbation lemmas. The first concerns perturbation from the zero potential for which $\alpha(\lam) \equiv 1$ and $\beta(\lam) \equiv 0$. 

\begin{lemma}
{  Suppose that $\varphi \in C_0^\infty(-R,R)$, $\lam \neq 0$, and $\mu$ is a small parameter. Let $q=\mu \varphi$.
Then the associated  transition matrix has the form}
\begin{equation}
\label{T.mu}
{T(\lam,q)} = \twomat{1}{0}{0}{1} + \twomat{0}{\mu c_\varphi}{ {\color{red}}\eps  \lam \overline{\mu} \overline{c_\varphi}}{0} + \bigO{\mu^2}
\end{equation}
where
$$c_\varphi = \eps  \int_{-\infty}^\infty e^{2i\lam y} \varphi(y) \, dy$$
and the correction term depends on $\norm[H^{1,1}]{\varphi}$.
\end{lemma}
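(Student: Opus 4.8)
The plan is to expand the normalized Jost solutions $N^\pm(x,\lambda)$ of \eqref{direct.n.de} in powers of the small parameter $\mu$ and to keep track carefully of which contributions are $O(\mu)$ and which are $O(\mu^2)$. Writing $q=\mu\varphi$, the potential in \eqref{direct.n.de} becomes $\mu\soffdiag{\varphi}{\eps\lambda\overline{\varphi}} + \tfrac{i\eps}{2}\mu^2\sdiag{-|\varphi|^2}{|\varphi|^2}$, so the only first-order term in $\mu$ is the off-diagonal one. Substituting the ansatz $N^\pm(x,\lambda) = I + \mu N_1^\pm(x,\lambda) + \mu^2 E^\pm(x,\lambda;\mu)$ into the Volterra form of \eqref{direct.n.de}, the leading correction solves the decoupled linear problem $\tfrac{d}{dx}N_1^\pm = -i\lambda\,\ad(\sigma_3)(N_1^\pm) + \soffdiag{\varphi}{\eps\lambda\overline{\varphi}}$ with $N_1^\pm\to 0$ as $x\to\pm\infty$. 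Since the inhomogeneity is off-diagonal and $\ad(\sigma_3)$ annihilates diagonal matrices and preserves off-diagonal ones, $N_1^\pm$ is itself off-diagonal, and its two entries are explicit oscillatory integrals of $\varphi$ and $\overline{\varphi}$ against $e^{\pm 2i\lambda(\cdot)}$; for instance $(N_1^+)_{12}(x) = -\int_x^\infty e^{2i\lambda(y-x)}\varphi(y)\,dy$, and $(N_1^+)_{21}$ is the analogous integral of $\overline{\varphi}$ with a prefactor $\eps\lambda$.

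To control the remainder, I would invoke the resolvent estimates of Lemma \ref{lemma:direct.n} specialised to $q=\mu\varphi$. Since $\varphi\in C_0^\infty$, one has $\|q\|_{L^1}+\|q^\sharp\|_{L^1}\lesssim |\mu|\,\|\varphi\|_{H^{1,1}}$ for $|\mu|\le 1$ (the cubic piece $|q|^2\overline{q}$ of $q^\sharp=\overline{q'}-\tfrac{i}{2}\eps|q|^2\overline{q}$ only contributes at higher order), so the Volterra operator governing \eqref{direct.n1}--\eqref{direct.n2} has operator norm $O(|\mu|)$, its Neumann series converges for $|\mu|$ small, and every term beyond the single first-order term is $O(\mu^2)$ with constant controlled by $\|\varphi\|_{H^{1,1}}$; the fixed value $\lambda\neq 0$ enters only through these finite constants. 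This is precisely the source of the dependence of the error on $\|\varphi\|_{H^{1,1}}$ asserted in the statement.

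It remains to read off $T(\lambda)$. Because $\varphi$ is supported in $(-R,R)$ we have $N^-(x,\lambda)=I$ for $x<-R$, so by \eqref{direct.n.jc}, $T(\lambda) = e^{i\lambda x\,\ad(\sigma_3)}\bigl(N^+(x,\lambda)\bigr)$ for any $x<-R$; equivalently one may evaluate the Wronskian formulas \eqref{alpha}--\eqref{beta} at $x=0$. Inserting the expansion of $N^+$: the diagonal of $N_1^+$ vanishes, so $\alpha(\lambda)=\balpha(\lambda)=1+O(\mu^2)$ and $T$ is the identity up to $O(\mu^2)$ on the diagonal; in the off-diagonal entries the factors $e^{\pm 2i\lambda x}$ coming from $e^{i\lambda x\,\ad(\sigma_3)}$ cancel exactly the $x$-dependence of $(N_1^+)_{12}$ and $(N_1^+)_{21}$, leaving the $x$-independent integrals $\mu c_\varphi$ in the $(1,2)$ slot and $\eps\lambda\overline{\mu}\,\overline{c_\varphi}$ in the $(2,1)$ slot, with $c_\varphi=\eps\int_{\R} e^{2i\lambda y}\varphi(y)\,dy$. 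Together with the remainder bound this yields \eqref{T.mu}.

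The only point needing care is the uniform bookkeeping of orders in $\mu$: one must verify that every term produced by iterating the Volterra equations, other than the single first-order term, is genuinely quadratic in $\mu$ — the structural reason being that the diagonal $|q|^2$-term in \eqref{direct.n.de} and any product of two or more potential factors already carry $\mu^2$. Everything else is a routine specialisation of estimates already established in Lemma \ref{lemma:direct.n}.
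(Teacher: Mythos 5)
Your proposal is correct and follows essentially the same route as the paper: a first-order expansion in $\mu$ of the Volterra equations for the normalized Jost solutions $N^\pm$, with the remainder controlled by the resolvent/Neumann-series bounds of Lemma \ref{lemma:direct.n}, and the transition matrix read off from the relation \eqref{direct.n.jc} in the limit $x\to-\infty$ (equivalently, from the Wronskian formulas). The paper's own proof is just a terser version of this — it cites the uniform boundedness of $(N_{11}^+,N_{21}^+)$ and passes to the limit in \eqref{direct.n1}--\eqref{direct.n2} — so your added bookkeeping merely fills in details the authors left implicit.
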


\begin{proof}
It suffices to show that 
\begin{align}
\label{direct.a.mu}
\alpha(\lam)	
	&\sim	1 + \bigO{\mu^2}, \\
\label{direct.b.mu}
\lam {\bbeta(\lam)} \
	&= -\eps \lam \overline{\mu} \int e^{-2i\lam y} \overline{\varphi(y)} \, dy
	+ \bigO{\mu^2}.
\end{align}
First, we recall from Section 3.2 of Paper I that, for $\lam \in \R $, we have 
the uniform estimate
$$ \left| \left(N_{11}^+(x,\lam), N_{21}^+(x,\lam) \right) \right| \lesssim 1 $$
where the implied constants depend only on $\norm[H^{2,2}]{q}$ (the key issue is that the large-$\lambda$ behavior is controlled despite the $\lam$-dependence of the perturbing term in \eqref{direct.n.de}; see equations (3.4) of Paper I  for the integration by parts that removes this term). 
Taking limits as $x \rarr -\infty$ in  equations \eqref{direct.n1}--\eqref{direct.n2} 
 {   for $N^+$ (and as $x\to -\infty $ in the corresponding  equations for $N^-$)} 
and using the relation \eqref{direct.n.jc}, we 
 deduce that
\eqref{direct.a.mu} and \eqref{direct.b.mu} hold.
\end{proof}

The next lemma will give a mechanism for splitting multiple poles and perturbing zeros on the real axis.

\begin{lemma}
\label{lemma:TT}
Suppose that $q_1$ and $q_2$ are compactly supported potentials with disjoints supports, and that the support of $q_2$ on the real line is to the left of the support of $q_1$. Then:
\begin{enumerate}
\item[(i)] The identity
$$T(\lam,q_1+q_2) = T(\lam,q_2) T(\lam,q_1) $$
holds.
\item[(ii)]
If ${  q_1 } \in C_0^\infty((-R,R))$ and  $q_2 = \mu \varphi$ with $\varphi \in C_0^\infty((-2R,-R))$, the formula
\begin{equation}
\label{TT}
T( {  \lam }, q_1+\mu \varphi)	
		=	 \twomat{1}{\mu c_\varphi }{\eps\lam \overline{\mu c_\varphi} }{1} T(\lam,q_1) + \bigO{\mu^2}
\end{equation}
holds.

\end{enumerate}
\end{lemma}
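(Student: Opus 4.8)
The plan is to prove Lemma \ref{lemma:TT} in two parts. For part (i), the key fact is that the transition matrix $T(\lam,q)$ relates the two Jost solutions normalized at $+\infty$ and $-\infty$ respectively, and that these Jost solutions are defined by their asymptotic behaviour together with the ODE \eqref{direct.n.jc}. First I would observe that, since the supports of $q_1$ and $q_2$ are disjoint with $\operatorname{supp} q_2$ to the left of $\operatorname{supp} q_1$, in the region to the right of $\operatorname{supp} q_1$ the potential $q_1+q_2$ vanishes, so the Jost solution $N^+$ for the combined potential coincides with the Jost solution for $q_1$; similarly, to the left of $\operatorname{supp} q_2$ the $N^-$ for $q_1+q_2$ coincides with that for $q_2$. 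In the gap between the two supports, the potential vanishes, so any solution of the $x$-ODE is a constant (in $x$) multiple of $e^{-i\lam x \sigma_3}$ times a constant matrix; tracking the fundamental solution through $\operatorname{supp} q_1$ contributes the factor $T(\lam,q_1)$ and through $\operatorname{supp} q_2$ the factor $T(\lam,q_2)$. Composing these, and being careful about the ordering (the transition through $q_1$ happens first as one moves from $+\infty$ leftward, but the scattering convention \eqref{direct.n.jc} writes $N^+ = N^- e^{-i\lam x\ad\sigma_3}T$, which after disentangling gives $T(\lam,q_1+q_2)=T(\lam,q_2)T(\lam,q_1)$), yields the claimed identity. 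This is essentially a semigroup/cocycle property of transfer matrices and the main care is bookkeeping of left/right normalizations and the $\ad\sigma_3$ conjugation by $e^{-i\lam x\sigma_3}$.

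For part (ii), I would simply combine part (i) with the previous lemma \eqref{T.mu}. Since $q_2 = \mu\varphi$ with $\varphi \in C_0^\infty((-2R,-R))$ is a small perturbation of the zero potential, that lemma gives
$$
T(\lam,\mu\varphi) = \twomat{1}{0}{0}{1} + \twomat{0}{\mu c_\varphi}{\eps\lam\overline{\mu}\,\overline{c_\varphi}}{0} + \bigO{\mu^2} = \twomat{1}{\mu c_\varphi}{\eps\lam\overline{\mu c_\varphi}}{1} + \bigO{\mu^2},
$$
using $\overline{\mu}\,\overline{c_\varphi}=\overline{\mu c_\varphi}$. Substituting this into the identity $T(\lam,q_1+\mu\varphi) = T(\lam,\mu\varphi)T(\lam,q_1)$ and noting that $T(\lam,q_1)$ is $\lam$-fixed and bounded on compact $\lam$-sets (it is entire in $\lam$ when $q_1\in C_0^\infty$, by the lemma preceding Theorem \ref{thm:generic}), the $\bigO{\mu^2}$ error is multiplied by a bounded matrix and remains $\bigO{\mu^2}$. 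This gives \eqref{TT} directly.

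I expect the only genuine obstacle to be getting the multiplicativity in part (i) stated with the correct order and the correct handling of the factor $e^{-i\lam x\ad\sigma_3}$ appearing in \eqref{direct.n.jc}; once that algebra is pinned down, part (ii) is a one-line substitution. A clean way to organize part (i) is to pass from the $N$-normalization back to the $\Psi$-Jost solutions (or to un-normalized fundamental matrices solving $\Psi_x = (-i\lam\sigma_3 + \text{lower order})\Psi$), for which $\Psi^+ = \Psi^- T$ with $T$ genuinely $x$-independent and the composition law $T(q_1+q_2)=T(q_2)T(q_1)$ is transparent because on the overlap-free regions the solution for the combined potential literally equals the solution for each piece, and on the empty middle region the evolution is trivial. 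Then translate back to the matrix $T(\lam,\cdot)$ of \eqref{transition-lambda}, checking that the quadratic-in-$\zeta$ reductions \eqref{alpha.beta.def} and the relation \eqref{direct.n.jc} are compatible with this composition — which they are, since they are all consequences of the same underlying linear ODE.
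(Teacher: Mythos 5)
Your proposal is correct and follows essentially the paper's route: the paper records part (i) as the product identity $N^+(x,\lam,q_1+q_2)=N^+(x,\lam,q_2)\,N^+(x,\lam,q_1)$ followed by the limit $x\to-\infty$ (with the $e^{i\lam x\,\ad\sigma_3}$ conjugation), which is exactly what your region-by-region matching of the un-normalized fundamental solutions establishes. Part (ii) is, as in the paper, an immediate substitution of the expansion \eqref{T.mu} into the identity of part (i).
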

\begin{proof}
Consider the normalized solution $N^+(x,\lam,q)$. It is not difficult to see that
$$
N^+(x,\lam,q_1+q_2) = N^+(x,\lam,q_2) N^+(x,\lam,q_1). 
$$

We may now compute
$$
T(\lam, q_1+q_2) 	= \lim_{x \rarr -\infty} e^{i \lam x \ad(\sigma_3)}
								\left[N^+(x,\lam,q_2) N^+(x,\lam,q_1)\right]\\
						=	T(\lam,q_2) T(\lam,q_1)
$$
The second assertion is an immediate consequence of the first.
\end{proof}

Suppose that $q_1$ and $q_2$ are chosen as in Lemma \ref{lemma:TT}(ii). {  To simplify the notation, let us write  $\alpha(\lambda,\mu) $
to denote $\alpha(\lambda, q_1+\mu\varphi)$.   It follows from \eqref{TT} that
\begin{equation}
\label{balpha.p0}
\alpha(\lam,\mu)  = \alpha(\lam,0)  +  { \mu c_{\varphi} } \lam {\bbeta(\lam)}  
	+ \bigO{\mu^2}.
\end{equation}

 In the next proposition, we  will  expand the above formula near $\lambda= \lambda_0$:
}
\begin{equation}
\label{balpha.p}
\alpha(\lam,\mu)  = \alpha(\lam,0) +   { \mu c_{\varphi_0} } \lam_0 {\bbeta(\lam_0)}  +C_0(\lambda-\lambda_0)\mu
	+ \bigO{\mu^2}.
\end{equation}
where  
$$c_{\varphi_0}=\eps  \int_{-\infty}^\infty e^{2i\lam_0 y} \varphi(y) \, dy$$
and
$$
\left|C_0\right|
\leq 
\norm[L^\infty]{\dfrac{d}{d\lambda}\left(\lambda{\bbeta(\lambda)}\right)}.
$$ 
From the compact support of the potential $q$ and the asymptotic behavior $\alpha(\lambda)$ (\eqref{a.infty}) we know that $\alpha$ has finitely many zeros in $\mathbb{C}^- \cup \mathbb{R}$. We put down non-overlapping discs $D(\lam_i,r_i)$ with centers at the finitely many zeros $\lam_i$.  We will prove:

\begin{proposition}
\label{prop:direct.generic}
Suppose that $R>0$ and $q \in C_0^\infty([-R,R])$. Let $\alpha(\lam)$ be the 
$(1,1)$ entry of the transition matrix for $q$. For $\varphi \in C_0^\infty(\R )$, let $\alpha(\lam,\mu)$ be the $(1,1)$ entry for the transition matrix of $q+\mu \varphi$, so that $\alpha(\lam,0) = \alpha(\lam)$. 
\begin{itemize}
\item[(i)]		Suppose that $\lbrace \lambda_i \rbrace_{i=1}^m$ are the isolated zeros of $\alpha(\lam)$ in $\mathbb{C}^- \cup \mathbb{R}$ and $\lam_i \neq 0$ is one of the zeros of $\alpha(\lam)$ of multiplicity $n \geq 2$, i.e. 
$\alpha(\lambda)=(\lambda-\lambda_i)^n g(\lambda)$
 for some analytic function $g$ with $g(\lam_i)\neq 0$.    Then, for some $\varphi \in C_0^\infty(\R )$ and all sufficiently small $\mu \neq 0$, $\alpha(\lam,\mu)$ has $n$ simple zeros in the disc $D_{r_i}(\lambda_i)$.

\item[(ii)]  Suppose that after the perturbation in part (i),  $\Lambda_j$ is a simple zero of $\alpha(\lam,\mu)$ on the real axis, $\Lambda_j \neq 0$. Then, there is a function $\varphi \in C_0^\infty(\R )$ so that, for all real, nonzero, and sufficiently small $\mu'$, $\alpha(\lam,\mu')$ has no zeros on the real axis near $\Lambda_j$. 
\end{itemize}
In each case, we may choose $\varphi$ to have support in $(-2R,-R)\cup (R, 2R)$.
\end{proposition}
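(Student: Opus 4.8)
The plan is to run the first–order perturbation formula behind \eqref{TT}–\eqref{balpha.p0} against Rouch\'e's theorem. I would begin with two preliminary observations that hold because $q\in C_0^\infty([-R,R])$ (and, below, the perturbing functions are compactly supported as well): all Jost solutions and all entries of the transition matrix are \emph{entire} in $\lambda$; $\det T(\lambda,\cdot)\equiv 1$ by \eqref{det.ab}; and by \eqref{a.infty} the zero set of $\alpha$ is finite and lies in $\mathbb{C}^-\cup\mathbb{R}$, so we may shrink each $r_i$ so that $\overline{D(\lambda_i,r_i)}$ contains no zero of $\alpha$ besides $\lambda_i$ (and $g$ is zero–free there in part (i)). The one structural input I would isolate first: if $\alpha(\lambda_*)=0$ then $T_{21}(\lambda_*)=\lambda_*\bbeta(\lambda_*)\neq 0$ — because the first column of $N^+(0,\lambda_*)=N^-(0,\lambda_*)T(\lambda_*)$ equals $T_{21}(\lambda_*)$ times the second column of $N^-(0,\lambda_*)$, so its vanishing would contradict $\det N^+(0,\lambda_*)=1$ — and, for $\lambda_*\in\mathbb{R}$, $\det T\equiv 1$ forces $\lambda_*\beta(\lambda_*)\bbeta(\lambda_*)=-1$, hence $\beta(\lambda_*)\neq 0$. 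The hypotheses $\lambda_i\neq 0$, $\Lambda_j\neq 0$ enter exactly here, since $T_{21}(0)=0$.

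For part (i): choose $\varphi\in C_0^\infty((-2R,-R))$, i.e.\ to the \emph{left} of $\operatorname{supp}q$, so that Lemma \ref{lemma:TT}(ii) gives, uniformly for $\lambda$ in compact sets, $\alpha(\lambda,\mu)=\alpha(\lambda)+\mu\,c_\varphi(\lambda)T_{21}(\lambda)+\mathcal{O}(\mu^2)$ with $c_\varphi(\lambda)=\eps\int e^{2i\lambda y}\varphi(y)\,dy$ \emph{entire} (no conjugate appears because $\varphi$ is on the left, which is why I place it there rather than to the right). Pick $\varphi$ with $c_\varphi(\lambda_i)\neq 0$ — possible since $y\mapsto e^{2i\lambda_i y}$ is nowhere zero — and set $b:=c_\varphi(\lambda_i)T_{21}(\lambda_i)\neq 0$. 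Writing $\alpha(\lambda)=(\lambda-\lambda_i)^n g(\lambda)$ and comparing, near $\lambda=\lambda_i+w$, $\alpha(\lambda_i+w,\mu)$ with $P_\mu(w):=g(\lambda_i)w^n+\mu b$, whose $n$ simple roots lie on the circle $|w|=\rho(\mu):=|\mu b/g(\lambda_i)|^{1/n}\to 0$ and are mutually $\gtrsim\rho(\mu)$–separated, I would apply Rouch\'e on each of the $n$ small circles of radius $\sim\rho(\mu)$ about those roots (on which $|P_\mu|\gtrsim\rho(\mu)^n$ while the error is $\mathcal{O}(\rho(\mu)^{n+1})$, using $|\mu|\asymp\rho(\mu)^n$), and once more on $|w|=r_i$ (where $|\alpha(\lambda_i+w,0)|\gtrsim r_i^n$ and the perturbation is $\mathcal{O}(\mu)$), to conclude that $\alpha(\cdot,\mu)$ has exactly $n$ zeros in $D_{r_i}(\lambda_i)$, each simple. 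Note one \emph{cannot} use Rouch\'e directly on $|w|=\rho(\mu)$ since $|P_\mu|$ is not bounded below there; the localization to the $n$ nascent roots is the crux.

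For part (ii): with $\tilde q:=q+\mu\varphi$ from (i) (so $\operatorname{supp}\tilde q\subset[-2R,R]$) and $\Lambda_j\neq 0$ a simple zero of $\alpha(\cdot,\tilde q)$ on $\mathbb{R}$, choose $\psi\in C_0^\infty((R,2R))$, to the \emph{right} of $\operatorname{supp}\tilde q$. Lemma \ref{lemma:TT}(i) gives, for $\lambda\in\mathbb{R}$, $\partial_{\mu'}\alpha(\lambda,\tilde q+\mu'\psi)\big|_{\mu'=0}=\eps\lambda\,\overline{c_\psi(\lambda)}\,\beta(\lambda,\tilde q)$. Since $\Lambda_j$ is simple, for small $\mu'$ the function $\alpha(\cdot,\tilde q+\mu'\psi)$ has (by Rouch\'e) a unique zero $\lambda(\mu')$ in a fixed small disc about $\Lambda_j$, with $\lambda(\mu')-\Lambda_j=-\mu'\,w_0\,\overline{c_\psi(\Lambda_j)}+o(\mu')$ where $w_0:=\eps\Lambda_j\beta(\Lambda_j,\tilde q)/\alpha'(\Lambda_j,\tilde q)\neq 0$ is fixed ($\beta(\Lambda_j,\tilde q)\neq 0$, $\alpha'(\Lambda_j,\tilde q)\neq 0$, $\Lambda_j\neq 0$). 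Because the functional $\psi\mapsto c_\psi(\Lambda_j)$ on $C_0^\infty((R,2R))$ has full range in $\mathbb{C}$ (as $\cos(2\Lambda_j\,\cdot)$ and $\sin(2\Lambda_j\,\cdot)$ are linearly independent on $(R,2R)$ for $\Lambda_j\neq 0$), I would choose $\psi$ with $\operatorname{Im}\!\big(w_0\,\overline{c_\psi(\Lambda_j)}\big)\neq 0$; then $\operatorname{Im}\lambda(\mu')=-\mu'\operatorname{Im}\!\big(w_0\,\overline{c_\psi(\Lambda_j)}\big)+o(\mu')\neq 0$ for all small real $\mu'\neq 0$, so the only zero near $\Lambda_j$ leaves $\mathbb{R}$. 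Both $\varphi$ and $\psi$ are supported in $(-2R,-R)\cup(R,2R)$, as claimed.

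The step I expect to cause the most friction is the bookkeeping of the two Rouch\'e arguments in (i) — in particular making the localized comparison with $P_\mu$ precise and controlling the $\mathcal{O}(\mu^2)$ remainder from Lemma \ref{lemma:TT} uniformly in $\lambda$ on the relevant compact discs. The non-vanishing facts ($T_{21}(\lambda_i)\neq 0$ in (i), $\beta(\Lambda_j)\neq 0$ in (ii)) are short once one writes $N^+(0,\lambda)=N^-(0,\lambda)T(\lambda)$ and uses $\det\equiv 1$, but they are exactly what makes the perturbation genuinely split/move the zero and so must be stated carefully.
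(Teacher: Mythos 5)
Your proposal is correct and follows essentially the same route as the paper: the first-order formula of Lemma \ref{lemma:TT} (as in \eqref{balpha.p0}--\eqref{balpha.p}) combined with a Rouch\'e count on $\partial D_{r_i}(\lambda_i)$ and a second, localized Rouch\'e/implicit-function argument near the $n$-th roots of $-\mu b/g(\lambda_i)$ to get simplicity in (i), and the same leading-order displacement of the simple real zero off the axis in (ii). The only differences are cosmetic — you place the perturbation on the right in (ii) (so $\beta$ rather than $\bbeta$ appears) and phrase the motion of the root via the implicit function theorem instead of the paper's shifted disc $D_j$ — and you make explicit two points the paper leaves implicit, namely that $\lambda_i\bbeta(\lambda_i)\neq 0$ at any zero of $\alpha$ (forced by $\det T\equiv 1$) and that $\psi\mapsto c_\psi(\Lambda_j)$ has full range in $\C$, which is a welcome addition.
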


\begin{proof}
(i)  We first claim that there exists a function $\varphi\in C^\infty_0(\mathbb{R})$, $\varphi\geq 0$ such that 
$$ \widehat{\varphi}(\lam_i) = \int_{-\infty}^\infty { e^{ 2 i\lam_i x} } \varphi(x) \, dx \neq 0$$ for all $i$, $1 \leq i \leq m$.

Indeed, let $2\lambda_i=\xi_i+i\eta_i$. We  then have 
$$\widehat{\varphi}(\lam_i) = \int_{-\infty}^\infty (\cos \xi_i x + i\sin\xi_i x) e^{\eta_i x}\varphi(x) \, dx$$
 with $ e^{\eta_i x}\varphi(x)\geq 0$ for all $x$. \\
 
If we let $\xi=\text{max} \lbrace \xi_1,...\xi_i,...\xi_m\rbrace$,  $r=\pi/2\xi$ and make $|\text{supp}(\varphi)|\leq r$, then at least one of $\cos \xi_i x$ and $\sin \xi_i x$ does not change sign on $\text{supp}(\varphi)$ for all $i$. So
$ \widehat{\varphi}(\lam_i) {  \ne 0} $ for all $i$.\\

Using the Taylor expansion of $\widehat{\varphi}(\lambda)$ and $\lambda {  \bbeta(\lam)}$ we can write  (\ref{balpha.p}) as
\begin{equation}
\label{alpha-mu}
\alpha(\lam,\mu)  = (\lambda-\lambda_i)^n g(\lambda) {  +}  { \mu c_{\varphi_i} } \lam_i 
\bbeta(\lam_i) 
 +C_0(\lambda-\lambda_i)\mu
	+ \bigO{\mu^2}.
\end{equation}
If we can establish the following inequalities
 \begin{equation}
 \label{bound-1}
 | \lambda {  {\bbeta}(\lam)}|_{L^\infty}\lesssim_q 1
 \end{equation}
  and
\begin{equation}
\label{bound-2}
| (\lambda {  {\bbeta}(\lam)})'|_{L^\infty}\lesssim_q 1
\end{equation}
where $\lambda\in\overline{ D(\lambda_i, r_i) }$
 then it is clear that
\begin{align*}
|\alpha(\lam,\mu) -  (\lambda-\lambda_i)^n g(\lambda)  |&=|  { \mu c_{\varphi_i} } \lam_i  \bbeta(\lam_i)
 +C_0(\lambda-\lambda_i)\mu+ \bigO{\mu^2}  |\\
                                                        &\leq |\lambda-\lambda_i|^n |g(\lambda)|  
\end{align*}
for $\mu$ sufficiently small and $\lambda\in \partial D(\lambda_i, r_i)$. Rouch\'{e}'s Theorem shows that the number of zeros of $\alpha(\lambda, \mu)$ and $\alpha(\lambda, 0)$ agree (with multiplicities) in $D(\lambda_i, r_i )$. That is, $\alpha(\lambda, \mu)$ has exactly $ n$ zeros there.

To prove  estimates \eqref{bound-1} and \eqref{bound-2}, we use the fact that
$\lam \bbeta(\lam) = \lim_{x \rarr \infty} e^{-2i\lam x} N_{21}^-(x,\lam)$ and the analogue of \eqref{direct.n2} for $N_{21}^-$ to compute
\begin{align}
\label{lambeta}
\lambda   {\bbeta}(\lam)&= \int_{-R}^R e^{-2i\lambda y}\left(\lambda \eps \overline{q(y)}N^-_{11}(y,\lambda)  +p_2(y)N^-_{21}(y,\lambda) \right)dy.
\end{align}
From direct computation, its derivative is
\begin{align}
\label{dbeta}
\dfrac{d}{d\lambda}\left(\lambda{  {\bbeta}(\lam)}\right)
	&= \int_{-R}^R  
			e^{-2i\lambda y} (-2iy) 
				\left(
					\lambda \eps\overline{q(y)}N^-_{11}(y,\lambda)  +
					p_2(y)N^-_{21}(y,\lambda) 
				\right)	\, dy\\
\nonumber
	&\quad +\int_{-R}^R 
			e^{-2i\lambda y}
					\left( \eps \overline{q(y)}N^-_{11}(y,\lambda)  +
					p_2(y)N^-_{21}(y,\lambda) _\lambda 
				\right)	\, 	dy\\ \nonumber
	&\quad -\int_{-R}^R 
			e^{-2i\lambda y}
					\left(
						\lambda \eps \overline{q(y)}N^-_{11}(y,\lambda)_\lambda  +
						p_2(y)N^-_{21}(y,\lambda)_\lambda 
					\right) \,  dy.                                                                                       
\end{align}
Inequalities (\ref{bound-1}) and (\ref{bound-2}) follow from these expressions and Lemma \ref{lemma:direct.n}. \\

Now we want to show that the zeros  of $\alpha(\lambda, \mu)$ are simple. For $0\leq k\leq n-1$, consider the disc  around the $k^{th}$ root of unity of $\gamma_i$
\begin{equation}
\label{disc}
D_k:=D\left(| \gamma_i |^{\frac{1}{n}} e^{i (\phi+2\pi k)/n }+\lam_i ,\,  \rho |\gamma_i|^{\frac{1}{n}} \right)
\end{equation}
where
$$ \gamma_i=\frac{{ \mu c_{\varphi_i} } \lam_i 
 \bbeta(\lam_i)
 }{g(\lam_i)} , \,\,\, \phi=\arg \gamma_i+\pi$$
Notice that if 
$\rho< \pi/n$ then $D_k \cap D_\ell $ is empty for $k\neq \ell.$ 
We now expand $g(\lambda)$ at $\lambda=\lambda_i$ and get
$$\alpha(\lambda, \mu)=(\lambda-\lambda_i)^n g(\lambda_i)+\mathcal{O}(\lambda-\lam_i)^{n+1}.$$
For $\lambda\in \partial D_k$, 
$$\left\vert(\lambda-\lambda_i)^n g(\lambda_i)  + { \mu c_{\varphi_i } }  \lam_i 
 \bbeta(\lam_i)
-\alpha(\lambda, \mu) \right\vert   
{  \lesssim} C_0 |\gamma_i |^{1+\frac{1}{n}}.$$
On the other hand, if we choose $\rho > 2 C_0 |\gamma_i|^{\frac{1}{n}} $ then for $\lambda\in  \partial D_k$, 
\begin{align*}
\left\vert(\lambda-\lambda_i)^n g(\lambda_i)-\gamma_i g(\lambda_i)\right\vert
&=|\gamma_i | \rho\left(1+\bigO {\rho^2} \right)\\
&
{   \gtrsim} C_0 |\gamma_i|^{1+\frac{1}{n}}\\
&\geq | (\lambda-\lambda_i)^n g(\lambda_i)  + { \mu c_{\varphi_i} }   \lam_i  \bbeta(\lam_i)
-\alpha(\lambda, \mu) |.
\end{align*}
Since the discs $D_k$ are disjoint, Rouch\'{e}'s theorem now shows that there is exactly one zero of $\alpha(\lambda, \mu)$ in each $D_k$. This shows that all $n$ zeros are simple.\\

(ii)  After the first step of perturbation in (i),  $\alpha(\lambda, \mu)$ has simple zeros $\lbrace \Lambda_j \rbrace_{j=1}^l$. Suppose $\Lambda_j$ is a zero of  $\alpha(\lambda, \mu)$ on the real axis. We make another small perturbation of the potential as above and formulate 
\begin{align}
\label{alpha-mu'}
\alpha(\lambda, \mu')= (\lambda-\Lambda_j) h(\lambda, \mu) + { \mu' c_{\psi_j }}  \Lambda_j  \bbeta(\Lambda_j, \mu)  +C'_0(\lambda-\Lambda_j)\mu'
	+ \bigO{\mu'^2}
\end{align}
where $$(\lambda-\Lambda_j)h(\lambda, \mu)=\alpha(\lambda, \mu) $$
and we define 
\begin{equation}
\label{disc'}
D_j:=D\left( \Gamma_j +\Lambda _j ,\,  \rho'|\Gamma_j| \right)
\end{equation}
where
$$ \Gamma_j=\frac{ { \mu' c_{\psi_j} } \Lambda_j \bbeta(\Lambda_j, \mu)}{h(\Lambda_j, \mu)} .$$
Given $\Lambda_j\in \mathbb{R}$, we can make appropriate choices of small parameter $\mu'$ and $\psi\in C^\infty_0(\mathbb{R})$ such that $\Im( \Gamma_j +\Lambda_j)$ is strictly nonzero and $D_j\cap \mathbb{R}$ is empty. Since there are only finitely many zeros,  we can choose $\mu$  which works for all $j=1,2,..., l$. Repeating the argument in (i) we get the desired conclusion.\\
\end{proof}

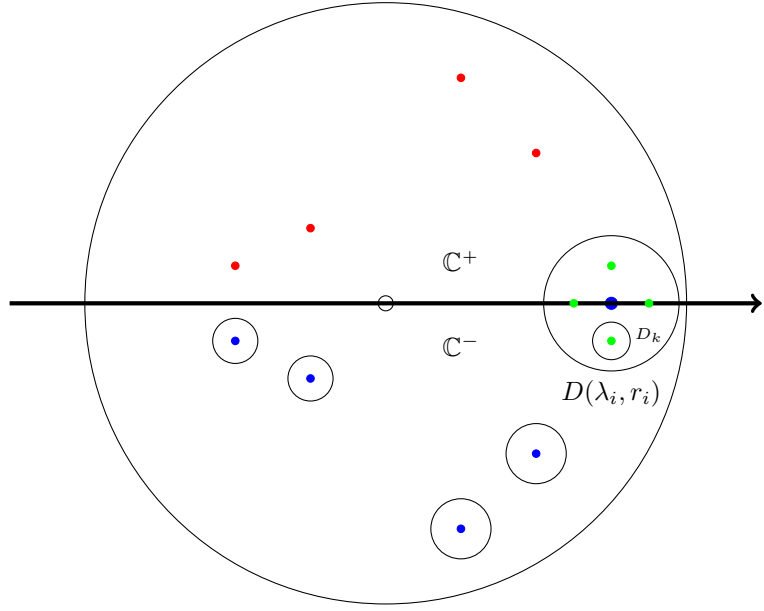
\begin{figure}
\begin{tikzpicture}
\draw [red, fill=red] (-2,0.5) circle [radius=0.05];
\draw [red, fill=red] (-1,1) circle [radius=0.05];
\draw [red, fill=red] (1,3) circle [radius=0.05];
\draw [red, fill=red] (2,2) circle [radius=0.05];
\draw [blue, fill=blue] (3, 0) circle [radius=0.08];
\draw  (0,0) circle [radius=0.1];
\draw (0,0) circle [radius=4];
\node [above] at (1,0.3) {$\mathbb{C^+}$};
\draw [->][ultra thick] (-5,0) -- (5,0);
\node [below] at (1,-0.3) {$\mathbb{C^-}$};
\draw [blue, fill=blue] (-2,-0.5) circle [radius=0.05];
\draw [blue, fill=blue] (-1,-1) circle [radius=0.05];
\draw [blue, fill=blue] (1,-3) circle [radius=0.05];
\draw [blue, fill=blue] (2,-2) circle [radius=0.05];
\draw [green, fill=green] (3,-0.5) circle [radius=0.05];
\draw [green, fill=green] (3,0.5) circle [radius=0.05];
\draw [green, fill=green] (3.5,0) circle [radius=0.05];
\draw [green, fill=green] (2.5,0) circle [radius=0.05];
\node [below] at (3.5, -0.2){{\tiny $D_k$}};
\draw (3, -0.5) circle [radius=0.25];
\draw (3, 0) circle [radius=0.9];
\node [below] at (3, -0.9) { $D(\lam_i, r_i)$};
\draw (-2, -0.5) circle [radius=0.3];
\draw (-1, -1) circle [radius=0.3];
\draw (1, -3) circle [radius=0.4];
\draw (2, -2) circle [radius=0.4];
\end{tikzpicture}
\caption{Zeros of $\balpha$ and $\alpha$ in the $\lambda$ plane}
\label{figure-1}
\end{figure}

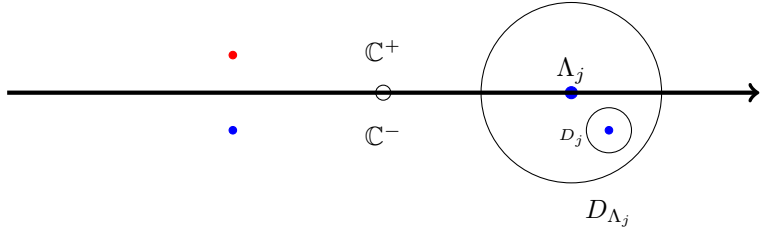
\begin{figure}
\begin{tikzpicture}
\draw [red, fill=red] (-2,0.5) circle [radius=0.05];
\draw [blue, fill=blue] (2.5, 0) circle [radius=0.08];
\draw  (0,0) circle [radius=0.1];
\node [above] at (0,0.3) {$\mathbb{C^+}$};
\draw [->][ultra thick] (-5,0) -- (5,0);
\node [below] at (0,-0.3) {$\mathbb{C^-}$};
\draw [blue, fill=blue] (-2,-0.5) circle [radius=0.05];
\node [above] at (2.5, 0) {$\Lambda_j$};
\draw (2.5, 0) circle [radius=1.2];
\draw [blue, fill=blue] (3.0, -0.5) circle [radius=0.05];
\node[left] at (2.8,- 0.6) {{\tiny $D_j$}};
\draw (3.0,- 0.5) circle [radius=0.3];
\node [below] at (3, -1.3) {$D_{\Lambda_j}$};
\end{tikzpicture}
\caption{Simple zero of $\alpha(\lambda, \mu)$ on $\mathbb{R}$}
\label{figure-2}
\end{figure}

Proposition \ref{prop:direct.generic} shows that there is a dense subset of $q \in H^{2,2}(\R )$ for which $\alpha(\lam;q)$ has at most finitely many simple zeros 
in $\C  ^-$ and no zeros on $\R $. Owing to the continuity of $\alpha$ in $q$,
the fact that {  $\alpha$  is} analytic in $\C  ^-$, and the continuity of the map $q \mapsto \alpha(\dotarg,q)$ imply that this set is also open.

\begin{proposition}
\label{prop:direct.open}
Suppose that $q_0 \in H^{2,2}(\R )$ and that $\alpha(\dotarg;q_0)$ has exactly $n$ simple zeros in $\C  ^-$ and no zeros on $\R $. There is a neighborhood $\calN$ of $q_0$ in $H^{2,2}(\R )$ so that all $q \in \calN$ have these same properties.
\end{proposition}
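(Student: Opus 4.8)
The plan is a standard Rouch\'e stability argument built on the two facts already established for $\alpha$: analyticity in $\C^-$ together with the Lipschitz bound of Proposition \ref{lemma:direct.a} (uniform in $\lambda$ with $\Im \lambda \le 0$), and the uniform decay $\alpha(\lambda) \to 1$ as $|\lambda| \to \infty$ from Proposition \ref{lemma:direct.a.infty}. Throughout I would work inside the fixed bounded set $B = \{ q \in H^{2,2}(\R) : \norm[H^{2,2}]{q-q_0} \le 1 \}$ so that the constants supplied by those two propositions are uniform.

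First I would localize the zeros away from infinity. By Proposition \ref{lemma:direct.a.infty} there is $R>0$ with $|\alpha(\lambda;q) - 1| < 1/2$ whenever $|\lambda| \ge R$, $\Im \lambda \le 0$, and $q \in B$; in particular $\alpha(\dotarg;q)$ has no zeros in $\{ |\lambda| \ge R,\ \Im \lambda \le 0 \}$ for every such $q$. Set $\Omega = \{ \lambda : \Im \lambda \le 0,\ |\lambda| \le R \}$, which is compact, and note $\Omega \cup \{ |\lambda| \ge R,\ \Im\lambda\le 0\}$ exhausts the closed lower half-plane. By hypothesis the zeros $\lambda_1, \dots, \lambda_n$ of $\alpha(\dotarg;q_0)$ all lie in the open half-plane $\{\Im\lambda<0\}$, none on $\R$ and none on $\{|\lambda| = R\}$, so one may choose pairwise disjoint closed discs $\overline{D_i} = \overline{D(\lambda_i,r_i)} \subset \mathrm{int}(\Omega) \cap \{\Im\lambda < 0\}$, each containing no zero of $\alpha(\dotarg;q_0)$ other than $\lambda_i$. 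On the compact set $K = \Omega \setminus \bigcup_i D(\lambda_i,r_i)$, which contains $\R \cap \{|\lambda|\le R\}$ and every circle $\partial D_i$, the continuous function $|\alpha(\dotarg;q_0)|$ has no zero, hence attains a strictly positive minimum $m>0$.

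Next I would invoke the Lipschitz estimate: Proposition \ref{lemma:direct.a}, applied on the bounded set $B$, yields $\eps \in (0,1]$ so that $\norm[H^{2,2}]{q-q_0} < \eps$ forces $\sup_{\Im\lambda\le 0} |\alpha(\lambda;q) - \alpha(\lambda;q_0)| < m/2$. Let $\calN = \{ q : \norm[H^{2,2}]{q-q_0} < \eps \}$, a neighborhood of $q_0$ in $H^{2,2}(\R)$. For $q \in \calN$: on $K$ we get $|\alpha(\lambda;q)| \ge m - m/2 > 0$, so $\alpha(\dotarg;q)$ has no zero on $K$; combined with the first step, every zero of $\alpha(\dotarg;q)$ with $\Im\lambda\le 0$ lies in $\bigcup_i D_i$, and in particular $\alpha(\dotarg;q)$ has no zeros on $\R$. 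On each circle $\partial D_i \subset K$ we have the strict inequality $|\alpha(\lambda;q) - \alpha(\lambda;q_0)| < m/2 \le m \le |\alpha(\lambda;q_0)|$, and both functions are holomorphic on a neighborhood of $\overline{D_i}$ (which lies in the open lower half-plane, where $\alpha$ is analytic), so Rouch\'e's theorem gives that $\alpha(\dotarg;q)$ has exactly one zero, counted with multiplicity, inside $D_i$. Summing over $i$, $\alpha(\dotarg;q)$ has exactly $n$ zeros in $\C^-$, each of multiplicity one and hence simple, with no real zeros, which is precisely the asserted conclusion.

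There is no real obstacle here; the only points requiring care are keeping $q$ inside a fixed bounded subset so that the constant in Proposition \ref{lemma:direct.a} and the decay rate in Proposition \ref{lemma:direct.a.infty} are uniform, and choosing the discs $\overline{D_i}$ strictly inside $\{\Im\lambda<0\}$ so that Rouch\'e applies to functions holomorphic on a full neighborhood of $\overline{D_i}$ — both immediate from the hypothesis that the $\lambda_i$ are simple zeros lying in the open lower half-plane.
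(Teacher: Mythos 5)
Your argument is correct and follows essentially the same route as the paper: Lipschitz continuity of $q\mapsto\alpha(\dotarg;q)$ (Proposition \ref{lemma:direct.a}) together with analyticity in $\C^-$ gives stability of the $n$ simple zeros via a Rouch\'e argument, while the uniform lower bound on $|\alpha|$ near the real axis rules out real zeros. You are in fact slightly more explicit than the paper in invoking Proposition \ref{lemma:direct.a.infty} to exclude new zeros appearing at large $|\lambda|$, a step the paper's terse proof leaves implicit.
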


\begin{proof}
Since $|\alpha(\lam;q_0)|$ does not vanish on $\R $, we have $|\alpha(\lam;q_0)| \geq c$ for some $c>0$. It follows from Lipschitz continuity of $q \mapsto \alpha(\dotarg;q)-1$ as maps $H^{2,2}(\R ) \rarr H^1(\R)$  that there is an $r_0 > 0$ so that 
$|\alpha(\lam;q)| \geq c/2$ for all $q$ with $\norm[H^{2,2}]{q-q_0} < r_0$. Next, 
let $\eta_1 = \inf_{j \neq k} |\lam_j- \lam_k|$, $\eta_2 = \inf_{k} |\imag \lam_k|$, and
$\eta = \frac{1}{2} \inf(\eta_1,\eta_2)$. By {  Proposition} \ref{lemma:direct.a} and analyticity there is an $r_1>0$ so that the $n$ simple zeros of $\alpha$ remain simple and move a distance no more than $\eta$ for $q \in H^{2,2}(\R )$ with 
$\norm[H^{2,2}]{q-q_0} < r_1$. Take $\calN = B(q_0,r)$ where $r < \inf(r_1,r_2)$.
\end{proof}

\subsection{Lipschitz Continuity of Spectral Data for Generic Potentials}
\label{sec:direct.lip.discrete}

Owing to Proposition \ref{prop:direct.scatt.lip}, to complete the proof of  Theorem \ref{thm:R.lip}, it suffices to prove Lipschitz continuity of the eigenvalues $\lam_k$ and the norming constants $C_k$ on open subsets of $U_N$ with bounds uniform in bounded subsets of $U_N$. 
This is an exercise in eigenvalue perturbation theory and is carried out in Proposition \ref{prop:direct.discrete.lip}.

We order the zeros of $\ba$ by modulus and, given two zeros with the same modulus, order by increasing phase in $(0,\pi)$. We reformulate the definition of $C_k$ in terms of the normalized Jost solutions $N^\pm$ of \eqref{direct.n.de}. If $\balpha(\lam_k) = 0$, there is a constant $b_k$ with the property
\begin{equation}
\label{direct:bk.bis}
e^{-2i\lam_k x}\twovec{N_{11}^-(x,\lam_k) }{\lam N_{21}^-(x,\lam_k)  }
= b_k e^{2i\lam_k x}\twovec{\lam^{-1}N_{12}^+(x,\lam_k)}{N_{22}^+(x,\lam_k) }
\end{equation} 
{   If $\balpha'(\lambda_k) \ne 0$, one defines the norming constant as $C_k = {b_k}/{(\zeta_k \balpha'(\lambda_k))}$ where  $\lambda_k= \zeta_k^2$. The discrete scattering
data are composed of the pairs $(\lambda_k, C_k)$.}
\begin{proposition}
\label{prop:direct.discrete.lip}
Suppose that $q_0 \in U$ with $N$ simple zeros of $\balpha$ in $\C  ^+$. Let $\Lam = \{ \lam_1, \ldots, {  \lambda_N\}}$ be a listing of the zeros of 
$\balpha$ with the ordering as described above, and set 
$$ 
d_\Lam = 
	\min
		\left( 
			\min_{1 \leq j \neq k \leq N} |\lam_j(q_0) - \lam_k(q_0)|,  \,\,
			\min (\imag  \lam_k) 
		\right).
$$ There is a neighborhood $\calN$ of $q_0$ so that:
\begin{itemize}
\item[(i)]		For any $q \in \calN$, $\balpha(\lam;q)$ has exactly $n$ simple zeros in $\C  ^+$,  no zeros on $\R $, and $|\lam_j(q) - \lam_j(q_0)| \leq \frac{1}{2}d_\Lam$.
\item[(ii)]	The estimate $|\lam_j(q) - \lam_j(q_0)| \leq C \norm[H^{2,2}]{q-q_0}$
holds for $C$ uniform in $q \in \calN$. 
\item[(iii)]	The estimate $|b_j(q) - b_j(q_0)| \leq C \norm[H^{2,2}]{q-q_0}$ holds for $C$
uniform in $q \in \calN$.
{ 
\item[(iv)]	The estimate $|C_j(q) - C_j(q_0)| \leq C \norm[H^{2,2}]{q-q_0}$ holds for $C$
uniform in $q \in \calN$.
}
\end{itemize}
\end{proposition}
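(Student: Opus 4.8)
The plan is to establish the four assertions in order, each bootstrapping on the previous ones, and to use throughout the identity $\balpha(\lam)=\overline{\alpha(\lambdabar)}$ together with the uniform Lipschitz bound of Proposition \ref{lemma:direct.a} and the Volterra analysis behind Lemma \ref{lemma:direct.n}. For \emph{part (i)}, this is the quantitative form of Proposition \ref{prop:direct.open} transported from $\alpha$ on $\C^-$ to $\balpha$ on $\C^+$. Since $\inf_{\lam\in\R}|\balpha(\lam;q_0)|=\inf_{\lam\in\R}|\alpha(\lam;q_0)|\geq c>0$ and Proposition \ref{lemma:direct.a} gives $\sup_{\lam\in\R}|\balpha(\lam;q)-\balpha(\lam;q_0)|\lesssim\norm[H^{2,2}]{q-q_0}$, the no-spectral-singularity condition is open. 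Enclose each zero $\lam_j(q_0)$ in a disc $D_j=D(\lam_j(q_0),\rho)$ with $\rho<\tfrac12 d_\Lam$, whose closures are pairwise disjoint, lie in $\C^+$, and contain no other zero of $\balpha(\dotarg;q_0)$. Again by Proposition \ref{lemma:direct.a}, $\sup_{\partial D_j}|\balpha(\dotarg;q)-\balpha(\dotarg;q_0)|<\inf_{\partial D_j}|\balpha(\dotarg;q_0)|$ once $q$ is close enough to $q_0$, so Rouch\'e's theorem yields exactly one zero of $\balpha(\dotarg;q)$ in $D_j$, with $|\lam_j(q)-\lam_j(q_0)|<\rho\leq\tfrac12 d_\Lam$; shrinking $\rho$, this zero is simple because $\balpha'(\lam_j(q_0);q_0)\neq0$ and $\balpha'(\dotarg;q)$ is uniformly close to $\balpha'(\dotarg;q_0)$ on $D_j$ (analyticity plus a Cauchy estimate applied to the bound of Proposition \ref{lemma:direct.a}). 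This defines the neighborhood $\calN$.

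For \emph{part (ii)}, use the argument principle: for $q\in\calN$,
\[
 \lam_j(q)=\frac{1}{2\pi i}\oint_{\partial D_j} z\,\frac{\partial_z\balpha(z;q)}{\balpha(z;q)}\,dz .
\]
Subtracting the value at $q_0$ and combining fractions, the integrand difference is
\[
 z\,\frac{\balpha(z;q_0)\bigl(\partial_z\balpha(z;q)-\partial_z\balpha(z;q_0)\bigr)+\partial_z\balpha(z;q_0)\bigl(\balpha(z;q_0)-\balpha(z;q)\bigr)}{\balpha(z;q)\,\balpha(z;q_0)} .
\]
On $\partial D_j$ the denominator is bounded below by part (i), while $\balpha(\dotarg;q)$ and $\partial_z\balpha(\dotarg;q)$ are bounded above and differ from their $q_0$ counterparts by $\mathcal{O}(\norm[H^{2,2}]{q-q_0})$ — the first directly from Proposition \ref{lemma:direct.a}, the second from the same estimate on a slightly larger disc together with Cauchy's integral formula for the derivative. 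Integrating over the fixed-length contour $\partial D_j$ gives $|\lam_j(q)-\lam_j(q_0)|\lesssim\norm[H^{2,2}]{q-q_0}$ uniformly on $\calN$.

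For \emph{part (iii)}, evaluate \eqref{direct:bk.bis} at $x=0$; the two sides are parallel vectors $v_L(q)=b_j(q)\,v_R(q)$ with $v_R(q)=\bigl(\lam_j(q)^{-1}N_{12}^+(0,\lam_j(q);q),\,N_{22}^+(0,\lam_j(q);q)\bigr)\neq0$ (a column of an $SL(2)$-matrix), so $b_j(q)=\langle v_L(q),v_R(q)\rangle/|v_R(q)|^2$ with $\langle\dotarg,\dotarg\rangle$ the Hermitian pairing; since $|v_R(q_0)|^2>0$, the denominator is bounded below on a possibly smaller $\calN$ by continuity. It therefore suffices to show $q\mapsto v_L(q),v_R(q)$ are Lipschitz, which reduces to two facts about the normalized Jost solutions entering \eqref{direct:bk.bis}, both obtained by the Volterra analysis of Lemma \ref{lemma:direct.n}: (a) for $\lam$ in a fixed complex neighborhood of $\lam_j(q_0)$ the Volterra kernels are analytic in $\lam$ and bounded uniformly in $\lam$ and in $q\in\calN$, so by the second resolvent formula $q\mapsto N^\pm(0,\lam;q)$ is Lipschitz uniformly in such $\lam$; and (b) for fixed $q$, $\lam\mapsto N^\pm(0,\lam;q)$ is analytic and, by the uniform bounds in (a) and Cauchy estimates, Lipschitz on a slightly smaller neighborhood, uniformly in $q\in\calN$. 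Decomposing $N^\pm(0,\lam_j(q);q)-N^\pm(0,\lam_j(q_0);q_0)$ as $\bigl[N^\pm(0,\lam_j(q);q)-N^\pm(0,\lam_j(q);q_0)\bigr]+\bigl[N^\pm(0,\lam_j(q);q_0)-N^\pm(0,\lam_j(q_0);q_0)\bigr]$, the first bracket is $\mathcal{O}(\norm[H^{2,2}]{q-q_0})$ by (a) and the second is $\mathcal{O}(|\lam_j(q)-\lam_j(q_0)|)=\mathcal{O}(\norm[H^{2,2}]{q-q_0})$ by (b) and part (ii). Hence $v_L,v_R$, and therefore $b_j$, are Lipschitz on $\calN$ with uniform constant.

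For \emph{part (iv)}, recall $C_j(q)=b_j(q)/\bigl(\zeta_j(q)\,\balpha'(\lam_j(q);q)\bigr)$ where $\zeta_j(q)$ is the square root of $\lam_j(q)$ lying in $\Omega^{++}$. Since $\lam_j(q)\in\C^+$ with $|\lam_j(q)|\geq|\imag\lam_j(q)|\geq\tfrac12 d_\Lam$ for $q\in\calN$, it stays in a compact subset of $\C^+$ on which $\sqrt{\dotarg}$ is holomorphic, so $q\mapsto\zeta_j(q)$ is Lipschitz (by part (ii)) and bounded below. The map $q\mapsto\balpha'(\lam_j(q);q)$ is Lipschitz because $\balpha'$ is jointly Lipschitz in $(\lam,q)$ — by analyticity, Proposition \ref{lemma:direct.a}, and a Cauchy estimate, exactly as in part (ii) — and $\lam_j(\dotarg)$ is Lipschitz; it is bounded below on $\calN$ because $\balpha'(\lam_j(q_0);q_0)\neq0$ ($\lam_j(q_0)$ being a simple zero). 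With $b_j(\dotarg)$ Lipschitz and bounded near $q_0$ by part (iii), the quotient rule gives $|C_j(q)-C_j(q_0)|\lesssim\norm[H^{2,2}]{q-q_0}$ uniformly on $\calN$. The main obstacle is precisely the joint $(\lam,q)$-Lipschitz continuity of the analytically continued Jost solutions needed in part (iii); the recurring subsidiary point is to keep every denominator that appears — $\balpha$ on $\partial D_j$, $|v_R|^2$, $\zeta_j$, and $\balpha'(\lam_j)$ — bounded away from zero on $\calN$, which is exactly where the hypothesis $q_0\in U$ (no spectral singularities, all zeros simple) and part (i) are used.
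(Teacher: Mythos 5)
Your proposal is correct, and for parts (i), (iii), (iv) it runs essentially parallel to the paper's argument; the genuine divergence is in how you obtain the Lipschitz dependence of the eigenvalues in part (ii). The paper gets (i)--(ii) by citing Proposition \ref{prop:direct.open} and then applying the implicit function theorem on $\C \times H^{2,2}(\R)$: it observes that $\alpha(\lam,q)$ is \emph{analytic} in $q$ (because the Jost functions are given by convergent Volterra series analytic in $q$) and that $\balpha'(\lam_j(q_0);q_0)\neq 0$, so $\lam_j(q)$ is $C^1$, hence Lipschitz. You instead redo Rouch\'e quantitatively and represent $\lam_j(q)$ by the argument-principle integral over $\partial D_j$, controlling both $\balpha$ and $\balpha'$ there via the sup-norm Lipschitz bound of Proposition \ref{lemma:direct.a} together with Cauchy estimates on slightly larger discs compactly contained in $\C^+$. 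Your route needs only Lipschitz (not analytic) dependence of $\balpha$ on $q$ and makes the uniformity of the constant on $\calN$ completely explicit, at the cost of a slightly longer computation; the paper's route is shorter but leans on the analyticity-in-$q$ claim and the Banach-space implicit function theorem. In (iii) you replace the paper's choice of a nonvanishing entry of the second column of $N^+$ (and the resulting quotient formula for $b_j$) by the projection $b_j=\langle v_L,v_R\rangle/|v_R|^2$, which avoids the case distinction but rests on exactly the same ingredient as the paper: Lipschitz continuity, jointly in $\lam$ near $\lam_j(q_0)$ and in $q$, of the Jost columns that continue to $\C^+$, obtained from the Volterra analysis behind Lemma \ref{lemma:direct.n} and its mirror-image for the other columns. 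Part (iv) coincides with the paper's: the paper controls $\balpha'(\lam_j)$ by a Cauchy integral over a small circle, which is the same Cauchy-estimate device you use, and both proofs finish with the quotient rule after noting the lower bounds on $|\zeta_j|$ and $|\balpha'(\lam_j)|$ coming from simplicity of the zeros and part (i).
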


\begin{proof}
(i) From Proposition \ref{prop:direct.open} and \eqref{direct:alpha.breve}, we immediately conclude that there is a neighborhood $\calN$ of $q_0$ for which $q \in \calN$ has exactly $N$ simple zeros in $\C ^+$ with no singularities on the real axis. We can establish continuity of the simple zeros as a function of $q$ (and hence  estimate $|\lam_j(q)-\lam_j(q_0)| \leq \frac{1}{2} d_\Lambda$) as 
follows. The equation  $\alpha(\lam_j(q); q) = 0$, defines $\lam_j$ as an analytic function of $q$ in a neighborhood of $q$ owing to the fact that $\alpha'(\lam_j(q_0), q_0) \neq 0$ and the implicit function theorem on the Banach space $\C \times H^{2,2}(\R)$.
 $\alpha$ as a function on $ \C  ^- \times H^{2,2}(\R )$. The function $\alpha(\lam,q)$
 is  analytic  in $q$ because the functions occurring in the Wronskian formula \eqref{alpha} may be computed by convergent Volterra series which are analytic in $q$. 

(ii) The implicit function theorem also guarantees that the function $\lam_j(q)$ will be $C^1$ as a function of $q$, and hence Lipschitz continuous.

(iii) Uniqueness for the equation \eqref{direct.n.de} guarantees that at least one of $N_{12}^+(0,\lam_j)$ and $N_{22}^+(0,\lam_j)$ is nonzero at $q=q_0$. Suppose that $N_{12}^+(0,\lam_j(q_0)) \neq 0$. By shrinking the neighborhood if needed we may assume that 
$N_{12}^+(0,\lam_j(q)) >0$ strictly for all $q \in \calN$. We may then compute from \eqref{direct:bk.bis} that $b_j(q) = \lam_j(q) N_{21}^-(0,\lam_j(q))/N_{22}^+(0,\lam_j(q))$ which, as a product and quotient of Lipschitz continuous functions of $q$, is itself Lipschitz continuous in $q$.

(iv) Finally, $\alpha'(\lambda_k)$ can easily be expressed in terms of $\alpha$ through a Cauchy integral over a small circle around $\lambda_k$ due to the analyticity of 
$\alpha$ in $\C^-$, and the Lipschitz continuity 
of $b_j$ and $\alpha(\lambda_j)$  in $q$ extends to the norming constants $C_j$.
\end{proof}

Propositions \ref{prop:direct.scatt.lip} and \ref{prop:direct.discrete.lip} give the proof of:

\begin{theorem}
\label{thm:R.lip}
For each $N$, the map $\calR:U_N \rarr V_N$ is uniformly Lipschitz continuous on bounded subsets of $U_N$.
\end{theorem}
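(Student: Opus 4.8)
The plan is to treat the two blocks of scattering data separately: the continuous datum $\rho$ and the discrete data $\{\lam_j,C_j\}_{j=1}^N$, fixing throughout a bounded subset $U'$ of $U_N$ with constants $C$ (the $H^{2,2}$ bound) and $c$ (the lower bounds on $\Im\zeta_j^2$, on $\inf_\Sigma|\ba|$, and --- consistently with Definition \ref{def:V.bounded} --- on the separation $d_\Lambda$). Since $\calR(q)=(\rho,\{\lam_j,C_j\})$ takes values in the product $V_N$, it suffices to bound each coordinate by a multiple of $\norm[H^{2,2}]{q_1-q_2}$ with constant depending only on $C$ and $c$.

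For the reflection coefficient, I would first note that on $U'$ the bound $\inf_{\zeta\in\Sigma}|\ba(\zeta)|\geq c$ becomes, through $\alpha(\zeta^2)=a(\zeta)$ and the relation $\balpha(\lam)=\overline{\alpha(\lambdabar)}$ (which on $\R$ reads $\overline{\alpha(\lam)}$), the statement $\inf_{\lam\in\R}|\alpha(\lam;q)|\geq c$. Hence $U'$ lies inside the open set of Proposition \ref{prop:direct.scatt.lip}, and since the Lipschitz constant produced there --- via the quotient rule applied to $\rho=\beta/\alpha$ and the local Lipschitz bounds for $q\mapsto\alpha,\beta$ and their weighted derivatives --- is uniform on $\{\norm[H^{2,2}]{q}\leq C,\ \inf_\R|\alpha|\geq c\}$, this yields $\norm[H^{2,2}]{\rho(q_1)-\rho(q_2)}\lesssim\norm[H^{2,2}]{q_1-q_2}$ uniformly on $U'$.

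For the discrete data, Proposition \ref{prop:direct.discrete.lip} already supplies, for each fixed $q_0\in U'$, a neighborhood on which $\balpha$ keeps exactly $N$ simple zeros and no real zeros, the chosen ordering of the $\lam_j$ is preserved, and $|\lam_j(q)-\lam_j(q_0)|+|C_j(q)-C_j(q_0)|\lesssim\norm[H^{2,2}]{q-q_0}$. The key remaining point is uniformity: inspecting that proof, the radius of the neighborhood and the Lipschitz constant are controlled only by $\norm[H^{2,2}]{q_0}$, by the lower bound for $|\alpha(\dotarg;q_0)|$ on $\R$, and by $d_\Lambda(q_0)$, all of which are bounded on $U'$ in terms of $C$ and $c$; this produces a single $\delta>0$ and constant $L$ so that the estimate holds whenever $q_0,q\in U'$ with $\norm[H^{2,2}]{q-q_0}<\delta$. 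I would then close with the standard dichotomy: for $\norm[H^{2,2}]{q_1-q_2}<\delta$ apply the local estimate, while for $\norm[H^{2,2}]{q_1-q_2}\geq\delta$ use that the uniform estimates of \S\ref{sec:direct.scattering} (bounding $\norm[H^{2,2}]{\rho}$), Proposition \ref{lemma:direct.a.infty} (bounding $|\lam_j|$), and the norming-constant formula (bounding $|C_j|$) make $\calR(U')$ of finite diameter $d$ in $V_N$, whence $|\calR(q_1)-\calR(q_2)|\leq d\leq(d/\delta)\norm[H^{2,2}]{q_1-q_2}$; combining the two cases gives the claim.

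The main obstacle is not any individual estimate --- those are contained in Propositions \ref{prop:direct.scatt.lip} and \ref{prop:direct.discrete.lip} --- but the bookkeeping of \emph{uniformity} in the second block: one must confirm that the implicit-function-theorem, Wronskian, and Cauchy-integral constants in Proposition \ref{prop:direct.discrete.lip} depend only on the bounded-subset data, equivalently that as $q$ ranges over $U'$ no two eigenvalues collide and none reaches the real axis or escapes to infinity, i.e. that $d_\Lambda$ stays bounded below on $U'$.
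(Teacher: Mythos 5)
Your proof follows the paper's own route: the paper obtains Theorem~\ref{thm:R.lip} precisely by combining Proposition~\ref{prop:direct.scatt.lip} for the reflection coefficient with Proposition~\ref{prop:direct.discrete.lip} for the eigenvalues and norming constants, which is exactly the decomposition you use. The only difference is that you make explicit the local-to-uniform bookkeeping (uniformity of the constants and neighborhoods in Proposition~\ref{prop:direct.discrete.lip} on a bounded set, plus the finite-diameter dichotomy for widely separated pairs), which the paper leaves implicit; this is a correct elaboration of the same argument rather than a different one.
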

%
%

\section{The Inverse Scattering Map}
\label{sec:inverse}

In this section we study the Riemann-Hilbert problem which defines the inverse scattering map. As explained in the Introduction, to prove existence and uniqueness of solutions, the Riemann-Hilbert problem in the $\zeta$ variable is most convenient. On the other hand, to prove Lipschitz continuity of the inverse scattering map, it is most effective to work in the $\lambda$ variables.  

This section is organized as follows.  In section \ref{sec:RHP1} we  prove existence and uniqueness of solutions for the Riemann-Hilbert Problem \ref{RHP1}. In section \ref{sec:RHP2} use these results to prove existence and uniqueness for the Riemann-Hilbert Problem \ref{RHP2}. Finally, in section \ref{sec:Lip}, we prove Lipschitz continuity of scattering maps by studying the Beals-Coifman integral equations for Problem \ref{RHP2}. 

\subsection{First Riemann-Hilbert Problem}
\label{sec:RHP1}

In this subsection we study Problem \ref{RHP1} at a fixed time $t$.  For convenience we set $t=0$ in the discussion that follows (so that the phase factor $\exp\left(-it\Theta \ad(\sigma)\right)$ becomes $\exp(-ix\zeta^2 \ad(\sigma)$), and write the solution $M(x,\zeta,0)$ as $M(x,\zeta)$.
For later use we note that the maps
\begin{align}
\label{RHP1.sym1}
M(x,\zeta) & \mapsto \twomat{M_{11}(x,-\zeta)}{-M_{12}(x,-\zeta)}{-M_{21}(x,-\zeta)}{M_{22}(x,-\zeta)}\\
\label{RHP1.sym2}
M(x,\zeta) & \mapsto \sigma_\eps \overline{M(x,\zetabar)} \sigma_\eps^{-1} 
\end{align}
where 
$$ \sigma_\eps = \twomat{0}{1}{\eps}{0} $$
preserve the solution space of Problem \ref{RHP1}.

We will prove:

\begin{theorem}
\label{thm:RHP1.unique}
For given data $\left( r, \{ \zeta_i, c_i\}_{i=1}^N \right)$ with $r \in H^1(\Sigma)$ odd and satisfying \eqref{rr}, $\zeta_i \in \Omega^{++}$, and $c_i \in \C^\times$, there exists a unique solution 
of Problem \ref{RHP1} for either $\eps=+1$ or $\eps=-1$.
\end{theorem}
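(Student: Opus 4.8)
The plan is to follow the Beals--Coifman strategy already laid out in Section~\ref{sec:prelim}: reduce the Riemann--Hilbert problem with poles (Problem~\ref{RHP1}) to a pole-free problem on an augmented contour via Theorem~\ref{thm:RHP.model.disc}, and then establish existence and uniqueness for the augmented problem using the Fredholm theory of the operator $\calC_w$ together with a vanishing lemma. The key structural input is the symmetry reduction. First I would augment the contour $\Sigma$ by small circles $\gamma_j$ around each $\zeta_j \in \calZ^{++}$ and its mirror points $-\zeta_j, \overline{\zeta_j}, -\overline{\zeta_j}$ (respecting the quartet structure coming from \eqref{sym.ab}), obtaining a complete contour $\Sigma' = \Sigma \cup \bigcup \gamma_j$, and transfer the residue conditions (iii) into jump matrices on the $\gamma_j$ as in Problem~\ref{RHP.model.aug}. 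Since $r \in H^1(\Sigma)$ and the jump matrices on the $\gamma_j$ are rational (hence smooth) on those compact curves, the associated $w_\pm$ lie in $H^1(\Sigma') \cap L^\infty(\Sigma')$, so $\calC_w$ is Fredholm of index zero on $L^2(\Sigma')$ by the result of Trogdon--Olver cited after Theorem~\ref{thm:BC}. By Theorem~\ref{thm:BC} and Theorem~\ref{thm:RHP.model.disc}, it then suffices to prove that $\mathrm{ker}(I - \calC_w) = \{0\}$, i.e.\ the homogeneous Beals--Coifman equation has only the trivial solution.

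The vanishing argument is the heart of the matter and is where I expect the main obstacle. The standard device (Zhou's vanishing lemma) is to take a putative solution $M$ of the homogeneous problem (so $M \to 0$ at infinity, $M_+ = M_- v$ on $\Sigma$, with the residue conditions), form the matrix $H(z) = M(z) M^*(\bar z)$ where $M^*$ denotes the conjugate transpose, and show $H$ extends to an entire function vanishing at infinity, hence $H \equiv 0$, which forces $M \equiv 0$. For this to work one needs the jump matrix $v$ on $\Sigma$ to have the right positivity/Hermiticity structure across the two components of $\Sigma$: on $\Sigma$ one has, using \eqref{r.to.br} and \eqref{rr}, that $v + v^*$ (suitably interpreted on each ray, since $\Sigma$ has the two connected pieces $\imag \zeta^2 = 0$) is positive definite when $1 - r\breve r \geq c > 0$. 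The subtlety here, and the reason both cases $\eps = \pm1$ must be checked, is that the relevant Hermitian form involves $\sigma_\eps$ rather than the identity: the correct pairing is $M \mapsto \sigma_\eps \overline{M(\bar\zeta)} \sigma_\eps^{-1}$ from \eqref{RHP1.sym2}, which for $\eps = -1$ is not a genuine inner-product conjugation. I would handle this by conjugating $M$ by a fixed constant matrix (or working with the form $M(z)\,\sigma_\eps\, \overline{M(\bar z)}^{T}\, \sigma_\eps^{-1}$ and tracking signs carefully) so that the jump relation on $\Sigma$ produces a nonnegative contribution to the contour integral $\oint H$, and then check that the jumps on the $\gamma_j$ contribute nothing to the residue count because they are similarity transformations by unipotent matrices with a single off-diagonal entry; the residue conditions ensure $H$ has no poles at the $\zeta_j$.

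Once the vanishing lemma gives $\mathrm{ker}(I - \calC_w) = \{0\}$, Fredholm index zero yields invertibility of $I - \calC_w$ on $L^2(\Sigma')$, hence a unique $\mu - I \in L^2(\Sigma')$ solving the Beals--Coifman equation, hence (by Theorem~\ref{thm:BC} and \eqref{mu-to-M}) a unique solution $M'$ of Problem~\ref{RHP.model.aug}; undoing the augmentation via Theorem~\ref{thm:RHP.model.disc} produces the unique solution of Problem~\ref{RHP1}. Finally I would verify the determinant condition: $\det M$ has no jump across $\Sigma'$ and no poles (the residue structure is nilpotent, so the determinant is regular at $\calZ$), and $\det M \to 1$ at infinity, so $\det M \equiv 1$ and $M$ indeed maps into $SL(2,\C)$. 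Two remarks on the obstacle: the $\eps = -1$ case genuinely differs from the defocusing-type sign, so the positivity in the vanishing lemma must be arranged with care, and one must also confirm that the small circles $\gamma_j$ can be chosen disjoint from $\Sigma$ and from each other, which is automatic since $\calZ^{++} \subset \Omega^{++}$ is a finite set lying off $\Sigma$.
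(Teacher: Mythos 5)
Your overall skeleton---augmenting the contour via Theorem \ref{thm:RHP.model.disc}, noting that $I-\calC_w$ is Fredholm of index zero, and reducing everything to a vanishing lemma---is the same as the paper's. The genuine gap is inside the vanishing lemma, which is exactly the point where "conjugate by a constant matrix and track signs carefully" does not suffice. Zhou's uniqueness theorem (Theorem 9.3 of \cite{Zhou89}) needs two things of the jump on the Schwarz-invariant augmented contour: $v'(\zeta)=v'(\bar\zeta)^*$ off the real axis, and $\Re v'\geq 0$ on $\R$. The positivity on $\R$ in fact holds for \emph{both} signs of $\eps$ thanks to \eqref{rr}; what fails for one sign is the off-axis symmetry. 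Using $\br(\zeta)=\eps\,\overline{r(\bar\zeta)}$ and the oddness of $r$, on $i\R$ the condition $v'(\bar\zeta)^*=v'(\zeta)$ forces $-\eps\, r=r$, and on the circles $\gamma_k^*$ the entry $-\eps\,\overline{c_k}/(\zeta-\overline{\zeta_k})$ matches the conjugate-transpose of the jump on $\gamma_k$ only if $-\eps=1$. So Zhou's hypotheses hold precisely when $\eps=-1$ (you have the labels reversed: $\eps=-1$ is the tractable case, not the problematic one). For $\eps=+1$ no single constant conjugation repairs the symmetry on $i\R$ and on all circles simultaneously (for instance $D=\mathrm{diag}(i,1)$ fixes the circle jumps but flips the symmetry on $i\R$), and the twisted pairing $M(z)\,\sigma_\eps\,\overline{M(\bar z)}^{T}\sigma_\eps^{-1}$ does not produce a sign-definite quantity to integrate. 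As written, your argument therefore proves the theorem for one sign of $\eps$ only.

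The paper closes this gap with a different idea (Lemma \ref{lemma:bob}): the contour $\Sigma$ is invariant under $\zeta\mapsto i\zeta$, and the map $M(x,\zeta)\mapsto\sigma_2 M(x,i\zeta)\sigma_2$ is an explicit bijection from the solution space of Problem \ref{RHP1} with $\eps=-1$ and data $\left(r,\{\zeta_k,c_k\}\right)$ onto the solution space with $\eps=+1$ and transformed data $s(\zeta)=\overline{r(i\bar\zeta)}$, $\eta_k=i\overline{\zeta_k}$, $d_k=i\overline{c_k}$. Existence and uniqueness proved for $\eps=-1$ via Zhou's theorem are then transported to $\eps=+1$. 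Without this transfer---or a genuine substitute for it---the case $\eps=+1$ claimed in the statement is not covered by your proposal.
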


To prove Theorem \ref{thm:RHP1.unique}, we first exhibit an explicit invertible map from the solution space of Problem \ref{RHP1} with $\eps=-1$ to the solution space with $\eps=+1$ (Lemma \ref{lemma:bob}). 
Next, we replace the discrete data in Problem \ref{RHP1} by additional contours and jump relations, obtaining Problem \ref{RHP1c} which is equivalent owing to Theorem \ref{thm:RHP.model.disc}. Finally, we will use the symmetries of the jump matrix for Problem \ref{RHP1c} with $\eps = -1$ and a theorem of Zhou \cite[Theorem 9.3]{Zhou89} to conclude that there exists a unique solution to Problem \ref{RHP1c} with $\eps=-1$. The equivalence of Problems \ref{RHP1} and \ref{RHP1c} implies that Problem \ref{RHP1} also has a unique solution, and the isomorphism of solution spaces given by 
Lemma \ref{lemma:bob} implies there also exists a unique solution to  Problem \ref{RHP1} with $\eps=+1$.

The first step is:

\begin{lemma}
\label{lemma:bob}
The map 
\begin{equation}
\label{m-.to.m+}
M(x,\zeta) \mapsto M^\sharp(x,\zeta)=\sigma_2 M(x, i \zeta ) \sigma_2 
\end{equation}
maps the solution space of  the Riemann-Hilbert problem  for $\eps=-1$  and scattering data 
$$\left( r, \{ \zeta_k \}_{k=1}^N, \{ c_k \}_{k=1}^N \right)$$  
onto the solution space of the Riemann-Hilbert problem for $\eps=1$ and scattering data
$$\left( s, \{ \eta_k \}_{k=1}^N, \{ d_k \}_{k=1}^N \right)$$ 
where
\begin{equation}
\label{data-.to.data+}
 s(\zeta) = \overline{r(i\zetabar)}, \quad \eta_k = i\overline{\zeta_k},
\quad d_k = i\overline{c_k} 
\end{equation}
\end{lemma}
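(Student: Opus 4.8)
The plan is to verify directly that the transformation $M \mapsto M^\sharp$ defined by $M^\sharp(x,\zeta) = \sigma_2 M(x,i\zeta)\sigma_2$ carries each of the defining properties of Problem \ref{RHP1} with $\eps = -1$ (normalization at $\infty$, jump relation across $\Sigma$, residue conditions at $\calZ$, membership in $SL(2,\C)$) to the corresponding property with $\eps = +1$ and the transformed data \eqref{data-.to.data+}, and that the inverse transformation (which is again of the same form, since $\sigma_2^2 = I$ and $\zeta \mapsto i\zeta$ has order $4$) does the reverse. First I would record the elementary algebraic facts: $\sigma_2^{-1} = \sigma_2$, $\sigma_2 \sigma_3 \sigma_2 = -\sigma_3$, hence $\sigma_2 e^{c\,\ad\sigma_3} \sigma_2 = e^{-c\,\ad\sigma_3}$ acting by conjugation on $2\times 2$ matrices, and $\sigma_2 \left(\begin{smallmatrix} a & b \\ c & d\end{smallmatrix}\right)\sigma_2 = \left(\begin{smallmatrix} d & c \\ b & a\end{smallmatrix}\right)$. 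Also note $\sigma_2 = \sigma_\eps|_{\eps=1}$, and the relation between $\sigma_\eps$ for $\eps = \pm 1$ under conjugation by $\sigma_2$ will be needed to match the symmetry reductions \eqref{RHP1.sym1}--\eqref{RHP1.sym2}.

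Next I would check the contour is preserved: $\Sigma = \{\imag \zeta^2 = 0\}$ is invariant under $\zeta \mapsto i\zeta$ (since $(i\zeta)^2 = -\zeta^2$), and this map interchanges $\Omega^+$ and $\Omega^-$ as well as swapping the two sides of each ray of $\Sigma$; one must check the orientation bookkeeping so that $M^\sharp_\pm$ is expressed in terms of $M_\mp$. Then the jump relation: starting from $M_+(x,\zeta) = M_-(x,\zeta)\,e^{-ix\zeta^2\ad\sigma_3} v_{\eps=-1}(\zeta)$, substitute $\zeta \mapsto i\zeta$, conjugate by $\sigma_2$, and use $\sigma_2 e^{-ix(i\zeta)^2 \ad\sigma_3}\sigma_2 = \sigma_2 e^{ix\zeta^2\ad\sigma_3}\sigma_2 = e^{-ix\zeta^2\ad\sigma_3}$ to produce $M^\sharp_+ = M^\sharp_- \, e^{-ix\zeta^2\ad\sigma_3}\big(\sigma_2 v_{\eps=-1}(i\zeta)\sigma_2\big)$; the remaining task is the bookkeeping computation showing $\sigma_2 v_{\eps=-1}(i\zeta)\sigma_2 = v_{\eps=+1}(\zeta)$ with $s(\zeta) = \overline{r(i\zetabar)}$, using $\br(\zeta) = \eps\overline{r(\zetabar)}$ and the oddness $r(-\zeta) = -r(\zeta)$. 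For the residue conditions, a simple zero $\zeta_k \in \Omega^{++}$ of $\ba$ maps to $i\overline{\zeta_k}$; since $\zeta \mapsto i\zeta$ sends $\Omega^{++}$ to itself, one tracks the quartet $(\zeta,-\zeta,\zetabar,-\zetabar)$ and checks, using $\Res_{z=\zeta_k} M^\sharp(x,z) = \sigma_2 \big(\Res_{w = i\zeta_k} M(x,w)\big)\sigma_2 \cdot i$ (chain rule factor from $w = i z$) and the explicit lower/upper-triangular residue matrices, that the constants transform as $d_k = i\overline{c_k}$. Finally, $\det M^\sharp = \det M = 1$ is immediate, and the normalization $M^\sharp \to I$ as $|\zeta|\to\infty$ follows from $\sigma_2 I \sigma_2 = I$.

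The main obstacle — really the only nontrivial point — is the sign and conjugation bookkeeping in the jump-matrix identity $\sigma_2 v_{\eps=-1}(i\zeta)\sigma_2 = v_{\eps=+1}(\zeta)$ and the matching residue-constant computation: one has to correctly propagate the interaction between (a) the reflection $\zeta \mapsto -\zeta$ built into the symmetry of $r$, (b) the complex conjugation $\zeta \mapsto \zetabar$ relating $r$ and $\br$, (c) the substitution $\zeta \mapsto i\zeta$, and (d) conjugation by $\sigma_2$, and verify they combine to flip $\eps$ from $-1$ to $+1$ while producing exactly the data $(s,\{\eta_k\},\{d_k\})$ of \eqref{data-.to.data+}. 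I would do this by writing the $2\times2$ jump matrix $v$ entrywise, tracking each entry, and checking the constraint $1 - r\br \ge c > 0$ transforms into $1 - s\bs \ge c > 0$ (equivalently the sign condition $1 - \eps\lam|\rho|^2 > 0$ is respected under $\eps \to -\eps$, $\lam \to -\lam$). Once these identities are in hand, the fact that the transformation is its own inverse up to the order-four rotation gives bijectivity of solution spaces, completing the proof.
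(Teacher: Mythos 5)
Your overall strategy is the same as the paper's: verify directly that conjugation by $\sigma_2$ combined with the substitution $\zeta\mapsto i\zeta$ carries the normalization, jump, and residue conditions of the $\eps=-1$ problem into those of the $\eps=+1$ problem with data \eqref{data-.to.data+}, and get surjectivity from the inverse map of the same form. However, the key identity you propose to verify is false as stated, and the error comes exactly from the orientation bookkeeping you flagged but did not carry through. Since $(i\zeta)^2=-\zeta^2$, the map $\zeta\mapsto i\zeta$ exchanges $\Omega^+$ and $\Omega^-$, so $M^\sharp_\pm(x,\zeta)=\sigma_2 M_\mp(x,i\zeta)\sigma_2$; substituting $M_-(x,i\zeta)=M_+(x,i\zeta)\,e^{ix\zeta^2\ad\sigma_3}\bigl(v(i\zeta)^{-1}\bigr)$ then yields
\begin{equation*}
M^\sharp_+(x,\zeta)=M^\sharp_-(x,\zeta)\,e^{-ix\zeta^2\ad\sigma_3}\bigl(\sigma_2\,v(i\zeta)^{-1}\,\sigma_2\bigr),
\end{equation*}
so the identity to check is $\sigma_2\,v_{\eps=-1}(i\zeta)^{-1}\sigma_2=v_{\eps=+1}(\zeta)$, \emph{not} $\sigma_2\,v_{\eps=-1}(i\zeta)\sigma_2=v_{\eps=+1}(\zeta)$. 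The latter fails: since $\det v=1$, a direct entrywise computation gives $\sigma_2\,v_{\eps=-1}(i\zeta)\,\sigma_2=v_{\eps=+1}(\zeta)^{-1}$, so the verification you propose would not close until the inverse is inserted. With the inverse, the computation is exactly the paper's, using $\br(i\zeta)=-\overline{r(-i\zetabar)}=s(\zeta)$ and $r(i\zeta)=\overline{s(\zetabar)}=\bs(\zeta)$ (from \eqref{r.to.br} with $\eps=-1$ and the oddness of $r$).

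Two smaller slips in the residue step should also be repaired. First, $\zeta\mapsto i\zeta$ does \emph{not} preserve $\Omega^{++}$ (it sends the first quadrant into $\Omega^-$); the poles of $M^\sharp$ sit at $-i\calZ$, and the representative in $\Omega^{++}$ is $\eta_k=i\overline{\zeta_k}$ because the relevant pole of $M$ is at $i\eta_k=-\overline{\zeta_k}$, whose residue matrix is the \emph{upper}-triangular one with entry $\overline{c_k}$; conjugation by $\sigma_2$ flips it to lower-triangular, as needed for the $\eps=+1$ data. Second, the chain-rule factor is $1/i=-i$, not $i$: one has $\Res_{z=\eta_k}M^\sharp(x,z)=(-i)\,\sigma_2\bigl(\Res_{w=-\overline{\zeta_k}}M(x,w)\bigr)\sigma_2$, and followed literally your factors would give the wrong constant, whereas tracked correctly they produce $d_k=i\overline{c_k}$ as claimed. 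Finally, the paper also checks the normalization in its Beals--Coifman form by showing $\sigma_2(C^\mp h)(i\zeta)\sigma_2=(C^\pm h^\sharp)(\zeta)$ with $h^\sharp(s)=\sigma_2\,ih(is)\,\sigma_2\in L^2(\Sigma)$; your sketch treats only the $\bigO{1/z}$ statement, which is fine for Problem \ref{RHP1} as stated but should be upgraded if you argue at the level of $\dee C(L^2)$ boundary values.
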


\begin{proof}
We claim that $M^\sharp$ solves the problem with $\eps=1$. 
First, it is easy to see that 
$$M^\sharp_\pm(x,\zeta)-I = \sigma_2 \left(M_\mp(x,i\zeta)-I\right) \sigma_2= \sigma_2 (C^\mp h)(i\zeta) \sigma_2$$ 
for $h \in L^2(\Sigma)$. A short computation shows that
$ \sigma_2(C^\mp h)(i\zeta) \sigma_2 =(C^\pm h^\sharp)(\zeta)$
where $h^\sharp(s) =\sigma_2 ih(is) \sigma_2 \in L^2(\Sigma)$. Hence
$M^\sharp(x,\dotarg) - I = C^\pm h^\sharp$
where $h^\sharp \in L^2(\Sigma)$.
It remains to check that the jump conditions and residue conditions are 
satisfied with the transformed data \eqref{data-.to.data+}.

First, we compute, for $\zeta \in \Sigma$,
$$
M^\sharp_+(x,\zeta) 
	=	\sigma_2 	M_-(x,i\zeta) \sigma_2	
	=	\sigma_2	M_+(x,i\zeta) v(i\zeta)^{-1} \sigma_2
	=	M^\sharp_-(x,\zeta) v^\sharp(\zeta)
$$
where
$$
v^\sharp(\zeta)
	=	\sigma_2 v(i\zeta)^{-1} \sigma_2
	=	\twomat{1-r(i\zeta) \br(i\zeta)}
						{\br(i\zeta)}{-r(i\zeta)}{1}
	=	\twomat{1-s(\zeta)\bs(\zeta)}{s(\zeta)}
						{-\bs(\zeta)}{1}		
$$
In the last step we used \eqref{r.to.br} and \eqref{data-.to.data+} to conclude 
that $\br(i\zeta) = -\overline{r(-i\zetabar)} = s(\zeta)$,
and  that $ r(i\zeta)=\overline{s(\zetabar)}=\bs(\zeta)$.

Next, we compute, for $\zeta_k \in \calZ^{++}$ and $\eta_k = i \overline{\zeta_k}$,
\begin{align*}
\Res_{z=\eta_k} M^\sharp(x,z)
	&=	\lim_{z \rarr \eta_k} (z-\eta_k) M^\sharp(x,z)\\
	&=	\lim_{z \rarr \eta_k} (z-\eta_k) \sigma_2 M(x,iz) \sigma_2\\
	&=	\lim_{w \rarr -\overline{\zeta_k}} 
					(-iw-i\overline{\zeta_k}) \sigma_2 M(x,w) \sigma_2\\
	&=	(-i)\sigma_2 
					\left(\Res_{w=-\overline{\zeta_k}} M(x,w) \right)
					\sigma_2\\
	&=	 \lim_{w \rarr -\overline{\zeta_k}} \sigma_2 M(x,w) \sigma_2 e^{ix{\overline{\zeta_k}}^2\ad(\sigma_3)} \twomat{0}{0}{i \overline{c_k}}{0}\\
	&=	\lim_{z \rarr \eta_k} M^\sharp(x,z) e^{-ix\eta_k^2 \ad(\sigma_3)}\twomat{0}{0}{i \overline{c_k}}{0}
\end{align*}
where in the second to last line we used the identity $\sigma_2 e^{i\theta \ad(\sigma_3)} \sigma_2 = e^{-i\theta \ad(\sigma_3)}$. The remaining residue conditions follow by symmetry.

We have now proved that the map $M \mapsto M^\sharp$ 
takes the solution space of Problem \ref{RHP1} with $\eps = -1$ \emph{into} the solution space of Problem \ref{RHP1} with $\eps=1$. To show that the map is onto, we note that, by a similar computation, the map 
\begin{equation}
\label{m+.to.m-}
M^\sharp(x,\zeta) \mapsto M(x,\zeta)=\sigma_2 M^\sharp(x, -i \zeta ) \sigma_2 
\end{equation}
maps the solution space of problem with $\eps=1$ to the solution space of problem with $\eps=-1$.

\end{proof}

We wish to apply a standard uniqueness result \cite[Theorem 9.3]{Zhou89} to show that Problem \ref{RHP1} with $\eps=-1$
has a unique solution.
Theorem \ref{thm:RHP.model.disc} implies that  Problem \ref{RHP1} is equivalent  to the following problem which replaces the discrete data with jumps on small circular contours centered at the points $\zeta \in \calZ$. The augmented contour ${\Sigma}'$ is given by 
\begin{equation}
\label{Sigma.prime}
\Sigma' = \Sigma \cup \left( \cup_{\zeta \in \calZ} \gamma_\zeta \right) 
\end{equation}
where the contours $\gamma_\zeta$ are oriented as shown.  Observe that, with the chosen orientation, the contour is Schwarz-reflection invariant. 

\begin{center}
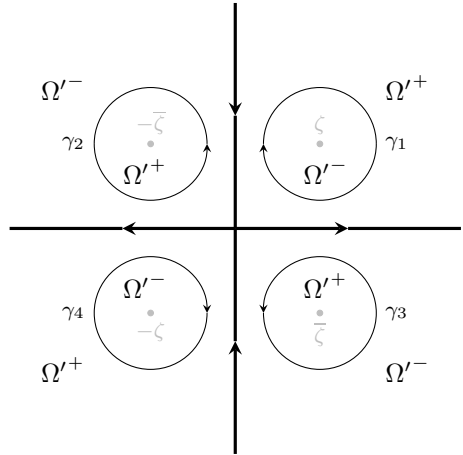
\begin{figure}[H]
\caption{The Contour $\Gamma'$}
\begin{tikzpicture}[scale=0.75]
\draw[very thick,->,>=stealth]  (0,4) 	-- (0,2);
\draw[very thick]					(0,2) 	-- (0,0);
\draw[very thick,->,>=stealth]   (0,-4) -- (0,-2);
\draw[very thick]					(0,-2) -- (0,0);
\draw[very thick,->,>=stealth]  (0,0) 	-- (-2,0);
\draw[very thick]					(-2,0) -- (-4,0);
\draw[very thick,->,>=stealth]  (0,0) 	-- (2,0);
\draw[very thick]					(2,0)	--	(4,0);
\node[right] at (2.5,2.5)		{${\Omega'}^+$};
\node[left]   at (-2.5,-2.5)		{${\Omega'}^+$};
\node[left]   at (-2.5,2.5)		{${\Omega'}^-$};
\node[right] at (2.5,-2.5)		{${\Omega'}^-$};
\draw[gray!50,fill=gray!50] (1.5,1.5) circle(0.5mm);
\draw[gray!50,fill=gray!50] (-1.5,1.5) circle(0.5mm);
\draw[gray!50,fill=gray!50] (1.5,-1.5) circle(0.5mm);
\draw[gray!50,fill=gray!50] (-1.5,-1.5) circle(0.5mm);
\node[above,gray!50] at (1.5,1.5) {\footnotesize{$\zeta$}};
\node[above,gray!50] at (-1.5,1.5) {\footnotesize{$-\overline{\zeta}$}};
\node[below,gray!50] at (1.5,-1.5) {\footnotesize{$\overline{\zeta}$}};
\node[below,gray!50] at (-1.5,-1.5) {\footnotesize{$-\zeta$}};
\draw[black,->,>=stealth] (2.5,1.5) 	arc(360:180:1);
\draw[black]					(0.5,1.5)	arc(180:0:1);	
\node[right] at (2.5,1.5) 	{\footnotesize{$\gamma_1$}};
\node[below] at (1.6,1.4) {{${\Omega'}^-$}};
\draw[black] 					(-0.5,1.5) arc(0:180:1);
\draw[black,->,>=stealth]	(-2.5,1.5) arc(180:360:1);
\node[left] at (-2.5,1.5)	{\footnotesize{$\gamma_2$}};
\node[below] at (-1.6,1.4)	{{${\Omega'}^+$}};
\draw[black,->,>=stealth]	(2.5,-1.5) arc(0:180:1);
\draw[black]					(0.5,-1.5) arc(180:360:1);
\node[right] at (2.5,-1.5) {\footnotesize{$\gamma_3$}};
\node[above] at (1.6,-1.4) 	{${\Omega'}^+$};
\draw[black,->,>=stealth]	(-2.5,-1.5) arc(180:0:1);
\draw[black]					(-2.5,-1.5) arc(180:360:1);
\node[left] at (-2.5,-1.5)	{\footnotesize{$\gamma_4$}};
\node[above] at (-1.6,-1.4) 	{${\Omega'}^-$};
\end{tikzpicture}
\label{fig:RHP1.c}
\end{figure}
\end{center}

\begin{RHP}
\label{RHP1c}
Given $r \in H^1(\Sigma)$ with $r(-\zeta) = -r(\zeta)$ and so that \eqref{rr} holds, $ \{ \zeta_i, c_i \}_{i=1}^N$,
find an analytic function $M(x,\dotarg):	\C \setminus \left( \Sigma \cup \calZ \right) \rarr SL(2,\C)$
with 
$M_\pm(x,\dotarg) - I \in \dee C(L^2)$ and $M_+(x,\zeta) = M_-(x,\zeta) e^{-ix\zeta^2 \ad(\sigma)} v'(\zeta)$ where
\begin{equation}
\label{RHP1c.jump}
v'(\zeta) = 	\begin{cases}
						\begin{pmatrix}
							1 - r(\zeta) \br(\zeta)	&	r(\zeta)	\\
							 {\color{red}- }\br(\zeta)					&	1
						\end{pmatrix},
						&  \zeta \in \Sigma\\
						\\
						\lowunitmat{\dfrac{c_i}{\zeta \pm \zeta_i}},
						&	\zeta \in \pm \gamma_i\\
						\\
							\upunitmat{\dfrac{-\eps \overline{c_i} }{\zeta \mp \overline{\zeta_i}}},
						& \zeta \in \pm \gamma_i^*
					\end{cases}
\end{equation}
\end{RHP}

The jump matrix $v'(\zeta)$ admits the factorization $v'(\zeta) = (I-w_x^-)^{-1} (I+w_x^+)$ where
$w_x^\pm = e^{-ix\zeta^2 \ad(\sigma)} w^\pm$ and $w^\pm$ are given by
\begin{equation}
\label{RHP1c.jump.w}
(w^+,w^-) =
\begin{cases}
	\left( \lowmat{-\br(\zeta)}, \upmat{r(\zeta)} \right),		&	\zeta \in \Sigma,\\
	\\
	\left( \lowmat{\frac{c_i}{\zeta \pm \zeta_i}} , \upmat{0} \right),		&	\zeta \in \pm \gamma_i, \\
	\\
	\left( \lowmat{0}, \upmat{\frac{-\eps \overline{c_i}}{\zeta \mp \overline{\zeta_i}}} \right),	&	\zeta \in \pm \gamma_i^*
\end{cases}
\end{equation}

We now consider Problem \ref{RHP1} with $\eps=-1$ and show that this problem is uniquely solvable. In this case we have
\begin{equation}
\label{br.-1}
\br(\zeta) = -\overline{r(\zetabar)}  \quad (\eps = -1).
\end{equation}
By Theorem \ref{thm:BC}, there exists a unique solution to 
Problem \ref{RHP1c} if and only if 
there exists a unique solution $\mu$ to the Beals-Coifman integral equation
\begin{equation}
\label{RHP1c.BC}\mu = I + \calC_w \mu 
\end{equation}
where, for a matrix-valued function $h \in L^2(\Sigma')$, 
\begin{equation}
\label{RHP1c.Cw}
	\calC_w h = C^+(h w_x^-) + C^-(h w_x^+) 
\end{equation}
and $C^\pm$ are Cauchy projectors for the augmented contour. The solution $\mu$ is related to the boundary values $M_\pm$ by the formulas
\begin{equation}
\label{RHP1c.M-to-mu}
\mu = M_+(I+w_x^+)^{-1} = M_-(I-w_x^-)^{-1}.
\end{equation}
We seek a solution $\mu$ with 
$\mu - I  \in L^2(\Sigma')$ and note that $\calC_w I$ belongs to $L^2(\Sigma')$ by the assumed properties of
the scattering data. Proposition 4.2 of \cite{Zhou89} shows that $I - \calC_w$ is a Fredholm operator of index $0$: thus, to show that the Beals-Coifman integral equation has a unique solution, it suffices to show that the only solution to the equation $\mu_0 = \calC_w \mu_0$ with $\mu_0 \in L^2(\Sigma')$ is the zero vector. Owing to Lemma \ref{lemma:bob}, it suffices to show that this is the case when $\eps = -1$. 

We now observe that the jump matrix \eqref{RHP1c.jump} has the following properties. For a matrix-valued function $C$ on $\Sigma'$, denote by $C^\dagger(z)$ the function $C^\dagger(z) = C(\zbar)^*$. First, 
$$
A = \left. v' \right|_{\R} = \begin{pmatrix}
									1+|r|^2 & r\\
									\overline{r} & 1
								\end{pmatrix}
$$
has the property  that $A + A^* \geq 0$. Second, we claim that for $\zeta \in \Sigma' \setminus \R$, $(v')^\dagger(\zeta) = v'(\zeta)$. For $\zeta \in \Gamma_i$ and $\zeta \in \Gamma_i^*$, this is an immediate consequence of the formulas \eqref{RHP1c.jump}. For $\zeta \in \Sigma \setminus \R = i\R$, we compute
from \eqref{br.-1} and $r(-\zeta) = -r(\zeta)$ that  $\br(\zeta) = \overline{r(\zeta)}$. Hence,
$r(\overline{\zeta})=-\overline{\br(\zeta)}$, $-\br(\overline{\zeta})=\overline{r(\zeta)}$ so that
$$
\left. v' \right|_{i\R} 
				= 	\begin{pmatrix}
						1+|r|^2	&	r\\
						\rbar		&1
					\end{pmatrix}
$$
and
$(v')^\dagger(\zeta)	= v'(\zeta)$. From \cite[Theorem 9.3]{Zhou89} we conclude:

\begin{proposition}
\label{prop:RHP1c.-} There exists a unique solution to Problem \ref{RHP1c} in the case $\eps=-1$.
\end{proposition}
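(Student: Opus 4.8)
The plan is to verify the hypotheses of Zhou's vanishing lemma, \cite[Theorem 9.3]{Zhou89}, and apply it; the discussion preceding the statement has in fact already recorded the two structural properties this theorem requires. First recall the reduction. By Theorem \ref{thm:BC}, Problem \ref{RHP1c} has a unique solution if and only if the Beals--Coifman integral equation \eqref{RHP1c.BC} has a unique solution $\mu$ with $\mu - I \in L^2(\Sigma')$. Since $\calC_w I \in L^2(\Sigma')$ by the assumed properties of the scattering data, and since $I - \calC_w$ is Fredholm of index zero by Proposition 4.2 of \cite{Zhou89}, it suffices to show that $\ker(I - \calC_w) = \{0\}$, i.e.\ that the only $\mu_0 \in L^2(\Sigma')$ with $\mu_0 = \calC_w \mu_0$ is $\mu_0 = 0$; an injective index-zero Fredholm operator is invertible, which then yields existence and uniqueness simultaneously. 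Equivalently, writing $M$ for the associated Cauchy transform of $\mu_0$, it suffices to show that the homogeneous Riemann--Hilbert problem --- find $M$ analytic on $\C \setminus \Sigma'$ with $M_\pm \in \dee C(L^2)$, $M(z) \to 0$ as $|z| \to \infty$, and $M_+ = M_- v'$ on $\Sigma'$ --- admits only $M \equiv 0$.

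The two facts already established are exactly Zhou's hypotheses. On the real line $A := \left. v' \right|_{\R}$ satisfies $A + A^* \geq 0$, indeed strictly, from the explicit form of $A$; and $\Sigma'$ is invariant under Schwarz reflection $z \mapsto \zbar$ with its chosen orientation, while $(v')^\dagger = v'$ on $\Sigma' \setminus \R$, where $C^\dagger(z) := C(\zbar)^*$. With these in place, \cite[Theorem 9.3]{Zhou89} gives the vanishing of the homogeneous problem. If one prefers to reproduce the mechanism rather than cite it: one forms the Schwarz-symmetrized product of $M$ with $M^\dagger$, uses the analyticity of $M$ on ${\Omega'}^\pm$ together with the symmetry $(v')^\dagger = v'$ to continue this product analytically across $\Sigma' \setminus \R$, producing a function analytic off $\R$ that is $\bigo{z^{-2}}$ at infinity; Cauchy's theorem then gives a contour identity of the form $\int_\R M_+(s)\bigl(A(s) + A(s)^*\bigr) M_+(s)^* \, ds = 0$, and the positivity of $A + A^*$ forces $M_+ \equiv 0$ on $\R$, after which the jump and residue relations propagate $M \equiv 0$ throughout $\C \setminus \Sigma'$. (Combined with Lemma \ref{lemma:bob} this would also settle the $\eps = +1$ case of Theorem \ref{thm:RHP1.unique}, but here only the $\eps = -1$ statement is needed.)

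The one point requiring care --- bookkeeping rather than a genuine obstacle --- is to confirm that the orientation of the small circles $\gamma_\zeta$ in \eqref{Sigma.prime}, and the way they meet the real and imaginary axes, is compatible with the ${\Omega'}^\pm$ decomposition, so that the analytic-continuation step across $\Sigma' \setminus \R$ is legitimate; this is precisely why the orientation in Figure \ref{fig:RHP1.c} was chosen to be Schwarz-reflection invariant. One should also note that the finitely many self-intersections of $\Sigma'$ at the origin cause no difficulty, being handled by the $L^2$ boundary-value theory and the $\dee C(L^2)$ normalization as in \cite[\S2.5]{TO16}.
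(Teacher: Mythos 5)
Your proposal is correct and follows essentially the same route as the paper: reduction to the Beals--Coifman equation via Theorem \ref{thm:BC}, Fredholm index zero from \cite[Proposition 4.2]{Zhou89}, and verification of the two Schwarz-symmetry hypotheses ($A + A^* \geq 0$ on $\R$ and $(v')^\dagger = v'$ on $\Sigma' \setminus \R$) so that \cite[Theorem 9.3]{Zhou89} gives the vanishing of the homogeneous problem. The paper's argument is exactly this, with the hypothesis checks carried out in the discussion immediately preceding the proposition.
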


Proposition \ref{prop:RHP1c.-} and Lemma \ref{lemma:bob}  together imply:

\begin{proposition}
\label{prop:RHP1c.+}  There exists  a unique solution to Problem \ref{RHP1c} in the case $\eps=+1$.
\end{proposition}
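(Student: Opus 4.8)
The plan is to deduce the statement directly from Proposition~\ref{prop:RHP1c.-} together with the symmetry map of Lemma~\ref{lemma:bob}, using Theorem~\ref{thm:RHP.model.disc} to pass between the discrete-data and augmented-contour formulations. Concretely, by Theorem~\ref{thm:RHP.model.disc}, Problem~\ref{RHP1c} with a given value of $\eps$ is equivalent to Problem~\ref{RHP1} with the same $\eps$; so it suffices to establish existence and uniqueness for Problem~\ref{RHP1} with $\eps=+1$, and then transport the conclusion back to Problem~\ref{RHP1c}.

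Lemma~\ref{lemma:bob} provides the explicit linear isomorphism $M(x,\zeta)\mapsto M^\sharp(x,\zeta)=\sigma_2 M(x,i\zeta)\sigma_2$, with inverse $M^\sharp(x,\zeta)\mapsto \sigma_2 M^\sharp(x,-i\zeta)\sigma_2$, from the solution space of Problem~\ref{RHP1} with $\eps=-1$ and data $\left(r,\{\zeta_i,c_i\}_{i=1}^N\right)$ onto the solution space of Problem~\ref{RHP1} with $\eps=+1$ and data $\left(s,\{\eta_i,d_i\}_{i=1}^N\right)$, where $s,\eta_i,d_i$ are given by \eqref{data-.to.data+}. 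I would then check that the data assignment $\left(r,\{\zeta_i,c_i\}\right)\mapsto\left(s,\{\eta_i,d_i\}\right)$ is a bijection between the admissible-data classes for the two signs: $s(\zeta)=\overline{r(i\overline{\zeta})}$ is again odd and lies in $H^1(\Sigma)$; $\eta_i=i\overline{\zeta_i}\in\Omega^{++}$ whenever $\zeta_i\in\Omega^{++}$; $d_i=i\overline{c_i}\in\C^\times$; and, using $\bs(\zeta)=r(i\zeta)$ as computed inside the proof of Lemma~\ref{lemma:bob}, the constraint $1-s\bs\ge c>0$ on $\Sigma$ is equivalent to constraint \eqref{rr} for $r$ transported along $\zeta\mapsto i\zeta$. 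The assignment is manifestly invertible, the inverse being induced by \eqref{m+.to.m-}.

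With these pieces in place, the argument closes: Proposition~\ref{prop:RHP1c.-} (equivalently, for Problem~\ref{RHP1}) furnishes, for every admissible datum, a unique solution in the case $\eps=-1$; pushing forward through the isomorphism and the data bijection above yields, for every admissible datum, a unique solution of Problem~\ref{RHP1} — hence of Problem~\ref{RHP1c} — in the case $\eps=+1$. The only step that is not purely formal is the bookkeeping in the previous paragraph: tracking the breve conventions, the oddness $r(-\zeta)=-r(\zeta)$, and the sign constraint \eqref{rr} under the conjugation $\zeta\mapsto i\overline{\zeta}$. This is elementary, so I expect no genuine obstacle here — the proposition is essentially a corollary of the $\eps=-1$ case.
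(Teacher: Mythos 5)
Your proposal is correct and follows essentially the same route as the paper: the authors also obtain the $\eps=+1$ case as an immediate consequence of Proposition~\ref{prop:RHP1c.-} together with the isomorphism of solution spaces from Lemma~\ref{lemma:bob}, with Theorem~\ref{thm:RHP.model.disc} supplying the equivalence between the discrete-data and augmented-contour formulations. Your extra bookkeeping on the data bijection (oddness of $s$, $\eta_i\in\Omega^{++}$, and the constraint \eqref{rr} under $\zeta\mapsto i\overline{\zeta}$) is exactly the content implicit in Lemma~\ref{lemma:bob} and its proof, so there is no gap.
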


Propositions \ref{prop:RHP1c.-} and \ref{prop:RHP1c.+} together with Theorem \ref{thm:RHP.model.disc} now conclude the proof of Theorem \ref{thm:RHP1.unique}.

\begin{remark}
\label{rem:RHP1.sym}
We can now use the unique solvability of Problem \ref{RHP1} and equations \eqref{RHP1.sym1}--\eqref{RHP1.sym2} to conclude that the unique solution 
to Problem \ref{RHP1} possesses these symmetries. From the symmetries of $M$ we can deduce that the solution $\mu$ to the Beals-Coifman integral equation \eqref{BC.mu} obeys
\begin{align}
\label{BC.mu.sym1}
\mu(x,-\zeta) 		&= 	\begin{pmatrix}
									\mu_{11}(x,\zeta) 	&	-\mu_{12}(x,\zeta)	\\
									-\mu_{21}(x,\zeta)	&	\mu_{22}(x,\zeta)
								\end{pmatrix}\\[5pt]
\label{BC.mu.sym2}
\mu(x,\zetabar)	&=	\begin{pmatrix}
									\overline{\mu_{22}(x,\zeta)}	&	
									\eps \overline{\mu_{21}(x,\zeta)}	\\
									\eps \overline{\mu_{12}(x,\zeta)}	&
									\overline{\mu_{11}(x,\zeta)}
								\end{pmatrix}
\end{align}

\end{remark}

\subsection{Second Riemann-Hilbert Problem}
\label{sec:RHP2}

We now turn to the analysis of Riemann-Hilbert Problem \ref{RHP2}, again setting $t=0$ for convenience. In the introduction, we assumed that 
the given data $\rho$ belonged to the space $H^{2,2}(\R)$. Here we will begin with a slightly weaker assumption (which will be important at a key point in the proof of Theorem \ref{thm:Lip.I}--see Lemma \ref{lemma:Kx.small} and its proof). Let
\begin{equation}
\label{Y}
Y= \left\{ \rho \in L^{2,5/4}(\R): \rho' \in L^{2,3/4}(\R), \inf_{\lam \in \R} \left(1-\lam|\rho(\lam)|^2\right) > 0 \right\}.
\end{equation}   
We will connect Problems \ref{RHP1} and \ref{RHP2} under these assumptions by
setting 
\begin{equation}
\label{r.stringent}
r(\zeta) = \zeta\rho(\zeta^2).
\end{equation} 
A short computation shows that, if $r$ is given by\eqref{r.stringent} for $\rho \in Y$, then $r \in H^{1,1}(\Sigma)$.   On the other hand, for $\rho \in Y$, the jump matrix $J(\lam)$ in Problem \ref{RHP2}(iii) satisfies $J(\lam)-I \in L^2(\R) \cap L^\infty(\R)$. As we will see, the function $\rho$ becomes the scattering data in Problem \ref{RHP2}. 

We will prove:
\begin{theorem}
\label{thm:RHP2.unique}
Suppose that $\rho \in Y$ and that $\{\lam_j, C_j\}_{j=1}^N$ is a sequence from $\C^+ \times C^\times$. There exists a unique solution to Problem \ref{RHP2} with data 
$\calD = \left( \rho, \{\lam_j, C_j\}_{j=1}^N \right)$ for either sign of $\eps$.
\end{theorem}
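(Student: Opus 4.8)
The plan is to reduce Problem \ref{RHP2} to Problem \ref{RHP1}, whose unique solvability has just been established in Theorem \ref{thm:RHP1.unique}, by the quadratic change of variables $\lambda = \zeta^2$. Given $\rho \in Y$, set $r(\zeta) = \zeta\rho(\zeta^2)$; as noted after \eqref{r.stringent} this is an odd element of $H^{1,1}(\Sigma)$ satisfying \eqref{rr}. For each $\lambda_j \in \C^+$ let $\zeta_j \in \Omega^{++}$ be its unique square root and put $c_j = C_j/2$ (cf.\ \eqref{zeta->lambda}). These are admissible data for Problem \ref{RHP1}; let $M(x,\zeta)$ be its unique solution, which by Remark \ref{rem:RHP1.sym} obeys the symmetry \eqref{RHP1.sym1}, so that its diagonal entries are even and its off-diagonal entries odd in $\zeta$. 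Consequently
$$ N(x,\lambda) \coloneqq \begin{pmatrix} M_{11}(x,\zeta) & \zeta^{-1}M_{12}(x,\zeta) \\ \zeta M_{21}(x,\zeta) & M_{22}(x,\zeta) \end{pmatrix}, \qquad \lambda = \zeta^2, $$
is single-valued on $\C\setminus(\R\cup\Lambda)$, and one verifies directly that it solves Problem \ref{RHP2}: the $z\to\infty$ normalization comes from that of $M$ and the large-$\zeta$ expansion of \eqref{LS.m} (which also fixes the entry $q^\ast$); the jump relation on $\R$ comes from \eqref{RHP1c.jump}, the pointwise identity $r(\zeta)\breve{r}(\zeta) = \eps\lambda|\rho(\lambda)|^2$ valid on all of $\Sigma$, and $\Theta(x,t,\zeta) = \theta(x,t,\zeta^2)$; the residue conditions transform using $\lambda - \lambda_j \sim 2\zeta_j(\zeta-\zeta_j)$ and $C_j = 2c_j$; and the second symmetry \eqref{RHP1.sym2} is exactly condition (i) of Problem \ref{RHP2}. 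This proves existence, for both choices of $\eps$ at once since $\zeta \mapsto \zeta^2$ leaves $\eps$ untouched.

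For uniqueness I run the construction backwards. Given any solution $N$ of Problem \ref{RHP2}, use its symmetry (i) to recover the whole matrix and set $M_{11} = N_{11}$, $M_{22} = N_{22}$, $M_{12} = \zeta N_{12}$, $M_{21} = \zeta^{-1}N_{21}$ at $\lambda = \zeta^2$; this $M$ is automatically invariant under $\zeta \mapsto -\zeta$ in the sense of \eqref{RHP1.sym1}. One checks that $M$ satisfies all the requirements of Problem \ref{RHP1} with the data constructed above --- the normalization and continuous boundary values on $\Sigma$ being obtained from those of $N$ on $\R$ by pulling back along each of the four rays of $\Sigma$ under $\zeta \mapsto \zeta^2$, and the jump and residue relations by reversing the computations above. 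Theorem \ref{thm:RHP1.unique} then forces $M$, hence $N$, to be unique.

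The one point requiring genuine care is the image $\lambda = 0$ of the self-intersection $\zeta = 0$ of $\Sigma$, where $\zeta \mapsto \zeta^2$ degenerates: one must show that $\zeta^{-1}M_{12}$, $\zeta M_{21}$ (and, going backwards, $\zeta^{-1}N_{21}$) extend across $\lambda = 0$ with continuous boundary values and the correct integrability. This works because $r(\zeta) = \zeta\rho(\zeta^2)$ vanishes linearly at $\zeta = 0$, so the jump matrix of Problem \ref{RHP1c} is $I + O(\zeta)$ near the origin and the Beals--Coifman solution $M$ is correspondingly differentiable there with $M_{12}, M_{21} = O(\zeta)$; the quantitative estimate is precisely what the weights in \eqref{Y} (which yield $r \in H^{1,1}(\Sigma)$, controlling $r$ and $r'$ near both $\zeta = 0$ and $\zeta = \infty$) are designed to provide. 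I expect this origin analysis, rather than the algebraic bookkeeping of jumps and residues, to be the main obstacle. A forward-direction template for the same $\zeta \mapsto \zeta^2$ reduction appears in \eqref{direct.n.de}--\eqref{alpha.beta.def}.
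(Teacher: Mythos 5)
Your existence argument is the same as the paper's (Proposition \ref{prop:RHP2.exist}): the paper also transfers the unique solution of Problem \ref{RHP1} to the $\lam$-plane via $\lam=\zeta^2$, using the symmetry of Remark \ref{rem:RHP1.sym} to make the conjugated matrix single-valued, and the real substance there is exactly the point you flag at the end — verifying the asymptotic/boundary-value condition for the transferred solution. The paper does this not by a pointwise analysis of $M_{12}$ at $\zeta=0$ but at the level of the Beals--Coifman equations (Lemma \ref{lemma:RHP2.sol}): the solution $(\mu_{11},\mu_{12})$ of the equations for Problem \ref{RHP1c} induces, via $\nu_{11}(x,\zeta^2)=\mu_{11}(x,\zeta)$, $\nu_{12}(x,\zeta^2)=\zeta^{-1}\mu_{12}(x,\zeta)$, a solution of the equations for Problem \ref{RHP2}, using the projector identity \eqref{Cpm-Sigma-to-R} and the parity of $\mu_{11}r_x$ and $\mu_{12}\br_x$; this is how the condition $n_\pm-(1,0)\in\dee C(L^2)$ is obtained, and it is where the hypotheses on $Y$ enter.

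Your uniqueness argument, however, departs from the paper and has a gap as written. You pull an arbitrary solution $N$ of Problem \ref{RHP2} back to a candidate $M$ and invoke Theorem \ref{thm:RHP1.unique}; but the uniqueness in that theorem is established through the equivalence with Problem \ref{RHP1c} and Zhou's vanishing theorem, i.e.\ within the class of solutions with $M_\pm - I\in\dee C(L^2(\Sigma'))$. Problem \ref{RHP2} only gives you continuous boundary values and the (nonstandard) $\bigO{1/z}$ normalization, so membership of the pulled-back $M$ in that Beals--Coifman class is not automatic; verifying it amounts to running the transfer of Lemma \ref{lemma:RHP2.sol} in reverse (with the Jacobian distortion of $L^2$ norms near $\lam=0$ and $\infty$ under $\lam=\zeta^2$), which is precisely the technical content you would need to supply. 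The paper avoids this entirely: in Proposition \ref{prop:RHP2.unique} two solutions $n,n^\sharp$ are extended to matrices $N,N^\sharp$ via the symmetry, the expansion \eqref{N.det} (which also absorbs the constant $q^*$ in the $(2,1)$ entry) gives $\det N=\det N^\sharp\equiv 1$ by Liouville, and then $B=N(N^\sharp)^{-1}$ is shown to have no jump across $\R$, removable singularities at each $\lam_k,\overline{\lam_k}$, and limit $I$ at infinity, whence $B\equiv I$. If you prefer your pullback route, either prove the $\dee C(L^2)$ membership of $M_\pm-I$ or note that once you are reduced to a matrix problem with unit-determinant jumps you may as well apply the Liouville argument directly in the $\lam$-plane — which is the paper's proof. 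A second, smaller remark: in the backwards direction the origin is less delicate than you suggest, since the symmetry $N_{21}(z)=\eps z\,\overline{N_{12}(\bar z)}$ gives $\zeta^{-1}N_{21}(\zeta^2)=\eps\zeta\,\overline{N_{12}(\overline{\zeta^2})}\to 0$; the genuinely delicate origin issue sits in the forward direction, where $\nu_{12}=\zeta^{-1}\mu_{12}$ must be controlled, and that is handled by the integral-equation argument just described.
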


Theorem \ref{thm:RHP2.unique} is a direct consequence of Propositions \ref{prop:RHP2.exist} and \ref{prop:RHP2.unique} below.

We begin by showing how to construct a solution of Problem \ref{RHP2} from the unique solution of Problem \ref{RHP1} for $r$ given by \eqref{r.stringent} and $\{\zeta_j,c_j\}$ given by the inverse map of \eqref{zeta->lambda}. From Remark \ref{rem:RHP1.sym}, it follows that the unique solution $M$ of Problem \ref{RHP1} takes the form
$$M(x,\zeta)  = 	\begin{pmatrix}
							M_{11}(x,\zeta)								&	 M_{12}(x,\zeta)	\\[5pt]
							\eps \overline{M_{12}(x,\zetabar)}	&	\overline{M_{11}(x,\zetabar)}
						\end{pmatrix}
$$
where
$$ M_{11}(x,-\zeta) = M_{11}(x,\zeta), \quad M_{12}(x,-\zeta) = -M_{12}(x,\zeta). $$
Let $\varphi_\zeta$ be the automorphism
$$ \varphi_\zeta 
			\begin{pmatrix} 
					a  & b \\ c & d 
			\end{pmatrix} = 
			\begin{pmatrix} 
				a & \zeta^{-1}b \\ \zeta c & d 
			\end{pmatrix} 
$$
and define a $2 \times 2$ matrix-valued function $N(x,\lam)$ by
\begin{equation}
\label{M-to-N}
N(x,\zeta^2) = \varphi_\zeta \left( M(x,\zeta) \right)
\end{equation}
which is well-defined because the right-hand side is an even function of $\zeta$. If $r$ is an odd function on $\Sigma$ then
$r(\zeta)/\zeta$ induces an even function on $\R$ by the formula
\begin{equation}
\label{r-to-rho}
\rho(\zeta^2) = \zeta^{-1} r(\zeta)
\end{equation}
which motivates the assumption \eqref{r.stringent}. 
The map $\zeta \mapsto \zeta^2$ maps $\Sigma$ onto $\R$ and $\calZ^{++}$ onto the set $\Lam = \{ \lam_i \}_{i=1}^N$ where $\lam_i = \zeta_i^2$. The set $\calZ$ is mapped onto the set of the $\lam_i$ and their complex conjugates. A computation shows that the row-vector valued function 
\begin{equation}
\label{N-to-n}
n(x,\lam) = \left( N_{11}(x,\lam), N_{12}(x,\lam) \right) 
\end{equation}
obeys Problem \ref{RHP2}, where $\rho$ and $\lam_i$ have already been defined and $C_j = 2c_j$.  The factor of $2$ relating $c_j$ and $C_j$ comes from the quadratic change of variables.  Corresponding to the assumption \eqref{rr} on $r$, we assume that \eqref{rho.con} holds for a positive constant $c$.

\begin{center}
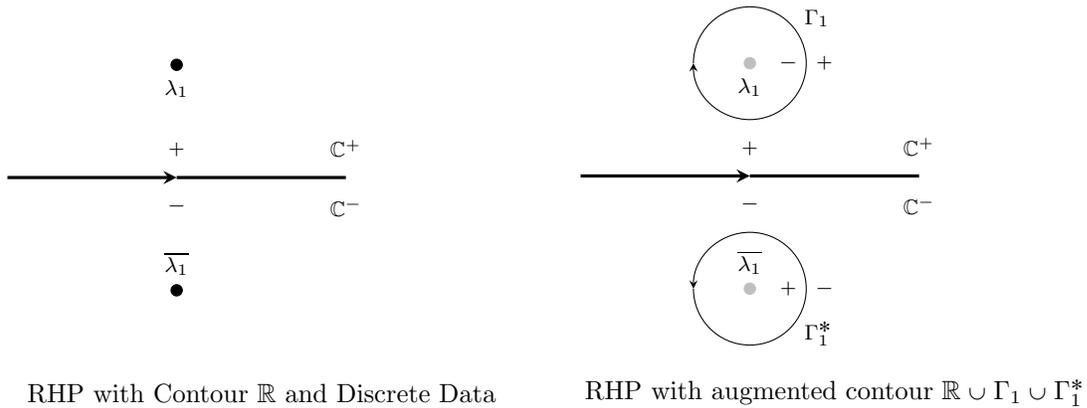
\begin{figure}[H]
\caption{Contours for Problem \ref{RHP2} and Problem \ref{RHP2c}
\label{fig:RHP2}
}

\bigskip

\begin{minipage}{0.45\textwidth}
\begin{tikzpicture}[scale=0.75]
\draw[very thick,->,>=stealth] (-3,0) -- (0,0);
\draw[very thick]				(0,0) -- (3,0);
\node[above] at (0,0.2) 					{\footnotesize{$+$}};
\node[below] at (0,-0.2)					{\footnotesize{$-$}};
\draw[black,fill=black] 		(0,2)			circle(0.1cm);
\draw[black,fill=black]		(0,-2)			circle(0.1cm);
\node[below] at				(0,1.9)		{\footnotesize{$\lam_1$}};
\node[above] at 				(0,-1.9)		{\footnotesize{$\overline{\lam_1}$}};
\node[above] at 				(3,0.2)		{\footnotesize{$\C^+$}};
\node[below] at				(3,-0.2)		{\footnotesize{$\C^-$}};
\end{tikzpicture}
\end{minipage}
\qquad
\begin{minipage}{0.45\textwidth}
\begin{tikzpicture}[scale=0.75]
\draw[very thick,->,>=stealth] (-3,0) -- (0,0);
\draw[very thick]						(0,0) -- (3,0);
\node[above] at (0,0.2) 					{\footnotesize{$+$}};
\node[below] at (0,-0.2)					{\footnotesize{$-$}};
\draw[gray!50,fill=gray!50] 		(0,2)		circle(0.1cm);
\draw[gray!50,fill=gray!50]		(0,-2)		circle(0.1cm);
\node[below] at				(0,1.9)		{\footnotesize{$\lam_1$}};
\node[above] at 				(0,-1.9)		{\footnotesize{$\overline{\lam_1}$}};
\draw[->,>=stealth]			(1,2)			arc(360:180:1cm);
\draw								(-1,2)			arc(180:0:1cm);
\node[right] at					(.8,2.8)		{\footnotesize{$\Gamma_1$}};
\node[left] at 					(1,2)			{\footnotesize{$-$}};
\node[right] at 				(1,2)			{\footnotesize{$+$}};
\draw[->,>=stealth]			(1,-2)			arc(0:180:1cm);
\draw								(-1,-2)		arc(180:360:1cm);
\node[right] at					(.8,-2.8)		{\footnotesize{$\Gamma_1^*$}};
\node [left] at					(1,-2)			{\footnotesize{$+$}};
\node [right] at 				(1,-2)			{\footnotesize{$-$}};
\node[above] at 				(3,0.2)		{\footnotesize{$\C^+$}};
\node[below] at				(3,-0.2)		{\footnotesize{$\C^-$}};
\end{tikzpicture}
\end{minipage}

\bigskip

\begin{minipage}{0.45\textwidth}
\begin{center}
RHP with Contour $\R$ and Discrete Data
\end{center}
\end{minipage}
\qquad
\begin{minipage}{0.45\textwidth}
\begin{center}
RHP with augmented contour
$\R \cup \Gamma_1 \cup \Gamma_1^*$
\end{center}
\end{minipage}
\end{figure}
\end{center}

We also record the equivalent Riemann-Hilbert problem in which the discrete data are replaced by new contours and associated jump conditions. We denote by $\Gamma_i$ and $\Gamma_i^*$ small circular contours centered respectively at $\lam_i$ and $\overline{\lam_i}$ (see Figure \ref{fig:RHP2}). Finally, we set $\Lambda' = \bigcup_{i=1}^N \left( \Gamma_i \cup \Gamma_i^* \right)$. 

\begin{RHP}
\label{RHP2c}
Given $\rho$ satisfying \eqref{r.stringent} and  \eqref{rho.con} and  a sequence $\{ \lam_k,C_k \}_{k=1}^N$ from $\C^+ \times \C^\times$, 
find a row vector-valued function $n(x,\dotarg)$ on  $\C \setminus \left(\R \cup \Lam' \right)$ with the following properties:
\begin{itemize}
\item[(i)]		$n(x,z)$ has continuous boundary values on $\R$ with 
$$n_\pm(x,\dotarg) - (1,0) \in \dee C(L^2),$$ 
and 
$n_+(x,\lam) = n_-(x,\lam) e^{-i\lam x \ad(\sigma)} V(\lam)$ where
$$
V(\lam) = 	\begin{pmatrix}
						1-\rho(\lam) \brho(\lam)		&	\rho(\lam)	\\
						-\lam \brho(\lam)						&  1
					\end{pmatrix}
$$
and $\brho(\lam) = \eps \overline{\rho(\lam)}$.
\item[(ii)]	$n(x,z)$ has continuous boundary values on each $\Gamma_i$, $\Gamma_i^*$ and 
for $\lam \in \Gamma_i$ (resp.\ $\lam \in \Gamma_i^*$) the jump relation
$ n_+(x,\lam) = n_-(x,\lam) V(\lam)$ holds, where
$$ \left. V(\lam)\right|_{\Gamma_k} =
\begin{pmatrix}
1										&	0	\\
\frac{\lam_k C_k}{\lam - \lam_k}	&	1
\end{pmatrix},
\quad
\text{resp.\ }
\quad
\left. V(\lam) \right|_{\Gamma_k^*} =
\begin{pmatrix}
1	&	\frac{-\eps \overline{C_k}}{\lam - \overline{\lam_k}}	\\
0	&	1
\end{pmatrix}
$$
\end{itemize}
\end{RHP}

\begin{remark}
To formulate the Beals-Coifman integral equations for Problem \ref{RHP2}, we use the factorization 
$V(\lam)=(I-W^-)^{-1}(I+W^+)$ where 
$$
(W^-,W^+)=
\begin{cases}
\left( \begin{pmatrix}
			0 & \rho \\ 0 & 0
		\end{pmatrix},
		\begin{pmatrix}
			0 & 0 \\ -\lam \brho & 0
		\end{pmatrix}
\right), & \lam \in \R\\
\\
\left( \begin{pmatrix}	
			1 & 0 \\ 0 & 1\\
		\end{pmatrix},
		\begin{pmatrix}
			0	&	0	\\
			\frac{\lam_k C_k}{\lam-\lam_k}	&	0
		\end{pmatrix}
\right), & \lam \in \Gamma_k\\
\\
\left(	\begin{pmatrix}
			0	&	\frac{-\eps \overline{C_k}}{\lam-\overline{\lam_k}}	\\
			0	&	0
		\end{pmatrix},
		\begin{pmatrix}
			1	&	0	\\	0	&	1
		\end{pmatrix}
\right), & \lam \in \Gamma_k^*
\end{cases}
$$
\end{remark}

\begin{lemma}
\label{lemma:RHP2.sol}
Any solution of Problem \ref{RHP1} with $r$ given by \eqref{r.stringent} and $\rho$ satisfying \eqref{rho.con}  induces a solution of Problem \ref{RHP2} via the formulas
\begin{equation}
\label{M-to-n}
(n_1(x,z^2),n_2(x,z^2) = \left(M_{11}(x,z), z^{-1} M_{12}(x,z) \right).
\end{equation}
\end{lemma}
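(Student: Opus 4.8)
The plan is to produce $n$ by pulling the unique solution $M$ of Problem~\ref{RHP1} back along $\zeta\mapsto\zeta^2$ and then verifying, one by one, the defining properties of Problem~\ref{RHP2} (in the row-vector form of Remark~\ref{rem:RHP2}, which is all that is needed later). First I would check that the data $r(\zeta)=\zeta\rho(\zeta^2)$, together with $\{\zeta_j,c_j\}$ where $\zeta_j^2=\lambda_j$, meet the hypotheses of Problem~\ref{RHP1}: $r$ is odd on $\Sigma$ because $\rho(\zeta^2)$ is even; $r\in H^{1,1}(\Sigma)\subset H^1(\Sigma)$ for $\rho\in Y$, as noted above \eqref{r.stringent}; and, computing $\br(\zeta)=\eps\zeta\overline{\rho(\zeta^2)}$ separately on the real and on the purely imaginary parts of $\Sigma$, one finds $r(\zeta)\br(\zeta)=\eps\zeta^2|\rho(\zeta^2)|^2=\eps\lambda|\rho(\lambda)|^2$, so condition \eqref{rr} is exactly \eqref{rho.con}. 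Theorem~\ref{thm:RHP1.unique} then supplies the (unique) solution $M$, and by Remark~\ref{rem:RHP1.sym} it obeys \eqref{RHP1.sym1}, so $M_{11},M_{22}$ are even and $M_{12},M_{21}$ odd in $\zeta$.

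Next I would set $N(x,\lambda)=\varphi_\zeta(M(x,\zeta))$, $\zeta^2=\lambda$, as in \eqref{M-to-N}. The parity just recorded gives $\varphi_{-\zeta}(M(x,-\zeta))=\varphi_\zeta(M(x,\zeta))$, so $N$ is a well-defined, $SL(2,\C)$-valued function of $\lambda$, meromorphic on $\C\setminus(\R\cup\Lambda)$ with $\Lambda$ the image of $\calZ$. Since $\zeta\mapsto\zeta^2$ is bianalytic off the origin and $\varphi_\zeta$ is conjugation by $\mathrm{diag}(1,\zeta)$, the property $M_\pm-I\in\dee C_\Sigma(L^2)$ passes to $n_\pm-(1,0)\in\dee C_\R(L^2)$ for the row vector $n=(N_{11},N_{12})=(M_{11},\zeta^{-1}M_{12})$, the behavior near $\lambda=0$ being controlled by the weights built into $Y$. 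Finally $M=I+\bigO{1/\zeta}$ and parity give $M_{11}-1=\bigO{1/\lambda}$ and $\zeta^{-1}M_{12}=\bigO{1/\lambda}$, hence $n=(1,0)+\bigO{1/z}$, which is condition (i)--(ii) of Problem~\ref{RHP2} for $n$.

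For the jump relation, $\varphi_\zeta$ is a ring automorphism and both $\varphi_\zeta$ and $e^{-ix\zeta^2\ad\sigma_3}$ act on a $2\times2$ matrix by rescaling its off-diagonal entries, so they commute, with $e^{-2ix\zeta^2}=e^{-2ix\lambda}$. Applying $\varphi_\zeta$ to $M_+=M_-\,e^{-ix\zeta^2\ad\sigma_3}v(\zeta)$ (Problem~\ref{RHP1} at $t=0$) therefore gives $N_+=N_-\,e^{-ix\lambda\ad\sigma_3}\varphi_\zeta(v(\zeta))$, and the identities $\zeta^{-1}r(\zeta)=\rho(\lambda)$, $\zeta\br(\zeta)=\eps\lambda\overline{\rho(\lambda)}$, $r\br=\eps\lambda|\rho(\lambda)|^2$ identify $\varphi_\zeta(v(\zeta))$ with the jump matrix $J(\lambda)$ of Problem~\ref{RHP2}(iii); restricting to the first row yields the jump relation for $n$.

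The one point needing genuine care is the residue conditions. $M$ has simple poles at the four points $\pm\zeta_j,\pm\overline{\zeta_j}$, and under $\zeta\mapsto\zeta^2$ the pair $\pm\zeta_j$ coalesces into the single simple pole $\lambda_j$ of $N$ (and $\pm\overline{\zeta_j}\mapsto\overline{\lambda_j}$). Because $d\lambda=2\zeta\,d\zeta$, a small loop about $\lambda_j$ lifts to a small loop about $\zeta_j$, whence $\Res_{\lambda=\lambda_j}N=2\zeta_j\,\varphi_{\zeta_j}\bigl(\Res_{\zeta=\zeta_j}M\bigr)$. Substituting the Problem~\ref{RHP1}(iii) residue condition $\Res_{\zeta_j}M=\lim_{\zeta\to\zeta_j}M(x,\zeta)\,e^{-ix\lambda_j\ad\sigma_3}\lowmat{c_j}$, and using multiplicativity of $\varphi_{\zeta_j}$, its commutation with $e^{-ix\lambda_j\ad\sigma_3}$, and $\varphi_{\zeta_j}\!\left(\lowmat{c_j}\right)=\lowmat{\zeta_j c_j}$, the accumulated factor $2\zeta_j\cdot\zeta_j=2\lambda_j$ reproduces exactly $\Res_{\lambda_j}N=\lim_{z\to\lambda_j}N(x,z)\,e^{-ix\lambda_j\ad\sigma_3}\lowmat{\lambda_j C_j}$ with $C_j=2c_j$, i.e.\ Problem~\ref{RHP2}(iv) at $\lambda_j$; the condition at $\overline{\lambda_j}$ follows identically, or from the Schwarz symmetry \eqref{RHP1.sym2} of $M$. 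I expect this bookkeeping --- the collapse of two poles into one with the resulting factor of two, keeping the off-/on-diagonal action of $\varphi_\zeta$ straight --- to be the only delicate step; the $\dee C(L^2)$ statement near $\lambda=0$ also merits a word but is routine given $\rho\in Y$.
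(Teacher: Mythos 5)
Your verification of the jump relation and of the residue bookkeeping is correct and matches what the paper does (the paper dismisses exactly these points as ``easy to see''): the factor $2\zeta_j$ from $d\lambda=2\zeta\,d\zeta$, the action of $\varphi_{\zeta_j}$ turning $\lowmat{c_j}$ into $\lowmat{\zeta_j c_j}$, and the identification $C_j=2c_j$ all come out right, as does $\varphi_\zeta(v(\zeta))=J(\lambda)$.

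The genuine gap is the step you yourself flag and then dismiss as routine: showing $n_\pm(x,\dotarg)-(1,0)\in\dee C(L^2)$. This is the \emph{only} part of the lemma the paper regards as requiring proof, and it does not follow from ``$\zeta\mapsto\zeta^2$ is bianalytic off the origin.'' Two things break under naive substitution. First, the boundary-value structure does not transport: the Cauchy projector on the cross $\Sigma$ is related to the projector on $\R$ by the even/odd decomposition \eqref{Cpm-Sigma-to-R}, $(C^\pm_\Sigma f)(\zeta)=(C^\pm_\R g)(\zeta^2)+\zeta\,(C^\pm_\R h)(\zeta^2)$ with $h(u)=\bigl(f(\sqrt u)-f(-\sqrt u)\bigr)/(2\sqrt u)$, so ``being $C^\pm_\Sigma$ of an $L^2(\Sigma)$ function'' does not become ``being $C^\pm_\R$ of an $L^2(\R)$ function'' by a change of variables; one must separate parities and absorb the extra factor of $\zeta$ and the division by $\sqrt u$. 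Second, $L^2(\Sigma,d\zeta)$ and $L^2(\R,d\lambda)$ are not exchanged by $\lambda=\zeta^2$ because of the Jacobian $d\lambda=2\zeta\,d\zeta$ (growth at infinity, degeneracy at $\lambda=0$); the weights in $Y$ are precisely what make the conclusion true, but that has to be exhibited, not asserted. This is why the paper argues instead through the Beals--Coifman equations: it shows that \eqref{mu-to-nu} carries the solution of \eqref{RHP1.11.AI}--\eqref{RHP1.12.AI.disc} to a solution of \eqref{RHP2.11}--\eqref{RHP2.12.disc}, using \eqref{Cpm-Sigma-to-R}, the oddness of $\mu_{11}r_x$ and evenness of $\mu_{12}\br_x$, the $H^{1,1}$ regularity of $r,\br$, and the partial-fraction identities $\tfrac{1}{\zeta-\zeta_j}\mp\tfrac{1}{\zeta+\zeta_j}$ for the discrete sums; it then proves $\nu_{11}-1,\ \nu_{12}\in L^2$ on the augmented contour and only afterwards writes $n_\pm-(1,0)=C^\pm\bigl(\nu\,(W_x^++W_x^-)\bigr)$ with $W_x^\pm\in L^2\cap L^\infty$ to get membership in $\dee C(L^2)$. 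Your proposal needs this argument (or an equivalent one) to be complete; as written, the normalization condition is the one defining property of Problem \ref{RHP2} that has not actually been verified.
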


\begin{proof} 
It is easy to see that the jump relations and residue condition in Problem \ref{RHP1} satisfied by $M$ imply that the jump relations and residue conditions for Problem \ref{RHP2} are satisfied by $n$ as defined by \eqref{M-to-n}.  It remains to show that $n$ satisfies the condition $n_\pm(x,\dotarg) - (1,0) \in \dee C(L^2)$.

To this end we show that a solution $\left(\mu_{11}(x,\zeta),\mu_{12}(x,\zeta) \right)$ of the Beals-Coifman integral equations for Problem \ref{RHP1c}  (see Appendix \ref{app:RHP1c.BC})
induces a solution $\left(\nu_{11}(x,\lam),\nu_{12}(x,\lam) \right)$ of the Beals-Coifman integral equations for Problem \ref{RHP2} (see Appendix \ref{app:RHP2.BC})   via the formulas
\begin{equation}
\label{mu-to-nu}
\nu_{11}(x,\zeta^2) = \mu_{11}(x,\zeta), \quad \nu_{12}(x,\zeta^2) = \zeta^{-1} \mu_{12}(x,\zeta). 
\end{equation}
We then show that $\nu_{11}(x,\dotarg)-1$ and $\nu_{12}(x,\dotarg)$ belonging to $L^2(\R \cup \Lambda')$
Given these properties of $\nu$, one can express
$$ n^\pm - I = C^\pm \left( \nu (W_x^+ + W_x^-) \right), $$
(where $\nu = (\nu_{11}, \nu_{12})$). One can then use,the facts that $W_x^\pm \in L^\infty\R \cup (\Lambda') \cap L^2(\R \cup \Lambda')$ and that $\nu_{11}(x,\dotarg) -1$ and $\nu_{12}(x,\dotarg) \in L^2(\Lambda')$ 
to deduce that $n^\pm - (1,0) \in \dee C(L^2)$ as required.

To show that the functions $\nu_{11}$ and $\nu_{12}$ defined by \eqref{mu-to-nu} have the required properties, we show that the integral equations \eqref{RHP1.11.AI} -- \eqref{RHP1.12.AI.disc} for $(\mu_{11},\mu_{12})$ imply the integral equations \eqref{RHP2.11}--\eqref{RHP2.12.disc} (recall that $\mu_{11}(x,\dotarg)$ is analytic in the discs enclosed by $\gamma_j^*$, while $\mu_{12}(x,\dotarg)$ is analytic in the discs enclosed by $\gamma_j$, so that the Cauchy integrals in \eqref{RHP1c.11.Sig}--\eqref{RHP1c.12.Gam*} are actually equal to their discrete counterparts in \eqref{RHP1.11.AI}--\eqref{RHP1.12.AI.disc}). In the discrete sums, we use the formulae
\begin{align*}
\frac{1}{\zeta - \zeta_j} - \frac{1}{\zeta+\zeta_j} = \frac{2\zeta_j}{\zeta^2 - \zeta_j^2},
\qquad
\frac{1}{\zeta - \zeta_j} + \frac{1}{\zeta+\zeta_j} = \frac{2\zeta}{\zeta^2 - \zeta_j^2}.
\end{align*}
In the terms involving Cauchy projectors for $\Sigma$, 
we use formula 
\begin{equation}
\label{Cpm-Sigma-to-R}
(C_\Sigma^\pm f)(\zeta) = \left(C^\pm_\R g)\right(\zeta^2) + 
			\zeta \left( C^\pm_\R h \right)(\zeta^2).
\end{equation}
valid for $f \in H^{1,1}(\Sigma)$, where
$$
g(u) = \frac{1}{2} \left(  f(\sqrt{u}) + f(-\sqrt{u}) \right), \quad 
h(u) =\frac{\left(  f(\sqrt{u})  -  f(-\sqrt{u} )  \right)}{2\sqrt{u}}
$$ 
(see  \S 2.4 of Paper I for discussion and references). We also exploit the fact that $\mu_{11}(x,\dotarg) - 1$ and $\mu_{12}(x,\dotarg)$ belong to $H^1(\Sigma)$ while $r$ and $\br$ belong to $H^{1,1}(\Sigma)$,  together with the facts that $\mu_{12}(x,\zeta) \br_x(\zeta)$ is an even function of $\zeta \in \Sigma$ while
$\mu_{11}(x,\zeta) r_x(\zeta)$ is an odd function of $\zeta$ (see \eqref{RHP1.AI.sym1}--\eqref{RHP1.AI.sym2} and recall that $r$ and $\br$ are odd functions).
\end{proof}

From Theorem \ref{thm:RHP1.unique} and Lemma \ref{lemma:RHP2.sol}, we have:

\begin{proposition}
\label{prop:RHP2.exist}
Let $\left( \rho, \{ \lam_k, C_k \}_{k=1}^N \right) \in Y \times \left(\C^+ \times \C^\times\right)^N$.
Then, there exists a solution to Problem \ref{RHP2}.
\end{proposition}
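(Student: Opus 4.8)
The plan is to read the statement as a corollary of the unique solvability of Problem \ref{RHP1} (Theorem \ref{thm:RHP1.unique}) together with the transfer principle of Lemma \ref{lemma:RHP2.sol}. Given the data $\left(\rho, \{\lam_k,C_k\}_{k=1}^N\right) \in Y \times \left(\C^+\times\C^\times\right)^N$, I would first build the corresponding data for Problem \ref{RHP1}: set $r(\zeta)=\zeta\rho(\zeta^2)$ as in \eqref{r.stringent}, which (as already noted after \eqref{r.stringent}) lies in $H^{1,1}(\Sigma)\subset H^1(\Sigma)$ since $\rho\in Y$; it is odd because $\rho(\zeta^2)$ is an even function of $\zeta$; and it satisfies $1-r(\zeta)\br(\zeta)=1-\eps\zeta^2|\rho(\zeta^2)|^2 \ge c>0$ on $\Sigma$ by the constraint built into the definition of $Y$ (equivalently \eqref{rho.con}), so \eqref{rr} holds. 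For the discrete part, invert the quadratic change of variables \eqref{zeta->lambda}: because $\zeta\mapsto\zeta^2$ is a bijection from $\Omega^{++}$ onto $\C^+$, each $\lam_k\in\C^+$ has a unique square root $\zeta_k\in\Omega^{++}$, and I set $c_k=C_k/2\in\C^\times$.

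With these data in hand, Theorem \ref{thm:RHP1.unique} produces the (unique) solution $M(x,\cdot)$ of Problem \ref{RHP1} associated to $\left(r,\{\zeta_k,c_k\}_{k=1}^N\right)$, for either sign of $\eps$. Then Lemma \ref{lemma:RHP2.sol} applies verbatim: the row vector
$$
n(x,z^2) = \left(M_{11}(x,z),\, z^{-1}M_{12}(x,z)\right)
$$
is well defined (the right-hand side is even in $z$ by the symmetry \eqref{RHP1.sym1} inherited by $M$), solves the jump and residue relations of Problem \ref{RHP2} with scattering data $\rho$, eigenvalues $\lam_k=\zeta_k^2$, and norming constants $C_k=2c_k$, and satisfies $n_\pm(x,\cdot)-(1,0)\in\dee C(L^2)$. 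This gives a solution of Problem \ref{RHP2} with the prescribed data $\calD=\left(\rho,\{\lam_k,C_k\}_{k=1}^N\right)$, which is what is claimed.

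The only point requiring genuine care is the verification, already carried out in the discussion of \eqref{r.stringent} and in the proof of Lemma \ref{lemma:RHP2.sol}, that the change of variables $\lam=\zeta^2$ interacts well with the function spaces and the positivity constraint: the degeneracy of the map at $\zeta=0$ is exactly compensated by the weights $5/4$ and $3/4$ appearing in the definition of $Y$, so that $r\in H^{1,1}(\Sigma)$; and the parity properties of $\mu_{11}r_x$ and $\mu_{12}\br_x$ on $\Sigma$ are what let one pass between the Cauchy operators $C_\Sigma^\pm$ and $C_\R^\pm$ via \eqref{Cpm-Sigma-to-R}, guaranteeing that $\nu_{11}-1,\nu_{12}\in L^2(\R\cup\Lambda')$ and hence $n_\pm-(1,0)\in\dee C(L^2)$. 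No additional obstacle arises, so the proof is complete once these bookkeeping checks are invoked.
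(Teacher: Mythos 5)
Your proposal is correct and follows exactly the paper's route: the paper proves Proposition \ref{prop:RHP2.exist} precisely by combining Theorem \ref{thm:RHP1.unique} with Lemma \ref{lemma:RHP2.sol}, after converting the data via $r(\zeta)=\zeta\rho(\zeta^2)$ and the inverse of \eqref{zeta->lambda}. The extra bookkeeping you spell out (oddness of $r$, the positivity constraint from $Y$, and the bijection $\Omega^{++}\to\C^+$ under $\zeta\mapsto\zeta^2$) is exactly what the paper relies on implicitly, so nothing is missing.
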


Given the existence of a solution to Problem \ref{RHP2}, it is easy to prove that the solution is unique.

\begin{proposition}
\label{prop:RHP2.unique}
Suppose that $n$ and $n^\sharp$ are two rwo vector-valued solutions to Problem \ref{RHP2} where $r$ is given by \eqref{r.stringent} and $\rho$ satisfies \eqref{rho.con}. Then $n=n^\sharp$.
\end{proposition}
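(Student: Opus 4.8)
The plan is to prove uniqueness for Problem \ref{RHP2} by converting the two putative solutions into a single vector-valued function that solves a Riemann-Hilbert problem with a jump matrix enjoying a positivity property, and then invoking the same Zhou-type vanishing lemma already used in Section \ref{sec:RHP1}. First I would pass to the augmented Problem \ref{RHP2c}, which by Theorem \ref{thm:RHP.model.disc} is equivalent to Problem \ref{RHP2}, so it suffices to prove uniqueness there; this replaces the residue conditions by jumps on the small circles $\Gamma_i$, $\Gamma_i^*$ and puts us in the setting of a normalized RHP on a complete (Schwarz-invariant) contour.

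The key reduction is that if $n$ and $n^\sharp$ are two row-vector solutions, their difference has no jump and is $\mathcal{O}(1/z)$, but this alone does not immediately give vanishing because the contour RHP is not for an invertible matrix. The standard device (exactly as in \cite{BC84,Zhou89}) is to build the $2\times 2$ matrix whose first row is $n$ and whose second row is obtained from $n$ via the symmetry (i) in Problem \ref{RHP2}, namely $(n_2^\sharp(z),n_1^\sharp(z))$ with the conjugation $z\mapsto\zbar$ built in; concretely one forms
$$
M(z) = \begin{pmatrix} n_1(z) & n_2(z) \\ \eps\,\overline{n_2(\zbar)} & \overline{n_1(\zbar)} \end{pmatrix},
$$
and similarly $M^\sharp$ from $n^\sharp$. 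One checks that $M$ and $M^\sharp$ solve the \emph{matrix} Problem \ref{RHP2}, which is the $\lam$-variable image of the uniquely solvable Problem \ref{RHP1} via \eqref{M-to-N}. Since Problem \ref{RHP1} has a unique solution (Theorem \ref{thm:RHP1.unique}), and the change of variables \eqref{M-to-N}, \eqref{M-to-n}, \eqref{r-to-rho} is a bijection between the relevant solution spaces, one concludes $M=M^\sharp$, hence $n=n^\sharp$.

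The main obstacle — and the step requiring care — is checking that $M$ as built from a \emph{row-vector} solution $n$ actually lies in the correct solution class for the matrix problem, i.e.\ that $M$ is invertible (equivalently $\det M\not\equiv 0$, so $M\in SL(2,\C)$ after normalization), that $M_\pm - I \in \dee C(L^2)$, and that the symmetry-constructed second row is genuinely analytic off the contour with the right residue structure. The analyticity and jump/residue checks are routine given the symmetries of the jump matrix $J(\lam)$ in Problem \ref{RHP2}(iii) and the fact that the discrete data enter in conjugate pairs $(\lam_k,\overline{\lam_k})$. For invertibility one argues as in Beals–Coifman: $\det M$ solves a scalar RHP with trivial (determinant-one) jumps and is $1+\mathcal{O}(1/z)$, hence entire and equal to $1$; this uses that $\det J \equiv 1$ on $\R$ (which follows from the form of $J$) and the corresponding determinant-one property of the residue/circle jumps in Problem \ref{RHP2c}. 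Alternatively, one can avoid the determinant computation entirely by citing the uniqueness half of Theorem \ref{thm:RHP1.unique} together with Lemma \ref{lemma:RHP2.sol}: any two solutions $n,n^\sharp$ both lift (via the symmetry) to matrix solutions of the $\zeta$-problem, and two solutions of Problem \ref{RHP1} must coincide, so their $(1,1)$ and $(1,2)$ entries coincide, giving $n=n^\sharp$ after \eqref{M-to-n}. I would write the proof using this second route, as it reuses Theorem \ref{thm:RHP1.unique} directly and sidesteps re-running the vanishing-lemma machinery.
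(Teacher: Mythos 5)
Your strategy is workable, but it is a genuinely different route from the paper's, so let me compare. The paper never returns to the $\zeta$-plane: it extends $n$ and $n^\sharp$ to matrices $N$, $N^\sharp$ directly in the $\lam$ variable via the symmetry, proves $\det N=\det N^\sharp\equiv 1$ by a Liouville argument based on the lower-triangular large-$\lam$ asymptotics \eqref{N.det}, and then shows $B=N\left(N^\sharp\right)^{-1}$ has no jump across $\R$, no singularities at the points of $\Lam$, and tends to $I$, whence $B\equiv I$. Your plan instead lifts each row-vector solution through $\lam=\zeta^2$ (i.e.\ $M_{11}(x,\zeta)=n_1(x,\zeta^2)$, $M_{12}(x,\zeta)=\zeta\,n_2(x,\zeta^2)$, second row by Schwarz symmetry) and cites the uniqueness half of Theorem \ref{thm:RHP1.unique}. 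That does buy you the vanishing-lemma machinery for free, but it costs you the \emph{converse} of Lemma \ref{lemma:RHP2.sol}, which the paper never proves: you must check that the lifted matrix is a bona fide solution of Problem \ref{RHP1} --- the jump on both $\R$ and $i\R$ with the orientation of $\Sigma$, the residue conditions at all four points of each quartet with $c_j=C_j/2$ (cf.\ \eqref{zeta->lambda}), the identity asymptotics (these do hold: $n_2=\bigO{1/\lam}$ makes $\zeta n_2(\zeta^2)$ decay), and $\det\equiv 1$, since Problem \ref{RHP1} asks for $SL(2,\C)$-valued solutions. So the determinant computation cannot really be ``avoided entirely''; the paper must do the analogous work to establish \eqref{N.det}, so neither route escapes it.

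One concrete correction: the matrix you display is not the right symmetry extension, and with it the key verification step would fail. The jump matrix $J(\lam)$ carries $-\eps\lam\overline{\rho(\lam)}$ in its $(2,1)$ entry, and the symmetry that actually preserves the solution space (inherited from \eqref{maps.Psi} through the quadratic change of variables, and the one appearing in RHP \ref{rhp.Nrhp} and, modulo an evident misprint, in the paper's own proof) is $N_{21}(z)=\eps z\,\overline{N_{12}(\zbar)}$, $N_{22}(z)=\overline{N_{11}(\zbar)}$; the factor of $z$ is omitted in the statement of condition (i) of Problem \ref{RHP2}, and your $M$ inherits that omission. As written, the second row of your $M$ does not satisfy the jump relation on $\R$ (the $\lam$ in $J_{21}$ has nothing to cancel it), so ``one checks that $M$ and $M^\sharp$ solve the matrix Problem \ref{RHP2}'' breaks down. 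Once the factor is restored, $N_{21}(z)\to\eps\,\overline{c(x)}\neq 0$ as $z\to\infty$, which is precisely why the $\lam$-variable matrix problem carries the lower-triangular normalization in Problem \ref{RHP2}(ii) rather than the identity; equivalently, in the $\zeta$-variable lift the factor is supplied by $M_{12}=\zeta\,n_2(\zeta^2)$ and identity asymptotics are recovered, so Theorem \ref{thm:RHP1.unique} applies (the data $r(\zeta)=\zeta\rho(\zeta^2)$ being admissible by \eqref{r.stringent} and \eqref{rho.con}). With that bookkeeping fixed, your second route goes through and is a legitimate alternative to the paper's Liouville argument.
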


\begin{proof}
Extend $n$ and $n^\sharp$ to matrix-valued functions $N$ and $N^\sharp$ by setting
$$
N(x,\lam)
=
\begin{pmatrix}
	n_{1}(x,\lam)		&	n_{2}(x,\lam)	\\[5pt]
	\eps \lambar \overline{n_{2}(x,\lambar)} & \overline{n_{1}(x,\lambar)}
\end{pmatrix}
$$
and similarly for $n^\sharp$. 
A short computation shows that $N$ and $N^\sharp$ obey the jump relations (i) and residue relations (ii). We claim that
(1) $\det N(x,\lam) = \det N^\sharp(x,\lam) = 1$, (2) $B(x,\lam)$ is holomorphic near each $\lam_k$ and $\newtext{\overline{\lam_k}}$, and (3) the function $B(x,\lam)=N(x,\lam) N^\sharp(x,\lam)^{-1}$ has no jump across $\R$,
and . If so then $B(x,\lam)$ is an entire matrix-valued function with $\lim_{|\lam| \rarr \infty} B(x,\lam)=I$, and hence $B(x,\lam)=I$, proving that $N=N^\sharp$. Thus it suffices to establish these claims.

To establish the first claim, we can use the formulation in Problem \ref{RHP2c} to 
study the determinant 
of the solution to the contour problem.
Since $\det V(\lam) = 1$, it follows that the determinant
of $N$ has no jumps across the contours so that the determinant is holomorphic. Next, we claim that
\begin{equation}
\label{N.det}
\lim_{|z| \rarr \infty} N^*(x,z) = 
\begin{pmatrix}
1	&	0	\\
c^*(x)	&	1
\end{pmatrix},
\end{equation}
where $N^*$ is either $N$ or $N^\sharp$
for a bounded function $c^*(x)$ (we establish this claim at the end of the proof).  If so, it follows from Liouville's theorem that  $\det N(x,\lam)=\det N^\sharp(x,\lam)=1$ so that the function $B(x,\lam)$ is well-defined.  

Second, a short computation using the residue conditions shows that $B(x,\lam)$ has no singularity at the points $\lam_k$ and $\overline{\lam_k}$ .

Third, the continuity of $B(x,\lam)$ across the real axis is an immediate consequence of the fact that $N$ and $N^\sharp$ obey the same jump relation there. 

This proves the required uniqueness, modulo the claim \eqref{N.det}.
Observe that
\begin{align*}
N(x,z)	&=	I+ \frac{1}{2\pi i} 
						\int_\R \frac{N_-(x,\lam)(V(\lam,x)-I)}{\lam-z}	\, d\lam\\
			&\quad
					+	\sum_j	\frac{1}{z-\lam_j} 
											\twomat{\alpha_j}{ 0}{\beta_j }{0 }
					+  \sum_j 	\frac{1}{z-\overline{\lam_j}} 
											\twomat{0}{\eps\overline{\beta_j}}
														{0}{\overline{\alpha_j}}  	
\end{align*}
for $\alpha_j$ and $\beta_j$ independent of $z$ (but possibly depending on $x$), so that in particular
\begin{align*}
N_{11}(x,z) 	&= 1 + \bigO{z^{-1}}, \\
z N_{12}(x,z) 	&= -\int_\R N_-(x,\lam)(V(\lam,x)-I) \, d\lambda
 							- \sum_j  \eps \overline{\beta_j} + o(1).
\end{align*}
Since $N_{12}(x,z) = \eps z \overline{N_{21}(x,\zbar)}$ and $N_{22}(x,z) = \overline{N_{11}(x,\zbar)}$, this proves the required asymptotics \eqref{N.det}.
\end{proof}

\subsection{Lipschitz Continuity of the Inverse Scattering Map}
\label{sec:Lip}

In this section we analyze the Beals-Coifman integral equations \eqref{RHP2c.11.R}--\eqref{RHP2c.12.Lam*} in order to prove the following continuity result about the inverse scattering map. In most of this section, we will assume that $\rho \in H^{2,2}(\R)$, and at a key point we exploit the fact that bounded subsets of $H^{2,2}(\R)$ are precompact in the space $Y$
considered in Section \ref{sec:RHP2}.   Recall Definition \ref{def:V.bounded} which defines a bounded subset of $V_N$.
\begin{theorem}
\label{thm:Lip.I}
The inverse scattering map
\begin{align*}
\calI:  V_{N}  										&\rarr 			U_{N}\\
\left(\rho,\{\lam_j,C_j\}_{j=1}^N \right)	&\mapsto		q
\end{align*}
is Lipschitz continuous as a map from $V_{N}$ to $H^{2,2}(\R)$, and uniformly Lipschitz on bounded subsets of $V_N$. Moreover,  $\calI: V_N \rarr U_N$, $\calI \circ \calR$ is the identity on $U_{N}$ and $\calR$ is one-to-one as a map from $U_{N}$ onto $V_{N}$. 
\end{theorem}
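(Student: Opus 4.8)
The plan is to prove Theorem \ref{thm:Lip.I} in three parts, mirroring the structure already used for Problem \ref{RHP2}: first the Lipschitz bound for the reconstruction map, then the two ``identity'' statements, and finally the bijectivity of $\calR$.

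\medskip

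\textbf{Step 1: Lipschitz continuity of $\calI$.} The reconstruction formula \eqref{q.lam} gives $q(x,t)$ (with $t=0$) as $\lim_{z\to\infty} 2iz\, n_2(x,z)$, so I would express the relevant coefficient through the Beals--Coifman integral equations \eqref{RHP2c.11.R}--\eqref{RHP2c.12.Lam*}. Writing $\nu=(\nu_{11},\nu_{12})$ for the solution of $\nu = \bfe + \calC_W \nu$ on the augmented contour $\R\cup\Lambda'$, one has
$$
q(x) = -\frac{1}{\pi}\int_\R \left(\nu\,(W_x^+ + W_x^-)\right)_2(x,\lam)\,d\lam
         + (\text{discrete contributions from }\Lambda').
$$
The key analytic input, exactly as in the existence argument, is a uniform resolvent bound $\|(I-\calC_W)^{-1}\|_{L^2\to L^2}\le C$ on bounded subsets of $V_N$; this uses the precompactness of bounded subsets of $H^{2,2}(\R)$ in $Y$ (the point of introducing $Y$ in \eqref{Y}) together with the continuity-compactness argument of the type used in Lemma \ref{lemma:direct.S.res}, plus unique solvability (Proposition \ref{prop:RHP2.unique}) to rule out a kernel. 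Given that, the second resolvent identity converts Lipschitz dependence of the data $(W^\pm,\lam_j,C_j)$ on $(\rho,\{\lam_j,C_j\})$ into Lipschitz dependence of $\nu$, hence of $q$; to get $H^{2,2}$-continuity rather than just $L^\infty$ one differentiates the integral equation in $x$ twice and multiplies by weights, using that $\rho\in H^{2,2}$ controls $\|\langle\cdot\rangle^2 q\|_{L^2}$ and $\|q''\|_{L^2}$ (this is where $\rho\in H^{2,2}$, not merely $Y$, is needed, and is presumably handled via a lemma like ``$\calK_x$ small'' referenced in the text). I would then split an arbitrary pair of data in a bounded set, estimate each piece, and absorb the discrete-spectrum contributions, which are finite sums of residue terms depending Lipschitz-continuously on $\lam_j,C_j$ with denominators bounded below by $d_\Lambda \ge c$.

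\medskip

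\textbf{Step 2: $\calR\circ\calI = \mathrm{id}$ and $\calI\circ\calR = \mathrm{id}$.} For $\calI\circ\calR=\mathrm{id}$ on $U_N$: given $q\in U_N$, the Beals--Coifman solution $M(x,z)$ built from the Jost functions solves Problem \ref{RHP1}, and by the change of variables \eqref{M-to-n} the induced row vector $n(x,\lam)$ solves Problem \ref{RHP2} with the scattering data $\calR(q)$; by the uniqueness in Theorem \ref{thm:RHP2.unique} this is \emph{the} solution, and the reconstruction formula \eqref{q.lam} returns $q$ because $M$ solves \eqref{LS.m} and the large-$z$ expansion of $M$ has $q$ in the $(1,2)$ slot (same computation as for \eqref{q.zeta}). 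For $\calR\circ\calI=\mathrm{id}$ on $V_N$: given $\calD=(\rho,\{\lam_j,C_j\})\in V_N$, let $q=\calI(\calD)$; one must check that the scattering transform of $q$ returns $\calD$. The cleanest route is again uniqueness: the RHP solution $N(x,z)$ for data $\calD$ solves the Lax equation \eqref{LS.m} in $x$ (this is the standard ``dressing'' computation, which should be stated somewhere in Section \ref{sec:RHP2}), its columns therefore have the analyticity and asymptotics characterizing the normalized Jost functions of $q$, so by the uniqueness of Jost solutions they \emph{are} those Jost functions; reading off the jump matrix and the residues then recovers $\rho$ and $\{\lam_j,C_j\}$. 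Here one uses that $q\in U_N$ so that $\calR(q)$ is well-defined with exactly $N$ eigenvalues and no spectral singularity — which is part of the assertion $\calI:V_N\to U_N$ and follows because the jump matrix has $1-\lam|\rho|^2\ge c>0$ and exactly $N$ prescribed simple poles.

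\medskip

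\textbf{Step 3: $\calR$ bijective $U_N\to V_N$.} Injectivity of $\calR$ is immediate from $\calI\circ\calR=\mathrm{id}$ (a left inverse exists), and surjectivity from $\calR\circ\calI=\mathrm{id}$ (a right inverse exists), so $\calR$ and $\calI$ are mutually inverse bijections. The main obstacle, I expect, is Step 1 — specifically, establishing the \emph{uniform} invertibility of $I-\calC_W$ on bounded subsets of $V_N$ (the contour has self-intersections at $\xi$ only after deformation, but here the contour $\R\cup\Lambda'$ is already mildly singular where $\Gamma_j$ could in principle approach $\R$; this is controlled by $d_\Lambda\ge c$) and propagating Lipschitz estimates through \emph{two} $x$-derivatives with the correct polynomial weights, since the $\lam$-weights in $H^{2,2}$ translate into decay/smoothness of $q$ only after a careful integration-by-parts bookkeeping in the Beals--Coifman kernel. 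The ``identity map'' steps are, by contrast, soft consequences of the uniqueness theorems already proved.
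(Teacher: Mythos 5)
Your overall strategy (Beals--Coifman equations on the augmented contour, uniform resolvent bounds via precompactness of bounded sets of $V_N$ in $Y$ plus the uniqueness theorem, second resolvent identity for Lipschitz dependence, and uniqueness/dressing arguments for $\calI\circ\calR=\mathrm{id}$ and $\calR\circ\calI=\mathrm{id}$) is the same skeleton the paper uses, and your Steps 2--3 are essentially the paper's step (4) together with soft consequences of it.

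There is, however, a genuine gap in Step 1: you try to obtain Lipschitz continuity into $H^{2,2}(\R)$ from the right-normalized problem alone. The estimates coming from Problem \ref{RHP2} are only uniform on half-lines $[a,\infty)$. The operator $\calK_x$ in \eqref{K00}--\eqref{K11} and the inhomogeneous term $\calK_x\mathbf{e}$ contain the factors $C_{j,x}=C_j e^{2i\lam_j x}$ and $\overline{C_{j,x}}$, whose moduli grow like $e^{2\,\imag(\lam_j)\,|x|}$ as $x\to-\infty$ (and the continuous-spectrum operator $S_x$ is small only as $x\to+\infty$, cf.\ Lemma \ref{lemma:Kx.large}); likewise the discrete part of the reconstruction, $q_3(x)=\sum_j 2i\eps\,\overline{C_{j,x}}\,\nu_j(x)$, is a sum of exponentially \emph{growing} terms in that direction. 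So neither the uniform resolvent bound nor the weighted estimates you need for $\langle x\rangle^2 q$ and $q''$ can be propagated to $x\to-\infty$ from this problem, and your Step 1 only yields Lipschitz continuity of $\calD\mapsto q\big|_{[a,\infty)}$ into $H^{2,2}([a,\infty))$. The paper closes this by introducing the \emph{left}-normalized Riemann--Hilbert problem (Appendix \ref{app:left}), whose data $(\rho^\ell,\{\lam_k,C_k^\ell\})$ give the mirror-image estimates on $(-\infty,a)$, proving the analogous Lipschitz bound there, and then showing the right- and left-hand reconstructions coincide on the overlap $(-a,a)$; without this (or some equivalent device) the claimed map into $H^{2,2}(\R)$ does not follow. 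A secondary, more technical point: the $\langle x\rangle^2$ weight is handled in the paper by trading it for two $\lam$-derivatives of $\rho\,\nu_0$ (Proposition \ref{prop:nu.lam}), so you need uniform control of $\partial_\lam\nu_0$ and $\langle\lam\rangle^{-1}\partial_\lam^2\nu_0$ in addition to the two $x$-derivatives you mention.
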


The complete proof of this theorem follows in outline the proof of Theorem 1.9 in section 6 of Paper I.  That is,
\begin{itemize}
\item[(1)]	We prove that the ``right'' Riemann-Hilbert problem, Problem \ref{RHP2} and the reconstruction formula \eqref{q.zeta} yield a Lipschitz continuous map from
$V_{N}$ to $H^{2,2}(a,\infty)$ for any $a \in \R$.
\item[(2)]	We prove that an analogous ``left'' Riemann-Hilbert problem and its reconstruction formula yield a Lipschitz continuous map from $V_{N}$ to $H^{2,2}(-\infty,a)$ for any $a \in \R$.
\item[(3)]	We show that the left- and right-hand reconstructions of $q$ from the same scattering data coincide on $(-a,a)$ for any $a \in \R$. 
\item[(4)]	We show that $\calI \circ \calR$ is the identity on $U_{N}$ 
\end{itemize}

Here we will show how the analysis of Paper I, section 6 must be modified to accommodate solitons in order to carry out step (1). The modifications to step (2) are analogous and the argument for step (3) is identical to that given in Paper I, Lemma 6.16. The argument for step (4) rests (as is standard) on the uniqueness of Beals-Coifman solutions, compare Proposition 6.17 in Paper I.  Thus, in what follows, we will discuss the details of step (1) only. In Appendix \ref{app:left}, we derive the left-hand Riemann-Hilbert problem. 

We will prove the Lipschitz continuity using equations \eqref{RHP2.11}--\eqref{RHP2.12.disc} and the reconstruction formula \eqref{RHP2.nu.q.recon}. In what follows, we will first
make a reduction of the integral equations. Next, we will prove continuity of the solution and its $x$ and $\lam$ derivatives in $\calD$. We will use this continuity and the reconstruction formula \ref{RHP2.nu.q.recon} to prove Lipschitz continuity of the inverse
scattering map.

First, we reformulate \eqref{RHP2.11}--\eqref{RHP2.12.disc}. 
Let 
\begin{equation}
\label{X}
X = \left(\C \oplus L^2(\R) \right) \oplus \C^N 
\end{equation}
and regard
\begin{equation}
\label{nuflat}
\nu^\flat = 
\left( 
	\nu_{11}(x,\lam), \{ \nu_{11}(x,\overline{\lam_j})\}_{j=1}^N 
\right) 
\end{equation}
as an element of $X$ for each $x$. 
Iterating equations \eqref{RHP2.11}--\eqref{RHP2.12.disc} we find 
\begin{equation}
\label{nuflat.int}
\nu^\flat = \mathbf{e} + \calK_x \nu^\flat
\end{equation}
where $\mathbf{e}= (1,\{ 1,\dots, 1\}) \in X$ and $\calK_x : X \rarr X$ is defined as follows. For $(v,h) \in X$
we have 
\begin{equation}
\label{calK}
\calK_x 
\begin{pmatrix} v \\ h \end{pmatrix} = 
\begin{pmatrix}\calK_{00} & \calK_{01} \\ \calK_{10} & \calK_{11} \end{pmatrix} \begin{pmatrix} v \\ h \end{pmatrix}
\end{equation}
where
\begin{align}
\label{K00}
(\calK_{00} v)(\lam)
	&=	\left(S_x v\right)(\lam)
		+ \sum_j 	\frac{C_{j,x} \lam_j}{\lam-\lam_j} \,
								C_\R
									\left(
										\rho_x(\dotarg) v(\dotarg)
									\right)(\lam_j)	\\
\label{K01}
(\calK_{01} h)(\lam) 
	&=	- \sum_j h_j \, \overline{C_{j,x}} \,
								C^-
									\left[ 
										\frac{(\dotarg)\overline{\rho_x(\dotarg)}}
											{\dotarg - \lam_j} 
									\right](\lam)
		+ \sum_{j,k}	h_k\, \frac{\eps \lam _j C_{j,x} \overline{C_{k,x}}}
											{(\lam-\lam_j)(\lam_j - \overline{\lam_k})}	\\
\label{K10}
(\calK_{10} v)_i
	&=	-\eps C_\R \left[ (\dotarg) \overline{\rho_x(\dotarg)} 
									C^+\left( \rho_x(\diamond) v(\diamond) \right)
										(\dotarg)
							\right](\overline{\lam_i})
			+	\sum_j	\frac{C_{j,x} \lam_j}{\overline{\lam_i}-\lam_j}
									C_\R \left(\rho_x(\dotarg) v(\dotarg)\right)(\lam_j) \\
\label{K11}
(\calK_{11} h)_i
	&=
		- 	\sum_{j}  h_j \overline{C_{j,x}} 
									C_\R 
										\left[ \frac{(\dotarg) 
												\overline{\rho_x(\dotarg)}}
												{\dotarg - \overline{\lam_j}}
										\right] (\overline{\lam_i})
		+	\sum_{j,k}	\frac{\eps \lam_j C_{j,x} \overline{C_{k,x}}}
											{\overline{\lam_i}-\lam_j} h_k	
\end{align}
In \eqref{K00}
\begin{equation}
\label{Sx}
(S_x h)(\lam) = -\eps C^-
	\left[ 
				(\dotarg) \overline{\rho_x(\dotarg)} 
				\, C^+\left(\rho_x (\diamond) h(\diamond)\right) 
	\right] (\lam),
\end{equation}
\begin{equation}
\label{rhoCx}
\rho_x(\lam) = e^{-2i\lam x} \rho(\lam), \quad C_{j,x} = C_j e^{2i\lam_j x}, 
\end{equation}
and all of the remaining operators are of finite rank. We will usually suppress the dependence of $\calK_x$ on the scattering data $\calD = \left(\rho, \{\lam_j, C_j\}_{j=1}^N\right)$ but sometimes write $\calK_x(\calD)$ when needed to make the dependence explicit.

Setting $\nu^\sharp = \nu^\flat - \mathbf{e}$ and 
\begin{equation}
\label{Xsharp}
X^\sharp = L^2(\R)\oplus \C^N
\end{equation}
we obtain the equation
\begin{equation}
\label{nusharp.int}
\nu^\sharp = \calK_x \mathbf{e} + \calK_x \nu^\sharp
\end{equation}
We will use this equation to study Lipschitz continuity of $\nu$ in $\calD$, and use the reconstruction formula \eqref{RHP2.nu.q.recon} to prove continuity of $q$ in $\calD$. To see what estimates are needed, we write
\begin{equation}
\label{nuflat.to.nu0}
\nu^\flat = \left(1+\nu_0, \left\{ \nu_j(x) \right\}_{j=1}^N \right)
\end{equation}
and see that the reconstruction formula is
$q(x) = q_1(x) + q_2(x) + q_3(x)$ where
\begin{gather*}
q_1(x)	
	=	\frac{1}{\pi} \int_\R e^{-2i\lam x} \rho(\lam) d \lam, \quad 
q_2(x)	
	=	\frac{1}{\pi} \int_\R e^{-2i\lam x} \rho(\lam) \nu_0(x,\lam) \, d\lam,\\
q_3(x)
	=	\sum_{j=1}^N 2i\eps \overline{C_{j,x}} \nu_j(x)
\end{gather*}

First, the map $\rho \mapsto q_1$ is Lipschitz from $H^{2,2}(\R)$ to itself owing to mapping properties of the Fourier transform. 

Second, to show that $\calD \mapsto q_2$ 
is Lipschitz continuous from $V_N$ to $H^{2,2}([a,\infty))$, we need estimates on $\nu_0(x,\lam)$ and its derivatives in the space $L^2((a,\infty) \times \R, dx \, d\lam)$.
The formulas
\begin{align*}
x^2 q_2(x)	
	&=	\frac{1}{4\pi} \int_\R e^{-2i\lam x} (\rho {\nu_0})_{\lam\lam} (x,\lam) \, d\lam,\\
q_2''(x)
	&=	\frac{1}{\pi} 
				\int_\R e^{-2i\lam x} \left( -4\lam^2 \rho(\lam) \nu_0(x,\lam)\right)  \, d\lam \\
	&\quad + \frac{1}{\pi} 
				\int_\R \bigl( -2i\lam\rho(\lam) (\nu_0)_x(x,\lam)+ \, \rho(\lam) (\nu_0)_{xx}(x,\lam)\bigr) \, d\lam,
\end{align*}
imply that, in order to show that $\calD \mapsto q_2$ is Lipschitz continuous from $V_N$ to $H^{2,2}(\R)$, it suffices to show that
\begin{equation}
\label{nu.todo}
\nu_0, \quad (\nu_0)_x, \quad (\nu_0)_{xx}, \quad (\nu_0)_\lam,
\text{ and }\langle \lam \rangle^{-1} (\nu_0)_{\lam\lam}
\end{equation}
are Lipschitz continuous from $V_N$ to $L^2((a,\infty) \times \R)$. 

Third, the function $q_3(x)$ is a sum of decaying exponential functions of $x$ as $x \rarr \infty$ multiplied by the functions $\nu_j(x)$.  Thus, to show that $x^2 q_3(x)$ and $(q_3)_{xx}(x)$ belong to $L^2(a,\infty)$, it suffices to show that the functions $\nu_j(x)$ and their second derivatives in $x$ are bounded. 

In order to prove these statements, it sufffices to prove that:
\begin{enumerate}
\item[(1)]	The resolvent $(I - \calK_x)^{-1}$ is a bounded operator from $X^\sharp$ to itself uniformly in $x \in (a,\infty)$ so that it may be ``lifted'' to a bounded operator from $L^2((a,\infty),X^\sharp)$ to itself
\item[(2)]	The inhomogeneous term $\calK_x \mathbf{e}$ together with appropriate derivatives in $x$ and $\lam$ belong to the space $L^2((a,\infty),X^\sharp)$, and
\item[(3)]	The integral equation \eqref{nuflat.int} can be differentiated in $x$ and $\lam$ to obtain estimates on $x$ and $\lam$ derivatives of $\nu_0$ and $x$ derivatives of $\nu_j$.
\end{enumerate}

We will prove the following propositions in three subsections.  In what follows, we denote by $W$ a bounded subset of $V_N$ in the sense of Definition \ref{def:V.bounded}, and by $\calD$ a generic element of $W$.

\begin{proposition}
\label{prop:nu.L2}
The unique solution $\nu^\sharp$ of \eqref{nusharp.int}
satisfies
$$\norm[L^2([a,\infty),X^\sharp) \cap L^\infty([a,\infty),X^\sharp)]{\nu^\sharp} \lesssim 1$$ 
with estimates uniform in $\calD \in W$.
\end{proposition}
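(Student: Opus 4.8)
The plan is to establish the result in two stages: first a uniform (in $x \in [a,\infty)$ and in $\calD \in W$) bound on the resolvent $(I-\calK_x)^{-1}$ acting on $X^\sharp$, and then a uniform bound on the inhomogeneous term $\calK_x \mathbf{e}$ in $X^\sharp$, together with enough decay in $x$ to put it in $L^2([a,\infty),X^\sharp)$. Once both pieces are in hand, $\nu^\sharp = (I-\calK_x)^{-1} \calK_x \mathbf{e}$ immediately gives the stated bound, because the resolvent lifts to a bounded operator on $L^2([a,\infty),X^\sharp)$ with the same norm by Fubini, and the $L^\infty$ bound follows from the pointwise-in-$x$ resolvent bound applied to the pointwise-in-$x$ bound on $\calK_x \mathbf{e}$.

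For the resolvent bound I would decompose $\calK_x$ into its four blocks as in \eqref{calK}. The genuinely infinite-dimensional piece is $\calK_{00}$, whose leading part is the operator $S_x$ of \eqref{Sx}; the remaining terms in $\calK_{00}$, $\calK_{01}$, $\calK_{10}$, $\calK_{11}$ are of finite rank. The operator $S_x$ is a composition $C^- \circ (\text{mult. by } \lam\overline{\rho_x}) \circ C^+ \circ (\text{mult. by } \rho_x)$, so by boundedness of $C^\pm$ on $L^2(\R)$ its norm is controlled by $\norm[L^\infty]{\rho}^2$ plus, crucially, a factor accounting for the $\lam$ weight; this is exactly the type of estimate already used for $q$ in the no-soliton case in Paper I, and one shows $\norm{(I-S_x)^{-1}} \lesssim 1$ uniformly in $x \geq a$ and in $\rho$ in a bounded subset of $H^{2,2}(\R)$ by the Fredholm/continuity-compactness argument (the operator $S_x$ depends continuously on $(x,\rho)$, is Fredholm of index zero because $S_x$ has trivial kernel — a solution of $v = S_x v$ would correspond to a solution of a homogeneous Riemann–Hilbert problem, which vanishes by the uniqueness in Proposition \ref{prop:RHP2.unique} — and $\norm{S_x} \to 0$ only after extra decay, so one uses compactness of bounded subsets of $H^{2,2}$ in $Y$ exactly as in Section \ref{sec:RHP2}). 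With $(I-S_x)^{-1}$ controlled, one treats $\calK_x - \operatorname{diag}(S_x,0)$ as a finite-rank perturbation: the denominators $\lam-\lam_j$, $\lam_j - \overline{\lam_k}$, $\overline{\lam_i}-\lam_j$ are bounded below by $d_\Lambda \geq c > 0$ on the bounded set $W$, the Cauchy transforms $C_\R[(\dotarg)\overline{\rho_x}/(\dotarg-\lam_j)]$ evaluated at points off $\R$ are bounded by $\norm[H^{2,2}]{\rho}$ and $c$, and $|C_{j,x}| = |C_j|\,|e^{2i\lam_j x}| \leq |C_j| e^{-2(\Im \lam_j)|x|}$ decays in $x$; hence the perturbation has small-rank operator norm bounded uniformly on $W$. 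Invertibility of $I - \calK_x$ with uniform resolvent bound then follows from the Fredholm alternative together with the uniqueness statement of Proposition \ref{prop:RHP2.unique} (which guarantees the kernel is trivial for every $\calD \in V_N$), and uniformity over $W$ again from the continuity–compactness argument.

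For the inhomogeneous term, one computes $\calK_x \mathbf{e}$ explicitly from \eqref{K00}--\eqref{K11} with $v \equiv 1$, $h = (1,\dots,1)$. Its $L^2(\R)$-component is $S_x 1 + \sum_j \frac{C_{j,x}\lam_j}{\lam-\lam_j} C_\R(\rho_x)(\lam_j) - \sum_j \overline{C_{j,x}} C^-[\tfrac{(\dotarg)\overline{\rho_x}}{\dotarg-\lam_j}](\lam) + \sum_{j,k}(\cdots)$; the first term lies in $L^2(\R)$ uniformly (this is the $q_2$-type estimate from Paper I, using $\rho \in H^{2,2}$), while each of the finite-rank terms is a fixed $L^2$-function — a resolvent kernel $1/(\lam-\lam_j)$ or a Cauchy transform — times a scalar that decays exponentially in $x$. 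The $\C^N$-component is a finite sum of such exponentially decaying scalars. Thus $\norm[X^\sharp]{\calK_x \mathbf{e}} \lesssim 1$ uniformly in $x \geq a$ and $\calD \in W$, and moreover the soliton contributions decay like $e^{-c|x|}$; combined with the uniform bound on the $S_x 1$ piece this shows $\calK_x \mathbf{e} \in L^\infty([a,\infty),X^\sharp)$, and since the $L^2(\R)$-valued piece $S_x 1$ is also bounded in $L^2([a,\infty),L^2(\R))$ by the Fourier-side estimates of Paper I (the relevant weighted-$L^2$ bound on $q_2$), we get $\calK_x \mathbf{e} \in L^2([a,\infty),X^\sharp)$. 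The main obstacle, I expect, is the uniform-in-$x$ invertibility of $I - S_x$ on $L^2(\R)$: one cannot get smallness of $\norm{S_x}$ for all $x$, so one is forced into the continuity–compactness argument over $[a,\infty] \times \overline{W}$ (compactifying in $x$ by checking that $S_x \to 0$ as $x \to +\infty$, as in \eqref{direct:S.small}), and one must be careful that the closure of $W$ in the weaker topology $Y$ does not introduce spectral singularities or collide eigenvalues — this is handled precisely by the definition of "bounded subset" (Definition \ref{def:V.bounded}) which keeps $d_\Lambda \geq c$ and $\inf(1-\lam|\rho|^2) > 0$.
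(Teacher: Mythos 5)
Your proposal is correct and follows essentially the same route as the paper: a uniform resolvent bound for $(I-\calK_x)^{-1}$ on $X^\sharp$ obtained from the decay of $\calK_x$ as $x\to\infty$ together with a Fredholm-plus-uniqueness (Theorem \ref{thm:RHP2.unique}) and continuity--compactness argument over compact $x$-intervals using the compact embedding of $W$ into $Y\times(\C^+\times\C^\times)^N$, combined with explicit $L^2_x$ and $L^\infty_x$ estimates on the inhomogeneous term $\calK_x\mathbf{e}$ (exponential decay of the soliton pieces, Paper~I bounds for the purely continuous piece), exactly as in Lemmas \ref{lemma:Kx.large}--\ref{lemma:res}, Proposition \ref{prop.res}, and Lemmas \ref{lemma:f1}--\ref{lemma:f2} with Remark \ref{rem:f.infty}. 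The only cosmetic differences (citing \eqref{direct:S.small} rather than the Paper~I bound on $S_x$, and treating the uniform lower bound on $1-\eps\lam|\rho|^2$ as part of Definition \ref{def:V.bounded}) do not affect the argument.
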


\begin{proposition}
\label{prop:nu.lam}
The estimates
$$
\norm[L^2([a,\infty) \times \R)]{{\nu_0}_\lam} +
\norm[L^2([a,\infty) \times \R)]
		{\langle \diamond \rangle^{-1}
			{\nu_0}_{\lam\lam}(\dotarg,\diamond)}
\lesssim 1
$$
and 
$$
\norm[L^\infty([a,\infty) \times \R)]{{\nu_0}_\lam} +
\norm[L^\infty([a,\infty) \times \R)]
		{\langle \diamond \rangle^{-1}
			{\nu_0}_{\lam\lam}(\dotarg,\diamond)}
\lesssim 1
$$
holds uniformly for $\calD \in W$.  Moreover, the maps 
$\calD \rarr \nu_\lam$ and $\calD \rarr \langle \dotarg \rangle^{-1}\nu_{\lam\lam}$ are Lipschitz continuous.
\end{proposition}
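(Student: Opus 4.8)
The plan is to differentiate the fixed-point equation \eqref{nusharp.int} (equivalently \eqref{nuflat.int}) in $\lambda$ once and twice and solve the resulting equations with the resolvent already controlled in the proof of Proposition~\ref{prop:nu.L2}. The structural input is that $\partial_\lambda$ commutes with the Cauchy projectors, $\partial_\lambda C^\pm f = C^\pm f'$ for $f \in H^1(\R)$, so that by the Leibniz rule differentiating $\calK_x$ amounts to differentiating the multiplications by $\rho_x$ and by $\lambda\overline{\rho_x}$ built into $S_x$ (see \eqref{Sx}, \eqref{K00}) and the rational factors $(\lambda - \lambda_j)^{-1}$ in the finite-rank pieces \eqref{K00}--\eqref{K11}. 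Writing $\nu^\flat = (1+\nu_0, \{\nu_j\}_{j=1}^N)$ and noting that the discrete components $\nu_j$ are functions of $x$ alone, one finds that $(\nu_0)_\lambda$ satisfies an equation of the form $(\nu_0)_\lambda = g_1 + S_x (\nu_0)_\lambda$, where $g_1$ is built from $\rho'$, from $\nu^\flat$ itself (controlled by Proposition~\ref{prop:nu.L2}), and from rational functions with poles at the $\lambda_j$; differentiating once more produces $\langle\lambda\rangle^{-1}(\nu_0)_{\lambda\lambda} = g_2 + \widehat{S}_x\big(\langle\lambda\rangle^{-1}(\nu_0)_{\lambda\lambda}\big)$ with $\widehat{S}_x = \langle\lambda\rangle^{-1} S_x \langle\lambda\rangle$, a bounded operator on $L^2(\R)$ (because $\langle\lambda\rangle\rho \in L^\infty(\R)$ and $\lambda\rho \in L^\infty(\R)$ for $\rho \in H^{2,2}(\R)$) which differs from $S_x$ by operators with bounded kernels; hence $I - \widehat{S}_x$ is invertible with uniformly bounded inverse by the same Fredholm-plus-trivial-kernel argument used for Proposition~\ref{prop:nu.L2}, the triviality of $\ker(I-\widehat{S}_x)$ following from that of $\ker(I-S_x)$ (itself a consequence of the Beals--Coifman uniqueness underlying Proposition~\ref{prop:RHP2.unique}) together with the mapping properties of $C^\pm$ on weighted $L^2$-spaces. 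The analogous $X^\sharp$-valued statements hold after adjoining the finite-rank blocks.

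The only delicate point is the appearance of factors of $x$ when $\partial_\lambda$ falls on $\rho_x(\lambda) = e^{-2i\lambda x}\rho(\lambda)$, since $x \to +\infty$ on $[a,\infty)$. These are absorbed through the identity
\begin{equation*}
x\,e^{-2i\lambda x}\rho(\lambda) = \tfrac{i}{2}\,\partial_\lambda\!\big(e^{-2i\lambda x}\rho(\lambda)\big) - \tfrac{i}{2}\,e^{-2i\lambda x}\rho'(\lambda),
\end{equation*}
so that inside any Cauchy transform the first term may be integrated by parts, turning the factor $x$ into a derivative on the argument of $C^\pm$, i.e.\ into one more $\lambda$-derivative of the density. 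Each factor of $x$ therefore costs one derivative of $\rho$, and the regularity $\rho \in H^{2,2}(\R)$ — with $\rho'' \in L^2(\R)$ — is exactly enough to build $(\nu_0)_\lambda$ and $(\nu_0)_{\lambda\lambda}$ with the weights claimed. The loss of one power of $\langle\lambda\rangle$ in the second-derivative estimate is forced: differentiating the weight-$\lambda$ multiplication in $S_x$ twice yields a term $\sim \lambda\,\overline{\rho}''$, and since $H^{2,2}(\R)$ does not control $\langle\lambda\rangle\rho''$ in $L^2(\R)$, this term enters $g_2$ only after division by $\langle\lambda\rangle$.

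With $g_1$ and $g_2$ shown to lie in $L^2([a,\infty)\times\R) \cap L^\infty([a,\infty)\times\R)$ uniformly for $\calD$ in a bounded set $W$ — using boundedness of $C^\pm$ on $L^2(\R)$, $\rho \in H^{2,2}(\R)$, the $L^2 \cap L^\infty$ bounds of Proposition~\ref{prop:nu.L2}, and the uniform separation $d_\Lambda \geq c$ together with $|\lambda_j|, |C_j| \leq C$ from Definition~\ref{def:V.bounded}, which bound the finite-rank coefficients and kernels — applying the uniformly bounded resolvents (lifted to $L^2([a,\infty),\cdot)$ in $x$, exactly as in Paper~I) yields the asserted $L^2$ and $L^\infty$ bounds on $(\nu_0)_\lambda$ and $\langle\lambda\rangle^{-1}(\nu_0)_{\lambda\lambda}$. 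The Lipschitz continuity of $\calD \mapsto \nu_\lambda$ and $\calD \mapsto \langle\lambda\rangle^{-1}\nu_{\lambda\lambda}$ then comes from the second resolvent identity applied to the differentiated equations: the differences $\calK_x(\calD_1) - \calK_x(\calD_2)$ (and of the inhomogeneous terms) are bounded in operator- and $L^2$-norm by $\norm[H^{2,2}]{\rho_1 - \rho_2} + \sum_j |\lambda_j^{(1)} - \lambda_j^{(2)}| + \sum_j |C_j^{(1)} - C_j^{(2)}|$ uniformly on $W$ (once more using $d_\Lambda \geq c$ to estimate differences of the rational factors), and combining this with the uniform resolvent bounds and the uniform estimates just obtained closes the argument.

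The main obstacle is precisely the bookkeeping that converts the $x$-growth into $\lambda$-regularity of $\rho$ while losing no more than the advertised single power of $\langle\lambda\rangle$, and verifying that $I - \widehat{S}_x$ (and its $X^\sharp$-valued analogue) remains boundedly invertible on the weighted space; the contributions of the discrete data, though absent from Paper~I, are harmless since the poles $\lambda_j$ stay uniformly bounded and uniformly off the real axis on $W$.
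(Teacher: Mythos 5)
Your plan hinges on an unverified reduction. After differentiating \eqref{nusharp.int} you assert that $(\nu_0)_\lam = g_1 + S_x\big((\nu_0)_\lam\big)$ with $g_1$ free of $(\nu_0)_\lam$ and lying in $L^2([a,\infty)\times\R)$, and likewise a weighted second-order equation with principal part $\widehat S_x=\langle\lam\rangle^{-1}S_x\langle\lam\rangle$, which you then solve by inverting $I-S_x$ and $I-\widehat S_x$. But the commutator terms produced by the differentiation carry factors of $x$ multiplying $\nu_0$ (from $\partial_s\rho_x=-2ix\,\rho_x+e^{-2i\lam x}\rho'$ and its conjugate inside \eqref{Sx}), and the device you propose for absorbing them --- writing $x\rho_x$ as a $\lam$-derivative and integrating by parts inside the Cauchy transform --- puts that derivative back onto the density $\rho_x\nu_0$, i.e.\ it reintroduces $(\nu_0)_\lam$ through another operator of exactly $S_x$-type with a nontrivial constant coefficient (and, at second order, reintroduces $(\nu_0)_{\lam\lam}$ beyond the single copy of $\widehat S_x$). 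So the principal operator of your fixed-point equation is not $I-S_x$ (resp.\ $I-\widehat S_x$) but $I$ minus a combination of $S_x$-like pieces, and its invertibility does not follow from the ``Fredholm-plus-trivial-kernel'' argument you cite: the uniqueness underlying Proposition \ref{prop:RHP2.unique} and Lemma \ref{lemma:Kx.small} kills the kernel of $I-\calK_x$ (or of $I-S_x$ in the soliton-free case), not of the modified operator. Even in the clean form you state, the kernel-triviality of $I-\widehat S_x$ needs a separate bootstrap (a homogeneous solution of $v=S_xv$ in $\langle\lam\rangle L^2$ lies in $L^2$ because $\langle\lam\rangle\rho\in L^\infty$ for $\rho\in H^{2,2}$), together with a redone compactness--continuity argument for uniformity over $W\times[a,R]$; your proposal only gestures at these.

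The paper avoids this entirely, and the contrast shows why the inversion is both problematic and unnecessary. Lemmas \ref{lemma:K01}, \ref{lemma:S0} (with Remark \ref{rem:S_0.lamlam}) and \ref{lemma:S1} bound the $\lam$-derivatives of the full compositions $\big(\calK_{00}\nu_0\big)_\lam$, $\big(\calK_{01}\widetilde{\nu}^*\big)_\lam$, etc.\ directly in terms of \emph{undifferentiated} norms of $\nu_0$ and $\nu^\sharp$ (and, for the second derivative, in terms of $\nu_0$ and $(\nu_0)_\lam$ only, cf.\ \eqref{S_0.lamlam.bis}); the factors of $x$ are absorbed against the two opposite oscillations $e^{\pm 2i\lam x}$ on the Fourier side (Paper I, Lemma 6.13), costing weights and derivatives of $\rho$ rather than derivatives of $\nu_0$. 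With such bounds the differentiated equation \eqref{int.nu1} yields the $L^2$ and $L^\infty$ estimates by inspection, with no new resolvent, and the Lipschitz statements follow from the bilinearity of all the ingredients in the data. To make your route rigorous you would either have to reproduce that Fourier-side bookkeeping --- at which point you discover the inversion step is superfluous --- or supply a genuine proof of uniform invertibility for the modified principal operators, which is the missing piece of your argument.
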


\begin{proposition}
\label{prop:nu.x}
Denote by $\nu^\sharp(x,\dotarg)$ the unique solution to \eqref{nusharp.int}. Then $\nu^\sharp$, $(\nu^\sharp)_x$,
and $(\nu^\sharp)_{xx}$ all belong to $L^2([a,\infty),X^\sharp)$
with norms bounded uniformly in $\rho$ in a fixed bounded subset of $H^{2,2}(\R)$ with $(1-\lam|\rho(\lam)|^2 \geq c>0$
for a fixed $c$, and $\{ \lam_j, C_j\}$ in a fixed compact subset of $(\C^+ \times \C^\times)^N$. 
\end{proposition}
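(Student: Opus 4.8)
The plan is to differentiate the integral equation \eqref{nusharp.int} in $x$, invert $I-\calK_x$ using the uniform resolvent bound that is already available, and then estimate the resulting forcing terms in $L^2([a,\infty),X^\sharp)$. The starting point is the fact, established while proving Proposition~\ref{prop:nu.L2}, that the family $\{(I-\calK_x)^{-1}\}_{x\ge a}$ is bounded on $X^\sharp$ uniformly in $x$ and uniformly for $\rho$ in a bounded subset of $H^{2,2}(\R)$ with $1-\lam|\rho(\lam)|^2\ge c$ and $\{\lam_j,C_j\}$ in a compact subset of $(\C^+\times\C^\times)^N$. Any uniformly bounded, measurable family of operators on a Banach space $B$ induces a bounded operator of the same norm on $L^2([a,\infty),B)$ via $\int_a^\infty\|T_xf(x)\|_B^2\,dx\le(\sup_x\|T_x\|)^2\,\|f\|_{L^2([a,\infty),B)}^2$, so it suffices to realize the forcing terms below in $L^2([a,\infty),X^\sharp)$.

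Differentiating \eqref{nusharp.int} once and twice yields
\[
(I-\calK_x)(\nu^\sharp)_x = (\calK_x\mathbf{e})_x + (\dee_x\calK_x)\nu^\sharp,
\]
\[
(I-\calK_x)(\nu^\sharp)_{xx} = (\calK_x\mathbf{e})_{xx} + 2(\dee_x\calK_x)(\nu^\sharp)_x + (\dee_x^2\calK_x)\nu^\sharp .
\]
Thus, granted Proposition~\ref{prop:nu.L2}, the claim reduces to showing that the operators $\dee_x^j\calK_x$, $j=1,2$, are bounded on $X^\sharp$ uniformly in $x\ge a$ and in the data, and that $\dee_x^k(\calK_x\mathbf{e})$, $k=1,2$, lie in $L^2([a,\infty),X^\sharp)$ with uniform norms; one then solves the two displayed equations in order, applying the lifted resolvent at each stage — first to obtain $(\nu^\sharp)_x$, then, after substituting it into the second equation, to obtain $(\nu^\sharp)_{xx}$.

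I would treat the two structural pieces of $\calK_x$ separately. Every block of $\calK_x$ apart from $S_x$ — namely $\calK_{01},\calK_{10},\calK_{11}$ and the finite-rank part of $\calK_{00}$ — carries a factor $C_{j,x}=C_je^{2i\lam_jx}$ with $\Im\lam_j\ge d_\Lambda>0$ on the compact parameter set, so each $\dee_x$ only produces a polynomial prefactor in the $\lam_j$; hence these operators and all their $x$-derivatives have operator norm $\lesssim\langle x\rangle^{M}e^{-2d_\Lambda x}\in L^1\cap L^\infty([a,\infty))$, and their contributions to the forcing are trivially in $L^2([a,\infty),X^\sharp)$, uniformly in the data. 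This is the only genuinely new feature relative to Paper~I, and it is the easy part. The term $S_x$ of \eqref{Sx} is the substantive one: each $\dee_x$ replaces a factor $\rho_x(\lam)=e^{-2i\lam x}\rho(\lam)$ by $-2i\lam\rho_x(\lam)$, i.e.\ it moves a power of $\lam$ onto one of the two reflection-coefficient factors. These powers are absorbed using the $H^{2,2}$-norm of $\rho$: $\langle\lam\rangle^2\rho\in L^2$ gives $\lam\rho,\lam^2\rho\in L^2$, while interpolating $\langle\lam\rangle^2\rho\in L^2$ with $\rho''\in L^2$ gives $\langle\lam\rangle\rho'\in L^2$, whence $\langle\lam\rangle\rho\in H^1(\R)\hookrightarrow L^\infty(\R)$ and in particular $\lam\rho\in L^\infty$. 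With these facts, the $L^2$-boundedness of $C^\pm$, and the commutation identity $\lam\,(C^\pm f)(\lam)=\bigl(C^\pm(s\mapsto sf(s))\bigr)(\lam)-\tfrac{1}{2\pi i}\int_\R f$ (used to push residual powers of $\lam$ through the Cauchy projectors, the constant boundary terms being absorbed against the $L^2$ factor $\lam\overline{\rho_x}$), one checks that $\dee_xS_x$ and $\dee_x^2S_x$ are bounded on $L^2(\R)$ uniformly in $x\ge a$ and in $\rho$ over a bounded subset of $H^{2,2}(\R)$, and that $\dee_x^k(S_x(1))\in L^2([a,\infty)\times\R)$ uniformly. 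These are exactly the continuous-spectrum estimates carried out in Section~6 of Paper~I, which I would invoke rather than reproduce.

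Assembling the pieces as above gives $(\nu^\sharp)_x,(\nu^\sharp)_{xx}\in L^2([a,\infty),X^\sharp)$ with the uniformities propagating at each step from those of the resolvent and of the operator- and forcing-bounds. \emph{The main obstacle} is the $\lam$-weight bookkeeping in the $S_x$ estimate: since $S_x$ already contains one explicit factor of $\lam$, a naive second $x$-differentiation appears to require $\lam^3\rho\in L^2$, which $H^{2,2}$ does not supply; the resolution is structural — once powers of $\lam$ are commuted through $C^\pm$ and $\lam\rho\in L^\infty$ is used, the remaining expressions close in $L^2$, precisely as in Paper~I. The only things one must verify afresh here are that the additional, exponentially small soliton blocks of $\calK_x$ do not disturb that mechanism and, upstream, that the uniform invertibility of $I-\calK_x$ on the enlarged block space $X^\sharp$ (supplied by Proposition~\ref{prop:nu.L2}) is in hand before any differentiation is performed.
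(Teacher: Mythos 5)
Your overall skeleton (differentiate \eqref{nusharp.int} once and twice, invert $I-\calK_x$ with the uniform resolvent bound lifted to $L^2([a,\infty),X^\sharp)$, and estimate the forcing terms) is exactly the paper's, and your treatment of the forcing $\dee_x^k(\calK_x\mathbf{e})$ and of the genuinely finite-rank, exponentially decaying blocks is fine. The gap is at the step you yourself flag and then wave away: the claim that $\dee_x^2\calK_x$ is a bounded operator on $X^\sharp$ uniformly in $x$ and in the data, proved by commuting powers of $\lam$ through $C^\pm$ and using $\lam\rho\in L^\infty$. That mechanism does not close. From $\rho\in H^{2,2}(\R)$ one gets $\lam^2\rho\in L^2$ and $\lam\rho\in L^\infty$, but neither $\lam^2\rho\in L^\infty$ nor $\lam^3\rho\in L^2$; consequently, in the second $x$-derivative the term where both derivatives fall on the inner factor requires bounding $C^+\bigl(s^2\rho_x h\bigr)$ with only $h\in L^2$ (after one commutation you are left with $\lam^2\overline\rho\cdot C^+(s\rho_x h)$, a product of two $L^2$ functions under an outer $C^-$), and the term where both fall on the outer factor produces $\lam^3\overline\rho$, which is not even in $L^2$. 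These individual Leibniz pieces are not separately controlled, so "the remaining expressions close in $L^2$" is precisely the assertion that needs proof, and it is not supplied by the commutation identity plus $\lam\rho\in L^\infty$. (A related slip: not every non-$S_x$ block carries a factor $C_{j,x}$ — the first term of $\calK_{10}$ in \eqref{K10} contains only $\rho_x$ factors and is not exponentially small; it is exactly this block, together with the inner factor $C^+(\rho_x\,\cdot)$, that causes the difficulty.)

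The paper's resolution is structurally different from what you propose: Lemma \ref{lemma:K-estimates} does \emph{not} assert a pure operator bound for the second derivative. Estimate \eqref{Kxx.est} reads $\norm[X^\sharp]{\calK_{xx}h}\lesssim(1+\norm[H^{2,2}(\R)]{\rho})^2\norm[X^\sharp]{h}+\norm[L^2(\R)]{\widetilde h_\lam}$, the extra term arising from the Fourier representation of $C^+(\rho_x h)$, whose second $x$-derivative is estimated by $\norm[L^1]{\widehat h}\,\norm[L^2]{\widehat\rho''}$ and hence costs $\norm[L^2]{h}+\norm[L^2]{h_\lam}$. Accordingly, the paper's proof of Proposition \ref{prop:nu.x} feeds in the bound $\norm[L^2([a,\infty)\times\R)]{(\nu_0)_\lam}\lesssim 1$ from Proposition \ref{prop:nu.lam} when estimating $(\calK)_{xx}\nu^\sharp$; your argument never invokes the $\lam$-derivative estimates at all, so as written it cannot be completed. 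To repair it you must either (i) prove the $\lam$-derivative bounds first and use an estimate of the form \eqref{Kxx.est}, as the paper does, or (ii) genuinely exploit the composite kernel structure of $S_x$ and of the first term of $\calK_{10}$ — the $x$-dependence enters only through $e^{2i(\lam'-s)x}$, so two derivatives produce $(\lam'-s)^2$, which can be cancelled against the inner Cauchy kernel $1/(s-\lam')$ before any Leibniz splitting — an argument you have not articulated and which is different from the commutation scheme you describe.
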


\begin{proof}[Proof of Theorem \ref{thm:Lip.I}, given Propositions \ref{prop:nu.L2}--\ref{prop:nu.x}]
Recall from the discussion following the statement of Theorem \ref{thm:Lip.I} that we give details only for Step (1) of the four steps since step (2) is similar to step (1) while steps (3) and (4) follow closely the argument of Paper I. 

To carry out step (1), we note that Propositions \ref{prop:nu.L2}--\ref{prop:nu.x} immediately imply that the functions \eqref{nu.todo} are Lipschitz in the scattering data and that the functions $\nu_j^*$ are bounded Lipschitz functions of the scattering data. This gives the necessary control to prove that $q_2$ and $q_3$ in \eqref{RHP2.nu.q.recon} have the required continuity properties as maps from $W$ to $H^{2,2}(\R)$.
\end{proof}

\subsubsection{Resolvent Estimates, $L^2$ Estimates on $\nu^\sharp$}

We begin the study of \eqref{nusharp.int}. It is easy to see that $\calK_x \mathbf{e} \in X^\sharp$ if $\rho \in H^{2,2}(\R)$.  Thus, to solve \eqref{nusharp.int}  for $\nu^\sharp$, we need to show that $(I-\calK_x)^{-1}$ exists. First, we observe:

\begin{lemma}
\label{lemma:Kx.large}
The operator $I-\calK_x : X^\sharp \rarr X^\sharp$ is Fredholm and
$$\lim_{x \rarr \infty} \sup_{\calD \in W} \norm[X^\sharp \rarr X^\sharp]{\calK_x} = 0$$ where $\calD$ in a bounded subset $W$ of $V_N$.
\end{lemma}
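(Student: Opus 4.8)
The plan is to split $\calK_x$ according to the block structure \eqref{calK} on $X^\sharp=L^2(\R)\oplus\C^N$ and to treat the finite-rank blocks separately from the single genuinely infinite-dimensional piece. The blocks $\calK_{01}$, $\calK_{10}$, $\calK_{11}$, together with the finite-rank sum appearing in $\calK_{00}$ (see \eqref{K00}), each carry at least one factor $C_{j,x}=C_j e^{2i\lam_j x}$ or $\overline{C_{j,x}}=\overline{C_j}\,e^{-2i\overline{\lam_j}x}$. On a bounded set $W$ one has $|C_j|\le C$ and, since $\Lambda$ is conjugation-invariant, $\Im\lam_j\ge d_\Lambda\ge c>0$, so each such factor is $O(e^{-2cx})$; the Cauchy transforms and multipliers that accompany it involve only $\rho_x$ (same $L^2\cap L^\infty$ norms as $\rho$) and $(\lam-\lam_j)^{-1}$ (bounded because $\Im\lam_j\ge c$), hence are bounded in the relevant norms uniformly over $W$. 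Thus all of these blocks have norm $O(e^{-2cx})$ as $x\to+\infty$, uniformly over $\calD\in W$.

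The substantive block is $\calK_{00}=S_x+(\text{finite rank})$ with $S_x h=-\eps\,C^-\bigl(\lam\,\overline{\rho_x}\,C^+(\rho_x h)\bigr)$ as in \eqref{Sx}. I would analyze $S_x$ on the Fourier side. With a fixed convention the multipliers of $C^\pm$ are, up to sign, the indicators of complementary half-lines, while multiplication by $e^{\mp 2i\lam x}$ shifts the Fourier support by $\pm 2x$. Tracking these through the composition shows $\widehat{S_x h}$ is supported on one half-line and is given by a convolution in which $\widehat{\lam\bar\rho}$ is evaluated only at points of modulus $\ge 2x$; concretely, the integral kernel of $S_x$ in Fourier variables takes the form
\[
\mathcal K(\xi,\mu)\;=\;c\,\mathbf{1}_{\xi<0}\int_{2x}^{\infty}\widehat{\lam\bar\rho}(\xi-\tau)\,\widehat{\rho}(\tau-\mu)\,d\tau .
\]
Estimating by Young's inequality, placing $\widehat{\rho}\in L^1(\R)$ (valid since $\rho\in H^{2,2}(\R)$ forces $\langle\cdot\rangle^2\widehat\rho\in L^2$) on the $\widehat\rho$-factor, yields
\[
\|S_x\|_{\mathrm{HS}}^2\;\lesssim\;\|\widehat{\rho}\|_{L^1}^2\int_{-\infty}^{-2x}|\widehat{\lam\bar\rho}(\eta)|^2\,|\eta|\,d\eta\;\lesssim\;x^{-1}\,\|\widehat{\rho}\|_{L^1}^2\,\bigl\|\langle\cdot\rangle\,\widehat{\lam\bar\rho}\bigr\|_{L^2}^2 ,
\]
where $\langle\cdot\rangle\,\widehat{\lam\bar\rho}\in L^2$ because $\lam\bar\rho\in L^2(\R)$ and $(\lam\bar\rho)'=\bar\rho+\lam\bar\rho'\in L^2(\R)$, both with norms controlled by $\|\rho\|_{H^{2,2}}$ (elementary weighted Sobolev estimates; e.g.\ $\|\langle\cdot\rangle\rho'\|_{L^2}\lesssim\|\rho\|_{H^{2,2}}$ by an integration by parts in the Fourier variable). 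Since $\|\rho\|_{H^{2,2}}$ is bounded on $W$, this gives $\|S_x\|_{\mathrm{HS}}\lesssim x^{-1/2}\|\rho\|_{H^{2,2}}^2\to 0$, uniformly over $\calD\in W$.

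Putting the two parts together, $\sup_{\calD\in W}\|\calK_x\|_{X^\sharp\to X^\sharp}\le\sup_{\calD\in W}\|\calK_x\|_{\mathrm{HS}}\to 0$ as $x\to+\infty$, which is the second assertion. The first assertion is then almost automatic: the same computation shows $S_x$ is Hilbert--Schmidt, hence compact, for each large $x$, and every remaining block of $\calK_x$ is of finite rank, so $\calK_x$ is compact on $X^\sharp$ and $I-\calK_x$ is Fredholm of index zero. The main obstacle is precisely the $S_x$ estimate: the two multipliers $\rho_x$ and $\overline{\rho_x}$ cancel in modulus, so the decay in $x$ cannot come from size alone — it comes from the Riesz projection wedged between them, which (as the Fourier computation makes explicit) forces the composition to sample $\widehat{\lam\bar\rho}$ only in its tail beyond frequency $2x$, where the $H^{2,2}$-regularity of $\rho$ makes it quantitatively small. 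The finite-rank bookkeeping and the Fredholm conclusion are then routine.
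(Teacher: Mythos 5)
Your overall strategy is the same as the paper's: split $\calK_x$ into the operator $S_x$ of \eqref{Sx} plus the remaining (finite-rank) blocks, get exponential decay of the latter from the factors $C_{j,x}$, and show $\|S_x\|\to 0$ uniformly on $W$; compactness of $S_x$ plus finite rank of the rest then gives the Fredholm property. Where you genuinely differ is that the paper simply cites Paper I, Lemma 6.7 for the compactness and decay of $S_x$, whereas you give a self-contained Fourier-kernel computation; that computation is correct (with the paper's orientation conventions the wedged projection does force the kernel to sample $\widehat{\lam\bar\rho}$ only at frequencies of modulus $\ge 2x$), and it buys a quantitative Hilbert--Schmidt rate $O(x^{-1/2})$ and compactness in one stroke, with the weighted-Sobolev facts $\widehat\rho\in L^1$ and $\lam\bar\rho\in H^1$ indeed controlled by $\|\rho\|_{H^{2,2}}$.

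There is, however, one concrete overclaim: it is not true that every finite-rank block carries a factor $C_{j,x}$ or $\overline{C_{j,x}}$. The first summand of $\calK_{10}$ in \eqref{K10}, namely $-\eps\, C_\R\bigl[(\dotarg)\overline{\rho_x(\dotarg)}\,C^+\bigl(\rho_x(\diamond)v(\diamond)\bigr)(\dotarg)\bigr](\overline{\lam_i})$, has no exponentially small coefficient, so your justification for that block fails as written. The term does tend to zero uniformly on $W$, but for the same reason as $S_x$, not because of exponential factors: writing the Cauchy integral as a Plancherel pairing, the factor $e^{2isx}\,C^+(\rho_x v)$ has Fourier support in $[2x,\infty)$, so the pairing only sees the Fourier tail of $s\,\overline{\rho(s)}/(s-\overline{\lam_i})$ beyond $2x$ (the evaluation point $\overline{\lam_i}$ lying off the axis only helps), and the $H^{2,2}$ control of $\rho$ together with $|\Im\lam_i|\ge c$ makes that tail $O(x^{-1/2})$ uniformly on $W$. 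So the gap is fixable with exactly the mechanism you already developed for $S_x$, but you should state and prove it for this term explicitly rather than subsuming it under the exponentially decaying blocks. (For what it is worth, the paper's one-line proof glosses over this same term by attributing the decay of ``all remaining terms'' to the factors $C_{j,x}$.)
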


\begin{proof}
The operator $\calK_x$ is a finite-rank perturbation of the operator
$S_x:L^2(\R) \rarr L^2(\R)$ given by \eqref{Sx}. The operator $\calS_x$
was shown to be compact in Paper I, Lemma 6.7. In the same Lemma we showed that $\norm[L^2 \rarr L^2]{S_x} = 0$ uniformly for $\rho$ in a bounded subset of $H^{2,2}(\R)$. Coefficients of all remaining terms have exponential decay at least $\bigO{e^{-d_\Lambda x/2}}$ as $x \rarr \infty$ owing to factors $C_{j,x}$ or their complex conjugates. This decay rate is uniform in bounded subsets of $V_N$ since $d_\Lambda$ has a fixed lower bound on such sets. 
\end{proof}

We seek an estimate on $\norm[X^\sharp \rarr X^\sharp]{(I-\calK_x)^{-1}}$ uniform in $\calD$ in a bounded subset of $V_N$ and $x \in [a,\infty)$ for any fixed $a$. It follows from Lemma \ref{lemma:Kx.large} that, given a bounded subset $W$ of $V_N$, there is an $R>0$ depending on $W$ so that 
$$ \sup_{\calD \in W, x \geq R} \norm[X^\sharp \rarr X^\sharp]{\calK_x}  < 1/2 $$
so that
\begin{equation}
\label{RKx.est}
\sup_{\calD \in W, x \geq R} \norm[X^\sharp \rarr X^\sharp]{(I-\calK_x)^{-1}} \leq 2. 
\end{equation}
Thus, to obtain a uniform resolvent bound, it suffices to estimate the resolvent
for $\calD \in W$ and $a \leq x \leq R$ for a fixed $a$. To do so we exploit the following  facts:
\begin{enumerate}
\item[(1)]  For data $\calD \in Y \times (\C^+ \times \C^\times)^N$, the operator $\calK_x$ is bounded from $X$ to itself
\item[(2)] the resolvent exists for $\calD = \left(\rho,\{\lam_j,C_j\}\right) \in Y \times (\C^+ \times \C^\times)^N$ and the map
$$ \left(\rho,\{\lam_j,C_j\}_{j=1}^N\right) \mapsto (I-\calK_x)^{-1} $$
is continous 
\item[(3)] If $W$ is a bounded subset of $V_N$, then $W$ is compactly embedded in
$Y \times (\C^+ \times \C^\times)^N$
\end{enumerate}
Together, these three facts imply that the image of $W \times [a,r]$ in $\calB(X^\sharp \rarr X^\sharp)$ is a precompact set, hence bounded, for any $-\infty< a < r < \infty$.

We will prove:
\begin{lemma}
\label{lemma:Kx.small}
For any bounded subset $W$ of $V_N$ and any $-\infty < a < r < \infty$,
$$\sup_{(\calD,x) \in W \times [a,r]} \norm[X^\sharp \rarr X^\sharp]{(I-\calK_x)^{-1}} \lesssim 1 $$
\end{lemma}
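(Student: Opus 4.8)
The plan is to exploit precisely the three structural facts displayed above: that $\calK_x$ acts boundedly on $X^\sharp$ for all data in $Y \times (\C^+\times\C^\times)^N$, that the resolvent $(I-\calK_x)^{-1}$ exists for all such data and all $x$, and that $(\calD,x)\mapsto(I-\calK_x)^{-1}$ is continuous; combined with the fact that a bounded subset $W$ of $V_N$ embeds compactly into $Y \times (\C^+\times\C^\times)^N$, these force the image of $W\times[a,r]$ in $\calB(X^\sharp)$ to be precompact, hence norm-bounded, which is exactly the assertion.

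First I would establish invertibility of $I-\calK_x$ on $X^\sharp$ for every $\calD \in Y \times (\C^+\times\C^\times)^N$ and every $x\in\R$. By the proof of Lemma \ref{lemma:Kx.large}, $\calK_x$ is a compact operator on $X^\sharp$ (a finite-rank perturbation of the operator $S_x$ of \eqref{Sx}, which is compact on $L^2(\R)$ by Paper~I, Lemma~6.7), so $I-\calK_x$ is Fredholm of index zero. Injectivity comes from uniqueness for Problem \ref{RHP2}: by Propositions \ref{prop:RHP2.exist} and \ref{prop:RHP2.unique}, together with the Beals--Coifman correspondence of Section \ref{sec:prelim} (Theorems \ref{thm:BC} and \ref{thm:RHP.model.disc}), the equation $\nu^\flat = \mathbf{e} + \calK_x\nu^\flat$ has a \emph{unique} solution in $X$; adding to that solution any element of $\ker(I-\calK_x)$ yields another solution, so $\ker(I-\calK_x)=\{0\}$, and a Fredholm operator of index zero with trivial kernel is boundedly invertible. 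When some $C_j=0$ the corresponding residue condition is trivial and Problem \ref{RHP2} reduces to one with fewer poles, so the same conclusion holds on all of $Y \times (\C^+\times\C)^N$ with $\Im\lam_j\neq 0$.

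Next I would verify that $(\calD,x)\mapsto\calK_x(\calD)$ is continuous into $\calB(X^\sharp)$, the $\rho$-component carrying the $Y$-topology. The singular-integral part is continuous: $\rho\mapsto\rho_x=e^{-2i(\dotarg)x}\rho$ is jointly continuous into $Y$, and the bilinear expression \eqref{Sx} in $\rho_x$ and the $L^2$-bounded Cauchy projectors inherits this continuity as in Paper~I. The remaining finite-rank terms in \eqref{K00}--\eqref{K11} are built from $C_{j,x}=C_j e^{2i\lam_j x}$, $\lam_j$, the Cauchy kernels $(\lam-\lam_j)^{-1}$ and $(\lam-\overline{\lam_j})^{-1}$, evaluations of Cauchy transforms at $\lam_j,\overline{\lam_j}$, and pairings of these with $\rho_x$; since $|\Im\lam_j|\geq d_\Lambda>0$ the kernels stay in $L^2(\R)\cap L^\infty(\R)$ and depend continuously on $\lam_j$, while $\rho_x\in L^1(\R)\cap L^2(\R)$ depends continuously on $\rho\in Y$, so each term is continuous; these formulas extend continuously across $C_j=0$. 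Combining this with the second resolvent identity and the invertibility of Step~1, $(\calD,x)\mapsto(I-\calK_x)^{-1}$ is continuous on $Y \times (\C^+\times\C)^N \times \R$ (with $\Im\lam_j>0$).

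Finally I would invoke compactness. For a bounded $W\subset V_N$ the $\rho$-components form a bounded subset of $H^{2,2}(\R)$, which is precompact in the ambient space of $Y$ by the weighted Rellich theorem (one gains a derivative and a power of the weight in passing from $H^{2,2}(\R)$ to $\{\rho\in L^{2,5/4}:\rho'\in L^{2,3/4}\}$: split into a local $H^2\hookrightarrow H^1$ compactness and a uniformly small weighted tail), and the constraint $\inf_{\lam\in\R}(1-\lam|\rho(\lam)|^2)\geq c>0$ valid on $W$ is preserved under $Y$-convergence, so the $Y$-closure of $W$ stays inside $Y$. The $(\lam_j,C_j)$-components lie in $\big(\{\Im\lam\geq c\}\cap\{|\lam|\leq C\}\big)^N\times\{|C|\leq C\}^N$, a compact subset of $\C^+\times\C$ on which, by Steps 1--2, $\calK_x$ and its resolvent are defined and continuous. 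Hence the closure of $W\times[a,r]$ in $Y \times (\C^+\times\C)^N\times[a,r]$ is compact, and the continuous map $(\calD,x)\mapsto(I-\calK_x)^{-1}$ carries it to a compact, hence bounded, subset of $\calB(X^\sharp)$; since $W\times[a,r]$ lies in that closure, $\sup_{(\calD,x)\in W\times[a,r]}\norm[X^\sharp\rarr X^\sharp]{(I-\calK_x)^{-1}}<\infty$. The step I expect to be the main obstacle is the operator-norm continuity of $(\rho,x)\mapsto S_x$ (hence of $\calK_x$), which requires tracking the Paper~I estimates that control the $\lam$-weighted factors in \eqref{Sx}, together with the accompanying check that no limit point of $W$ in $Y$ has a vanishing $1-\lam|\rho|^2$, so that the resolvent is genuinely defined on the whole compact set.
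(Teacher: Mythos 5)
Your proposal is correct and follows essentially the same route as the paper: the same three facts — boundedness of $\calK_x$ on $X^\sharp$ for data in $Y\times(\C^+\times\C^\times)^N$, existence and continuity of $(\calD,x)\mapsto(I-\calK_x)^{-1}$ via Fredholm theory plus the uniqueness theorem for Problem \ref{RHP2}, and the compact embedding of bounded subsets of $V_N$ into $Y\times(\C^+\times\C^\times)^N$ — combined through the same continuity-compactness argument on $W\times[a,r]$. Your extra detail on the operator-norm continuity of $\calK_x$ and on checking that the lower bound for $1-\eps\lam|\rho(\lam)|^2$ survives passage to the $Y$-closure only makes explicit points the paper leaves implicit.
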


\begin{proof}
We check assertions (1)--(3) above. 

(1) For $\rho \in Y$ it is immediate that $\rho, \lam \rho \in L^2(\R)$. To see that 
$\rho, \lam \rho \in L^\infty(\R)$ we use the conditions on $\rho$ to show that
$(\rho^2)'$ and $(\lam^2 \rho^2)'$ are integrable. These estimates suffice to prove the claimed operator bounds from
\eqref{K00}--\eqref{K11}.

(2) Existence follows from Lemma \ref{lemma:Kx.large}, the uniqueness Theorem \ref{thm:RHP2.unique}, and Fredholm theory.

(3) The compactness now follows from the compact embedding of $Y$ in $H^{2,2}(\R)$ and the fact that the $\lam_j, C_j$ that occur in a bounded subset of $V_n$ run through compact subsets of $\C^+$ and $\C^\times$.
\end{proof}

For $\calD = \left( \rho, \{\lam_j, C_j \}_{j=1}^N \right)$, define
\smallskip
$$ \norm[V_N]{\calD} = \norm[H^{2,2}(\R)]{\rho} + \sup_j |\lam_j| + \sup_j |C_j|. $$
Although $V_N$ is not complete in this norm (it does not control lower bounds on $C_j$ and $|\imag \lam_j|$) it will suffice for the continuity estimates we need.

\begin{lemma}
\label{lemma:res}
Let $W$ be a bounded subset of $V_N$.
The resolvent $(I-\calK_x)^{-1}$ satisfies the uniform estimate
$$ \sup_{(x,\calD) \in [a,\infty) \times W} \norm[X^\sharp \rarr X^\sharp]{(I-\calK_x)^{-1}} \lesssim 1 $$
and the Lipschitz estimate
$$ \norm[X^\sharp \rarr X^\sharp]{(I-K_x(\calD_1))^{-1} - (I-\calK_x(\calD_2))^{-1}}
\lesssim \norm[V_N]{\calD_1 - \calD_2}
$$
with uniform Lipschitz constant for $\calD_1, \calD_2 \in W$.
\end{lemma}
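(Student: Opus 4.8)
The plan is to deduce both estimates from the second resolvent identity, feeding in the three ingredients that are already in place: the large-$x$ smallness of Lemma~\ref{lemma:Kx.large} (hence \eqref{RKx.est}), the compactness bound of Lemma~\ref{lemma:Kx.small}, and a Lipschitz bound on the map $\calD\mapsto\calK_x$ in operator norm that is uniform in $x$. First I would dispose of the uniform resolvent bound: by \eqref{RKx.est} there is an $R=R(W)$ with $\sup_{\calD\in W,\,x\ge R}\norm[X^\sharp\rarr X^\sharp]{(I-\calK_x)^{-1}}\le 2$, while Lemma~\ref{lemma:Kx.small}, applied to the compact interval $[a,R]$ (empty if $R\le a$), gives $\sup_{(\calD,x)\in W\times[a,R]}\norm[X^\sharp\rarr X^\sharp]{(I-\calK_x)^{-1}}\lesssim 1$. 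Splitting $[a,\infty)=[a,R]\cup[R,\infty)$ yields the first assertion.

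For the Lipschitz estimate I would write the second resolvent identity
\begin{equation*}
(I-\calK_x(\calD_1))^{-1}-(I-\calK_x(\calD_2))^{-1}=(I-\calK_x(\calD_1))^{-1}\bigl(\calK_x(\calD_1)-\calK_x(\calD_2)\bigr)(I-\calK_x(\calD_2))^{-1},
\end{equation*}
and, using the uniform resolvent bound just obtained, reduce to proving
\begin{equation*}
\sup_{x\ge a}\ \norm[X^\sharp\rarr X^\sharp]{\calK_x(\calD_1)-\calK_x(\calD_2)}\lesssim\norm[V_N]{\calD_1-\calD_2},\qquad \calD_1,\calD_2\in W.
\end{equation*}

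I would establish this blockwise from \eqref{K00}--\eqref{K11}. The leading block $\calK_{00}$ contains $S_x$, which is bilinear in $\rho_x$; by the estimates of Paper~I (Lemma~6.7) together with the $L^2$-boundedness of the Cauchy projectors on $\R$, one has $\norm[L^2\rarr L^2]{S_x(\rho_1)-S_x(\rho_2)}\lesssim\norm[H^{2,2}]{\rho_1-\rho_2}$ uniformly in $x$ for $\rho_1,\rho_2$ bounded in $H^{2,2}(\R)$. Every remaining term in \eqref{K00}--\eqref{K11} is of finite rank and depends on $\calD$ only through $\lam_j$, $C_{j,x}=C_j e^{2i\lam_j x}$, their conjugates, the $L^2(\R)$ kernels $\mu\mapsto(\mu-\lam_j)^{-1}$, and the scalars $(\lam_j-\overline{\lam_k})^{-1}$; on $W$ one has $|\lam_j|,|C_j|\le C$ and $\imag\lam_j\ge d_\Lambda\ge c>0$, so each of these building blocks is a smooth function of $(\lam_j,C_j)$ with derivatives bounded uniformly on the relevant compact set, and the evaluations of Cauchy transforms occur either as bounded operators into $L^2(\R)$ or at points bounded away from $\R$. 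The only $x$-dependence sits in the exponentials $e^{\pm2i\lam_j x}$ and $e^{\mp2i\overline{\lam_j}x}$; since $\imag\lam_j\ge c$, both these factors and their $\lam_j$-derivatives (each carrying an extra factor of $x$) are bounded uniformly on $[a,\infty)$, because $|x|\,e^{-2x\,\imag\lam_j}$ is bounded there. Summing the blockwise Lipschitz bounds gives the displayed operator-norm estimate, and with the resolvent identity the lemma follows.

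The step I expect to be the main obstacle — and the only one requiring genuine care rather than bookkeeping — is the uniform-in-$x$ Lipschitz bound on $\calD\mapsto\calK_x$: one must track how the exponential factors $e^{2i\lam_j x}$ interact with their own $\lam_j$-derivatives, which grow linearly in $x$, and confirm that the lower bound $\imag\lam_j\ge c$ available on the bounded set $W$ tames this growth uniformly over all of $[a,\infty)$; and one must import, rather than re-derive, the bilinear estimate on $S_x(\rho_1)-S_x(\rho_2)$ from Paper~I. Everything else reduces to routine estimates of Cauchy transforms of $H^{2,2}$ functions and of rational kernels with poles off the real axis.
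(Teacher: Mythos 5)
Your proposal is correct and follows essentially the same route as the paper: the uniform bound is obtained by combining Lemma~\ref{lemma:Kx.large} (via \eqref{RKx.est}) with Lemma~\ref{lemma:Kx.small}, and the Lipschitz estimate comes from the second resolvent identity together with the operator-norm Lipschitz bound $\norm[X^\sharp \rarr X^\sharp]{\calK_x(\calD_1)-\calK_x(\calD_2)}\lesssim\norm[V_N]{\calD_1-\calD_2}$ read off from the formulas \eqref{K00}--\eqref{K11}. The extra detail you supply about the exponential factors $e^{2i\lam_j x}$ and the bilinear bound on $S_x$ is exactly the bookkeeping the paper leaves implicit in the phrase ``easily proved from the formulas.''
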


\begin{proof}
The uniform estimate is an immediate consequence of Lemmas \ref{lemma:Kx.large} and \ref{lemma:Kx.small}. The Lipschitz estimate follows from the second resolvent identity and the estimate
$$ \norm[X^\sharp \rarr X^\sharp]{K_x(\calD_1) - K_x(\calD_2)}
\lesssim \norm[V_N]{\calD_1 - D_2}$$
which is easily proved from the formulas \eqref{K00}--\eqref{K11}.
\end{proof}

Owing to the uniform estimate, we can lift $(I-\calK_x)^{-1}$ to an operator
which we denote by $(I-\calK)^{-1}$ on $L^2((a,\infty);X^\sharp)$ by the formula
$$ \left[(I-\calK)^{-1}f\right](x,\lam) = \left[(I-\calK_x)^{-1}f(x,\dotarg)\right](\lam). $$
As an immediate consequence of Lemma \ref{lemma:res}, we have:

\begin{proposition}
\label{prop.res}
Let $W$ be a bounded subset of $V_N$.
The resolvent $(I-\calK)^{-1}$ satisfies the estimates
$$ 
\norm[\calB(L^2([a,\infty);X^\sharp)]{(I-{\calK})^{-1}}
\lesssim 1
$$
with constants uniform in  $\calD \in W$ and
$$
\norm[\calB(L^2([a,\infty);X^\sharp)]
		{(I-{\calK}(\calD_1))^{-1}- (I-{\calK}(\calD_2))^{-1}} 
\lesssim
\norm
	[V_N]	
	{\calD_1 - \calD_2}
$$
with constants uniform in  $\calD_1$, $\calD_2 \in W$. 
\end{proposition}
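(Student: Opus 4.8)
The plan is to deduce Proposition~\ref{prop.res} directly from the pointwise-in-$x$ estimates of Lemma~\ref{lemma:res} by a routine lifting argument; no new analysis is required beyond checking measurability of the pointwise resolvent. First I would record that the map $x \mapsto \calK_x \in \calB(X^\sharp)$ is norm-continuous on $[a,\infty)$ --- inspection of \eqref{K00}--\eqref{K11}, together with the continuity of $x \mapsto \rho_x$ in $L^2(\R)$ and of $x \mapsto C_{j,x}$, gives this immediately --- so by the second resolvent identity $x \mapsto (I-\calK_x)^{-1} \in \calB(X^\sharp)$ is also continuous. Consequently, for $f \in L^2([a,\infty);X^\sharp)$ the $X^\sharp$-valued function $x \mapsto (I-\calK_x)^{-1} f(x,\dotarg)$ is strongly measurable, and the formula defining $(I-\calK)^{-1}$ produces a well-defined measurable $X^\sharp$-valued function of $x$.

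For the uniform bound, I would apply the first estimate of Lemma~\ref{lemma:res} pointwise: for a.e.\ $x \in [a,\infty)$,
\begin{equation*}
\norm[X^\sharp]{\left[(I-\calK)^{-1}f\right](x,\dotarg)}
= \norm[X^\sharp]{(I-\calK_x)^{-1} f(x,\dotarg)}
\lesssim \norm[X^\sharp]{f(x,\dotarg)},
\end{equation*}
with implied constant uniform in $\calD \in W$. Squaring and integrating in $x$ over $[a,\infty)$ yields $\norm[L^2([a,\infty);X^\sharp)]{(I-\calK)^{-1}f} \lesssim \norm[L^2([a,\infty);X^\sharp)]{f}$, which is the first assertion and in particular shows $(I-\calK)^{-1}$ is bounded on $L^2([a,\infty);X^\sharp)$. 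For the Lipschitz estimate I would use that, pointwise in $x$,
\begin{equation*}
\left[\left((I-\calK(\calD_1))^{-1} - (I-\calK(\calD_2))^{-1}\right)f\right](x,\dotarg)
= \left((I-\calK_x(\calD_1))^{-1} - (I-\calK_x(\calD_2))^{-1}\right) f(x,\dotarg),
\end{equation*}
and the second estimate of Lemma~\ref{lemma:res} bounds the $X^\sharp$-norm of the right-hand side by $C\,\norm[V_N]{\calD_1-\calD_2}\,\norm[X^\sharp]{f(x,\dotarg)}$ with $C$ uniform in $x \in [a,\infty)$ and in $\calD_1,\calD_2 \in W$. Squaring, integrating in $x$, and taking the supremum over $\norm[L^2([a,\infty);X^\sharp)]{f}\le 1$ gives the stated Lipschitz bound.

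The only point that genuinely needs care --- and the place where all the work actually sits --- is that the constants in Lemma~\ref{lemma:res} are uniform in $x$ over the \emph{unbounded} interval $[a,\infty)$, not merely on compacta. This is precisely what one gets by combining Lemma~\ref{lemma:Kx.large} (which forces $\norm[X^\sharp\to X^\sharp]{\calK_x}<1/2$ for $x\ge R$, hence the Neumann bound \eqref{RKx.est}) with Lemma~\ref{lemma:Kx.small} (which controls the resolvent on the compact range $x\in[a,R]$ via the continuity--compactness argument), together with the fact that $\norm[X^\sharp\to X^\sharp]{\calK_x(\calD_1)-\calK_x(\calD_2)}\lesssim\norm[V_N]{\calD_1-\calD_2}$ uniformly in $x$. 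Granting those ingredients, which are already established above, the lifting is purely formal and requires no additional decay or regularity of $f$.
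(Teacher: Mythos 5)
Your argument is correct and is essentially the paper's own: the paper defines $(I-\calK)^{-1}$ by the same pointwise lifting formula and treats both estimates as immediate consequences of the uniform-in-$x$ bounds in Lemma~\ref{lemma:res}, exactly as you do by squaring and integrating in $x$. Your added remark on strong measurability via norm-continuity of $x\mapsto\calK_x$ is a harmless refinement of the same proof.
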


To solve \eqref{nusharp.int}, we also need to control the inhomogeneous term $\calK_x e$. Let
\begin{equation}
\label{bff}
\bff	=	\calK_x \bfe 
		= 	\left(
				f^\sharp(x,\lam),
				\{ f^\sharp(x,\overline{\lam_j})\}_{j=1}^N 
			\right). 
\end{equation}
From \eqref{calK} we see that
$$
f^\sharp =	f_1^\sharp + f_2^\sharp + f_3^\sharp + f_4^\sharp
$$
where
\begin{align*}
f_1^\sharp(x,\lam)
	&=	
	\sum_k \frac{C_{k,x} \lam_k }{\lam-\lam_k}
			\left(
					\sum_j 
						\frac	{\eps\overline{C}_{j,x} }
								{\overline{\lambda}_j-\lambda_k} 
			\right)
\\
f_2^\sharp(x,\lam)
	&=	 \sum_k \frac{1}{\lam - \lam_k}
			\left(
				{\dint_{\mathbb{R}}\frac{\rho_x(s)}{s-\lambda_k}
				\dfrac{ds}{2\pi i} C_{k,x} \lam_k }
			\right)
\\
f_3^\sharp(x,\lam)
	&= \sum_k\overline{C}_{k,x} 
			C^-\left[
					\frac{ (\dotarg)\overline{\rho_x(\dotarg)}  }
						{(\dotarg)-\overline{\lambda}_k} 
				\right](\lambda)	\\
f_4^\sharp(x,\lam)
	&=	- \eps C^-
				\left\lbrace
						C^+
							\left[
								\rho_x(\diamond) 
							\right]  
							(\dotarg) 
							\overline{\rho_x(\dotarg)} 
				\right\rbrace(\lam)
\end{align*}
We can get $f^\sharp (x,\overline{\lambda}_j)$, $1 \leq j \leq N$, by substituting $\overline{\lambda}_j$  for $\lambda\in\mathbb{R}$  and changing the corresponding Cauchy projection $C^-$ to a Cauchy integral over the real line.

\begin{lemma}
\label{lemma:f1}
For $f^\sharp$ given by \eqref{bff} and indices $i=1,2,3,4$ and $1 \leq j \leq N$,
$$
\left| f^\sharp_i(x,\overline{\lam}_j) \right|+
\left|\dee f^\sharp_i(x,\overline{\lam}_j)/\dee x\right| +
\left|\dee^2 f^\sharp_i(x,\overline{\lam}_j)/\dee x^2\right|
\lesssim \left(1+ \norm[H^{2,2}(\R)]{\rho}\right)^2
$$
uniformly for
$\{ \lam_j, C_j\}$ in a fixed compact subset of $(\C^+ \times C^\times)^N$ and $x \geq a$.
\end{lemma}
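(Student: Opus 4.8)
The plan is to bound each of the four pieces $f_i^\sharp(x,\bar\lam_j)$ together with their first two $x$-derivatives, using only the hypothesis that $\rho\in H^{2,2}(\R)$ (controlling $\norm[H^{2,2}]{\rho}$) and that the discrete data $\{\lam_j,C_j\}$ live in a fixed compact subset of $(\C^+\times\C^\times)^N$, so in particular $|\lam_j|$, $|C_j|$ are bounded above, $|\Im\lam_j|\geq c$, and $|\lam_j-\bar\lam_k|\geq c$ for all $j,k$. First I would record that differentiating $f_i^\sharp$ in $x$ only brings down factors of $\lam_k$ or $\bar\lam_k$ (from $C_{j,x}=C_je^{2i\lam_jx}$) or acts on $\rho_x(s)=e^{-2isx}\rho(s)$ producing $(-2is)\rho(s)$; since $x\geq a$, the exponentials $e^{2i\lam_jx}$ are uniformly bounded, and multiplying $\rho$ by $s$ or $s^2$ costs at most $\norm[L^{2,2}]{\rho}\lesssim\norm[H^{2,2}]{\rho}$. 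Hence it suffices to estimate each term and each term with $\rho$ replaced by $s^m\rho$, $m\leq 2$, uniformly.

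For $f_1^\sharp(x,\bar\lam_j)$: this is a finite double sum of terms $\tfrac{C_{k,x}\lam_k}{\bar\lam_j-\lam_k}\cdot\tfrac{\eps\bar C_{\ell,x}}{\bar\lam_\ell-\lam_k}$, each factor bounded using the compactness hypotheses ($|\bar\lam_j-\lam_k|\geq|\Im\lam_k|\geq c$ and likewise $|\bar\lam_\ell-\lam_k|\geq c$), so $|f_1^\sharp(x,\bar\lam_j)|\lesssim 1$, and $x$-derivatives only insert bounded factors. For $f_2^\sharp(x,\bar\lam_j)=\sum_k\tfrac{C_{k,x}\lam_k}{\bar\lam_j-\lam_k}\int_\R\tfrac{\rho_x(s)}{s-\lam_k}\tfrac{ds}{2\pi i}$, the Cauchy integral is over $\R$ evaluated at $\lam_k\in\C^+$, hence at distance $\geq c$ from the contour, so $\bigl|\int_\R\tfrac{\rho_x(s)}{s-\lam_k}ds\bigr|\leq\norm[L^2]{\rho}\,\norm[L^2]{(s-\lam_k)^{-1}}\lesssim\norm[H^{2,2}]{\rho}$ by Cauchy--Schwarz, uniformly in $x$; $x$-derivatives replace $\rho$ by $s^m\rho$ (still in $L^2$) and insert bounded factors. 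For $f_3^\sharp(x,\bar\lam_j)$ one substitutes $\bar\lam_j$ for the real variable $\lam$ in $C^-[\tfrac{(\cdot)\overline{\rho_x(\cdot)}}{(\cdot)-\bar\lam_k}](\lam)$, turning it into a Cauchy integral over $\R$ at the point $\bar\lam_j\in\C^-$, again at distance $\geq c$ from $\R$; the integrand $\tfrac{s\overline{\rho_x(s)}}{s-\bar\lam_k}$ has $|s-\bar\lam_k|\geq c$ in the denominator, so the whole thing is bounded by $\norm[L^{2,1}]{\rho}\lesssim\norm[H^{2,2}]{\rho}$ via Cauchy--Schwarz, and $x$-derivatives are handled as before.

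The main obstacle is $f_4^\sharp(x,\bar\lam_j)$, which after the substitution becomes the composition of two Cauchy-type operators, $-\eps C_\R\bigl\{C^+[\rho_x](\cdot)\,\overline{\rho_x(\cdot)}\bigr\}$ evaluated at $\bar\lam_j$. Here the key point is the same one used for $S_x$ and $S_x\bfe$ in Paper I (compare Lemma 6.7 there and the discussion around \eqref{Sx}): $C^+$ is bounded on $L^2(\R)$, so $C^+[\rho_x]\in L^2(\R)$ with norm $\lesssim\norm[L^2]{\rho}$; then $C^+[\rho_x](\cdot)\,\overline{\rho_x(\cdot)}\in L^1(\R)$ by Cauchy--Schwarz with norm $\lesssim\norm[L^2]{\rho}^2$; and finally the outer Cauchy integral evaluated at the off-contour point $\bar\lam_j$ (distance $\geq c$ from $\R$) of an $L^1$ function is bounded by $c^{-1}$ times its $L^1$ norm, giving $|f_4^\sharp(x,\bar\lam_j)|\lesssim\norm[H^{2,2}]{\rho}^2$. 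For the $x$-derivatives of $f_4^\sharp$ one differentiates under the integral: each derivative either multiplies one copy of $\rho_x$ by $(-2is)$ — so one replaces $\norm[L^2]{\rho}$ by $\norm[L^{2,1}]{\rho}$ or $\norm[L^{2,2}]{\rho}\lesssim\norm[H^{2,2}]{\rho}$ — the $L^2$-boundedness of $C^+$ and the Cauchy--Schwarz/off-contour estimates going through unchanged. Collecting the four bounds, all terms are $\lesssim(1+\norm[H^{2,2}]{\rho})^2$ uniformly in $x\geq a$ and in the compact parameter set, which is the claim. $\hfill\qed$
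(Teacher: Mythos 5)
Your proof is correct, and it follows exactly the route the paper has in mind: the paper declares these pointwise bounds at the off-axis points $\overline{\lam}_j$ essentially trivial, and your argument supplies the expected details — lower bounds on $|\overline{\lam}_j-\lam_k|$ and on the distance of $\overline{\lam}_j$ to $\R$ from compactness of the discrete data, Cauchy--Schwarz (or the $L^1$ off-contour bound combined with $L^2$-boundedness of $C^+$ for $f_4^\sharp$), and the observation that $x$-derivatives only insert bounded factors $\lam_k$ or powers of $s$ absorbed by the $H^{2,2}$-norm of $\rho$. No gaps.
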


\begin{lemma}
\label{lemma:f2}
For $f^\sharp$ given by \eqref{bff} and indices $i=1,2,3 $ and $1 \leq j \leq N$ 
\begin{equation}
\label{f2.1}
\sup_{0 \leq k \leq 2}
	\norm[L^2([a,\infty))]
			{\dee^k f^\sharp_i(\dotarg,\overline{\lam}_j)/\dee x^k} 
\lesssim \left(1+ \norm[H^{2,2}(\R)]{\rho}\right)^2
\end{equation}
uniformly for 
$\{ \lam_j, C_j\}$ in a fixed compact subset of $(\C^+ \times \C^\times)^N$.  Moreover
\begin{equation}
\label{f2.2}
\sup_{0 \leq k \leq 2}
	\norm[L^2([a,\infty)\times \R)]
			{\dee^k f^\sharp_i(\dotarg,\diamond)/\dee x^k}
	\lesssim  \left(1+ \norm[H^{2,2}(\R)]{\rho}\right)^2\\
\end{equation}
and
\begin{equation}
\label{f2.3}
\norm[L^2([a,\infty)\times \R)]
			{\dee f_i^\sharp(\dotarg,\diamond)/\dee \lam} +
\norm[L^2([a,\infty)\times \R)]
			{\langle \diamond \rangle^{-1} 
			\dee^2 f_i^\sharp/\dee \lam^2 (\diamond,\dotarg)}\\
	\lesssim  \left(1+ \norm[H^{2,2}(\R)]{\rho}\right)^2
\end{equation}
with the same uniformity.
For $f_4$ we have the following estimates:
\begin{subequations}
\label{f4.est}
\begin{align}
\label{f4}
\norm[L^2_\lam]{{f^\sharp_4}(x,\dotarg)}	
	&\lesssim (1+|x|)^{-1} \norm[H^{2,0}]{\rho} \norm[L^2]{\rho}, \\
\label{f4.x}
\norm[L^2((-a,\infty) \times \R)]{(f^\sharp_4)_x}
	&\leq		\norm[L^{2,1}]{\rho} \norm[L^2]{\rho}\\
\label{f4.xx}
\norm[L^2((-a,\infty)\times \R)]{(f^\sharp_4)_{xx}}
	&\leq	\norm[L^{2,2}]{\rho} \norm[L^{2,1}]{\rho}\\
\label{f4.l}
\norm[L^2_\lam]{(f^\sharp_4)_{\lam}(x,\dotarg)}
	&\lesssim	(1+|x|)^{-1}\norm[H^2]{\rho}\norm[H^{2,2}]{\rho}\\
\label{f4.ll}
\norm[L^2_\lam]{ \langle \dotarg \rangle^{-1} (f^\sharp_4)_{\lam\lam}(x,\dotarg)}		
	&\lesssim	(1+|x|)^{-1} \norm[H^{2,2}]{\rho}^2
\end{align}
\end{subequations}
\end{lemma}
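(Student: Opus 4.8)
The plan is to read off the structure of $\bff = \calK_x\bfe$ from the decomposition recorded just after \eqref{bff} and treat the three ``discrete'' pieces $f_1^\sharp,f_2^\sharp,f_3^\sharp$ separately from the ``continuous'' piece $f_4^\sharp$. Each of $f_1^\sharp,f_2^\sharp,f_3^\sharp$ carries at least one factor $C_{k,x}$ or $\overline{C_{k,x}}$, and by \eqref{rhoCx} one has $|C_{k,x}| = |C_k|\,e^{-2(\Im\lam_k)x} \lesssim e^{-c x}$ for a $c>0$ uniform over a bounded subset of $V_N$ (indeed $\Im\lam_k\ge d_\Lambda$); these terms are therefore elementary once this decay is exploited. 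The remaining piece $f_4^\sharp = -\eps\,C^-\!\big[\,C^+[\rho_x](\dotarg)\,\overline{\rho_x(\dotarg)}\,\big]$ is exactly the inhomogeneous term of the solitonless problem, so for it I would quote the corresponding estimates from Paper~I, \S 6.

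For $i=1,2,3$ I would argue as follows. Each $f_i^\sharp$ is a finite sum indexed by the poles whose $\lam$-dependence enters only through (a) rational kernels $(\lam-\lam_k)^{-m}$, $m=1,2$, which lie in $L^2(\R)$ and are bounded on $\R$ with norms controlled by $d_\Lambda^{-1}$; (b) values $C_\R[h](\lam_k)$ or $C_\R[h](\overline\lam_j)$ of Cauchy transforms at points a distance $\geq d_\Lambda$ off $\R$, bounded by $\|h\|_{L^2}\,d_\Lambda^{-1/2}$; and (c) the Cauchy projectors $C^\pm$ acting boundedly on $L^2(\R)$. Since $\|\rho_x\|_{L^2(\R)}=\|\rho\|_{L^2(\R)}$ for real $x$, this already gives the pointwise bound of Lemma~\ref{lemma:f1} at $\lam=\overline\lam_j$ and the $L^2_\lam$ bound, each with a prefactor $e^{-cx}$; integrating $e^{-2cx}$ over $[a,\infty)$ then produces \eqref{f2.1} and \eqref{f2.2}. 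Each $x$-derivative lands either on a $C_{k,x}$ (pulling out a bounded factor $\lam_k$) or on $\rho_x$ via $\partial_x\rho_x=-2i\lam\rho_x$ (raising the $\lam$-weight by one power), so up to two derivatives the estimates persist with $\|\rho\|_{L^2}$ replaced by $\|\langle\lam\rangle^2\rho\|_{L^2}\lesssim\|\rho\|_{H^{2,2}}\le(1+\|\rho\|_{H^{2,2}})^2$. For \eqref{f2.3} I would differentiate the rational kernels, using $\partial_\lambda(\lam-\lam_k)^{-m}=-m(\lam-\lam_k)^{-m-1}\in L^2(\R)$, and the Cauchy projectors via $\partial_\lambda C^-[h]=C^-[h']$, which is legitimate because the relevant $h$ is built from $s\,\overline{\rho_x(s)}$ times smooth bounded rational factors and lies in $H^1(\R)$ when $\rho\in H^{2,2}(\R)$; the weight $\langle\lam\rangle^{-1}$ absorbs the residual growth in the second $\lam$-derivative, and the $e^{-cx}$ factor again yields the $L^2([a,\infty)\times\R)$ bounds.

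The substantive case is $f_4^\sharp$. Here I would follow Paper~I: unwinding the inner pairing and \eqref{rhoCx} one writes
\[
C^+[\rho_x](s)\,\overline{\rho_x(s)} = \frac{1}{2\pi i}\int_\R \frac{e^{-2i(u-s)x}\,\rho(u)\,\overline{\rho(s)}}{u-s_+}\,du,
\]
in which the oscillation $e^{-2i(u-s)x}$ pairs with the kernel $(u-s)^{-1}$, so a single integration by parts in $u$ produces the gain $(1+|x|)^{-1}$; combined with the $L^2$-boundedness of $C^\pm$ and Cauchy--Schwarz this gives \eqref{f4}. The bounds \eqref{f4.x}--\eqref{f4.xx} follow by differentiating in $x$ — each derivative either consumes the gained $x^{-1}$ or descends onto an exponential, costing one power of $\lam$ in the weight on $\rho$, which is precisely the bookkeeping in the stated right-hand sides — and \eqref{f4.l}--\eqref{f4.ll} follow by differentiating in $\lam$ through $\partial_\lambda C^-=C^-\partial_s$, which trades $\lam$-derivatives for $s$-derivatives on the $\rho$-factors (controlled by $\rho\in H^{2,2}(\R)$), the weight $\langle\lam\rangle^{-1}$ absorbing the growth from the second derivative. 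I expect the only real obstacle to be exactly this last term, namely keeping careful track of how $\partial_x$ and $\partial_\lam$ exchange decay in $x$ for weighted regularity of $\rho$ in the right-hand sides of \eqref{f4.est}; since $f_4^\sharp$ coincides with the inhomogeneous term of the no-soliton problem, the cleanest route is to cite the corresponding estimates of Paper~I, \S 6, and supply only the elementary exponential-decay argument for $f_1^\sharp,f_2^\sharp,f_3^\sharp$.
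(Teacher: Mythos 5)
Your proposal is correct and follows essentially the same route as the paper: the paper's proof simply invokes Paper~I (Lemma~6.5 and (6.14)) for the $f_4^\sharp$ estimates \eqref{f4}--\eqref{f4.ll} and dismisses the discrete pieces $f_1^\sharp,f_2^\sharp,f_3^\sharp$ as trivial, which is exactly the exponential-decay argument in $C_{k,x}$ that you spell out. Your explicit bookkeeping (rational kernels off the real axis, $\partial_x\rho_x=-2i\lam\rho_x$ costing one weight, the $e^{-cx}$ factor absorbing any polynomial growth in $x$) fills in what the paper leaves unsaid, so no gap.
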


\begin{remark}
\label{rem:f.infty}
These estimates and Sobolev embedding imply that
$$
\norm[L^\infty([a,\infty);X^\sharp)]{f^\sharp}
	\lesssim	 \left( 1 + \norm[H^{2,2}(\R)]{\rho} \right)^2
$$
\end{remark}

\begin{remark}
\label{rem:f.bilinear}
Since $\textbf{f}^\sharp$ and its derivatives are bilinear in the data $\left(	\rho,\{ \lam_k \}, \{ C_k \}	\right)$, the estimates used to prove the lemma above can easily be adapted to show that $\textbf{f}^\sharp$ and its derivatives are Lipschitz continuous as a function of $\left(	\rho,\{ \lam_k \}, \{ C_k \}	\right)\in V_N $. 
\end{remark}
\begin{proof}
We can establish the inequalities (\ref{f4})-(\ref{f4.ll}) using the conclusion from Paper I, Lemma 6.5 and (6.14). The other part of the lemma is trivial.
\end{proof}

Lemma \ref{lemma:res}, Proposition \ref{prop.res}, Lemmas \ref{lemma:f1}, and \ref{lemma:f2} together with Remark \ref{rem:f.infty} immediately imply Proposition \ref{prop:nu.L2}.

\subsubsection{$\lam$-Derivatives}

Next,  we consider $\lam$-derivatives of $\nu^\sharp$.  Write
$$
\nu^\sharp  
	= \left(\nu_0, \{ \widetilde{\nu}^*_j \}_{j=1}^N \right)
	=	\left(\nu_0, \widetilde{\nu}^*\right).
$$
Note that only $\nu_0$ depends on $\lam$, and obeys 
the integral equation
\begin{equation}
\label{int.nu1}
\nu_0	
	=	f_1 	+ \calK_{00}(\nu_0) 
				+ \calK_{01}(\widetilde{\nu}^*).
\end{equation}

We will prove Proposition \ref{prop:nu.lam} by differentiating \eqref{int.nu1}
and using resolvent bounds to estimate the derivatives.

The following estimates allow us to control the third term
$\calK_{01} (\widetilde{\nu}^*)$ in \eqref{int.nu1} and its derivatives in 
$\lam$.

\begin{lemma}
\label{lemma:K01}
Suppose that $\nu^\sharp \in L^2([a,\infty),X^\sharp)$. Then
\begin{align}
\label{K01.lam}
\norm[L^2([a,\infty) \times \R)]
		{\left[\calK_{01} (\widetilde{\nu}^*)\right]_\lam}
&\lesssim \norm[H^{2,2}(\R)]{\rho} \norm[L^2(\R,\C^N)]{\widetilde{\nu}^*}	\\
\label{K01.lamlam}
\norm[L^2([a,\infty) \times \R)]
		{\left[\calK_{01} (\widetilde{\nu}^*)\right]_{\lam\lam}}
&\lesssim	\norm[H^{2,2}(\R)]{\rho} \norm[L^2(\R,\C^N)]{\widetilde{\nu}^*}  
\end{align}
hold, with estimates uniform 
in  $\{ \lam_j, C_j\}$ in a compact subset of $(\C^+ \times \C^\times)^N$.
Similar estimates hold, with the same uniformity, if the $L^2([a,\infty) \times \R)$-norm is replaced by the $L^\infty([a,\infty),L^2(\R))$-norm.
\end{lemma}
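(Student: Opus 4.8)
The plan is to differentiate the explicit formula \eqref{K01} for $\calK_{01}$ directly in $\lambda$, term by term, and reduce every resulting quantity to the $L^2$-boundedness of the Cauchy projectors (Section \ref{sec:prelim}) together with the Sobolev control of $\rho$. Write $\calK_{01}(\widetilde{\nu}^*) = T_1 + T_2$, where $T_1(x,\lam) = -\sum_j \widetilde{\nu}^*_j(x)\,\overline{C_{j,x}}\; C^-[\,(\dotarg)\overline{\rho_x(\dotarg)}/(\dotarg-\lam_j)\,](\lam)$ is the Cauchy-transform piece and $T_2(x,\lam) = \sum_{j,k}\widetilde{\nu}^*_k(x)\,\eps\lam_j C_{j,x}\overline{C_{k,x}}/((\lam-\lam_j)(\lam_j-\overline{\lam_k}))$ is the rational piece. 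First I would record the elementary facts that hold uniformly over the given compact subset of $(\C^+\times\C^\times)^N$: there are $c,C>0$ with $\Im\lam_j\ge c$, $|\lam_j-\overline{\lam_k}|\ge 2c$, $|\lam_j|+|C_j|\le C$, and — the point that makes the soliton terms harmless — $(1+|x|)^m(|C_{j,x}|+|\overline{C_{j,x}}|)\lesssim 1$ for all $x\ge a$ and $0\le m\le 2$, since $|C_{j,x}| = |C_j|e^{-2(\Im\lam_j)x}$ decays exponentially as $x\to+\infty$ and is bounded on $[a,\infty)$.

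For $T_2$ the $\lambda$-derivatives act only on the simple poles: $\partial_\lam^m[(\lam-\lam_j)^{-1}]$ is a constant times $(\lam-\lam_j)^{-m-1}$, and $\|(\lam-\lam_j)^{-m-1}\|_{L^2_\lam(\R)}\lesssim 1$ for $m\ge 0$ because $|\lam-\lam_j|\ge c$ and the integrand decays like $|\lam|^{-m-1}$. Hence $\|\partial_\lam^m T_2(x,\dotarg)\|_{L^2_\lam}\lesssim\|\widetilde{\nu}^*(x)\|_{\C^N}$ by Cauchy--Schwarz in the finite $j,k$ sums, and the contribution of $T_2$ to \eqref{K01.lam}--\eqref{K01.lamlam} follows at once by integrating in $x$ over $[a,\infty)$, or by taking a supremum in $x$ for the $L^\infty$ version.

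The substantive term is $T_1$. Here I would use that the Cauchy projectors, being Fourier multipliers, commute with $\partial_\lam$, so $\partial_\lam^m(C^-g) = C^-(\partial_s^m g)$ (equivalently, integrate by parts; the boundary terms vanish once one checks that $g_j(s) = s\,\overline{\rho_x(s)}/(s-\lam_j)$ and its first two $s$-derivatives decay, for which one writes $s/(s-\lam_j) = 1 + \lam_j/(s-\lam_j)$ and uses $\rho\in H^{2,2}(\R)$). The $L^2$-boundedness of $C^-$ then bounds $\|\partial_\lam^m(C^-g_j)\|_{L^2_\lam}$ by $\|\partial_s^m g_j\|_{L^2_s}$, and a Leibniz expansion of $g_j = \big(s/(s-\lam_j)\big)e^{2isx}\overline{\rho(s)}$ gives $\|\partial_s^m g_j\|_{L^2_s}\lesssim(1+|x|)^m\|\rho\|_{H^{2,2}(\R)}$ for $m\le 2$: the $s$-derivatives of $s/(s-\lam_j)$ are bounded (in fact square-integrable) with constants depending only on $c,C$; those of $e^{2isx}$ produce factors $(2ix)^m$, bounded in $s$; and those of $\overline{\rho}$ are controlled in $L^2_s$ by $\|\rho\|_{H^{2,2}(\R)}$. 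Combining these and using the exponential decay of $\overline{C_{j,x}}$ to swallow the $(1+|x|)^m$ factor yields $\|\partial_\lam^m T_1(x,\dotarg)\|_{L^2_\lam}\lesssim\|\rho\|_{H^{2,2}(\R)}\|\widetilde{\nu}^*(x)\|_{\C^N}$ pointwise in $x\ge a$; integrating in $x$ gives \eqref{K01.lam}--\eqref{K01.lamlam}, and taking the supremum in $x\ge a$ gives the analogous $L^\infty([a,\infty);L^2(\R))$ bounds.

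I expect the only real obstacle to be the bookkeeping confirming that the $x$-power factors generated when $\partial_s$ falls on $e^{2isx}$ are genuinely dominated by the exponential decay $e^{-2(\Im\lam_j)x}$ carried by the norming-constant prefactors $\overline{C_{j,x}}$ (and $C_{j,x}$) — that is, that the discrete data entering $\calK_{01}$ help rather than hurt the estimate. Once that is organized, everything reduces to the Cauchy-projector and Sobolev inequalities already used in the no-soliton setting of Paper I (Lemmas 6.5 and 6.7), and uniformity in $\{\lam_j,C_j\}$ is automatic since all constants depend only on the lower bounds for $\Im\lam_j$ and $|\lam_j-\overline{\lam_k}|$ and the upper bounds for $|\lam_j|$ and $|C_j|$.
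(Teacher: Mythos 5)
Your proposal is correct and takes essentially the same route as the paper: the paper's proof consists precisely of invoking the explicit formula \eqref{01} for $\calK_{01}$ and the boundedness and exponential decay of $C_{j,x}$ on $[a,\infty)$, which is exactly the mechanism you use to absorb the powers of $x$ generated when the $\lambda$-derivatives, moved onto $s$ through the Cauchy projector, hit $e^{2isx}$, while the purely rational double sum is handled by the uniform lower bounds on $\Im\lam_j$ and $|\lam_j-\overline{\lam_k}|$. The only cosmetic points are that the rational term contributes a bound without the factor $\|\rho\|_{H^{2,2}(\R)}$ (harmless, given how the lemma is applied with constants of the form $(1+\|\rho\|_{H^{2,2}(\R)})^2$), and that the $L^\infty([a,\infty),L^2(\R))$ variant comes with the correspondingly adjusted norm of $\widetilde{\nu}^*$ on the right, a looseness already present in the paper's own statement.
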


\begin{proof}
These estimates follow from the explicit formula
\begin{align}
\label{01}
\calK_{01}\left(\widetilde{\nu}^*\right) 
	&=	-\sum_k 
				\widetilde{\nu}_k^*(x)
				\overline{{C}_{k,x}}
				C_{-}
				\left [ 
					\frac{(\dotarg)\overline{\rho_x(\dotarg)}} 
						{(\dotarg)-\overline{\lambda}_k}
				\right](\lambda) 
				\\
\nonumber
	&\quad +
		\sum_j
			\widetilde{\nu}_j^*(x)
			\eps\overline{C_{j,x}}
			\left(
				\sum_k
					\frac{\lambda_k C_{k,x} }
						{(\lambda-\lambda_k)
						(\lambda_k-\overline{\lambda}_j)} 
			\right)
\end{align}
noting that the functions $C_{j,x}$ are bounded and exponentially decaying in $x$.
\end{proof}

The operator $\calK_{00}=S_x + S'$ (cf. \eqref{K00}) where
$S_x$ was studied in Paper I and denoted $S$ there. We recall the following estimates from Paper I. Proofs may be constructed from computations in the  proof of Paper I, Lemma 6.13.

\begin{lemma}
\label{lemma:S0}
Suppose that $\rho \in H^{2,2}(\R)$. The following estimates hold for 
$h \in L^2([a,\infty) \times \R) \cap L^2([a,\infty),L^1(\R))$
\begin{align}
\label{S_0.lam}
\norm[L^2([a,\infty) \times \R)]{\frac{\dee {S_x}}{\dee \lam} \left[h\right]}	
	&\lesssim	
		\norm[H^{2,2}]{\rho}^2 \norm[L^\infty([a,\infty),L^2(\R))]{h} \\
\label{S_0.lamlam}
\norm[L^2([a,\infty) \times \R)]
	{\langle \lam \rangle^{-1}\frac{\dee^2 {S_x} }{\dee \lam^2} \left[h\right]}
		&\lesssim	
			\norm[L^1]{\widehat{\rho}'}
			\norm[L^{2,2}]{\widehat{\rho}}
			\norm[L^2([a,\infty),L^1(\R))]{\widehat{h}}		
\end{align}
where $\widehat{h}$ denotes the partial Fourier transform of $h(x,\lam)$ in the second variable, and the implied constants depend only on $a$.
\end{lemma}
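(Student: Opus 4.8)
I would deduce both estimates from an explicit Fourier-side representation of $S_x$, in the spirit of the proof of Lemma 6.13 of Paper I. Write $\widehat\rho = \calF\rho$ and, for fixed $x$, $\widehat h(x,\cdot) = \calF[h(x,\cdot)]$. Recalling that the Cauchy projectors $C^\pm$ act on the Fourier side as multiplication by signed indicators of complementary half-lines, that $e^{-2i(\cdot)x}$ acts as translation by $x$, and that $\partial_\lam$ acts as multiplication by $2iy$, a direct computation starting from \eqref{Sx} and \eqref{rhoCx} shows, after the change of variables $w=y'+x$, that $S_x$ is conjugate under $\calF_\lam$ to integration against the kernel
\[
\mathcal K_x(y,y'') \;=\; -\frac{i\eps}{2\pi^2}\,\mathbf{1}_{\{y<0\}}\int_x^\infty \overline{\widehat\rho'(w-y)}\,\widehat\rho(w-y'')\,dw ,
\]
so that $\calF_\lam[S_x h](y)=\int \mathcal K_x(y,y'')\,\widehat h(x,y'')\,dy''$. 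Two structural features drive the argument and are already visible here: the inner integral runs over the tail $w\in(x,\infty)$, which supplies the decay in $x$ needed to land in $L^2([a,\infty))$; and on the support $\{y<0\}$ (and for $x\ge 0$) the argument $w-y$ of $\widehat\rho'$ exceeds both $w$ and $|y|$, so that powers of $y$ produced by $\lam$-differentiation can be traded for the $H^2$-regularity of $\rho$, encoded in $\langle\cdot\rangle\widehat\rho',\langle\cdot\rangle^2\widehat\rho\in L^2$. When $a<0$ one first splits $[a,\infty)=[a,0]\cup[0,\infty)$, the compact part being estimated trivially; this is the source of the $a$-dependence of the constants.

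For \eqref{S_0.lam} I would differentiate this representation once, so that the kernel of $\partial_\lam S_x$ is $2iy\,\mathcal K_x(y,y'')$. On $\{y<0\}$ one has $|y|\,|\widehat\rho'(w-y)|\le \langle w-y\rangle\,|\widehat\rho'(w-y)|$ with $\|\langle\cdot\rangle\widehat\rho'\|_{L^2}\lesssim\|\rho\|_{H^{2,2}}$; writing the $y''$-integration as a convolution of $\widehat\rho$ against $\widehat h(x,\cdot)$ and applying Young's inequality together with $\|\widehat h(x,\cdot)\|_{L^2}\lesssim\|h(x,\cdot)\|_{L^2}$, then Minkowski's integral inequality in $w$, then integration in $y$ and in $x\in[a,\infty)$ (the cutoff $(x,\infty)$ producing an integrable tail), one obtains the bound $\lesssim\|\rho\|_{H^{2,2}}^2\,\|h\|_{L^\infty([a,\infty),L^2(\R))}$; the square reflects the two occurrences of $\rho$ in $S_x$.

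For \eqref{S_0.lamlam} the scheme is the same with one extra factor of $y$, the kernel of $\partial_\lam^2 S_x$ carrying $(2iy)^2$. Here the weight $\langle\lam\rangle^{-1}$ is precisely what offsets the factor $\mu$ in $\mu\,\overline{\rho_x(\mu)}$, which near the diagonal $\mu\sim\lam$ contributes an extra power of $\lam$ that the available $H^2$-regularity of $\rho$ cannot otherwise absorb; after using $|y|\lesssim\langle\lam\rangle$ on exactly one of the two $y$-factors, the remaining $\langle\cdot\rangle\widehat\rho'$ is kept in $L^2$, one copy of $\widehat\rho$ is placed in $L^1$ (yielding the factor $\|\widehat\rho'\|_{L^1}$ from Young's inequality for the convolution, after trading $\langle w-y\rangle\widehat\rho'$ for $\widehat\rho'$ and absorbing the weight into the copy $\langle\cdot\rangle^2\widehat\rho\in L^{2,2}$), and the input is measured in $\|\widehat h\|_{L^2([a,\infty),L^1(\R))}$, which is what pairs with taking the appropriate kernel factor in $L^\infty$. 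Minkowski's inequality in $w$ and integration in $x$ then close the estimate.

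The main obstacle is not any single inequality but the bookkeeping: one must simultaneously track (i) the $x$-tail cutoff, which converts the weighted regularity of $\rho$ into genuine $L^2_x([a,\infty))$-integrability rather than a mere $L^\infty_x$ bound; (ii) the allocation of the three Fourier factors among $L^1$, $L^2$, and $L^\infty$ so that Young's and Minkowski's inequalities deliver exactly the norms appearing on the right-hand sides; and (iii) the matching of each power of $y$ against an available weight, which is what dictates where the $\langle\lam\rangle^{-1}$ is needed. All of this is carried out in the proof of Lemma 6.13 of Paper I, to which the present statement reduces once one observes that the discrete spectral data play no role in $S_x$.
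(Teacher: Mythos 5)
Your proposal is correct and follows essentially the same route as the paper: the paper offers no independent argument for this lemma, stating only that the estimates are recalled from Paper I and that proofs may be constructed from the computations in the proof of Lemma 6.13 there, which is exactly the reduction you make. Your Fourier-side sketch (the tail integral over $(x,\infty)$ supplying the $L^2$-in-$x$ decay, the allocation of $\widehat{\rho}$, $\widehat{\rho}'$, $\widehat{h}$ among $L^1$, $L^2$, $L^\infty$ via Young and Minkowski, and the weight $\langle \lam \rangle^{-1}$ compensating the single power of the spectral variable that the $H^{2,2}$ regularity of $\rho$ cannot absorb) is consistent with the computations referenced there.
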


\begin{remark}
\label{rem:S_0.lamlam}
Using the inequality
$$\norm[L^1(\R)]{\widehat{\psi}\,}
	\leq	\norm[L^2(\R)]{\psi} + \norm[L^2(\R)]{\psi_\lam}$$
we may rewrite the estimate \eqref{S_0.lamlam} as 
\begin{equation}
\label{S_0.lamlam.bis}
\norm[L^2([a,\infty) \times \R)]
	{\langle \lam \rangle^{-1}\frac{\dee^2 {S_x} }{\dee \lam^2} \left[h\right]}	
		\lesssim\norm[H^{2,2}(\R)]{\rho}^2 
				\left( 
					\norm[L^2([a,\infty) \times \R)]{h} +
					\norm[L^2([a,\infty) \times \R)]{h_\lam}	
				\right)		
\end{equation}
\end{remark}

Next, we study the operator $S'$.

\begin{lemma}
\label{lemma:S1}
The following estimates hold for the operator $S'$.
\begin{align}
\label{S1.lam}
\norm[L^2([a,\infty) \times \R)]
		{(S' h)_\lam}
	&\lesssim	
		\norm[L^2(\R)]{\rho} 
		\norm[L^2([a,\infty) \times \R)]{h}	\\
\label{S1.lamlam}
\norm[L^2([a,\infty) \times \R)]
		{(S' h)_{\lam\lam}}
	&\lesssim	
		\norm[L^2(\R)]{\rho} 
		\norm[L^2([a,\infty) \times \R)]{h}\\
\label{S1.x}
\norm[L^2([a,\infty) \times \R)]{(S')_x h}
	&\lesssim  \norm[H^{2,2}(\R)]{\rho} 
		\left( 
			\norm[L^2([a,\infty) \times \R)]{h}
		\right)\\
\label{S1.xx}
\norm[L^2([a,\infty) \times \R)]{(S')_{xx} (h)}
	&\lesssim	\norm[H^{2,2}(\R)]{h}
		\left(	
			\norm[L^2([a,\infty) \times \R)]{h}
		\right)
\end{align}
where the implied constants depend only on $a$ and $\{ \lam_j , C_j \}$.
\end{lemma}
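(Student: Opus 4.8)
The plan is to read off the four estimates \eqref{S1.lam}--\eqref{S1.xx} directly from the explicit finite-rank form of $S'$. Recalling \eqref{K00}, the operator $S' = \calK_{00} - S_x$ acts on a function $h$ of $\lambda$ alone by
\[
  (S'h)(\lambda) = \sum_{j=1}^N \frac{C_{j,x}\lambda_j}{\lambda-\lambda_j}\,
    \frac{1}{2\pi i}\int_\R \frac{\rho_x(s)\,h(s)}{s-\lambda_j}\,ds ,
\]
with $\rho_x$, $C_{j,x}$ as in \eqref{rhoCx}; when $S'$ is lifted to $L^2([a,\infty)\times\R)$ it operates pointwise in $x$, and $(S')_x$, $(S')_{xx}$ denote the first and second $x$-derivatives of this family of operators, with the argument $h(x,\cdot)$ left undifferentiated (this is how they enter when one differentiates the integral equation \eqref{int.nu1}). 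The structural facts that make all four bounds routine are: (i) for $\lambda_j\in\C^+$ with $\Im\lambda_j\ge d_\Lambda>0$ on a bounded subset of $V_N$, the functions $\lambda\mapsto(\lambda-\lambda_j)^{-m}$, $m\ge1$, and $s\mapsto(s-\lambda_j)^{-1}$ lie in $L^2(\R)\cap L^\infty(\R)$ with norms controlled by $d_\Lambda$; (ii) $|C_{j,x}| = |C_j|e^{-2x\Im\lambda_j}\lesssim e^{-2d_\Lambda x}$ for $x\ge a$, and $\partial_x C_{j,x} = 2i\lambda_j C_{j,x}$, so each $x$-derivative only multiplies by the bounded constant $2i\lambda_j$; (iii) $\partial_x\rho_x(s) = -2is\,\rho_x(s)$, hence $|\partial_x^k\rho_x(s)| = (2|s|)^k|\rho(s)|$ and $\|\partial_x^k\rho_x\|_{L^2} = 2^k\|\rho\|_{L^{2,k}}$.

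With these in hand I would first bound the inner integral by Cauchy--Schwarz,
\[
  \Big|\int_\R \frac{\partial_x^k\rho_x(s)\,h(x,s)}{s-\lambda_j}\,ds\Big|
  \le \|\partial_x^k\rho_x\|_{L^2(\R)}\,\Big\|\tfrac{1}{\,\cdot\,-\lambda_j}\Big\|_{L^\infty(\R)}\,\|h(x,\cdot)\|_{L^2(\R)}
  \lesssim \|\rho\|_{L^{2,k}}\,\|h(x,\cdot)\|_{L^2(\R)},
\]
then differentiate the prefactor $C_{j,x}\lambda_j(\lambda-\lambda_j)^{-1}$ in $\lambda$ or in $x$ as the estimate requires and take the $L^2(\R_\lambda)$ norm, using (i) for $\|(\,\cdot\,-\lambda_j)^{-m}\|_{L^2(\R_\lambda)}$ and (ii) for the $C_{j,x}$ factors. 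In every case this yields a pointwise-in-$x$ bound of the shape
\[
  \| (S')_{(\mathrm{deriv})}h \,(x,\cdot)\|_{L^2(\R_\lambda)}
  \lesssim e^{-2d_\Lambda x}\,\|\rho\|_{*}\,\|h(x,\cdot)\|_{L^2(\R)},
\]
where $\|\rho\|_* = \|\rho\|_{L^2}$ for the pure $\lambda$-derivatives \eqref{S1.lam}--\eqref{S1.lamlam} and $\|\rho\|_*=\|\rho\|_{L^{2,1}}$, resp.\ $\|\rho\|_{L^{2,2}}$, for \eqref{S1.x}, resp.\ \eqref{S1.xx}, each dominated by $\|\rho\|_{H^{2,2}(\R)}$. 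Squaring and integrating in $x$ over $[a,\infty)$, the factor $e^{-4d_\Lambda x}\le e^{-4d_\Lambda a}$ may simply be pulled out, which gives the asserted $L^2([a,\infty)\times\R)$ bounds; all implied constants depend only on $a$, on the lower bound for $d_\Lambda$, and on upper bounds for $|\lambda_j|$ and $|C_j|$, hence are uniform on bounded subsets of $V_N$.

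There is no real obstacle here: the lemma is an elementary consequence of the explicit structure of $S'$ together with the exponential decay of $C_{j,x}$. The only points needing care are (a) interpreting $(S')_x$ and $(S')_{xx}$ as derivatives of the $x$-dependent operator family, so that no $x$-derivative lands on $h$, and (b) bookkeeping the weighted norm of $\rho$ that appears in each line (the $\|h\|_{H^{2,2}}$ on the right of the printed \eqref{S1.xx} should read $\|\rho\|_{H^{2,2}}$). Finally, since $(S'h)(x,\lambda)$ is bilinear in $h$ and in the data $(\rho,\{\lambda_j\},\{C_j\})$, the same computations show that $S'h$ and its $\lambda$- and $x$-derivatives are Lipschitz continuous in the scattering data on bounded subsets of $V_N$, as will be needed downstream.
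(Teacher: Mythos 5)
Your argument is correct and is exactly the route the paper takes: the paper's proof simply observes that all four bounds are easy consequences of the explicit finite-rank formula $(S'h)(x,\lam)=\sum_{j}\frac{C_{j,x}\lam_j}{\lam-\lam_j}\,C_\R\!\left(\rho_x(\dotarg)h(x,\dotarg)\right)(\lam_j)$ together with the boundedness and exponential decay of $C_{j,x}$, which is precisely what you carry out in detail. Your remark that the right-hand side of \eqref{S1.xx} should read $\norm[H^{2,2}(\R)]{\rho}$ rather than $\norm[H^{2,2}(\R)]{h}$ is also correct; that is a typo in the statement.
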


\begin{remark}
In the estimates above, the derivative is taken with respect to $\lam$
on the composition $S' h$
for \eqref{S1.lam} and \eqref{S1.lamlam}, but the derivative of the \emph{operator only} with respect to the parameter $x$ is taken in \eqref{S1.x} and \eqref{S1.xx}.
\end{remark}

\begin{proof}
These estimates are easy consequences of the explicit formula
$$
(S'h)(x,\lam) = 
\sum_{j=1}^N \frac{C_{j,x} \lam_j}{\lam-\lam_j}
	C_\R \left(\rho_x(\dotarg) h(x,\dotarg)\right)(\lam_j)
$$
recalling \eqref{rhoCx}.
\end{proof}


\begin{proof}[Proof of Proposition \ref{prop:nu.lam}]
 We may use \eqref{int.nu1} together with Lemmas \ref{lemma:K01}, \ref{lemma:S0}, and \ref{lemma:S1} to estimate
\begin{align*}
\norm[L^2([a,\infty) \times \R)]{{\nu_0}_\lam} 
	&	\lesssim 
	\norm[L^2([a,\infty) \times \R)]{(f^\sharp)_\lam} +
	\norm[L^2([a,\infty) \times \R)]{(\calK_{00} \nu_0)_\lam} +
	\norm[L^2([a,\infty) \times \R)]{(\calK_{01} \nu_0)_\lam}\\
	& 	\lesssim
	\left(1+\norm[H^{2,2}(\R)]{\rho}\right)^2 
		\left(
				1+ \norm[L^\infty([a,\infty);L^2(\R))]{\nu_0}
				+ \norm[L^2([a,\infty);X^\sharp)]{\nu^\sharp} 		
		\right)
\end{align*}
which shows that $\norm[L^2([a,\infty) \times \R)]{{\nu_0}_\lam}$ is bounded uniformly for $\calD \in W$.

Differentiating \eqref{int.nu1} a second time yields the 
equation
$$
\langle \lam \rangle^{-1} (\nu_0)_{\lam\lam}
	=	\langle \lam \rangle^{-1} (f)_{\lam\lam} +
		\langle \lam \rangle^{-1} 
			\left( \calK_{00}(\nu_0) \right)_{\lam\lam}  +
		\langle \lam \rangle^{-1} 
			\left( \calK_{01} (\widetilde{\nu}^*) \right)_{\lam\lam} .
$$
Using estimates \eqref{f2.3} and \eqref{f4.ll} in the first right-hand term, \eqref{S_0.lamlam.bis}, \eqref{S1.lamlam} and Remark \ref{rem:S_0.lamlam} in the second right-hand term,
and \eqref{K01.lamlam}
in the third term, we conclude that
$$
\norm[L^2([a,\infty) \times \R)]
		{\langle \diamond \rangle^{-1}
			{\nu_0}_{\lam\lam}(\dotarg,\diamond)}\\
\lesssim	
	\left( 1 + \norm[H^{2,2}]{\rho} \right)^2 
	\left( 
			1 	+  \norm[L^2([a,\infty),X^\sharp)]{\nu^\sharp} 
				+	\norm[L^2([a,\infty) \times \R)]{{\nu_0}_\lam} 
	\right)
$$
which shows that 
$\norm[L^2([a,\infty) \times \R)]
		{\langle \diamond \rangle^{-1}
			{\nu_0}_{\lam\lam}(\dotarg,\diamond)}$
is also bounded uniformly for $\calD \in W$. Thus, 
we obtain Proposition \ref{prop:nu.lam}.
\end{proof}

\subsubsection{$x$-Derivatives}

We now turn to estimates on ${\nu_0}_x$ and ${\nu_0}_{xx}$.
We will differentiate \eqref{nusharp.int} with respect to $x$ and therefore will need the following estimates on the operators $\calK_x$ and $\calK_{xx}$.

\begin{lemma}
\label{lemma:K-estimates}
Fix $a \in \R$ and suppose that $h = (\widetilde{h}, h^*) \in X^\sharp$.
\begin{align}
\label{Kx.est}
\sup_{x \geq a}
	\norm[\calB(X^\sharp)]{\calK_x}
	&	\lesssim		\left( 1 + \norm[H^{2,2}(\R)]{\rho}\right)^2\\
\label{Kxx.est}
\sup_{x \geq a}
	\norm[X^\sharp]{\calK_{xx} h}
	& \lesssim	
	\left(
		1+ \norm[H^{2,2}(\R)]{\rho}
	\right)^2
	\norm[X^\sharp]{h} + \norm[L^2(\R)]{\widetilde{h}_\lam}			
\end{align}
where constants are uniform for $\{\lam_j,C_j \}_{j=1}^N$ in a compact subset of 
$(\C^+ \times \C^\times)^N$.
\end{lemma}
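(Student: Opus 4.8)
The plan is to estimate the operator norms of $\calK_x$ and $\calK_{xx}$ on $X^\sharp$ directly from the explicit formulas \eqref{K00}--\eqref{K11}, exploiting the fact that the discrete pieces carry exponential decay via $C_{j,x} = C_j e^{2i\lam_j x}$ (with $\imag \lam_j \geq d_\Lambda > 0$, so $|C_{j,x}| \lesssim e^{-d_\Lambda x}$ is bounded for $x \geq a$), while the purely continuous piece $S_x$ is controlled by the estimates already established in Paper I (Lemma 6.7 and the computations in Lemma 6.13). Concretely, I would split $\calK_x$ into the block structure \eqref{calK} and treat the four blocks $\calK_{00}, \calK_{01}, \calK_{10}, \calK_{11}$ in turn: $\calK_{00} = S_x + S'$, where $\norm[\calB(L^2)]{S_x} \lesssim \norm[H^{2,2}]{\rho}^2$ uniformly in $x$ (Paper I) and $S'$ is a finite-rank operator whose kernel $\frac{C_{j,x}\lam_j}{\lam-\lam_j}$ lies in $L^2(\R)$ with norm controlled by $|C_j|, |\lam_j|$ and $d_\Lambda$ (Lemma \ref{lemma:S1} gives $\norm{S' h} \lesssim \norm[L^2]{\rho}\norm{h}$); $\calK_{01}$ maps $\C^N$ into $L^2(\R)$ with norm $\lesssim \norm[H^{2,2}]{\rho}$ using the boundedness of the Cauchy projector $C^-$ on $L^2$ applied to $\frac{\lam \overline{\rho_x}}{\lam - \overline{\lam_k}}$ (note $\lam\rho \in L^2 \cap L^\infty$ for $\rho \in H^{2,2}$) plus a purely algebraic finite sum; $\calK_{10}$ evaluates a $C^-$-composition of $\rho_x$ kernels at the points $\overline{\lam_i}$, so it is bounded from $L^2(\R)$ to $\C^N$ by $\norm[H^{2,2}]{\rho}^2$ after controlling the pointwise values of a Cauchy integral via Sobolev embedding; and $\calK_{11}$ is finite-rank with explicit bounds. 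Collecting these gives \eqref{Kx.est}.

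For \eqref{Kxx.est}, the key observation is that differentiating $\calK_x$ twice in $x$ brings down factors of $\lam$ (from $\rho_x(\lam) = e^{-2i\lam x}\rho(\lam)$, so $\partial_x \rho_x = -2i\lam \rho_x$ and $\partial_x^2 \rho_x = -4\lam^2 \rho_x$) and factors of $\lam_j$ (from $C_{j,x}$). The factors of $\lam_j$ are harmless since the $\lam_j$ run through a compact set; the difficulty is the $\lam^2 \rho_x$ terms. In the blocks that only involve point evaluations at $\overline{\lam_j}$ or finite sums ($\calK_{10}, \calK_{11}$, and the $S'$ part of $\calK_{00}$), the extra $\lam^k$ appears inside a Cauchy integral $C_\R(\lam^2 \rho_x(\lam) h(\lam))$ evaluated at a fixed point off the real axis, which is controlled by $\norm[L^2]{\lam^2 \rho} \norm[L^2]{h} \lesssim \norm[H^{2,0}]{\rho}\norm{h}$ — no loss. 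Likewise in $\calK_{01}$, differentiating the kernel $\frac{\lam\overline{\rho_x}}{\lam-\overline{\lam_k}}$ twice in $x$ produces $\frac{\lam^3 \overline{\rho_x}}{\lam - \overline{\lam_k}}$, but since $|\lam - \overline{\lam_k}| \geq d_\Lambda$ the factor $\frac{\lam^3}{\lam-\overline{\lam_k}} \sim \lam^2$ at infinity and $\lam^2 \rho \in L^2$, so $C^-$ boundedness gives the bound $\lesssim \norm[H^{2,0}]{\rho}$. The genuinely delicate term is $\partial_x^2 S_x$, where $S_x h = -\eps C^-[\lam \overline{\rho_x} C^+(\rho_x h)]$; differentiating twice produces terms like $C^-[\lam^3 \overline{\rho_x} C^+(\rho_x h)]$ and $C^-[\lam \overline{\rho_x} C^+(\lam^2 \rho_x h)]$. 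Here I cannot absorb two factors of $\lam$ into $\rho$ without using a derivative in $h$: I would integrate by parts in the Fourier/Cauchy representation exactly as in Paper I, trading $\lam^2$ acting on $\rho_x(\lam) h(x,\lam)$ for two $x$-derivatives hitting $\rho$ (harmless, $\rho \in H^{2,2}$) plus \emph{one} $\lam$-derivative on $h$ — this is the source of the term $\norm[L^2(\R)]{\widetilde{h}_\lam}$ on the right side of \eqref{Kxx.est}. This is exactly analogous to the reason the second $x$-derivative estimates in Paper I required control of $\nu_\lam$.

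In more detail, for $\partial_x^2 S_x$ I would write $\rho_x(\lam) = e^{-2i\lam x}\rho(\lam)$ and note $\partial_x^2[\rho_x(\lam) h(x,\lam)]$ generates, among benign terms, the combination $e^{-2i\lam x}[(\partial_x^2 - 4i\lam\partial_x - 4\lam^2)(\rho h)]$; the $-4\lam^2$ piece is the problem. Since $\lam^2 e^{-2i\lam x}(\rho h) = -\tfrac14 \partial_x^2[e^{-2i\lam x}](\rho h)$, one integration by parts in $x$ inside the inner Cauchy transform $C^+$ (which does not act on $x$) and then estimating, or alternatively working on the Fourier side where $e^{-2i\lam x}$ becomes translation, converts the $\lam^2$ into at most one $\lam$-derivative on $h$ (plus derivatives on $\rho$, which $\rho \in H^{2,2}$ absorbs, and boundary terms controlled by $\norm[H^{2,2}]{\rho}\norm{h}_{X^\sharp}$). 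The outer $C^-$ and inner $C^+$ are bounded on $L^2(\R)$ by \eqref{Cpm}, and $\lam\overline{\rho} \in L^2 \cap L^\infty$ handles the outer multiplier and any leftover single factor of $\lam$. Assembling all block contributions and using that the discrete quantities lie in compact sets yields \eqref{Kxx.est}. I expect the only real obstacle is the bookkeeping in $\partial_x^2 S_x$ — making precise which terms need the $\widetilde{h}_\lam$ correction and which are absorbed by $\norm[H^{2,2}]{\rho}^2\norm{h}$ — but since the identical mechanism was carried out for the operator $S$ in Paper I, Lemma 6.13, it should reduce to citing and lightly adapting that computation, together with the elementary estimates for the finite-rank blocks.
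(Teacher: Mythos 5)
Your proposal is correct and takes essentially the paper's route: block-by-block bounds read off from \eqref{K00}--\eqref{K11}, the Paper I estimates for the $S_x$ part, elementary bounds for the finite-rank pieces, and, for the second $x$-derivative, the Fourier-side representation in which $e^{-2i\lam x}$ acts by translation of $\widehat{\rho}$ so that Young's inequality costs $\norm[L^1]{\widehat{h}} \lesssim \norm[L^2]{h} + \norm[L^2]{\widetilde{h}_\lam}$ --- precisely the paper's displayed computation for $C^+\bigl[h\rho\,e^{-2ix(\cdot)}\bigr]_{xx}$ and the source of the $\norm[L^2(\R)]{\widetilde{h}_\lam}$ term in \eqref{Kxx.est}. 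The one bookkeeping slip is your claim that $\calK_{10}$ incurs no loss: its composition term $C_\R\bigl[(\dotarg)\overline{\rho_x}\,C^+(\rho_x v)\bigr](\overline{\lam_i})$ meets the same obstruction as $\partial_x^2 S_x$ when both derivatives fall on the inner $\rho_x$, since $\norm[L^2]{\lam^2\rho\,v}$ cannot be bounded by $\norm[L^2]{v}$ ($\lam^2\rho\notin L^\infty$ for general $\rho\in H^{2,2}$, only $\langle\lam\rangle^{3/2}\rho\in L^\infty$), so the same $\norm[L^1]{\widehat{v}}$ device must be applied there as well, which the allowed $\norm[L^2]{\widetilde{h}_\lam}$ term in the stated estimate accommodates.
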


\begin{proof}
In Paper I, Lemma 6.7, we proved
that $S_x$, $\left(S_x\right)_x$, and $\left(S_x\right)_{xx}$ are all bounded operators on $L^2(\R)$. The boundedness of $S'$ and its first two derivatives in $x$, uniform in $x \in [a,\infty)$ is a 
trivial consequence of the formulas, showing that $\calK_{00}$ is twice-differentiable in $x$ with uniform estimates. 

The remaining operators are of finite rank. The differentiability of $\calK_{11}$ and $\calK_{01}$ is an immediate consequence of the formulas. Finally, to treat $\calK_{01}$ it suffices to study mapping properties of Cauchy projection $C^+$.  For fixed $x$ and some $h\in L^2(\mathbb{R})$, we have
\begin{align}
\label{cauchy+}
C_{+}
	\left[ h(\dotarg)\rho(\cdot) e^{-2ix(\cdot)}\right](s)
=\frac{1}{\pi}
	\int_{0}^{\infty}
		\left[\widehat{h}(\dotarg)*\widehat{\rho}(\dotarg+x)\right](\xi)
		e^{2i\xi s} \, d\xi
\end{align}
Computing the first and second derivatives in $x$, we get
\begin{align}
\label{cauchy+_x}
C_{+}
	\left[ 
		h(\dotarg)\rho(\cdot) e^{-2ix(\cdot)}
	\right]_{x}(s)
&=	\frac{1}{\pi}
			\int_{0}^{\infty}
				\left[\widehat{h}(\dotarg)*\widehat{\rho}'(\dotarg+x)\right](\xi)
			e^{2i\xi s} \, d\xi	\\
\label{cauchy+_xx}
C_{+}
	\left[ 
		h(\dotarg)\rho(\cdot) e^{-2ix(\cdot)}
	\right]_{xx}(s)
&=	\frac{1}{\pi}
		\int_{0}^{\infty}
			\left[
				\widehat{h}(\dotarg)*\widehat{\rho}''(\dotarg+x)
			\right](\xi)
			e^{2i\xi s} \, d\xi
\end{align}
Using Plancherel's identity we have 
$$
\left\Vert 
	C_{+}
		\left[ 
			h(\dotarg)\rho(\cdot) e^{-2ix(\cdot)}
		\right]_{x}     
\right\Vert_{L^2}
\lesssim 
\Vert	\widehat{h}\Vert_{L^2} 
\Vert\widehat{\rho}'\Vert_{L^1}
$$
and
$$
\left\Vert 
	C_{+}
		\left[ 
			h(\dotarg)\rho(\cdot) e^{-2ix(\cdot)}
		\right]_{xx}     
\right\Vert_{L^2}
\lesssim 
	\Vert
		\widehat{h}
	\Vert_{L^1} 
	\Vert\widehat{\rho}''\Vert_{L^2}.
$$
The estimate \eqref{Kx.est} is now easily deduced.
Since
$$  
\Vert\widehat{h}\Vert_{L^1}
\leq 
\norm[L^2]{\langle\xi\rangle \widehat{h}}
\leq 
\left\Vert \frac{\dee h}{\dee \lambda}\right\Vert_{L^2}+
\Vert {h}\Vert_{L^2}
$$
the estimate \eqref{Kxx.est} now follows.
\end{proof}

\begin{remark}
Since all estimates in the proof of Proposition \ref{lemma:K-estimates} are bilinear in the scattering data $\left(	\rho,\{ \lam_k, C_k \}\right)$, it follows that 
$\left( \rho,\{ \lam_k, C_k \}\right) \mapsto \calK_x$ and 
$\left( \rho,\{ \lam_k, C_k \}\right)\mapsto\calK_{xx}$ 
are locally Lipschitz maps from 
$H^{2,2}(\R) \times (\C^{+} \times \C^\times)^N$ to the bounded operators on $L^2_\lambda\oplus \mathbb{C}^n $.
\end{remark}

\begin{proof}[Proof of Proposition \ref{prop:nu.x}]
Differentiating the integral equation \eqref{nusharp.int} we see that
\begin{align*}
\nu_x^\sharp		&= (f)_x + ({\calK})_x \nu^\sharp + {\calK} (\nu_x^\sharp)	\\
\nu_{xx}^\sharp 	&= (f)_{xx} + ({\calK})_{xx} \nu^\sharp + ({\calK})_x (\nu^\sharp)_x + {\calK} (\nu^\sharp_{xx})
\end{align*}
so that we can conclude from  that $\nu_x^\sharp$, $\nu_{xx}^\sharp \in L^2([a,\infty),X^\sharp)$ provided we show that
\begin{align*}
h_1	&=	(f)_x + ({\calK})_x\nu^\sharp\\
h_2	&=	(f)_{xx} + ({\calK})_{xx}\nu^\sharp + ({\calK})_x (\nu^\sharp)_x + {\calK}(\nu_{xx}^\sharp)
\end{align*}
belong to $L^2([a,\infty),X^\sharp)$.
It follows from \eqref{f2.1}, \eqref{f2.2}, and \eqref{f4.x} that 
$f_x \in L^2([a,\infty),X^\sharp)$, while $(\calK)_x \nu^\sharp \in L^2([a,\infty),X^\sharp)$ by \eqref{Kx.est}. Hence 
$h_1 \in L^2([a,\infty),X^\sharp)$, so that $(\nu^\sharp)_x \in L^2([a,\infty),X^\sharp)$. 

To see that $h_2 \in L^2([a,\infty),X^\sharp)$, we use \eqref{f2.1}, \eqref{f2.2} and \eqref{f4.xx} to conclude 
that $f_{xx} \in L^2([a,\infty),X^\sharp)$; \eqref{Kxx.est}
and the fact that $\nu^\sharp$ and ${\nu_0}_\lam$ both belong to $L^2([a,\infty),X^\sharp)$ to show that $(\calK)_{xx}(\nu^\sharp)$ belongs to $L^2([a,\infty),X^\sharp)$; and \eqref{Kx.est} and our previous result to show that 
$(\calK)_x \nu_x^\sharp \in L^2([a,\infty),X^\sharp)$. 
Hence $h_2 \in L^2([a,\infty),X^\sharp)$ and so 
$\nu^\sharp_{xx} \in  \in L^2([a,\infty),X^\sharp)$.
\end{proof}
%
%
%
%
%
 
\section{Deformation to a mixed \texorpdfstring{$\bar{\partial}$}{DBAR}-Riemann-Hilbert Problem}
\label{sec:deform}

This section is devoted to the two first transformations in the reduction of the original RHP \ref{RHP2} to a model one that can be solved explicitly
and provides the precise behavior of the solution of the DNLS equation for long time up, to small terms of order $O(|t|^{-3/4})$.
We present the analysis for $t>0$ and $t<0$ simultaneously by introducing the parameter $\eta= \sgn(t)$.

The first step  is the conjugation of the solution of RHP \ref{RHP2} by a scalar function denoted $\delta$ defined in \eqref{T}, which is itself solution of a scalar
RHP (Section \ref{sec:conj}. This operation is standard in this type of problems and its effect is described in details in \cite{DZ03}, see also \cite{BJM16} and Paper 2.
The second step (Section \ref{sec:extensions}) is the deformation of contours from the real axis to the contour $\Sigma^{(2)}$ shown in Figure \ref{fig:n2def}. 
Our presentation follows 
Paper 2 with the addition of the  treatment of the discrete data associated to the  residue conditions \cite{BJM16}.

%
%

\subsection{Conjugation}
\label{sec:conj}
The long-time asymptotic analysis of RHP~\ref{RHP2} is determined by the growth and decay of the exponential function $e^{2it\theta}$ 
appearing in both the jump relation (Problem \ref{RHP2}(iii))  and the residue conditions (Problem \ref{RHP2}(iv)).
Let 
\begin{equation}\label{parameters}
	\xi = -\frac{x}{4t} 
		\qquad \qquad
	\sgnt = \sgn t
\end{equation}
From \eqref{phase.lambda} it's clear that for $|t| \gg 1$, $|e^{2it \theta}| \ll 1$ whenever $\sgnt \Re(\lambda - \xi) < -c < 0$ and $|e^{2it \theta}| \gg 1$ when $\sgnt \Re(\lambda - \xi) > c > 0$. The RHP~\ref{RHP2} is derived from the scattering operator in such a way that it has identity asymptotics as $x \to +\infty$ with $t$ fixed. Our interest is in studying the behavior of solutions when $|t| \to \infty$ with $x/t$ fixed in some interval. It is necessary to renormalize the RHP 
\newtext{so}
that it is well behaved as $t \to \infty$ in the sector in interest. 

\begin{figure}[htb]
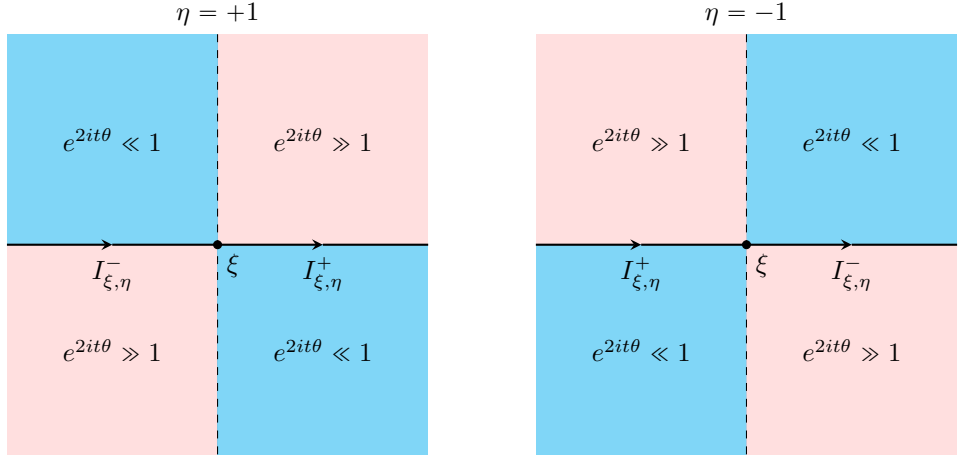

\centering
\hspace*{\stretch{1}}
\FigPhaseA
\hspace*{\stretch{1}}
\FigPhaseB
\hspace*{\stretch{1}}
\caption{The regions of growth and decay of the exponential factor $e^{2it\theta}$ in the $\lambda$-plane for either sign of $\sgnt = \sgn t$. 
\label{fig:theta signs}
}
\end{figure}

Define the real intervals 
\begin{equation}\label{Tint}
	\begin{aligned}
	\negint &= \{  \lambda \in \C \,:\, \Im \lambda = 0, \ -\infty < \sgnt \Re \lambda \leq \sgnt \xi \}, \\
	\posint &= \{  \lambda \in \C \,:\, \Im \lambda = 0, \  \sgnt \xi < \sgnt \Re \lambda \leq \infty  \}
	\end{aligned}
\end{equation}
oriented left-to-right. 
Note that if the sign of $t$, and thus $\sgnt$, is changed with $\xi$ held fixed, the effect is simply to exchange intervals 
$I_{\xi,\eta}^\pm$.
See Figure~\ref{fig:theta signs}.

{\color{blue} Let}
\begin{equation}\label{T}
	\begin{gathered}
	\delta(\lambda) = \delta(\lambda, \xi, \sgnt)  := 
	\exp \lp i \int_{\negint}  \frac{\kappa(z) }{z-\lambda} dz \rp,
	\\
	\kappa(z) = -\frac{1}{2\pi} \log(1 -\eps z |\rho(z)|^2).
	\end{gathered}
\end{equation}

\begin{lemma}\label{lem:T}
The function $\delta(\lambda)$ defined by \eqref{T} has the following properties:
\begin{enumerate}[i.]
	\item $\delta$ is meromorphic in $\C \setminus \negint$. 
	\item For $\lambda \in \C \setminus \negint$, $ \delta(\lambda) \bar{\delta(\bar{\lambda)}} = 1$,
	moreover,
	\[
		e^{-\| \kappa \|_\infty/2} \leq 
		\left| \delta(\lam) \right|
		\leq e^{\| \kappa \|_\infty/2}
	\]
	\item For $\lambda \in \negint$, {  its}  boundary values  $\delta_\pm$ {  as $\lambda$ approaches the real axis from above and below}, satisfy 
	\begin{equation}\label{Tjump}
		\delta_+(\lambda) / \delta_-(\lambda) = 1 - \eps \lambda |\rho(\lambda)|^2, 
		\quad \lambda \in \negint.
	\end{equation}
	
	\item As $ |\lambda| \to \infty$ with $|\arg(\eta \lambda)| \neq \pi$, 
	\begin{equation}\label{Texpand}
		{  \delta(\lambda)} = 1 - 
		\frac{1}{2\pi i \lam} \int_{\negint} \log( 1 -\eps z |\rho(z)|^2 ) dz 
		+ \bigo{ \lambda^{-2} }.
	\end{equation}
	\item As $\lambda \to \xi$ along any ray $\xi + e^{i \phi} \R_+$ with 
	$| \arg (\sgnt(\lambda-\xi) )| < \pi$
	\begin{gather} \label{Tbound}
		\left| \delta(\lambda, \xi,\sgnt) - \delta_0(\xi,\sgnt) (\sgnt(\lambda-\xi))^{i \sgnt \kappa(\xi)} \right| 
		\lesssim_{\rho,\phi}  |\lambda - \xi | \log |\lambda - \xi|.
	\intertext{The implied constant depends on $\rho$ through its $H^{2,2}(\R)$-norm and is independent of $\xi$. Here $\delta_0(\xi, \sgnt) = \exp(i\beta(\xi,\xi)$ is the complex unit}
		\nonumber
		\label{delta0 arg}
		\beta(z,\xi) = - \sgnt \kappa(\xi) \log(\sgnt(z-\xi+1))
		+ \int_{\negint} \frac{ \kappa(s) - \chi(s) \kappa(\xi)}{s-z} ds
		,
	\end{gather}
	and $\chi(s)$ is the characteristic function of the interval $\sgnt \xi - 1 < \sgnt s < \sgnt \xi$. 
In all of the above formulas, we choose the principal branch of power and logarithm functions.
\end{enumerate}
\end{lemma}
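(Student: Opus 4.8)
The plan is to establish the five properties of $\delta$ directly from the integral representation \eqref{T}, treating each in turn; most are standard consequences of the theory of the Cauchy transform on a finite interval, and the only genuinely delicate point is the local behavior near the stationary phase point $\xi$ in part (v).

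First I would record that $\kappa$ is a well-defined real-valued function in $H^{1,1}$-type spaces: by the constraint \eqref{rho.con} (or more precisely $1-\eps z|\rho(z)|^2 \geq c > 0$ for $\rho \in Y$ or $H^{2,2}$), the argument of the logarithm stays bounded away from $0$, so $\kappa$ is bounded, continuous, and decays at infinity, with $\|\kappa\|_\infty \lesssim \|\rho\|_{H^{2,2}}$. Then \emph{(i)} is immediate: $\delta$ is the exponential of a Cauchy-type integral over the finite-or-semi-infinite interval $\negint$, hence analytic off $\negint$ (away from the single endpoint $\xi$ and, if $\negint$ is a half-line, with controlled behavior at infinity — the branch issue at $\arg(\eta\lambda) = \pi$). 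For \emph{(ii)}, the Schwarz-symmetry $\delta(\lambda)\overline{\delta(\bar\lambda)} = 1$ follows because $\kappa$ is real on $\negint$ and $\overline{(z-\bar\lambda)^{-1}} = (z-\lambda)^{-1}$ for real $z$; the two-sided bound on $|\delta(\lambda)|$ comes from writing $\log|\delta(\lambda)| = -\mathrm{Im}\int_{\negint}\frac{\kappa(z)}{z-\lambda}\,dz = \int_{\negint}\kappa(z)\,\mathrm{Im}\frac{1}{z-\lambda}\,dz$, recognizing the Poisson-kernel-like integrand, and noting $\int \mathrm{Im}\frac{1}{z-\lambda}\,dz$ over a half-line is bounded in modulus by $\pi$ (it is an arctangent difference), giving $|\log|\delta|| \leq \|\kappa\|_\infty \cdot \tfrac12$ as claimed. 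For \emph{(iii)}, the jump relation is the Plemelj formula: $\delta_+/\delta_- = \exp\big(i(C^+ - C^-)(2\pi\kappa)\big)$ evaluated via \eqref{Cpm}, which gives $\exp(2\pi i \kappa(\lambda)) = \exp(-\log(1-\eps\lambda|\rho(\lambda)|^2))$... — I would be careful with the constant and sign conventions in \eqref{T} so this comes out to $1 - \eps\lambda|\rho(\lambda)|^2$ exactly. For \emph{(iv)}, the large-$\lambda$ expansion is the Laurent expansion of the Cauchy integral: $\frac{1}{z-\lambda} = -\frac1\lambda - \frac{z}{\lambda^2} - \cdots$, so $i\int_{\negint}\frac{\kappa(z)}{z-\lambda}\,dz = -\frac{i}{\lambda}\int_{\negint}\kappa(z)\,dz + O(\lambda^{-2})$, and exponentiating gives $\delta = 1 - \frac{i}{\lambda}\int\kappa\,dz + O(\lambda^{-2})$; substituting $\kappa = -\frac{1}{2\pi}\log(1-\eps z|\rho|^2)$ produces exactly \eqref{Texpand}. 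One must check uniformity in $\xi$ and the restriction $|\arg(\eta\lambda)| \neq \pi$, which is where the endpoint of the half-line $\negint$ could interfere.

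The main obstacle is \emph{(v)}, the local expansion near $\xi$. The idea is to subtract off the singular part: write
\begin{equation*}
	\delta(\lambda) = \exp\left( i\int_{\negint}\frac{\kappa(z)-\chi(z)\kappa(\xi)}{z-\lambda}\,dz \right) \cdot \exp\left( i\kappa(\xi)\int_{\negint}\frac{\chi(z)}{z-\lambda}\,dz \right),
\end{equation*}
where $\chi$ is the characteristic function of the unit-length interval adjacent to $\xi$ inside $\negint$. The second factor is an explicit elementary integral: $\int \frac{\chi(z)}{z-\lambda}\,dz = \log(\xi - \lambda) - \log(\xi - 1 - \lambda)$ (up to orientation/branch bookkeeping with $\sgnt$), which reproduces the power $(\sgnt(\lambda-\xi))^{i\sgnt\kappa(\xi)}$ together with a correction that is analytic and $\lesssim|\lambda-\xi|$ near $\xi$. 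The first factor has an integrand whose numerator $\kappa(z) - \chi(z)\kappa(\xi)$ vanishes like $|z-\xi|^{1/2}$-or-better near $\xi$ because $\kappa \in H^1 \hookrightarrow C^{1/2}$ — in fact one needs the Hölder-$\tfrac12$ modulus of continuity of $\kappa$, which follows from $\rho \in H^{2,2}$ — so this factor extends continuously to $\lambda = \xi$ with value $\exp(i\beta(\xi,\xi))$ and, by a standard estimate on Cauchy integrals of Hölder functions (the difference quotient argument), satisfies $|\exp(i\int\cdots) - \delta_0| \lesssim |\lambda-\xi|\log|\lambda-\xi|$. Combining the two factors via $|ab - a_0 b_0| \leq |a||b-b_0| + |b_0||a-a_0|$ and using the uniform bounds from \emph{(ii)} yields \eqref{Tbound}. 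I would present the definition of $\beta(z,\xi)$ in \eqref{delta0 arg} as precisely the logarithm of the product of the regularized integral and the $\log(\sgnt(z-\xi+1))$ term, and check that the implied constant depends on $\rho$ only through $\|\rho\|_{H^{2,2}}$ and not on $\xi$ — the key point being that the $C^{1/2}$ seminorm of $\kappa$ is controlled uniformly and the regularization interval has fixed length $1$. Throughout, one fixes the principal branch, and the bookkeeping of which side of $\negint$ one approaches (the factor $\sgnt$) must be tracked consistently with the orientation chosen after \eqref{Tint}.
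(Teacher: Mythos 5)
Your treatment of (i)–(iv) follows the same route the paper takes (the paper simply records these as elementary consequences of the definition \eqref{T} and the Sokhotski--Plemelj formula, and for (iv) uses the same geometric expansion of $(z-\lambda)^{-1}$ together with $\|\kappa\|_{L^1}\lesssim\|\rho\|_{H^{2,2}}$). Two pieces of bookkeeping you flagged should be settled as follows: for (iii), since $i\int_{\negint}\frac{\kappa(z)}{z-\lam}\,dz=-2\pi (C\kappa)(\lam)$, Plemelj gives $\delta_+/\delta_-=e^{-2\pi\kappa(\lam)}=1-\eps\lam|\rho(\lam)|^2$ directly (your intermediate $\exp(2\pi i\kappa)$ carries a stray $i$); for (ii), the Poisson-kernel argument yields $\bigl|\log|\delta(\lam)|\bigr|\le\pi\|\kappa\|_\infty=\tfrac12\|\log(1-\eps(\cdot)|\rho(\cdot)|^2)\|_\infty$, so your ``bounded by $\pi$, hence $\|\kappa\|_\infty/2$'' is not arithmetic, although it reproduces the inequality in the form the lemma states it.

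The genuine gap is in (v). Your decomposition of $\delta$ into $(\sgnt(\lam-\xi))^{i\sgnt\kappa(\xi)}e^{i\beta(\lam,\xi)}$ is exactly the right one (it is the computation encoded in \eqref{delta0 arg}; the paper itself only cites Appendix A of \cite{LPS16} for this part), but the regularity you invoke is too weak for the claimed rate. If $\kappa$ is only H\"older-$\tfrac12$, as $H^1\hookrightarrow C^{1/2}$ gives, the Plemelj--Privalov-type estimate for the regularized Cauchy integral yields $|\beta(\lam,\xi)-\beta(\xi,\xi)|=\bigO{|\lam-\xi|^{1/2}}$, not $\bigO{|\lam-\xi|\log|\lam-\xi|}$; the $t\log(1/t)$ modulus is precisely the borderline \emph{Lipschitz} case. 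Concretely, $\beta(\lam,\xi)-\beta(\xi,\xi)=\bigO{|\lam-\xi|}+(\lam-\xi)\int_{\negint}\frac{\kappa(s)-\chi(s)\kappa(\xi)}{(s-\lam)(s-\xi)}\,ds$, and on the unit interval where $\chi\equiv1$ one needs $|\kappa(s)-\kappa(\xi)|\lesssim|s-\xi|$ to bound the integral by $\int_0^1\frac{du}{\max(u,|\lam-\xi|)}\lesssim\log|\lam-\xi|^{-1}$; with only a $|s-\xi|^{1/2}$ modulus that integral is of size $|\lam-\xi|^{-1/2}$ and the final bound degrades to $|\lam-\xi|^{1/2}$, weaker than \eqref{Tbound}. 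The fix is available but must be stated: $\kappa$ is Lipschitz with constant controlled by $\|\rho\|_{H^{2,2}}$, uniformly in $\xi$, since $\kappa'=\frac{\eps}{2\pi}\,\frac{(z|\rho(z)|^2)'}{1-\eps z|\rho(z)|^2}$ with $(z|\rho|^2)'=|\rho|^2+(z\overline{\rho})\rho'+(z\rho)\overline{\rho'}$ bounded because both $z\rho$ and $\rho'$ lie in $H^1(\R)\hookrightarrow L^\infty(\R)$ when $\rho\in H^{2,2}(\R)$, and the denominator is bounded below by $c>0$. With that Lipschitz bound (in place of the $C^{1/2}$ seminorm you cite) your splitting argument closes and gives \eqref{Tbound}, the ray-angle $\phi$ entering only through $|s-\lam|\gtrsim c_\phi\max(|s-\xi|,|\lam-\xi|)$.
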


\begin{proof}
Parts $i$.--$iv$. are elementary consequences of the definition \eqref{T} and the Sokhotski-Plemelj formula. 
For part $iv$. one geometrically expands 
the factor $(z-\lambda)^{-1}$ for large $\lambda$, and uses the fact that $\| \kappa \|_{L^1(\R)} \lesssim \| \rho \|_{H^{2,2}(\R)}$ to bound the remainder in the integral term for $\lambda$ bounded away from the contour of integration. 
The proof of part $v.$ can be found in Appendix A of \cite{LPS16}.
\end{proof}
We now define a new unknown function $\nk{1}$ using our partial transmission coefficient 
\begin{equation}\label{n1}
\nk{1}(\lambda) = \nn(\lambda) \delta(\lambda)^{-\sigma_3}.
\end{equation}
{  We claim that $\nk{1}$ satisfies the following RHP.}

\begin{RHP}\label{rhp.n1}
Find a row vector\newtext{-}valued function $\nk{1}: \C \setminus (\R  \cup \poles) \to \C^2$ with the following properties
\begin{enumerate}[1.]
	\item $\nk{1}(\lambda) = (1,0) + \bigo{\lambda^{-1}}$ as $\lambda \to \infty$. 
	\item For $\lambda \in \R$, the boundary values
	$\nk[\pm]{1}(\lambda)$ satisfy the jump relation 
	$\nk[+]{1}(\lambda) = \nk[-]{1}(\lambda) \vk{1}(\lambda)$ where
	\begin{equation}\label{V1}
		\vk{1} (\lambda) = \begin{cases}
			\triu{ \rho(\lambda) \delta(\lambda)^{2} e^{2it \theta} }
			\tril{ -\eps \lambda \bar{\rho}(\lambda) \delta(\lambda)^{-2} e^{-2it \theta} }	
			& z \in \posint \\[1.5em]
			\tril{ \frac{ -\eps \lambda \bar{\rho}(\lambda) \delta_-(\lambda)^{-2}}
			{1 -\eps \lambda |\rho(\lambda)|^2} e^{-2it \theta} }
			\triu{ \frac{ \rho(\lambda) \delta_+(\lambda)^{2}}
			{1 -\eps \lambda |\rho(\lambda)|^2} e^{2it \theta} }
			& z \in \negint
		\end{cases}
	\end{equation}
	\item $\nk{1}(\lambda)$ has simple poles at each point in $\poles$, for each $\lambda_k \in \poles^+$, 
		\begin{subequations}\label{n1 residue}
			\begin{gather} 
				\begin{aligned}
				\res_{\lambda = \lambda_k} \nk{1}(\lambda) 
				&= \lim_{\lambda \to \lambda_k} \nk{1}(\lambda) \vk{1}(\lambda_k) \\
				\res_{\lambda = \bar{\lambda_k}} \nk{1}(\lambda) 
				&= \lim_{\lambda \to \bar{\lambda_k}} \nk{1}(\lambda) \vk{1}(\bar{\lambda_k})
				\end{aligned}
			\shortintertext{where}
		\label{n1 residue matrices}		
				\begin{aligned}
					\vk{1}(\lambda_k) &= 
						\tril[0]{ \lambda_k C_k \delta(\lambda_k)^{-2} e^{-2it \theta} } \\
					\vk{1}(\bar{\lambda}_k) &= 					
                    	\triu[0]{ \eps \bar{C}_k \delta(\bar{\lambda}_k)^{2} e^{2it \theta} } 
				\end{aligned}
			\end{gather}
		\end{subequations}		
\end{enumerate}
\end{RHP}

\begin{proposition}\label{prop:M1}
Suppose that $\nn$ satisfies RHP~\ref{RHP2}, then the function $\nk{1}$ defined by \eqref{n1} satisfies the RHP~\ref{rhp.n1}.
\end{proposition}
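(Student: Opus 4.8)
The plan is to verify directly that the substitution $\nk{1}(\lambda) = \nn(\lambda)\delta(\lambda)^{-\sigma_3}$ transforms each defining property of RHP~\ref{RHP2} into the corresponding property of RHP~\ref{rhp.n1}. First I would record the basic properties of $\delta$ from Lemma~\ref{lem:T}: it is meromorphic (in fact analytic, for $\rho$ with no spectral singularities, so $\delta$ never vanishes) on $\C\setminus\negint$, it is bounded above and below there, it has the jump $\delta_+/\delta_-=1-\eps\lambda|\rho(\lambda)|^2$ on $\negint$, and $\delta(\lambda)\to 1$ as $|\lambda|\to\infty$. In particular $\lambda\mapsto\delta(\lambda)^{-\sigma_3}$ is analytic and invertible off $\negint$ with determinant $1$, so $\nk{1}$ is a well-defined row vector on $\C\setminus(\R\cup\poles)$ with no new singularities introduced and none removed off $\negint$; on $\negint$ the new jump absorbs the jump of $\delta$. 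Since $\delta(\lambda)\to1$ at infinity, $\delta^{-\sigma_3}\to I$ and hence $\nk{1}(\lambda)=(1,0)+\bigo{\lambda^{-1}}$, which is property~1.

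Next I would check the jump relation on $\R$. On $\posint$, $\delta$ is analytic across the contour so $\delta_+^{\sigma_3}=\delta_-^{\sigma_3}=\delta^{\sigma_3}$, and from $n_+=n_-e^{it\theta\,\ad\sigma_3}J(\lambda)$ we get $\nk[+]{1}=\nk[-]{1}\,\delta^{\sigma_3}e^{it\theta\,\ad\sigma_3}J(\lambda)\,\delta^{-\sigma_3}$. Using the factorization of $J$ (see Problem~\ref{RHP2}(iii) and Remark~\ref{rem:RHP2}, or rather the standard UL/LU factorization) $J=\tril{-\eps\lambda\bar\rho}\triu{\rho}$ together with the facts that $\delta^{\sigma_3}$ conjugates an upper-triangular unipotent matrix with entry $\rho$ into one with entry $\delta^2\rho$ (and a lower-triangular one with entry $-\eps\lambda\bar\rho$ into one with entry $\delta^{-2}(-\eps\lambda\bar\rho)$), and that the $e^{it\theta\,\ad\sigma_3}$ acts to produce $e^{2it\theta}$ (resp.\ $e^{-2it\theta}$) on the upper (resp.\ lower) entry, I recover exactly $\vk{1}(\lambda)$ on $\posint$. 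On $\negint$ I do the same computation but now must use the opposite factorization of $J$, $J=\triu{\frac{\rho}{1-\eps\lambda|\rho|^2}}\,\diag{1-\eps\lambda|\rho|^2}{(1-\eps\lambda|\rho|^2)^{-1}}\,\tril{\frac{-\eps\lambda\bar\rho}{1-\eps\lambda|\rho|^2}}$; conjugating by $\delta_\pm^{\sigma_3}$ and using $\delta_+/\delta_-=1-\eps\lambda|\rho|^2$ (so that the diagonal middle factor is exactly cancelled: $\delta_-^{\sigma_3}\,\diag{1-\eps\lambda|\rho|^2}{\cdot}\,\delta_+^{-\sigma_3}=I$) yields the two-factor form of $\vk{1}$ on $\negint$, where one factor carries $\delta_+^2$ and the other $\delta_-^{-2}$.

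Finally I would verify the residue conditions at the discrete points, which all lie off $\negint$ so that $\delta$ is analytic and nonvanishing near each of them. At $\lambda_k\in\poles^+$, writing $\Res_{z=\lambda_k}\nn(z)=\lim_{z\to\lambda_k}\nn(z)e^{it\theta\,\ad\sigma_3}J(\lambda_k)$ with $J(\lambda_k)=\lowmat{\lambda_k C_k}$, and multiplying on the right by the constant matrix $\delta(\lambda_k)^{-\sigma_3}$ (constant because $\delta$ is analytic at $\lambda_k$), one gets $\Res_{z=\lambda_k}\nk{1}(z)=\lim_{z\to\lambda_k}\nk{1}(z)\,\delta(\lambda_k)^{\sigma_3}e^{it\theta\,\ad\sigma_3}\lowmat{\lambda_k C_k}\delta(\lambda_k)^{-\sigma_3}=\lim_{z\to\lambda_k}\nk{1}(z)\,\vk{1}(\lambda_k)$ with $\vk{1}(\lambda_k)=\tril[0]{\lambda_k C_k\delta(\lambda_k)^{-2}e^{-2it\theta}}$; the conjugation by $\delta^{\sigma_3}$ of a strictly-lower-triangular matrix multiplies its $(2,1)$ entry by $\delta^{-2}$, and $e^{it\theta\,\ad\sigma_3}$ multiplies it by $e^{-2it\theta}$. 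The computation at $\bar\lambda_k$ is entirely symmetric, using $J(\bar\lambda_k)=\upmat{\overline{C_k}}$ and giving the $\delta(\bar\lambda_k)^2e^{2it\theta}$ factor. Collecting these checks establishes that $\nk{1}$ satisfies all the conditions of RHP~\ref{rhp.n1}, which is Proposition~\ref{prop:M1}. The only slightly delicate point is bookkeeping the two distinct factorizations of $J$ on the two half-lines and confirming the diagonal middle factor is exactly cancelled by the jump of $\delta$; everything else is routine conjugation algebra.
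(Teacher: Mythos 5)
Your proof is correct and follows essentially the same route as the paper's own argument: conjugate by $\delta^{-\sigma_3}$, use the two standard triangular factorizations of the jump on $\posint$ and $\negint$, cancel the diagonal middle factor against the jump $\delta_+/\delta_-=1-\eps\lam|\rho(\lam)|^2$, and treat the residues by conjugation with the constant matrices $\delta(\lam_k)^{\pm\sigma_3}$, using that $\delta$ is analytic, bounded above and below (hence nonvanishing) away from $\negint$. The one slip is the order of the noncommuting factors in both factorizations: on $\posint$ the correct identity is $J=\striu{\rho}\,\stril{-\eps\lam\overline{\rho}}$ (upper first, not $\stril{-\eps\lam\overline{\rho}}\,\striu{\rho}$, whose $(1,1)$ and $(2,2)$ entries come out in the wrong places), and on $\negint$ the correct factorization is lower--diagonal--upper, $J=\stril{\tfrac{-\eps\lam\overline{\rho}}{1-\eps\lam|\rho|^2}}\,\sdiag{1-\eps\lam|\rho|^2}{(1-\eps\lam|\rho|^2)^{-1}}\,\striu{\tfrac{\rho}{1-\eps\lam|\rho|^2}}$, not upper--diagonal--lower; as literally written your products do not equal $J$. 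With the orders corrected, your bookkeeping (inserting $\delta_-^{\sigma_3}\cdots\delta_+^{-\sigma_3}$ so that the left, lower-triangular factor picks up $\delta_-^{-2}$, the right, upper-triangular factor picks up $\delta_+^{2}$, and the middle diagonal becomes the identity by \eqref{Tjump}) produces exactly \eqref{V1}, and the remaining checks (normalization at infinity from $\delta\to1$, no new poles or zeros introduced off $\negint$, and the residue computation at $\lam_k$ and $\overline{\lam_k}$) coincide with the paper's proof. Incidentally, your jump relation $v^{(1)}=\delta_-^{\sigma_3}\,e^{it\theta\ad\sigma_3}J\,\delta_+^{-\sigma_3}$ is the right one; the displayed relation in the paper's proof carries a sign typo, with $\delta_+^{\sigma_3}$ where $\delta_+^{-\sigma_3}$ is meant.
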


\begin{proof}
{  The fact that}  $\nk{1}$ is analytic in $\C \setminus (\R \cup \poles)$, and approaches $(1,0)$ as $\lambda \to \infty$ follows directly from its definition, Lemma~\ref{lem:T} and the analytic properties of $\nn$ given in RHP~\ref{RHP2}. 
The relation 
$\vk{1}(\lambda) = \delta_-^{\sig}(\lambda) v(\lambda) \delta_+^{\sig}(\lambda)$, the standard factorizations of $v$
\begin{align*}
	v&= \triu{\rho e^{2it \theta} } \tril{-\eps \lambda \rho^* e^{-2it \theta} } \cr
	&=\tril{ \frac{-\eps \lambda \rho^* e^{-2it \theta} }{1-\eps \lambda |\rho|^2} }
	 \diag{1-\eps \lambda |\rho|^2}{(1-\eps \lambda |\rho|^2)^{-1} }
	 \triu{ \frac{\rho e^{2it \theta} }{1-\eps \lambda |\rho|^2} }
\end{align*}

and the jump relation \eqref{Tjump} satisfied by $\delta(z)$ on $\negint$ allow us to write $\vk{1}$ as 
\begin{equation*}
	\vk{1}(\lambda)= \begin{cases}
	\triu{ \rho(z) \delta^2(\lambda) e^{2it \theta} } 
	\tril{-\eps \lambda \rho(\lambda) \delta^{-2}(\lambda)  e^{-2it \theta} }  
	&  \lambda \in \posint  \\
	\tril{ \frac{ -\eps \lambda \bar{\rho}(z) \delta_-^{-2}(\lambda) }{ 1-\eps \lambda \rho \bar{\rho}} e^{-2it \theta} } 
	\triu{ \frac{ \rho(z) \delta_+^2(\lambda) }{1-\eps \lambda \rho \bar{\rho} } e^{2it \theta} }
	& \lambda \in \negint.,
	\end{cases}
\end{equation*}
Concerning the residues, since $\delta(\lambda)$ is analytic near each $\lambda_k$ we have
\[
	\res_{\lam_k} \nk{1} 
	= \lim_{\lam \to \lam_k} \nn(\lam) v(\lam_k) \delta(\lam_k)^{-\sig}
	= \lim_{\lam \to \lam_k} \nk{1}(\lam) \delta(\lam_k)^{\sig} v(\lam_k) \delta(\lam_k)^{-\sig}
\]
which gives the first equation in \eqref{n1 residue matrices}. The residue at $\bar{\lam}_k$ is treated similarly. 

\end{proof}

%
%

\subsection{\texorpdfstring{$\dbar$}{DBAR}-extensions of jump factorization}
\label{sec:extensions}

The next step  is to introduce a transformation which uses the factorization \eqref{V1} to deform the jump matrix $\vk{1}$  replacing it with new jumps along contours in the complex plane which are near the identity. The phase function \eqref{phase.lambda} has a single (quadratic) critical point at $\xi = -x/4t$. Let
\begin{equation}\label{Sigma_k}
	\begin{gathered}
	\Sk{2} = \Sigma_1 \cup \Sigma_2 \cup \Sigma_3 \cup \Sigma_4 \\
	\Sigma_k = \xi + e^{\frac{i\pi}{4}(2+(2k-3)\sgnt) }\, \R_+ , \quad k = 1,2,3,4,
	\end{gathered}
\end{equation}
with each ray oriented with increasing (resp. decreasing) real part for $\sgnt = +1$ (resp. $\sgnt = -1$). The function $e^{2it \theta}$ is exponentially increasing along $\Sigma_1$ and $\Sigma_3$ and decreasing along $\Sigma_2$ and $\Sigma_4$, while the reverse is true of $e^{-2it \theta}$.
Let  $\Omega_k,\, k=1,\dots,6$, denote the six connected components of $\C \setminus \lp \R \bigcup_{k=1}^4 \Sigma_k \rp$, starting with the sector $\Omega_1$ between $\posint$ and $\Sigma_1$ and numbered consecutively continuing counterclockwise (resp. clockwise) if $\sgnt = +1$ (resp. $\sgnt = -1$), see Figure~\ref{fig:n2def}.

In order to deform the contour $\R$ to the contour $\Sk{2}$, we introduce a new unknown $\nk{2}$ obtained from $\nk{1}$ as
\begin{align}
\label{n2 def}
	\nk{2}(\lam) = \nk{1} (\lam)\mathcal{R}^{(2)}(\lam).
\end{align}
The condition that the new unknown $\nk{2}$ have no jump on the real axis determines the boundary value of $\mathcal{R}^{(2)}$ in each of the sectors $\Omega_k$, $k=1,3,4,6$ meeting the real axis through the factorization of $\vk{1}$ in \eqref{V1}. 
These factorizations involve the reflection coefficient $\rho$ which does not extend analytically to the complex plane. 
To extend $\mathcal{R}^{(2)}$ off the real axis, we use the method of 
\cite{BJM16,CJ16,DM08} which introduces non-analytic extensions. 
The new unknown $\nk{2}$ will satisfy  a mixed $\dbar$-RHP.
The only condition on the extension is that we have some mild control on $\dbar \mathcal{R}^{(2)}$ sufficient to ensure that the $\dbar$ contribution to the long-time asymptotics of $q(x,t)$ is negligible. 
This is the content of Lemma~\ref{lem:extensions} below. 
We have considerable freedom in choosing the extension. We use this freedom to ensure that: $1)$ the new jumps on $\Sk{2}$ match a well known model RHP; and $2)$ in a small neighborhood of each pole in $\Lambda$,  $\mathcal{R}^{(2)}(\lam) = \sdiag{1}{1}$ so that the residues are unaffected by the transformation. 
We choose $\mathcal{R}^{(2)}$ as shown in Figure~\ref{fig:n2def}, where the functions $R_1,R_3,R_4,R_6$ satisfy
%
%
%
\newcommand{\temp}{\frac{ -\eps \xi \rho^*(\xi)}{1 - \eps \xi |\rho(\xi)|^2} 
		\delta_0(\xi,\sgnt)^{-2} (\sgnt(\lam-\xi))^{-2i \sgnt \kappa(\xi)} 
		(1-\indicator(z))}
\begin{subequations}\label{R_k}
\begin{align}
	\label{R1}
	R_1(\lam) &= \begin{dcases}
		 \spacing{-\eps \lambda \rho^*(\lam) \delta(\lam)^{-2} }{\temp}
			& \lam \in \posint \\
		-\eps \xi \rho^*(\xi) \delta_0(\xi, \sgnt)^{-2} (\sgnt(\lam-\xi))^{-2i \sgnt \kappa(\xi)}
		(1-\indicator(\lam)) 		
			& \lam \in  \Sigma_1
	\end{dcases} \bigskip \\
	\label{R3}
	R_3(\lam) &= \begin{dcases}
		\spacing{\frac{ \rho(\lam)}{1 - \eps \lambda |\rho(z)|^2} \delta_+(\lam)^{2}}{\temp} 
			& \lam \in \negint \\
		\frac{ \rho(\xi)}{1 - \eps \lam |\rho(\xi)|^2} 
		\delta_0(\xi, \sgnt)^{2} (\sgnt(\lam-\xi))^{2i \sgnt \kappa(\xi)} 
		(1-\indicator(\lam)) 
			& \lam \in  \Sigma_2
	\end{dcases} \bigskip \\
	\label{R4}
	R_4(\lam) &= \begin{dcases}
		\spacing{\frac{ -\eps \lambda \rho^*(\lam)}{1 -\eps \lam |\rho(\lam)|^2} \delta_-(\lam)^{-2} }
		{\temp}
			& \lam \in \negint \\
		\frac{ -\eps \xi \rho^*(\xi)}{1 - \eps \xi |\rho(\xi)|^2} 
		\delta_0(\xi, \sgnt)^{-2} (\sgnt(\lam-\xi))^{-2i \sgnt \kappa(\xi)} (1-\indicator(\lam)) 
			& \lam \in  \Sigma_3
	\end{dcases} \bigskip \\
	\label{R6}
	R_6(\lam) &= \begin{dcases}
		\spacing{\rho(\lam) \delta(\lam)^{2} }{\temp}
			& \lam \in \posint \\
		\rho(\xi) \delta_0(\xi, \sgnt)^{2} (\sgnt(\lam-\xi))^{2i \sgnt\kappa(\xi)}(1-\indicator(\lam)) 
			& \lam \in  \Sigma_4
	\end{dcases} 
\end{align}
\end{subequations}
Here $\indicator$ is a $C_0^\infty(\C,[0,1])$ cutoff function supported on a neighborhood of each point of discrete spectra such that 
\begin{equation}\label{chi prop}
	\indicator(\lam) = \begin{cases}
		1 & \dist(\lam, \poles) < \poledist/3 \\
		0 & \dist(\lam, \poles) > 2\poledist/3
	\end{cases}
\end{equation} 
where $\poledist$, defined by \eqref{distances}, is sufficiently small to ensure that the disks of support intersect neither each other nor the real axis.
\begin{figure}[t]
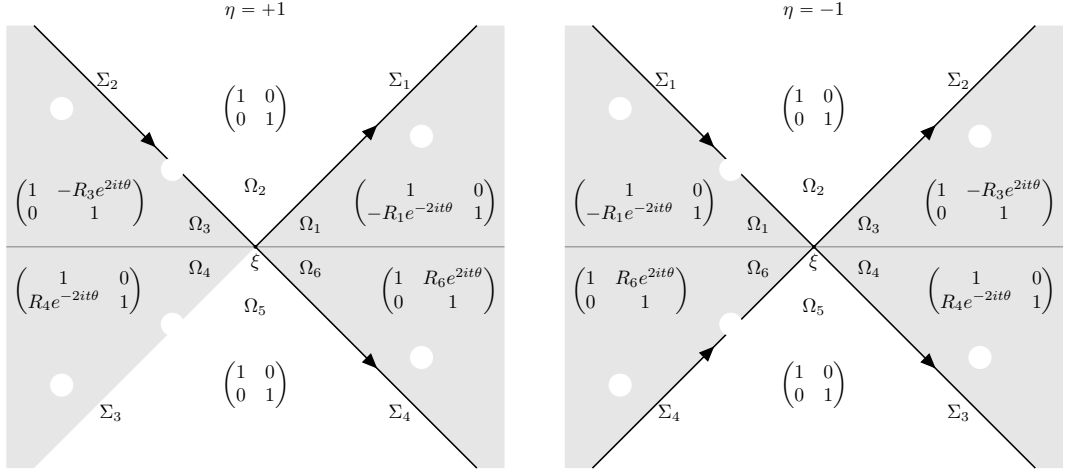

\hspace*{\stretch{1}}
\posDBARcontours
\hspace*{\stretch{1}}
\negDBARcontours
\hspace*{\stretch{1}}
\caption{Depicted here are the contour $\Sk{2} = \bigcup_{k=1}^4 \Sigma_{k}$ and regions $\Omega_k$ $k=1,\dots,6$ defining the transformation $\nk{2} = \nk{1} \mathcal{R}^{(2)}$. 
The labeling of the regions depends on $\sgnt$.
The non-analytic matrix $\mathcal{R}^{(2)}$ is given in each region $\Omega_k$. 
The support of  the $\bar{\partial}$-derivatives, $\Wk{2} = \dbar \mathcal{R}^{(2)}$ is shaded in gray.
\label{fig:n2def}
}
\end{figure}

The following lemma and its proof are almost identical to 
\cite[Proposition 2.1]{DM08}
or 
\cite[Lemma 4.1]{LPS16} .
The Lemma establishes the existence of functions $R_k$ and  estimates that are useful to control the contribution of the solution of the $\dbar$-problem (Section~\ref{sec:dbar}) to the large time behavior of $q(x,t)$  To state it, we introduce the factors  
\begin{align*}
	& p_1(\lam) =   {-\eps \bar{\rho}(\lam)}
	&& p_3(\lam) = \frac{\rho(\lam)}{1-\eps \lam |\rho(\lam)|^2} \\
	& p_6(\lam) = \rho(\lam)
	&& p_4(\lam) =   { \frac{-\eps \bar{\rho}(\lam)}{1-\eps \lam |\rho(\lam)|^2} }
\end{align*}

\begin{lemma}\label{lem:extensions}
Suppose that $\rho \in H^{2,2}(\R)$ and that $c:= \inf_{\lam \in \R} (1 - \eps \lam | \rho(\lam)|^2) > 0$ strictly. Then there exist functions $R_k$ on $\Omega_k$, $k=1,3,4,6$ satisfying \eqref{R_k}, such that
\begin{gather*}
	\left| \dbar R_k \right| \lesssim 
	\begin{cases}
		{ \left| \dbar \indicator(\lam) \right| }+ |   {\lam^{m_k}\,} p_k'(\Re \lam) | + \log |\lam - \xi|^{-1} 
		& z \in \Omega_k, \quad |z-\xi| \leq 1 \\
		{ \left| \dbar \indicator(\lam) \right| }+ |   {\lam^{m_k}\,} p_k'(\Re \lam) | + |\lam - \xi|^{-1} 
		& z \in \Omega_k, \quad |z-\xi| > 1 
	\end{cases} 
\shortintertext{and}
	\dbar R_k(\lambda) = 0, \qquad 
	  { \text{if $\lam = 0$ or  }}
	\dist(\lam, \Lambda) \leq \poledist/3
\end{gather*}
where   {$m_k = 1$ for $k=1,4$ and $m_k=0$ for $k=2,6$ and} the implied constants are uniform in $\xi \in \R$ and $\rho$ in a fixed bounded subset of $H^{2,2}(\R)$ with $1- \eps \lam |\rho(\lam)|^2 \geq c > 0$ for a fixed constant $c$.
\end{lemma}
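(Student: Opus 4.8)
The plan is to construct each $R_k$ explicitly by interpolating between its two prescribed boundary values using the polar-type coordinates $\lam = \xi + u e^{i\phi}$ adapted to the rays $\Sigma_k$, exactly as in \cite[Proposition 2.1]{DM08} and \cite[Lemma 4.1]{LPS16}, and then to paste in the cutoff $\indicator$ to kill the extension near the poles. Concretely, for (say) the sector $\Omega_1$ bounded by $\posint$ and $\Sigma_1$, write $\lam - \xi = u e^{i\phi}$ with $\phi$ ranging over the angular aperture of $\Omega_1$, and set
\[
	R_1(\lam) = \big[ f_1(u) \, g_1(\phi) + (1 - g_1(\phi))\, h_1(u) \big] (1 - \indicator(\lam)),
\]
where $g_1$ is a smooth cutoff in the angular variable equal to $1$ on $\posint$ and $0$ on $\Sigma_1$, the function $f_1(u)$ is $-\eps(\xi+u)\rho^*(\xi+u)\delta(\xi+u)^{-2}$ (the value prescribed on $\posint$, so $R_1$ matches $\vk{1}$ there), and $h_1(u) = -\eps\xi\rho^*(\xi)\delta_0(\xi,\sgnt)^{-2}(\sgnt u)^{-2i\sgnt\kappa(\xi)}$ (the value prescribed on $\Sigma_1$). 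One checks directly that this formula reproduces \eqref{R1}; the analogous constructions for $R_3, R_4, R_6$ use $p_3, p_4, p_6$ respectively and the appropriate boundary data from \eqref{R3}--\eqref{R6}.

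Next I would compute $\dbar R_k$. Since $\dbar = \tfrac12(\dee_u + \tfrac{i}{u}\dee_\phi)$ in these coordinates, applying $\dbar$ to the bracket above and to $(1-\indicator)$ via the Leibniz rule produces three types of terms: (a) a term $\tfrac12(1-\indicator) g_1(\phi) f_1'(u) \cdot e^{i\phi}$ coming from the radial derivative of the "analytic-on-the-real-axis" piece, which after the chain rule is bounded by $|\lam^{m_k} p_k'(\Re\lam)|$ (the $\delta$ factors and $\delta_0$ have $|\cdot| $ bounded above and below by Lemma~\ref{lem:T}(ii), and differentiating them is harmless; the factor $\lam^{m_k}$ accounts for the extra $\lam$ in $p_1, p_4$); (b) a term $\tfrac{i}{2u}(1-\indicator) g_1'(\phi)(f_1(u) - h_1(u))$ coming from the angular derivative, which is controlled using $|f_1(u) - h_1(u)| \lesssim |\lam-\xi| |\log|\lam-\xi||$ for $|\lam - \xi| \le 1$ — this is precisely the content of Lemma~\ref{lem:T}(v), the estimate $|\delta(\lam) - \delta_0(\sgnt(\lam-\xi))^{i\sgnt\kappa}| \lesssim |\lam-\xi|\log|\lam-\xi|$, combined with Lipschitz continuity of $\rho$ (from $\rho \in H^{2,2}(\R) \hookrightarrow C^{1/2}$) so that dividing by $u$ leaves the $\log|\lam-\xi|^{-1}$ bound; for $|\lam-\xi|>1$ the difference is merely bounded and the $1/u$ gives the $|\lam-\xi|^{-1}$ decay; and (c) the term $-[\cdots]\,\dbar\indicator(\lam)$, bounded by $|\dbar\indicator(\lam)|$ since the bracket is bounded. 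Adding these gives the claimed estimate. The last assertion, $\dbar R_k = 0$ when $\lam = 0$ or $\dist(\lam,\Lambda) \le \poledist/3$, follows because on $\dist(\lam,\Lambda)\le \poledist/3$ we have $\indicator \equiv 1$ so $R_k \equiv 0$, and at $\lam = 0$ one uses that $\rho(0)$ enters multiplied by $\lam$ in the relevant places — more precisely $p_1(0)=p_4(0)$ and the construction can be arranged (choosing $f_k, h_k$ to agree to appropriate order at $u$ corresponding to $\lam = 0$, or simply noting the prefactor $\lam$ in $R_1, R_4$) so that the angular-derivative term vanishes there; for $R_3, R_6$ one includes an extra smooth factor vanishing at $\lam = 0$, which is permissible since $\rho(0)$ multiplies $e^{2it\theta}$ whose contribution at $\lam=0$ is irrelevant to leading order.

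The uniformity in $\xi$ and in $\rho$ over bounded subsets of $H^{2,2}(\R)$ with $1 - \eps\lam|\rho(\lam)|^2 \ge c > 0$ then comes from tracking constants: the $\delta$-bounds in Lemma~\ref{lem:T}(ii) depend only on $\|\kappa\|_\infty \lesssim_c \|\rho\|_{H^{2,2}}$; the constant in Lemma~\ref{lem:T}(v) is explicitly stated to depend on $\rho$ only through $\|\rho\|_{H^{2,2}(\R)}$ and to be independent of $\xi$; and the Sobolev embedding constant is absolute. I expect the main obstacle to be bookkeeping the $\lam = 0$ vanishing condition cleanly together with the pole-cutoff, since the two requirements ($R_k \equiv 0$ near $\Lambda$, and $\dbar R_k = 0$ at the origin) must be met simultaneously without spoiling the matching of $R_k$ to the prescribed boundary values on $\posint, \negint, \Sigma_k$ — but this is a local modification away from the critical point $\xi$ (the disks around $\Lambda$ and the origin are bounded away from $\xi$ in the cone $\calS$, at least for $|\xi|$ not too small, and the small-$|\xi|$ case is handled separately in Theorem~\ref{thm:long-time-gauge}(ii)), so it does not interact with the delicate estimate near $\xi$. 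Everything else is a routine adaptation of \cite{DM08,LPS16,BJM16}.
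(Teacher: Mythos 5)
Your overall strategy (an angular interpolation between the two prescribed boundary values, multiplied by $1-\indicator$) is the same as the paper's, but the specific extension you choose breaks the crucial estimate near $\xi$. You extend the real-axis datum \emph{radially}, taking $f_1(u)=-\eps(\xi+u)\rho^*(\xi+u)\delta(\xi+u)^{-2}$ and $h_1(u)$ as functions of $u=|\lam-\xi|$ alone. Then the radial part of $\dbar=\tfrac{e^{i\phi}}{2}\lp\partial_u+\tfrac{i}{u}\partial_\phi\rp$ differentiates $\delta(\xi+u)^{-2}$ and $(\sgnt u)^{-2i\sgnt\kappa(\xi)}$ in $u$, and these derivatives are \emph{not} harmless: differentiating \eqref{T} gives $\tfrac{d}{d\lam}\log\delta(\lam)=i\int_{\negint}\kappa(z)(z-\lam)^{-2}\,dz\sim i\kappa(\xi)/(\lam-\xi)$ near $\xi$ (equivalently, $\delta(\lam)\approx\delta_0(\sgnt(\lam-\xi))^{i\sgnt\kappa(\xi)}$ by Lemma~\ref{lem:T}(v), whose derivative blows up like $|\lam-\xi|^{-1}$), and likewise $|h_1'(u)|\sim 2\kappa(\xi)/u$. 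Hence your $|\dbar R_1|$ contains a term of size $\kappa(\xi)\,|\lam-\xi|^{-1}$ for $|\lam-\xi|\le 1$ with no compensating smallness, which is not bounded by $|\lam\, p_1'(\Re\lam)|+\log|\lam-\xi|^{-1}$; your claim in step (a) that differentiating the $\delta$-factors is harmless because of the modulus bounds in Lemma~\ref{lem:T}(ii) is exactly where the argument fails. This is not cosmetic: an uncontrolled $|\lam-\xi|^{-1}$ singularity at $\xi$ fed into the estimates of Section~\ref{sec:dbar} yields only $\bigo{t^{-1/2}}$ (up to logarithms) for $\int|\Wk{3}|\,dm$ instead of $\bigo{t^{-3/4}}$, which cannot beat the order-$t^{-1/2}$ dispersive term. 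The paper's construction avoids this entirely by never differentiating the singular factors: it takes $R_1=\lb f_1(\lam)+\lp\lam p_1(\Re\lam)-f_1(\lam)\rp\sgnt\cos 2\phi\rb\delta(\lam)^{-2}(1-\indicator(\lam))$ with $f_1(\lam)=\xi p_1(\xi)\delta_0(\xi,\sgnt)^{-2}(\sgnt(\lam-\xi))^{-2i\sgnt\kappa(\xi)}\delta(\lam)^{2}$, so that $\delta(\lam)^{\pm2}$ and $(\sgnt(\lam-\xi))^{-2i\sgnt\kappa(\xi)}$ enter only as functions analytic in the sector and are annihilated by $\dbar$; the derivative then hits only $p_1(\Re\lam)$, $\cos2\phi$ and $\indicator$, and the angular term $\lp\lam p_1(\Re\lam)-f_1(\lam)\rp/|\lam-\xi|$ is controlled by Lemma~\ref{lem:T}(v) together with the regularity of $\rho$ — this is the one genuinely nontrivial point of the lemma, and your construction bypasses rather than reproduces it.

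Two further slips. First, your $h_1(u)$ does not actually match the prescribed value \eqref{R1} on $\Sigma_1$: there the power is $(\sgnt(\lam-\xi))^{-2i\sgnt\kappa(\xi)}$ with $\lam-\xi=u\,e^{i\sgnt\pi/4}$, which differs from your $(\sgnt u)^{-2i\sgnt\kappa(\xi)}$ by the factor $e^{\pi\kappa(\xi)/2}$ in modulus. Second, inserting ``an extra smooth factor vanishing at $\lam=0$'' into $R_3,R_6$ is not permissible: $\lam=0$ may lie on $\negint$ or $\posint$, and such a factor would destroy the boundary-value matching required by \eqref{R3} and \eqref{R6}. No such modification is needed: in the paper's construction the vanishing of $\dbar R_k$ at $\lam=0$ is claimed (and used, cf.\ Corollary~\ref{cor:R2.bd}) only for the sectors meeting the origin with the explicit factor $\lam$, i.e.\ $k=1,4$, where it is automatic because $0$ lies on the real axis, so $\sin2\phi=0$ kills the angular term, the prefactor $\lam$ kills the $p_k'$ term, and $\dbar\indicator$ vanishes near $\R$ — not for the reasons you give, and with no alteration of $R_3,R_6$.
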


This lemma has the following immediate corollary:
\begin{corollary}\label{cor:R2.bd}
Let $\lam - \xi = u + i v$ with $u,v \in \R$. Then under the assumptions of Lemma~\ref{lem:extensions}  for $k=1,3,4,6$ we have
\begin{gather*}
	\left| \dbar \mathcal{R}^{(2)}(\lam;\xi) \right| \lesssim 
	\begin{cases}
		\lp   { \left| \dbar \indicator(\lam) \right| 
		+ \left|   {\lam^{m_k}\,} p_k'(\Re \lam) \right| } + \log \frac{1}{|z-\xi|} \rp 
		e^{-8t |u| |v|}
		& \lam \in \Omega_k,\ |\lam - \xi| \leq 1 \\
		\lp   { \left| \dbar \indicator(\lam) \right| 
		+ \left|   {\lam^{m_k}\,} p_k'(\Re \lam) \right| } + \frac{1}{\sqrt{1+|\lam - \xi|^2} } \rp 
		e^{-8t |u| |v|}
		& \lam \in \Omega_k,\ |\lam - \xi| > 1,
	\end{cases}
\shortintertext{and}
	\begin{aligned}
	\dbar \mathcal{R}^{(2)}(\lam,\xi) & \equiv 0 
	\qquad \text{if } \lam \in \Omega_2 \cup \Omega_5
	\text{ or } \dist(\lam, \Lambda) \leq \poledist/3, \\
	  { \lim_{\lam \to 0} \dbar \mathcal{R}^{(2)}(\lam,\xi) }&  {= 0 
	\qquad \text{when } \lam \in \Omega_1 \cup \Omega_4.} 
	\end{aligned}
\end{gather*}
  {Here $m_k$ is as defined in Lemma~\ref{lem:extensions}, and} all the implied constants are uniform in $\xi \in \R$ and $|t|>1$.
\end{corollary}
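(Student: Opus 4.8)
The plan is to derive Corollary~\ref{cor:R2.bd} directly from Lemma~\ref{lem:extensions} by combining the pointwise bounds on $\dbar R_k$ with the exponential factor coming from $e^{\pm 2it\theta}$, which is built into $\mathcal{R}^{(2)}$ via the entries displayed in Figure~\ref{fig:n2def}. First I would recall that in each sector $\Omega_k$ with $k=1,3,4,6$ the matrix $\mathcal{R}^{(2)}$ has exactly one off-diagonal entry, of the form $R_k(\lambda)\, e^{\pm 2it\theta(\lambda)}$ (with the sign and triangular structure dictated by which ray $\Sigma_k$ bounds $\Omega_k$), while on $\Omega_2\cup\Omega_5$ it is the identity; hence $\dbar\mathcal{R}^{(2)} = (\dbar R_k)\, e^{\pm 2it\theta}$ on $\Omega_k$ and vanishes identically on $\Omega_2\cup\Omega_5$. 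Applying Lemma~\ref{lem:extensions} to $\dbar R_k$ gives the first two bracketed factors immediately, so the only real content is to show $|e^{\pm 2it\theta(\lambda)}| = e^{-8t|u||v|}$ where $\lambda-\xi = u+iv$.

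The key computation is the phase estimate. From \eqref{phase.lambda}, $\theta(x,t,\lambda) = -(\lambda x/t + 2\lambda^2)$, and with $\xi=-x/(4t)$ this is $\theta = -2\lambda^2 + 4\xi\lambda = -2(\lambda-\xi)^2 + 2\xi^2$; the constant $2\xi^2$ is real and irrelevant to the modulus, so $\Im\theta = -\Im\big(2(\lambda-\xi)^2\big) = -4uv$. Therefore $|e^{2it\theta}| = e^{-2t\,\Im\theta} = e^{8tuv}$, and one checks using the geometry of the rays $\Sigma_k$ in \eqref{Sigma_k} — namely that on $\Omega_1,\Omega_4$ the product $uv$ has the sign making $e^{-2it\theta}$ decay (this is the ``$e^{2it\theta}$ increasing along $\Sigma_1,\Sigma_3$'' statement in the text), and symmetrically on $\Omega_3,\Omega_6$ for $e^{2it\theta}$ — that in every relevant sector the off-diagonal entry carries a decaying exponential with modulus $e^{-8t|u||v|}$ for $t>0$, and the analogous statement with $|t|$ for $t<0$ after accounting for $\eta=\sgn t$. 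This yields the two-case estimate in the corollary, splitting $|\lambda-\xi|\le 1$ versus $|\lambda-\xi|>1$ exactly as in Lemma~\ref{lem:extensions} and using $|\lambda-\xi|^{-1}\lesssim(1+|\lambda-\xi|^2)^{-1/2}$ for $|\lambda-\xi|>1$.

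The remaining assertions are bookkeeping: $\dbar\mathcal{R}^{(2)}\equiv 0$ on $\Omega_2\cup\Omega_5$ because $\mathcal{R}^{(2)}$ is the identity there by construction; $\dbar\mathcal{R}^{(2)}\equiv 0$ when $\dist(\lambda,\Lambda)\le\poledist/3$ because there $\indicator\equiv 1$ forces the $(1-\indicator)$ factor in \eqref{R_k} to vanish together with its derivative (equivalently, this is the ``residues unaffected'' design choice, and it is exactly the last line of Lemma~\ref{lem:extensions}); and $\lim_{\lambda\to 0}\dbar\mathcal{R}^{(2)} = 0$ on $\Omega_1\cup\Omega_4$ is the $\lambda=0$ clause of Lemma~\ref{lem:extensions} (these are the sectors where $m_k=1$, so the $\lambda^{m_k}p_k'$ term carries a vanishing factor $\lambda$; one also notes $\indicator$ vanishes near $0$ since the disks of support avoid the real axis). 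Uniformity in $\xi\in\R$ and $|t|>1$ is inherited verbatim from Lemma~\ref{lem:extensions}, since the exponential factor $e^{-8t|u||v|}\le 1$ contributes nothing to the constants.

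I do not anticipate a genuine obstacle here — the corollary is a direct packaging of Lemma~\ref{lem:extensions} with an elementary phase identity. The one point requiring a little care is matching the sign of $\Im\theta$ against the orientation and placement of each ray $\Sigma_k$ so that every off-diagonal entry of $\mathcal{R}^{(2)}$ really does decay rather than grow; this is where the factor $\eta=\sgnt$ in the definition \eqref{Sigma_k} does its work, and it is cleanest to verify it sector by sector for $\sgnt=+1$ and then invoke the stated symmetry $\xi$ fixed, $\sgnt\mapsto-\sgnt$ swaps $I_{\xi,\eta}^\pm$ and the roles of $e^{\pm 2it\theta}$.
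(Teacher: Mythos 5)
Your derivation is correct and is exactly the route the paper intends: the corollary is obtained by multiplying the pointwise bounds of Lemma~\ref{lem:extensions} on $\dbar R_k$ by the modulus of the exponential entry in each sector, using $\theta = -2(\lambda-\xi)^2 + 2\xi^2$ so that $|e^{\pm 2it\theta}| = e^{-8|t||u||v|}$ in the sectors where each factor appears, with the identity blocks on $\Omega_2\cup\Omega_5$, the $(1-\indicator)$ cutoff near $\Lambda$, and the $\lambda=0$ clause of the lemma giving the remaining assertions. Your sector-by-sector sign check (and the $\eta=\sgn t$ relabeling for $t<0$) is precisely the bookkeeping the paper leaves implicit in calling this an immediate corollary.
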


\begin{proof}[Proof of Lemma~\ref{lem:extensions}]
We give the construction for $R_1$. Define $f_1(\lam)$ on $\Omega_1$ by
\[
	f_1(\lam) =   {\xi} p_1(\xi) \delta_0(\xi, \sgnt)^{-2} ( \eta (z-\xi) )^{-2i \eta \kappa(\xi) \sig} \delta(\lam)^{2}
\]
and let
\[
	R_1(\lam) = \lb f_1(\lam) + \lp   {\lam} p_1(\Re \lam)  - f_1(\lam) \rp \sgnt \cos (2\phi) \rb 
	\delta(\lam)^{-2}(1- \indicator(\lam)), 
\]
where $\phi = \arg(\lam - \xi)$. It is easy to see that $R_1$ as constructed has the boundary values in \eqref{R1} and that $\dbar R_1(\lam) = 0$ for $\dist(\lam, \Lambda) < \poledist/3$. Writing $\lam - \xi = r e^{i \phi}$ we have
\[
	\dbar = \frac{1}{2} \lp \pd{}{x} + i \pd{}{y} \rp 
	= \frac{e^{i\phi}}{2} \lp \pd{}{r} + \frac{i}{r} \pd{}{\phi} \rp,
\]
and
\begin{multline*}
	\dbar R_1(\lam) = 
	-\lb f_1(\lam) +   {\sgnt} \lp   {\lam} p_1(\Re \lam)  
	- f_1(\lam) \rp \cos (2\phi) \rb \delta(\lam)^{-2} \dbar\indicator(\lam) \\
	\eta \lb \frac{  {\lam}}{2} p_1'(\lam) \cos(2\phi) 
	- \frac{i e^{i\phi}}{|z-\xi|} (p_1(\Re \lam) 
	- f_1(\lam)) \sin(2\phi) \rb \delta^{-2}(\lam) (1-\indicator(\lam))
\end{multline*}

  {Clearly, $\dbar R_1(0) = 0$;} it follows from Lemma~\ref{lem:T}(ii. and v.) that
\[
\left| \dbar R_{  {1} } \right| \lesssim_\rho 
	\begin{cases}
		  { | \dbar \indicator(\lam) | } + |   {\lam} p_k'(\Re \lam) | 
		+ \log |\lam - \xi|^{-1}, 
		&  |z-\xi| \leq 1 \\
		  { | \dbar \indicator(\lam) | } + |   {\lam} p_k'(\Re \lam) | 
		+ |\lam - \xi|^{-1}, 
		&  |z-\xi| > 1 
	\end{cases} 
\]
where the implied constants depend on $\inf_\R (1-\eps \lam |\rho(\lam)|^2)$, $\oldnorm{\rho}_{H^{2,2}(\R)} $, and $\poles$. The constructions of $R_3,R_4$ and $R_6$ are similar.
\end{proof}

The new unknown $\nk{2}$ satisfies a mixed $\dbar$-RHP. We compute the new jumps on $\Sk{2}$ using the formula
\[
	\vk{2} = \nk[-]{1}^{-1}  \nk[+]{1}
	= \lp \mathcal{R}^{(2)}_-\rp^{-1} \vk{1} \mathcal{R}^{(2)}_+
\]
where the $\pm$ subscripts refer to the left/right side of the contour with respect to the orientation. Away from $\Sk{2}$, remembering that $\nk{1}$ is analytic in $\C \setminus (\R \cup \poles)$, we have
\[
	\dbar \nk{2} = \nk{1} \dbar \mathcal{R}^{(2)} = \nk{2} \lp \mathcal{R}^{(2)}\rp^{-1} \dbar \mathcal{R}^{(2)}
	= \nk{2} \dbar \mathcal{R}^{(2)}.
\]
where the last step follows from the nilpotency of $\dbar \mathcal{R}^{(2)}$.
 
%
%
\begin{DBAR}\label{rhp.n2}
Find a row vector valued function 
$\nk{2}: \C \setminus (\Sk{2} \cup \poles) \to \C^2$ with the following properties
\begin{enumerate}[1.]
\item $\nk{2}(\lam)$ has continuous first partial derivatives in $\C \setminus (\Sk{2} \cup \poles)$ and continuous boundary values $\nk[\pm]{2}(\lambda)$ on $\Sk{2}$. 
\item $\nk{2}(\lam) = (1,0) + \bigo{\lam^{-1}}$ as $ \lam \to \infty$.
\item For $\lam \in \Sk{2}$, the boundary values satisfy the jump relation 
	$\nk[+]{2}(\lam) = \nk[-]{2}(\lam) \vk{2}(\lam)$, where
\begin{equation}\label{V2}
	\begin{gathered}
		\vk{2}(\lam) = I + (1-\indicator(\lambda)) h(\lam), \\
		h(\lam) = \begin{cases}
			\tril[0]{ -\eps \xi \bar{\rho}(\xi) \delta_0(\xi,\sgnt)^{-2} 
			(\sgnt(\lam-\xi))^{-2i \sgnt \kappa(\xi)} e^{2it \theta} } 
		 		& z \in \Sigma_1 \smallskip \\
    		\triu[0]{ \frac{\rho(\xi) \delta_0(\xi,\sgnt)^{2} }{1+|r(\xi)|^2}  
    		(\sgnt(z-\xi))^{2i \sgnt \kappa(\xi)} e^{-2it \theta} } 
    			& z \in \Sigma_2 \smallskip\\
    		\tril[0]{ \frac{-\eps \xi \bar{\rho}(\xi) \delta_0^{-2}(\xi,\sgnt) } {1+|r(\xi)|^2} 
    		(\sgnt(z-\xi))^{-2i \sgnt \kappa(\xi)} e^{2it \theta} } 
    			& z \in \Sigma_3 \smallskip \\
    		\triu[0]{ \rho(\xi) \delta_0(\xi,\sgnt)^{2}  
    		(\sgnt(z-\xi))^{2i  \sgnt \kappa(\xi)} e^{-2it \theta} } 
				& z \in \Sigma_4 
		\end{cases}
	\end{gathered}
\end{equation}

\item For $\lam \in \C$ we have
\[
	\dbar \nk{2} (\lam) = \nk{2}(\lam) \dbar \mathcal{R}^{(2)}(\lam)
\]	
where
\begin{equation}\label{W2}
	\dbar \mathcal{R}^{(2)}(\lam) = \begin{cases}
		\tril[0]{-\dbar R_1(\lam) e^{-2it\theta} } & \lam \in \Omega_1 \smallskip \\
		\triu[0]{-\dbar R_3(\lam) e^{2it\theta} } & \lam \in \Omega_3 \smallskip \\
		\tril[0]{\dbar R_4(\lam) e^{-2it\theta} } & \lam \in \Omega_4 \smallskip \\
		\triu[0]{\dbar R_6(\lam) e^{2it\theta} } & \lam \in \Omega_6 \smallskip \\
		\qquad \quad \mathbf{0} & \textrm{elsewhere}
	\end{cases}
\end{equation}	

\item $\nk{2}(\lam)$ has simple poles at each point in $\poles$. For each $\lam_k \in \poles_+$ 
	\begin{equation}\label{n2 residue}
			\begin{aligned}
				\res_{\lam = \lam_k} \nk{2}(\lam) 
				&= \lim_{\lam \to \lam_k} \nk{2}(\lam) \vk{1}(\lam_k) \\
				\res_{\lam = \bar{\lam}_k} \nk{1}(\lam) 
				&= \lim_{\lam \to \bar{\lam}_k} \nk{2}(\lam) \vk{1}(\bar{\lam}_k)
			\end{aligned}
	\end{equation}
	where $\vk{1}$ is  given in \eqref{n1 residue matrices}.
\end{enumerate}
\end{DBAR}
%
%

%
%

\section{ Decomposition into a RH model problem and a pure \texorpdfstring{$\dbar$}{DBAR}-problem}
\label{sec:models}
The next step in our analysis is to construct the solution $\Nrhp$ of a \textit{matrix-valued} Riemann-Hilbert problem such that the transformation
\begin{equation}\label{n3 def}
	\nk{3}(\lam) = \nk{2}(\lam) (\Nrhp(\lam))^{-1}
\end{equation}
results in a pure $\dbar$-problem, \ie, the new unknown $\nk{3}$ is continuous; it has no jumps or poles. We arrive at the problem for $\Nrhp$ by essentially ignoring the $\dbar$ component of Problem~\ref{rhp.n2}.

%
%
\begin{RHP}\label{rhp.Nrhp}
Find a $2\times2$ matrix valued function 
$\Nrhp: \C \setminus (\Sk{2} \cup \poles) \to SL_2(\C)$ with the following properties
\begin{enumerate}[1.]
\item $\Nrhp$ satisfies the symmetry relation 
	\[
		\Nrhp(\lam) = \begin{pmatrix}
			\overline{\Nrhp[22](\lambar)} & \eps \lam^{-1} \overline{\Nrhp[21](\lambar)} \\
			\eps \lam \overline{\Nrhp[12](\lambar)} &\overline{\Nrhp[11](\lambar)}
		\end{pmatrix} 
	\]
\item $\Nrhp(\lam) = \tril{\alpha} + \bigo{\lam^{-1}}$ as $ \lam \to \infty$, for 
a constant $\alpha$ determined by the symmetry condition above.
\item For $\lam \in \Sk{2}$, the boundary values satisfy the jump relation 
	$\Nrhp[+](\lam) = \Nrhp[-](\lam) \vk{2}(\lam)$, where $\vk{2}$ is given by \eqref{V2}.
\item $\Nrhp(\lam)$ has simple poles at each point in $\poles$. 
	For each $\lam_k \in \poles_+$ 
	\begin{equation}\label{Nrhp residue}
			\begin{aligned}
				\res_{\lam = \lam_k} \Nrhp(\lam) 
				&= \lim_{\lam \to \lam_k} \Nrhp(\lam) 
				\tril[0]{\lam_k C_k \delta^{-2}(\lam_k) e^{-2it\theta} } \\
				\res_{\lam = \lambar_k} \Nrhp(\lam) 
				&= \lim_{\lam \to \lambar_k} \Nrhp(\lam) 
				\triu[0]{\eps \overline{C_k} \delta^{2}(\lambar_k) e^{2it\theta} }
			\end{aligned}
	\end{equation}
\end{enumerate}
\end{RHP}


%
%

\begin{lemma}\label{lem:n2.to.n3} 
Suppose that $\nk{2}$ solves $\dbar$-RHP~\ref{rhp.n2}. Given a solution $\Nrhp$ of RHP~\ref{rhp.Nrhp},
the function $\nk{3}$ defined by \eqref{n3 def} satisfies $\dbar$-~Problem~\ref{dbar.n3} below.
\end{lemma}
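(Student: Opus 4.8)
The plan is to verify the four defining properties of the pure $\dbar$-problem (Problem~\ref{dbar.n3}) directly from \eqref{n3 def}, using that $\Nrhp$ solves RHP~\ref{rhp.Nrhp} and $\nk{2}$ solves $\dbar$-RHP~\ref{rhp.n2}. The essential point is that $\nk{2}$ and $\Nrhp$ share the same jumps on $\Sk{2}$ and the same residue structure at the points of $\poles$, so those singularities cancel in the product $\nk{2}(\Nrhp)^{-1}$, leaving only the $\dbar$-data as a source term.

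First I would check analyticity and the absence of jumps. Since $\Nrhp: \C \setminus (\Sk{2}\cup\poles) \to SL_2(\C)$, its determinant is identically $1$ (the jump matrices $\vk{2}$ and the residue-generating factors all have determinant $1$, and $\det \Nrhp \to 1$ at infinity by properties 2 and 3, so Liouville applies), hence $(\Nrhp)^{-1}$ is well-defined and analytic off $\Sk{2}\cup\poles$. For $\lam \in \Sk{2}$ the boundary values satisfy
$$
\nk[+]{3} = \nk[+]{2}\,(\Nrhp[+])^{-1} = \nk[-]{2}\,\vk{2}\,\bigl(\Nrhp[-]\vk{2}\bigr)^{-1} = \nk[-]{2}\,(\Nrhp[-])^{-1} = \nk[-]{3},
$$
so $\nk{3}$ extends continuously across $\Sk{2}$ with no jump. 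Next, near each $\lam_k \in \poles_+$ both $\nk{2}$ and $\Nrhp$ have a simple pole with the \emph{same} residue relation driven by the nilpotent matrix $\vk{1}(\lam_k)$; writing the Laurent expansions one sees that $\nk{2}(\Nrhp)^{-1}$ is bounded near $\lam_k$, hence has a removable singularity there. The computation is the standard one used in \cite{BJM16}: if $\Nrhp = A(I + B/(\lam-\lam_k)) + \dots$ locally with $B$ the nilpotent residue data and $\nk{2}$ has a compatible expansion, the product has no pole. The same argument applies at each $\lambar_k$ by the symmetry. Thus $\nk{3}$ is continuous on all of $\C$, establishing that $\nk{3}$ has no contour and no discrete spectrum — exactly property~1 of Problem~\ref{dbar.n3}.

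Then I would compute the $\dbar$-derivative and the asymptotics. Away from $\Sk{2}\cup\poles$, since $\Nrhp$ is analytic (so $\dbar\Nrhp = 0$) we have
$$
\dbar \nk{3} = (\dbar \nk{2})(\Nrhp)^{-1} = \nk{2}\,\dbar\mathcal{R}^{(2)}\,(\Nrhp)^{-1} = \nk{3}\,\Nrhp\,\dbar\mathcal{R}^{(2)}\,(\Nrhp)^{-1},
$$
which is the $\dbar$-equation of Problem~\ref{dbar.n3} with the new (conjugated) $\dbar$-data $\Wk{3} := \Nrhp \,\dbar\mathcal{R}^{(2)}\,(\Nrhp)^{-1}$. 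Finally, from property~2 of RHP~\ref{rhp.Nrhp}, $\Nrhp(\lam) = \tril{\alpha} + \bigo{\lam^{-1}}$ is invertible near infinity with $(\Nrhp)^{-1} = \tril{-\alpha} + \bigo{\lam^{-1}}$, and $\nk{2}(\lam) = (1,0) + \bigo{\lam^{-1}}$, so $\nk{3}(\lam) = (1,0)\,\tril{-\alpha} + \bigo{\lam^{-1}} = (1,0) + \bigo{\lam^{-1}}$ as $\lam \to \infty$, which is the normalization in Problem~\ref{dbar.n3}.

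The main obstacle is the removability of the poles at $\poles \cup \overline{\poles}$: one must carefully match the Laurent coefficients of $\nk{2}$ and $\Nrhp$ to see the cancellation, using that $\dbar\mathcal{R}^{(2)} \equiv 0$ in a neighborhood of each pole (Corollary~\ref{cor:R2.bd}), so that near $\poles$ the function $\nk{2}$ is genuinely meromorphic and the residue condition \eqref{n2 residue} has its classical meaning. Everything else is bookkeeping: determinant$=1$ for $(\Nrhp)^{-1}$, the algebraic cancellation of jumps, and the trivial propagation of the large-$\lam$ expansion. I would present the pole cancellation as the one genuinely computational lemma and relegate the rest to one-line verifications.
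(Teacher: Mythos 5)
Your proposal is correct and follows essentially the same route as the paper's proof: the algebraic cancellation of the jump across $\Sigma^{(2)}$, the removability of the poles via matching Laurent expansions and the nilpotency $v^{(1)}(p)^2=0$ (with $(\mathcal{N}^{\mathsc{rhp}})^{-1}=\sigma_2(\mathcal{N}^{\mathsc{rhp}})^{\intercal}\sigma_2$ since $\det \mathcal{N}^{\mathsc{rhp}}=1$), the conjugation $\dbar n^{(3)} = n^{(3)}\,\mathcal{N}^{\mathsc{rhp}}\,\dbar\mathcal{R}^{(2)}\,(\mathcal{N}^{\mathsc{rhp}})^{-1}$, and the immediate normalization at infinity. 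Your extra observations (that $\dbar\mathcal{R}^{(2)}\equiv 0$ near $\Lambda$ makes $n^{(2)}$ genuinely meromorphic there, and the explicit check $(1,0)\stril{-\alpha}=(1,0)$) are harmless refinements of the same argument.
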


\begin{DBAR}
\label{dbar.n3}
Given $x,t \in \R$ and $\rho \in H^{2,2}(\R)$ with $\inf_{\R} (1 - \eps \lam|\rho(\lam)|^2) > 0$, find a continuous, row vector-valued function $\nk{3}(\lam)$ with the following properties 
\begin{enumerate}[1.]
	\item $\nk{3}(\lam) \to \begin{pmatrix} 1 & 0 \end{pmatrix}$
		as $|\lam | \to \infty$.
	\item $\dbar \nk{3}(\lam) = \nk{3}(\lam) \Wk{3}(\lam)$, where
	\begin{equation}\label{W3def}
		\Wk{3}(\lam) = \Nrhp (\lam)  \dbar \mathcal{R}^{(2)}(\lam) (\Nrhp)^{-1}(\lam).
	\end{equation}	
\end{enumerate}
\end{DBAR}

\begin{proof}[Proof of Lemma~\ref{lem:n2.to.n3}]
Given solutions $\nk{2}$ and $\Nrhp$ of $\dbar$-RHP~\ref{rhp.n2} and RHP~\ref{rhp.Nrhp} respectively, the normalization condition for $\nk{3}$ is immediate. As $\Nrhp$ is holomorphic in $\C \backslash \Sk{2}$, the $\dbar$-derivative of $\nk{3}$ satisfies
\begin{equation}\label{W3}
	\dbar \nk{3} = \dbar \nk{2} \Nrhp^{-1} = \lb \nk{2} 
\bar{\partial}\mathcal{R}^{(2)} 
	\rb \Nrhp^{-1} 
	= \nk{3} \lb \Nrhp 
 \bar{\partial}\mathcal{R}^{(2)} 
	\Nrhp^{-1} \rb.
\end{equation}

The computation 
\begin{equation}
	\begin{aligned}[t]
	\nk[+]{3}(\lam) 
		&= \nk[-]{3}(\lam) \Nrhp[-](\lam) \vk{2}(\lam) \Nrhp[+](\lam)^{-1}  
		 \\
		&= \nk[-]{3}(\lam) \Nrhp[-](\lam) \vk{2}(\lam) \lb \vk{2}(\lam)^{-1} \Nrhp[-](\lam)^{-1} \rb \\
		&= \nk[-]{3}(\lam). 
	\end{aligned}
	\tag*{$\lam \in \Sk{2}$} 
\end{equation}
shows that $\nk{3}$ has no jumps and is everywhere continuous. Another direct calculation shows that 
$\nk{3}$ has removable singularities at each pole in $\poles$: for instance if $p \in \poles$ and $\vk{1}(p)$ is the nilpotent residue matrix in \eqref{n1 residue} then 
using \eqref{n2 residue} and \eqref{Nrhp residue} we have the local Laurent expansion of at $p$ 
\begin{align*}
	\nk{2}(\lam) = a(p) \lb \frac{ \vk{1}(p)}{\lam - p} + I \rb + \bigo{(\lam -  {  p} )}\\ 
	\Nrhp(\lam) = A(p) \lb \frac{ \vk{1}(p)}{\lam - p} + I \rb + \bigo{(\lam -  {  p} )} 
\end{align*}
where $a(p)$ and $A(p)$ are the constant row vector and matrix in their respective expansions. As $\Nrhp \in SL_2(\C)$, 
\[
	(\Nrhp)^{-1} = \sigma_2 (\Nrhp)^\intercal \sigma_2
	= \lb \frac{ -\vk{1}(p)}{\lam - p} + I \rb \sigma_2 A(p)^\intercal \sigma_2 + \bigo{(\lam -   {  p} )}.  
\]
It follows that 
\begin{multline*}
	\nk{3}(\lam) = \nk{2}(\lam) \Nrhp(\lam)^{-1} \\
	= \left\{ a(p) \lb \frac{ \vk{1}(p)}{\lam - p} + I \rb + \bigo{(\lam -  {  p} )} \right\}  
	\left\{ \lb \frac{ -\vk{1}(p)}{\lam - p} + I \rb 
		\sigma_2 A(p)^\intercal \sigma_2 + \bigo{(\lam -  {  p}   )}  \right\} \\
	= \bigo{1}	.
\end{multline*}
where the last equality follows from the fact that $\vk{1}(p)^2 = 0$.

\end{proof}

The remainder of this section is dedicated to proving the following proposition
\begin{proposition}\label{prop:Nrhp.est}
Given $\rho \in H^{2,2}(\R)$ with $c:= \inf_{\lam \in \R} (1 - \eps \lam | \rho(\lam)|^2) >0$ strictly, then there exists $T> 0$ such that for $|t| > T$, there exists a unique solution $\Nrhp(\lam)$ of RHP~\ref{rhp.Nrhp} satisfying 
\[
	\oldnorm{\Nrhp(\lam)}_{L^\infty(\C \backslash B_\poles)} \lesssim 1
\] 
where $B_\poles$ is any open neighborhood of $\poles$ and the implied constants are uniform in $x$ and $|t|>T$; they depend on $B_\poles$ and $\rho$.
\end{proposition}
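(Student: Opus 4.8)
The plan is to construct $\Nrhp$ by gluing together an \emph{outer} (soliton) model $\Nsol$ solving a pure-discrete RHP with no jump on $\Sk{2}$ (Problem~\ref{outermodel}) and an \emph{inner} parabolic-cylinder model $\NPC$ near the stationary phase point $\xi$ (Problem~\ref{rhp.localmodel}), and then to correct the mismatch via a small-norm RHP for an error matrix $\error$ (Problem~\ref{rhp.E}). Concretely, I would first record that $\Nsol$ exists and is uniformly bounded on $\C\setminus B_\poles$: since $\vk{2}$ is the identity off $\Sigma^{(2)}$ and $\Nsol$ by construction carries only the residue conditions \eqref{Nrhp residue}, existence/uniqueness of $\Nsol$ reduces (via Theorem~\ref{thm:RHP.model.disc}) to solving a finite linear algebraic system whose matrix is invertible because of the symmetry relation (item~1 of Problem~\ref{rhp.Nrhp}), which makes the associated quadratic form positive; the bound away from $\poles$ is then automatic from Cramer's rule together with the uniform bound $e^{-\|\kappa\|_\infty/2}\le|\delta|\le e^{\|\kappa\|_\infty/2}$ of Lemma~\ref{lem:T}(ii) on the conjugating factors in the residue matrices. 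This step I expect to cite directly from Proposition~\ref{outer.soliton}.

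Next I would localize: fix a disk $\Uxi$ of fixed radius about $\xi$ chosen small enough to avoid all of $\poles$ (possible because $\indicator$ is supported in $\poledist/3$-neighborhoods of $\poles$, disjoint from $\R$). Inside $\Uxi$ the jump $\vk{2}$ on $\Sk{2}\cap\Uxi$ has, after the rescaling $z = \xi + (8t)^{-1/2}\zeta$ dictated by the quadratic phase $\theta$, exactly the form of the standard parabolic-cylinder jump matrix of Figure~\ref{FigPCjumps}/\ref{FigPCjumpsB} with parameter $\kappa(\xi)$; I set $\NPC = \Nsol\,\mPC$ where $\mPC$ is the explicit parabolic-cylinder parametrix (Problem~\ref{rhp.localmodel}), which satisfies $\mPC = I + \bigo{t^{-1/2}}$ on $\partial\Uxi$ by Proposition~\ref{prop:PCmodel.est}. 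Define the global parametrix $\Nrhp_{\mathrm{appx}}$ to be $\NPC$ inside $\Uxi$ and $\Nsol$ outside, and set $\error(\lam) = \Nrhp(\lam)\,\Nrhp_{\mathrm{appx}}(\lam)^{-1}$. A routine check shows $\error$ has no poles (the residue conditions cancel because both factors carry the same residue data) and no jump across $\Sk{2}$ except across $\partial\Uxi$ and along the parts of $\Sigma_k$ outside $\Uxi$, where the jump is $I + \bigo{t^{-1/2}}$ on $\partial\Uxi$ (from $\mPC$) and $I + \bigo{e^{-ct}}$ on the tails (from the exponential decay of $h$ in \eqref{V2} away from $\xi$, using $\real z^2$ along $\Sk{2}$).

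Then Problem~\ref{rhp.E} is a small-norm RHP on a fixed contour with jump $\vke = I + \bigo{t^{-1/2}}$ in $L^\infty\cap L^2$; by the standard Beals-Coifman theory (Theorem~\ref{thm:BC} and the associated Neumann-series bound for $\calC_w$) it has a unique solution with $\error - I = \bigo{t^{-1/2}}$ uniformly, provided $|t| > T$ for $T$ large enough depending only on $\|\rho\|_{H^{2,2}}$ and $c$; here I use that $\Nsol$ is bounded away from $\poles$ to keep the conjugated jumps controlled. Unwinding, $\Nrhp = \error\,\Nrhp_{\mathrm{appx}}$ exists, is unique, and satisfies $\|\Nrhp\|_{L^\infty(\C\setminus B_\poles)}\lesssim 1$ with the stated uniformity. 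I expect the main obstacle to be the two bookkeeping points that make the gluing legitimate: first, verifying that the extended jump $\vk{2}$ inside $\Uxi$ really matches the parabolic-cylinder model \emph{after} the scaling and the factor $\delta_0(\xi,\sgnt)(\sgnt(\lam-\xi))^{i\sgnt\kappa(\xi)}$ has been absorbed (this uses Lemma~\ref{lem:T}(v) to replace $\delta$ by $\delta_0(\sgnt(\lam-\xi))^{i\sgnt\kappa}$ up to an $\bigo{|\lam-\xi|\log|\lam-\xi|}$ error, which on the rescaled variable is $\bigo{t^{-1/2}\log t}$ — still admissible); and second, confirming uniformity of all estimates as $\xi$ ranges over $\R$, which requires the $\xi$-independent constants supplied by Lemma~\ref{lem:T} and the fact that $\kappa(\xi)$ stays bounded because $1-\eps\lam|\rho(\lam)|^2 \ge c$.
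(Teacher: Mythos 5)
Your overall architecture is the same as the paper's: an outer soliton model $\Nsol$, a parabolic-cylinder parametrix near $\xi$, and a small-norm error RHP posed on $\partial\Uxi$ together with the contour tails, exactly as in \eqref{error def}, RHP~\ref{rhp.E} and Lemma~\ref{lem:Err}, so the plan as a whole is the one the paper executes.

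One supporting claim, however, would fail if carried out as written: the assertion that the uniform bound on $\Nsol$ away from $\poles$ is ``automatic from Cramer's rule together with Lemma~\ref{lem:T}(ii).'' The residue matrices \eqref{Nrhp residue} contain the factors $e^{\mp 2it\theta(\lam_k)}$, whose moduli grow exponentially in $|t|$ for those poles $\lam_k$ with $\eta\,\Re(\lam_k-\xi)<0$; Lemma~\ref{lem:T}(ii) only controls the $\delta$-factors. Hence the linear system coming from the partial-fraction ansatz has coefficients unbounded in $(x,t)$, and Cramer's rule gives no $(x,t)$-uniform estimate. The paper's fix (Appendix~\ref{app:solitons}) is to renormalize first by the Blaschke product \eqref{Blaschke}, splitting the poles into $\pospoles$ and $\negpoles$ so that the renormalized residue coefficients obey the uniform bound \eqref{gamma.bounds}, and only then to argue invertibility and uniform boundedness of the resulting matrix (Problem~\ref{outmodel2}, Lemma~\ref{lem:sol.bound}); the invertibility there comes from the already-established solvability of RHP~\ref{RHP2}, not from positivity of a quadratic form induced by the symmetry. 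Since you also say you would cite Proposition~\ref{outer.soliton} directly (add Lemma~\ref{lem:outer.bound} for the bound), the gap is repairable by citation, but your own justification of that step is not correct. A smaller point: the error problem is not normalized to the identity at infinity --- by the symmetry of RHP~\ref{rhp.Nrhp} it tends to a lower-triangular matrix with entry $\overline{q}_\error$ --- so the ``standard'' small-norm theory must be applied row by row as in Lemma~\ref{lem:Err}; since $|\overline{q}_\error|\lesssim t^{-1/2}$ this does not change the estimates, but it is why the paper constructs $\error$ one row at a time rather than by a single Neumann series.
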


To prove the existence of $\Nrhp$, we will first construct two explicit models: one which exactly solves the pure soliton problem obtained by ignoring the jump conditions, and a second which uses parabolic cylinder functions to build a matrix whose jumps exactly match those of $\nk{2}$ in a neighborhood of the critical point $\xi$. Using our models we prove that $\Nrhp$ exists and extract its behavior for large $t$.

\subsection{The outer model: the soliton component}
\label{sec:outer model}

The matrix $\Nrhp$ is meromorphic away from the contour $\Sk{2}$ on which its boundary values satisfy the jump relation $\Nrhp[+](\lam) = \Nrhp[-](\lam) \vk{2}(\lam)$. It is clear from \eqref{V2} that 
\begin{equation}\label{V2 bound outside} 
	\left| \vk{2}(\lam) - I \right| \lesssim 
	e^{-2\sqrt{2} t |\lam-\xi|^2},
\end{equation}
where the implied constant depends upon $\poledist$ and $c:= \inf_{\lam \in \R}(1-\eps \lam |\rho(\lam)|^2)$. It follows that outside a fixed neighborhood of $\xi$ we introduce only exponentially small error (in  $t$) by completely ignoring the jump condition on $\Nrhp$. This results in the following outer model problem

\begin{problem}\label{outermodel} 
For any fixed $\xi \in \R$, let $\Nsol: \C \to SL_2(\C)$ be a meromorphic function such that
\begin{itemize}
	\item $\Nsol$ satisfies the symmetry relation 
	\begin{equation}\label{Nsol symmetry}
		\Nsol(\lam) = \begin{pmatrix}
			\overline{\Nsol[22](\lambar)} & \eps \lam^{-1} \overline{\Nsol[21](\lambar)} \\
			\eps \lam \overline{\Nsol[12](\lambar)} & \overline{\Nsol[11](\lambar)}
		\end{pmatrix} 
	\end{equation}
	\item $\Nsol(\lam) = \tril{\alpha(x,t)} + \bigo{\lam^{-1}}$ as $\lam \to \infty$, where $\alpha$ is determined via the symmetry condition.
	\item $\Nsol$ has a simple pole at each point in $\poles$ satisfying the residue relations in \eqref{Nrhp residue} with $\Nsol$ replacing $\Nrhp$.
	
\end{itemize}
\end{problem}

The essential fact we need concerning $\Nsol$ is as follows.

\begin{proposition}
\label{outer.soliton}
A unique solution $\Nsol$ of Problem~\ref{outermodel} exists. 
Moreover, the solution $\Nsol$ is precisely the matrix solution of RHP~\ref{RHP2} corresponding to the reflectionless scattering data $\mathcal{D}_\xi = \{ (\lam_k, 
{    \widetilde{C_k}}) \}_{k=1}^N$ generated by an exact $N$-soliton solution $\qsol(x,t;\mathcal{D}_\xi) $ of \eqref{DNLS2} where $\{\lam_k\}_{k=1}^N$ are the points generated by our original initial data \eqref{data} and the modified connection coefficients are given by 
\[
 {  \widetilde{C_k}	} = C_k \exp\lp \frac{i}{\pi} \int_{\negint}
	\log(1 - \eps z |\rho(z)|^2) \frac{dz}{z - \lam_k} \rp.
\]
That is, as $\lam \to \infty$, $\Nsol$ admits the expansion
\begin{equation}\label{Nsol.asymp}
	\Nsol(\lam) = \tril{ -\frac{\eps \overline{\qsol}(x,t;\mathcal{D}_\xi)}{2i} } + \frac{ \Nsol[1] }{\lam} + \bigo{\frac{1}{\lam^2}},
\quad \text{where} \quad
	2i (\Nsol[1] )_{12} = \qsol(x,t;\mathcal{D}_\xi).
\end{equation}
\end{proposition}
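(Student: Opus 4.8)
The proof of Proposition~\ref{outer.soliton} decomposes into three parts: existence and uniqueness for Problem~\ref{outermodel}; identification of the solution with a reflectionless solution of RHP~\ref{RHP2}; and extraction of the asymptotic expansion \eqref{Nsol.asymp}. The key observation is that Problem~\ref{outermodel} is simply RHP~\ref{RHP2} with $\rho \equiv 0$ (no jump across $\R$) and with the residue conditions \eqref{Nrhp residue} in which the connection coefficients $C_k$ have been replaced by $C_k \delta(\lam_k)^{-2}$ (and conjugates). Using the expression \eqref{T} for $\delta$, we have
\[
	\delta(\lam_k)^{-2} = \exp\lp -2i \int_{\negint} \frac{\kappa(z)}{z - \lam_k}\, dz \rp
	= \exp\lp \frac{i}{\pi} \int_{\negint} \frac{\log(1 - \eps z |\rho(z)|^2)}{z - \lam_k}\, dz \rp,
\]
so that the residue data for Problem~\ref{outermodel} is exactly $\{(\lam_k, \widetilde{C_k})\}$ with $\widetilde{C_k}$ as in the statement. (A factor from the $e^{-2it\theta}$ term in \eqref{Nrhp residue} is absorbed into the time-dependence of the soliton, matching the $x,t$-dependence in the reconstructed potential.)

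\textbf{Existence and uniqueness.} I would first reduce Problem~\ref{outermodel} to its augmented-contour form, replacing each pole $\lam_k$ (resp.\ $\overline{\lam_k}$) by a small circle $\Gamma_k$ (resp.\ $\Gamma_k^*$) with the jump matrices built from the discrete data, exactly as in Theorem~\ref{thm:RHP.model.disc} and RHP~\ref{RHP2c}. Since there is no continuous data, the resulting contour problem has jump matrices supported on finitely many circles; the associated Beals--Coifman operator $\calC_w$ acts on the finite-dimensional space spanned by the Laurent coefficients at the $2N$ poles, so existence and uniqueness reduce to the invertibility of an explicit $2N \times 2N$ linear system. Uniqueness is cleanest to argue directly: if $\Nsol$ and $\widetilde{\Nsol}$ are two solutions, then $\det \Nsol \equiv 1$ (the jumps and residue structure force the determinant to be entire and $\to 1$ at infinity, as in the proof of Proposition~\ref{prop:RHP2.unique}), the product $\Nsol (\widetilde{\Nsol})^{-1}$ has removable singularities at the poles (the residue matrices are nilpotent), no jump, and tends to $I$; hence it is identically $I$. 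Alternatively one invokes the general theory that a reflectionless RHP of this type is always solvable; this is standard (see \cite{BJM16}), and the construction of explicit $N$-soliton solutions is carried out in Appendix~\ref{app:solitons}, which I would cite for the closed-form solution.

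\textbf{Identification and asymptotics.} Once $\Nsol$ is known to be the unique solution of RHP~\ref{RHP2} with data $\mathcal{D}_\xi$, it follows from the construction of $N$-soliton solutions in Appendix~\ref{app:solitons} and the reconstruction formula \eqref{q.lam} that the corresponding potential is precisely $\qsol(x,t;\mathcal{D}_\xi)$, an exact $N$-soliton solution of \eqref{DNLS2} with the prescribed eigenvalues. The expansion \eqref{Nsol.asymp} is then immediate: the leading term's $(2,1)$-entry is $-\eps \overline{\qsol}/(2i)$ by the symmetry relation \eqref{Nsol symmetry} applied to the $\bigo{\lam^{-1}}$ term together with the asymptotic normalization $\Nsol = \tril{\alpha} + \bigo{\lam^{-1}}$, and the identity $2i(\Nsol[1])_{12} = \qsol(x,t;\mathcal{D}_\xi)$ is exactly the reconstruction formula \eqref{q.lam} read off from the first subleading coefficient of the $(1,2)$-entry. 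The main obstacle in the whole argument is not any single step but the bookkeeping in the identification part: one must check carefully that the $\delta(\lam_k)^{-2}$ factors and the $e^{-2it\theta(\lam_k)}$ factors in \eqref{Nrhp residue} combine correctly into the time-evolved connection coefficient of a \emph{bona fide} soliton solution (i.e.\ that the conjugation by $\delta^{\sigma_3}$ and the deformation have not corrupted the discrete scattering data), and that the normalization at infinity, which here is $\tril{\alpha}$ rather than the identity, is consistent with the symmetry reduction; these are the places where sign errors and misplaced factors of $2$ typically creep in, so I would track them explicitly against the formulas in Appendix~\ref{app:solitons}.
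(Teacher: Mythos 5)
Your proposal is correct and follows essentially the same route as the paper: you identify Problem~\ref{outermodel} as RHP~\ref{RHP2} with reflectionless data $\mathcal{D}_\xi$ (the $\delta(\lam_k)^{-2}$ factor giving exactly $\widetilde{C_k}$), invoke the general existence and uniqueness theory of Section~\ref{sec:inverse} (which covers $\rho\equiv 0$), and read off \eqref{Nsol.asymp} from the reconstruction formula \eqref{q.lam} and the symmetry condition. The additional material you sketch (augmented-contour reduction, finite linear system, Liouville-type uniqueness) essentially reproduces the Appendix~\ref{app:solitons} construction and is not needed beyond the citation, so no gap.
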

\begin{proof}
Comparing Problem~\ref{outermodel} to RHP~\ref{RHP2}, it is obvious that $\Nsol$ solves RHP~\ref{RHP2} for scattering data $\mathcal{D}_\xi$. 
{  Recalling that $\delta(\lambda)$ is given in \eqref{T},} this is exactly the scattering data generated by the $N$-soliton solution $\qsol(x,t;\mathcal{D}_\xi)$ of \eqref{DNLS2}. 
As was shown in {   Section \ref{sec:inverse} (Theorem  \ref{thm:RHP1.unique}), } a unique solution of RHP~\ref{RHP2} always exists for any admissible scattering data, which includes the data considered here. The expansion for large $\lam$ follows from the fact that $\Nsol$ is meromorphic, and the given off-diagonal coefficients of the leading and first moment terms follow from \eqref{q.lam} and the symmetry condition in Problem~\ref{outermodel}.

\end{proof}

Since $\Nsol$ is the solution of RHP~\ref{RHP2} for reflectionless scattering data, Lemma \ref{lem:sol.bound}  of Appendix~\ref{app:solitons} provides the following useful facts. 

\begin{lemma}\label{lem:outer.bound}
Given $\rho \in H^{2,2}(\R)$ and $\{ (\lam_k, c_k) \}_{k=1}^N \subset \C^+ \times \C^\times$ for RHP~\ref{RHP2}, the solution $\Nsol$ of Problem~\ref{outermodel} satisfies
\[
	\oldnorm{\Nsol}_{\infty} = \bigo{1}
\]
where the implied constant is independent of $(x,t) \in \R^2$ and depends on $\rho$ through its $H^{2,2}(\R)$ norm. 
\end{lemma}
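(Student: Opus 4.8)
The statement to prove is Lemma~\ref{lem:outer.bound}: a uniform (in $(x,t)$) $L^\infty$-bound on the reflectionless Beals--Coifman solution $\Nsol$ of Problem~\ref{outermodel}, with the constant controlled by $\norm[H^{2,2}(\R)]{\rho}$. The plan is to reduce everything to the structure of the pure-$N$-soliton Riemann--Hilbert problem, which has only discrete data, and then to exploit the fact cited in the lemma statement that Appendix~\ref{app:solitons} (Lemma~\ref{lem:sol.bound}) already contains the needed estimate for such reflectionless problems. So the main work is bookkeeping: identifying exactly which reflectionless scattering data $\Nsol$ corresponds to, checking that those data lie in the regime covered by Lemma~\ref{lem:sol.bound}, and tracking how the constant depends on $\rho$.

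\textbf{Step 1.} By Proposition~\ref{outer.soliton}, $\Nsol$ is \emph{precisely} the solution of the full RHP~\ref{RHP2} with the reflection coefficient set to zero and discrete data $\mathcal{D}_\xi = \{(\lam_k, \widetilde{C_k})\}_{k=1}^N$, where $\widetilde{C_k} = C_k \exp\!\big(\tfrac{i}{\pi}\int_{\negint}\log(1-\eps z|\rho(z)|^2)\,\tfrac{dz}{z-\lam_k}\big)$. The eigenvalues $\lam_k$ are unchanged from the original data. Thus $\Nsol$ is a genuine (matrix) soliton solution's RHP data, and its $(1,2)$-moment reconstructs the $N$-soliton $\qsol(x,t;\mathcal{D}_\xi)$ via \eqref{q.lam}.

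\textbf{Step 2.} Invoke Lemma~\ref{lem:sol.bound} from Appendix~\ref{app:solitons}: for reflectionless data $\{(\lam_k, c_k)\}_{k=1}^N \subset \C^+\times\C^\times$ the solution of RHP~\ref{RHP2} is bounded in $L^\infty(\C)$ uniformly in $(x,t)\in\R^2$, with the bound depending only on the $\lam_k$ and (the moduli of) the $c_k$. Here we must verify two points. First, that the modified coefficients $\widetilde{C_k}$ remain in $\C^\times$ and that the map $(x,t)\mapsto$ (time-evolved soliton data) stays in a fixed compact subset of $(\C^+\times\C^\times)^N$; this is clear because $\lam_k$ are fixed, $|\widetilde{C_k}| = |C_k|\,|\exp(\cdots)|$ with the exponent bounded by $\tfrac{1}{\pi}\|\kappa\|_{L^1(\R)}\,\tfrac{1}{\operatorname{dist}(\lam_k,\R)} \lesssim \norm[H^{2,2}(\R)]{\rho}/d_\Lambda$ (using $\|\kappa\|_{L^1(\R)}\lesssim\norm[H^{2,2}(\R)]{\rho}$ as in Lemma~\ref{lem:T}), and the DNLS time evolution only multiplies $\widetilde{C_k}$ by a unimodular factor $e^{-4i\lam_k^2 t}$ times a bounded soliton-shift factor. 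Second, that the dependence of the constant on $\rho$ enters only through this bound on $|\widetilde{C_k}|$, which is controlled by $\norm[H^{2,2}(\R)]{\rho}$ and the fixed quantity $d_\Lambda$; hence the implied constant in $\oldnorm{\Nsol}_\infty = \bigo{1}$ depends on $\rho$ only through $\norm[H^{2,2}(\R)]{\rho}$ (and on the fixed discrete spectrum), as claimed.

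\textbf{Step 3 (conclusion).} Combining Steps 1 and 2: $\Nsol$ equals a reflectionless RHP~\ref{RHP2} solution whose data lies in a compact subset of $(\C^+\times\C^\times)^N$ determined by $\norm[H^{2,2}(\R)]{\rho}$ and the $\lam_k$, and by Lemma~\ref{lem:sol.bound} such solutions are uniformly bounded; therefore $\oldnorm{\Nsol}_\infty = \bigo{1}$ uniformly in $(x,t)\in\R^2$ with the stated dependence. The only genuinely delicate point — and hence what I would expect to be the main obstacle — is making the dependence of the constant on $\rho$ truly go through $\norm[H^{2,2}(\R)]{\rho}$ alone and nothing else about $\rho$: this requires the estimate $\|\kappa\|_{L^1(\R)}\lesssim\norm[H^{2,2}(\R)]{\rho}$ together with the lower bound $\inf_\R(1-\eps\lam|\rho(\lam)|^2)\geq c$ to keep $\log(1-\eps z|\rho(z)|^2)$ integrable and to guarantee $\widetilde{C_k}\neq 0$, so that the contour $\negint$-integral defining $\widetilde{C_k}$ is bounded uniformly. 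Everything else is a direct citation of Proposition~\ref{outer.soliton} and Appendix~\ref{app:solitons}.
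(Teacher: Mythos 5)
Your overall route is the same as the paper's: Lemma~\ref{lem:outer.bound} is obtained there simply by noting (Proposition~\ref{outer.soliton}) that $\Nsol$ is the solution of RHP~\ref{RHP2} for the reflectionless data $\mathcal{D}_\xi=\{(\lam_k,\widetilde{C}_k)\}_{k=1}^N$ and then citing Lemma~\ref{lem:sol.bound} of Appendix~\ref{app:solitons}. Your extra bookkeeping --- two-sided bounds on $|\widetilde{C}_k|$ uniform in $\xi$, using $\|\kappa\|_{L^1(\R)}\lesssim\norm[H^{2,2}(\R)]{\rho}$ (which needs the lower bound $\inf_\R(1-\eps\lam|\rho|^2)\ge c$) together with $|z-\lam_k|\ge \Im\lam_k\ge d_\Lambda$ --- is exactly what makes the constant of Lemma~\ref{lem:sol.bound} uniform over the $\xi$-dependent family $\mathcal{D}_\xi$, a point the paper leaves implicit, so this part of your argument is a genuine improvement in care.

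However, one step in your Step~2 is wrong as written. The factor $e^{-4i\lam_k^2 t}$ is not unimodular for $\lam_k\in\C^+$ with $\Re\lam_k\neq 0$: its modulus is $e^{8t\,\Re\lam_k\,\Im\lam_k}$. More to the point, the $(x,t)$-dependence of Problem~\ref{outermodel} enters through the residue matrices \eqref{Nrhp residue} via $e^{\mp 2it\theta(\lam_k)}$, and these coefficients do \emph{not} remain in a compact subset of $\C^\times$; they grow or decay exponentially in $t$. So the claim that ``the time-evolved soliton data stays in a fixed compact subset of $(\C^+\times\C^\times)^N$'' is false and cannot be the source of uniformity in $(x,t)$. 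That uniformity is precisely the content of Lemma~\ref{lem:sol.bound} itself: its proof renormalizes the exponentially large residue coefficients by the Blaschke factor \eqref{Blaschke}, built from the $\xi$-dependent partition $\posnegpoles$, so that the effective coefficients $\gamma_k(x,t)$ satisfy the uniform bound \eqref{gamma.bounds}, and then concludes by continuity and compactness in $\vect{\Gamma}$. Your proof is repaired simply by deleting that sentence and invoking Lemma~\ref{lem:sol.bound} as stated (constant independent of $(x,t)$ for fixed admissible data), retaining only your check that $|\widetilde{C}_k(\xi)|$ is bounded above and away from zero uniformly in $\xi$ by constants depending on $\norm[H^{2,2}(\R)]{\rho}$, $c$, and $d_\Lambda$, since the Appendix~\ref{app:solitons} constant depends on the norming constants only through such two-sided bounds (via $K_\poles$ and the compactness argument). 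A cosmetic point: the bound in Lemma~\ref{lem:sol.bound} is on $L^\infty(\C\setminus\mathcal{B}_\poles)$, not on all of $\C$, since $\Nsol$ has poles at $\poles$; you should phrase your conclusion the same way.
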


\subsection{Local model at the {  stationary phase } point}
\label{sec:local model}
In any neighborhood of the critical point $\lam = \xi$ the bound \eqref{V2 bound outside} does not give a uniformly small estimate of the jump $\vk{2}$ for large times. It follows that our outer model, which replaced $\vk{2}$ with identity, is not a good approximation of $\Nrhp$ in a neighborhood of $\xi$. We require a new model $\NPC$ which is an accurate approximation inside a small--but fixed with respect to $|t|$--neighborhood of $\lam = \xi$. Let 
\begin{equation}\label{xi disk}
	\Uxi = \{ \lam \in \C \,:\, |\lam - \xi| \leq \poledist/3 \} 
\end{equation}
where the radius $\poledist/3$ is chosen such that $(1-\indicator(\lam))\equiv 1$ for $\lam \in \Uxi$; this has the effect of making the jump matrix $\vk{2}$, \eqref{V2}, constant along $\Sigma_k \cap \Uxi$, $k=1,\dots,4$.

Define the time-scaled local coordinate
\begin{gather}\label{zeta}
	\zeta(\lam) = |8t|^{1/2} (\lam - \xi).
\end{gather}
Under this change of variables we have the identifications
\[
	e^{2it \theta}  = {  e^{- i\eta \zeta^2/2}} e^{4it\xi^2},
	\qquad
	(\eta(\lam - \xi))^{2i\eta \kappa(\xi)} = (\eta \zeta)^{2i \eta \kappa(\xi)} e^{-i \eta \kappa(\xi) \log |8 t|}.
\]
Also set,
\begin{equation}
\begin{gathered}\label{PCconst}
	r_\xi = \rho(\xi) \delta_0(\xi,\sgnt)^2 e^{-i \sgnt \kappa(\xi) \log|8t|} e^{4it\xi^2} \\
	s_\xi = -\eps \xi \bar{\rho}(\xi) \delta_0(\xi,\sgnt)^{-2} e^{i \sgnt \kappa(\xi) \log|8t|} e^{-4it\xi^2},
\end{gathered}
\end{equation}
so that $1+r_\xi s_\xi = 1 - \eps \xi | \rho(\xi)|^2$.
\begin{figure}[htbp]
	\centering
	\hspace*{\stretch{1}}
	\FigPCjumps 
	\hspace*{\stretch{1}}
	\FigPCjumpsB
	\hspace*{\stretch{1}}
	\caption{
	The system of contours for the local model problem near $\lam = \xi$. 
	The model jumps are $\vk{\textsc{PC}} = 
	(\sgnt \zeta)^{i \sgnt \kappa(\xi)\sig} e^{-i \sgnt \zeta^2 \sig/4} V 
	(\sgnt \zeta)^{-i \sgnt \kappa(\xi)\sig} e^{i \sgnt \zeta^2 \sig/4}$ 
	where $V$ is given above in terms of the local variable $\zeta$ defined by \eqref{zeta}.
	\label{fig:PCjumps}
	}
\end{figure}

Using the notation just introduced, and extending the constant jump of $\vk{2}\Big|_{\lam \in \Uxi}$ to infinity along each of the four rays $\Sigma_k,\ k=1,\dots,4$, (see Figure~\ref{fig:PCjumps}) our local model $\NPC$ satisfies  

\begin{RHP}\label{rhp.localmodel}
	Find a $2\times2$ matrix-valued function $\NPC(\lam) = \NPC(\lam;\xi,\sgnt)$, analytic in $\C \setminus \Sk{2}$ with the following properties:
	\begin{enumerate}[1.]
		\item $\NPC(\lam;\xi,\sgnt) = \sdiag{1}{1} + \bigo{\lam^{-1}}$ as $|\lam| \to \infty$.
		\item $\NPC(\lam;\xi,\sgnt)$ has continuous boundary values 
		$\NPC[\pm](\lam; \xi,\sgnt)$ on $\Sk{2}$ which satisfy the jump relation 
		$\NPC[+] = \NPC[-] \vk{\textsc{pc}}$, where
		\begin{equation} \label{NPC jump}
			\vk{\textsc{pc}}(\lam) = 
			\begin{cases}
				\tril{ s_\xi(\eta \zeta)^{-2i \eta \kappa(\xi)} e^{i \eta \zeta^2/2} } 
					& z \in \Sigma_1, \smallskip \\
				\triu{\frac{r_\xi}{1+ r_\xi s_\xi} 
				(\eta \zeta)^{2i \eta \kappa(\xi)} e^{-i \eta \zeta^2/2} } 
					& z \in \Sigma_2, \smallskip \\
				\tril{\frac{s_\xi}{1+ r_\xi s_\xi} 
				(\eta \zeta)^{-2i \eta \kappa(\xi)} e^{i \eta \zeta^2/2} } 
					& z \in \Sigma_3, \smallskip \\
				\triu{  r_\xi (\eta \zeta)^{2i \eta \kappa(\xi)}  e^{-i \eta \zeta^2/2} } 
					& z \in \Sigma_4. 
			\end{cases} 
		\end{equation}
	\end{enumerate}
\end{RHP}
\begin{remark}
RHP~\ref{rhp.localmodel} does not possess the symmetry condition shared by RHP~\ref{rhp.Nrhp} and Problem~\ref{outermodel}. This is because it is a local model and will only be used for bounded values of $\lam$. The normalization is chosen such that the residual error $\error$ defined by \eqref{error def} has a near identity jump on the shared boundary between the local and outer models. 
\end{remark}

This type of model problem is typical in integrable systems whenever there is a phase function, here $\theta$, which has a quadratic critical point along the real line 
The solution in each of these cases is found by a further reduction of RHP~\ref{rhp.localmodel} to a problem with constant jumps (at the price of nontrivial behavior at infinity) whose solution {  satisfies a} 
differential equation,
 which can be solved using parabolic cylinder functions, $D_a(z)$, whose properties are tabulated in \cite[Chapter 12]{DLMF}. The precise details of the construction for DNLS, which differ only slightly from the construction for KdV or NLS can be found in \cite{LPS16}; here we give only the necessary details.

\begin{proposition}\label{prop:PCmodel.est}
Fix $\xi$ and let $\kappa = \kappa(\xi)$ be as given in \eqref{T}. 
Then for any choice of constants $r_\xi, s_\xi$ in \eqref{PCconst} such that $1 + r_\xi s_\xi = e^{-2\pi \kappa} \neq 0$, the solution $\NPC(\lam;\xi,\sgnt)$ of RHP~\ref{rhp.localmodel} is given by  
\begin{gather}\label{local model soln}
	\begin{aligned}
		\NPC(\lam;\xi, +) &= F(\zeta(\lam); s_\xi, r_\xi)  \\
		\NPC(\lam;\xi, -) &= \sigma_2 F(-\zeta(\lam); r_\xi, s_\xi) \sigma_2 
	\end{aligned}
\shortintertext{where,}	
\nonumber	
	F(\zeta; s,r) := \Phi_{s,r}(\zeta) \mathcal{P}_{s,r}(\zeta) 
	\zeta^{-i \kappa \sig} e^{i\zeta^2\sig/4} \\
\nonumber
	\mathcal{P}_{s,r}(\zeta) = 
	\left\{ \begin{array}{c@{\quad}l@{\hspace{2em} }c@{\quad}l}
		\tril{s_\xi} & \arg \zeta \in \lp 0, \frac{\pi}{4} \rp, &
		\triu{\frac{r_\xi}{1+r_\xi s_\xi}} & \arg \zeta \in \lp \frac{3\pi}{4},\pi \rp, \smallskip \\ 
		\triu{-r_\xi} & \arg \zeta \in \lp -\frac{\pi}{4}, 0 \rp,  &
		\tril{\frac{-s_\xi}{1+r_\xi s_\xi}} & \arg \zeta \in \lp -\pi, -\frac{3\pi}{4} \rp, \\
		\multicolumn{4}{c}{\diag{1}{1} \quad |\arg \zeta| \in \lp \frac{\pi}{4},\frac{3\pi}{4} \rp.} 
	\end{array}\right.
\\
\nonumber	
\label{SimpliPhi+}
 	\Phi_{s,r} (\zeta)= 
	\begin{pmatrix}
 		{e^{-\frac{3\pi}{4}\kappa} D_{i\kappa}(\zeta e^{-3i\pi/4})} &
		{\dfrac{e^{\frac{\pi}{4}(\kappa-i)}}{\beta_{21}(s,r) }
			(-i\kappa) D_{-i\kappa-1}(\zeta e^{-\pi i/4})} \\
		{\dfrac{e^{-\frac{3\pi}{4}(\kappa+i)}}{\beta_{12}(s,r)}
			i\kappa D_{i\kappa -1}(\zeta e^{-3i\pi/4})} &
		{e^{\pi\kappa/4}D_{-i\kappa}(\zeta e^{-i\pi/4})}	
	\end{pmatrix}
\shortintertext{for $\Im(\zeta)>0$, and for $\Im(\zeta)<0$}
\nonumber
\label{SimpliPhi-}
 	\Phi_{s,r} (\zeta)= 
	\begin{pmatrix}
		{e^{\pi\kappa/4}D_{i\kappa}(\zeta e^{\pi i/4})} &
		{-\dfrac{i\kappa}{\beta_{21}(s,r)} e^{-\frac{3\pi}{4}(\kappa-i)}
				D_{-i\kappa-1}(\zeta e^{3i\pi/4})} \\
		{\dfrac{(i\kappa)}{\beta_{12}(s,r)} e^{\frac{\pi}{4}(\kappa+i)}
				D_{i\kappa-1}(\zeta e^{\pi i/4})} &
		{e^{-3\pi \kappa/4}D_{-i\kappa}(\zeta e^{3i\pi/4})}
	\end{pmatrix}.
\shortintertext{where,}
\label{betas}
	\beta_{12}(s,r) = \frac{ \sqrt{2\pi}e^{-\pi \kappa/2} e^{i \pi/4} }{s \Gamma(-i\kappa)}
	\qquad
	\beta_{21}(s,r) = \frac{\kappa}{\beta_{12}(s,r)} 
		= \frac{ -\sqrt{2\pi}e^{-\pi \kappa/2} e^{-i \pi/4} }{r \Gamma(i\kappa)}.
\shortintertext{As $\zeta \to \infty$}
	F(\zeta; s,r) = I + \frac{1}{\zeta} \offdiag{ -i\beta_{12}(s,r)}{i\beta_{21}(s,r)}
	+ \bigo{\zeta^{-2}}.
\end{gather}
\end{proposition}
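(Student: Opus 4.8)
\textbf{Proof strategy for Proposition~\ref{prop:PCmodel.est}.}
The plan is to reduce RHP~\ref{rhp.localmodel} to a Riemann--Hilbert problem with \emph{constant} (i.e.\ $\zeta$-independent) jump matrices by peeling off the oscillatory and power-law factors, then to recognize the resulting constant-jump problem as one solved by parabolic cylinder functions. First I would introduce the conjugated unknown $\widetilde{F}(\zeta) = \NPC(\lam;\xi,\sgnt) (\sgnt\zeta)^{-i\sgnt\kappa\sigma_3} e^{i\sgnt\zeta^2\sigma_3/4}$ using the change of variable \eqref{zeta}; as noted in the caption of Figure~\ref{fig:PCjumps}, under this conjugation the jump relation \eqref{NPC jump} becomes $\widetilde{F}_+ = \widetilde{F}_- \vk{\textsc{pc}}$ with $\vk{\textsc{pc}}$ now a piecewise-constant unipotent matrix on the four rays $\Sigma_k$. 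The price is that $\widetilde{F}$ no longer normalizes to $I$ at infinity but instead has an essential singularity governed by $\zeta^{-i\sgnt\kappa\sigma_3}e^{i\sgnt\zeta^2\sigma_3/4}$. Treating $\sgnt=+1$ first (the $\sgnt=-1$ case reduces to it by the substitution $\zeta\mapsto-\zeta$ and conjugation by $\sigma_2$, accounting for the relabeling of the rays, which I would verify by a short computation exactly as in Lemma~\ref{lemma:bob}), I would then observe that $\Psi(\zeta) := \widetilde{F}(\zeta)$ satisfies a first-order linear ODE: differentiating the relation $\Psi_+=\Psi_-\vk{\textsc{pc}}$ and using that $\vk{\textsc{pc}}$ is constant shows $\dee_\zeta\Psi \cdot \Psi^{-1}$ has no jumps, is entire, and by the growth of $\Psi$ at infinity is a polynomial in $\zeta$ of degree one — explicitly $\dee_\zeta\Psi = \left(\tfrac{i\sgnt\zeta}{2}\sigma_3 + B\right)\Psi$ for a constant off-diagonal matrix $B$ to be determined.

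The next step is to solve this ODE. Eliminating one component yields Weber's equation for $D_a(\cdot)$, so each matrix entry of $\Psi$ is a linear combination of parabolic cylinder functions $D_{\pm i\kappa}$ and $D_{\pm i\kappa - 1}$ evaluated at $\zeta$ times appropriate eighth-roots of unity $e^{\pm i\pi/4}, e^{\pm 3i\pi/4}$, the specific rotations being forced by the requirement that $\Psi$ have the prescribed algebraic-times-Gaussian behavior in each of the half-planes $\Im\zeta\gtrless 0$. I would use the standard connection formulas and asymptotics for $D_a(z)$ from \cite[Chapter 12]{DLMF} to match the solution across $\R$ and to fix the off-diagonal constants $\beta_{12}(s,r),\beta_{21}(s,r)$: the relation $\beta_{12}\beta_{21}=\kappa$ comes from $\det\Psi\equiv 1$ (equivalently $\det\NPC\equiv1$, since all jumps are unimodular), and the individual values are pinned down by enforcing consistency of the jumps on $\Sigma_1$ and $\Sigma_2$, which after substituting the asymptotic expansion of $D_a$ gives the stated formulas involving $\Gamma(\pm i\kappa)$. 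Here the hypothesis $1+r_\xi s_\xi = e^{-2\pi\kappa}\neq 0$ is exactly what guarantees $\kappa$ is finite and $\Gamma(\pm i\kappa)$ nonzero, so the construction does not degenerate. The piecewise matrix $\mathcal{P}_{s,r}(\zeta)$ in the sectors is then precisely the factor that converts the constant-jump solution $\Phi_{s,r}$ back to a function whose jumps match $\vk{\textsc{pc}}$; verifying $F_+=F_-\vk{\textsc{pc}}$ on each $\Sigma_k$ and that $\mathcal{P}_{s,r}$ is continuous across the rays $\arg\zeta = \pm\pi/4,\pm3\pi/4$ is a direct check ray-by-ray.

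Finally, to obtain the large-$\zeta$ expansion $F(\zeta;s,r) = I + \zeta^{-1}\soffdiag{-i\beta_{12}}{i\beta_{21}} + \bigo{\zeta^{-2}}$, I would insert the two-term asymptotics $D_a(z) = z^a e^{-z^2/4}(1 - \tfrac{a(a-1)}{2z^2} + \cdots)$ (valid in the appropriate sectors) into the explicit formula for $\Phi_{s,r}$, multiply by $\mathcal{P}_{s,r}$ (which is $I$ in the sectors containing $\arg\zeta$ near $\pm\pi/2$, so irrelevant for the expansion there) and by $\zeta^{-i\kappa\sigma_3}e^{i\zeta^2\sigma_3/4}$; the leading exponential and power factors cancel, the diagonal correction is $\bigo{\zeta^{-2}}$, and the $\bigo{\zeta^{-1}}$ term is purely off-diagonal with the coefficients $-i\beta_{12},i\beta_{21}$ as claimed. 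I expect the main obstacle to be bookkeeping: getting the branch cuts of $\zeta^{\pm i\kappa}$, the choices of rotated arguments $\zeta e^{\pm i\pi/4}$, $\zeta e^{\pm 3i\pi/4}$ in the two half-planes, and the orientation-dependent relabeling for $\sgnt=-1$ all mutually consistent so that the jumps close up correctly on every ray — this is where sign errors proliferate. Since this construction is, up to these DNLS-specific constants, identical to the parabolic-cylinder parametrix used for KdV and NLS, I would cite \cite{LPS16} for the detailed verification and present here only the statement of the solution together with the asymptotic expansion, which is all that is used downstream in Section~\ref{sec:RHP.exist}.
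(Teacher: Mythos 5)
Your proposal is correct and follows essentially the same route as the paper: the paper itself gives no detailed proof of Proposition~\ref{prop:PCmodel.est}, but instead observes that the standard reduction to a constant-jump problem solvable by a first-order ODE in $\zeta$ (Weber's equation, parabolic cylinder functions from \cite[Chapter 12]{DLMF}) applies, and defers the DNLS-specific bookkeeping to \cite{LPS16} — exactly the construction and the citation strategy you outline. Your ancillary checks (the $\sigma_2$-conjugation with $\zeta\mapsto-\zeta$ for $\sgnt=-1$, $\beta_{12}\beta_{21}=\kappa$ from the unimodular jumps, and the large-$\zeta$ expansion from the two-term asymptotics of $D_a$) are the same ingredients used there.
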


The essential property of $\NPC$ that we will need later is the asymptotic expansion for large $\zeta$. 
Using \eqref{local model soln},  we have 

\begin{equation} \label{local model expand}
	\NPC(\lam;\xi,\sgnt) = 
	 I + \frac{|8t|^{-1/2}}{\lam - \xi} A(\xi,\sgnt)
	 	+ \bigo{t^{-1}}, \\
	\qquad \lam \in \partial \Uxi,
\end{equation}
where
\begin{equation}\label{Axi}
	A(\xi,\sgnt) = \offdiag{ -iA_{12}(\xi, \sgnt) }{iA_{21}(\xi, \sgnt)}
\end{equation}
{  with 
\begin{align*}
&A_{12} (\xi, +) = \beta_{12} (s_\xi, r_\xi ), \   \  A_{21} (\xi, +) = \beta_{21} (s_\xi, r_\xi ) \nonumber \\
&A_{12} (\xi, -) = -\beta_{21}(r_\xi, s_\xi),  \  A_{21} (\xi, -) = -\beta_{12}(r_\xi, s_\xi)
\end{align*}
satisfies}

\begin{equation}\label{mod beta}
    	|A_{12}(\xi,\sgnt)|^2 =  \frac{\kappa(\xi)}{\xi} 
		\qquad \qquad 
		A_{21}(\xi, \sgnt) = \eps \xi \overline{A_{12}(\xi,\sgnt)}
\end{equation}
\begin{subequations}\label{beta arg}
\begin{align}
	&\begin{multlined}[.9\textwidth]
	\arg A_{12}(\xi, +) = 
	\frac{\pi}{4} + \arg \Gamma(i \kappa(\xi)) - \arg ( -\eps \xi \overline{\rho(\xi)} ) \\
	+ \frac{1}{\pi} \int_{-\infty}^\xi \log|\xi - \lam|\, \mathrm{d}_\lam \log(1-\eps \lam |\rho(\lam)|^2) 
	- \kappa(\xi) \log|8t| + 4t \xi^2 
	\end{multlined} \\
	&\begin{multlined}[.9\textwidth]
	\arg A_{12}(\xi, -) = 
	\frac{\pi}{4} - \arg \Gamma(i \kappa(\xi)) - \arg (-\eps \xi \overline{\rho(\xi)} ) \\
	+ \frac{1}{\pi} \int_{\xi}^\infty \log|\xi - \lam|\, \mathrm{d}_\lam \log(1-\eps \lam |\rho(\lam)|^2) 
	+ \kappa(\xi) \log|8t| + 4t \xi^2 
	\end{multlined} 
\end{align}	
\end{subequations}

The first line of \eqref{local model expand} and \eqref{mod beta} are proved in \cite{LPS16}; the second  of \eqref{local model expand} follows easily from the first and the \eqref{local model soln}. The second line of \eqref{mod beta} is a consequence of the fact that $\beta_{12} \beta_{21} = \kappa$. Equations \eqref{beta arg} follow simply from \eqref{betas} and \eqref{PCconst}
where we use \eqref{delta0 arg} and integration by parts to express the integral terms.

We will also need the values of the model problem at $z=0$, which from \eqref{zeta} gives $\zeta(0) = -|8t|^{1/2} \xi$. Note that, though \eqref{local model soln} is piecewise defined across the real axis, $\NPC$ does not have a jump across the real axis (cf. \eqref{NPC jump}); in the formulas below we have  chosen the components of $\Phi_{s,r}$ for which right multiplication by $\mathcal{P}_{s.r}(\zeta)$ has no effect. {  The first column  $\NPC[1]$  of $\NPC$ at $\lambda=0$ is given by:}
\begin{equation}\label{NPC1.at.0}
	\NPC[1](0;\xi,\sgnt) = 
	e^{\frac{\pi \kappa(\xi)}{4}} 
	{e^{2it\xi^2 - \frac{i}{2} \sgnt \kappa(\xi)  \log |8t\xi^2|} }
	\diag{1}{ -e^{\frac{i\sgnt \pi}{4}} \sgn(\xi) }
	\begin{bmatrix}
	 D_{i\sgnt \kappa(\xi)} \lp e^{\frac{i\sgnt \pi}{4}} |8t\xi^2|^{1/2} \rp \\
	 iA_{21}(\xi,\sgnt) D_{i\sgnt \kappa(\xi)-1} 
	   \lp e^{\frac{i\sgnt \pi}{4}} |8t\xi^2|^{1/2} \rp
	\end{bmatrix}
\end{equation}
\begin{lemma}\label{lem:NPC21}
Let $c_1, c_2, c_3$ be strictly positive constants, and suppose that $\rho \in H^{2,2}(\R)$ with $\oldnorm{\rho}_{H^{2,2}(\R)} \leq c_1$, $\inf_{\lam \in \R} (1 - \eps \lam |\rho(\lam)|^2 ) \geq c_2$, and $|\xi| < c_3$. Then as $|t| \to \infty$, 
\[
	|\Nsol[21](0;\xi,\sgnt)|  \lesssim t^{-1/2},
\]
where the implied constant is independent of $\xi$ and $\rho$.
\end{lemma}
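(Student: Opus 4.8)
The plan is to read the quantity in question off the explicit formula \eqref{NPC1.at.0} and bound it factor by factor; the only non-elementary input is a size estimate for the parabolic cylinder function $D_{i\sgnt\kappa(\xi)-1}$ along the ray $\arg = \sgnt\pi/4$. From the lower component of \eqref{NPC1.at.0},
$$
\NPC[21](0;\xi,\sgnt) = -\sgn(\xi)\,i\,e^{\frac{i\sgnt\pi}{4}}\,A_{21}(\xi,\sgnt)\,
 e^{\frac{\pi\kappa(\xi)}{4}}\, e^{\,2it\xi^2-\frac{i}{2}\sgnt\kappa(\xi)\log|8t\xi^2|}\,
 D_{i\sgnt\kappa(\xi)-1}\!\left(e^{\frac{i\sgnt\pi}{4}}|8t\xi^2|^{1/2}\right).
$$
First I would note that under the hypotheses $1-\eps\xi|\rho(\xi)|^2$ lies in a fixed compact subinterval of $(0,\infty)$ (using $\|\rho\|_{L^\infty}\lesssim\|\rho\|_{H^{2,2}}\le c_1$ and $|\xi|<c_3$), so $\kappa$ is real-valued with $\|\kappa\|_{L^\infty}\lesssim 1$; hence $e^{\pi\kappa(\xi)/4}\lesssim1$ and the fourth exponential has modulus $1$. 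Moreover $|\kappa(\xi)|=\tfrac1{2\pi}|\log(1-\eps\xi|\rho(\xi)|^2)|\lesssim|\xi||\rho(\xi)|^2\lesssim|\xi|$, so by \eqref{mod beta} $|A_{21}(\xi,\sgnt)|^2=\xi^2|A_{12}(\xi,\sgnt)|^2=|\xi\kappa(\xi)|\lesssim|\xi|^2$, i.e.\ $|A_{21}(\xi,\sgnt)|\lesssim|\xi|$, all constants depending only on $c_1,c_2,c_3$. It therefore remains to prove
$$
\left|D_{i\sgnt\kappa(\xi)-1}\!\left(e^{\frac{i\sgnt\pi}{4}}|8t\xi^2|^{1/2}\right)\right|\lesssim \min\!\left(1,\ (|t|^{1/2}|\xi|)^{-1}\right).
$$

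For this I would invoke the large-argument asymptotics of parabolic cylinder functions \cite[\S12.9]{DLMF}: $D_\nu(z)=z^\nu e^{-z^2/4}(1+\bigo{|z|^{-2}})$, uniformly for $\nu$ in a compact subset of $\C$ and $|\arg z|\le \tfrac{3\pi}{4}-\delta$. Writing $z=e^{i\sgnt\pi/4}|8t\xi^2|^{1/2}$ one has $z^2=\sgnt\,i\,|8t\xi^2|$ purely imaginary, hence $|e^{-z^2/4}|=1$, while $|z^\nu|=|z|^{\Re\nu}e^{-\frac{\sgnt\pi}{4}\Im\nu}=|z|^{-1}e^{-\pi\kappa(\xi)/4}\lesssim|z|^{-1}$. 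Thus, choosing $R_0$ so that the $\bigo{|z|^{-2}}$ error is $\le\tfrac12$, for $|z|\ge R_0$ we get $|D_{i\sgnt\kappa(\xi)-1}(z)|\lesssim|z|^{-1}=|8t\xi^2|^{-1/2}$, which is comparable to $(|t|^{1/2}|\xi|)^{-1}$; and for $0\le|z|<R_0$ the entirety of $D_\nu(\cdot)$ together with joint continuity in $(\nu,z)$ give $|D_{i\sgnt\kappa(\xi)-1}(z)|\lesssim1$ uniformly over the compact parameter set $\{|\Im\nu|\le\|\kappa\|_{L^\infty},\ \Re\nu=-1\}\times\{|z|\le R_0\}$.

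Combining these with $|A_{21}(\xi,\sgnt)|\lesssim|\xi|$ yields the claim in both regimes: if $|8t\xi^2|^{1/2}\ge R_0$ then $|\NPC[21](0;\xi,\sgnt)|\lesssim|\xi|(|t|^{1/2}|\xi|)^{-1}\lesssim|t|^{-1/2}$, while if $|8t\xi^2|^{1/2}<R_0$ (so in particular $|\xi|\lesssim|t|^{-1/2}$) then $|\NPC[21](0;\xi,\sgnt)|\lesssim|\xi|\lesssim|t|^{-1/2}$; in either case the implied constant depends only on $c_1,c_2,c_3$. The one delicate point — and the only genuine obstacle — is that along the ray $\arg z=\sgnt\pi/4$ the Gaussian factor $e^{-z^2/4}$ does not decay but merely oscillates, so the decisive $|t|^{-1/2}$ gain must come from the polynomial prefactor $z^\nu$ with $\Re\nu=-1$ together with the vanishing $|A_{21}|\lesssim|\xi|$; one must also keep the parabolic-cylinder asymptotics uniform in the order $\nu=i\sgnt\kappa(\xi)-1$, which is legitimate since $\kappa$ ranges over a bounded set under the standing hypotheses.
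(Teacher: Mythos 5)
Your proposal is correct and follows essentially the same route as the paper: both read the quantity off \eqref{NPC1.at.0}, use $|A_{21}(\xi,\sgnt)|^2=|\xi\kappa(\xi)|$ from \eqref{mod beta}, and exploit that along $\arg z=\sgnt\pi/4$ the factor $e^{-z^2/4}$ is unimodular so the large-argument asymptotics of $D_{i\sgnt\kappa(\xi)-1}$ supply the decisive $|z|^{-1}$. The only cosmetic difference is that the paper bundles the two factors into showing $|e^{\pi\kappa(\xi)/4}\,p\,D_{i\sgnt\kappa(\xi)-1}(p)|$ is bounded uniformly in $p\ge 0$ (using $\kappa(\xi)/\xi$ bounded), whereas you estimate $|A_{21}|\lesssim|\xi|$ and $|D_{\nu-1}(p)|\lesssim\min(1,|p|^{-1})$ separately and split into two regimes — the same computation, differently organized.
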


\begin{proof}
	From \eqref{NPC1.at.0} and \eqref{mod beta} we have, setting  $p := e^{\frac{i\sgnt \pi}{4}} |8t\xi^2|^{1/2} $,
\begin{align*}
	|\NPC[21](0; \xi,\sgnt)|	
	= 
	e^{\frac{\pi \kappa(\xi)}{4} } 
	\left| 
	  A_{21}(\xi,\sgnt) 
	  D_{i\eta \kappa(\xi) - 1} \lp  p \rp 
	\right| 
	= 
	\left| \frac{\kappa(\xi)}{8t\xi} \right|^{1/2} 
	\left| 
	  e^{\frac{\pi \kappa(\xi)}{4} } p 
	  D_{i\eta \kappa(\xi) - 1} \lp   p \rp.
	\right|,
\end{align*}
Since $\kappa(\xi)/\xi \to \tfrac{\eps}{2\pi} |\rho(0)|^2$ as $\xi \to 0$, 
 {  it is sufficient to } show that the last factor is bounded in $p \geq 0$. For finite $p$ this is trivial, and for large $p$, the asymptotic expansion of $D_{\nu}(z)$ \cite[Eq. 12.9.1]{DLMF} gives 
\[
	\left| 
	  e^{\frac{\pi \kappa(\xi)}{4} } p 
	  D_{i\eta \kappa(\xi) - 1} \lp  p \rp
	\right|
	= \left| e^{-i p^2/4} p^{i \sgnt \kappa(\xi)} \left[1 + \bigo{p^{-2}} \right] \right|
	= 1 + \bigo{p^{-2}}.
	\qedhere
\]
\end{proof}

We also need the following boundedness property:
\begin{lemma}[{see \cite[Appendix D]{LPS16}}]
\label{lem:PCmodel bound}
Let $c_1$ and $c_2$ be strictly positive constants, and suppose that $\rho \in H^{2,2}(\R)$ with $\oldnorm{\rho}_{H^{2,2}(\R)} \leq c_1$ and $\inf_{\lam \in \R} (1 - \eps \lam |\rho(\lam)|^2 ) \geq c_2$.
Then, 	
\begin{gather*}
	\oldnorm{ \NPC(\, \cdot\, ; \xi,\sgnt) }_\infty \lesssim 1 \\
	\oldnorm{ \NPC(\, \cdot\, ; \xi,\sgnt)^{-1} }_\infty \lesssim 1,
\end{gather*}
where the implied constants are uniform in $\xi$ and $|t|>1$ and depends only on $c_1$ and $c_2$.
\end{lemma}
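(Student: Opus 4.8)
The plan is to read off both bounds directly from the explicit representation of $\NPC$ in Proposition~\ref{prop:PCmodel.est}, after a preliminary reduction. Since the jump matrices \eqref{NPC jump} all have unit determinant and $\NPC(\lam;\xi,\sgnt)\to I$ as $|\lam|\to\infty$, the function $\det\NPC$ is entire and tends to $1$, hence $\det\NPC\equiv 1$; therefore $\NPC(\dotarg;\xi,\sgnt)^{-1}=\sigma_2\,\NPC(\dotarg;\xi,\sgnt)^\intercal\,\sigma_2$ and the second estimate follows from the first, so it suffices to prove $\oldnorm[\infty]{\NPC(\dotarg;\xi,\sgnt)}\lesssim 1$ uniformly in $\xi\in\R$ and $|t|>1$, with constant depending only on $c_1$ and $c_2$. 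The first thing I would do is check that every parameter entering \eqref{local model soln}--\eqref{PCconst}, namely $\kappa(\xi)$, $r_\xi$, $s_\xi$ (equivalently the normalization constants $\beta_{12},\beta_{21}$), ranges over a compact set determined only by $c_1,c_2$. Indeed $\kappa(\xi)$ is real with $|\kappa(\xi)|\le\tfrac12\oldnorm[\infty]{\kappa}\lesssim\oldnorm[H^{2,2}(\R)]{\rho}\le c_1$; the prefactors in \eqref{PCconst} are unimodular, so $|r_\xi|=|\rho(\xi)|\lesssim c_1$ by Sobolev embedding and $|s_\xi|=|\xi\rho(\xi)|\lesssim c_1$ by the weighted embedding $H^{2,2}(\R)\hookrightarrow\langle\dotarg\rangle^{-1}L^\infty(\R)$; finally $1+r_\xi s_\xi=1-\eps\xi|\rho(\xi)|^2\in[c_2,\,1+Cc_1^2]$, so by \eqref{betas} and \eqref{mod beta} the quantities $\kappa(\xi)/\xi$ and $\xi\kappa(\xi)$ (which equal the squared moduli of the various $\beta_{ij}$) are bounded: for $|\xi|$ large, $|\rho(\xi)|=\bigo{|\xi|^{-1}}$ forces $\xi|\rho(\xi)|^2\to 0$, keeping both ratios bounded.

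With the parameters confined to a compact set $P=P(c_1,c_2)$, I would split the estimate according to the rescaled variable $\zeta=\zeta(\lam)=|8t|^{1/2}(\lam-\xi)$. On $\{|\zeta|\le R\}$ for a fixed $R$: the parabolic cylinder functions $D_a(z)$ are jointly continuous (indeed entire) in $(a,z)$, so the factors $\Phi_{s,r}$, $\mathcal P_{s,r}$ and $\zeta^{-i\kappa\sig}e^{i\zeta^2\sig/4}$ in \eqref{local model soln} are continuous in $(\zeta,\kappa,r_\xi,s_\xi)$ away from $\Sk{2}$ with continuous boundary values on $\Sk{2}$, the pieces match across each $\Sigma_k$ and there is no jump across $\R$ by construction; a compactness argument over $\{|\zeta|\le R\}\times P$ then gives $\sup_{|\zeta|\le R}|\NPC|\lesssim 1$. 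On $\{|\zeta|>R\}$: I would use the expansion of Proposition~\ref{prop:PCmodel.est}, $F(\zeta;s,r)=I+\zeta^{-1}\offdiag{-i\beta_{12}(s,r)}{i\beta_{21}(s,r)}+\bigo{\zeta^{-2}}$, where the remainder is uniform over $P$ thanks to the uniform large-argument asymptotics of $D_a$ for $a$ in a compact set \cite[Ch.~12]{DLMF}; choosing $R$ large enough that this remainder together with the $\zeta^{-1}$ term has norm $\le\tfrac12$ gives $|\NPC|\le\tfrac32$ there. Combining the two regions yields $\oldnorm[\infty]{\NPC(\dotarg;\xi,\sgnt)}\lesssim 1$, and hence also $\oldnorm[\infty]{\NPC(\dotarg;\xi,\sgnt)^{-1}}\lesssim 1$.

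The main obstacle is the uniformity of all of this as the order parameter $i\kappa(\xi)$ degenerates to $0$ (which happens whenever $\rho(\xi)$ is small, in particular for $|\xi|$ large). When $\kappa(\xi)\to 0$ one of $\beta_{12},\beta_{21}$ tends to $0$, so the factors $\beta_{12}^{-1},\beta_{21}^{-1}$ in $\Phi_{s,r}$ look singular; in fact the relevant \emph{entries} stay bounded, since $i\kappa\,D_{\pm i\kappa-1}$ compensates the blow-up and the model degenerates to the trivial one $\NPC\equiv I$ at $\rho(\xi)=0$. Verifying this compensation uniformly --- i.e. that the representation \eqref{local model soln} is well-behaved, and the parabolic cylinder asymptotics uniform, down to $\kappa=0$ --- is the one delicate point; this is precisely what is carried out in \cite[Appendix~D]{LPS16}, to which the statement refers, and the rest of the argument is routine bookkeeping.
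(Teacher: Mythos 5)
Your argument is correct and is essentially the argument of the cited source: the paper gives no proof of this lemma beyond the pointer to \cite[Appendix D]{LPS16}, and your route --- $\det \NPC \equiv 1$ to reduce the inverse bound to the direct one, confinement of $(\kappa(\xi), r_\xi, s_\xi)$ and hence of $\beta_{12},\beta_{21}$ to a compact set determined only by $c_1,c_2$ (using the weighted Sobolev bounds $|\rho(\xi)|+|\xi||\rho(\xi)|^2 \lesssim c_1$ and $c_2 \le 1-\eps\xi|\rho(\xi)|^2 \lesssim 1+c_1^2$), and the split into $|\zeta|\le R$ (joint continuity of $D_a(z)$) versus $|\zeta|>R$ (large-$\zeta$ asymptotics uniform for parameters in a compact set) --- is exactly the standard argument carried out there. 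The one point you single out as delicate is in fact harmless: because $\beta_{12}\beta_{21}=\kappa$, the apparently singular factors in $\Phi_{s,r}$ are $i\kappa/\beta_{12}=\beta_{21}$ and $-i\kappa/\beta_{21}=-\beta_{12}$, which are themselves bounded on the compact parameter set, so no additional compensation as $\kappa(\xi)\to 0$ needs to be verified beyond the continuity you already invoke.
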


\subsection{Existence theory for the RH model problem}
\label{sec:RHP.exist}
In this section, we prove that the solution $\Nrhp$ of our model problem, RHP~\ref{rhp.Nrhp} exists, by constructing it from the outer and local models introduced previously. The models are not an exact solution, and some residual error persists. We will show that for large times, the error solves a small norm Riemann-Hilbert problem which we can expand asymptotically.

Write the solution $\Nrhp$ of RHP~\ref{rhp.Nrhp} in the form 
\begin{equation}\label{error def}
	\Nrhp(\lam) = 
	\begin{cases}
		\error(\lam) \Nsol(\lam) & |\lam-\xi| \notin \Uxi \\
		\error(\lam) \Nsol(\lam) \NPC(\lam) & |\lam-\xi| \in \Uxi \\
	\end{cases}
\end{equation}
where  {  $\Uxi$ is defined in \eqref{xi disk},} $\Nsol$, the solution of Problem~\ref{outermodel}, and $\NPC$, the solution of RHP~\ref{rhp.localmodel}, are both  bounded functions of $(x,t)$ {  having determinant equal to $1$}. This relation implicitly defines a transformation to a new unknown $\error$ which satisfies a new RH problem. In order to state it let 
\begin{equation}\label{error contour}
	\Ske = \partial \Uxi \cup (\Sigma_2 \setminus \Uxi)
\end{equation}
where the circle $\partial \Uxi$ is oriented counter clockwise. 

\begin{RHP}
\label{rhp.E}
Find a $2 \times 2$ matrix value function $\error$ analytic in $\C \backslash \Ske$ with the following properties
\begin{enumerate}[1.]
	\item For $z \in \C \setminus \calU_\xi$, $\error$ satisfies the symmetry relation 
	\[
		\error(\lam) = \begin{pmatrix}
			\overline{\error_{22}(\overline{\lam})} & \eps \lam^{-1}\overline{\error_{21}(\overline{\lam})}\\
			\eps \lam \overline{\error_{12}(\overline{\lam})} & \overline{\error_{11}(\overline{\lam})}
		\end{pmatrix} 
	\]	
	\item $\error (\lam) = \tril{\newtext{\overline{q}_\error}} + \bigo{\lam^{-1}}$ as $|\lam| \to \infty$, for a constant $\newtext{\overline{q}_\error}$ determined by the symmetry condition above. 
	\item For $\lam \in \Ske$, the boundary values $\error_\pm$ satisfy the jump relation $\error_+(\lam) = \error_-(\lam) \vke(\lam)$ where
\begin{equation}\label{error jump}
	\vke(\lam) = 
	\begin{cases}
		\Nsol(\lam) \vk{2}(\lam) \Nsol(\lam)^{-1} & \lam \in \Sigma_2 \setminus \Uxi \\
		\Nsol(\lam) \NPC(\lam)^{-1}  \Nsol(\lam)^{-1} &  z \in \partial \Uxi
	\end{cases}
\end{equation}

\end{enumerate}
\end{RHP}

The jump matrix $\vke$ is uniformly near identity for large times; it follows from \eqref{V2 bound outside}, \eqref{local model expand} and Lemma~\ref{lem:outer.bound}, that 
\begin{gather}\label{error jump bound0}
	\left| \vke(\lam) - I \right| \lesssim 
	\begin{cases}
		t^{-1/2} & \lam \in \partial \Uxi \\
		e^{-2\sqrt{2} t |\lam-\xi|^2} & \lam \in \Sigma_2 \setminus \Uxi
	\end{cases}
\shortintertext{and} 
	\label{error jump bound}
	\oldnorm{\vke - I}_{L^{2,k}(\R) \cap L^\infty(\R)} \lesssim t^{-1/2}, \quad k \in {   \N}.
\end{gather}
There is a well known existence and uniqueness theorem for RHPs with near identity jump matrices \cite{}. 
Let $C_\error$ denote the Cauchy integral operator
\begin{equation}
	C_\error f  = C^-(f(\vke-I)),
\end{equation}
where $C^-$ is the usual Cauchy projection operator on $\Ske$: 
\[
	C^- f(\lam) = \lim_{\lam \to \Ske[-]} \frac{1}{2\pi i} \int_{\Ske} \frac{ f(z)}{z-\lam} dz.
\]
The essential fact needed for the small-norm theory is that $C_\error$ is a small norm operator,
\begin{equation}\label{error.op.bound}
	\oldnorm{ C_\error }_{L^2(\Ske) \rarr L^2(\Ske)}
	= \bigo{ \| \vke - I \|_\infty } = \bigo{t^{-1/2}}.
\end{equation}

\begin{lemma}
\label{lem:Err}
Suppose that $\rho \in H^{2,2}(\R)$ and $c:= \inf_{\lam \in \R} (1 - \lam | \rho(\lam)|^2) > 0$ strictly. 
Then, for sufficiently large times $|t|>0$, there exists a unique solution $\error(\lam;x,t)$ of RHP~\ref{rhp.E} with the property that 
\[
	\oldnorm{\error - {\begin{pmatrix} 1 & 0 \\\overline{q}_\error & 1 \end{pmatrix}}  }_{L^\infty(\C) } \lesssim t^{-1/2}.
\]
Moreover, as $\lam \to \infty$
\[
	\error(\lam) =  {\begin{pmatrix} 1 & 0 \\ \overline{q}_\error & 1 \end{pmatrix}}
	  + \lam^{-1} \error_{1} + \bigo{ \lam^{-2} }
\]

where ${{q}_\error} := \eps \lp \error_{1} \rp_{12}$ and
\begin{equation}\label{E1.12}
	2i \lp \error_{1} \rp_{12} = 
{ 	\frac{1}{|2t|^{1/2} }  }
	\lb A_{12}(\xi, \sgnt) \Nsol[11](\xi)^2 
		+ A_{21}(\xi, \sgnt) \Nsol[12](\xi)^2 
	\rb + \bigo{t^{-1}}, 
\end{equation}
Here, $\Nsol$ is the solution of the Problem~\ref{outermodel} described in Lemma~\ref{outer.soliton}
while $A_{12}$ and $A_{21}$ are given by \eqref{Axi}-\eqref{beta arg}. 
\end{lemma}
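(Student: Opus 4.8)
The plan is to apply the standard small-norm Riemann-Hilbert theory to RHP~\ref{rhp.E}, using the jump bounds \eqref{error jump bound0}--\eqref{error jump bound} and the operator bound \eqref{error.op.bound} as the input, and then to extract the large-$\lambda$ expansion by a careful residue-type computation. First I would record the standard fact: because $C_\error$ has operator norm $\bigo{t^{-1/2}}$, the operator $I - C_\error$ is invertible on $L^2(\Ske)$ for $|t|$ large, so the singular integral equation
\[
	\mu_\error = I + C_\error \mu_\error
\]
has a unique solution $\mu_\error \in I + L^2(\Ske)$ with $\oldnorm{\mu_\error - I}_{L^2(\Ske)} \lesssim t^{-1/2}$, and then
\[
	\error(\lam) = I + \frac{1}{2\pi i} \int_{\Ske} \frac{\mu_\error(s)(\vke(s)-I)}{s-\lam}\, ds
\]
solves RHP~\ref{rhp.E}. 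The uniform bound $\oldnorm{\error - \stril{\overline{q}_\error}}_{L^\infty(\C)} \lesssim t^{-1/2}$ follows from Cauchy-Schwarz applied to this representation, together with $\oldnorm{\vke - I}_{L^2 \cap L^\infty} \lesssim t^{-1/2}$; the symmetry relation for $\error$ is inherited from the symmetry of $\vke$ (which in turn comes from the symmetries of $\Nsol$ and $\vk{2}$) by the uniqueness of the small-norm solution.

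Next I would extract the asymptotic expansion. Expanding $(s-\lam)^{-1} = -\lam^{-1} - s\lam^{-2} - \cdots$ in the integral representation gives
\[
	\error_1 = -\frac{1}{2\pi i} \int_{\Ske} \mu_\error(s)(\vke(s) - I)\, ds.
\]
I would split $\Ske = \partial \Uxi \cup (\Sigma_2 \setminus \Uxi)$. On $\Sigma_2 \setminus \Uxi$ the integrand is exponentially small in $t$ by \eqref{error jump bound0}, contributing $\bigo{t^{-1}}$ (indeed much smaller). On $\partial \Uxi$ I would use \eqref{error jump} together with \eqref{local model expand}: $\vke(\lam) - I = \Nsol(\lam)\left(\NPC(\lam)^{-1} - I\right)\Nsol(\lam)^{-1}$, and $\NPC(\lam)^{-1} - I = -|8t|^{-1/2}(\lam-\xi)^{-1} A(\xi,\sgnt) + \bigo{t^{-1}}$. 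Since $\mu_\error = I + \bigo{t^{-1/2}}$ in $L^2$, replacing $\mu_\error$ by $I$ in the $\partial \Uxi$ integral costs only $\bigo{t^{-1}}$ (the product of two $t^{-1/2}$ factors, using that $\vke - I$ is itself $\bigo{t^{-1/2}}$ uniformly on $\partial \Uxi$). Then by the residue theorem, since $A(\xi,\sgnt)(\lam-\xi)^{-1}$ has its only pole inside $\Uxi$ at $\lam = \xi$ while $\Nsol$ is analytic there (for $|t|$ large, $\xi$ sits away from the poles $\poles$ by the support condition on $\indicator$),
\[
	-\frac{1}{2\pi i}\int_{\partial\Uxi} \Nsol(s) \left(\NPC(s)^{-1} - I\right)\Nsol(s)^{-1}\, ds
	= |8t|^{-1/2}\, \Nsol(\xi) A(\xi,\sgnt) \Nsol(\xi)^{-1} + \bigo{t^{-1}}.
\]
Writing out the $(1,2)$ entry using $A(\xi,\sgnt) = \soffdiag{-iA_{12}}{iA_{21}}$, $\Nsol \in SL_2(\C)$ so $\Nsol(\xi)^{-1} = \sigma_2 \Nsol(\xi)^\intercal \sigma_2$, and the symmetry $\Nsol[21] = \eps\lam^{-1}\overline{\Nsol[12](\lambar)}$, a short matrix computation yields
\[
	2i(\error_1)_{12} = \frac{1}{|2t|^{1/2}}\left[A_{12}(\xi,\sgnt)\Nsol[11](\xi)^2 + A_{21}(\xi,\sgnt)\Nsol[12](\xi)^2\right] + \bigo{t^{-1}},
\]
which is \eqref{E1.12}. (The prefactor changes from $|8t|^{-1/2}$ to $|2t|^{-1/2}$ because of the factor of $2i$ in the reconstruction normalization; I would double-check this constant against the conventions in \eqref{q.lam} and \eqref{Nsol.asymp}.)

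The main obstacle I anticipate is not the abstract small-norm machinery, which is entirely standard, but rather being careful about two points. First, one must verify that $\Nsol$ and $\Nsol^{-1}$ are genuinely bounded on $\partial \Uxi$ uniformly in $(x,t)$ — this is Lemma~\ref{lem:outer.bound} — and that $\partial \Uxi$ stays a fixed distance from the poles $\poles$; this is guaranteed by the choice $\Uxi = \{|\lam-\xi|\le \poledist/3\}$ in \eqref{xi disk} and the cutoff property \eqref{chi prop}, but it needs to be invoked explicitly so the residue computation is legitimate. Second, keeping track of the error terms: the contribution of replacing $\mu_\error$ by $I$, the $\bigo{t^{-1}}$ tail in \eqref{local model expand}, and the exponentially small piece on $\Sigma_2 \setminus \Uxi$ must all be shown to be $\bigo{t^{-1}}$, uniformly in $\xi$ and in $\rho$ ranging over a bounded subset of $H^{2,2}(\R)$ with $1 - \eps\lam|\rho(\lam)|^2 \ge c$; this uniformity is inherited from the corresponding uniform statements in Lemma~\ref{lem:extensions}, Lemma~\ref{lem:outer.bound}, Lemma~\ref{lem:PCmodel bound}, and \eqref{local model expand}.
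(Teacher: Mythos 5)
Your asymptotic computation of $\lp \error_1 \rp_{12}$ follows the paper's proof essentially verbatim: discard the exponentially small contribution of $\Sigma_2\setminus\Uxi$, replace the Beals--Coifman solution by its leading term at a cost of $\bigo{t^{-1}}$, evaluate $-\tfrac{1}{2\pi i}\oint_{\partial\Uxi}(\vke(z)-I)\,dz$ by residues using \eqref{error jump} and \eqref{local model expand} (legitimate because $\Uxi$ has radius $\poledist/3$, so $\Nsol$ and its inverse are analytic and uniformly bounded there by Lemma~\ref{lem:outer.bound}), and simplify with $\det\Nsol=1$; the constant does come out right, since $2i\cdot(-i)\cdot|8t|^{-1/2}=|2t|^{-1/2}$.

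The genuine gap is in your existence step. RHP~\ref{rhp.E} is \emph{not} normalized to the identity: condition 2 requires $\error(\lam)\to\stril{\overline{q}_\error}$ with $\overline{q}_\error$ a generically nonzero constant forced by the symmetry in condition 1. The function you construct, $I+C\bigl(\mu_\error(\vke-I)\bigr)$ with $\mu_\error=I+C_\error\mu_\error$, tends to $I$ at infinity and therefore does not satisfy condition 2; and your claim that the symmetry is inherited ``by uniqueness'' fails at exactly this point. Indeed, if $\error\to I$, the matrix obtained by applying the symmetry of condition 1 has $(2,1)$ entry $\eps\lam\,\overline{\error_{12}(\lambar)}\to\eps\,\overline{\lp\error_1\rp_{12}}$, which is nonzero in general (the factor $\lam$ promotes the $\bigo{\lam^{-1}}$ tail of $\error_{12}$ to a constant), so the symmetrized function lies in a \emph{different} normalization class and uniqueness cannot identify it with your $\error$. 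This is precisely why the paper builds $\error$ row by row: the first row, normalized to $(1\ 0)$, is obtained from a small-norm Beals--Coifman equation on $\Ske$; the constant $\overline{q}_\error$ is then read off from the first row through the symmetry, as a contour integral of $\bigl((1\ 0)+\vect{e}_1\bigr)$ against the second column of $\vke-I$, with $|\overline{q}_\error|\lesssim t^{-1/2}$; and the second row is obtained from a second Beals--Coifman equation with inhomogeneous term $(\overline{q}_\error\ 1)$. You could salvage your route by left-multiplying your identity-normalized solution by $\stril{\overline{q}_\error}$ with $\overline{q}_\error:=\eps\,\overline{\lp\error_1\rp_{12}}$ --- this leaves $\lp\error_1\rp_{12}$, hence \eqref{E1.12}, unchanged --- but you would then still have to \emph{prove} the symmetry for the resulting matrix, which in effect is the paper's row-wise argument. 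A secondary point: the $L^\infty(\C)$ bound does not follow from Cauchy--Schwarz alone, since $\oldnorm[L^2(\Ske)]{(z-\lam)^{-1}}$ blows up as $\lam$ approaches $\Ske$; the paper gets uniformity up to the contour by locally deforming the (locally analytic) jump with a bounded invertible transformation, and your write-up needs some such device as well.
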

 
\begin{proof}
Due to the nonstandard normalization we will construct the solution $\error$ row-by-row.
We begin by considering the first row, which we denote $\vect{e}_1 = \begin{pmatrix} \error_{11} & \error_{12} \end{pmatrix}$, which is canonically normalized.

By standard results in the theory of Cauchy integral operators \cite{DZ03}, $\vect{e}_1$ must satisfy
\begin{equation}\label{row1}
	\vect{e}_1(\lam) = (1 \ 0 )
	+ \frac{1}{2\pi i} \int_{\Ske} \frac{ ((1 \ 0)  + \vect{\mu}_1(z) )(\vke(z)-I) }{z-\lam} dz
\end{equation}
where $\vect{\mu}_1 \in L^2(\Ske)$ is the unique row vector solution of 
\begin{equation}\label{mu}
	(\one - C_\error) \vect \mu_1 = C_\error (1 \ 0)
\end{equation}
The existence and uniqueness of $\vect \mu_1$ follows immediately from \eqref{error.op.bound}
which establishes the existence of $(\one- C_\error)^{-1}$, and allows one to construct $\vect \mu_1$ by Neumann series, moreover, we have 
\begin{equation}\label{mu bound}
	\oldnorm{\vect \mu_1 }_{ {  L^2(\Ske) }} \lesssim  
	\frac{ \oldnorm{ C_\error }_{L^2(\Ske) \rarr L^2(\Ske)} }{1- \oldnorm{ C_\error }_{L^2(\Ske) \rarr L^2(\Ske)} }
	\lesssim t^{-1/2}.
\end{equation}
Fix a small constant $d$ and suppose that $\inf_{z \in \Ske} |\lam - z| > d$, then 
\[
	\left| \vect{e}_1 - (1\ 0) \right| \leq  
	\frac{d^{-1}}{2\pi} \lp
		\oldnorm{\vke - I}_{{  L^1}} + \oldnorm{\vect \mu_1}_{{  L^2}} \oldnorm{\vke -I}_{{  L^2}} \rp \lesssim t^{-1/2}.
\]
To get $L^\infty$ control for $\lam$ approaching $\Ske$ we observe that the jumps on the contours $\Ske$ are locally analytic, and so can be freely deformed locally by a bounded invertible transformation $\vect{e}_1 \mapsto \widetilde{\vect{e}}_1$. The previous argument then goes through to show that $\left| \widetilde{ \vect{e}}_1 - (1\ 0) \right|$ is bounded on $\Ske$ which then gives a similar bound on $\vect{e}_1$ as the transformation itself is bounded. 

To build the second row $\vect{e}_2 = \begin{pmatrix} \error_{21} & \error_{22} \end{pmatrix}$, we begin by using the symmetry condition to compute $\newtext{\overline{q}_\error}$. Since $\error_{21}(\lam) = \eps \lam \overline{\error_{12}(\overline{\lam})}$ for all large $\lam$, we use \eqref{row1} and take the limit as $\lam \to \infty$ to find
\[
	\newtext{q_\error} = \frac{\eps}{2 \pi i } \lp \int_{\Ske} \lb (1\ 0) + \vect{e}_1(z) \rb \lb \vke{z} - I \rb_2 dz \rp,
\]
where the subscript $2$ on the second factor of the integrand denotes the second column of the matrix. 
Finally, using \eqref{error jump bound} and \eqref{mu bound} we have the bound
\begin{equation}\label{alpha bound}
	|\newtext{\overline{q}_\error}| \lesssim t^{-1/2}.
\end{equation}

Now that $\overline{q}_\error$ is well defined, we construct the second row as 
\begin{equation}\label{row2}
	\vect{e}_2(\lam) = ({\overline{q}_\error} \ 1 )
	+ \frac{1}{2\pi i} \int_{\Ske} \frac{ (({\overline{q}_\error} \ 1)  + \vect{\mu}_2(z) )(\vke(z)-I) }{z-\lam} dz
\end{equation}
where
\begin{equation}\label{mu2}
	(\one - C_\error) \vect \mu_2 = C_\error ({\overline{q}_\error} \ 1).
\end{equation}
Then repeating the arguments above we have that $\oldnorm{ \vect{\mu}_2}_{L^2{\Ske} } \lesssim  t^{-1/2}$ and $\oldnorm{\vect{e}_2 - ({\overline{q}_\error} \ 1)}_{L^\infty(\C)} \lesssim t^{-1/2}$.

Define the matrices $\error = \twovec{ \vect{e}_1 }{\vect{e}_2 }$ and $\vect{\mu} = \twovec{ \vect{\mu}_1 }{\vect{ \mu}_2 }$. Then, for large $\lam$ write $\error_0 = 
{\begin{pmatrix} 1 &0 \\ \overline{q} &1
\end{pmatrix}}  $
and
\begin{gather*}
	\error(\lam) = \error_0 + \lam^{-1} \error_{1} + \lam^{-2} \mathcal{S}(\lam) \\
	\error_{1} = \frac{-1}{2\pi i} \int\limits_{\Ske} (\error_0 + \vect \mu(z))(\vke(z) - I) dz, 
	\qquad
	\mathcal{S}(\lam) = \frac{\lam}{2\pi i} 
		\int\limits_{\Ske} \frac{(\error_0 + \vect \mu(z)) z (\vke(z) - I)}{(z-\lam)} dz .
\end{gather*}
Using \eqref{error jump bound} and \eqref{mu bound}, as $\lam \to \infty$ 
the residual $\mathcal{S}$ satisfies 
\[
	|\mathcal{S} | \lesssim \oldnorm{V_E-I }_{L^{2,2}(\R)} \lesssim t^{-1/2},
\]
while using \eqref{alpha bound} we have
\begin{equation}\label{E1}
	\begin{aligned}
		\error_{1} 
		&= -\frac{1}{2\pi i} \oint_{\partial \Uxi} (\vke(z) -I) dz + \bigo{t^{-1}} \\
		&= |8t|^{-1/2} \Nsol(\xi) {A(\xi,\sgnt) } \Nsol(\xi)^{-1} + \bigo{t^{-1}}.
	\end{aligned}
\end{equation}
The last line in \eqref{E1} follows from a residue calculation using \eqref{local model expand}. A direct calculation using the fact that $\det \Nsol = 1$ then gives \eqref{E1.12}.
\end{proof}

Combining Lemmas~\ref{lem:outer.bound}, \ref{lem:PCmodel bound}, and \ref{lem:Err}, it follows from \eqref{error def} that

\begin{proposition}\label{prop:Nrhp.bound}
Let $c_1$ and $c_2$ be strictly positive constants, and suppose that $\rho \in H^{2,2}(\R)$ with $\oldnorm{\rho}_{H^{2,2}(\R)} \leq c_1$ and $\inf_{\lam \in \R} (1 - \eps \lam |\rho(\lam)|^2 ) \geq c_2$.
Then, 	
\begin{gather*}
	\oldnorm{ \Nrhp }_\infty \lesssim 1 \qquad
	\oldnorm{ (\Nrhp)^{-1} }_\infty \lesssim 1,
\end{gather*}
where the implied constants are uniform in $\xi$ and $t>1$ and depend only on $c_1$ and $c_2$.
\end{proposition}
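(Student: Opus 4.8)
\textbf{Plan of proof for Proposition~\ref{prop:Nrhp.bound}.} The statement is an immediate packaging of the three uniform sup-norm bounds already established: Lemma~\ref{lem:outer.bound} for $\Nsol$, Lemma~\ref{lem:PCmodel bound} for $\NPC$ and its inverse, and Lemma~\ref{lem:Err} for the error factor $\error$. The idea is simply to read off the formula \eqref{error def} defining $\Nrhp$ in terms of these factors and estimate each region separately. First I would fix $c_1,c_2>0$ and assume $\rho$ lies in the corresponding bounded subset of $H^{2,2}(\R)$ with $\inf_{\lam\in\R}(1-\eps\lam|\rho(\lam)|^2)\ge c_2$; by Lemma~\ref{lem:Err} there is a $T>0$ (depending only on $c_1,c_2$) such that for $|t|>T$ the solution $\error$ of RHP~\ref{rhp.E} exists, is unique, and satisfies $\|\error - \stril{\overline{q}_\error}\|_{L^\infty(\C)}\lesssim t^{-1/2}$; in particular $\|\error\|_\infty\lesssim 1$ and, since $\det\error\equiv 1$ and $\error$ is triangular at leading order with bounded entries, $\|\error^{-1}\|_\infty = \|\sigma_2\error^\intercal\sigma_2\|_\infty\lesssim 1$, with constants depending only on $c_1,c_2$ and $|t|>T$.

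Next I would split into the two cases of \eqref{error def}. For $\lam$ outside the disk $\Uxi$ we have $\Nrhp(\lam)=\error(\lam)\Nsol(\lam)$, so
\[
	\|\Nrhp\|_{L^\infty(\C\setminus\Uxi)} \le \|\error\|_\infty \|\Nsol\|_\infty \lesssim 1
\]
by Lemma~\ref{lem:Err} and Lemma~\ref{lem:outer.bound}, the latter giving $\|\Nsol\|_\infty=\bigo{1}$ uniformly in $(x,t)$ and depending on $\rho$ only through its $H^{2,2}$-norm. For $\lam\in\Uxi$ we have $\Nrhp(\lam)=\error(\lam)\Nsol(\lam)\NPC(\lam)$, so
\[
	\|\Nrhp\|_{L^\infty(\Uxi)} \le \|\error\|_\infty \|\Nsol\|_\infty \|\NPC\|_\infty \lesssim 1
\]
by the same two lemmas together with Lemma~\ref{lem:PCmodel bound}. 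Since $\|\NPC\|_\infty$ and $\|\NPC^{-1}\|_\infty$ are bounded uniformly in $\xi$ and $|t|>1$ with constants depending only on $c_1,c_2$, and $\Uxi$ is the fixed-radius disk of radius $\poledist/3$, the two estimates combine to give $\|\Nrhp\|_\infty\lesssim 1$ on all of $\C$. The bound on the inverse is identical: $\det\Nsol=\det\NPC=1$ forces $\det\Nrhp=1$ (a fact already noted after \eqref{error def}), hence $(\Nrhp)^{-1}=\sigma_2(\Nrhp)^\intercal\sigma_2$, and
\[
	\|(\Nrhp)^{-1}\|_{L^\infty(\C)} = \|(\Nrhp)^\intercal\|_\infty = \|\Nrhp\|_\infty \lesssim 1,
\]
or, more directly, $\|(\Nrhp)^{-1}\|_\infty\le \|\NPC^{-1}\|_\infty\|\Nsol^{-1}\|_\infty\|\error^{-1}\|_\infty\lesssim 1$ on $\Uxi$ and $\le\|\Nsol^{-1}\|_\infty\|\error^{-1}\|_\infty\lesssim 1$ off $\Uxi$.

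There is essentially no obstacle here; the only point requiring a word of care is that all implied constants must be tracked to be uniform in $\xi$ and in $|t|>T$, which is exactly what the cited lemmas assert (Lemma~\ref{lem:outer.bound} is even uniform in $(x,t)\in\R^2$, while Lemmas~\ref{lem:PCmodel bound} and \ref{lem:Err} carry the stated uniformity in $\xi$ and $|t|$). One should also note that the threshold $T$ on $|t|$ inherited from Lemma~\ref{lem:Err} depends only on $c_1,c_2$, so the final bounds hold for all such $\rho$ with a single choice of $T$; the conclusion for $|t|>T$ suffices since Proposition~\ref{prop:Nrhp.est} (whose proof this completes) is an asymptotic statement. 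This establishes both displayed inequalities and hence the proposition.
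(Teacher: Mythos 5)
Your proposal is correct and follows essentially the same route as the paper, which simply combines Lemmas~\ref{lem:outer.bound}, \ref{lem:PCmodel bound}, and \ref{lem:Err} with the decomposition \eqref{error def} (and the $\det=1$ observation for the inverse), exactly as you do. Your extra remarks on the large-$|t|$ threshold and the uniformity of constants only make explicit what the paper leaves implicit.
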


When estimating  the gauge factor for  the solution  u of the DNLS equation (Section  \ref{subsec:sol-u}),  we need the following result that provides the large-time 
behavior of the error term $\error$ at $z=0$ : 
\begin{proposition}
\label{prop:E}
Suppose that $\rho \in H^{2,2}(\R)$ and $c:= \inf_{\lam \in \R} (1 - \eps \lam |\rho(\lam)|^2) > 0$ strictly. Then, as $|t| \to \infty$ the unique solution of RHP~\ref{rhp.E} described by Lemma~\ref{lem:Err} satisfies
\begin{align}
\label{e11.at0.out}	
	\error_{11}(0) 
	&= 1
	- \frac{i\eps}{|8t|^{1/2}} 
	\lb
		\Nsol[12](\xi) \overline{A_{12}(\xi,\sgnt) \Nsol[11](\xi)}
		+ \overline{\Nsol[12](\xi)} A_{12}(\xi,\sgnt) \Nsol[11](\xi) 
	\rb \\
\nonumber
	&\quad
	+ \bigo{t^{-1}}
\shortintertext{
whenever $0 \neq \Uxi$; when $0 \in \Uxi$ it satisfies
}
\label{e11.at0.in}	
	\error_{11}(0) 
	&= 1
	- \frac{i\eps}{|8t|^{1/2}} \times \\
\nonumber
	&\quad
	\lb
		\overline{A_{12}(\xi,\sgnt) }
		\lp 
			\Nsol[12](\xi) \overline{\Nsol[11](\xi)} 
		  - \Nsol[12](0)  \overline{\Nsol[11](0)}
		\rp
		+ A_{12}(\xi,\sgnt) \overline{\Nsol[12](\xi)} \Nsol[11](\xi) 	
	\rb \\
\nonumber
	&\quad
	+ \bigo{t^{-1}} 
\shortintertext{and}
\label{e12.at0.in}	
	\error_{12}(0) 
	&= \frac{i\eps}{|8t|^{1/2}} 
	\lb
		\eps A_{12}(\xi,\sgnt) \frac{ \Nsol[11](\xi)^2 - \Nsol[11](0)^2}{\xi}
		+ \overline{(A_{12}(\xi,\sgnt)} \lp \Nsol[12](\xi)^2 - \Nsol[12](0)^2 \rp
	\rb \\
\nonumber
	&\quad 
	+ \bigo{t^{-1}}
\end{align}
\end{proposition}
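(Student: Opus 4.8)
The plan is to evaluate $\error(\lambda)$ at $\lambda = 0$ directly from the integral representations \eqref{row1} and \eqref{row2} established in the proof of Lemma~\ref{lem:Err}, and to extract the leading contribution from the circle $\partial\Uxi$ via a residue calculation using the local model expansion \eqref{local model expand}. Concretely, from \eqref{row1} (and its analogue \eqref{row2} for the second row), evaluated at $\lambda = 0$, one has
\[
	\error(0) = \error_0 + \frac{1}{2\pi i}\int_{\Ske} \frac{(\error_0 + \vect\mu(z))(\vke(z) - I)}{z}\, dz,
\]
where $\error_0 = \stril{\overline{q}_\error}$. The contribution of $\Sigma_2 \setminus \Uxi$ to this integral is exponentially small in $t$ by \eqref{error jump bound0}, and the $\vect\mu$-term on $\partial\Uxi$ contributes $\bigO{t^{-1}}$ by the bound $\oldnorm{\vect\mu}_{L^2(\Ske)}\lesssim t^{-1/2}$ in \eqref{mu bound} together with $\oldnorm{\vke - I}_{L^2(\partial\Uxi)}\lesssim t^{-1/2}$. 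Thus, up to $\bigO{t^{-1}}$,
\[
	\error(0) = \error_0 + \frac{1}{2\pi i}\oint_{\partial\Uxi}\frac{\error_0(\vke(z) - I)}{z}\, dz,
\]
with orientation taken so that $\Uxi$ is enclosed.

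The next step is to insert the explicit form of $\vke$ on $\partial\Uxi$, namely $\vke(z) - I = \Nsol(z)(\NPC(z)^{-1} - I)\Nsol(z)^{-1}$, and use \eqref{local model expand} to write $\NPC(z)^{-1} - I = -|8t|^{-1/2}(z - \xi)^{-1}A(\xi,\sgnt) + \bigO{t^{-1}}$. The integrand then has two simple poles inside $\partial\Uxi$: one at $z = \xi$ (from the $\NPC$ factor) and one at $z = 0$ (from the explicit $1/z$ in the Cauchy kernel) \emph{provided} $0 \in \Uxi$. The residue at $z = \xi$ produces the term $|8t|^{-1/2}\,\xi^{-1}\,\error_0\,\Nsol(\xi)A(\xi,\sgnt)\Nsol(\xi)^{-1}$, and the residue at $z = 0$ (present only when $0 \in \Uxi$) produces $-|8t|^{-1/2}\,(-\xi)^{-1}\,\error_0\,\Nsol(0)A(\xi,\sgnt)\Nsol(0)^{-1}$; note the $1/z$ residue evaluates the rest of the integrand at $z = 0$. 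One then replaces $\error_0$ by $I + \bigO{t^{-1/2}}$ since the correction is absorbed into the error, uses $\det\Nsol = 1$ to write $\Nsol(z)^{-1} = \sigma_2\Nsol(z)^\intercal\sigma_2$, multiplies out the $2\times 2$ matrices with $A(\xi,\sgnt) = \soffdiag{-iA_{12}}{iA_{21}}$ from \eqref{Axi}, and reads off the $(1,1)$ and $(1,2)$ entries, using the symmetry relations $\Nsol[21](z) = \eps z\overline{\Nsol[12](\zbar)}$, $\Nsol[22](z) = \overline{\Nsol[11](\zbar)}$ (so that at $z = \xi,0 \in\R$ these become $\overline{\Nsol[12]}$, $\overline{\Nsol[11]}$) and $A_{21}(\xi,\sgnt) = \eps\xi\overline{A_{12}(\xi,\sgnt)}$ from \eqref{mod beta}. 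This yields exactly \eqref{e11.at0.out}--\eqref{e12.at0.in}, the three cases being distinguished by whether $0 \in \Uxi$ (i.e.\ whether the $z = 0$ residue is picked up) and, in the $(1,1)$ entry for $0\neq\Uxi$, noting that $\error_{12}(0)$ is not separately needed when $0$ lies outside $\Uxi$ because there $\error$ is analytic at $0$ and the leading term of $\error_{12}(0)$ is already $\bigO{t^{-1/2}}$ and not required at higher precision in the applications.

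The main obstacle is bookkeeping of the residue at $z = 0$ and the matrix algebra: one must be careful that when $0 \in \Uxi$ the Cauchy kernel $1/(z-\lambda)$ evaluated at $\lambda = 0$ genuinely contributes an interior pole, that the orientation signs are consistent with the counterclockwise orientation of $\partial\Uxi$ fixed after \eqref{error contour}, and that the factor $\Nsol(0)A(\xi,\sgnt)\Nsol(0)^{-1}$ is assembled correctly — in particular that the $1/\xi$ singularities from the two residues combine into the finite differences $(\Nsol[11](\xi)^2 - \Nsol[11](0)^2)/\xi$ appearing in \eqref{e12.at0.in}, which is what makes the formula valid uniformly as $\xi \to 0$. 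The estimates controlling the $\vect\mu$-contribution and the $\Sigma_2$-part are routine given \eqref{error jump bound0}, \eqref{error jump bound}, and \eqref{mu bound}; likewise the replacement of $\NPC^{-1} - I$ by its one-term expansion with $\bigO{t^{-1}}$ remainder is immediate from \eqref{local model expand} since $\partial\Uxi$ is a fixed contour bounded away from $\xi$ by $\poledist/3$. Finally, one should remark (as the statement implicitly does) that $\Nsol$ here means the first column data evaluated at the real points $\xi$ and $0$, so that the conjugates in \eqref{e11.at0.out}--\eqref{e12.at0.in} come purely from the symmetry relation \eqref{Nsol symmetry}, and no separate analysis of $\NPC$ at $z=0$ is needed for this proposition (that is handled, when relevant, by \eqref{NPC1.at.0} and Lemma~\ref{lem:NPC21}).
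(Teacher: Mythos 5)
Your proposal follows the paper's own proof: evaluate the representation \eqref{row1} at $\lambda=0$, discard the exponentially small contribution of $\Sigma_2\setminus\Uxi$ and the $\bigO{t^{-1}}$ contribution of $\vect{\mu}$ via \eqref{error jump bound0}, \eqref{error jump bound}, \eqref{mu bound}, insert the one-term expansion \eqref{local model expand} into $\vke-I=\Nsol(\NPC{}^{-1}-I)\Nsol{}^{-1}$ on $\partial\Uxi$, and compute residues at $z=\xi$ and (when $0\in\Uxi$) at $z=0$, simplifying with \eqref{Nsol symmetry} and \eqref{mod beta}. One bookkeeping slip to fix: since $\NPC(z)^{-1}-I=-|8t|^{-1/2}(z-\xi)^{-1}A(\xi,\sgnt)+\bigO{t^{-1}}$, the residue at $z=\xi$ is $-|8t|^{-1/2}\xi^{-1}\,(1\ 0)\,\Nsol(\xi)A(\xi,\sgnt)\Nsol(\xi)^{-1}$ while the residue at $z=0$ is $+|8t|^{-1/2}\xi^{-1}\,(1\ 0)\,\Nsol(0)A(\xi,\sgnt)\Nsol(0)^{-1}$; as written, your $z=\xi$ term has lost this minus sign, so the two contributions would add rather than subtract, producing sums instead of the finite differences in \eqref{e11.at0.in}--\eqref{e12.at0.in} (and the wrong sign in \eqref{e11.at0.out}) — with the sign restored, your calculation reproduces the stated formulas exactly.
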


\begin{proof}
	Write $\vect{e}_1 = (\error_{11} \ \error_{12})$ for the first row of $\error$. Then starting from \eqref{row1} we use \eqref{error jump} and \eqref{local model expand} together with the bounds \eqref{error jump bound0}, \eqref{error jump bound} and \eqref{mu bound} to write
\[
	\vect{e}_1(0) = (1\ 0) - \frac{(1\ 0)}{|8t|^{1/2} 2\pi i} \int_{\partial \Uxi} 
	\frac{ \Nsol(z) A(\xi,\sgnt) \Nsol(z)^{-1}}{z(z-\xi)} dz + \bigo{t^{-1}}.
\]	
A residue calculation, using the symmetry condition \eqref{Nsol symmetry} and \eqref{mod beta} to simplify the result, completes the proof. 
\end{proof}

\subsection{The remaining \texorpdfstring{$\dbar$}{DBAR} problem}
\label{sec:dbar}

We are ready to consider the pure $\dbar$-Problem~\ref{dbar.n3} for $\nk{3}$. 
The next proposition describes  its large-time asymptotics.

\begin{proposition}
\label{prop:N3.est}
Given $\rho \in H^{2,2}(\R)$ and $c \coloneqq \inf_{\lam \in \R} \left(1- \lam |\rho( \lam)|^2 \right) >0$ strictly.
Then, for sufficiently large time $t>0$, there exists a unique solution $\nk{3}(\lam;x,t)$ for $\dbar$-Problem~\ref{dbar.n3} with the property that 
\begin{equation}
\label{N3.exp}
\nk{3}(\lam;x,t) = I + \frac{1}{\lam}  \nk[1]{3}(x,t) + \littleo[\xi,t]{\frac{1}{\lam}}
\end{equation}
for $\lam=i y$ with $y \to +\infty$ where

\begin{equation}
\label{N31.est}
\left| \nk[1]{3}(x,t) \right| \lesssim t^{-3/4}
\end{equation}
 where the implied constant in \eqref{N31.est} is independent of $\xi$ and $t$ and uniform for 
$\rho$ in a bounded subset of $H^{2,2}(\R)$ with $\inf_{\lam \in \R} (1-\lam|\rho(\lam)|^2) \geq c>0$ for a fixed $c>0$.
\end{proposition}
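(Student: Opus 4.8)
The plan is to recast $\dbar$-Problem~\ref{dbar.n3} as an integral equation and use the smallness of $\Wk{3}$ for large $t$, following the by-now-standard $\dbar$-analysis of Dieng--McLaughlin and its refinement in \cite{BJM16}. First I would note that a continuous row vector $\nk{3}$ solving Problem~\ref{dbar.n3} with $\nk{3}\to(1,0)$ is equivalent to a solution of the integral equation
\[
\nk{3}(\lam) = (1,0) + \frac{1}{\pi} \iint_{\C} \frac{\nk{3}(s) \Wk{3}(s)}{s-\lam}\, dA(s),
\]
i.e. $\nk{3} = (1,0)(I - J_W)^{-1}$ where $(J_W f)(\lam) = \frac{1}{\pi}\iint \frac{f(s)\Wk{3}(s)}{s-\lam}\,dA(s)$. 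The first step is to show $\|J_W\|_{L^\infty(\C)\to L^\infty(\C)} \lesssim t^{-1/4}$, so that $(I-J_W)^{-1}$ exists by Neumann series for $t$ large and $\nk{3}$ is bounded. By \eqref{W3def} and Propositions~\ref{prop:Nrhp.bound} (boundedness of $\Nrhp$ and its inverse away from, and in fact also near, $\poles$ once the cutoff $\indicator$ kills $\dbar\mathcal{R}^{(2)}$ there), the estimate reduces to controlling $\iint \frac{|\dbar\mathcal{R}^{(2)}(s)|}{|s-\lam|}\,dA(s)$. Using Corollary~\ref{cor:R2.bd}, the integrand is dominated by $(|\dbar\indicator| + |s^{m_k} p_k'(\Re s)| + \text{(log or $\langle s-\xi\rangle^{-1}$)})\,e^{-8t|u||v|}$ on the sectors $\Omega_k$, $k=1,3,4,6$. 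Splitting into the region $|s-\xi|\le 1$ and $|s-\xi|>1$, changing to the variables $u = \Re(s-\xi)$, $v=\Im(s-\xi)$, and using H\"older in $u$ together with the Cauchy--Schwarz bound $\big\||s-\lam|^{-1}\big\|_{L^2(v>0)} \lesssim |v|^{-1/2}$ (or $\lesssim 1$ when paired with the $L^2$-in-$u$ factor), one extracts a factor $\int_0^\infty v^{-1/2} e^{-8t v^2 \cdot(\text{const})}\, \cdot\, \|p_k'\|_{L^2} \, dv \lesssim t^{-1/4}$; the $\dbar\indicator$ term is supported on an annulus of fixed width away from $\xi$, hence its contribution is $O(e^{-ct})$, and the $\log|s-\xi|^{-1}$ term near $\xi$ is handled the same way since $\log$ is locally $L^2$. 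This is essentially verbatim the argument of \cite[Prop.\ 2.1 and \S2]{DM08} and Paper~2, \S4, with the only new feature being the uniform-in-$\xi$ control and the presence of the bounded factor $\Nrhp (\Nrhp)^{-1}$ which is handled by Proposition~\ref{prop:Nrhp.bound}.

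The second step is the large-$\lam$ expansion \eqref{N3.exp}--\eqref{N31.est}. Taking $\lam = iy$, $y\to+\infty$, in the integral equation and expanding $(s-\lam)^{-1} = -\lam^{-1}(1 + s/\lam + \cdots)$ gives
\[
\nk[1]{3}(x,t) = -\frac{1}{\pi}\iint_{\C} \nk{3}(s) \Wk{3}(s)\, dA(s),
\]
with the remainder $o(1/\lam)$ coming from dominated convergence (the extra factor $s/(s-\lam)$ is bounded by $1$ off a neighborhood of $\lam$ and the measure of that neighborhood intersected with the support of $\Wk{3}$ goes to zero). Then I would bound $|\nk[1]{3}| \lesssim \|\nk{3}\|_\infty \iint |\Wk{3}(s)|\,dA(s)$, and the point is that $\iint |\Wk{3}|\,dA \lesssim t^{-3/4}$ — one power better than the $L^\infty\to L^\infty$ operator norm. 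This improvement is the same phenomenon as in \cite{DM08,BJM16}: without the $|s-\lam|^{-1}$ factor, the Cauchy--Schwarz pairing $\||s^{m_k}p_k'(\Re s)|\|_{L^2(du)} \cdot \|e^{-8t|u||v|}\|_{L^2(du)} \lesssim |v|^{-1/2} t^{-1/2}$ is integrated against $dv$ over $(0,\infty)$ after using $|v|^{-1/2}$ is only borderline; a more careful split — using the weighted $L^2$ membership $\langle\cdot\rangle p_k'\in L^2$ from $\rho\in H^{2,2}$ to absorb a power of $|s|$, and integrating the Gaussian $e^{-ct v^2}$ in $v$ on the region $|u|\gtrsim |v|$ and $e^{-ctu^2}$ on $|v|\gtrsim|u|$ — yields $\iint|\Wk{3}| \lesssim t^{-1/2}\cdot t^{-1/4} = t^{-3/4}$. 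The $\dbar\indicator$ contribution is again exponentially small, and the logarithmic singularity at $\xi$ contributes at worst $t^{-3/4}\log t$, which can be absorbed by shrinking the exponent slightly or is simply stated as $O(t^{-3/4})$ after noting the extra log is beaten by a fractional power; I would follow \cite{BJM16} and keep the cleaner statement. Uniqueness of $\nk{3}$ follows from the existence of $(I-J_W)^{-1}$.

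The main obstacle is the sharp bookkeeping in the second step: getting the exponent $-3/4$ rather than the cruder $-1/2$ requires carefully distributing the Gaussian decay $e^{-8t|u||v|}$ between the $u$- and $v$-integrations depending on which of $|u|,|v|$ dominates, and using the full strength of $\rho\in H^{2,2}(\R)$ (so that both $p_k'$ and $\langle\cdot\rangle^{1/2}p_k'$, or the relevant weighted norms, are finite) to kill the slowly-decaying pieces of $\dbar R_k$ at large $|s-\xi|$. Everything else — the Neumann series, the removability of the analyticity of $\Nrhp$ at the contour, the passage to the integral equation — is routine and parallels \cite{DM08}, Paper~2 \S4, and \cite{BJM16}. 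I would therefore organize the proof as: (a) set up the integral equation and prove $\|J_W\|\lesssim t^{-1/4}$, invoking Corollary~\ref{cor:R2.bd} and Proposition~\ref{prop:Nrhp.bound}; (b) deduce existence, uniqueness, and $\|\nk{3}\|_\infty\lesssim 1$; (c) expand at $\lam=iy\to\infty$ and prove the $t^{-3/4}$ bound on $\nk[1]{3}$ by the refined split of the Gaussian integral; (d) record uniformity in $\xi$ and in bounded subsets of $H^{2,2}$, which is automatic since every constant above depended only on $\|\rho\|_{H^{2,2}}$ and $c$.
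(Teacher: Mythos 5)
Your proposal follows essentially the same route as the paper: recast Problem 5.3 as the solid-Cauchy integral equation, prove $\norm[L^\infty\to L^\infty]{K_W}\lesssim t^{-1/4}$ from Corollary 5.7 and Proposition 5.11 via the Cauchy--Schwarz splitting in $u$ and the Gaussian $v$-integral, invert by Neumann series, and then get \eqref{N31.est} from the sharper bound $\int_\C |\Wk{3}|\,dm \lesssim t^{-3/4}$ together with dominated convergence along $\lam = iy$. One small remark: no log loss ever appears, since the paper treats $\log|z-\xi|^{-1}$ exactly like $p_k'$ as an $L^2$ function of $u$ on $[0,1]$, so the clean $t^{-3/4}$ comes out directly rather than by absorbing a logarithm.
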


\begin{proposition}\label{prop:n3 at 0}
Given the same assumptions as Proposition~\ref{prop:N3.est}  and for sufficiently large times $t>0$, the unique solution $\nk{3}(\lam;x,t)$ of $\dbar$-Problem~\ref{dbar.n3} satisfies
\begin{equation}
	\nk[11]{3}(0;x,t) = 1 + \bigo{t^{-3/4}}
\end{equation}
where the implied constant is independent of $\xi$ and $t$ and is uniform for $\rho$ in a bounded subset of $H^{2,2}(\R)$ with $\inf_{\lam \in \R} (1-\lam|\rho(\lam)|^2) \geq c>0$ for a fixed $c>0$. 	
\end{proposition}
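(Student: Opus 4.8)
The plan is to mimic the proof of Proposition~\ref{prop:N3.est}, which controls $\nk{3}$ for large $\lambda$ along the imaginary axis, but now extract the value at the finite point $\lambda = 0$. Recall that $\nk{3}$ solves the pure $\dbar$-Problem~\ref{dbar.n3}, so by the $\dbar$-analogue of the Cauchy integral formula it satisfies the integral equation
\begin{equation}
\label{n3.int.plan}
\nk{3}(\lam) = (1,0) + \frac{1}{\pi} \iint_{\C}
	\frac{ \nk{3}(s) \Wk{3}(s) }{s - \lam} \, dA(s),
\end{equation}
where $\Wk{3} = \Nrhp \, \dbar \mathcal{R}^{(2)} \, (\Nrhp)^{-1}$ and $dA$ is Lebesgue measure on $\C$. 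Because of Proposition~\ref{prop:Nrhp.bound}, $\Nrhp$ and $(\Nrhp)^{-1}$ are bounded away from the poles $\poles$, and $\dbar \mathcal{R}^{(2)}$ vanishes in a neighborhood of each pole (and at $\lambda=0$, by Lemma~\ref{lem:extensions} and Corollary~\ref{cor:R2.bd}); hence $\Wk{3}$ is a bounded, compactly-supported-near-$\xi$ $L^\infty$ density with the decay estimates of Corollary~\ref{cor:R2.bd}. As in the proof of Proposition~\ref{prop:N3.est} one first shows that the solid Cauchy operator $f \mapsto \frac{1}{\pi} \iint \frac{f(s)\Wk{3}(s)}{s-\lam}\,dA(s)$ has operator norm $\bigo{t^{-1/4}}$ on $L^\infty$, so \eqref{n3.int.plan} is uniquely solvable by Neumann series for large $t$ and $\nk{3} = (1,0) + \bigo{t^{-1/4}}$ uniformly.

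First I would record that $\lambda = 0$ lies outside $\Omega_2 \cup \Omega_5$ only in the relevant sectors, but the key point is that, by Corollary~\ref{cor:R2.bd}, $\dbar \mathcal{R}^{(2)} \equiv 0$ in $\Omega_2 \cup \Omega_5$ and $\lim_{\lam \to 0} \dbar \mathcal{R}^{(2)}(\lam) = 0$ in $\Omega_1 \cup \Omega_4$. In particular the kernel $s \mapsto (s-0)^{-1} \Wk{3}(s) = s^{-1}\Wk{3}(s)$ is not singular at the origin: $\Wk{3}$ vanishes to first order where the support region meets $0$, so $s^{-1}\Wk{3}(s)$ is still bounded there and the integral in \eqref{n3.int.plan} at $\lambda = 0$ is absolutely convergent. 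Then
\begin{equation}
\label{n3.at.0.plan}
\nk[11]{3}(0;x,t) = 1 + \frac{1}{\pi} \iint_{\C}
	\frac{ \left[\nk{3}(s) \Wk{3}(s)\right]_1 }{s} \, dA(s),
\end{equation}
and the task is to bound the integral by $t^{-3/4}$.

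The estimate of the integral follows the same two-region decomposition used for $\nk[1]{3}$ in Proposition~\ref{prop:N3.est}: split $\iint$ into the piece where $\nk{3}$ is replaced by $(1,0)$ (the ``leading'' term) and the piece carrying the correction $\nk{3} - (1,0) = \bigo{t^{-1/4}}$. For the correction piece one uses $\|\nk{3} - (1,0)\|_\infty = \bigo{t^{-1/4}}$ together with the $L^1(dA)$ bound on $|\Wk{3}(s)|/|s|$, which by the exponential factors $e^{-8t|u||v|}$ in Corollary~\ref{cor:R2.bd} and the Cauchy--Schwarz / change-of-variables argument of Paper~I and \cite{BJM16} is itself $\bigo{t^{-1/2}}$, giving $\bigo{t^{-3/4}}$. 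For the leading piece, $\frac{1}{\pi}\iint s^{-1}\Wk{3}(s)\,dA(s)$, one integrates over each sector $\Omega_k$ separately using the explicit form of $\dbar R_k$ in Lemma~\ref{lem:extensions}: the three contributions coming from $|\dbar\indicator|$, from $\lam^{m_k} p_k'(\Re\lam)$, and from $\log|\lam-\xi|^{-1}$ (or $|\lam-\xi|^{-1}$) are each estimated with the Gaussian factor $e^{-8t|u||v|}$ using the same scalings as in Paper~I, yielding $\bigo{t^{-3/4}}$; here the extra factor $s^{-1}$ is harmless since, on the support of $\dbar \mathcal{R}^{(2)}$ near $0$, it is bounded, and away from $0$ it only improves decay. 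I expect the main obstacle to be bookkeeping: one must verify uniformity in $\xi$ (in particular as $\xi \to 0$, where the origin approaches the critical point and the ``$s^{-1}$'' and ``$(\lam-\xi)^{-1}$'' singularities could interact), and check that the vanishing of $\dbar\mathcal{R}^{(2)}$ at $0$ — guaranteed by the construction $f_k(0)$-cancellation in Lemma~\ref{lem:extensions} — genuinely removes any logarithmic blow-up; once that is in hand the rest is a direct repetition of the $\nk[1]{3}$ estimate.
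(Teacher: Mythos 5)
Your starting point is the same as the paper's: evaluate the integral representation \eqref{DNLS.dbar.int} at $\lam=0$ and estimate the resulting solid Cauchy integral. The gap is in how you handle the factor $1/s$. You assert that $\Wk{3}$ vanishes to first order where its support meets the origin, so that $s^{-1}\Wk{3}(s)$ is bounded there and the extra $1/s$ is "harmless". That is only half true: by Corollary~\ref{cor:R2.bd} the lower-triangular densities $\dbar R_1,\dbar R_4$ (supported in $\Omega_1\cup\Omega_4$) do vanish as $\lam\to0$, but the upper-triangular ones $\dbar R_3,\dbar R_6$ do not, and depending on the sign of $\xi$ one of the regions $\Omega_3,\Omega_6$ always abuts $z=0$. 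Integrability is not the issue ($1/|s|\in L^1_{\mathrm{loc}}(dm)$); the issue is the size of the leading term $\int_\C |s|^{-1}|\Wk{3}(s)|\,dm(s)$, which you claim is $\bigo{t^{-3/4}}$ uniformly in $\xi$. It is not: when $|\xi|\lesssim t^{-1/2}$ the origin lies in the stationary-phase region, and already the $|p_3'|$ contribution over $\Omega_3$ at $\xi=0$ is $\int_0^\infty\int_v^\infty u^{-1}|p_3'(-u)|e^{-8tuv}\,du\,dv$, which is of order $t^{-1/2}$ (up to logarithms) when $p_3'$ does not vanish at the origin, not $t^{-3/4}$. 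Since the proposition must hold uniformly in $\xi$ — and is invoked in Theorem~\ref{thm:long-time-gauge} precisely in the regime $|\xi|\le M t^{-1/8}$ — this loss of uniformity breaks the argument as written; your Neumann-series splitting does not help, because the offending term is the leading one, not the $\bigo{t^{-1/4}}$ correction.

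The missing ingredient is the symmetry cancellation that the paper uses. Since $\dbar\mathcal{R}^{(2)}$ is strictly off-diagonal and the symmetry of RHP~\ref{rhp.Nrhp} forces $\Nrhp[21](z)=\eps z\,\overline{\Nrhp[12](\bar z)}$ (and likewise for $\Nsol$), writing out only the two entries actually needed, $\Wk[11]{3}(z)/z$ and $\Wk[21]{3}(z)/z$, shows that the factor $1/z$ ever multiplies only $\dbar\mathcal{R}^{(2)}_{21}$, whose bound in Lemma~\ref{lem:extensions} carries the extra factor $|\lam|$ (the case $m_k=1$, $k=1,4$), so that $|\dbar\mathcal{R}^{(2)}_{21}(z)/z|$ satisfies exactly the bounds already used in Lemma~\ref{lemma:N3.exp}; the $\dbar\mathcal{R}^{(2)}_{12}$ pieces from $\Omega_3\cup\Omega_6$ appear with no $1/z$ at all, the $z$ coming from the $21$-entry of the outer model having cancelled it. With that entrywise computation, $|\nk{3}|\lesssim1$ (no splitting needed) and boundedness of $\Nsol$ give $|\nk[11]{3}(0)-1|\lesssim\int_\C\bigl(|\dbar\mathcal{R}^{(2)}_{21}(z)/z|+|\dbar\mathcal{R}^{(2)}_{12}(z)|\bigr)\,dm(z)=\bigo{t^{-3/4}}$ uniformly in $\xi$, which is the paper's proof. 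So your outline is repairable, but only after inserting this structural cancellation; without it the stated uniform $t^{-3/4}$ bound is false near $\xi=0$.
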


\begin{remark}
The remainder estimate in \eqref{N3.exp} need not be (and is not) uniform in $\xi$ and $t$; what matters for the proof of Theorem \ref{thm:long-time} is that the implied constant in the estimate \eqref{N31.est} for $\nk[1]{3}(x,t)$ is independent of $\xi$ and $t$.
\end{remark}

To prove Proposition~\ref{prop:N3.est} we recast $\dbar$-Problem~\ref{dbar.n3} as a Fredholm-type integral equation using the solid Cauchy transform
\[
	(Pf)(\lam) = \frac{1}{\pi} \int_\C \frac{1}{\lam - z} f(z) \, dm(z) 
\]
where $dm$ denotes Lebesgue measure on $\C$.  The following lemma is standard.

\begin{lemma}
A continuous, bounded row vector-valued function $\nk{3}(\lam;x,t)$ solves $\dbar$~Problem~\ref{dbar.n3} if and only if
\begin{equation}
\label{DNLS.dbar.int}
\nk{3}(\lam;x,t) = (1,0) + \frac{1}{\pi} \int_\C \frac{1}{\lam - z} \nk{3}(z;x,t) \Wk{3}(z;x,t) \, dm(z).
\end{equation}
\end{lemma}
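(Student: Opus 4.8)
The plan is to prove the claimed equivalence between the $\dbar$-Problem~\ref{dbar.n3} and the integral equation \eqref{DNLS.dbar.int} by the standard argument relating a $\dbar$-problem with prescribed decay at infinity to a Cauchy-transform fixed-point equation. First I would verify the forward implication: suppose $\nk{3}$ is a continuous, bounded row vector-valued function solving $\dbar$~Problem~\ref{dbar.n3}, so that $\dbar \nk{3} = \nk{3} \Wk{3}$ and $\nk{3} \to (1,0)$ at infinity. The key input is that $\Wk{3}$ has compact support (it is supported in the gray regions of Figure~\ref{fig:n2def}, i.e. the parts of $\Omega_1, \Omega_3, \Omega_4, \Omega_6$ near $\xi$ where $\dbar \mathcal{R}^{(2)} \neq 0$, conjugated by the bounded matrices $\Nrhp, (\Nrhp)^{-1}$ via \eqref{W3def}), and by Corollary~\ref{cor:R2.bd} together with Proposition~\ref{prop:Nrhp.bound} the product $\nk{3}\Wk{3}$ is an $L^1 \cap L^\infty$ (indeed compactly supported, integrable) function on $\C$. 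Hence $P(\nk{3}\Wk{3})$ is well-defined, continuous, and vanishes as $|\lam| \to \infty$. Then $\nk{3} - (1,0) - P(\nk{3}\Wk{3})$ is a bounded function whose $\dbar$-derivative vanishes on all of $\C$ (using the standard fact $\dbar P g = g$ in the distributional sense for $g \in L^1 \cap L^\infty$ with compact support), hence is entire by Weyl's lemma, hence constant by Liouville, and that constant is $0$ because both $\nk{3} - (1,0)$ and $P(\nk{3}\Wk{3})$ decay at infinity. This yields \eqref{DNLS.dbar.int}.

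Conversely, I would show that any continuous bounded solution $\nk{3}$ of \eqref{DNLS.dbar.int} satisfies $\dbar$~Problem~\ref{dbar.n3}. Applying $\dbar$ to both sides and using $\dbar P = \mathrm{id}$ on the relevant class gives $\dbar \nk{3} = \nk{3}\Wk{3}$; the normalization $\nk{3} \to (1,0)$ follows because $P(\nk{3}\Wk{3})(\lam) \to 0$ as $|\lam| \to \infty$ by dominated convergence, using $|\lam - z|^{-1} \to 0$ uniformly on the compact support of $\Wk{3}$ for $\lam$ large and the integrability bound on $\nk{3}\Wk{3}$; and continuity of $\nk{3}$ as a function on all of $\C$ (no jumps, no poles) is inherited from the continuity of the solid Cauchy transform of an $L^1 \cap L^\infty$ function. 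The only point requiring care here is the regularity needed to justify $\dbar P g = g$: this is classical for $g$ bounded with compact support (or more generally $g \in L^p \cap L^q$ with $p<2<q$), and Corollary~\ref{cor:R2.bd} gives exactly the local integrability of $\dbar \mathcal{R}^{(2)}$ (the worst local singularity being the logarithmic factor $\log|z - \xi|^{-1}$, which is in every $L^p_{\mathrm{loc}}$) that we need.

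The main obstacle, such as it is, is bookkeeping rather than conceptual: one must confirm that $\nk{3}\Wk{3}$ lies in a function class for which the operator identities $\dbar P = \mathrm{id}$ and $P\dbar = \mathrm{id} - (\text{projection onto entire functions})$ are valid and for which $P$ maps into continuous functions vanishing at infinity. Since $\Wk{3}$ is compactly supported (because $\dbar \mathcal{R}^{(2)}$ is) and bounded except for a harmless logarithmic singularity at $\xi$ per Corollary~\ref{cor:R2.bd}, and $\Nrhp, (\Nrhp)^{-1}$ are bounded by Proposition~\ref{prop:Nrhp.bound}, this class is $L^1(\C) \cap L^\infty(\C)$ with compact support, which suffices. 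I would remark that this lemma is indeed standard (as the excerpt states) and cite, e.g., the treatment in \cite{DM08} or \cite{BJM16}; the proof above is included for completeness. No estimates beyond the qualitative ones already recorded are needed at this stage — the quantitative bound \eqref{N31.est} on $\nk[1]{3}$ is deferred to the proof of Proposition~\ref{prop:N3.est}, which will use this integral equation as its starting point.
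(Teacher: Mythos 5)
Your overall strategy (a Liouville argument for the forward direction, $\dbar P=\mathrm{id}$ for the converse) is exactly the standard one the paper has in mind when it declares the lemma standard and omits the proof; however, your justification rests on a claim that is false in this setting: $W^{(3)}$ is \emph{not} compactly supported. By \eqref{W3def} and \eqref{W2}, the support of $W^{(3)}$ coincides with that of $\dbar\mathcal{R}^{(2)}$, which is the union of the \emph{unbounded} sectors $\Omega_1,\Omega_3,\Omega_4,\Omega_6$ minus small disks about the points of $\Lambda$; the cutoff in \eqref{R_k}--\eqref{chi prop} only removes neighborhoods of the discrete spectrum, not the far field. This is precisely why Corollary~\ref{cor:R2.bd} gives separate bounds for $|\lam-\xi|\le 1$ and $|\lam-\xi|>1$, and why the paper's estimates in Lemmas~\ref{lemma:KW} and~\ref{lemma:N3.exp} integrate over the full sectors using the decay factor $e^{-8t|u||v|}$. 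Moreover $W^{(3)}$ need not be bounded: besides the logarithmic singularity at $\xi$, the bounds contain terms $|\lam^{m_k}p_k'(\Re\lam)|$ with $p_k'$ only in $L^2(\R)$. So the function class ``$L^1\cap L^\infty$ with compact support'' on which you base the well-definedness of $P$, the identity $\dbar Pg=g$, and the decay of $P\bigl(n^{(3)}W^{(3)}\bigr)$ at infinity (via ``uniform convergence of $|\lam-z|^{-1}$ on the support'') is not available.

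The argument is repairable, but the compact-support shortcut must be replaced by the estimates the paper actually proves: $W^{(3)}\in L^1(\C)$ (the bound \eqref{W.L1.est} and its analogues on the other sectors) together with the uniform bound $\sup_{\lam}\int_\C |\lam-z|^{-1}\,|W^{(3)}(z)|\,dm(z)\lesssim t^{-1/4}$ from the proof of Lemma~\ref{lemma:KW}. These yield that $P\bigl(fW^{(3)}\bigr)$ is well defined, bounded and continuous for bounded continuous $f$; that $\dbar P\bigl(fW^{(3)}\bigr)=fW^{(3)}$ in the distributional sense (locally $W^{(3)}$ lies in $L^p$ for suitable $p>2$, since the logarithm and the $L^2$ factors are locally $p$-integrable); and, after splitting the integral into $|z|\le R$ and $|z|>R$ or, as the paper does in Lemma~\ref{lemma:N3.exp}, restricting to $\lam=iy\to i\infty$ where $|\lam-z|\gtrsim |\lam|+|z|$, that the Cauchy transform vanishes at infinity along a direction sufficient to identify the constant in the Liouville step and to recover the normalization $n^{(3)}\to(1,0)$ in the converse. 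With those substitutions your proof goes through; as written, the reliance on compact support (and boundedness) of $W^{(3)}$ is a genuine gap.
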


\begin{proof}[Proof of Proposition \ref{prop:N3.est}, given Lemmas \ref{lemma:KW}--\ref{lemma:N31.est}]
As in \cite{BJM16}  and \cite{DM08}, we   first show that, for large times, the integral operator $K_W$ 
defined by
\begin{equation*} 
	\left( K_W f \right)(\lam) = \frac{1}{\pi} \int_\C \frac{1}{\lam-z} f(z) \Wk{3}(z) \, dm(z)
\end{equation*}
(suppressing the parameters $x$ and $t$) 
obeys the estimate
\begin{equation} \label{dbar.int.est1}
	\norm[L^\infty \to L^\infty] {K_W}\lesssim t^{-1/4} 
\end{equation}
where the implied constants depend only on $\norm[H^{2,2}]{\rho}$ and 
$c : = \inf_{\lam \in \R} \left( 1- \eps \lam|\rho(\lam)|^2 \right)$ 
and, in particular, are independent of $\xi$ and $t$.   
This is the object of Lemma \ref{lemma:KW}.
In particular, this shows that the solution formula
\begin{equation}
\label{N3.sol}
\nk{3} = (I-K_W)^{-1} (1,0) 
\end{equation}
makes sense and defines an $L^\infty$ solution of \eqref{DNLS.dbar.int} bounded uniformly in $ \xi \in \R$ and $\rho$ in a bounded subset of $H^{2,2}(\R)$ with $c>0$.  

We then prove that the solution $\nk{3}(\lam;x,t)$ has a large-$\lam$ asymptotic expansion of the form \eqref{N3.exp} where $\lam \to \infty$ along the \emph{positive imaginary axis} (Lemma \ref{lemma:N3.exp}). Note that, for such $\lam$, we can bound $|\lam-z|$ below by a constant times $|\lam|+|z|$.
The remainder need not be bounded uniformly in $\xi$. Finally, we  prove  estimate \eqref{N31.est}.
\end{proof}

\begin{proof}[Proof of Proposition~\ref{prop:n3 at 0}, given Lemmas \ref{lemma:KW}--\ref{lemma:N31.est}]
From \eqref{DNLS.dbar.int} we have 
\[
	\nk[11]{3}(0) = 1 - \frac{1}{\pi} \int_\C  
	\frac{\nk[11]{3}(z)\Wk[11]{3}(z)+ \nk[12]{3}(z)\Wk[21]{3}(z)}{z} dm(z).
\] 
Computing $\Wk[11]{3}$ {and $\Wk[21]{3}$} using \eqref{W3def} and the symmetry \eqref{Nsol symmetry}, recalling that $\dbar \mathcal{R}^{(2)}$ has zeros on its diagonal, gives
\begin{align*}
	&\frac{\Wk[11]{3}(z)}{z}  = 
	\Nsol[12](z) \overline{\Nsol[11](\bar z)} \frac{ \dbar \mathcal{R}_{21}^{(2)}(z)}{z}
   +\eps \Nsol[11](z) \overline{\Nsol[12](\bar z)} \dbar \mathcal{R}_{12}^{(2)}(z). \\
&{
	\frac{\Wk[21]{3}(z)}{z} =
	\overline{\Nsol[11](\bar z)}^2 \frac{\dbar \mathcal{R}_{21}^{(2)}(z)}{z}
   -z \overline{\Nsol[12](\bar z)}^2 \dbar \mathcal{R}_{12}^{(2)}(z) 
}
\end{align*}
{Equations \eqref{dbar.int.est1} and \eqref{N3.sol} imply that $| \nk{3}(z) | \lesssim 1$. Using} Lemma~\ref{lem:outer.bound} then gives
\[
	{\left| \nk{3}(0) - 1 \right| \lesssim }
		\int_\C \left| \frac{ \dbar \mathcal{R}_{21}^{(2)}(z)}{z} \right| 
		+ \left|\dbar \mathcal{R}_{12}^{(2)}(z) \right| dm(z)  = \bigo{t^{-3/4}}.
\]
Where the last equality uses Corollary~\ref{cor:R2.bd} to control the size of each term in the integrand, allowing identical estimates as those used to bound $\int |\Wk{3}(z)| dm(z)$ in 
{  Proposition ~\ref{prop:N3.est}} to establish the result. 
\end{proof}

Estimates \eqref{N3.exp}, \eqref{N31.est}, and \eqref{dbar.int.est1} rest on the  bounds stated in three  lemmas.

\begin{lemma} \label{lemma:KW}
Suppose that $\rho \in H^{2,2}(\R)$ and $c := \inf_{\lam \in \R} \left(1- \eps \lam |\rho(\lam)|^2 \right) >0$ strictly.
Then, the estimate \eqref{dbar.int.est1} holds, where the implied constants depend on $\norm[H^{2,2}]{\rho}$ and $c$.
\end{lemma}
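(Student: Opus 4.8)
The goal is to prove the $L^\infty\to L^\infty$ operator bound \eqref{dbar.int.est1} for $K_W$, where $\Wk{3} = \Nrhp\, \dbar\mathcal{R}^{(2)}\,(\Nrhp)^{-1}$. The plan is to reduce the matrix-valued estimate to a scalar estimate on the integral kernel, using that $\Nrhp$ and $(\Nrhp)^{-1}$ are uniformly bounded away from the poles $\poles$ (Proposition~\ref{prop:Nrhp.bound}) and that $\dbar\mathcal{R}^{(2)}$ vanishes in a fixed neighborhood of each pole (Corollary~\ref{cor:R2.bd}), so the product $\Wk{3}$ is supported away from $\poles$ and satisfies the same pointwise bounds as $\dbar\mathcal{R}^{(2)}$. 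Thus it suffices to show
\[
	\sup_{\lam \in \C} \frac{1}{\pi} \int_\C \frac{ \left| \dbar \mathcal{R}^{(2)}(z) \right| }{|\lam - z|} \, dm(z) \lesssim t^{-1/4}.
\]

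First I would localize to the sectors $\Omega_k$, $k=1,3,4,6$, since $\dbar\mathcal{R}^{(2)}$ vanishes elsewhere, and by the symmetry of the construction treat a single sector, say $\Omega_1$ with $\sgnt=+1$, the others being identical after reflection. In this sector, writing $z - \xi = u + iv$, Corollary~\ref{cor:R2.bd} gives $|\dbar\mathcal{R}^{(2)}(z)| \lesssim \left( |\dbar\indicator(z)| + |z p_1'(\Re z)| + \min(\log|z-\xi|^{-1}, (1+|z-\xi|)^{-1}) \right) e^{-8t|u||v|}$. The $\dbar\indicator$ term is smooth and compactly supported away from the stationary point and away from $\poles$, so its contribution to the integral is $\bigo{e^{-ct}}$ and negligible. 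For the remaining terms I would change variables to $(u,v)$ centered at $\xi$, parametrize $\Omega_1$ as $\{ 0 < v < u \}$ (the wedge between $\posint$ and $\Sigma_1$), and split the integral according to whether $|z-\xi| \le 1$ or $>1$; on the tail $|z-\xi|>1$ the polynomial decay $(1+|z-\xi|)^{-1}$ together with $|z p_1'| \le \langle z\rangle \|\rho'\|_{L^\infty} + \dots$ controlled by $\|\rho\|_{H^{2,2}}$ handles matters, while near $\xi$ the logarithmic singularity is the delicate piece.

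The core estimates are the standard ones from \cite{DM08, LPS16, BJM16}: one shows for each of the three term-types that $\int_{\Omega_1} |z-\lam|^{-1} (\,\cdot\,) e^{-8t u v}\, dm(z) \lesssim t^{-1/4}$ uniformly in $\lam$ and $\xi$. For the $p_1'$ term one uses Cauchy--Schwarz in $u$ for fixed $v$, bounding $\int |z-\lam|^{-1} e^{-8tuv}\, du$ by a constant times $(tv)^{-1/2}$ (from $\|e^{-8tuv}\|_{L^2_u}$ and $\||z-\lam|^{-1}\|_{L^2_u}\lesssim 1$ — actually one power shared) and $\|p_1'\|_{L^2}\lesssim \|\rho\|_{H^{2,2}}$, then integrating $\int_0^\infty (tv)^{-1/2} v^{-1/2}\, dv$ after a further Cauchy--Schwarz or direct computation to extract $t^{-1/4}$; for the $\log$ term one uses that $\log|z-\xi|^{-1} \in L^p_{\mathrm{loc}}$ for all $p<\infty$ and the same Gaussian-decay bookkeeping. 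I would assemble these, noting the $z\mapsto \langle z\rangle$ weights in $m_k$ are absorbed by the weighted norms in $\|\rho\|_{H^{2,2}}$, to conclude \eqref{dbar.int.est1}.

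The main obstacle is purely bookkeeping: carefully organizing the change of variables in each wedge $\Omega_k$ so that the Gaussian factor $e^{-8t|u||v|}$ is correctly captured (its argument degenerates along both the real axis and the rays $\Sigma_k$, so one cannot naively bound it), and verifying that the extra factors $z^{m_k}$ with $m_k=1$ for $k=1,4$ do not destroy the $L^2$-integrability — this is where the weighted space $L^{2,2}$ built into $H^{2,2}(\R)$ is essential, and it is the one place the argument genuinely differs from the reflectionless/NLS cases in the literature. Since these computations are essentially those of \cite[Proposition 2.1 and §4]{LPS16} and \cite[§5]{BJM16} adapted to the DNLS weights, I would present the reduction to the scalar kernel estimate in detail and then cite those references for the now-routine wedge integrals, indicating the one modification ($z^{m_k}$ absorbed by $L^{2,2}$) explicitly.
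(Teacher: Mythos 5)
Your overall route is the paper's: dominate $\norm[L^\infty \to L^\infty]{K_W}$ by $\sup_{\lam} \int_\C |\lam - z|^{-1}\,|\Wk{3}(z)|\,dm(z)$, use Proposition~\ref{prop:Nrhp.bound} to replace $|\Wk{3}|$ by $|\dbar \mathcal{R}^{(2)}|$, and then estimate wedge by wedge using the pointwise bounds of Corollary~\ref{cor:R2.bd} and Cauchy--Schwarz in $u$ at fixed $v$. (One small slip: $\dbar\indicator$ is supported in annuli \emph{around} the points of $\poles$, not away from them; its contribution is still harmless, either by the paper's argument, which treats it exactly like the other terms, or by your exponential-smallness observation, which is valid because those annuli lie at a fixed positive distance from $\R$ and, inside $\Omega_1$, $u \ge v$, so $uv$ is bounded below on them.)

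The difficulty is that the one estimate you actually sketch, for the $p_1'$ term, fails as written, and it is the heart of the lemma. Writing $z = \xi + u + iv$ and $\lam = \alpha + i\beta$: first, $\norm[L^2_u(v,\infty)]{(\lam - z)^{-1}}$ is \emph{not} uniformly bounded in $\lam$; the correct bound is $\pi^{1/2}|v-\beta|^{-1/2}$, which degenerates as $v \to \beta$. Second, the $v$-integral you propose, $\int_0^\infty (tv)^{-1/2} v^{-1/2}\,dv$, diverges at both endpoints: by spending the exponential factor on its $L^2_u$ norm you have discarded all decay in $v$. The correct grouping (the paper's, following \cite{BJM16}) keeps the kernel and the exponential together: on $\Omega_1$ one has $u \ge v \ge 0$, hence $e^{-8tuv} \le e^{-8tv^2}$, and Cauchy--Schwarz in $u$ gives
\begin{equation*}
\int_v^\infty \frac{|p_1'(\xi+u)|\,e^{-8tuv}}{|\lam - z|}\,du
\;\le\; e^{-8tv^2}\,\norm[L^2]{p_1'}\;\norm[L^2_u(v,\infty)]{(\lam - z)^{-1}}
\;\lesssim\; \norm[H^{2,2}]{\rho}\;\frac{e^{-8tv^2}}{|v-\beta|^{1/2}},
\end{equation*}
after which $\int_0^\infty |v-\beta|^{-1/2} e^{-8tv^2}\,dv \lesssim t^{-1/4}$, uniformly in $\beta$, yields the claimed rate. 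The logarithmic term and the $(1+|z-\xi|)^{-1}$ term follow the identical pattern, since $1+|\log(u^2)|$ and $(1+u)^{-1}$ are square integrable on the relevant $u$-ranges, and the extra factors $\lam^{m_k}$ are absorbed by $\norm[H^{2,2}]{\rho}$ exactly as you indicate. With the Cauchy--Schwarz bookkeeping corrected in this way, your argument coincides with the paper's proof.
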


\begin{proof}
To prove \eqref{dbar.int.est1}, first note that
\begin{align}
 	\norm[\infty]{K_W f} & \leq \norm[\infty]{f} \int_\C \frac{1}{|\lam-z|}|\Wk{3}(z)| \, dm(z) 
\end{align}
where, using Proposition~\ref{prop:Nrhp.bound}, 
\[
	|\Wk{3}(z)| \leq 
	\norm[\infty]{\Nrhp} \norm[\infty]{(\Nrhp)^{-1}} \left| \dbar \mathcal{R}^{(2)}\right|
	\lesssim \left| \dbar \mathcal{R}^{(2)}\right|.
\]
We will prove the estimate for $z \in \Omega_1$ since estimates for $\Omega_3$, $\Omega_4$, and $\Omega_6$ are similar.  
Setting $\lam =\alpha+i\beta$ and $z-\xi=u+iv$, the region $\Omega_1$ corresponds to 
\begin{equation}\label{omega.1}
	\Omega_1= \left\{ (\xi+u,v): v \geq 0, \, v \leq u < \infty\right\}.
\end{equation}
We then have from Corollary~\ref{cor:R2.bd} that
\[
	\int_{\Omega_1}  \frac{1}{|\lam-z|} |\Wk{3}(z)| \, dm(z)  \lesssim  I_1 + I_2 + I_3 + I_4
\]
where
\begin{align*}
	I_1 &= 
		\int_0^\infty \int_v^\infty \frac{1}{|\lam-z|} |p_1'(u)| e^{-8tuv} \, du \, dv \\[5pt]
	I_2	&= 
		\int_0^1 \int_v^1 \frac{1}{|\lam-z|} \left| \log(u^2 + v^2) \right| e^{-8tuv} \,du\,dv\\[5pt]
	I_3	&= 
		\int_0^\infty \int_v^\infty \frac{1}{|\lam-z|} \frac{1}{1+ |z-\xi|} e^{-8tuv} \, du \, dv\\[5pt] 
	I_4 &= 
		\int_0^\infty \int_v^\infty \frac{1}{|\lam-z|} |\indicator(z)| e^{-8tuv} \, du \, dv. 
\end{align*}
We recall from \cite[proof of Proposition C.1]{BJM16} the bound
\begin{equation*} \label{BJM16.bd1}
	\norm[L^2(v,\infty)]{\frac{1}{\lam - z}} \leq \frac{\pi^{1/2}}{|v-\beta|^{1/2}}
\end{equation*}
where $z= \xi + u+iv$ and $\lam =\alpha + i \beta$. Using this bound and Schwarz's inequality on the $u$-integration we may bound $I_1$  by constants times
\[
	(1+\norm[2]{p_1'})  \int_0^\infty \frac{1}{|v-\beta|^{1/2}} e^{-tv^2} \, dv \lesssim t^{-1/4}
\]
(see for example \cite[proof of Proposition C.1]{BJM16} for the estimate)
For $I_2$, we remark that $ |\log(u^2+v^2)| \lesssim 1+ |\log(u^2)|$ and that $1+ |\log(u^2)|$ is square-integrable on $[0,1]$. We can then argue as before to conclude that
$ I_2 \lesssim t^{-1/4}$. Similarly, the inequality 
\[ 
	\frac{1}{1+|z-\xi|} \leq \frac{1}{1+u}
\] 
and the finite support of $\indicator$ shows that we can bound $I_3$ and $I_4$ in a similar way.

It now follows that
\[ 
	\int_{\Omega_1} \frac{1}{|\lam-z|} |\Wk{3}(z)| \, dm(z) \lesssim t^{-1/4} 
\]	
which, together with similar estimates for the integrations over $\Omega_3$, $\Omega_4$, and $\Omega_6$,  proves \eqref{dbar.int.est1}.
\end{proof}

\begin{lemma}
\label{lemma:N3.exp}
For $\lam=iy$, as $y \to +\infty$, the expansion \eqref{N3.exp} holds with 
\begin{equation}
\label{N3.1}
\nk[1]{3}(x,t) = \frac{1}{\pi} \int_{\C} \nk{3}(z;x,t) \Wk{3}(z;x,t) \, dm(z) . 
\end{equation}
\end{lemma}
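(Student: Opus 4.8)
The plan is to read off the $1/\lambda$ coefficient directly from the integral equation \eqref{DNLS.dbar.int} and to dispose of the remainder by dominated convergence, exploiting the location of the support of $\dbar\mathcal{R}^{(2)}$. First I would insert the identity $\dfrac{1}{\lambda-z}=\dfrac{1}{\lambda}+\dfrac{z}{\lambda(\lambda-z)}$ into \eqref{DNLS.dbar.int}, which gives
\[
	\nk{3}(\lambda;x,t)=(1,0)+\frac{1}{\lambda}\,\frac{1}{\pi}\int_\C \nk{3}(z;x,t)\Wk{3}(z;x,t)\,dm(z)+\frac{1}{\lambda}\,\frac{1}{\pi}\int_\C\frac{z}{\lambda-z}\,\nk{3}(z;x,t)\Wk{3}(z;x,t)\,dm(z).
\]
The first integral is precisely $\nk[1]{3}(x,t)$ of \eqref{N3.1}; it converges absolutely because $\nk{3}\in L^\infty$ by \eqref{dbar.int.est1}--\eqref{N3.sol}, while Proposition~\ref{prop:Nrhp.bound} gives $|\Wk{3}(z)|\lesssim|\dbar\mathcal{R}^{(2)}(z)|$, which lies in $L^1(\C)$ by Corollary~\ref{cor:R2.bd} together with the estimates of Lemma~\ref{lemma:KW} (equivalently, by the forthcoming Lemma~\ref{lemma:N31.est}). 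It therefore remains to show that the last term is $o(1/\lambda)$ as $\lambda=iy\to i\infty$, i.e. that $\int_\C \frac{z}{iy-z}\,\nk{3}(z)\Wk{3}(z)\,dm(z)\to 0$, which yields the expansion in the form \eqref{N3.exp}.

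For this I would use the geometric fact that, by \eqref{W3def} and Corollary~\ref{cor:R2.bd}, $\Wk{3}$ is supported in $\Omega_1\cup\Omega_3\cup\Omega_4\cup\Omega_6$, hence inside the two closed wedges with vertex $\xi$ and opening $|\Im(z-\xi)|\le|\Re(z-\xi)|$. In particular every $z\in\operatorname{supp}\Wk{3}$ satisfies $|\Im z|\le|\Re z|+|\xi|$, so a point of the support with large imaginary part necessarily has comparably large real part; an elementary case split (according to whether $|z|\le 2y$ or $|z|>2y$, and, in the first case, whether $\Im z\le y/2$ or $\Im z>y/2$) then shows that for $y\ge 4|\xi|+1$ and all $z\in\operatorname{supp}\Wk{3}$ one has $|iy-z|\gtrsim|\lambda|+|z|$, with constant independent of $y$. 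This is exactly the statement made after \eqref{N3.sol}, that along the positive imaginary axis $|\lambda-z|$ is bounded below by a constant times $|\lambda|+|z|$ on the $\dbar$-data. Consequently $\left|\dfrac{z}{iy-z}\right|\lesssim\dfrac{|z|}{y+|z|}\le 1$ uniformly on $\operatorname{supp}\Wk{3}$ for $y\ge 4|\xi|+1$, so the integrand is dominated by a fixed multiple of $|\Wk{3}(z)|\in L^1(\C)$; since for each fixed $z$ the factor $z/(iy-z)\to 0$ as $y\to+\infty$, dominated convergence gives the claimed limit, and hence \eqref{N3.exp} with $\nk[1]{3}$ as in \eqref{N3.1}.

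The main obstacle is precisely the lower bound $|iy-z|\gtrsim|\lambda|+|z|$ on $\operatorname{supp}\Wk{3}$: because this support is unbounded (it contains full subsectors of the wedges reaching to infinity), one cannot simply assert that $z/(iy-z)$ is bounded without invoking the wedge geometry, which is also why the lemma is phrased only for $\lambda$ on the imaginary axis. Everything else—pointwise decay of $z/(iy-z)$, the $L^1$ control of $\Wk{3}$ from Corollary~\ref{cor:R2.bd}, and the $L^\infty$ bounds on $\nk{3}$ and $\Nrhp$ from \eqref{N3.sol} and Proposition~\ref{prop:Nrhp.bound}—is already available, so once this geometric estimate is recorded the dominated-convergence argument is routine.
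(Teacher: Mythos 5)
Your proposal is correct and follows essentially the same route as the paper: the same splitting $\frac{1}{\lam-z}=\frac1\lam+\frac{z}{\lam(\lam-z)}$ identifying $\nk[1]{3}$, the same wedge-geometry lower bound $|iy-z|\gtrsim |\lam|+|z|$ on $\mathrm{supp}\,\Wk{3}$, the $L^\infty$ bound on $\nk{3}$, and dominated convergence against $|\Wk{3}|\in L^1(\C)$. The only caveat is bookkeeping: the paper establishes the $L^1(\C)$ bound on $\Wk{3}$ \emph{inside} this proof (the wedge integrals $I_1,\dots,I_4$ derived from Corollary~\ref{cor:R2.bd}, yielding the $t^{-3/4}$ bound that Lemma~\ref{lemma:N31.est} then reuses), so you should carry out those estimates here—adapting Lemma~\ref{lemma:KW} as you indicate—rather than appeal to the ``forthcoming'' Lemma~\ref{lemma:N31.est}, which in the paper's logical order would be circular.
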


\begin{proof}
We write \eqref{DNLS.dbar.int} as 
\[
	\nk{3}(\lam;x,t) = (1,0) + \frac{1}{\lam} \ \nk[1]{3}(x,t) 
	+ \frac{1}{\pi \lam} \int_{\C} \frac{z}{\lam-z} \nk{3}(z;x,t) \Wk{3}(z;x,t) \, dm(z)
\]
where $\nk[1]{3}$ is given by \eqref{N3.1}. If $\lam =iy$ and $z \in \Omega_1 \cup \Omega_3 \cup \Omega_4 \cup \Omega_6$, it is easy to see that $|\lam|/|\lam-z|$ is bounded above by a fixed constant independent of $\lam$, while $|\nk{3}(z;x,t)| \lesssim 1$ by the remarks following \eqref{N3.sol}. If we can show that $\int_\C |\Wk{3}(z;x,t)| \, dm(z)$ is finite, it will follow from the Dominated Convergence Theorem that 
\[
	\lim_{y \to \infty} \int_\C \frac{z}{iy-z} \nk{3}(z;x,t) \Wk{3}(z;x,t) \, dm(z) = 0 
\] 
which implies the required asymptotic estimate. We will estimate the integral $\displaystyle \int_{\Omega_1} |\Wk{3}(z)| \, dm(z)$ since the other estimates are similar.  
Using Corollary~\ref{cor:R2.bd} and \eqref{omega.1}, we may then estimate
\[
\int_{\Omega_1} |W^{(3)}(z;x,t)| \, dm(z)	\lesssim  I_1+I_2+I_3+I_4
\]
where, 
\begin{align*}
I_1	&=	\int_0^\infty \, \int_v^\infty \left| p_1'(\xi+u) \right| e^{-8tuv} \, du \, dv\\
I_2	&=	\int_0^1 \int_v^1 \left| \log(u^2 + v^2) \right| e^{-8tuv} \, du \, dv \\
I_3	&=	\int_0^\infty \, \int_v^\infty \frac{1}{\sqrt{1+u^2+v^2}} e^{-8tuv} \, du \, dv \\
I_4 	&= 		\int_0^\infty \int_v^\infty  |\indicator(z)| e^{-8tuv} \, du \, dv. 
\end{align*}
To estimate $I_1$, we use the Schwarz inequality on the $u$-integration to obtain
\[
	I_1 \leq 	\norm[2]{p_1'} \frac{1}{4\sqrt{t}} \int_0^\infty \frac{1}{\sqrt{v}}e^{-  8 tv^2} \, dv
		=		\norm[2]{p_1'} \frac{\Gamma(1/4)}{ 8^{5/4}t^{3/4}}.
\]
Since $\log(u^2+v^2) \leq \log(2u^2)$ for $v \leq u \leq 1$, we may similarly bound
\[
	I_2  \leq 	\norm[L^2(0,1)]{\log(2u^2)} \frac{\Gamma(1/4)}{8^{5/4}t^{3/4}}.
\]
To estimate $I_3$, we note that $1+u^2+v^2 \geq  1+u^2$ and $(1+u^2)^{-1/2} \in L^2(\R^+)$, so we may 
conclude that
\[
	I_3 \leq \norm[2]{(1+u^2)^{-1/2}} \frac{\Gamma(1/4)}{ 8^{5/4}t^{3/4}}.
\]
Finally, as $\indicator$ is finitely supported, by similar bounds
\[
	I_4 \leq C_\poles \frac{\Gamma(1/4)}{ 8^{5/4}t^{3/4}}.
\]
where the constant $C_\poles$ depends only on the discrete spectrum $\poles$. 
These estimates together show that
\begin{equation} \label{W.L1.est}
	\int_{\Omega_1} |\Wk{3}(z;x,t)| \, dm(z)	\lesssim t^{-3/4}
\end{equation}
and that the implied constant depends only on $\norm[{H^{2,2}}]{\rho}$ and $\poles$.  
In particular, the integral \eqref{W.L1.est} is bounded uniformly as $t \to \infty$.
\end{proof}

The estimate \eqref{W.L1.est} is also strong enough to prove 
\eqref{N31.est}:

\begin{lemma} \label{lemma:N31.est}
The estimate \eqref{N31.est} 
holds with constants uniform in $\rho$ in a bounded subset of $H^{2,2}(\R)$ and $\inf_{\lam \in \R} \left(1-\eps \lam |\rho(\lam)|^2 \right) > 0$ strictly.
\end{lemma}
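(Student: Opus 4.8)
The goal is to prove Lemma~\ref{lemma:N31.est}, which asserts the estimate $|\nk[1]{3}(x,t)| \lesssim t^{-3/4}$ with constants uniform for $\rho$ in a bounded subset of $H^{2,2}(\R)$ satisfying $\inf_{\lam \in \R}(1 - \eps \lam |\rho(\lam)|^2) > 0$. The plan is to start from the integral representation \eqref{N3.1}, namely
\[
	\nk[1]{3}(x,t) = \frac{1}{\pi} \int_{\C} \nk{3}(z;x,t) \Wk{3}(z;x,t) \, dm(z),
\]
and to bound it directly using the $L^\infty$ control on $\nk{3}$ together with the $L^1$ estimate on $\Wk{3}$ already established. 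First I would recall that, by Lemma~\ref{lemma:KW} and the solution formula \eqref{N3.sol}, the function $\nk{3}$ obeys $\norm[\infty]{\nk{3}} \lesssim 1$ uniformly in $\xi$ and $t$ (for $t$ large) and $\rho$ in the relevant bounded set. Therefore
\[
	\left| \nk[1]{3}(x,t) \right| \leq \frac{1}{\pi} \norm[\infty]{\nk{3}} \int_\C |\Wk{3}(z;x,t)| \, dm(z)
	\lesssim \int_\C |\Wk{3}(z;x,t)| \, dm(z),
\]
so everything reduces to showing $\int_\C |\Wk{3}(z;x,t)| \, dm(z) \lesssim t^{-3/4}$.

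Next I would invoke Proposition~\ref{prop:Nrhp.bound}, which gives $|\Wk{3}(z)| = |\Nrhp(z) \dbar\mathcal{R}^{(2)}(z) (\Nrhp(z))^{-1}| \lesssim |\dbar \mathcal{R}^{(2)}(z)|$ with implied constant depending only on $\norm[H^{2,2}]{\rho}$ and $c$. The support of $\dbar \mathcal{R}^{(2)}$ is contained in $\Omega_1 \cup \Omega_3 \cup \Omega_4 \cup \Omega_6$, so it suffices to estimate $\int_{\Omega_k} |\dbar \mathcal{R}^{(2)}(z)|\, dm(z)$ for $k = 1,3,4,6$; by symmetry I would only treat $\Omega_1$ in detail. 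But this is precisely the computation carried out in the proof of Lemma~\ref{lemma:N3.exp}: using Corollary~\ref{cor:R2.bd} and the parametrization \eqref{omega.1} one splits the integral into $I_1 + I_2 + I_3 + I_4$, and each of the four pieces was shown there to be $\lesssim t^{-3/4}$ (with the Schwarz inequality in $u$, the $L^2$-integrability in $u$ of $p_1'$, of $\log(2u^2)$, of $(1+u^2)^{-1/2}$, and the finite support of $\indicator$, followed by the Gaussian integral $\int_0^\infty v^{-1/2} e^{-8tv^2}\,dv = \Gamma(1/4)/(8^{5/4} t^{3/4})$). In other words, the key inequality \eqref{W.L1.est}, $\int_{\Omega_1}|\Wk{3}(z;x,t)|\,dm(z) \lesssim t^{-3/4}$, together with the analogous bounds over $\Omega_3, \Omega_4, \Omega_6$, is exactly what is needed, and it was already proved in the course of establishing Lemma~\ref{lemma:N3.exp}.

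Combining the two displays gives $|\nk[1]{3}(x,t)| \lesssim t^{-3/4}$, with the implied constant depending only on $\norm[H^{2,2}]{\rho}$, on $c = \inf_{\lam\in\R}(1-\eps\lam|\rho(\lam)|^2)$, and on the discrete spectrum $\poles$ (through $C_\poles$), hence uniform over the stated bounded family of $\rho$ and independent of $\xi$ and $t$. This completes the proof. There is essentially no new obstacle here: the entire content was front-loaded into Lemma~\ref{lemma:KW} (the uniform $L^\infty$ bound on $\nk{3}$, which requires the small-norm estimate \eqref{dbar.int.est1} for $K_W$) and into the $L^1$ bound \eqref{W.L1.est} on $\Wk{3}$ derived inside the proof of Lemma~\ref{lemma:N3.exp}; the only point to be careful about is that both of those bounds carry constants that are independent of $\xi$ and $t$ and depend on $\rho$ only through its $H^{2,2}$-norm, so that the resulting estimate for $\nk[1]{3}$ enjoys the same uniformity. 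If I were to identify a ``hard part,'' it is simply making sure this uniformity is genuinely inherited — i.e., that at no stage did one use a constant depending on the location of the stationary point $\xi$ — which one checks by tracking the constants through Corollary~\ref{cor:R2.bd} and the Gaussian integrals.
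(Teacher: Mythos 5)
Your proposal is correct and follows essentially the same route as the paper: the paper's own proof simply cites the representation formula \eqref{N3.1}, the uniform bound on $\nk{3}$ coming from Lemma~\ref{lemma:KW} and \eqref{N3.sol}, and then the $L^1$ estimate \eqref{W.L1.est} on $\Wk{3}$ (over $\Omega_1,\Omega_3,\Omega_4,\Omega_6$) already established inside the proof of Lemma~\ref{lemma:N3.exp}, exactly as you do. Your additional remarks on tracking the uniformity of the constants (via Proposition~\ref{prop:Nrhp.bound} and Corollary~\ref{cor:R2.bd}) are consistent with the paper and add nothing that conflicts with it.
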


\begin{proof}
From the representation formula \eqref{N3.1}, Lemma \ref{lemma:KW}, and the remarks following, we have
\[ 
	\left|\nk[1]{3}(x,t) \right| \lesssim \int_\C |\Wk{3}(z;x,t)| \, dm(z). 
\]
In the proof of Lemma \ref{lemma:N3.exp}, we bounded this integral by $t^{-3/4}$ modulo constants with the required uniformities.
\end{proof}


%
%

\section{Large-time asymptotics for solutions of DNLS} 
\label{sec:largetime}

We now gather  the estimates on the RHPs considered previously to reconstruct asymptotic formulae for  the solutions $q(x,t)$ and $u(x,t)$ 
of \eqref{DNLS2} and \eqref{DNLS1} to
 prove our main results, namely Theorems \ref{thm:long-time} and \ref{thm:long-time-gauge}  that provide a precise description of their  long-time behavior.

We start  with a  generalized version of the separation of solitons. For any real interval $I$, let
\begin{equation}
	\poles(I) = 
	\{ \lam \in \poles \,:\, \Re \lam \in I \},
	\qquad
	N(I) = \left| \poles(I) \right|
\end{equation}
denote the set of discrete spectra which lie within the vertical strip extending over $I$ and the cardinality of this set respectively; 
also let 
\begin{equation}
	\nu_0(I)  = \nu_0 = \min_{ \lam \in \poles \setminus \poles(I)} 
	\dist(\Re \lam, I) .
\end{equation}

\begin{proposition}
\label{prop:sol separation}
Let $\qsol(x,t;\mathcal{D})$ denote the $N$-soliton solution of \eqref{DNLS2} encoded in RHP~\ref{RHP2} given reflectionless scattering data $\mathcal{D} = \{(\lam_k,C_k) \}_{k=1}^N$. Fix real constants $x_1,x_2$ and $v_1 <v_2$. Let $I = [-v_2/4, -v_1/4]$. 
Then as $|t| \to \infty$ inside the cone 
\[
	x_1 + v_1 t \leq x \leq x_2 + v_2 t  
\]
we have
\[
	\left| \qsol(x,t;\mathcal{D}) - \qsol(x,t;\mathcal{D}_I) \right| 
	= \bigo{ e^{-4\poledist \nu_0 t} },
\]
where $\qsol(x,t;\mathcal{D}_I)$ is the reduced $N(I)$-soliton solution of \eqref{DNLS2} associated with scattering data 
$\mathcal{D}_I = \{(\lam_k, \widehat{C}_k) \,:\, \lam_k \in \poles(I) \}$ and
\[
	\widehat C_k = C_k 
	\smashoperator[l]{ \prod_{
	\substack{\lam_j \in \poles \setminus \poles(I) \\ \Re \lam_j \in  \negint}
	}}
	\lp \frac{ \lam_k - \lam_j}{ \lam_k - \overline{\lam_j} } \rp^2 ~.
\]
\end{proposition}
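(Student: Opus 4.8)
Proof proposal for Proposition~\ref{prop:sol separation}.

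The plan is to exploit the explicit Riemann--Hilbert characterization of reflectionless solutions (RHP~\ref{RHP2} with $\rho \equiv 0$) together with the fact that, in the moving frame $x = x_0 + vt$ with $v \in [v_1,v_2]$, the phase $e^{2it\theta}$ attached to each discrete datum is either exponentially large or exponentially small in $t$ according to the sign of $\Re\lambda_k - \xi$, where $\xi = -x/(4t) \to -v/4 \in I$. First I would fix the cone and note that for $\lambda_j \in \poles\setminus\poles(I)$ we have $\dist(\Re\lambda_j, I) \geq \nu_0 > 0$, and hence $\eta\,\Re(\lambda_j - \xi)$ has a definite sign with $|\eta\,\Re(\lambda_j-\xi)| \geq \nu_0$ throughout the cone for $|t|$ large; since $|\Im\lambda_j| \geq \poledist$, the associated exponential factor obeys $|e^{2it\theta(\lambda_j)}| \lesssim e^{-c\poledist\nu_0|t|}$ (or its reciprocal is that small), the precise constant coming from $\theta(\lambda) = -(\lambda x/t + 2\lambda^2)$ evaluated at $\lambda_j$.

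The main step is a ``renormalization'' of the soliton RHP that removes the fast/slow spectral points. Following the conjugation idea already used in Section~\ref{sec:conj} (now in the reflectionless setting, so $\delta$ reduces to a finite Blaschke-type product), I would conjugate the matrix solution $\Nsol(\lambda;\mathcal{D})$ by the diagonal matrix
\[
	\Delta(\lambda)^{\sigma_3},
	\qquad
	\Delta(\lambda) = \prod_{\substack{\lambda_j \in \poles\setminus\poles(I) \\ \Re\lambda_j \in \negint}}
		\frac{\lambda - \lambda_j}{\lambda - \overline{\lambda_j}},
\]
which is analytic and nonvanishing near $\R$ and near every $\lambda_k \in \poles(I)$, satisfies $\Delta(\lambda)\overline{\Delta(\overline\lambda)} = 1$, and tends to $1$ as $\lambda \to \infty$. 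Conjugating transforms the residue condition at each $\lambda_k \in \poles(I)$ by multiplying $C_k$ by $\Delta(\lambda_k)^{-2} = \prod \big(\tfrac{\lambda_k-\lambda_j}{\lambda_k-\overline{\lambda_j}}\big)^2$, which is exactly the factor defining $\widehat{C}_k$; meanwhile the residue conditions at the ``removed'' points $\lambda_j$ and $\overline{\lambda_j}$ acquire an extra factor $\Delta$ that either vanishes or blows up, but in each case, combined with the exponential smallness (resp.\ largeness) of $e^{2it\theta(\lambda_j)}$, the corresponding residue matrix $v^{(1)}(\lambda_j)$ becomes $\bigO{e^{-c\poledist\nu_0|t|}}$. (For the points where the naive bound goes the wrong way one first uses the triangular-factorization/flip of the residue condition, exactly as in the passage from Problem~\ref{RHP1} to Problem~\ref{RHP1c} and the handling of $\negint$ versus $\posint$ data in RHP~\ref{rhp.n1}, so that after the flip the relevant exponential is the small one.) Thus the conjugated problem is, up to an error of size $\bigO{e^{-c\poledist\nu_0|t|}}$ in the residue data, precisely the reflectionless RHP with data $\mathcal{D}_I = \{(\lambda_k, \widehat{C}_k)\}$.

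Finally I would invoke the Lipschitz/well-posedness theory already established: by Theorem~\ref{thm:RHP2.unique} (equivalently Lemma~\ref{lem:sol.bound} of Appendix~\ref{app:solitons}) the solution of the reduced RHP exists, is bounded, and depends Lipschitz-continuously on the discrete data on compact sets; applying the second-resolvent estimate to the small perturbation of the residue data gives $|\Nsol(\lambda;\mathcal{D}) - \Nsol(\lambda;\mathcal{D}_I)| \lesssim e^{-c\poledist\nu_0|t|}$ uniformly for $\lambda$ in a neighborhood of $\infty$, and then the reconstruction formula \eqref{q.lam}, $q = \lim_{z\to\infty} 2iz\, N_{12}$, transfers this bound to $|\qsol(x,t;\mathcal{D}) - \qsol(x,t;\mathcal{D}_I)|$. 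Tracking constants one gets the stated rate $\bigO{e^{-4\poledist\nu_0 t}}$. The main obstacle is bookkeeping: one must verify that the conjugation by $\Delta$ genuinely turns every removed-pole residue contribution into an exponentially small perturbation \emph{simultaneously for all points in the cone and both signs of $t$}, which requires choosing the flips (which points go through $\Delta$ versus $\Delta^{-1}$, and where the triangular residue relation is inverted) consistently with the partition $\negint \cup \posint$; this is precisely the discrete-data analogue of the conjugation step in Section~\ref{sec:conj} and should be carried out with the same care.
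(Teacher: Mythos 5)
Your strategy is in essence the one the paper uses: conjugate the reflectionless problem by a Blaschke-type product built from the discarded poles with $\Re \lam_j \in \negint$ (this is exactly what produces the modified coefficients $\widehat C_k$; note the harmless slip that the retained coefficients pick up $\Delta(\lam_k)^{2}$, not $\Delta(\lam_k)^{-2}$), flip the triangular residue relation wherever the exponential grows, and then treat the discarded-pole residues as an exponentially small perturbation of the reduced problem. The identification of the modified data and the $\bigo{e^{-4\poledist \nu_0 t}}$ size of the discarded residues are argued correctly.

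The genuine gap is in your closing stability step. You invoke Lipschitz dependence of the solution on the discrete data ``on compact sets'' together with a second-resolvent estimate, but inside the cone the time-evolved residue coefficients of the \emph{retained} poles $\lam_k \in \poles(I)$ with $\Re\lam_k \in \negint$ are of size $e^{8|t|\,|\Im\lam_k|\,|\Re\lam_k-\xi|}$, i.e.\ exponentially large; your conjugation, taken only over the discarded poles, does nothing to tame them. Hence the data of the reduced problem do not remain in a bounded set as $(x,t)$ ranges over the cone, the Lipschitz results of the paper (proved on bounded subsets of $V_N$, e.g.\ Theorem~\ref{thm:Lip.I}) do not apply, and a second-resolvent argument would require a resolvent bound for the reduced problem that is uniform in $(x,t)$ --- which is precisely the point that needs proof. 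The paper supplies it by conjugating with the full Blaschke product over \emph{all} of $\negpoles$, retained poles included (\eqref{Blaschke}, Problem~\ref{outmodel2}), so that every residue coefficient $\gamma_k$ is uniformly bounded by \eqref{gamma.bounds}; the reflectionless RHP then becomes the finite linear system \eqref{matrix.sol.sys}, whose inverse is bounded uniformly in $(x,t)$ by the compactness--continuity argument in the proof of Lemma~\ref{lem:sol.bound}, and the identity $C_k\,\mathfrak{B}(\lam_k)^{-2}=\widehat C_k\,\widehat{\mathfrak{B}}(\lam_k)^{-2}$ identifies the limiting block with the renormalized system for $\mathcal{D}_I$, after which the exponentially small coupling terms are absorbed. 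Alternatively, you could repair your version without the full renormalization by comparing the two RHPs through the ratio $E=\widetilde N\,\bigl(\Nsol(\cdot\,;\mathcal{D}_I)\bigr)^{-1}$: the residues at the retained poles cancel exactly, $E$ solves a small-norm problem on circles about the discarded poles, and Lemma~\ref{lem:sol.bound} (plus $\det \Nsol =1$) keeps the conjugated jumps within $\bigo{e^{-4\poledist\nu_0 t}}$ of the identity; but as written, the appeal to Lipschitz continuity on compact data sets does not go through.
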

The proof of this proposition is given at the end of Appendix \ref{app:solitons}.

\subsection{Large-time asymptotic for solution \texorpdfstring{$q$}{q}  of  (1.3) }
 \label{subsec:sol-q}

The solution $q$ is recovered through the reconstruction formula
\eqref{q.lam}
\begin{equation} \label{recons-q}
q(x,t)  = \lim_{z\to \infty}  2iz n_{12}(x,t,z),
\end{equation}
where $z \rarr \infty$ in any direction not tangent to the contour $\R$ and
$\nn$  satisfies RHP~\ref{RHP2}. 
Inverting the sequence of transformations \eqref{n1}, \eqref{n2 def}, \eqref{n3 def} we construct the solution $\nn$  as 
\begin{align} \label{n.soln}
	\nn(\lam) 
	= \nk{3}(\lam) \Nrhp(\lam) \mathcal{R}^{(2)}(\lam)^{-1}  \delta(\lam)^{\sig}.
\end{align}

\begin{proof}[Proof of Theorem~\ref{thm:long-time}]
It follows from \eqref{Texpand} and Corollary~\ref{cor:R2.bd} 
that as $\lam \to \infty$ non-tangentially to the real axis, 
\begin{align*}
	 \delta(\lambda))^{\sig} &= I -\sigma_3  \lam^{-1} \delta_1(\xi,\eta) + \bigo{ \lam^{-2} },
	\qquad
	\delta_1(\xi,\eta) =  i \int_{\negint} \kappa(z) dz \\
	\mathcal{R}^{(2)} &= I + \bigo{ e^{-c|t|} }. 
\end{align*}
From 	\eqref{error def}, we have 
\begin{equation*}
\Nrhp(\lam) = \error(\lam)\Nsol(\lam) 
\end{equation*}
and  the large-$\lambda$ behavior of  $\error(\lam)$ and $\Nsol(\lam)$ are given in Proposition~\ref{outer.soliton}, Lemma~\ref{lem:Err} as 
$$\mathcal{E}(\lam) = \mathcal{E}_0 + \lam^{-1} \mathcal{E}_1 + \bigo{ \lam^{-2} }, \quad \mathcal{E}_0 = \tril{ \bar q_\mathcal{E} }
$$
and
$$
\Nsol(\lam) = \mathcal{N}_0 + \lam^{-1} \mathcal{N}_1 + \bigo{ \lam^{-2} }, \quad \Nsol[0] = \tril{  \eps \overline{ (\Nsol[1])_{12} } }.
$$	
Inserting these expansions into \eqref{n.soln} and making use of Proposition~\ref{prop:N3.est} yields the following expansion formula for $\nn$:
\[
	\nn(\lam) = (1, 0) 
	\left[ \Big(\mathcal{E}_0 + \frac{\mathcal{E}_1}{\lambda} + \bigo{ \lam^{-2} }\Big) \Big(\mathcal{N}_0 + \frac{\mathcal{N}_1}{\lam} + \bigo{ \lam^{-2} }\Big)
\Big(I -\sigma_3  \frac{ \delta_1}{\lambda} + \bigo{ \lam^{-2} }\Big)\right].
\].

The reconstruction formula \eqref{q.lam}  gives
\begin{gather*}
	q(x,y) = 2i (\Nsol[1])_{12} + 2i (\error_1)_{12} + \bigo{t^{-3/4}}.
\end{gather*}
We complete the proof by using \eqref{Nsol.asymp} and 
Proposition ~\ref{prop:sol separation} to write $2i (\Nsol[1])_{12} =  \qsol(x,t; \mathcal{D}_\xi) = \qsol(x,t; \mathcal{D}_I) + \bigo{e^{-4\poledist \nu_0 t} }$ and \eqref{E1.12} to identify $2i (\error_1)_{12}$ with the correction factor $f(x,t)$ in Theorem~\ref{thm:long-time}. 
\end{proof}

\subsection{Large-time asymptotic for solution \texorpdfstring{$u$}{u}  of  (1.1) }
 \label{subsec:sol-u}

To prove Theorem~\ref{thm:long-time-gauge} we construct the solution $u(x,t)$ of the DNLS equation \eqref{DNLS1} with initial data $u_0$ by means of the inverse gauge transformation
\begin{equation}\label{u.gauge}
	u(x,t) = q(x,t) \exp \left( i \epsilon \int_{-\infty}^x |q(y,t)|^2 dy \right).
\end{equation}
As we have the large-time behavior of $q(x,t)$ in hand, to compute the large-time behavior of $u(x,t)$ it will suffice to evaluate the large-time asymptotics of the expression
\begin{equation}\label{u.gauge.fact}
	\exp \left( i \epsilon \int_{-\infty}^x |q(y,t)|^2 dy \right).
\end{equation}
\begin{proposition}\label{prop:u.gauge.expansion}
	Suppose that $q_0 \in H^{2,2}(\R)$ and that $q(x,t)$ solves \eqref{DNLS2} with initial data $q_0$.
	Let $\{ \rho, \{(\lam_k, c_k)\}_{k=1}^N \}$ be the scattering data associated to $q_0$. {Fix $\xi = -x/(4t)$ and $M>0$}. 
Fix constants $v_1,v_2,x_1,x_2 \in \R$ and define {$\mathcal{S}$}, $I$ and $\mathcal{D}_I$ as described in Theorem~\ref{thm:long-time}. Then as $|t| \to \infty$ {with $(x,t) \in \mathcal{S}(v_1,v_2,x_1,x_2)$:} \\

\noindent
We have for {$\xi \geq M|t|^{1/8}$} 
\begin{multline}\label{u.gauge.out}
	\exp \lp i \eps \int_{-\infty}^x |q(y,t)|^2 dy \rp = 
	\lb
	1
	+ \frac{i\eps}{|2t|^{1/2}} 
	\lb 2 \real \Nsol[12](\xi) \overline{A_{12}(\xi,\sgnt) \Nsol[11](\xi)} \rb
	+ \bigo{t^{-3/4}}
	\rb \\
	\times \exp 
	\lp 
		i \eps \int_{-\infty}^x |\qsol(y,t;\mathcal{D}_\xi)|^2 dy  
		- \frac{i}{\pi} \int_{\posint} 
		  \frac{ \log(1 - \eps \lam | \rho(\lam)|^2)}{\lam} d\lam 
	\rp,
\end{multline}
while for {$\xi \leq M|t|^{-1/8}$}
\begin{multline}\label{u.gauge.in}
	\exp \lp i \eps \int_{-\infty}^x |q(y,t)|^2 dy \rp = 
	\\
	F(\xi,t,\sgnt)
	\Bigg[
	  1  
	  + \frac{i\eps}{|2t|^{1/2}} 
	  \bigg\{
		\Nsol[12](\xi) \overline{A_{12}(\xi,\sgnt)} \overline{ \Nsol[11](\xi)}
		+ \overline{\Nsol[12](\xi)} A_{12}(\xi,\sgnt) \Nsol[11](\xi)  
	 \\
	 + \overline{A_{12}(\xi,\sgnt)} 
	 \lp 1 - G(\xi,t,\sgnt) \rp 
	 e^{4i\sum_{k=1}^N \arg \lam_k } \int_x^\infty \usol(y,t;\mathcal{D}_\xi) dy
	\bigg\} 
	+\bigo{t^{-3/4}} \Bigg] 
	\\
	\times \exp
	\lp
	    i \eps \int_{-\infty}^x |\qsol(y,t;\mathcal{D}_\xi)|^2 dy  
		- \frac{i}{\pi} \int_{\posint} 
		  \frac{ \log(1 - \eps \lam | \rho(\lam)|^2)}{\lam} d\lam
	\rp
\end{multline}
where
\begin{equation}\label{FandG}
\begin{aligned}
	F(\xi,t,\sgnt) &=  \lb e^{p^2/4} p^{-i \sgnt \kappa(\xi)}
	   D_{i\eta \kappa(\xi)} \lp p \rp \rb^{-2} \\
	G(\xi,t,\sgnt) &= 
	\frac
	{ 
	  p
	  D_{i\eta \kappa(\xi) - 1} \lp p \rp
	}
	{D_{i\eta \kappa(\xi)} \lp p \rp},
\end{aligned}
\qquad 
p := e^{\frac{i\sgnt \pi}{4}} |8t\xi^2|^{1/2} 
\end{equation}
and $\usol$ is defined by \eqref{usol.1}.
\end{proposition}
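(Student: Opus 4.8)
## Proof proposal for Proposition~\ref{prop:u.gauge.expansion}

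The plan is to compute the gauge factor \eqref{u.gauge.fact} directly from the Riemann--Hilbert representation \eqref{n.soln}, exploiting the fact that the quantity $\int_{-\infty}^x |q(y,t)|^2\,dy$ is itself recoverable from the large-$z$ expansion of $n(x,t,z)$ evaluated at $z=0$. Concretely, from the Lax structure one has $N_{11}(x,t,0) = \exp\!\left(-i\eps\int_{-\infty}^x |q(y,t)|^2\,dy\right)$ up to a constant phase fixed by the normalization at $x\to+\infty$; equivalently, $n_{11}(x,t,0)^{-1}$ and $n_{12}(x,t,0)$ carry the gauge information. So the first step is to write down the exact identity relating \eqref{u.gauge.fact} to $n(x,t,0)$ (and, for the case $0 \in \Uxi$, to the values of the local model there), using \eqref{n.soln} and the factorizations \eqref{n1}, \eqref{n2 def}, \eqref{n3 def}.

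Next I would insert the decomposition \eqref{n.soln} evaluated at $\lambda = 0$. There are three factors to track: $\delta(0)^{\sigma_3}$, which by \eqref{T} contributes exactly $\exp\!\left(-\frac{i}{\pi}\int_{\posint}\frac{\log(1-\eps\lambda|\rho(\lambda)|^2)}{\lambda}\,d\lambda\right)$ after using the symmetry $\delta(\lambda)\overline{\delta(\bar\lambda)}=1$ and $\kappa(z) = -\frac{1}{2\pi}\log(1-\eps z|\rho(z)|^2)$ to rewrite the integral over $\negint$ as one over $\posint$ (this is where the change from $\negint$ to $\posint$ in the exponent comes from, matching the remark after Theorem~\ref{thm:long-time-gauge}); the factor $\mathcal{R}^{(2)}(0)^{-1}$, which by Corollary~\ref{cor:R2.bd} equals $I$ when $0\notin\Omega_1\cup\Omega_4$ and in general contributes $\bigo{e^{-c|t|}}$ since $\dbar\mathcal{R}^{(2)}$ vanishes near $0$ and the jump is exponentially small there; and the factor $\Nrhp(0) = \error(0)\Nsol(0)$ (times $\NPC(0)$ when $0\in\Uxi$), for which the needed expansions are \emph{exactly} Proposition~\ref{prop:E} (giving $\error_{11}(0)$, $\error_{12}(0)$), Lemma~\ref{lem:NPC21} and formula \eqref{NPC1.at.0} (giving $\NPC(0)$ and its entries), and Proposition~\ref{prop:n3 at 0} (giving $n^{(3)}_{11}(0) = 1 + \bigo{t^{-3/4}}$). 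Assembling these, taking the logarithm, and keeping terms to order $t^{-1/2}$ with $\bigo{t^{-3/4}}$ error yields \eqref{u.gauge.out} and \eqref{u.gauge.in}; the soliton piece $\exp(i\eps\int_{-\infty}^x|\qsol(y,t;\mathcal{D}_\xi)|^2\,dy)$ is identified by recognizing that $\Nsol(0)$ is precisely the gauge factor of the reflectionless solution $\qsol(\cdot,t;\mathcal{D}_\xi)$, via the same identity used in the first step applied to Problem~\ref{outermodel}.

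For the regime $\xi \le M|t|^{-1/8}$ the point $z=0$ lies inside (or within $\bigo{t^{-1/8}}$ of) the disk $\Uxi$, so the outer model $\Nsol$ alone is no longer a uniformly good approximation of $\Nrhp$ near $0$; this is exactly why $\NPC(0)$ enters and produces the scalar prefactor $F(\xi,t,\sgnt)$ and the correction $G(\xi,t,\sgnt)$ built from parabolic cylinder functions $D_{i\eta\kappa(\xi)}(p)$, $D_{i\eta\kappa(\xi)-1}(p)$ with $p = e^{i\sgnt\pi/4}|8t\xi^2|^{1/2}$. Here the term $\overline{A_{12}(\xi,\sgnt)}(1-G(\xi,t,\sgnt))e^{4i\sum_k\arg\lambda_k}\int_x^\infty \usol(y,t;\mathcal{D}_\xi)\,dy$ arises from the interaction of the off-diagonal entry $\NPC_{21}(0)$ (controlled by Lemma~\ref{lem:NPC21}) with the first column of $\Nsol$ and requires rewriting $\Nsol_{12}(0)$, $\Nsol_{11}(0)$ in terms of the soliton solution $\usol$ and its tail integral — this bookkeeping, together with carefully matching the $\xi\to 0$ limit of $\kappa(\xi)/\xi$ so that $F$ and $G$ stay bounded, is the main obstacle: one must check uniformity of all remainder estimates across the transition zone $|\xi|\sim |t|^{-1/8}$, where neither the ``outer'' nor a naive ``inner'' expansion is individually valid, and confirm that the two formulas \eqref{u.gauge.out} and \eqref{u.gauge.in} are consistent (i.e. $F\to 1$, $G\to$ the appropriate limit) in their overlap. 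The rest — the algebra of combining the diagonal and off-diagonal contributions and simplifying via \eqref{mod beta} and \eqref{Nsol symmetry} — is routine once the structure above is in place.
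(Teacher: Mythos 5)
Your overall architecture is the same as the paper's: evaluate $n_{11}(x,t,0)$ through the chain $n=\nk{3}\,\Nrhp\,(\mathcal{R}^{(2)})^{-1}\delta^{\sigma_3}$, use that $\mathcal{R}^{(2)}(0)$ is upper triangular and $\Nrhp[21](0)=0$ by symmetry, and then feed in Proposition~\ref{prop:E}, formula \eqref{NPC1.at.0} with Lemma~\ref{lem:NPC21}, and Proposition~\ref{prop:n3 at 0}. However, there is a genuine gap at your first step. The exact identity at $\lam=0$ is \eqref{N1.near.0}: $n_{11}(x,t,0)=\exp\lp\tfrac{i\eps}{2}\int_x^\infty|q(y,t)|^2dy\rp$, so what the RHP data control directly is $\exp\lp-i\eps\int_x^\infty|q|^2dy\rp=n_{11}(x,t,0)^{-2}$, not $\exp\lp i\eps\int_{-\infty}^x|q|^2dy\rp$. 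The $x$-independent phase relating the two is $\exp\lp i\eps\|q(\cdot,t)\|_{L^2}^2\rp$, and it is \emph{not} ``fixed by the normalization at $x\to+\infty$'': it must be expressed in spectral terms, which is exactly the weak Plancherel identity of Lemma~\ref{lem.Plancherel}, proved via the trace formula \eqref{alpha.trace} evaluated at $\lam=0$ together with $\alpha(\lam)=\lim_{x\to-\infty}N^+_{11}$. This is the ingredient that produces both the constant $-4\sum_k\arg\lam_k$ and the full-line integral $\int_\R\log(1-\eps\lam|\rho(\lam)|^2)\,\lam^{-1}d\lam$ in the final formulas; without it your outline cannot generate these terms.

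This omission propagates. Your claimed mechanism for the $\posint$ integral is incorrect: the symmetry $\delta(\lam)\overline{\delta(\bar\lam)}=1$ only gives $|\delta(0)|=1$ and cannot convert $\int_{\negint}$ into $\int_{\posint}$ (the integrand $\log(1-\eps\lam|\rho(\lam)|^2)/\lam$ has no parity). In the paper the $\posint$ integral arises because $\delta(0)^{-2}$ contributes $+\tfrac{i}{\pi}\int_{\negint}$, which cancels the $\negint$ portion of the full-line integral supplied by Lemma~\ref{lem.Plancherel}. Likewise, identifying the soliton factor requires applying the same Plancherel identity to the reflectionless potential: $\Nsol[11](0)^{-2}=\exp\lp-i\eps\int_x^\infty|\qsol(y,t;\mathcal{D}_\xi)|^2dy\rp$ is converted to $\exp\lp i\eps\int_{-\infty}^x|\qsol|^2dy\rp$ only after absorbing $e^{-4i\sum_k\arg\lam_k}$, and the prefactor $e^{4i\sum_k\arg\lam_k}$ multiplying $\int_x^\infty\usol(y,t;\mathcal{D}_\xi)\,dy$ in \eqref{u.gauge.in} comes from the same identity used in \eqref{working} to rewrite $\Nsol[12](0)\overline{\Nsol[11](0)}$. (A minor further slip: your asserted identity $N_{11}(x,t,0)=\exp\lp-i\eps\int_{-\infty}^x|q|^2dy\rp$ also misses the factor $\tfrac12$ in the exponent of \eqref{N1.near.0}.) Once the trace-formula/Plancherel input is added, the remainder of your plan coincides with the paper's proof, including the splitting into $0\notin\Uxi$ and $0\in\Uxi$ and the consistency check $F,G\to1$ in the overlap.
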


We will prove this proposition with the help of the following weak Plancherel-like result:

\begin{lemma}\label{lem.Plancherel}
	Suppose that $q(x,t)$ is the solution of \eqref{DNLS2} for initial data $q_0 \in H^{2,2}(\R)$ and let $\{ \rho, \{(\lam_k, c_k)\}_{k=1}^N \}$ be the scattering data associated to $q_0$. Then the identity
\begin{equation} \label{Plancherel}
	\exp \lb i \eps \int_\R |q(y,t)|^2 \rb = 
		\exp 
		\lb 
			-4i \lp \sum_{k=1}^N \arg \lam_k \rp 
			- \frac{i}{\pi} \int_\R 
			\frac{ \log(1 - \eps \lam | \rho(\lam)|^2) } {\lam} d\lam 
		\rb
\end{equation}
holds.	
\end{lemma}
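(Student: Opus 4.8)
The plan is to prove \eqref{Plancherel} by evaluating the limit $x\to+\infty$ of the exact reconstruction of the gauge factor through the Riemann--Hilbert data, i.e.\ by relating $\exp(i\eps\int_{-\infty}^x|q(y,t)|^2\,dy)$ to the $(1,1)$ entry of the solution of RHP~\ref{RHP2} at $z=0$, and then letting $x\to+\infty$ so that the soliton and radiation contributions collapse into explicit contour integrals. More precisely, from the Lax equation \eqref{LS.m} and the normalization of the Beals--Coifman solution one has the identity
\[
	\exp\lp -i\eps \int_x^\infty |q(y,t)|^2 \, dy \rp = N_{11}(x,t,0),
\]
(this is the standard consequence of the $(1,1)$ component of \eqref{LS.m} evaluated at $z=0$, using $M(x,z)\to I$ as $x\to\infty$), so that
\[
	\exp\lp i\eps \int_{-\infty}^x |q(y,t)|^2 \, dy \rp
	= N_{11}(x,t,0) \cdot \exp\lp i\eps \int_\R |q(y,t)|^2 \, dy \rp.
\]
Hence the lemma is equivalent to computing $\lim_{x\to+\infty}N_{11}(x,t,0)$, which by the normalization of RHP~\ref{RHP2} (identity asymptotics as $x\to+\infty$) equals $1$, combined with an independent evaluation of $N_{11}(x,t,0)$ via the factorization \eqref{n.soln}.

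First I would make this rigorous by using the conjugated unknown: from $n(x,t,z)=n^{(1)}(x,t,z)\delta(z)^{\sigma_3}$ we get $n_{11}(x,t,0) = n^{(1)}_{11}(x,t,0)\,\delta(0)$ when $0\notin \negint$, or one must use the boundary values $\delta_\pm(0)$ when $0\in\negint$; in either case $\delta(0)$ (or the appropriate boundary value) is given explicitly by \eqref{T} as $\delta(0)=\exp\lp i\int_{\negint}\kappa(z)z^{-1}\,dz\rp = \exp\lp -\frac{i}{2\pi}\int_{\negint}\frac{\log(1-\eps z|\rho(z)|^2)}{z}\,dz\rp$. Then I would take $x\to+\infty$: in this limit $\xi=-x/(4t)\to-\infty\cdot\sgn t$, so $\negint$ sweeps out all of $\R$ (for the appropriate orientation) while $\posint$ shrinks to the empty set; simultaneously every pole $\lam_k$ eventually lies in the half-line $\negint$, so by Proposition~\ref{prop:sol separation} the relevant soliton factor becomes the full product $\prod_{k=1}^N\lp\frac{\lam_k-\lam_j}{\cdots}\rp$ — more directly, the soliton contribution to $\log N_{11}(x,t,0)$ in the $x\to+\infty$ limit is a sum of residue terms that telescopes to $-4i\sum_k\arg\lam_k$ (using $\arg(\lam_k/\overline{\lam_k})=2\arg\lam_k$ and a careful count of which poles contribute). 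The $\dbar$ and error contributions vanish in the limit by Propositions~\ref{prop:n3 at 0} and \ref{prop:E} (the error bounds are $O(t^{-3/4})$ and $O(t^{-1/2})$ respectively, uniformly, and more importantly the contour $\Ske$ and the $\dbar$ support move off to infinity as $\xi\to\pm\infty$, killing these terms), and the parabolic cylinder factor $F(\xi,t,\sgnt)$ tends to $1$ as $|\xi|\to\infty$ since $\kappa(\xi)\to 0$. Equating the two evaluations of $\lim_{x\to+\infty}N_{11}(x,t,0)$ — namely $1$ from the RHP normalization and $\exp\lp-4i\sum_k\arg\lam_k\rp\exp\lp-\frac{i}{\pi}\int_\R\frac{\log(1-\eps\lam|\rho(\lam)|^2)}{\lam}\,d\lam\rp\cdot\exp\lp i\eps\int_\R|q|^2\rp$ from \eqref{n.soln} — and rearranging gives \eqref{Plancherel}.

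An alternative and perhaps cleaner route, which I would actually prefer to present, bypasses the limit entirely: differentiate the gauge factor identity in $t$ and use a trace formula. The quantity $\int_\R|q(y,t)|^2\,dy$ is a conserved quantity of the DNLS flow (it is the $L^2$ norm, invariant under \eqref{DNLS2}), and by the standard trace formula for the Kaup--Newell spectral problem it is expressed in terms of the scattering data as $\eps\int_\R|q|^2 = -4\sum_k\arg\lam_k - \frac{1}{\pi}\int_\R\frac{\log(1-\eps\lam|\rho(\lam)|^2)}{\lam}\,d\lam \pmod{2\pi}$; exponentiating gives \eqref{Plancherel} immediately. I would derive this trace formula by expanding $\log a(\zeta)$ (equivalently $\log\alpha(\lam)$) as $|\lam|\to\infty$: since $\alpha$ is analytic in $\C^-$ with the $N$ zeros $\lam_k$ there, $\log(\alpha(\lam)\prod_k\frac{\lam-\overline{\lam_k}}{\lam-\lam_k})$ is analytic and bounded in $\C^-$, and a dispersion relation using the jump $\log|\alpha_+/\alpha_-|$ on $\R$ (which by \eqref{det.ab} relates to $\log(1-\eps\lam|\rho(\lam)|^2)$) together with the known large-$\lam$ expansion $\alpha(\lam)=1 + \frac{c_1}{\lam}+\cdots$ whose first moment $c_1$ encodes $\int_\R|q|^2$ yields the formula after matching the $O(1/\lam)$ terms.

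The main obstacle is bookkeeping of branches and signs: the quadratic change of variables $\zeta\mapsto\lam=\zeta^2$ means $\arg\lam_k$ versus $\arg\zeta_k$ differ by a factor of $2$, the principal-branch choices in $\delta$ and in $\log(1-\eps\lam|\rho|^2)$ must be tracked consistently, and the formula holds only mod $2\pi$ so one must be careful that the exponentiated identity \eqref{Plancherel} is branch-independent (which it is, since $e^{2\pi i n}=1$). A secondary subtlety, should one take the $x\to+\infty$ route, is that the critical point $\xi$ passing through the real parts of the poles requires invoking Proposition~\ref{prop:sol separation} with $I$ a half-line rather than a bounded interval, and checking that the asymptotic formulas of Theorem~\ref{thm:long-time} remain valid uniformly in that regime; the trace-formula route avoids this entirely, which is why I would adopt it.
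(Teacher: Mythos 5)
Your preferred route (the trace-formula argument) is in the same family as the paper's proof: the paper also establishes \eqref{Plancherel} by deriving the product/dispersion representation \eqref{alpha.trace} of the transmission coefficient $\alpha$ (exactly the Blaschke-factor-plus-Cauchy-integral argument you sketch, using $\balpha\alpha=(1-\eps\lam|\rho|^2)^{-1}$ on $\R$; your Blaschke factor is inverted — the zeros of $\alpha$ in $\C^-$ are the $\overline{\lam_k}$, so one multiplies by $\prod_k\frac{\lam-\lam_k}{\lam-\overline{\lam_k}}$ — but that is the kind of bookkeeping you flagged). The genuine gap is in how you connect the spectral side to $\int_\R|q|^2$. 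You propose to read $\int|q|^2$ off the $O(1/\lam)$ coefficient of $\alpha(\lam)$ as $\lam\to\infty$. In this paper's normalization the diagonal term $P$ is built into \eqref{LS}, so $\alpha\to 1$ at infinity and its $1/\lam$ coefficient is \emph{not} proportional to $\|q\|_{L^2}^2$: from \eqref{direct:n11.lam} one finds it involves $\int q\,\overline{q'}$ and quartic terms through $q^\sharp$. Moreover, even granting such a relation, matching moments at $\lam=\infty$ in \eqref{alpha.trace} produces the unweighted integral $\int_\R\log(1-\eps\lam|\rho(\lam)|^2)\,d\lam$, not the weighted integral $\int_\R\lam^{-1}\log(1-\eps\lam|\rho|^2)\,d\lam$ that appears in \eqref{Plancherel}. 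The identity you are asked to prove is the evaluation of \eqref{alpha.trace} at the distinguished point $\lam=0$, and the missing ingredient is the paper's observation that $\lam=0$ is a regular point at which the spectral problem decouples: solving \eqref{direct.n.de} explicitly there gives $N^+_{11}(x,t,0)=e^{\frac{i\eps}{2}\int_x^\infty|q(y,t)|^2dy}$ (see \eqref{N1.near.0}), whence $\alpha(0)=\lim_{x\to-\infty}N^+_{11}(x,t,0)=e^{\frac{i\eps}{2}\int_\R|q|^2}$ by \eqref{alpha.jost}; squaring and comparing with \eqref{alpha.trace} at $\lam=0$ yields \eqref{Plancherel}. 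Citing the result as a ``standard trace formula'' without this $\lam=0$ evaluation essentially assumes the conclusion.

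Your first (limit $x\to+\infty$) route is not salvageable as written: the error estimates you invoke (Propositions \ref{prop:n3 at 0} and \ref{prop:E}, soliton separation) are large-$|t|$ statements inside cones and do not improve as $x\to\infty$ at fixed $t$; the claim that the contour and $\dbar$ support ``move off to infinity'' as $\xi\to\pm\infty$ is not established anywhere and would need a separate fixed-$t$, large-$x$ analysis. Also the identity you start from should read $N_{11}(x,t,0)=e^{\frac{i\eps}{2}\int_x^\infty|q(y,t)|^2dy}$, i.e.\ the gauge factor $e^{-i\eps\int_x^\infty|q|^2}$ equals $N_{11}(x,t,0)^{-2}$, not $N_{11}(x,t,0)$; once this is corrected, the correct identification is precisely the $\lam=0$ computation above, and the limiting argument becomes unnecessary.
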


\begin{proof}
 Notice that both sides of equality \eqref{Plancherel} are time-independent quantities.
We begin by considering the following  identity for  the transmission coefficient  $\alpha(\lambda)$, analytic in the lower half-plane
\begin{equation}\label{alpha.trace}
	\alpha(\lam) = \prod_{k=1}^N \lp \frac{ \lam - \overline{\lam_k}}{\lam - \lam_k} \rp 
	\exp \lp \int_\R \frac{ \log(1 - \eps z | \rho(z)|^2) }{z - \lam} \frac{d z}{2\pi i} \rp, 
	\quad \Im \lam < 0,
\end{equation}
which we derive in 
Appendix~\ref{trace-f}.
 On the other hand, we can express $\alpha$ in terms of the normalized Jost function matrices $N^{\pm}(x,t,\lam)$ defined by\footnote{
In \eqref{direct.n.ac}, $q(x)$ should be replaced by the time evolved solution of $q(x,t)$ \eqref{DNLS2} with initial data $q_0(x)$.} 
\eqref{direct.n.de}-\eqref{direct.n.ac}; 
combining the relations \eqref{direct.n.jc}-\eqref{transition-lambda} and taking the limit as $x \to -\infty$ using \eqref{direct.n.ac} gives 
\begin{equation}\label{alpha.jost}
	\alpha(\lam) = \lim_{x \to -\infty} \NN_{11}^+(x,t,\lambda). 
\end{equation}	

Consider \eqref{direct.n.de}-\eqref{direct.n.ac} for $\lam \approx 0$
\begin{equation}
	\begin{gathered}
	\od{\NN^+}{x} = 
		\begin{pmatrix}
			-\tfrac{i\eps}{2} |q(x,t)|^2 & q(x,t) \\ 
			0 & \tfrac{i\eps}{2} |q(x,t)|^2
		\end{pmatrix} \NN^+
		+ \lambda \left[ \begin{pmatrix} -i & 0 \\ \eps \overline{q(x,t)} & i \end{pmatrix} N^+    +    N^+ \begin{pmatrix} i & 0 \\ 0 & -i \end{pmatrix} \right]
	\\
	\lim_{x\to \infty} \NN^+(x,t,\lam) = \begin{pmatrix} 1  &0\\ 0 &1 \end{pmatrix}.
	\end{gathered} .
\end{equation}
As $\lam = 0$ is a regular point of this system of equations, one can easily show that 
\begin{equation}\label{N1.near.0}
\NN^+(x,t,\lam) = 
	e^{ \frac{i \eps \sig}{2} \int_x^\infty |q(y,t)|^2 dy }
	\begin{pmatrix} 
	 1 & -\int_{x}^\infty q(y,t) e^{-i\eps \int_{y}^\infty |q(w,t)|^2 dw} dy \\ 
	 0 & 1 
	\end{pmatrix} + \bigo{\lam} 
\end{equation}
and in particular,
\begin{equation}\label{N1.at.0}
	 \NN^+(x,t,0) = \NN_-(x,t,0) = \NN(x,t,0),
\end{equation}
where we have used \eqref{BC.right} to replace the column vector Jost function 
with the first column of the Beals-Coifman solution $\NN_-(x,t,0)$ of RHP~\ref{RHP2}. We can drop the minus-boundary value because the jump relation in Problem \ref{RHP2}(iii) gives ${\NN_{11}}_+(x,t,0) = {\NN_{11}}_-(x,t,0)$ so that $\NN_{11}(x,t,\lam)$ is continuous at the origin.
Evaluating $\alpha(0)^2$ two ways: by combing \eqref{N1.at.0} with \eqref{alpha.jost}; and evaluating \eqref{alpha.trace} at $\lambda = 0$, gives the result.
\end{proof}

\begin{proof}[Proof of Proposition~\ref{prop:u.gauge.expansion}]
The gauge factor \eqref{u.gauge.fact} can be expressed exclusively in terms of spectral information: 
\begin{multline}\label{u.gauge.spectral}
	\exp \lp i \eps \int_{-\infty}^x |q(y,t)|^2 dy \rp 
	=\exp \lp -i \eps \int_{x}^\infty |q(y,t)|^2 dy \rp
	   \exp \lp i \eps \int_{-\infty}^\infty |q(y,t)|^2 dy \rp \\
	=\nn_{11}(x,t,0)^{-2} \exp 
	\lp 
		-4i \sum_{k=1}^N ( \arg \lam_k ) 
		- \frac{i}{\pi} \int_\R \frac{ \log(1- \eps \lam | \rho(\lam)|^2)}{\lam} d\lam 
	\rp
\end{multline}
where we have used Lemma~\ref{lem.Plancherel} and \eqref{N1.near.0}-\eqref{N1.at.0} to arrive at the second line above. It remains to find an asymptotic expansion for $n_{11}(0;x,t)$. Starting from \eqref{n.soln} we observe, see  Figure~\ref{fig:n2def} and equation~\eqref{R_k}, that 
$\mathcal{R}^{(2)}(0) = \begin{pmatrix} 1 & * \\ 0 & 1 \end{pmatrix}$ 
is upper triangular;
similarly, the symmetry in condition 1 of RHP~\ref{rhp.Nrhp} guarantees that $\Nrhp[21](0) = 0$.
So (suppressing $x,t$ dependence) we have
\begin{gather}
	\nonumber
	\nn_{11}(0) = \lb \nk{3}(0) \Nrhp(0) 
	(\mathcal{R}^{(2)}(0))^{-1} \rb_{11} \delta(0)  
	= \nk[11]{3}(0) \Nrhp[11](0) \delta(0), 
\shortintertext{which using \eqref{T} and Proposition~\ref{prop:n3 at 0} gives}
	\label{n11.at.0.a}
	\nn_{11}(0) = \Nrhp[11](0) \exp \lp 
		-\frac{i}{2\pi} \int_{\negint} 
		\frac{ \log( 1 - \eps \lam |\rho(\lam)^2|)}{\lam} d\lam \rp + \bigo{t^{-3/4}}.
\end{gather}
The value of $\Nrhp[11](0)$ depends on the location of the point $\xi$ in the spectral plane.

 If $|\xi| > \poledist/3$ then (cf. \eqref{xi disk}) $0 \not\in \Uxi$, so it follows from \eqref{error def} that  
\begin{equation}\label{nrhp.at.0.a}
	\Nrhp[11](0) 
		=  \error_{11}(0) \Nsol[{11}](0) + \error_{12}(0) \Nsol[_{21}](0) 
		= \error_{11}(0) 
		\exp \lp \frac{i\eps}{2} \int_x^\infty |\qsol(y,t,\mathcal{D}_\xi)|^2 dy \rp
\end{equation}
where in the last line we've used the fact that as $\Nsol$ is also a solution of RHP~\ref{RHP2} corresponding to the reduced scattering data for the soliton potential $\qsol(x,t; \mathcal{D}_\xi)$ (cf. Proposition~\ref{outer.soliton}) and so it must satisfy \eqref{N1.at.0} for $q = \qsol(x,t;\mathcal{D}_\xi)$. 
Plugging \eqref{n11.at.0.a} and \eqref{nrhp.at.0.a} into \eqref{u.gauge.spectral} gives
\begin{multline}
	\exp \lp i \eps \int_{-\infty}^x |q(y,t)|^2 dy \rp =
	\error_{11}(0)^{-2} \\
	\times
	\exp 
	\lp 
	  -i\eps \int_x^\infty |\qsol(y,t,\mathcal{D}_\xi)|^2 dy 
	  -4i \sum_{k=1}^N \arg \lam_k
	  -\frac{i}{\pi} \int_{\posint} \frac{ \log(1 - \eps \lam | \rho(\lam)|^2)}{\lam} d\lam
	\rp.
\end{multline}
This implies  \eqref{u.gauge.out}, using \eqref{e11.at0.out} and the 
equality that
\[
	\exp \lp -i\eps \int_x^\infty |\qsol(y,t,\mathcal{D}_\xi)|^2 dy - 4i \sum_{j=1}^N   \arg \lam_j \rp
	= \exp \lp i\eps \int_{-\infty}^x |\qsol(y,t,\mathcal{D}_\xi)|^2 dy 
	\rp,
\]
which expresses the soliton component of the gauge transform.

If $|\xi| < \poledist/3$ then $0 \in \Uxi$. We expand \eqref{error def}, using Lemma~\ref{lem:NPC21} and \eqref{e12.at0.in} to drop terms of order $t^{-1}$: 
\begin{equation}\label{Nrhp.at0.in.a}
\begin{aligned}
	\Nrhp[11](0) 
		&=  \lb \error(0) \Nsol(0) \NPC(0) \rb_{11} \\
		&= \error_{11}(0) \Nsol[11](0) \NPC[11](0) 
		 + \error_{11}(0) \Nsol[12](0) \NPC[21](0)
		 + \bigo{t^{-1}} \\
		 &=  \error_{11}(0) \Nsol[11](0) \Big(  \NPC[11](0) + \frac{\Nsol[12](0)}{\Nsol[11](0)} 
		 \NPC[21](0) \Big)
		 + \bigo{t^{-1}}. 
\end{aligned}
\end{equation}
As before, we use \eqref{N1.near.0} applied to $\Nsol$ and  $q = \qsol(x,t;\mathcal{D}_\xi)$, to write (suppressing $x,t$ dependence) 
\begin{equation}\label{working}
\begin{aligned}
	\frac{\Nsol[12](0)}{\Nsol[11](0)} = \Nsol[12](0) \overline{\Nsol[11](0)}
	&= -\int_{x}^\infty \qsol(y;\mathcal{D}_\xi) 
	e^{-i\eps \int_{y}^\infty |\qsol(w;\mathcal{D}_\xi)|^2 dw} dy \\
	&= - e^{-i\eps \| \qsol(\cdot,\mathcal{D}_\xi) \|^2_{L^2(\R)} } 
	\int_{x}^\infty \qsol(y;\mathcal{D}_\xi) 
	e^{i\eps \int^{y}_{-\infty} |\qsol(w;\mathcal{D}_\xi)|^2 dw} dy \\
	&= -\exp \lp 4i\sum_{k=1}^N \arg \lam_k \rp \int_x^\infty \usol(y,\mathcal{D}_\xi) dy
\end{aligned}
\end{equation}
where we have used \eqref{u.gauge} to define
\begin{equation}\label{usol.1}
	u_{\mathrm{sol}}(x,t;\mathcal{D}_\xi)  =
	\qsol(x,t;\mathcal{D}_\xi) 
	\exp \lp
		  i\eps \int_{-\infty}^x |\qsol(y,t;\mathcal{D}_\xi)|^2 dy
	\rp
\end{equation}
and the weak Plancherel identity \eqref{Plancherel} for reflectionless potential $\qsol$.
Inserting \eqref{working} into \eqref{Nrhp.at0.in.a}
\begin{multline}\label{nrhp.at.0.b}
	\Nrhp[11](0) =
	\error_{11}(0) \NPC[11](0) 
	\exp \lp \frac{i\eps}{2} \int_x^\infty |\qsol(y,t,\mathcal{D}_\xi)|^2 dy \rp 
	\\
	\times \lb	1 
	  -\frac{\NPC[21](0)}{\NPC[11](0)} 
	  e^{4i \sum_{k=1}^N \arg \lam_k} 
	  \int_{x}^\infty u_{\mathrm{sol}}(y,t) dy
	  + \bigo{t^{-1}}
	\rb .
\end{multline}
Finally, substituting \eqref{n11.at.0.a} and \eqref{nrhp.at.0.b} into \eqref{u.gauge.spectral} gives
\begin{multline}
	\exp \lp i \eps \int_{-\infty}^x |q(y,t)|^2 dy \rp = \\
	\error_{11}(0)^{-2} \NPC[11](0)^{-2} 
	\exp \lp
	  i\eps \int_{-\infty}^x |\qsol(y,t,\mathcal{D}_\xi)|^2 dy 
	  -\frac{i}{\pi} \int_{\posint} \frac{ \log(1 - \eps \lam | \rho(\lam)|^2)}{\lam} d\lam
	\rp  \\
	\times \lb	1 
		  +2\frac{\NPC[21](0)}{\NPC[11](0)} 
		  e^{4i \sum_{k=1}^N \arg \lam_k} 
		  \int_{x}^\infty u_{\mathrm{sol}}(y,t) dy
		  + \bigo{t^{-3/4}}
		\rb .
\end{multline}
Introducing the notation $p := e^{\frac{i\sgnt \pi}{4}} |8t\xi^2|^{1/2} $, the quantity $ \NPC[11](0)$, given in \eqref{NPC1.at.0}, is rewritten as
\[  
	\NPC[11](0) = e^{p^2/4} p^{-i\eta \kappa(\xi)} 
	D_{i \sgnt \kappa(\xi)}(p)
\]
while  expanding $\error_{11}(0)^{-2}$ using \eqref{e11.at0.in} we get
 \begin{multline}
\error_{11}(0)^{-2} = 
	  1  
	  + \frac{i\eps}{|2t|^{1/2}} 
	  \bigg\{
		\Nsol[12](\xi) \overline{A_{12}(\xi,\sgnt)} \overline{ \Nsol[11](\xi)}
		+ \overline{\Nsol[12](\xi)} A_{12}(\xi,\sgnt) \Nsol[11](\xi)  
	 \\
	 + \overline{A_{12}(\xi,\sgnt)} 
	 e^{4i\sum_{k=1}^N \arg \lam_k } \int_x^\infty \usol(y,t;\mathcal{D}_\xi) dy
	\bigg\} 
	+\bigo{t^{-1}} 
\end{multline}
where we have expressed $ \Nsol[12](0) \overline{\Nsol[11](0)}  $ using \eqref{working}.
We  evaluate  $ 2 \NPC[21](0)/\NPC[11](0)$ using \eqref{NPC1.at.0} again as 
\begin{align*}
2 \frac{\NPC[21](0)}{\NPC[11](0)} &= - 2 i A_{21} e^{-i\eta \pi/4} \sgn(\xi) 
\frac{  D_{i\eta \kappa(\xi) - 1} \lp p \rp} 	{D_{i\eta \kappa(\xi)} \lp p \rp} = - 2 i\eps |\xi|  e^{i\eta \pi/4} \overline{A_{12}} \frac{  D_{i\eta \kappa(\xi) - 1} \lp p \rp} 	{D_{i\eta \kappa(\xi)} \lp p \rp} 
\nonumber \\
&=  -   \overline{A_{12}} i\eps |2 t|^{-1/2}  p \frac{  D_{i\eta \kappa(\xi) - 1} \lp p \rp} 	{D_{i\eta \kappa(\xi)} \lp p \rp}.
\end{align*}
This concludes the proof of \eqref{u.gauge.in} 

Finally, we observe \cite[Eq.~12.9.1]{DLMF} that inserting the expansion $D_\nu(p) = e^{-p^2/4} p^{\nu} \lb 1 + \bigo{p^{-2}} \rb$ into \eqref{FandG} gives
\[
	F(p) = 1 + \bigo{\frac{1}{|t|\xi^2}}
	\qquad 
	G(p) = 1 + \bigo{\frac{1}{|t|\xi^2}}
\]	
so that the inner expansion \eqref{u.gauge.in} for $|\xi| \leq \poledist/3$ agrees with the outer expansion \eqref{u.gauge.out} for $|\xi| > M |t|^{-1/8}$.

\end{proof}


\appendix							

%
%

\section{Beals-Coifman Integral Equations for Problem \ref{RHP1} and Problem \ref{RHP2}}
\label{app:BC}

In this appendix we state the Beals-Coifman integral equations for Problem \ref{RHP1c} and deduce the algebraic-integral equations for Problem \ref{RHP1}. 
We also develop integral equations for Problem \ref{RHP2}.
A full derivation of the Beals-Coifman integral equations from the Beals-Coifman solutions in the direct problem is given in the thesis of the second author (Liu 2017).

\subsection{Integral Equations for Problems \ref{RHP1} and \ref{RHP1c}}
\label{app:RHP1c.BC}

Recall the augmented contour \eqref{Sigma.prime} and Figure \ref{fig:RHP1.c}. 
Here we expand the system of equations \eqref{RHP1c.BC}-\eqref{RHP1c.Cw} where $(w^+,w^-)$ is defined by \eqref{RHP1c.jump.w}. 
We only give the equations for $\mu_{11}$ and $\mu_{12}$ since the others can be obtained by symmetry.  
In what follows, if $\gamma$ is a component of $\Sigma'$, then $\calC_\gamma$ denotes the Cauchy integral for the contour, while $C^\pm$ denote the Cauchy projectors for the contour under consideration. We set
$$ 
r_x(s) = r(s) e^{-2ix s^2}, \quad
\br_x(s) = \br(s) e^{2ixs^2}, \quad
c_{j,x} 	=  c_j e^{2ixs^2}.
$$
First, for $\zeta \in \Sigma$:
\begin{align}
\label{RHP1c.11.Sig}
\mu_{11}(x,\zeta)
	&=	1	+
			 C^-\left(-\mu_{12}(x,\dotarg) \br_x(\dotarg)  \right)(\zeta)\\
\nonumber
	&\quad	 +	\sum_{j,\pm}
						 \calC_{\pm \gamma_j}
						 	\left(	
						 		\frac	{\mu_{12}(x,\dotarg) c_{j,x}}
						 				{\dotarg - {\pm  \zeta_j} }
						 	\right)(\zeta)\\[5pt]
\label{RHP1c.12.Sig}
\mu_{12}(x,\zeta)
	&=	C^+\left(\mu_{11}(x,\dotarg) r_x(\dotarg) \right)(\zeta)\\
\nonumber
	&\quad	+	\sum_{j,\pm}
						\calC_{\pm \gamma_j*}
							\left(
								\frac{-\eps \overline{c_{j,x}} 
										\mu_{11}(x,\dotarg)}
										{\dotarg - {\pm  \overline{\zeta_j}} }
							\right)(\zeta).
\intertext{For $\zeta \in \pm \gamma_i$:}
\label{RHP1c.11.gam}
\mu_{11}(x,\zeta)
	&=	1	+
			 C_{\Sigma}
			 	\left(-\mu_{12}(x,\dotarg) \br_x(\dotarg)  \right)(\zeta)\\
\nonumber
	&\quad	+	\sum_{(j,\pm) \ne (i,\pm) }
						 \calC_{\pm \gamma_j}
						 	\left(	
						 		\frac	{\mu_{12}(x,\dotarg) c_{j,x}}
						 				{\dotarg - {\pm  \zeta_j} }
						 	\right)(\zeta)\\
\nonumber
	&\quad	+	C^-
						\left(	
						 		\frac	{\mu_{12}(x,\dotarg) c_{i,x}}
						 				{\dotarg - {\pm  \zeta_j} }
						 	\right)(\zeta)\\[5pt]
\label{RHP1c.12.gam}
\mu_{12}(x,\zeta)
	&=	\calC_\Sigma 
				\left(\mu_{11}(x,\dotarg) r_x(\dotarg) \right)(\zeta)\\
\nonumber
	&\quad	+	\sum_{j,\pm}
						\calC_{\pm \gamma_j*}
							\left(
								\frac{-\eps \overline{c_{j,x}} 
										\mu_{11}(x,\dotarg)}
										{\dotarg - {\pm  \overline{\zeta_j}} }
							\right)(\zeta).
\intertext{Finally, for $\zeta \in \pm \gamma_i^*$:}
\label{RHP1c.11.Gam*}
\mu_{11}(x,\zeta)
	&=	1	+
			 C_\Sigma \left(-\mu_{12}(x,\dotarg) \br_x(\dotarg)  \right)(\zeta)\\
\nonumber
	&\quad	 +	\sum_{j,\pm}
						 \calC_{\pm \gamma_j}
						 	\left(	
						 		\frac	{\mu_{12}(x,\dotarg) c_{j,x}}
						 				{\dotarg - {\pm  \zeta_j} }
						 	\right)(\zeta)\\[5pt]
\label{RHP1c.12.Gam*}
\mu_{12}(x,\zeta)
	&=	C^+\left(\mu_{11}(x,\dotarg) r_x(\dotarg) \right)(\zeta)\\
\nonumber
	&\quad	+	\sum_{(j,\pm) \neq (i, \pm)}
						\calC_{\pm \gamma_j*}
							\left(
								\frac{-\eps \overline{c_{j,x}} 
										\mu_{11}(x,\dotarg)}
										{\dotarg - {\pm  \overline{\zeta_j}} }
							\right)(\zeta)\\
\nonumber
	&\quad	+  C^+	\left(
								\frac{-\eps \overline{c_{i,x}} 
										\mu_{11}*(x,\dotarg)}
										{\dotarg - {\pm  \overline{\zeta_j}} }
							\right)
\end{align}
In the restricted summations over $j$, we fix an index $i$ and one sign for the contour $\gamma_i$, and then sum over all 
$(\pm,j)$ for which either $i \neq j$ or $i=j$ but the signs do not coincide.

From these equations it is clear that $\mu_{11}(x,\zeta)$ extends analytically to the interiors of the discs bounded by $\gamma_i^*$ and $\mu_{12}(x,\zeta)$ extends analytically to 
the interiors of the discs bounded by $\gamma_i$. For this reason we can evaluate the Cauchy integrals around these contours using Cauchy's integral formula and obtain the following system of algebraic-integral equations for the functions  values 
$$
\left. \mu(x,\zeta) \right|_{\zeta \in \Sigma},
\quad
\left\{ 
	\mu_{11}(x,\pm \overline{\zeta_i}), 
	\mu_{12}(x,\pm \zeta_j) 
\right\}.
$$
\begin{align}
\label{RHP1.11.AI}
\mu_{11}(x,\zeta)
	&=	1	+	
				C^-\left(-\mu_{12}(x,\dotarg)\br_x(\dotarg) \right)(\zeta)
			+	\sum_{j,\pm} 
					\frac	{c_{j,x} \mu_{12}(x,\pm \zeta_j)}
							{\zeta \mp \zeta_j}\\[5pt]
\label{RHP1.12.AI}
\mu_{12}(x,\zeta)
	&=	C^+\left(\mu_{11}(x,\dotarg) r_x(\dotarg)\right)(\zeta)
			{\color{red}-}	\sum_{j, \pm} 
						\frac	{-\eps \overline{c_{j,x}}
									\mu_{11}(x,\pm \overline{\zeta_j})}
								{\zeta \mp \overline{\zeta_j}}\\[5pt]
\label{RHP1.11.AI.disc}
\mu_{11}(x,\pm \overline{\zeta_i})
	&=	1 
			+ 	\calC_\Sigma
					\left(-\mu_{12}(x,\dotarg) \br_x(\dotarg)\right)
					(\pm \overline{\zeta_j})
			+  \sum_{j, \pm } 
					\frac	{ \mu_{12}(x,\pm \zeta_j) c_{j,x} }
							{\pm \overline{\zeta_i} \mp \zeta_j}\\[5pt]
\label{RHP1.12.AI.disc}
\mu_{12}(x,\pm \zeta_i)
	&=	\calC_\Sigma
				\left( \mu_{11}(x,\dotarg) r_x(\dotarg) \right)(\pm \zeta_j)
			{\color{red}-}	\sum_{j, \pm}
					\frac  {-\eps \overline{c_{j,x}} 
								\mu_{11}(x,\pm \overline{\zeta_j})}
							{\zeta_i \mp \overline{\zeta_j}}
\end{align}
Theorems \ref{thm:BC} and \ref{thm:RHP1.unique} guarantee that the system of integral equations \eqref{RHP1c.11.Sig}--\eqref{RHP1c.12.Gam*}, and hence, also, the algebraic-integral equations \eqref{RHP1.11.AI}--\eqref{RHP1.12.AI.disc}, have a unique solution. 
It follows from the equations and the uniqueness property that
\begin{gather}
\label{RHP1.AI.sym1}
\mu_{11}(x,-\zeta)	=	\mu_{11}(x,\zeta),	
\quad \mu_{11}(x,-\overline{\zeta_j}) = \mu_{11}(x,\overline{\zeta_j})\\
\label{RHP1.AI.sym2}
\mu_{12}(x,-\zeta)	=	-\mu_{12}(x,\zeta),	\quad \mu_{12}(x,-\zeta_j) = -\mu_{12}(x,\zeta_j)
\end{gather}

Finally, in terms of the function $\mu$, the reconstruction formula \eqref{q.zeta} is given by
\begin{equation}
\label{RHP1c.mu.q.recon}
q(x)	=	-\frac{1}{\pi} \int_\Sigma \br_x(s) \mu_{11}(x,s) \, ds + 
			\sum_{j,\pm} 2i\eps \overline{c_j} \mu_{11}(x,\pm \overline{\zeta_j})
\end{equation}

\subsection{Integral Equations for Problem \ref{RHP2} and \ref{RHP2c}}
\label{app:RHP2.BC}

Here we develop the integral equations for Problem \ref{RHP2}
with the augmented contour $\Gamma = \R \cup \left(\cup_i \Gamma_i \cup \Gamma_i^* \right)$ described in Problem \ref{RHP2c}.   A full derivation of the Beals-Coifman integral equations from the Beals-Coifman solutions in the direct problem is given in the thesis of the second author (Liu 2017).  As before it is easy to obtain from these equations the algebraic-integral equations for Problem \ref{RHP2}.  For $\lam \in \R$:
\begin{align}
\label{RHP2c.11.R}
\nu_{11}(x,\lam)
	&=	1	+	
				C^-	\left(
							-\eps (\dotarg) \overline{\rho_x (\dotarg)}
							\nu_{12}(x,\dotarg)
						\right)
			+  \sum_j \calC_{\Gamma_j}
						\left(
							\frac
								{C_{j,x} \lam_j \nu_{12}(x,\dotarg) } 
								{(\dotarg - \lam_j)}
						\right)\\[5pt]
\label{RHP2c.12.R}
\nu_{12}(x,\lam)
	&=	1	+
				C^+	\left(
								\rho_x(\dotarg) \nu_{11}(x,\dotarg)
						\right)
				+	\sum_j \calC_{\Gamma_j^*}
						\left(
							\frac
								{ -\eps \overline{C_{j,x}}
									\nu_{11}(x,\dotarg) 
								}
								{(\dotarg - \overline{\lambda_j}) }
						\right)
\intertext{For $\lam \in \Lam_i$:}
\label{RHP2c.11.Lam}
\nu_{11}(x,\lam)
	&=	1	+	
				\calC_\R	
						\left(
							-\eps (\dotarg) \overline{\rho_x (\dotarg)}
							\nu_{12}(x,\dotarg)
						\right)
			+  \sum_{j \neq i} \calC_{\Gamma_j}
						\left(
							\frac
								{\nu_{12}(x,\dotarg) C_{j,x} \lam_j}
								{(\dotarg - \lam_j)}
						\right)\\
\nonumber
		&\quad	+	
				C^-	
						\left(
							\frac
								{\nu_{12}(x,\dotarg) C_{i,x} \lam_i}
								{(\dotarg - \lam_i)}
						\right)\\[5pt]
\label{RHP2c.12.Lam}
\nu_{12}(x,\lam)
	&=	1	+
				C_\R	\left(
								\rho_x(\dotarg) \nu_{11}(x,\dotarg)
						\right)
				+	\sum_j \calC_{\Gamma_j^*}
						\left(
							\frac
								{ -\eps \overline{C_{j,x}}
									\nu_{11}(x,\dotarg) 
								}
								{(\dotarg - \overline{\lambda_j}) }
						\right)
\intertext{For $\lam \in \Gamma_i^*$:}
\label{RHP2c.11.Lam*}
\nu_{11}(x,\lam)
	&=	1	+	
				\calC_\R	
						\left(
							-\eps (\dotarg) \overline{\rho_x (\dotarg)}
							\nu_{12}(x,\dotarg)
						\right)
			+  \sum_j \calC_{\Gamma_j}
						\left(
							\frac
								{\nu_{12}(x,\dotarg) C_{j,x} \lam_j}
								{(\dotarg - \lam_j)}
						\right)\\[5pt]
\label{RHP2c.12.Lam*}
\nu_{12}(x,\lam)
	&=	1	+
				\calC_\R	
						\left(
								\rho_x(\dotarg) \nu_{11}(x,\dotarg)
						\right)
				+	\sum_{j \neq i} \calC_{\Gamma_j^*}
						\left(
							\frac
								{ -\eps \overline{C_{j,x}}
									\nu_{11}(x,\dotarg)
								}
								{(\dotarg - \overline{\lambda_j}) }
						\right)\\
\nonumber
	&\quad	+	C^+
						\left(
							\frac
								{ -\eps \overline{C_{j,x}}
									\nu_{11}(x,\dotarg)
								}
								{(\dotarg - \overline{\lambda_j}) }
						\right)
\end{align}
From these equations it is clear that $\nu_{11}(x,\lam)$ is analytic for $\lam$ in the interior of $\Gamma_i^*$ and
$\nu_{12}(x,\lam)$ is analytic in the interior of $\Gamma_i$. 
We can then use Cauchy's theorem to derive the algebraic-integral equations for Problem \ref{RHP2}.
\begin{align}
\label{RHP2.11}
\nu_{11}(x,\lam)
	&=	1	+	C^-\left(
							-\eps (\dotarg) 
							\overline{\rho_x (\dotarg)}
							\nu_{12}(x,\dotarg)
						\right)
				+	\sum_j
						\frac{C_{j,x}\lam_j \nu_{12}(x,\lam_j)}{\lam -\lam_j}\\[5pt]
\label{RHP2.12}
\nu_{12}(x,\lam)
	&=	C^+	\left(
						\rho_x(\dotarg) \nu_{11}(x,\dotarg)
					\right)
				{\color{red}-}	\sum_j
						\frac{
								-\eps \overline{C_{j,x}}
								\nu_{11}(x,\overline{\lambda_j})
								}
								{ \lam - \overline{\lam_j} }\\[5pt]
\label{RHP2.11.disc}
\nu_{11}(x,\overline{\lam_i})
	&=	1	+	\calC_\R
						\left(
							-\eps (\dotarg) 
							\overline{\rho_x (\dotarg)}
							\nu_{12}(x,\dotarg)
						\right)(\overline{\lam_i})
				+	\sum_j
						\frac	{C_{j,x}\lam_j\nu_{12}(x,\lam_j)}
								{\overline{\lam_i} -\lam_j}\\[5pt]
\label{RHP2.12.disc}
\nu_{12}(x,\lam_i)
	&=	\calC_\R
					\left(
						\rho_x(\dotarg) \nu_{11}(x,\dotarg)
					\right)(\lam_i)
			{\color{red}-}	\sum_j
						\frac{
								-\eps \overline{C_{j,x}}
								\nu_{11}(x,\overline{\lambda_j})
								}
								{ \lam_i - \overline{\lam_j} }
\end{align}

Finally, in terms of the function $\nu$, the reconstruction formula \eqref{q.lam} becomes
\begin{equation}
\label{RHP2.nu.q.recon}
q(x)	=	{\color{red}-}\frac{1}{\pi} \int_\R \rho_x(s) \nu_{11}(x,s) \, ds
			+	\sum_j 2i \eps \overline{C_{j,x}} \nu_{11}(x,\overline{\lam_j})
\end{equation}

\subsection{Left and Right Transmission Coefficients}  \label{trace-f}

We establish 
relations between the transmission coefficients  $\balpha$ and $\alpha$ and the scattering data $ \{ \rho, \{\lambda_k\}_{k=1}^N$ \}. 

\begin{lemma}
The following relations 
\begin{equation}
\label{transmission+}
\balpha(\lambda)=\prod_{k=1}^N \frac{\lambda-\lambda_k}{\lambda-\overline{\lambda}_k}  \exp \left(  
-\int_{-\infty}^{+\infty}\frac{\log(1-\eps\xi |\rho(\xi)|^2)}{\xi-\lambda}\frac{d\xi}{2\pi i}
\right)
\end{equation}
\begin{equation}
\label{transmission-}
\alpha(\lambda)=\prod_{k=1}^N \frac{\lambda-\overline{\lambda}_k}{\lambda-{\lambda}_k}   \exp \left(  
\int_{-\infty}^{+\infty}\frac{\log(1-\eps\xi |\rho(\xi)|^2)}{\xi-\lambda}\frac{d\xi}{2\pi i}\right)
\end{equation}
hold.
\end{lemma}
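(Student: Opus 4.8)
The plan is to derive the trace formula for $\balpha$ (which yields \eqref{transmission+}), and then obtain \eqref{transmission-} directly from the symmetry $\alpha(\lambda) = \overline{\balpha(\bar\lambda)}$. The starting point is the analytic structure established earlier: $\balpha$ extends analytically to $\C^+$ (by \eqref{direct:alpha.breve} and Proposition \ref{lemma:direct.a}), its zeros in $\C^+$ are exactly the finitely many simple points $\lambda_1,\dots,\lambda_N$, it is nonvanishing on $\R$ (no spectral singularities, the defining hypothesis of $U$), and $\balpha(\lambda)\to 1$ as $|\lambda|\to\infty$ in $\C^+$ (Proposition \ref{lemma:direct.a.infty} plus \eqref{direct:alpha.breve}). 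Finally we need the boundary-value relation $|\balpha(\lambda)|^{2}$-type identity on $\R$; this comes from the determinant constraint $a\ba - b\bb = 1$ together with $\rho = \zeta^{-1}\bb/a$, which in $\lambda$-variables reads $\alpha\balpha(1 - \eps\lambda|\rho(\lambda)|^2)^{-1}\cdot(\text{something})$ — more precisely, from \eqref{det.ab} and the symmetries one gets $\alpha(\lambda)\balpha(\lambda) = (1 - \eps\lambda|\rho(\lambda)|^2)^{-1}$ modulo the standard rewriting, equivalently $|\balpha_+(\lambda)|^{-2}\cdot$ boundary relation. Let me be careful: since $\balpha(\lambda)$ analytic in $\C^+$ has boundary value on $\R$ with $\balpha(\lambda)\alpha(\lambda) = (1-\eps\lambda|\rho(\lambda)|^2)^{-1}$ and $\alpha(\lambda) = \overline{\balpha(\lambda)}$ for real $\lambda$, we obtain $|\balpha(\lambda)|^2 = (1-\eps\lambda|\rho(\lambda)|^2)^{-1}$ on $\R$, i.e. $\log|\balpha(\lambda)|^2 = -\log(1-\eps\lambda|\rho(\lambda)|^2)$.

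First I would form the Blaschke-type quotient $B(\lambda) = \prod_{k=1}^N \dfrac{\lambda - \lambda_k}{\lambda - \overline{\lambda_k}}$, which is analytic and nonvanishing in $\C^+$, has modulus $1$ on $\R$, and tends to $1$ at $\infty$; it has exactly the same zeros as $\balpha$ in $\C^+$. Then $g(\lambda) := \log\big(\balpha(\lambda)/B(\lambda)\big)$ is well-defined and analytic in $\C^+$ (a single-valued branch exists because $\balpha/B$ is zero-free and $\to 1$ at infinity), vanishes at $\infty$, and has real part on $\R$ equal to $\log|\balpha(\lambda)| = -\tfrac12\log(1-\eps\lambda|\rho(\lambda)|^2)$. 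A function analytic in the upper half-plane, decaying at infinity, is recovered from its boundary real part by the Schwarz integral / Cauchy integral representation:
\[
g(\lambda) = \frac{1}{2\pi i}\int_{-\infty}^{\infty} \frac{\log|\balpha(\xi)/B(\xi)|^2}{\xi - \lambda}\,d\xi
= -\int_{-\infty}^{\infty}\frac{\log(1 - \eps\xi|\rho(\xi)|^2)}{\xi - \lambda}\,\frac{d\xi}{2\pi i},\qquad \Im\lambda > 0,
\]
using $|B| = 1$ on $\R$. Exponentiating and multiplying back by $B(\lambda)$ gives \eqref{transmission+}. To make the Schwarz-formula step rigorous I would invoke that $\balpha - 1 \in H^1$-type decay (from the Wronskian formulas and the $L^1\cap L^2$ estimates of Section \ref{sec:direct}, or simply the $\bigO{1/\lambda}$ expansion), so $g \in H^2(\C^+)$ modulo the mild integrability of $\log(1-\eps\xi|\rho|^2)$, which holds since $\kappa = -\tfrac{1}{2\pi}\log(1-\eps\cdot|\rho|^2) \in L^1(\R)$ as noted in the proof of Lemma \ref{lem:T}.

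For \eqref{transmission-}: apply the involution $\lambda \mapsto \overline\lambda$ and complex conjugation to \eqref{transmission+}, using $\alpha(\lambda) = \overline{\balpha(\bar\lambda)}$; the Blaschke factor transforms as $\overline{B(\bar\lambda)} = \prod_k \dfrac{\lambda - \overline{\lambda_k}}{\lambda - \lambda_k}$, and the integral term picks up a sign because $\overline{(\xi-\bar\lambda)^{-1}(2\pi i)^{-1}} = -(\xi-\lambda)^{-1}(2\pi i)^{-1}$ for real $\xi$ while $\log(1-\eps\xi|\rho(\xi)|^2)$ is real; this produces exactly \eqref{transmission-}, now valid for $\Im\lambda < 0$. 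The main obstacle, and the step deserving the most care, is the justification of the Cauchy/Schwarz integral representation for $g$: one must check that $g$ genuinely extends to a function in the relevant Hardy class (decay at infinity, no poles, single-valued logarithm), so that the representation by its boundary values is legitimate and no additional entire factor $e^{ic\lambda}$ or polynomial is missed — this is where the normalization $\balpha \to 1$ at infinity and the simplicity/location of the zeros are essential, and I would spell out that argument rather than treat it as routine.
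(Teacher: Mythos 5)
Your proposal is correct and follows essentially the same route as the paper's Appendix argument: remove the zeros of $\balpha$ (resp.\ $\alpha$) by the Blaschke product, observe the resulting function is zero-free, analytic, and tends to $1$ in the relevant half-plane, and recover its logarithm from the boundary relation $\balpha(\xi)\alpha(\xi)=|\balpha(\xi)|^2=(1-\eps\xi|\rho(\xi)|^2)^{-1}$ by a Cauchy-type representation. The only cosmetic difference is that you invoke the Schwarz formula for the real part of $\log(\balpha/B)$, whereas the paper adds the Cauchy integral of $\log\bgamma$ to the (vanishing) Cauchy integral of $\log\gamma$ — the same computation, since $\gamma=\overline{\bgamma}$ on $\R$.
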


\begin{proof}
The functions $\balpha(\lambda)$ and $\alpha(\lambda)$ have  simple zeros $\lbrace \lambda_k: \Im(\lambda_k)>0 \rbrace_{k=1}^N$ and $\lbrace \overline{\lambda}_k : \Im(\overline{\lambda}_k) <0 \rbrace_{k=1}^N$ respectively. Defining
\begin{equation}
\label{trace}
\bgamma(\lambda)=\prod_{k=1}^N\frac{\lambda-\overline{\lambda}_k}{\lambda-\lambda_k}\balpha(\lambda), \,\,  ~~
\gamma(\lambda)=\prod_{k=1}^N\frac{\lambda-\lambda_k}{\lambda-\overline{\lambda}_k}\alpha(\lambda),
\end{equation}
$\bgamma(\lambda)$  is analytic in the upper half plane where it has no zeros,  while $\gamma$ is analytic in the lower half plane where it has no zeros. Also $\bgamma$ and $\gamma$ $\rightarrow$ 1 as $|\lambda|\rightarrow\infty$ in the respective half planes.

Therefore we have 
$$\log \bgamma(\lambda)=\int_{-\infty}^{+\infty}\frac{\log\bgamma(\xi)}{\xi-\lambda}\frac{d\xi}{2\pi i}\, ,
 \quad\quad 
 \int_{-\infty}^{+\infty}\frac{\log\gamma(\xi)}{\xi-\lambda}\frac{d\xi}{2\pi i}=0 \quad\Im(\lambda)>0$$
and
$$\log \gamma(\lambda)=-\int_{-\infty}^{+\infty}\frac{\log\gamma(\xi)}{\xi-\lambda}\frac{d\xi}{2\pi i}\, ,
 \quad\quad 
 \int_{-\infty}^{+\infty}\frac{\log\bgamma(\xi)}{\xi-\lambda}\frac{d\xi}{2\pi i}=0 \quad\Im(\lambda)<0. $$
Using \eqref{trace}, as well as the identities $\balpha(\xi)\alpha(\xi)=\bgamma(\xi)\gamma(\xi) = \left(1-\xi|\rho(\xi)|^2\right)^{-1}$, we deduce
\begin{equation}
\label{trace-balpha}
\log\balpha(\lambda)=\sum_{k=1}^N\log\left(\frac{\lambda-\lambda_k}{\lambda-\overline{\lambda}_k}   \right)-\int_{-\infty}^{+\infty}\frac{\log(1-\eps\xi |\rho(\xi)|^2)}{\xi-\lambda}\frac{d\xi}{2\pi i},\quad \Im(\lambda)>0,
\end{equation}
\begin{equation}
\label{trace-alpha}
\log\alpha(\lambda)=\sum_{k=1}^N\log\left(\frac{\lambda-\overline{\lambda}_k}{\lambda-{\lambda}_k}   \right)+\int_{-\infty}^{+\infty}\frac{\log(1-\eps\xi|\rho(\xi)|^2)}{\xi-\lambda}\frac{d\xi}{2\pi i},\quad \Im(\lambda)>0.
\end{equation}
from which the identities \eqref{transmission+} and \eqref{transmission-} are obtained.

\end{proof}

\section{The Left Riemann-Hilbert Problem}
\label{app:left}

Riemann-Hilbert problem~\ref{RHP2} was constructed to be right normalized, i.e., it's solution satisfies $\lim_{x \to +\infty} n(x,\lam) = (1,0)$. The right normalized problem gives good estimates for the inverse scattering map for $x \geq a$.
To reconstruct the potential $q$ for $x<a$, we use a new left RHP that gives good estimates for the inverse scattering map for $x < a$. 
This RHP yields solutions normalized as $x\to -\infty$ by the condition $\lim_{x \rarr -\infty} {n^\ell}(x,z) = (1,0)$, and a stable reconstruction of $q$ on any interval $(-\infty, a)$. 
 The  associated jump matrix ${V^\ell_{x}}$ is 
$$ {V^\ell_{x}}(\lam) = e^{-ix\lam \ad \sigma_3} 
		\Twomat{1}{{\rho^{\ell}}(\lam)}{-\eps \lam \overline{{\rho^{\ell}} (\lam)   }}{1-\eps \lam|{\rho^{\ell}}(\lam)|^2},$$

where
\begin{equation}
\label{left-rho}
	{\rho^{\ell}}(\zeta^2) = \zeta^{-1}{  \bb(\zeta)/\ba(\zeta)}
\end{equation}
\begin{equation}
\label{left-residue}
	{V_x^\ell}(\lam_k) = 
	\twomat{0}{{C^{\ell}_k} e^{-2i\lambda_k x} }{0}{0},
	\quad 
	{V_x^\ell}(\overline{\lam_k}) = 
	\twomat{0}{0}
	{\eps \overline{{C^{\ell}_k}}\overline{\lambda}_k e^{2i\overline{\lambda}_k x}}
	{0}
\end{equation}
The construction of (\ref{left-rho}) can be found in Paper I Section 6.2.

We now derive ${C^{\ell}_k}$ in (\ref{left-residue}) from the set of scattering data  ${ \{ \rho,  \{ \lambda_k , C_k \}_{k=1}^N \} }$ of the right RHP. Recall that for the right RHP, {for which we omit the superscript $r$ elsewhere},
\begin{equation}\label{BC.right} 
{
	n^r(x,\lam) = n(x,\lam) 
	=\begin{cases}
		\left( \dfrac{n_{11}^-(x,\lambda)}{\balpha(\lambda)},  n_{12}^+(x,\lambda) \right)
		& \lam \in \C^+ \\
		\left( n_{11}^+(x,\lambda), \dfrac{n_{12}^-(x,\lambda)}{\alpha(\lambda)}  \right)
		& \lam \in \C^-		
	\end{cases}
}
\end{equation}
If $\balpha(\lambda_k)=0$, then
\begin{equation}
\label{lam-k}
n^-_{11}(x,\lambda_k)=B_k\lam_k n^+_{12}(x,\lambda_k)e^{2i\lam_k x}
\end{equation}
\begin{equation}
\label{lam-k-bar}
n^-_{12}(x,\lambdabar_k)=\eps\overline{B_k} n^+_{11}(x,\lambda_k)e^{-2i\lambdabar_k x}
\end{equation}
and we have the norming constant
\begin{equation}
	C_k=\dfrac{B_k}{\balpha'(\lambda_k)}
\end{equation}
For the left RHP, we have
\begin{equation}\label{BC.left} 
{
	n^\ell(x,\lam)
	=\begin{cases}
		\left(n_{11}^-(x,\lambda), \dfrac{n_{12}^+(x,\lambda)}{\balpha(\lambda)} \right)
		& \lam \in \C^+ \\
		\left( \dfrac{n_{11}^+(x,\lambda)}{\alpha(\lambda)}, n_{12}^-(x,\lambda) \right)
		& \lam \in \C^-
	\end{cases}
}
\end{equation}
Thus
\begin{align*}
	\text{Res}_{\lambda=\lambda_k} {n^\ell} (x,\lambda) 
	&=\dfrac{1}{\balpha'(\lambda_k)}\left(0, n^+_{12}(x,\lambda_k) \right)\\
    &=\dfrac{e^{-2i\lam_k x}}{B_k\lambda_k \balpha'(\lambda_k) } 
      \left(0, n^-_{11}(x,\lambda_k) \right)
\end{align*}
and
\begin{align*}
	\text{Res}_{\lambda=\lambdabar_k} {n^\ell}(x,\lambda) 
	&=\dfrac{1}{\alpha'(\lambdabar_k)}\left( n^+_{11}(x,\lambdabar_k), 0 \right)\\
    &=\dfrac{e^{2i\lambdabar_k x}}{\eps \overline{B_k} \alpha'(\lambdabar_k) } 
    \left(n^-_{12}(x,\lambdabar_k), 0 \right)
\end{align*}
We now define 
\begin{equation}
\label{C-left}
{C^{\ell}_k}=\dfrac{1}{B_k \balpha'(\lambda_k)}=\dfrac{1}{C_k \left(\balpha'(\lambda_k )\right)^2}
\end{equation}

Now we arrive at the following left Riemann-Hilbert problem:
\begin{RHP}
\label{RHP.lambda-left}
Fix $x \in \R $ and {$\{ \rho^{\ell}, \{ \lambda_k , C^\ell_k \}_{k=1}^N \}$}, such that ${\rho^\ell} \in H^{2,2}(\R )$, $1-{\eps}s |{\rho^{\ell}}(s)|^2 \geq c>0$ strictly, and { $\{ \lambda_k , C^\ell_k \}_{k=1}^N \subset \C^+ \!\times \C^\times$}.
Find a vector-valued function {$n^\ell(x,\dotarg): \C \to \C^2$} with the following properties:
\begin{enumerate}
\item[(i)] ${n^\ell(x, \lam)}$ is an analytic function of ${\lam}$ for ${\lam} \in \mathbb{C}\setminus \Lambda' $  where
\begin{equation*}
	\Lambda'= \mathbb{R} \cup 
	\lbrace  
		\Gamma_1, ..., \Gamma_n, \Gamma_1^*, ..., \Gamma_n^* 
	\rbrace 
\end{equation*}
\item[(ii)]  ${n^\ell}(x,\lam)= (1 ,0) + \bigo{\lam^{-1}}$ as $\lam \rightarrow\infty$.
\item[(iii)] For each $\lambda \in \Lambda'$, ${n^\ell(x,z)}$ has continuous 
boundary values
${n^\ell_{\pm}}(x,\lambda)$ as $z \to \lambda \in \Lambda'$ {from the left or right of $\Lambda'$ respectively.} 
Moreover, the jump relation 
\[
	{n^\ell_+}(x,\lambda) = 
	{n^\ell_-}(x,\lambda) {V^\ell_{x}}(\lambda)
\]
holds, where for $\lambda\in\mathbb{R}$
\[
	{V^\ell_{x}}(\lambda)=
		e^{-i\lambda x \ad \sigma_3} 
		\Twomat{1}{{\rho^{\ell}}(\lam)}
		{- \lam\eps \overline{{\rho^{\ell}} (\lam)   }}
		{1-\eps \lam|{\rho^{\ell}}(\lam)|^2}
\]
and for $\lam \in \Gamma_k \cup \Gamma_k^*$ 
\[
	{V^\ell_x}(\lam_k) 
	=\begin{cases}
	   \triu{\dfrac{{C^\ell_k}  e^{-2i\lam_k x}}{\lam_k(\lam-\lam_k)}}
	   & \lam \in \Gamma_k, \\[.05em]
	   \tril{\dfrac{-\eps\overline{{C^\ell_k}} \, e^{2ix \overline{\lam_k}} }
	   {\lam-\overline{\lam_k}}}
	   & \lam \in \Gamma_i^*
	\end{cases}
\]
\end{enumerate}
\end{RHP}

We recover ${q(x)}$ from the formula
\begin{equation}
\label{bq.recon}
{q(x)} = \lim_{z \rarr \infty} 2iz {n^\ell_{12}}(x,z)
\end{equation}
where the limit is taken in a direction not tangential to $\R $.

\section{Solutions of RHP~\ref{RHP2} for reflectionless scattering data}
\label{app:solitons}

The bright (non-algebraic) soliton solutions of \eqref{DNLS2} can be characterized as the potentials $q(x,t)$ for which the associated scattering data are reflectionless: $\lp  \rho \equiv 0, \{ (\lam_k ,C_k) \}_{k=1}^N \rp$, and $(\lam_k , C_k) \in \C^+ \times \C^\times$ for each $k=1,\ldots,N$. 
If $N=1$, with scattering data $(\lam = u+iv, C)$, the single soliton solution of \eqref{DNLS2} is given by 
\begin{gather}\label{1sol}
	q(x,t) = 
	\varphi(x-x_0 + 4 u t) 
	\exp i \left\{  { 4 }|\lam|^2 t -2{u}(x+4ut) - \frac{\eps}{4}
	\int_{-\infty}^{x-x_0+4u t} \varphi(\eta)^2 d\eta
	- \alpha_0
	\right\}
\shortintertext{where}
\nonumber
	\varphi(y) = 
	\sqrt{\frac{8 v^2}{|\lam| \cosh(4 v y) - \eps u}}
\\
\nonumber
	x_0 = \frac{1}{4 v} \log \frac{ |\lam| |C|^2 }{4 v^2}
	\qquad
	\alpha_0 = \arg(\lambda) + \arg(C) + \pi/2
\end{gather}
which describes a solitary wave with amplitude envelope $\varphi$ traveling at speed $c = -4 \Re \lambda$. For $N > 1$ the solution formulae become ungainly, but we expect, generically, that for $|t| \gg 1$, the solution will resemble $N$ independent 1-solitons each traveling at its unique speed $-4 \Re \lam_k$\footnote{The non-generic case occurs when $\Re \lam_j = \Re \lam_k$ for one or more pairs $j \neq k$. In this case the solution possesses localized, quasi-periodic traveling waves known as breather solitons.}. For this reason, these solutions are called $N$-solitons of \eqref{DNLS2}. We will write $\Nsol$ for the solution of RHP~\ref{RHP2} when $\rho \equiv 0$ to emphasize its relation to soliton solutions of \eqref{DNLS2}.

When $\rho \equiv 0$, RHP~\ref{RHP2} reduces to a question of meromorphic function theory, and the singular integral equations for its solutions take the form of a system of linear equations.

\begin{lemma}
\label{lem:sol.bound}
There exists a unique solution $\Nsol$ to RHP~\ref{RHP2} with reflectionless scattering data  $\rho \equiv 0$, $\{ \lam_k, C_k \}_{k=1}^N \subset \C^+ \times \C^\times$ whenever $\lam_j \neq \lam_k$, for $j \neq k$. Moreover, the solution satisfies 
\begin{equation} \label{bound-nsol}
	\| \Nsol \|_{L^\infty(\C \setminus \mathcal{B}_\poles) }
	\lesssim 1
\end{equation}
where $\mathcal{B}_\poles$ is any open neighborhood of the poles $\Lambda = \{ \lam_k, \overline{\lam_k} \}_{k=1}^N$, and the implied constant depends only on $\mathcal{B}_\poles$ and the  scattering data; it is independent of $x,t$.
\end{lemma}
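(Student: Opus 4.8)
The plan is to reduce RHP~\ref{RHP2} with $\rho \equiv 0$ to a finite-dimensional linear algebra problem and then extract both existence/uniqueness and the uniform bound \eqref{bound-nsol} from the structure of that linear system. First I would work with the row-vector formulation $n = (n_1, n_2)$ of Remark~\ref{rem:RHP2}. When $\rho \equiv 0$ the jump matrix on $\R$ is the identity, so $n$ is meromorphic on all of $\C$ with at worst simple poles at the points of $\Lambda = \{\lam_k, \overline{\lam_k}\}_{k=1}^N$ and with $n(\lam) \to (1,0)$ as $\lam \to \infty$. Writing down the partial-fraction ansatz forced by these constraints,
\[
	n_1(\lam) = 1 + \sum_{k=1}^N \frac{a_k}{\lam - \overline{\lam_k}},
	\qquad
	n_2(\lam) = \sum_{k=1}^N \frac{b_k}{\lam - \lam_k},
\]
and imposing the residue relations in Problem~\ref{RHP2}(iv) (equivalently the conditions $\Res_{\lam_k} n = \lim_{\lam\to\lam_k} n(\lam)\,\operatorname{diag-free stuff}$ with the connection coefficients $\lam_k C_k$ and $-\eps \overline{C_k}$, each carrying its time-dependent factor $e^{\pm 2it\theta(\lam_k)}$) gives a closed $2N \times 2N$ linear system for the unknown vectors $(a_k)$ and $(b_k)$. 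Concretely, the residue condition at $\lam_k$ expresses $b_k$ in terms of $n_1(\lam_k)$, hence in terms of the $a_j$; the residue condition at $\overline{\lam_k}$ expresses $a_k$ in terms of $n_2(\overline{\lam_k})$, hence in terms of the $b_j$. Substituting one into the other yields an $N \times N$ system $(I + \mathcal{A})\,\mathbf{a} = \mathbf{c}$, where $\mathcal{A}$ is a Cauchy-type matrix built from $1/(\lam_j - \overline{\lam_k})$, the coefficients $\lam_k C_k e^{-2it\theta(\lam_k)}$ and their conjugates.

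The first substantive step is to prove that $I + \mathcal{A}$ is invertible. The cleanest route is the standard vanishing-lemma argument used throughout the subject: extend $n$ to a matrix $N$ via the symmetry in Problem~\ref{RHP2}(i), observe $\det N$ is entire (the residue conditions are precisely the ones that make $\det N$ have removable singularities at $\Lambda$ — this is the computation already carried out in the proof of Proposition~\ref{prop:RHP2.unique}) and bounded, hence $\det N \equiv 1$; then any element of the kernel of the homogeneous system produces a solution of the RHP with zero boundary data at infinity, and pairing $N$ with $N^\dagger$ (or using $\det N = 1$ directly, as in Proposition~\ref{prop:RHP2.unique}) forces it to vanish. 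Since this is exactly Theorem~\ref{thm:RHP1.unique}/Theorem~\ref{thm:RHP2.unique} specialized to reflectionless data, I can simply invoke those theorems for existence and uniqueness — the only new content of the lemma is the \emph{uniform} bound, independent of $x,t$.

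For the bound \eqref{bound-nsol}, the key observation is that the only place $(x,t)$ enters the linear system is through the unimodular phases $e^{\pm 2it\theta(\lam_k)} = e^{\pm 2it(-\lam_k x/t - 2\lam_k^2)}$, and $|e^{2it\theta(\lam_k)}| = e^{-4 t \,\Im(\lam_k^2) - 2 x \Im \lam_k}$ is \emph{not} bounded in $(x,t)$ — it ranges over $(0,\infty)$. So a naive "the matrix depends continuously on a compact parameter set" argument fails, and this is the main obstacle. The right way around it is to diagonalize the dependence: conjugate the system by the diagonal matrix $\operatorname{diag}(e^{it\theta(\lam_k)})$ (equivalently, rescale $a_k \mapsto a_k e^{-it\theta(\lam_k)}$, $b_k \mapsto b_k e^{it\theta(\lam_k)}$ — this is the standard "renormalization" that produces a self-adjoint, positive-definite structure). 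After this rescaling the matrix of the system becomes $I + \mathcal{M}$ where $\mathcal{M}$ has entries of the form $\dfrac{\gamma_j \overline{\gamma_k}\,\lam_j C_j \overline{C_k}}{(\lam_j - \overline{\lam_k})(\cdots)}$ with $|\gamma_j| \le 1$ (the surviving exponential factors are bounded by $1$ after the rescaling is chosen correctly, because the "bad" growth is absorbed). One then shows $I + \mathcal{M}$ is a positive-definite Hermitian perturbation of the identity — this is where the residue-coefficient signs and the location $\lam_k \in \C^+$ matter — so $\|(I+\mathcal{M})^{-1}\| \le 1$, uniformly in $(x,t)$. Unwinding the rescaling, each $a_k, b_k$ is then a bounded function of $(x,t)$ times a unimodular phase, hence bounded; and for $\lam$ outside a fixed neighborhood $\mathcal B_\poles$ of $\Lambda$ the denominators $|\lam - \lam_k|, |\lam - \overline{\lam_k}|$ are bounded below, so $|n(\lam)| \lesssim 1 + \sum_k (|a_k| + |b_k|) \lesssim 1$ with constant depending only on $\mathcal B_\poles$ and the scattering data. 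Finally I would remark that all constants are controlled by $\min_{j\ne k}|\lam_j - \lam_k|$, $\min_k \Im \lam_k$, $\max_k |\lam_k|$ and $\max_k |C_k|$, which is the uniformity needed for Lemma~\ref{lem:outer.bound} and Proposition~\ref{prop:sol separation}; this is also why one wants the bound stated with an implied constant "depending only on $\mathcal B_\poles$ and the scattering data," matching the hypotheses of those downstream results.
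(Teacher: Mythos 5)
Your reduction to a finite linear system and your appeal to Theorem~\ref{thm:RHP2.unique} for existence and uniqueness at each fixed $(x,t)$ are fine (modulo a slip: by Problem~\ref{RHP2}(iv) it is $n_1$ that has its poles at the $\lam_k\in\C^+$ and $n_2$ at the $\overline{\lam_k}$, not the other way around), and you correctly identify the real issue: the residue coefficients $C_k e^{-2it\theta(\lam_k)}$ are unbounded over $(x,t)\in\R^2$, so a naive compactness argument fails. The gap is in your proposed cure. A diagonal rescaling $a_k\mapsto a_k e^{-it\theta(\lam_k)}$, $b_k\mapsto b_k e^{it\theta(\lam_k)}$ is a conjugation of the linear system by a diagonal matrix, and such a conjugation cannot ``absorb'' the growth: the diagonal entries of the coefficient matrix are conjugation-invariant, and they contain terms of modulus $|C_k||C_j|\,e^{-8t[\Im\lam_k(\Re\lam_k-\xi)+\Im\lam_j(\Re\lam_j-\xi)]}$, which blow up exponentially whenever $\sgn t\,(\Re\lam_k-\xi)<0$, i.e.\ for the indices the paper collects in $\negpoles$. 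So the claim that ``the surviving exponential factors are bounded by $1$'' is false as stated. The Hermitian positive-definiteness you invoke is also unsubstantiated here: the coefficients carry the complex weights $\lam_k$ and the sign $\eps=\pm1$, and for $\eps=+1$ there is no direct positivity available — the paper itself has to prove solvability for $\eps=-1$ via Zhou's theorem and transfer to $\eps=+1$ through the isomorphism of Lemma~\ref{lemma:bob}. Finally, even if one had $\|(I+\mathcal M)^{-1}\|\le 1$, the right-hand side of your system still contains the same unbounded exponentials, so boundedness of the coefficients $a_k,b_k$ (hence of $\Nsol$ off $\mathcal B_\poles$) would not follow without an additional cancellation argument that you have not supplied.

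The paper removes the growth before writing the linear system, by the Blaschke-product transformation \eqref{Blaschke}: $\NsolAlt=\Nsol\,\mathfrak B^{-\sigma_3}$ with $\mathfrak B(\lam)=\prod_{k\in\negpoles}(\lam-\overline{\lam_k})/(\lam-\lam_k)$, where the partition $\pospoles\cup\negpoles$ is exactly the growing/decaying dichotomy you noticed. This swaps the triangular structure of the residue conditions at the ``bad'' poles, replacing $C_k e^{-2it\theta(\lam_k)}$ there by a reciprocal-type coefficient, so that all transformed coefficients $\gamma_k(x,t)$ in \eqref{out2 residue matrices} satisfy the uniform bound \eqref{gamma.bounds}. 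The resulting system \eqref{matrix.sol.sys} then has its only $(x,t)$-dependence through coefficients ranging over a compact set; invertibility at each point of that set follows from the already established unique solvability of RHP~\ref{RHP2} (with the degenerate limit $\gamma_k\to0$ handled by reduction to a system with fewer poles), and continuity of the inverse plus compactness yields the bound uniformly in $(x,t)$. If you want to salvage your approach you would need either to carry out this (or an equivalent) renormalization, or to exhibit a genuine quantitative positivity/cancellation mechanism valid for both signs of $\eps$ and for the $\lam_k$-weighted coefficients — neither is present in the proposal as written.
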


We will prove Lemma~\ref{lem:sol.bound} with the help of an auxilary transformation.

Recall that  $\xi = -x/4t$ denote the critical point of the phase function $\theta$ and that from Figure~\ref{fig:theta signs}, that the growth and decay properties of the exponentials $e^{\pm 2it \theta}$, which appear in the residue  relations of RHPs~\ref{RHP2}
change as one passes through either the real axis or the line $\Re \zeta = \xi$.
The partition of $\{ 0,1, \dots, N\} = \negpoles \cup \pospoles$, where 
\begin{equation}\label{index sets}
	\begin{aligned}
		\negpoles &= \{ k \in \{0,1, \dots, N \} \,:\, \eta (\Re z_k -\xi) < 0   \}, \\
		\pospoles &= \{ k \in \{0,1, \dots, N \} \,:\, \eta (\Re z_k -\xi) \geq 0   \}.
	\end{aligned}
\end{equation}
splits the  residues relations in Problem \ref{RHP2}(iii) into two sets;
As $|t| \to \infty$ with $x = -4\xi t$, those $k \in \pospoles$, correspond to poles $\lambda_k$ whose residues are exponentially decaying, while those $k \in \negpoles$, correspond to poles $\lambda_k$ whose residues are exponentially growing. 
We use this partition to define the transformation:
\begin{equation}\label{Blaschke}
	\NsolAlt(\lam) = \Nsol(\lam) \mathfrak{B}(\lam)^{-\sig},
	\qquad
	\mathfrak{B}(\lam) = \prod_{k \in \negpoles} \lp \frac{ \lam - \overline{\lam_k}}{\lam - \lam_k} \rp.
\end{equation}
We will show that the coefficients appearing in the residues relations for the  new RHP satisfied by  $\NsolAlt$ are bounded uniformly in $(x,t)$ for all index $k$. (see \eqref{gamma.bounds}).
The transformation  \eqref{Blaschke} results in the following problem for $\NsolAlt$:
\begin{problem}\label{outmodel2}
Find an analytic function $\NsolAlt: (\C \setminus \poles) \to SL_2(\C)$ such that
\begin{enumerate}[1.]
	\item $\NsolAlt$ satisfies the symmetry relation 
	\[
		\NsolAlt(\lam) = 
		\lam^{-\sig/2} \sigma_\eps^{-1} \overline{\NsolAlt(\overline{\lam})} 
		\sigma_\eps \lam^{\sig/2}
	\]
	\item $\NsolAlt(\lam) = \tril{\alpha(x,t)} + \bigo{\lam^{-1}}$ as $\lam \to \infty$.
	\item $\NsolAlt$	 has simple poles at each point in $\poles$. For each $\lam_k \in \poles_+$ 
	\begin{gather}\label{out2 residue}
			\begin{aligned}
				\res_{\lam = \lam_k} \NsolAlt(\lam) 
				&= \lim_{\lam \to \lam_k} \NsolAlt(\lam) \vk{\Delta}(\lam_k) \\
				\res_{\lam = \overline{\lam_k}} \NsolAlt(\lam) 
				&= \lim_{\lam \to \overline{\lam_k}} \NsolAlt(\lam) \vk{\Delta}(\overline{\lam_k})
			\end{aligned}
\shortintertext{where}
\label{out2 residue matrices}
		\begin{aligned}
			\vk{\Delta}(\lambda_k) &= 
			\begin{dcases}
				\tril[0]{ \lam_k \gamma_k(x,t) } 
					& k \in \pospoles 	\\		
				\triu[0]{ \lam_k^{-1} \gamma_k(x,t) } 
					& k \in \negpoles 	
			\end{dcases} 
			\\					
			\vk{\Delta}(\overline{\lambda_k}) &= 
				\begin{dcases}
					\triu[0]{ \eps \overline{\gamma_k}(x,t) }
						& k \in \Delta_\xi^{+} \\
					\tril[0]{ \eps \overline{\gamma_k}(x,t) }  
						& k \in \Delta_\xi^{-} .
				\end{dcases} 
			\\
			\text{with \hfill} \gamma_k &= 
			\begin{cases}
				C_k \mathfrak{B}(\lam_k)^{-2} e^{-2it\theta(\lam_k)}
				 & k \in \pospoles \\[,5em]
				C_k^{-1} (1/\mathfrak{B})'(\lam_k)^{-2} e^{2it\theta(\overline{\lam_k})} 
				 & k \in \negpoles .
			\end{cases}
		\end{aligned}
	\end{gather}
\end{enumerate}
\end{problem}

\begin{proof}[Calculation of residues in Problem~\ref{outmodel2}.]
The conditions in Problem~\ref{outmodel2} are a direct consequence of RHP~\ref{RHP2} (with $\rho \equiv 0$) and \eqref{Blaschke}. 
We omit most of the details, except the new residue conditions for $k \in \negpoles$. 

Consider $\lam_k \in \poles^+$, for $k \in \negpoles$. 
Denote $\Nsol[1]$ and $\Nsol[2]$ the first and second column of $\Nsol$, with the same convention for $\NsolAlt$.

Since $B(\lam)$ has zeros at each $\overline{\lam_k}$ and poles at each $\lam_k$,
$\NsolAlt[1](\lam) = \Nsol[1](\lam) B(\lam)^{-1}$ has a removable singularity at $\lam_k$, but acquires a pole at $\overline{\lam_k}$. For $\NsolAlt[2](\lam) = \Nsol[2](\lam) B(\lam)$ the situation is reversed.
We have
\begin{align*}
		\lim_{\lam \to \lam_k} \NsolAlt[1](\lam) 
		&= \res_{\lam = \lam_k} \Nsol[1](z)  \cdot (1/B)'(\lam_k) 
		= \lam_k C_k e^{-2it\theta_k} \Nsol[2](\lam_k)  (1/B)'(\lam_k),  \\
		\res_{\lam_k} \NsolAlt[2](\lam) &= 
		\res_{\lam=\lam_k} \Nsol[2](\lam) B(\lam) 
		= \Nsol[2](\lam_k) \lb (1/B)'(\lam_k) \rb^{-1}.  
\end{align*}
Using the calculation of $\NsolAlt[1](\lam_k)$ to replace $\Nsol[2](\lam_k)$ gives
\[
		\res_{\lambda_k} \NsolAlt[2](\lam) 
		= \lam_k^{-1} C_k^{-1} \lb (1/B)'(\lam_k) \rb^{-2} 
		e^{2it \theta} \NsolAlt[1](\lam_k)
		= \lam_k^{-1} \gamma_k(x,t) \NsolAlt[2](\lam_k), 
\]
which verifies the first formula in \eqref{n1 residue matrices}. The computation of the residue at $\overline{\lam_k}$ for $k \in \negpoles$ is similar,
aided by the symmetry $\overline{B(\overline{\lam})} = B(\lam)^{-1}$.
\end{proof}

The normalization and pole conditions imply that $\NsolAlt$ is  meromorphic in $\C$ with prescribed poles,  and using the symmetry condition 
(i) of RHP \ref{outmodel2}, $\NsolAlt$  must take the form
\begin{multline}\label{Nsol.expand} 
\NsolAlt(\lam) = 
\tril{\sum_{k=1}^N B_k} \\
+ \sum_{k \in \pospoles} 
	\frac{ \stwomat{A_k}{0}{\lam_k B_k}{0} }{\lam-\lam_k}
	+
   	\frac{ \stwomat{0}{\eps \overline{B_k}}{0}{\overline{A_k}} }{\lam-\overline{\lam_k}} 
+ \sum_{k \in \negpoles} 
	\frac{\stwomat{A_k}{0}{\overline{\lam_k} B_k}{0} }{\lam- \overline{\lam_k}} 
	+
   	\frac{ \stwomat{0}{\eps \overline{B_k}}{0}{\overline{A_k}} }{\lam-\lam_k},
\end{multline}
for unknown coefficients $A_k, B_k$, $k=1,\dots, N$ which are determined by the residue conditions \eqref{out2 residue} 
which  yield a system of  $2N$ linear equations:
\begin{subequations}\label{sol.lin.sys}
\begin{align}
	A_j &= \lam_j \gamma_j(x,t) 
	\lp
	\sum_{k \in \pospoles} \frac{ \eps \overline{B_k} }{ \lam_j - \overline{\lam_k}}
	+
	\sum_{k \in \negpoles} \frac{ \eps \overline{B_k} }{ \lam_j - \lam_k} 
	\rp
	&& j \in \pospoles, \\
	A_j &= \eps \overline{\gamma_j}(x,t) 
	\lp
	\sum_{k \in \pospoles} \frac{ \eps \overline{B_k} }{ \overline{\lam_j} - \overline{\lam_k}}
	+
	\sum_{k \in \negpoles} \frac{ \eps \overline{B_k} }{ \overline{\lam_j} - \lam_k} 
	\rp
	&& j \in \negpoles, \\
	\eps \overline{B_j} &= \eps  \overline{\gamma_j}(x,t) 
	\lp 1 +
	\sum_{k \in \pospoles} \frac{ A_k }{ \overline{\lam_j} - \lam_k}
	+
	\sum_{k \in \negpoles} \frac{ A_k }{ \overline{\lam_j} - \overline{\lam_k}} 
	\rp
	&& j \in \pospoles, \\
	\eps \overline{B_j} &= \lam_j^{-1} \gamma_j(x,t) 
	\lp 1 +
	\sum_{k \in \pospoles} \frac{ A_k }{ \lam_j - \lam_k}
	+
	\sum_{k \in \negpoles} \frac{ A_k }{ \lam_j - \overline{\lam_k}} 
	\rp
	&& j \in \negpoles.
\end{align}
\end{subequations}

\begin{proof}[\hypertarget{proof:sol.bound}Proof of Lemma~\ref{lem:sol.bound}]
\label{proof:sol.bound}
It is sufficient to  establish the boundness estimate \eqref{bound-nsol}  for $\NsolAlt$, as the invertible transformation between $\Nsol$ and $\NsolAlt$ depends only on $\poles$. 
Furthermore, to establish the boundedness of $\NsolAlt$ away from its poles, we only need to show that the unknowns $A_k, \overline{B_k}$ in \eqref{sol.lin.sys} are bounded in $(x,t)$. 
 
The system of equations \eqref{sol.lin.sys} can be expressed as a matrix equation
\begin{equation}\label{matrix.sol.sys}
	(\vect{I} - \vect{\Gamma} \vect{M}_\poles)\vect{\alpha} = \vect{\Gamma} \twovec{ \vect{0}_N }{ \one_N }
\end{equation}
where $\vect{\alpha} = \mathrm{col}(A_1, \dots, A_N, \eps \overline{B_1}, \dots, \eps \overline{B_N})$ is the vector of unknowns; $\vect{\Gamma}$ is a diagonal matrix whose entries are the appropriate residue coefficients, $\gamma_k$ or $\overline{\gamma_k}$, for the corresponding entry in $\vect{\alpha}$; $\vect{M}_\poles$ is a complex $2N\times 2N$ matrix 
whose entries depend only on $\poles$ and $\eps$; finally, $\vect{0}_N, \one_N$ are the vectors of $N$ zeros and ones respectively. 
We make the following observations: 
1) The only $(x,t)$ dependence in the coefficients of \eqref{matrix.sol.sys} appears in $\vect{\Gamma}$; 
2) For any choice of $\vect{\Gamma}$ with nonzero entries, any solution of \eqref{matrix.sol.sys} corresponds exactly to a solution of Problem~\ref{outmodel2}. As Problem~\ref{outmodel2} is a reflectionless reduction of RHP~\ref{RHP2}, the general existence and uniqueness result for solutions of RHP~\ref{RHP2} established in Section \ref{sec:RHP.exist} implies that 
$(\vect{I} - \vect{\Gamma} \vect{M}_\poles)$ is invertible for any such $\Gamma$;
3) If we allow any $\gamma_k \to 0$ in \eqref{matrix.sol.sys} the corresponding coefficients $A_k, B_k$ in $\vect{\alpha}$ must also vanish, and \eqref{matrix.sol.sys} reduces to an equivalent system for the coefficients of an expansion \eqref{Nsol.expand} with $2(N-1)$ poles at $\Lambda \setminus \{ \lam_k \cup \overline{\lam_k} \}$.

The advantage of Problem~\ref{outmodel2} is that, in light of \eqref{phase.lambda}, \eqref{index sets} and \eqref{out2 residue matrices}, the coefficients $\gamma_k(x,t)$ are uniformly bounded as $(x,t)$ vary over $\R^2$:
\begin{equation}\label{gamma.bounds}
	|\gamma_k(x,t)|  \leq 
	K_\poles
	\left.
	\begin{cases}  
		e^{\mp 8t \Im \lam_k (\Re \lam_k - \xi ) }
		& |\xi| \leq \Xi_0,\ k \in \posnegpoles \\
		e^{\mp 2x \Im \lam_k \lp 1 - \xi^{-1} \Re \lam_k \rp }
		& |\xi| \geq \Xi_0, \ k \in \posnegpoles
	\end{cases}
	\right\}
	\leq K_\poles,
\end{equation}
where $\Xi_0>0$ is any fixed constant and $K_\poles$ is a fixed constant depending only on the scattering data. Finally, using the observations above, since $(\vect{I} - \vect{\Gamma}\vect{M}_\poles)^{-1}$ exist and depends continuously on the entries in $\vect{\Gamma}$, it follows that $(\vect{I} - \vect{\Gamma}\vect{M}_\poles)^{-1}$ is a bounded for all $(x,t)$. The result follows immediately.  
\end{proof}

\begin{proof}[\hypertarget{proof:sol separation} Proof of Proposition ~\ref{prop:sol separation}]

	Order the discrete spectra such that $\lam_j \in \poles(I)$, $j=1,\dots N(I)$, and $\lam_j \not\in \poles(I)$, $j> N(I)$. Also introduce the permutation matrix
\[
	P = 
	\begin{pmatrix}
		I_{N(I)} & 0 & 0 & 0 \\
		0 & 0 & I_{N(I)} & 0 \\
		0 & I_{N(I)'} & 0 & 0 \\
		0 & 0 & 0 & I_{N(I)'} 
	\end{pmatrix}
	\qquad
	N(I)' := N - N(I).
\]	
For any matrix $A$ and vector $\vect{v}$ let 
\[
	A^{\pi} = P A P^\intercal, \qquad \vect{v}^{\pi} = P \vect{v}.
\]
Conjugating \eqref{matrix.sol.sys} by $P$ we arrive at the equivalent system of equations
\begin{equation}\label{permute.matrix.sys}
	(I - \vect{\Gamma}^\pi \vect{M}_\poles^\pi) \vect{\alpha}^\pi = \vect{\Gamma}^\pi
	\begin{pmatrix} 
		\vect{0}_{N(I)} &  \vect{1}_{N(I)} & 
		\vect{0}_{N(I)'} &  \vect{1}_{N(I)'} 
	\end{pmatrix}^\intercal.
\end{equation}
This systems has a natural block structure; writing
\[
	\vect{\Gamma}^\pi = \diag{ \vect{\Gamma}_0 }{ \vect{\Gamma}_1 },
	\qquad
	\vect{M}^\pi_\poles = 
	\begin{pmatrix*}[l]
		\vect{M}_{\poles(I)} & \vect{M}_{12} \\
		\vect{M}_{21} & \vect{M}_{22}
	\end{pmatrix*},
	\quad \text{and} \quad
	\vect{\alpha}^\pi = 
	\begin{pmatrix} 
		\vect{\alpha}_{N(I)} \\ \vect{\alpha}_{N(I)'}
	\end{pmatrix}		
\]
\eqref{permute.matrix.sys} becomes
\[
	\begin{pmatrix}
		I_{N(I)} -\vect \Gamma_0 \vect{M}_{\poles(I)} &
		- \vect\Gamma_0  \vect{M}_{12} \\
		- \vect\Gamma_1  \vect{M}_{21} & 
		I_{N(I)'} -\vect \Gamma_1 \vect{M}_{22}
	\end{pmatrix}	
	\twovec{\vect{\alpha}_{N(I)} }{ \vect{\alpha}_{N(I)'} }
	=
	\begin{pmatrix} 
		\vect \Gamma_0 
		\begin{psmallmatrix} \vect 0_{N(I)} \\ \vect 1_{N(I)} \end{psmallmatrix} 
		\\[1.05em]
		\vect \Gamma_1  
		\begin{psmallmatrix} \vect 0_{N(I)'} \\ \vect 1_{N(I)'} \end{psmallmatrix}
	\end{pmatrix}
\]
where the first $2N(I)$ rows determine the unknowns $\alpha_{N(I)} = \mathrm{col}( A_1,\dots, A_{N(I)},\eps \overline{B_1},\dots, \eps \overline{B_{N(I)}})$ corresponding to the discrete spectra in $\poles(I)$. Observe that the upper left block of the system is precisely the left hand side of \eqref{matrix.sol.sys} corresponding for the $N(I)$

As $|t| \to \infty$ inside the cone specified in the theorem, it follows from \eqref{gamma.bounds} that 
\begin{align}\label{Gamma.bound}
	\| \vect \Gamma_0 \| = \bigo{1}
	\quad \text{and} \quad
	\| \vect \Gamma_1 \| = \bigo{e^{-4\poledist \nu_0 t}},
\end{align}
so we may regard terms with $\vect{\Gamma}_1$ as perturbative. By elementary row operations this system is equivalent to
\begin{gather*}
	(\vect M_0 + \vect M_1) 
	\twovec{\vect{\alpha}_{N(I)} }{ \vect{\alpha}_{N(I)'} }
	=
	\vect \beta_0 + \vect \beta_1
\shortintertext{where}
	\vect M_0 = \diag{ I_{N(I)} - \vect \Gamma_0 \vect M_{\poles(I)} }{ I_{N(I)'} }
\qquad
	\vect \beta_0 =
	  \begin{pmatrix} 
		\vect \Gamma_0 
		\begin{psmallmatrix} \vect 0_{N(I)} \\ \vect 1_{N(I)} \end{psmallmatrix} 
		\\[1.05em]
		\vect 0_{2N(I)'}
	  \end{pmatrix}
\\
    \vect M_1 = 
	 \begin{pmatrix}
		- \vect \Gamma_0 \vect M_{12} \vect \Gamma_1 \vect M_{21} &
		- \vect \Gamma_0 \vect M_{12} \vect \Gamma_1 \vect M_{22} \\
		- \vect \Gamma_1 \vect M_{21} &
		- \vect \Gamma_1 \vect M_{22} 
	 \end{pmatrix} 
\qquad
	\vect \beta_1 =
	  \begin{pmatrix} 
		\vect \Gamma_0 \vect M_{12} \vect \Gamma_1 
		\begin{psmallmatrix} \vect 0_{N(I)'} \\ \vect 1_{N(I)'} \end{psmallmatrix}
		\\[1.05em]
		\vect \Gamma_1  
		\begin{psmallmatrix} \vect 0_{N(I)'} \\ \vect 1_{N(I)'} \end{psmallmatrix}
	\end{pmatrix}
\end{gather*}

Let 
\[
	\widehat{\mathfrak{B}} = \prod_{\substack{\lam_j \in \poles(I) \\ j \in \negpoles}} 
	\lp \frac{ \lam - \overline{\lam_j}}{\lam - \lam_j} \rp
\]
be the Blaschke product corresponding to the reduced poles in $\poles(I)$ akin to the full product $\mathfrak{B}$ defined in \eqref{Blaschke}. Then with $\widehat{C}_k$ as defined in the statement of Proposition \ref{prop:sol separation}, for each $\lam_k \in \poles(I)$ the corresponding connection coefficients satisfy
\begin{align*}
	&C_k \mathfrak{B}(\lam_k)^{-2} = \widehat{C_k} \widehat{\mathfrak{B}}(\lam_k)^{-2} 
	&& k \in \pospoles \\
	&C_k^{-1} (1/\mathfrak{B})'(\lam_k)^{-2} = \widehat{C_k}^{-1} (1/\widehat{\mathfrak{B}})'(\lam_k)^{-2} 
	&& k \in \negpoles .
\end{align*}  
It is clear then that the limiting system of equations (in which $\vect M_1$ and $\vect \beta_1$ are set to zero) is exactly equivalent to \eqref{matrix.sol.sys} for the reduced scattering data given by 
the discrete spectra $\poles(I)$ and connection coefficients $\widehat{C}_k$. Since we know from the proof of Lemma~\ref{lem:sol.bound} that $I_{N(I)} - \vect \Gamma_0 \vect M_{\poles(I)}$ has a bounded inverse for all $(x,t)$ if follows from \eqref{Gamma.bound} that
\[
	\vect{\alpha}_{N(I)} = 
	(I_{N(I)} - \vect \Gamma_0 \vect M_{\poles(I)})^{-1} \vect \beta_0 + \bigo{ e^{-4\poledist \nu_0 t} }
	\qquad
	\vect{\alpha}_{N(I)'} =  \bigo{ e^{-4\poledist \nu_0 t} }.
\]
The result of the proposition follows immediately. 
\end{proof}

%
%
%
%

\section{Soliton-free initial Data of Large $L^2$-norm}
\label{app:empty}

In this appendix we prove the following proposition which establishes the existence of $q \in H^{2,2}(\R)$ with arbitrary $L^2$-norm which have no spectral singularities and no solitons. 
\begin{proposition}
\label{prop:empty}
	The scattering map $\mathcal{R}(q)$ described by Definition~\ref{def:R} can be explicitly computed for the family of potentials 
	\begin{equation}\label{q.fam}
		q(x) = \nu \sech(x)^{1-2i\mu} e^{i \lp S_0 - \eps \nu^2 \tanh(x) - 2\delta x \rp},
		\qquad
		\| q \|_{L^2(\R)}^2 = 2 \nu^2.
	\end{equation}
	where $\nu > 0$, and $\mu, \delta, S_0 \in \R$. 
	The condition 
	\begin{equation}\label{cond.empty}
		\eps \delta < \mu^2/\nu^2
	\end{equation}
	is sufficient to guarantee the discrete spectrum is empty, i.e., $q \in U_0$ .
\end{proposition}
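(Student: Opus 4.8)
The plan is to reduce the proposition to an explicit computation of the Kaup--Newell spectral data for the one-parameter family \eqref{q.fam}, following the classical approach to soliton-free potentials via hypergeometric functions. First I would substitute the potential $q(x)$ of \eqref{q.fam} directly into the Zakharov--Shabat-type system \eqref{direct.n.de} (equivalently the original Kaup--Newell problem \eqref{LS}), and look for the Jost solutions $N^\pm$. The structure $\sech(x)^{1-2i\mu}$ together with the phase $\exp(i(S_0 - \eps\nu^2\tanh x - 2\delta x))$ is chosen precisely so that, after the substitution $s = \tfrac{1}{2}(1+\tanh x)$ (mapping $\R$ to $(0,1)$), the linear system transforms into a hypergeometric equation. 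Concretely, I would diagonalize or partially decouple \eqref{direct.n.de}, extract a scalar second-order ODE for one component of $N^+$, and identify it with Gauss's hypergeometric equation with parameters built from $\nu, \mu, \delta$, and the spectral variable $\lambda$ (or $\zeta$ with $\lambda = \zeta^2$). The gauge factor $e^{-\eps\nu^2\tanh x}$ plays the role of absorbing the nonlinear $P$-term in \eqref{LS}, and the shift $e^{-2i\delta x}$ shifts the spectral parameter, so the net effect is that $q$ corresponds to the same scattering problem as a "pure" $\sech$ potential but evaluated at a translated value of $\zeta$.

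Second, I would read off the scattering coefficients $\alpha(\lambda)$ and $\beta(\lambda)$ (equivalently $a(\zeta), \bb(\zeta)$) from the connection formulas for the hypergeometric function, i.e., from the behavior of ${}_2F_1$ at $s=0$ versus $s=1$. This is standard: the $2\times 2$ connection matrix for Gauss's equation is expressed through Gamma functions, and the entries of $T(\lambda)$ in \eqref{transition-lambda} become explicit ratios of $\Gamma$-functions in the parameters $a,b,c$ of the hypergeometric equation. In particular $\breve\alpha(\lambda)$ (the coefficient whose zeros in $\mathbb C^+$ are the eigenvalues, per Definition~\ref{def:U}) will come out as a ratio of the form $\Gamma(\cdot)\Gamma(\cdot)/[\Gamma(\cdot)\Gamma(\cdot)]$ with arguments linear in $\sqrt\lambda$ and in the fixed constants. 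I would also verify the normalization and symmetry conditions \eqref{sym.ab} are respected, and confirm $q\in H^{2,2}(\R)$ (immediate from the exponential decay of $\sech$ and its derivatives).

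Third, the discrete-spectrum analysis: zeros of $\breve\alpha$ in $\Omega^{++}$ (in the $\zeta$-picture) or $\mathbb C^+$ (in the $\lambda$-picture) occur exactly where the numerator Gamma functions have poles that are not cancelled by poles of the denominator Gamma functions. Since $\Gamma$ has poles only at non-positive integers, the condition for \emph{no} such uncancelled poles in the relevant half-plane translates into an inequality on the real parameters. I expect this inequality to simplify to exactly \eqref{cond.empty}, namely $\eps\delta < \mu^2/\nu^2$: heuristically, $\delta$ controls the "center" of the would-be bound-state band and $\mu^2/\nu^2$ controls its "width/threshold", and the stated condition pushes all potential eigenvalues off into the unphysical sheet. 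I would also need to check that under \eqref{cond.empty} there are no zeros of $\breve\alpha$ on $\Sigma$ (no spectral singularities); this is typically automatic once one checks $|\alpha(\lambda)|$ is bounded below on $\R$, which follows from the explicit Gamma-function formula and the reality of the parameters, using $|\Gamma(x+iy)|$ bounds.

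The main obstacle will be the bookkeeping in step one and two: carefully tracking how the two gauge/phase factors in \eqref{q.fam} interact with the matrix structure of \eqref{direct.n.de}, getting the hypergeometric parameters $a,b,c$ exactly right (including which component of $N^+$ satisfies the scalar equation and what the correct prefactor is), and then correctly matching the connection-matrix entry to $\breve\alpha$ rather than $\alpha$ or $\beta$. A secondary subtlety is handling the branch of $\zeta = \sqrt\lambda$ and making sure the half-plane $\Omega^{++}$ in $\zeta$ maps correctly to the region in the hypergeometric-parameter plane where we count poles; a sign error here would flip the inequality. Once the explicit formula for $\breve\alpha$ is in hand, the pole-counting argument giving \eqref{cond.empty} is short. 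I would also remark that the $L^2$-norm computation $\|q\|_{L^2}^2 = \nu^2\int\sech^2 = 2\nu^2$ is trivial and shows $\nu$ can be taken arbitrarily large while \eqref{cond.empty} is maintained (e.g. fix $\eps\delta \le 0$, or $\mu$ large), which is the point of Proposition~\ref{prop:empty} and feeds into Theorem~\ref{thm:R} and Theorem~\ref{thm:GWP}.
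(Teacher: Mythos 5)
Your overall route coincides with the paper's: substitute \eqref{q.fam} into the Kaup--Newell system, pass to $s=\tfrac12(1+\tanh x)$, reduce to a Gauss hypergeometric equation, and read off the scattering coefficients from the connection formula as ratios of Gamma functions. That part of the outline is sound; the paper indeed obtains $\ba(\zeta)= e^{-i\eps\nu^2}\,\Gamma(\gamma)\Gamma(\gamma-\alpha-\beta)\big/\big[\Gamma(\gamma-\alpha)\Gamma(\gamma-\beta)\big]$ with $\gamma=-i\zeta^2+i(\mu+\delta)+\tfrac12$ and $\alpha,\beta=i\mu\pm i\nu\sqrt{-\eps}\,R(\zeta)$, $R(\zeta)=\sqrt{\zeta^2-\eps(\mu/\nu)^2}$, and your remark about spectral singularities is easily settled since on $\Sigma$ one has $\Re\gamma=\Re(\gamma-\alpha-\beta)=\tfrac12$, so no Gamma factor can vanish or blow up.

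The gap is in the final, decisive step. First, a slip: zeros of $\ba$ in $\C^{++}$ come from poles of the \emph{denominator} Gamma functions (where $1/\Gamma$ vanishes), i.e.\ from $\gamma-\alpha$ or $\gamma-\beta$ hitting a non-positive integer; ``uncancelled poles of the numerator'' would produce poles of $\ba$, which are irrelevant to the eigenvalue count. Second, and more seriously, the resulting condition is not a short pole count. Since $\Re(\gamma-\alpha)>0$ throughout $\C^{++}$, the only candidates satisfy $\gamma-\beta=1-n$, i.e.\ the transcendental equation $\zeta^2-\delta-\nu\sqrt{-\eps}\,R(\zeta)+i\lp n-\tfrac12\rp=0$ with $\zeta\in\C^{++}$ and $n\in\N$, which involves the branched square root $R(\zeta)$; whether it has solutions is not visible from the Gamma-function structure alone. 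The paper must still do real work here: setting $z=\zeta^2-\delta=x+iy$, squaring, and separating real and imaginary parts, it shows any solution lies both in the half-strip $\{y>0,\ -\nu^2/2<\eps x<0\}$ and on the curve $x^2Q(x)=\nu^4y^2/4$ with $Q(x)=\lp\eps x+\nu^2/2\rp^2-\lb\nu^2(\nu^2/4-\eps\delta)+\mu^2\rb$, and then checks that $Q(0)=\eps\nu^2\delta-\mu^2<0$ --- exactly \eqref{cond.empty} --- forces the curve to miss the strip. Your heuristic about $\delta$ shifting a ``bound-state band'' does not by itself yield this inequality, so the sufficiency of \eqref{cond.empty}, which is the actual content of the proposition, remains unproved in your sketch.
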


We will prove this proposition by showing that the linear system for DNLS:
\begin{equation}
\label{LS.bis}
\psi_x = 
\begin{pmatrix}
-i\zeta^2 - \dfrac{i\eps|q|^2}{2}	&	\zeta q	\\
\zeta \eps \qbar						&	i\zeta^2 + \dfrac{i\eps}{2}|q|^2
\end{pmatrix}
\psi
\end{equation}
can be reduced to solving the hypergeometric differential equation for any potentials $q$ in the family \eqref{q.fam}. This reduction involves several changes of dependent variable as well as the map 
\begin{equation}\label{svar.def}
	s = \frac{1}{2} \lp 1 + \tanh(x) \rp
\end{equation}
which compactifies $\R$ to $(0,1)$. Thus $x \rarr \pm \infty$ asymptotics are equivalent to Taylor expansions at $s=0$ or $s=1$ modulo a leading singular term. 
The identity
\begin{equation}
\label{exs}
e^x = s^{1/2}(1-s)^{-1/2}
\end{equation} 
implies that $e^{i\zeta^2 x} \sim s^{i\zeta^2/2}$ near
$s=0$, while $e^{i\zeta^2 x} \sim (1-s)^{-i\zeta^2/2}$
near $s=1$. 
We normalize the solution as $s \darr 0$ (i.e., $x \rarr -\infty$) and use  transformation formulas for hypergeometric functions to compute the scattering data by finding asymptotics as $s \uarr 1$ (i.e., $x \rarr  +\infty$).

\subsection{Reduction to a Hypergeometric Equation}
We begin by writing the potential $q$ in \eqref{LS.bis} in the form
\begin{equation}
\label{q.form}
	q(x) = A(x) e^{iS(x)}.
\end{equation} 
We make a change of dependent variable to remove the oscillatory factor $e^{iS}$ from $q$. If we set $\psi = e^{iS(x) \sigma_3/2} \varphi$, then
\begin{equation}
\label{LS.phi}
   \varphi_x	=	
   \begin{pmatrix}
      -i\zeta^2 - \dfrac{i\eps}{2} A^2 - \dfrac{i}{2}S_x
      &	\zeta A				\\[5pt]
      \zeta \eps A								
      &	i\zeta^2 + \dfrac{i\eps}{2} A^2 + \dfrac{i}{2}S_x
   \end{pmatrix}
   \varphi.
\end{equation}
Introducing the change of independent variable \eqref{svar.def},  noting that
$d/dx = 2s(1-s) d/ds$, we obtain
\begin{equation}
\label{LS.phi.t}
   2s(1-s) \varphi_s = 
   \begin{pmatrix}
      -i\zeta^2 - \dfrac{i\eps}{2} A^2 - \dfrac{i}{2}S_x
      &	\zeta A				\\[5pt]
      \zeta \eps A										
      &	i\zeta^2 + \dfrac{i\eps}{2} A^2 + \dfrac{i}{2}S_x
   \end{pmatrix}
   \varphi
\end{equation}

Next, we reduce the system to a single second order ODE, by introducing the change of dependent variables
\begin{equation}\label{WW.def}
   W(s) = g(s) \diag{1}{ \dfrac{\zeta A}{2s(1-s)} } \varphi(s),
\end{equation}
where the scalar factor $g$ removes the $(1,1)$-entry of the coefficient matrix on the right hand side of \eqref{LS.phi.t} by choosing it to 
satisfy
\begin{equation}
\label{g}
   2s(1-s) g'(s) = \left(i\zeta^2 + \dfrac{i\eps}{2} A^2 + \dfrac{i}{2}S_x \right) g(s),
\end{equation}
while the diagonal matrix factor in \eqref{WW.def} normalizes the $(1,2)$-entry of the coefficient matrix to $1$. 
A short computation shows that $W$ obeys the differential equation
\begin{equation}
\label{LS.W.t}
   W_s = 
   \begin{pmatrix}
      0	  & 
      1	  \\[5pt]
      \dfrac{\eps \zeta^2 A^2}{4s^2(1-s)^2}   &		
      i\dfrac{2\zeta^2+ \eps A^2 + S_x}{2s(1-s)} 
         + \left( \dfrac{A_s}{A} + \dfrac{2s-1}{s(1-s)} \right)
   \end{pmatrix}
   W
\end{equation}
If we now set
\[
   W = \begin{pmatrix} w \\ w_s \end{pmatrix} ,
\]
This system reduces to the single second-order equation
\begin{equation}
\label{LS.w}
   s(1-s) w'' 
   - \left[ 
      i \left( \zeta^2 + \frac{\eps A^2}{2} + \frac{S_x}{2}  \right) 
     + \frac{s(1-s)A_s}{A} + 2s-1 
   \right] w' 
	- \frac{\eps \zeta^2 A^2}{4s(1-s)}w =0.
\end{equation}

Now, observe that for the family of potentials \eqref{q.fam} we have
\begin{gather}
\label{A}
	A^2 = \nu^2 \sech^2(x) = 4\nu^2 s(1-s) \\
\label{S_x}
	S_x = -\eps \nu^2 \sech^2(x) + 2 \mu \tanh(x) - 2\delta
	    = -\eps A^2 + 4 \mu s - 2(\mu+\delta)
\end{gather}
which reduces \eqref{LS.w} to
\begin{equation}
\label{LS.w.2}	
	s(1-s) w'' 
	+ \lb -i\zeta^2 + i(\mu+\delta) + \frac{1}{2} - (1+2i\mu) s \rb w' 
	- \eps \nu^2 \zeta^2 w = 0.
\end{equation}
which is the hypergeometric equation
\begin{gather}\label{hyper}
	s(1-s)w'' + \left[\gamma - (1+ \alpha + \beta)s \right]w' - \alpha \beta w = 0 \\
\shortintertext{with} 
\nonumber
	\alpha + \beta	=	2i\mu, \qquad
	\alpha \beta	=	\eps \nu^2 \zeta^2, \qquad
	\gamma			=	-i\zeta^2 + i (\mu+ \delta) + \frac{1}{2}.
\shortintertext{Hence}
\label{alpha.beta}
	\alpha			=	i\mu + i\nu \sqrt{-\eps} R(\zeta), \qquad
	\beta			=	i\mu -  i\nu \sqrt{-\eps} R(\zeta),
\shortintertext{where}
\label{Rzeta}
	R(\zeta) = \sqrt{\zeta^2 - \eps\left(\dfrac{\mu}{\nu}\right)^2}. 
\end{gather}
is taken to be finitely branched on the line segment connecting its branch points with $R(\zeta) \sim \zeta$ for $|\zeta| \gg 1$. In particular, this implies that $R$ maps $\C^{++}$ into itself for either sign of $\eps$. We take $\sqrt{\eps} = 1$ or $i$ when $\eps = 1$ or $-1$ respectively.

\subsection{The Jost Solution}
Denote by $F(\alpha,\beta,\gamma;s)$ the hypergeometric function, analytic in the disk $|s|<1$, with 
\begin{equation} 
\label{hyper.series}
	F(\alpha,\beta,\gamma;s) = 
	1 	+ \frac{\alpha\beta}{\gamma}s 
		+ \frac{1}{2!}\frac{\alpha(\alpha+1) \beta(\beta+1)}{\gamma(\gamma+1)}s^2 + \ldots
	\qquad
	|s| < 1	 
\end{equation}
(see \cite[15.2.1]{DLMF}).
A basis for the solution space of \eqref{LS.w.2} with singularity at $s=0$ is given by  \cite[15.10.2-15.10.3]{DLMF}
\begin{equation}
\label{w0}
\begin{split}
w_1(s)	&=	F(\alpha,\beta,\gamma;s)\\
w_2(s)	&=	s^{1-\gamma} F(\alpha-\gamma+1,\beta-\gamma+1,2-\gamma;s),
\end{split}
\end{equation}
while a basis for the solution space of \eqref{LS.w.2} with singularity at $s=1$ is
given by \cite[15.10.4-15.10.5]{DLMF}
\begin{equation}
\label{w1}
\begin{split}
w_3(s)	&=	F(\alpha, \beta, \alpha+\beta+1-\gamma;1-s)\\
w_4(s)	&=	(1-s)^{\gamma-\alpha-\beta}F(\gamma-\alpha,\gamma-\beta,\gamma-\alpha-\beta+1;1-s)
\end{split}
\end{equation}
We'll need the connection formula \cite[15.10.21]{DLMF}
\begin{equation}
\label{w1c}
w_1(s) = 
\frac	{\Gamma(\gamma)\Gamma(\gamma-\alpha-\beta) }
		{\Gamma(\gamma-\alpha) \Gamma( \gamma-\beta)} 
		w_3(s) 
+ 
\frac	{\Gamma(\gamma) \Gamma(\alpha+\beta-\gamma)}
		{\Gamma(\alpha) \Gamma(\beta)} 
		w_4(s) 
\end{equation}
to compute scattering data for the potential $q$. 

We now seek the correctly normalized solution
\begin{equation}
\label{m1-.form}
m_1^-(x,\zeta) = e^{iS(x)\sigma_3/2} \dfrac{ e^{i\zeta^2 x} }{g(t)}
	\begin{pmatrix}
	1		&		0	\\[3pt]
	0		&		\dfrac{\sqrt{s(1-s)}}{\zeta \nu}
	\end{pmatrix}
	\begin{pmatrix}
	w(s)		\\[3pt]
	w'(s)
	\end{pmatrix}
\end{equation}
by setting $w(s)=c_1 w_1(s) + c_2 w_2(s)$ and using the asymptotics
\begin{equation}
\label{m1-.left}
m_1^-(x,\zeta) =
\begin{pmatrix}
1\\0
\end{pmatrix}
+ o(1)
\end{equation}
for $x \rarr -\infty$, i.e., $s \darr 0$, to choose $c_1$ and $c_2$.

One may solve \eqref{g} for $g$, and use \eqref{q.fam} and \eqref{exs}
to compute
\begin{equation}\label{three.form}
\begin{aligned}
	e^{iS(x)/2}	&=	
	2^{-i\mu} e^{iS_0/2-i\eps \nu^2(s-1/2)} s^{-i(\mu+\delta)/2}(1-s)^{-i(\mu-\delta)/2}\\
	g(s) &=	s^{i\zeta^2/2-i(\mu+\delta)/2}(1-s)^{-i\zeta^2/2-i(\mu-\delta)/2}\\
	e^{i\zeta^2x}	&=	s^{i\zeta^2/2}(1-s)^{-i\zeta^2/2}
\end{aligned}
\end{equation}
so that 
\begin{equation} \label{m11-m21.pre}
	\begin{aligned}
		e^{iS(x)/2}  e^{i\zeta^2 x} / g(t) 
		  &= 2^{-i\mu} e^{iS_0/2-i\eps\nu^2(s-1/2)}\\
		e^{-iS(x)/2} e^{i\zeta^2 x} / g(t) 
		 &= 2^{i\mu} e^{-iS_0/2+i\eps\nu^2(s-1/2)}s^{i(\mu+\delta)}(1-s)^{i(\mu-\delta)}.
	\end{aligned}
\end{equation}
Using \eqref{m11-m21.pre} in \eqref{m1-.form} we obtain
\begin{align}
\label{-}
	m_1^-(x,\zeta)	=	
	\begin{pmatrix}
	   2^{-i\mu} e^{iS_0/2-i\eps \nu^2(s-1/2)} w(s)	\\
       \frac{2^{i\mu}}{\nu \zeta} e^{-iS_0/2+i\eps\nu^2(s-1/2)} 
	     s^{i(\mu+\delta)+1/2}(1-s)^{i(\mu-\delta)+1/2} \frac{dw}{ds}
	\end{pmatrix}.
\end{align}
Note that $\real(\gamma)=1/2$.
Setting $w(s) = c_1w_1(s) + c_2w_2(s)$ and using the convergent series expansions
\[
    w_1(s) \sim 1 + \bigO{s}, \quad 
    w_2(s) \sim s^{1-\gamma}\left(1+\bigO{s} \right), 
\]
we find that $c_1= 2^{i\mu} e^{-iS_0/2-i\eps \nu^2/2}$, $c_2=0$ so that
\begin{equation}
\label{m1-.sol}
m_1^-(x,\zeta)	=	
\begin{pmatrix}
e^{-i\eps \nu^2 s} w_1(s)\\[4pt]
\frac{2^{2i\mu}}{\nu \zeta} e^{-iS_0+i\eps \nu^2(s-1)}s^{\gamma+i\zeta^2}(1-s)^{1-\gamma+\alpha+\beta-i\zeta^2} w_1'(s)
\end{pmatrix}
\end{equation}
Repeating this calculation for the Jost functions $m_1^+$ and $m_2^+$ defined by \eqref{Jost.zeta.norm} yields
\begin{equation}\label{m+.sol}
   \begin{aligned}
		m_1^+(x,\zeta) &= 
		\begin{pmatrix}
		   e^{-i \eps \nu^2(s-1)} w_3(s) \\[4pt]
		   \frac{2^{2i\mu}}{\zeta \nu} e^{-i S_0 + i \eps \nu^2 s} 
		     s^{1/2 + i(\mu+\delta)} (1-s)^{1/2 + i(\mu-\delta)} w_3'(s)
		\end{pmatrix} \\
		m_2^+(x,\zeta) &= (\alpha + \beta - \gamma)^{-1} 
		\begin{pmatrix}
		   \zeta \nu 2^{2i\mu} e^{i S_0 - i \eps \nu^2 s}
		     s^{-i\zeta^2} (1-s)^{i\zeta^2} w_4(s) \\[4pt] 		     
		   e^{i\eps \nu^2(s-1)} 
		     s^{-i\zeta^2 + i(\mu+\delta) +1/2} (1-s)^{ i\zeta^2 + i(\mu-\delta)} w_4'(s)
		\end{pmatrix}
   \end{aligned}
\end{equation}

\subsection{Scattering Data}

The scattering coefficients $\ba(\zeta)$ and $-b(\zeta)$ defined by \eqref{trans} satisfy
\begin{equation}
\label{m1-.right}
   m_1^-(x,\zeta) =  \ba(\zeta) m_1^+(x,\zeta) - b(\zeta) e^{2i\zeta^2 x} m_2^+(x,\zeta),
   \qquad
   \zeta \in \Sigma.
\end{equation}
Using the connection formula \eqref{w1c}, the Jost functions \eqref{m1-.sol}-\eqref{m+.sol}, and the last line of \eqref{three.form} we may read off that 
\begin{align}
\label{ex.ba}
   \ba(\zeta)	&=	
       e^{-i\eps \nu^2} 		
	      \frac	{\Gamma(\gamma)\Gamma(\gamma-\alpha-\beta)}
	            {\Gamma(\gamma-\alpha)\Gamma(\gamma-\beta)}, 	
				\\
\label{ex.b}
   b(\zeta)		&=	
      - \frac{2^{2i\mu}}{\nu \zeta} e^{-iS_0} 
	  \frac{\Gamma(\gamma)\Gamma(1+\alpha+\beta-\gamma)}
	       {\Gamma(\alpha)\Gamma(\beta)}.
\end{align}
From the formulas
\begin{align*}
	\gamma	= -i\zeta^2+ i(\mu+\delta)+\frac{1}{2}
	\qquad
	\gamma-\alpha-\beta =	-i\zeta^2 -i(\mu-\delta) + \frac{1}{2}
\end{align*}
it is easy to see that for $\zeta \in \Sigma$, 
$-i\zeta^2$ is purely imaginary so that $\real(\gamma)=1/2$, $\real(\gamma-\alpha-\beta)=1/2$ and $\ba(\zeta)$ has no spectral singularities. 
Moreover, $\real(\gamma) > 1/2$ and $\real(\gamma-\alpha-\beta) > 1/2$ for $\zeta \in \C^{++}$ so that $\ba(\zeta)$ is holomorphic\footnote{The parameters $\alpha$ and $\beta$ appearing in the argument of $\ba(\zeta)$ and $b(\zeta)$ inherit a branch cut from $R(\zeta)$; their boundary values satisfy $\alpha^\pm = \beta_{\mp}$. Since both $\ba(\zeta)$ and $b(\zeta)$ are invariant under the map $\alpha \leftrightarrow \beta$, both scattering coefficients are analytic across the branch of $R$. 
} in $\C^{++}$ as required.   
It's also clear from \eqref{ex.b} and the above observations that $b(\zeta)$ extends meromorphically to $\zeta \in \C^{++}$; $b(\zeta)$ will have isolated simple poles at those values of $\zeta$ where
\begin{equation}\label{b.pole}
	\gamma - \alpha - \beta = m, \quad m = 1,2,3,\dots
\end{equation}
and neither $\alpha$ nor $\beta$ is a nonpositive integer.

The discrete spectrum of \eqref{LS.bis} are precisely the quartets $\pm \zeta_n, \pm \bar \zeta_n$, $\zeta_n \in \C^{++}$ such that $\ba(\zeta_n) = 0$. From \eqref{ex.ba}, $\ba$ will have a zero whenever $\gamma - \alpha = 1-n$, $\gamma-\beta = 1-n$. We have
\begin{align}
\label{gamma-alpha}
   \gamma-\alpha &=	
      -i\zeta^2 + i\delta - i\nu \sqrt{-\eps}R(\zeta) + \frac{1}{2}\\
\label{gamma-beta}
   \gamma-\beta &=	
      -i\zeta^2 + i\delta + i\nu \sqrt{-\eps}R(\zeta) + \frac{1}{2}
\end{align}
Notice that $\Re \gamma - \alpha = \Im \zeta^2 + \nu \Im( \sqrt{-\eps} R(\zeta)) > 0$ for either sign of $\eps$ as $R(\zeta) \in \C^{++}$ when $\zeta \in \C^{++}$. Thus zeros of $\ba(\zeta)$ must satisfy $\gamma-\beta = 1-n$ or equivalently
\begin{equation}
\label{cond 0}
   \zeta^2 -  \delta - \nu \sqrt{-\eps} R(\zeta) + i(n-1/2) = 0, \qquad \zeta \in \C^{++} \quad n=1,2,3,\ldots .
\end{equation}

Finally, we point out that $b(\zeta)$ is analytic at any zero of $\ba(\zeta)$ since if 
$\zeta_n$ satisfies both $\gamma - \beta = 1-n$ and \eqref{b.pole} then $\alpha = 1 - n - m$ is a negative integer and the pole of $b(\zeta)$ is removable. Then because the scattering relation \eqref{m1-.right} extends analytically to $\zeta_n$ we can compute the connection coefficient $c_n$ (see \eqref{czeta}-\eqref{czeta.bdef} ) as
\begin{equation}\label{ex.czeta}
	c_n = \frac{ b(\zeta_n)}{\ba'(\zeta_n)}.
\end{equation}

\begin{proof}[Proof of Proposition~\ref{prop:empty}]

From the arguments already given in this section the scattering map is easily constructed for data in the family \eqref{q.fam}. Using the formulae \eqref{sym.ab},\eqref{ab->rho}, and \eqref{zeta->lambda} we have
\[
	\rho(\lam) = 
	\eps \zeta^{-1} \frac{\overline{ b\lp \zetabar \rp}}{\overline{ \ba \lp \zetabar \rp}},
	\qquad
	\lam_n = \zeta_n^2,
	\qquad
	C_n = 2 c_n.
\]
It remains to show that \eqref{cond.empty} guarantees that the discrete spectrum will be empty.

We want to find values of $\mu$ and $\nu$ such that \eqref{cond 0} cannot be satisfied for any $\zeta \in \C^{++}$ and all $n = 1,2,3,\dots$.
Introduce the parameters
\[ 
	\lam \coloneqq \zeta^2, \quad z \coloneqq \zeta^2 - \delta, \quad L_n \coloneqq n-1/2
\]
and note that $\imag z>0$ and $L_n > 0$. The condition \eqref{cond 0}
becomes
\begin{equation}
\label{cond 1}
	z + iL_n = \nu \sqrt{-\eps}  \sqrt{z+\delta - \eps(\mu/\nu)^2}. 
\end{equation}
where we used \eqref{Rzeta}.
Writing $z=x+iy$ with $y>0$, squaring both sides of \eqref{cond 1},
and taking real and imaginary parts we find
\begin{subequations}
\label{cond 2}
\begin{align}
\label{cond 2a}
x^2 - (y+L_n)^2 + \eps \nu^2 x		&=	-\eps \nu^2 \delta + \mu^2,	\\
\label{cond 2b}
2x \left(y+L_n \right)					&=	-\eps \nu^2 y.
\end{align}
\end{subequations}
Solving \eqref{cond 2b} for $L_n$ we have
\begin{equation}
\label{L def}
L_n 	=	-y \frac{\eps \nu^2 +  2x}{2x}.
\end{equation}
Since $L_n$ and $y$ are both strictly positive, it follows that  $-(\eps \nu^2 + 2x)/(2x)$ is strictly positive as well. Thus, any solution $z=x+iy$ lies in the vertical half-strip
\begin{equation}
\label{strip}
S_\epsilon=	\left\{  (x,y) \in \R^2: y >0, \,\, -\frac{\nu^2}{2} < \eps x < 0 \right\}. 
\end{equation}
On the other hand, using \eqref{L def} in \eqref{cond 2a}, we see that any solutions $z$ of \eqref{cond 1} lie along the curve 
\begin{equation}
\label{cond 3}
\calC \coloneqq \left\{ (x,y) \in \R \times (0,\infty): x^2 Q(x) = \nu^4 y^2/4 \right\}
\end{equation}
where
\begin{equation}
\label{Q def}
	Q(x) = 	\left( \eps x + \frac{\nu^2}{2} \right)^2 - 
			\left[
			   \nu^2 \left( \frac{\nu^2}{4} - \eps \delta \right) + \mu^2
			\right]
\end{equation}
To find parameter values of $(\mu,\delta)$ for which $\ba$ has no zeros, it suffices to find $(\mu,\delta)$ so that $\calC$ does not intersect the strip $S_\eps$. From \eqref{Q def}, it is clear that $\calC$ will have empty intersection with $S_\eps$ provided $Q(x)<0$ for $x$ with $0 < -\eps x < \nu^2/4$.  For either sign of $\eps$ this is guaranteed if $Q(0) < 0$. Since $Q(0)= \eps \nu^2 \delta - \mu^2$ it follows that \eqref{cond.empty} guarantees the discrete spectrum is empty.
\end{proof}					

%
%
%
%

%
%
\endgroup

\begin{thebibliography}{99}

\bibitem{BC84}
Beals, R.; Coifman, R. R. Scattering and inverse scattering for first order systems. \emph{Comm.\ Pure Appl.\ Math.} \textbf{37} (1984), no. 1, 39--90.

\bibitem{BJM16}
Borghese, M., Jenkins, R., McLaughlin, K. T.-R.
Long-time asymptotic behavior of the focusing nonlinear Schr\"{o}dinger
equation. Preprint, \href{http://arXiv.org/pdf/1604.07436.pdf}{arXiv:1604.07436},  2016.

\bibitem{CL04}
Chen, X.J., Lam, W.K.,
Inverse scattering transform for the  derivative  nonlinear  Schr\"{o}dinger
equation with nonvanishing boundary conditions, 
\emph{Phs. Rev. E}, 
\textbf{69}, 2004, 066604.

\bibitem{CHL05}
Chen, X.J., Hou, L.J., Lam, W.K.,
Conservation laws for the derivative  nonlinear  Schr\"{o}dinger
equation with non vanishing boundary conditions, 
\emph{Chin,. Phys. Lett.}, \textbf{22}, no. 4, 2005,  830--832.

\bibitem{CGHHS06}
Craig, W., Guyenne, P., Hammack, J., Henderson, D., Sulem, C.,
Solitary water wave interactions.
\emph{Phys. of Fluids}, \textbf{18} (2006), 057106.

\bibitem{CJ16}
Cuccagna, Scipio, Jenkins, Robert On the asymptotic stability of N-soliton solutions of the defocusing nonlinear Schrödinger equation. \emph{Comm.\ Math.\ Phys.} \textbf{343} (2016), no. 3, 921--969.

\bibitem{DIZ93}
Deift, P. A.; Its, A. R.; Zhou, X. Long-time asymptotics for integrable nonlinear wave equations. \emph{Important developments in soliton theory}, 181--204, Springer Ser. Nonlinear Dynam., Springer, Berlin, 1993.

\bibitem{DZ93}
Deift, P., Zhou, X.  A steepest descent method for oscillatory Riemann-Hilbert problems. Asymptotics for the MKdV equation.  \emph{Ann.\ of Math.\ } (2) \textbf{137} (1993), no. 2, 295--368.

\bibitem{DZ03}
 Deift, P.,  Zhou, X. Long-time asymptotics for solutions of the NLS equation with initial data in a weighted Sobolev space. Dedicated to the memory of J\"urgen K. Moser. \emph{Comm. Pure Appl. Math.} 
 \textbf{56} (2003), 1029--1077.

\bibitem{DM08}
Dieng, M., McLaughlin, K D.-T.  Long-time Asymptotics for the NLS equation via dbar methods. 
Preprint,
\href{http://arXiv.org/pdf/0805.2807.pdf}{arXiv:0805.2807},
2008. 

\bibitem{dFM08}
diFranco, J., Miller, P.
The semi-classical modified nonlinear Schr\"odinger equation I : Modulation theory and spectral analysis.
 emph{Physica D}, 237, 2008, 947--997.
 
 \bibitem{DLMF}
NIST Digital Library of Mathematical Functions. 
\url{http://dlmf.nist.gov/}, Release 1.0.11 of 2016-06-08. 
Online companion to \cite{OLBC10}.

\bibitem{DKM13}
Duyckaerts, T., Kenig, C., F. Merle, F. 
 Classification of radial solutions of the focusing, energy- critical wave equation, \emph{Cambridge Journal of Mathematics}, \textbf{1}, no. 1, 2013, 75--144.

\bibitem{DJKM16}
 Duyckaerts, T.,   Jia, H.,  Kenig, C., Merle, F. 
Soliton resolution along a sequence of times for the focusing energy critical wave equation. 
arXiv:1601.01871v2

\bibitem{DKM15}
Duyckaerts, T.,  Kenig, C., Merle, F. 
Profiles for bounded solutions of dispersive equations, with applications to energy-critical wave and Schr\"odinger equations,
\emph{Commun. Pure Appl. Anal.}, \textbf{14}, no. 4, 2015, 1275--1326.
  
\bibitem{DZ17}
Dyatlov, S., Zworski, M. Mathematical Theory of Scattering Resonances, Version 0.1 (March 2017). 
\href{http://math.mit.edu/~dyatlov/res/res_20170323.pdf}{http://math.mit.edu/\~{}dyatlov/res/res\_20170323.pdf}.

\bibitem{ES83}
 Eckhaus, W., P. Schuur, P. 
The emergence of solitons of the Korteweg-de Vries equation from arbitrary initial conditions,
\emph{Math. Methods Appl. Sci.},  \textbf{5} , no. 1,  1983, 97--116.

\bibitem{GuoWu17}
Guo, Zihua; Wu, Yifei.
Global well-posedness for the derivative nonlinear Schr\"{o}dinger equation in $H^{1/2}(\R)$. 
\emph{Discrete Contin.\ Dyn.\ Syst.\ } \textbf{37} (2017), no. 1, 257--264

\bibitem{HO92} Hayashi, N., Ozawa, T.
On the derivative nonlinear Schr\"odinger equation. 
 \emph{Physica D} {55}, 1992, 14--36.
 
\bibitem{Its81}
Its, A. R. Asymptotic behavior of the solutions to the nonlinear Schr\'{o}dinger equation, and isomonodromic deformations of systems of linear differential equations. (Russian) 
\emph{Dokl.\ Akad.\ Nauk SSSR} \textbf{261} (1981), no. 1, 14--18.

\bibitem{KN78}
Kaup, D. J., Newell, A. C. (1978).  An exact solution for a derivative nonlinear Schr\"{o}dinger equation. 
\emph{J. Mathematical Phys.} {19}:798--801.
  
\bibitem{KV97}
Kitaev, A. V.; Vartanian, A. H. Leading-order temporal asymptotics of the modified nonlinear Schrödinger equation: solitonless sector. \emph{Inverse Problems} \textbf{13} (1997), no. 5, 1311--1339.
 
\bibitem{KV99}
Kitaev, A. V., Vartanian, A. H.,
Asymptotics of solutions to the modified nonlinear Schr歞inger equation: solitons on a nonvanishing continuous background. (English summary) 
\emph{SIAM J. Math. Anal.} \textbf{30} (1999), no. 4, 787--832. 

\bibitem{Lee83}
Lee, J.-H. (1983).  Analytic properties of Zakharov-Shabat inverse scattering problem with a polynomial spectral dependence of degree 1 in the potential. Thesis (Ph.\ D.), Yale University.
 
\bibitem{Lee89a}
Lee, J.-H. (1989). Global solvability of the derivative nonlinear Schr\"odinger equation. 
\emph{Trans. Amer. Math. Soc.} \textbf{314}:107--118.

\bibitem{Liu17}
Liu, Jiaqi. Global  well-posedness for the derivative nonlinear Schr\"{o}dinger equation with generic initial data. Thesis, University of Kentucky, 2017.

\bibitem{LPS15}
Liu, J.,  Perry, P.,   Sulem, C.  Global existence for the derivative nonlinear Schr\"{o}dinger equation by the method of inverse scattering. \emph{Comm.\ Partial Differential Equations} \textbf{41} (2016), no. 11, 1692--1760. 

\bibitem{LPS16}
Liu, J.,  Perry, P.,   Sulem, C.  Long-time behavior of solutions to the derivative nonlinear Schr\"{o}dinger equation by the method of inverse scattering.  Preprint, \href{https://arxiv.org/abs/1608.07659}{arXiv1608.07659}, to appear in 
\emph{Ann.\ Inst.\ Henri Poincar\'{e} C - Analyse non-lin\'{e}aire}.

\bibitem{MM09a}
Martel, Y., Merle, F., 
Inelastic interaction of nearly equal solitons for the quartic gKdV equation. (English summary) 
\emph{Invent.\ Math.}, \textbf{183} (2011), no. 3, 563--648. 

\bibitem{MMM09b}
Martel, Y.,  Merle, F., Mizumachi, T., 
Description of the inelastic collision of two solitary waves for the BBM equation.
\emph{Arch. Ration. Mech. Anal.}, \textbf{196} no. 2  (2010),  517--574. 

\bibitem{OLBC10}
F. W. J. Olver, D. W. Lozier, R. F. Boisvert, and C. W. Clark, editors. 
NIST Handbook of Mathematical Functions. 
Cambridge University Press, New York, NY, 2010. 
Print companion to \cite{DLMF}.

\bibitem{PS17}
Pelinovsky D., and  Shimabukuro,  Y.
Existence of global solutions to the derivative NLS equation with the inverse
scattering transform method, 
\emph{International Mathematics Research Notices},(2017).

\bibitem{S86}
Schuur, P. 
Asymptotic analysis of soliton problems.
An inverse scattering approach. Lecture Notes in Mathematics, 1232, Springer-Verlag, Berlin, 1986. 

\bibitem{SSP17}
Shimabukuro, Y.,   Saalmann, A.,  Pelinovsky, D.
The derivative NLS equation: global existence with solitons.
\href{https://arxiv.org/abs/1703.05277}{arXiv:1703.05277}.

\bibitem{Takaoka99}
Takaoka, H.
Well-posedness for the one-dimensional nonlinear Schr\"{o}dinger equation with the derivative nonlinearity. 
\emph{Adv.\ Differential Equations} \textbf{4} (1999), no. 4, 561--580. 

\bibitem{Tao08}
Tao, T. 
A global compact attractor for high-dimensional defocusing nonlinear Schr\"odinger equations with potential.
\emph{Dyn. Part. Diff. Eqs.}, \textbf{5} no. 2, 2008, 101--116.

\bibitem{TO16}
Trogdon, T., Olver, S.
Riemann-Hilbert problems, their numerical solution, and the computation of nonlinear special functions. 
Society for Industrial and Applied Mathematics (SIAM), Philadelphia, PA, 2016. 

\bibitem{TV00}
Tovbis, A., Venakides, S.
The eigenvalue problem for the focusing nonlinear Schr\"{o}dinger equation:  new solvable cases.
\emph{Physica D}, \textbf{146}, 2000, 250--264.

\bibitem{Wu14} Wu, Y. 
Global well-posedness on the derivative nonlinear Schr{\"o}dinger equation revisited.
Preprint, \href{http://arxiv.org/pdf/1404.5159.pdf}{arXiv:1404:5159}, 2014.
 
\bibitem{ZS72}  Zakharov, V.E., Shabat, A.B.
Exact theory of two-dimensional self-focusing and one-dimensional self-modulation of waves in nonlinear media.
\emph{Soviet Physics JETP}, \textbf{34}, no 1,  1972, 62--69.
 
\bibitem{Zhou89}
Zhou, X. The Riemann-Hilbert problem and inverse scattering. 
\emph{SIAM J.\ Math. Anal.\  } \textbf{20}, 1989, no. 4, 966--986.

\bibitem{Zhou89a}
Zhou, Xin Direct and inverse scattering transforms with arbitrary spectral singularities. 
\emph{Comm.\ Pure Appl.\ Math.} \textbf{42} (1989), no. 7, 895--938.

\end{thebibliography}
\end{document}